\numberwithin{equation}{section}
\setlist[enumerate,1]{label=\textup{(\alph*)}}
\definecolor{solarized-base03}{HTML}{262626}
\definecolor{solarized-base02}{HTML}{303030}
\definecolor{solarized-base01}{HTML}{6A6A6A}
\definecolor{solarized-base00}{HTML}{777777}
\definecolor{solarized-base0}{HTML}{969696}
\definecolor{solarized-base1}{HTML}{9E9E9E}
\definecolor{solarized-base2}{HTML}{E8E8E8}
\definecolor{solarized-base3}{HTML}{F6F6F6}
\definecolor{solarized-yellow}{HTML}{B58900}
\definecolor{solarized-orange}{HTML}{CB4B16}
\definecolor{solarized-red}{HTML}{DC322F}
\definecolor{solarized-magenta}{HTML}{D33682}
\definecolor{solarized-violet}{HTML}{6C71C4}
\definecolor{solarized-blue}{HTML}{268BD2}
\definecolor{solarized-cyan}{HTML}{2AA198}
\definecolor{solarized-green}{HTML}{859900}
\newtheorem{thm}{Theorem}[section]
\newtheorem{lem}[thm]{Lemma}
\newtheorem{cor}[thm]{Corollary}
\newtheorem{sublem}[thm]{Sub-lemma}
\newtheorem{prop}[thm]{Proposition}
\newtheorem{defin}[thm]{Definition}
\newtheorem{rem}[thm]{Remark}
\newtheorem*{irem}{Remark}
\newtheorem{notation}[thm]{Notation}
\newcommand{\cA}{{\mathcal{A}}}
\newcommand{\cB}{{\mathcal{B}}}
\newcommand{\cC}{{\mathcal{C}}}
\newcommand{\cD}{{\mathcal{D}}}
\newcommand{\cE}{{\mathcal{E}}}
\newcommand{\cG}{{\mathcal{G}}}
\newcommand{\cH}{{\mathcal{H}}}
\newcommand{\cI}{{\mathcal{I}}}
\newcommand{\cJ}{{\mathcal{J}}}
\newcommand{\cK}{{\mathcal{K}}}
\newcommand{\cL}{{\mathcal{L}}}
\newcommand{\cN}{{\mathcal{N}}}
\newcommand{\cM}{{\mathcal{M}}}
\newcommand{\cO}{{\mathcal{O}}}
\newcommand{\cP}{{\mathcal{P}}}
\newcommand{\cQ}{{\mathcal{Q}}}
\newcommand{\cR}{{\mathcal{R}}}
\newcommand{\cS}{{\mathcal{S}}}
\newcommand{\cT}{{\mathcal{T}}}
\newcommand{\cZ}{{\mathcal{Z}}}
\newcommand{\bA}{{\mathbb{A}}}
\newcommand{\bC}{{\mathbb{C}}}
\newcommand{\bD}{{\mathbb{D}}}
\newcommand{\bE}{{\mathbb{E}}}
\newcommand{\bF}{{\mathbb{F}}}
\newcommand{\bG}{{\mathbb{G}}}
\newcommand{\bK}{{\mathbb{K}}}
\newcommand{\bN}{{\mathbb{N}}}
\newcommand{\bM}{{\mathbb{M}}}
\newcommand{\bP}{{\mathbb{P}}}
\newcommand{\bR}{{\mathbb{R}}}
\newcommand{\bT}{{\mathbb{T}}}
\newcommand{\bX}{{\mathbb{X}}}
\newcommand{\bZ}{{\mathbb{Z}}}
\newcommand{\fkC}{{\mathfrak{C}}}
\newcommand{\fkD}{{\mathfrak{D}}}
\newcommand{\fkE}{{\mathfrak{E}}}
\newcommand{\fkK}{{\mathfrak{K}}}
\newcommand{\fkL}{{\mathfrak{L}}}
\newcommand{\fkP}{{\mathfrak{P}}}
\newcommand{\fkR}{{\mathfrak{R}}}
\newcommand\erR[1]{\fkR_{#1}}
\newcommand\oerR[1]{\overline\fkR_{#1}}
\newcommand\Osc{\textup{Osc}\,}
\newcommand\ve{\varepsilon}
\newcommand\eps{\epsilon}
\newcommand\vf{\varphi}
\newcommand\deltacomplex{\delta_{{\scriptscriptstyle\bC}}}
\newcommand\deltac{\delta_{\textup{c}}}
\newcommand{\bbK}{{\fkK}}
\newcommand{\bbbK}{{\bar\fkK}}
\newcommand{\bbKs}{{\fkK}}
\newcommand{\BOmega}{{\boldsymbol \Omega}}
\newcommand{\bchi}{{\boldsymbol \chi}}
\newcommand{\Clip}{C_{\textrm{Lip}}}
\newcommand{\pathnet}{\mathscr{A}}
\newcommand\vestep{\varsigma}
\newcommand\vesteps{\varsigma_\star}
\newcommand\sstep{Q}
\newcommand\fixsstep{q^*}
\newcommand\zet{\kappa}
\newcommand\Id{{\mathds{1}}}
\newcommand{\paramL}{\varsigma}
\newcommand{\paramb}{\beta}
\newcommand{\efrac}[2]{#1/#2} 
\newcommand{\eefrac}[2]{^{\efrac{#1}{#2}}} 
\newcommand\bRp{{\bR_{\scriptscriptstyle\ge0}}} 
\newcommand{\avgtheta}[1]{\thetas_{#1}}
\newcommand{\bavgtheta}[1]{\bar \theta^*_{#1}}
\newcommand\thetas{\theta^*}
\newcommand\sigmas{\sigma^*}
\newcommand\thetasl{{\avgtheta\ell}}
\newcommand{\thetaslk}[1]{\bar\theta^*_{\ell, {#1}}}
\newcommand{\thetaslkk}[2]{\bar\theta^*_{\ell_{#1}, {#2}}}
\newcommand\thetaslz{{\avgtheta\ellz}}
\newcommand\thetaref{{\theta^*}}
\newcommand\lprecisione{{\vartheta}}
\newcommand\lseparation{{\Delta_*}}
\newcommand\stable{{s}}  
\newcommand\Var{\boldsymbol\upsigma} 
\newcommand{\greenkubo}{\hat{\boldsymbol\upsigma}}
\newcommand\bVar{\greenkubo^2}
\newcommand\ellz{{\ell_0}}
\newcommand\rage{{\bar r}}
\newcommand\sillyconst{(\ln|\sigma|)^2|\sigma|\ve}
\newcommand\derivL{\cD}
\newcommand\bAs{\bA^{\hskip-1.3pt\dag}}
\newcommand\timeI{J}
\newcommand\gammah{\gamma_h}
\newcommand\raggio{\bar R_T}
\newcommand\raggiod{\hat R_\delta}
\newcommand\Czero{C_1}
\newcommand\Cuno{C_2}
\newcommand\Ctre{C_6}
\newcommand\Cquattro{C_5}
\newcommand\Ccinque{C_3}
\newcommand\Csei{C_7}
\newcommand\Csette{C_4}
\newcommand\CJ[1]{%
  \ifnum0=#1\relax%
  C_{10}
  \else%
  \ifnum1=#1\relax%
  C_{11}\else%
  \ifnum2=#1\relax%
  C_{12}\else%
  \ifnum3=#1\relax%
  C_{13}\else%
  \ifnum4=#1\relax%
  C_{14}\else%
  \ifnum5=#1\relax%
  \tilde C_0\else%
  \ifnum6=#1\relax%
  C_{16}\else%
  \ifnum7=#1\relax%
  C_{17}\else%
  \Const%
  \fi\fi\fi\fi\fi\fi\fi\fi} 
\newcommand{\Constgs}{C_{**}}
\newcommand{\CJdue}{C_{12}}
\newcommand{\bXs}{\bX^*}
\newcommand{\bXss}{\bX^{**}}
\newcommand{\testfunc}{g}
\newcommand{\testfuncp}{\overline \vf}
\newcommand{\auxPot}{\Phi}
\newcommand{\auxPots}{\Phi^*}
\newcommand{\bzetgtsl}{\zet_{\gamma,\thetasl}}
\newcommand{\bzetgts}{\zet_{\gamma,\thetas}}
\newcommand{\bzetgt}{\zet_{\gamma,\theta}}
\newcommand{\hgamma}{\hat\gamma}
\newcommand{\vgamma}{\check\gamma}
\newcommand{\tgamma}{\underline\gamma}
\newcommand{\Const}{{C_\#}}
\newcommand{\ConstPow}[1]{{C_\#^{#1}}}
\newcommand{\Consti}{{C_\#\invr}}
\newcommand{\const}{{c_\#}}
\newcommand{\vei}{\ve^{-1}}
\newcommand{\veh}{\ve^{\efrac12}}
\newcommand{\veih}{\ve^{-\efrac12}}
\newcommand{\deh}{d}
\newcommand{\de}[2]{\frac{\deh{}#1}{\deh{}#2}}
\newcommand{\st}{\;:\; }
\newcommand{\invr}{^{-1}}
\newcommand{\pheq}{\phantom{=}}
\newcommand{\dist}{\textup{dist}}
\newcommand{\supp}{\textup{supp}\,}
\newcommand{\pint}[1]{{\lfloor#1\rfloor}}
\newcommand{\Lip}{\operatorname{Lip}}
\newcommand{\first}{{\text{\rm pre}}}
\newcommand{\logmgf}{\Lambda_{\ell,\ve}}
\newcommand{\restoPalla}{\cR_{\ell,\ve}}
\newcommand{\spc}[1]{c_#1}
\newcommand{\thetaArgument}{\vartheta} 
\renewcommand{\ln}{\log} 
\newcommand{\ie}{i.e.\ }
\newcommand{\resp}{resp.\ }
\definecolor{majentic}{rgb}{0.5,0.1,0}
\definecolor{removal}{rgb}{0.6,0.6,1}
\renewcommand{\wp}{\varrho} 
\definecolor{referee}{rgb}{0.8,0,0}
\newcommand\numToPrime[1]{%
  \ifnum0=#1\relax%
  \else%
  \ifnum1=#1\relax%
  '\else%
  \ifnum2=#1\relax%
  ''\else%
  \ifnum3p=#1\relax%
  '''\else%
  ^{(#1)}%
  \fi\fi\fi\fi} 
\newcommand\dtt[1]{D_1\numToPrime{#1}}
\newcommand{\mr}{\mathring}
\newcommand\mergedsub[2]{#1\sc@sub{#2}}
\def\sc@sub#1{\def\sc@thesub{#1}\@ifnextchar_{\sc@mergesubs}{_{\sc@thesub}}}
\def\sc@mergesubs_#1{_{\sc@thesub#1}}
\newcommand\mergedsubC[2]{#1\sc@subC{#2}}
\newcommand\newsubcommandC[3]{\newcommand#1{\mergedsubC{#2}{#3}}}
\def\sc@subC#1{\def\sc@thesub{#1}\@ifnextchar_{\sc@mergesubsC}{_{\sc@thesub}}}
\def\sc@mergesubsC_#1{_{#1,\sc@thesub}}
\newcommand\mergedsubCR[2]{#1\sc@subCR{#2}}
\newcommand\newsubcommandCR[3]{\newcommand#1{\mergedsubCR{#2}{#3}}}
\def\sc@subCR#1{\def\sc@thesub{#1}\@ifnextchar_{\sc@mergesubsCR}{_{\sc@thesub}}}
\def\sc@mergesubsCR_#1{_{\sc@thesub,#1}}
\newcommand\mergedsup[2]{#1\sc@sup{#2}}
\newcommand\newsupcommand[3]{\newcommand#1{\mergedsup{#2}{#3}}}
\def\sc@sup#1{\def\sc@thesup{#1}\@ifnextchar^{\sc@mergesups}{^{\sc@thesup}}}
\def\sc@mergesups^#1{^{\sc@thesup #1}}
\newcommand\mergedsupC[2]{#1\sc@supC{#2}}
\def\sc@supC#1{\def\sc@thesup{#1}\@ifnextchar^{\sc@mergesupsC}{^{\sc@thesup}}}
\def\sc@mergesupsC^#1{^{#1,\sc@thesup}}
\newcommand\mergedpar[2]{\sc@parC{#1}{#2}}
\def\sc@parC#1#2{\def\sc@thebase{#1}\def\sc@thepar{#2}\@ifnextchar_{\sc@mergeparsub}{\@ifnextchar'{\sc@mergeparprime}{\@ifnextchar^{\sc@mergeparsup}{\sc@thebase(\sc@thepar)}}}}
\def\sc@mergeparsub_#1{\sc@thebase_{#1}(\sc@thepar)}
\def\sc@mergeparprime#1{\sc@thebase #1(\sc@thepar)}
\def\sc@mergeparsup^#1{\sc@thebase^{#1}(\sc@thepar)}
\newcommand{\regu}[2]{\mergedsup{#1}{(#2)}}
\newcommand{\regPi}[1]{\regu{\Pi}{#1}}
\newcommand{\udPi}{\mathbin{\rotatebox[origin=c]{180}{$\Pi$}}}
\newsupcommand\ellc{\ell}{\textup{c}}
\newcommand{\ratef}{\mathscr I}
\newsubcommandCR{\ratefLin}{\ratef}{\textup{Lin}}
\newcommand{\diffeoAvg}{\Upsilon}
\newsubcommandCR{\Ifirst}{I}{\first}
\newcommand{\gfunctional}{\varphi}
\newcommand{\gfunctionalh}{\varphi_h}
\newcommand{\sigmah}{\sigma_h}
\newcommand{\Leb}{\textup{Leb}}
\newcommand{\mnu}{\nu} 
\newcommand{\enu}{\upsilon^+} 
\newcommand{\emu}{\upsilon^{\textrm{c}}} 
\newcommand{\fm}{{\upnu\hskip-.8pt}} 
\newcommand{\fmm}{{\upnu\hskip-.8pt}}
\newcommand\tell{{\tilde\ell}}
\newsubcommandCR{\kse}{h}{\textup{KS}}
\newcommand{\expa}{a}
\newcommand{\expb}{b}
\newcommand{\expo}[1]{\exp\!\left[#1\right]}  
\newcommand\shiftPar{\kappa}
\newcommand\pot{\Omega}
\newcommand\potFamily{\fkP}
\newcommand\potSequence{\mathbf{\Omega}}
\newcommand\potShift{\mathbf{s}}
\newcommand{\SF}[2]{(\{#1\},#2)}
\newcommand{\stdf}{\fkL}
\newcommand{\convexP}{\varrho}
\newcommand\symrv\dagger 
\newsubcommandCR{\etaRefs}{H^*}{\ell}
\newcommand\etaRefb{H}
\newcommand\etaExp{\Xi}
\newcommand\etaExpb{\etaExp}
\newcommand\etaExph{\widehat \Xi}
\newsubcommandCR\etaExps{\Xi^*}{\ell}
\newsubcommandCR\etaExpsk{\Xi^*}{\ell_k}
\newsubcommandCR\etaExpsrage{\Xi^*}{\ell_\rage}
\newcommand{\imFun}{\mathcal H}
\newcommand{\un}{\textup{u}}
\newcommand{\nt}{\textup{c}}
\newcommand{\cone}{\mathfrak{C}}
\newcommand{\coneu}{\cone^\un}
\newcommand{\conec}{\cone^\nt}
\newcommand{\Kcone}{\gamma}
\newcommand{\Kconeu}{\Kcone^\un}
\newcommand{\Kconec}{\Kcone^\nt}
\newcommand\gpoly{\gamma_{
\ve}}    
\newcommand\gavg{\bar\gamma
}     
\newcommand\gspace{C^0([0,T],\bR^d)} 
\newsubcommandC{\ppath}{\bP}{\ve}       
\newsubcommandC{\epath}{\bE}{\ve}       
\newcommand\gball[1]{B
\def\temp{#1}\ifx\temp\empty
{}
\else
  |_{#1} 
  \fi}
\newcommand\QLipC{Q_{C,*}}
\newcommand{\BV}{\textup{BV}}
\newcommand{\tO}{\cL}
\newcommand{\tOh}{\widehat\tO}
\newcommand{\tOn}{\check\tO}
\newcommand{\tOp}{\cP}
\newcommand{\tOq}{\cQ}
\newcommand{\tOqb}{\tilde\tOq}
\newcommand{\tOpb}{\tilde\tOp}
\newcommand{\tOb}{\tilde\tO}
\newcommand{\chib}{\tilde\chi}
\newcommand{\tOqh}{\widehat\tOq}
\newcommand{\harb}{h^*}
\newcommand{\marb}{m^*}
\newcommand\nc[1]{_{\cC^{#1}}}
\newcommand\ncd[2]{_{\cC^{#1}(#2)}} 
\newcommand\nl[1]{_{L^{#1}}}
\newcommand\nw[2]{_{W^{#1,#2}}}
\newcommand\nBV{_\BV}
\newcommand\ho{\hat\omega}
\newcommand\hov{\varpi}
\newcommand{\hatA}{\hat A} 
\newcommand\bOmega{\bar\Omega}
\newsubcommandCR{\Omg}{\Omega}{\auxPot}
\newsubcommandCR{\hOmg}{\widehat\Omega}{\auxPot}
\newsubcommandCR{\smg}{\sigma}{\auxPot} \newsubcommandCR{\sgm}{\sigma}{\auxPot} 
\newsubcommandCR{\fixOmega}{\Omega^*}{\auxPot}
\newsubcommandCR{\fixhOmega}{\widehat\Omega^*}{\auxPot}
\newsubcommandCR{\sbOm}{\tilde\Omega^*}{\auxPot}
\newcommand{\aseq}{{\bf a}}
\newcommand{\dualsigma}{b}
\newcommand\Omegap[1]{\Omega^{(#1)}}
\newcommand\Omegaa{\Omegap0}
\newcommand\Omegab{\Omegap1}
\newcommand\trho{\rho_*}
\newcommand\trhove{\tilde\rho}
\newcommand{\rrho}{\mathring\rho}
\newcommand\bchimin{\bVar_-}
\newcommand\J[1]{\cJ_{#1}}
\newcommand\JJ[1]{\cI_{#1}}
\newsubcommandCR{\oW}{W}{\ell}
\newsubcommandCR{\oWk}{W}{\ell_k}
\newsubcommandCR{\ooW}{\overline W}{\ell}
\newsubcommandCR{\ooWk}{\overline W}{\ell_k}
\newsubcommandCR{\fkW}{{\mathfrak{W}}}{\ell}
\newsubcommandCR{\ofkW}{{\overline{\mathfrak{W}}}}{\ell}
\renewcommand\Re{\textup{Re}}
\renewcommand\Im{\textup{Im}}
\newcommand{\Tmax}{T_\textup{max}}
\newcommand\thvs{_{\theta,\paramL\Omega}}
\newcommand\thvss{_{\theta,{\paramL_2}\Omega}}
\newcommand\thvz{_{\theta,\Omega}}
\newcommand{\largeta}{\varkappa}
\newsubcommandCR\favg{\bar f}{\ell}
\newsubcommandCR\favgp{\bar f'}{\ell}
\newcommand\Favg{\bar F_\ve}
\newcommand\Favgnosub{\bar F}
\def\underbracket{%
  \@ifnextchar[{\@underbracket}{\@underbracket [0.5pt]}%
}
\def\@underbracket[#1]{%
  \@ifnextchar[{\@under@bracket[#1]}{\@under@bracket[#1][0.4em]}%
}
\def\@under@bracket[#1][#2]#3{
  \mathop{\vtop{\m@th \ialign {##\crcr $\hfil \displaystyle {#3}\hfil $%
        \crcr \noalign {\kern 3\p@ \nointerlineskip }\upbracketfill {#1}{#2}
        \crcr \noalign {\kern 3\p@ }}}}\limits}
\def\upbracketfill#1#2{$\m@th \setbox \z@ \hbox {$\braceld$}
  \edef\@bracketheight{\the\ht\z@}\bracketend{#1}{#2}
  \leaders \vrule \@height #1 \@depth \z@ \hfill
  \leaders \vrule \@height #1 \@depth \z@ \hfill \bracketend{#1}{#2}$}
\def\bracketend#1#2{\vrule height #2 width #1\relax}
\newcommand{\bcoord}[1]{(\bar{x}_{#1},\thetaslk{#1})}
\newcommand{\bcoordk}[2]{(\bar{x}_{#2},\thetaslkk{#1}{#2})}
\newcommand{\coord}[1]{(x_{#1},\theta_{#1})}
\newcommand{\BLambda}{\bar\Lambda}
\definecolor{jacopo}{RGB}{10,92,10}
\definecolor{carlangelo}{RGB}{92,10,10}
\definecolor{reviewColor}{RGB}{255,172,255}
\newcommand\deviatio{\Delta} \newsubcommandCR\deviation{\Delta}{\ell}
\newsubcommandCR\deviationsl{\Delta^*}{\ell}\newsubcommandCR\bdeviationsl{\bar\Delta^*}{\ell}
\newcommand\bdeviationslk[2]{\bar\Delta^*_{#1,#2}}
\newcommand\deviatione{\Delta}
\newcommand\devD{D}
\newcommand{\Averages}{\bD}
\newcommand{\measures}{\mathcal{P}} 
\newcommand{\Rad}[1]{R^{#1}}
\newcommand{\intr}{\textup{int}\,} %
\begin{document}
\title[Limit Theorems]{Limit Theorems for Fast-slow \\ partially hyperbolic systems}
\author{Jacopo De Simoi}
\address{Jacopo De Simoi\\
  Department of Mathematics\\
  University of Toronto\\
  40 St George St. Toronto, ON, Canada M5S 2E4}
\email{{\tt jacopods@math.utoronto.ca}}
\urladdr{\href{http://www.math.utoronto.ca/jacopods}{http://www.math.utoronto.ca/jacopods}}
\author{Carlangelo Liverani}
\address{Carlangelo Liverani\\
  Dipartimento di Matematica\\
  II Universit\`{a} di Roma (Tor Vergata)\\
  Via della Ricerca Scientifica, 00133 Roma, Italy.}
\email{{\tt liverani@mat.uniroma2.it}}
\urladdr{\href{http://www.mat.uniroma2.it/~liverani}{http://www.mat.uniroma2.it/~liverani}}

\begin{abstract}
  We prove several limit theorems for a simple class of partially hyperbolic fast-slow systems.  We start
  with some well know results on averaging, then we give a substantial refinement of known
  large (and moderate) deviation results and conclude with a completely new result (a
  local limit theorem) on the distribution of the process determined by the fluctuations
  around the average.  The method of proof is based on a mixture of standard pairs and
  transfer operators that we expect to be applicable in a much wider generality.
\end{abstract}
\keywords{Averaging theory, Large deviations, Limit Theorems, partially hyperbolicity}
\subjclass[2000]{37A25, 37C30, 37D30, 37A50, 60F17} \thanks{ We thank the Fields
  Institute, Toronto were this project started a very long time ago.  Both authors have been
  partially supported by the ERC Grant MALADY (ERC AdG 246953).  JDS acknowledges partial
  NSERC support. Most of all we would like to thank Dmitry Dolgopyat for his many comments
  and suggestion; without his advice this paper would certainly not exist. Finally, we are deeply indebted to the anonymous referees that we thank for their amazing job.}
\maketitle


\section{Introduction }
In this paper we analyze various limit theorems for a class of partially hyperbolic
systems of the fast-slow type.  Such systems are very similar to the ones studied by
Dolgopyat in~\cite{DimaAveraging}: in such paper the fast variables are driven by an
hyperbolic diffeomorphism or flow (see also~\cite{Bakhtin03, Bakhtin04, Kifer09,
  Melbourne-Sutart, Gottwald-Melbourne} for related results), here we consider the case in
which they are driven by an expanding map.  Notwithstanding the fact that we are not aware
of an explicit treatment of the latter case, the difference is not so relevant to justify,
by itself, a paper devoted to it.  In fact, we chose to deal with one dimensional
expanding maps only to simplify the exposition.  The point here is that, on the one hand,
we propose a different approach and, more importantly, on the other hand, we show that by
such an approach it is possible to obtain much sharper results: a \emph{Moderate and Large
  Deviation} Theorem and a \emph{Local Limit} Theorem.  To the best of our knowledge, this
is the first time a rate function is computed with such a precision to yield moderate
deviations of the paths and a local limit type theorem is obtained for a deterministic
evolution converging to a diffusion process with non constant diffusion.  Admittedly, the
present is not the most general case one would like to deal with, it is just a primer.
However, it shows that local limit results are attainable with an appropriate
combination/refinement of present days techniques (see the discussion below on how general
our approach really is).

The importance of local limit theorems hardly needs to be emphasized but, for the
skeptical reader, it is nicely illustrated in~\cite{DeL2, DeLPV}.  Indeed, in such papers
the present large and moderate deviations and local limit results are used in a
fundamental way to obtain a precise understanding of the statistical properties (e.g.\
existence and properties of the SRB measure, decay of correlations, meta-stability
etc$\ldots$) for the same class of systems for a small, but fixed, rate between the speeds
of the slow and fast motions. This provides a class of partially hyperbolic systems for
which very precise {\em quantitative} statistical properties can be established. In
addition, contrary to other cases, our results apply to an open set of systems (in the
$\cC^4$ topology).

For partially hyperbolic fast-slow systems several results concerning limit laws have
already been obtained.  In~\cite{Bakhtin03, Kifer04} it is proven that the motion
converges in probability to the motion determined by the averaged equation (morally a
\emph{law of large numbers}).  In~\cite{DimaAveraging} there are important results on the
fluctuations around the average (at a given time).  In particular, both \emph{large
  deviations} and converges in law to a diffusion for the fluctuation field (morally a
\emph{central limit theorem}) are obtained.  In~\cite{Bakhtin03, Bakhtin04} one can find
very sharp results on normal fluctuation and moderate deviation at a given time. In
particular, in~\cite{Bakhtin04} Bakhtin provides Cramer asymptotics for the distribution
of the slow variable at a fixed time for a system with fast motion given by a mixing
hyperbolic attractor. Such Cramer asymptotics gives estimates for moderate deviation, at a
fixed time, sharper than the one obtained here, but they do not provide directly a rate
function in path space, they hold only under the assumption that the dynamics is $\cC^r$
for a very large $r$ (contrary to our $\cC^4$ assumption) and they are not sufficient to
establish a local central limit theorem.  In~\cite{Kifer09} more general large deviation
results (in path space) are obtained. In particular, a variational formula for the rate
function is established. Yet, Kifer's results are not precise enough to treat moderate
deviations. To obtain a rate function for moderate deviations it is necessary to compute
the exponential momenta with a precision considerably higher than the $o(1)$ achieved
in~\cite{Kifer09}.  Here we present independent proofs of the above facts (or, better, of
the aforementioned substantial refinements of the above facts) and, most importantly, we
make a further step forward by addressing the issue of the local central limit theorem, a
result out of the reach of all previous approaches.

The lesson learned from~\cite{DimaAveraging} is that the \emph{standard pair} technique is
the best suited to investigate these type of partially hyperbolic systems.\footnote{ In
  particular, as far as we know, it represents the most efficient way to ``condition''
  with respect to the past in a field (deterministic systems) where conditioning poses
  obvious conceptual problems.}  Nevertheless, in the uniformly hyperbolic case,
techniques based on the study of the spectrum of the {\em transfer operator} are usually
much more efficient.  It is then tempting to try to mix the two points of view as much as
possible.  This was partially done already in \cite{Bakhtin04} and is also one of the
goals of our work. To simplify matters, we carry it out it in the simplest possible
setting (one dimensional expanding maps).  Nevertheless, we like to remark that extending many of
the present results to hyperbolic maps or flows is just a technical, not a conceptual
problem.  Indeed, till the recent past the use of transfer operators was limited to the
expanding case (or could be applied only after coding the system via Markov partitions,
greatly reducing the effectiveness of the method).  Yet, recently, starting
with~\cite{BKL02} and reaching maturity with~\cite{GL06, BT07, GL08, Liverani04, Tsujii10,
  Faure-Tsujii, Demers-Liverani, Demers-Zhang, Faure, FS2011, GLP}, it has been clarified
how to fully exploit the power of transfer operators in the hyperbolic, partially
hyperbolic and piecewise smooth setting.  Accordingly, it is now totally reasonable to
expect that any proof developed in the expanding case can be extended to the hyperbolic
one, whereby making the following arguments of a much more general interest.\footnote{ The only exception being the ``Dolgopyat estimate" necessary to compute the error term in the local limit theorem which still poses a conceptual challenge in the general hyperbolic case, but see \cite{Tsujii16} for recent progresses.}

The structure of the paper is as follows: we first describe the class of systems we are
interest in, and state precisely the main results. Then we discuss in detail the standard
pair technology. This must be done with care as we will need {\em higher smoothness} as
well as {\em complex} standard pairs, which have not been previously considered. In the
following section we use the tools so far introduced to establish an averaging theorem. As
already explained this result is not new, but it serves the purpose of illustrating the
generals strategy to the reader and the proof contains several facts needed in the following
arguments.  Section~\ref{subsec:markov} is devoted to the precise computation of the logarithmic moment generation function. This allows, in section~\ref{sec:upper}, to establish the large and moderate deviations of our dynamics from the average. We compute with unprecedented
precision the rate function of the large deviation principle. We stop short of providing a
full large and moderate deviations theory only to keep the exposition simple and since it
is not needed for our later purposes. Nevertheless, we improve considerably on known
results. Finally in Section~\ref{sub:lclt} we build on the previous work and prove a local
limit result for our dynamics. The proof is a bit lengthy but it follows the usual
approach: compute the Fourier transform of the distribution. This computation is very similar to the one in section~\ref{subsec:markov} only now we want to compute the expectation of a complex exponential rather then a real one, also we aim at a better precision. Yet, the strategy is essentially the same: we divide the time interval in shorter blocks (this is done in Section~\ref{sec:one-prelim}), then estimate
carefully the contribution of each block (this is done in Sections~\ref{sec:one-large} and
\ref{sec:one-small}) and we conclude by combining together the contributions of the single
blocks (done in section~\ref{proof-not-so-trivial-1}). Some fundamental technical tools
needed to perform such computations are detailed in the appendices. Appendix
\ref{subsec:transfer} contains a manifold of results on transfer operators and their
perturbation theory. In fact, not only it collects, for the reader convenience, many
results scattered in the literature, but also provides some new results. In addition, it
contains a discussion of the genericity of various conditions used in the paper including
the, to us, unexpected results that for smooth maps aperiodicity and not being
cohomologous to a constant are equivalent. Appendix~\ref{sec:dolgo} provides a detailed discussion of transfer
operators associated to semiflows that, although essentially present in the literature,
was not in the form needed for our needs (in particular we need uniform results for a one
parameter family of systems). Finally, Appendix~\ref{app:tedious} contains some simple and uneventful, but a bit lengthy, computations needed in the text.

\subsubsection*{{\bf Notation}} Through the paper we will use $\Const$ and $\const$ to
designate an arbitrary positive constant, depending only on our dynamical system, whose
value can change form an occurrence to the next even in the same line.  We will use
$C_{a,b,\cdots}$ to designate arbitrary  constants that depend on the quantities $a,b,\cdots$
while constants with other decorations (e.g. numbers as subscript) stand for a fixed
specific value.

Also we write $\cO(X)$ to denote a number which is bounded by $\Const X$ for any $\ve < \ve_\#$, where $\ve_\#$ depends only on the dynamics (note that $X$ might not depend on $\ve$, so that the second requirement becomes empty).
While we will use $\cO_\cB(X)$, where $(\cB, \|\cdot\|_{\cB})$ is a Banach space, to denote an element of $\cB$ with norm bounded by $\Const \|X\|_\cB$, again for all $\ve\leq \ve_\#$. We will always assume $\ve$ to be so small that this condition is met for every instance of the expression
  $\cO(\cdot)$.

  Finally, for $a\in \bR$ we will use $\pint{a}$ to designate its integer part, that is
  the largest integer that is smaller or equal to $a$.

\newpage
{\small 
\tableofcontents
}
\newpage
\section{The system and the results}\label{sec:results}
For $\ve>0$ let us consider the map $F_\ve\in\cC^{4}(\bT^2,\bT^2)$ defined by
\begin{equation}\label{eq:map}
  F_\ve(x,\theta)=(f(x,\theta),\theta+\ve\omega(x,\theta)),
\end{equation}
where $\|\omega\|_{\cC^4}=1$.  We assume that the $f_\theta=f(\cdot,\theta)$ are uniformly expanding, i.e.:
\begin{equation}\label{e_strongExpansion}
  \inf_{(x,\theta)\in\bT^2}\partial_x f(x,\theta)\ge \lambda,
\end{equation}
for some $\lambda > 1$; indeed by considering a suitable iterate of $F_\ve$, we will
assume without loss of generality that $\lambda > 2$.

This fact is well known to imply that each $f(\cdot,\theta)$ has a unique invariant probability
measure that is absolutely continuous with respect to the Lebesgue measure.  We denote
this measure (often called the SRB measure) by $\mu_\theta$.  Also, we assume that, for
every $\theta\in\bT$, $\omega$ is not $f_\theta$-cohomologous to a constant function, i.e.
\begin{enumerate}[label=\textup{(A\arabic*)},,ref=(A\arabic*)]
  \item\label{eq:cobo} For any $\theta\in\bT$ there exist no measurable\footnote{ It is
    well known by the Liv\v{s}ic Theorems that if $g_\theta$ is measurable, it is actually
    as smooth as the map $f_\theta$ (see also the proof of Lemma~\ref{lem:coboundary}).} function $g_\theta:\bR\to\bR$ and constant $a_\theta\in\bR$ so that
  $\omega(x,\theta)=g_\theta( f(x,\theta))-g_\theta(x)+a_\theta$.
\end{enumerate}

Note that the latter equation can hold only if for any invariant probability measure $\mu$ of
$f_\theta$, $\mu(\omega(\cdot,\theta))=\mu(a_\theta)=a_\theta$.  In particular, if
$\omega(\cdot,\theta)$ has different averages along two different periodic orbits of
$f_\theta$, then~\ref{eq:cobo} is satisfied.  It is then fairly easy to check such a
condition.  In particular note that the assumption above holds generically (see
Section~\ref{subsec:generic} for a more complete discussion of these issues).

Given $(x,\theta)\in\bT^2$, let us define the trajectory
$(x_n,\theta_n)=F_\ve^n(x,\theta)$ for any $n\in\bN$.

Here we describe a sequence of increasingly sharper results on the behavior of the
dynamics for times of order $\ve\invr$.\footnote{ In some cases it is also possible to
  obtain information for times of the order $\ve^{-2}$ (see~\cite{DimaAveraging}).  Yet,
  as far as we currently see, not of the quantitative type we are interested in.} We start
with well known facts, but we provide complete proofs both for the reader's convenience
and because they are a necessary preliminary to tackle our main results.
\subsection{The  skew product case}\label{sec:trivial}
For the reader convenience, we first give a brief, impressionistic, discussion of  the case in which $\partial_\theta f=\partial_\theta\omega=0$. In this case the map $F_\ve$ is a {\em skew product}: $F_\ve(x,\theta)=(f(x),\theta+\ve\omega(x))$. This case is fairly well understood as it can be reduced to the study of the statistical properties of the map $f$, let us recall why.

We are interested in the evolution of the slow variable $\theta_n$. Clearly we must wait for a time at least $\vei$ in order for a change of order one to be possible. It is then natural to rescale the time and introduce the {\em macroscopic time} $t=\ve n$.  The idea is then to fix some arbitrary $T > 0$ and then define, for all $t\in[0,T]$,
\[
 \theta_\ve(t)=\theta_\pint{t\vei}+(t\vei-\pint{t\vei})[\theta_{\pint{t\vei}+1}-\theta_\pint{t\vei}] \mod 1.
\]
Note that $\theta_\ve\in\cC^0([0,T],\bT)$.  The point here is twofold: on the one hand it
is clear that we cannot expect, at first, to control the behavior of $\theta_n$ for
arbitrarily large $n$. Hence we fix a time horizon $T\vei$, $T$ being arbitrary but
independent on $\ve$. On the other hand, it is natural to introduce a continuous
interpolation of the evolution of $\theta_n$ since
$|\theta_{\pint{t\vei}}-\theta_{\pint{s\vei}}|\leq |t-s|\|\omega\|_\infty$ hence, once
rescaled, the trajectory is Lipschitz on $\ve\bZ$ and it is then naturally interpolated by
a Lipschitz function on $\bR$.  Since
\[
F_\ve^n(x,\theta)=\left(f^n(x), \theta+\ve\sum_{k=0}^{n-1}\omega(f^k(x))\right)
\]
it follows that
\[
\left|\theta_\ve(t)-\frac{t}{\pint{\vei t}}\sum_{k=0}^{\pint{\vei t}-1}\omega(f^k(x)))\right|\leq\Const \ve.
\]
By the Birkhoff Ergodic Theorem, the sum converges almost surely with respect to each
invariant measure. This raises the issue of which measures to consider. In general this is
an issue open to debate, however here we take the point of view that the fast variable $x$
is originally distributed according to a probability measure absolutely continuous with
respect to Lebesgue and with a smooth density. This means that we are interested in the so
called {\em physical measures}. It is then well known that the distribution of $x$ will
tend exponential fast to the unique absolutely continuous invariant measure of $f$, call
it $\mu$, hence, $x$ almost surely,\footnote{ Note that, by the uniform Lipschitz property
  of the $\theta_\ve$ it suffices to control the limit on countably many $t$ to control it
  for all $t$.}
\[
\bar\theta(t)=\lim_{\ve\to 0}\theta_\ve(t)=t\mu(\omega)=:t\bar\omega=:\bar\theta(t).
\]
That is, the limit satisfies the autonomous differential equation $\dot{\bar \theta}=\bar\omega$.

Next, one is interested in the deviations from such a limit. This leads us to the study of
large deviations for an ergodic average. Such a problem has been intensively studies
starting with \cite{young90} and the situation can be summarized as follows: consider the
initial condition $\theta=\theta_0$ and $x$ distribute as above, then
$\gamma_\ve(\cdot)=\theta_\ve(\cdot)-\theta_0$ can be considered a random variable in
$\cC^0_*([0,T],\bT)=\{\gamma\in \cC^0\;:\;\gamma(0)=0\}$. Let $\bP_\ve$ be its law, then
for a sufficiently regular set $Q\in \cC^0_*([0,T],\bT)$
\[
\bP_\ve(Q)\sim e^{-\vei\inf_{\gamma\in Q}\ratef(\gamma)}
\]
where the {\em rate function} $\ratef$ is defined as
\[
\begin{split}
\ratef(\gamma)&=\begin{cases} -\infty \quad&\textrm{ if } \gamma \textrm{ is not Lipschitz}\\
\int_0^T\cZ(\gamma'(t)) dt &\textrm{ otherwise}
\end{cases}
\\
\cZ(b)&=-\sup_{\mnu\in\cM_\theta(b)}\{ \kse(\mnu) - \mnu(\log f_\theta')\},
\end{split}
\]
$\cM(b)=\{\mnu\in\cM\st \mnu(\omega)=b\}$, $\cM$  denotes the set of (ergodic) $f$-invariant probability measures, and $\kse(\mnu)$ is the Kolmogorov-Sinai entropy of the measure $\mnu$.
The above formula is very suggestive: if the statistics of a point $x$ is described by an invariant measure $\nu$, then it will give rise to a trajectory $\theta_\ve$ with velocity $\nu(\omega)$; moreover points that start in a $e^{-\const T\vei}$ neighborhood will have essentially the same trajectory for a time $T\vei$, hence the probability of order  $e^{-\const T\vei}$ for such a trajectory. Unfortunately, the formula for $\cZ$ is not very handy to compute. However the connection between the pressure and the maximal eigenvalue of the Ruelle transfer operator \cite{Baladibook} allows to compute the rate function for smooth $\gamma$ in a small neighborhood of $t\bar\omega$ yielding
\[
\ratef(\gamma)\sim\frac 12\int_0^T \sigma^{-2}\left[\gamma'(s)-\bar
    \omega\right]^2 \deh{}s,
\]
where, setting $\hat\omega=\omega-\bar\omega$, $\sigma^2 = \mu\left(\hat \omega^2\right)+ 2 \sum_{m=1}^{\infty} \mu_\theta\left( \hat \omega\circ f^m \hat \omega\right)$ is the variance of $\hat\omega$.

The above formula suggests that typical deviations are of order $\sqrt \ve$. It is then natural to study the fluctuations $\zeta_\ve(t)=\ve^{-\frac 12}[\theta_\ve(t)-\bar\theta(t)]$. This corresponds to the Central Limit theorem and its refinements (Local CLT, Berry-Essen estimates etc.).
The CLT in this context states that
\[
\bE(\vf(\zeta_\ve(t))\sim \int\vf(x)\frac{e^{-x^2/2\sigma^2}}{\sigma^2\sqrt{2\pi}}.
\]
Of course, for the applications it is essential to know {\em quantitatively} what the $\sim$ in the previous equations really means, that is we need an explicit estimate of the error. This is the task of the present paper as is explained shortly in the general case in which $f,\omega$ depend on the slow variable.

The basic idea used to extend results like the above to the general case is that the fast variable goes to its equilibrium (i.e. the physical measure) at an exponential speed. Hence in a times interval of size $\ve^\alpha$ for some $\alpha\in (1,0)$, the slow variable is almost constant and so is the dynamics. Since the invariant measure changes smoothly with the dynamics ({\em linear response}), then the statistical properties of the fast variable are more or less the same in the considered interval and large deviation results and LCLT hold. One can then use the Markov properties of the dynamics (here expressed in the formalism of {\em standard families}) to extend the result to longer times.

Note however that, while carrying out the above program, one must keep track of the mistakes in the various approximations and this is rather taxing. Especially if one needs to obtain very precise results like the ones achieved here. In fact, to understand if such error terms could be efficiently controlled was one of the motivations of the present paper. Finally, we should remark that most of our results are new even in the trivial skew product case discussed here.

\subsection{The Law of Large Numbers}
If we take the formal average with respect to the SRB measure of~\eqref{eq:map} we obtain
the following first order differential equation
\begin{align}\label{eq:averageeq}
  \frac{\deh\bar\theta}{\deh t}&=\bar\omega(\bar\theta)&
  \bar\theta(0)&=\theta_0,
\end{align}
where $\bar\omega(\theta)=\mu_\theta(\omega(\cdot,\theta))$.  For future use, let us also define the
function $\ho(x,\theta)=\omega(x,\theta)-\bar\omega(\theta)$.
\begin{irem}
  Note that, since $F_\ve\in\cC^4$, we can apply~\cite[Theorem 8.1]{GL06} with the Banach
  spaces $\{\cC^i\}_{i=0}^s$, $s=3$ and obtain that $\bar\omega\in\cC^{3-\alpha}$, for any
  $\alpha>0$.
\end{irem}
Accordingly, the above equation has a unique solution, which we denote
by $\bar\theta(t,\theta_0)$.  This can be generalized:
let $d\in\bN$, $B = (B_1,\cdots,B_{d-1})\in\cC^2(\bT^2,\bR^{d-1})$, and fix $\zeta_0=0$; for any
$k\in\bN$ let us define
\begin{equation}\label{eq:map-extra}
  \zeta_{k+1}=\zeta_k+\ve B(x_k,\theta_k).
\end{equation}
This equation describes the evolution of a passive quantity and it is relevant in many situations (see e.g.~\cite{DeL2, DeLPV}).  Then $\zeta_k$ should be \emph{close} (in a sense that will be detailed shortly) to $\bar\zeta(\ve k, \theta_0)$, the unique solution of
\begin{align}\label{e_averageA}
  \frac {d\bar\zeta(t, \theta_0)}{dt} &=\bar B(\bar\theta(t,\theta_0)) &
  \bar\zeta(0, \theta_0)&=0,
\end{align}
where we introduced the averaged function $\bar B(\theta)=\mu_\theta(B(\cdot,\theta))$.
It is then convenient to introduce the variables $z=(\theta,\zeta)\in \bR^d$ (for
convenience we have lifted $\theta$ to its universal cover) and $A=(A_1,\cdots,A_d)\in \cC^3(\bT^2,\bR^d)$, with
$A_1(x,\theta)=\omega(x,\theta)$ and $A_{i+1}=B_i$ for $i\in\{1,\cdots,d-1\}$.  Then the
evolution of the variables $(x,z)$ is described by the map
\begin{equation}\label{eq:map-full}
  \bF_\ve(x,z)=(f(x,\theta), z+\ve A(x,\theta));
\end{equation}
again we set $(x_k,z_k)=\bF_\ve^k(x,z)$, for $k\in\bN$.  A first relevant fact is that the
above averaging approximation can be justified rigorously.  These type of results are well
known and go back, at least, to Anosov \cite{Anosov}.  Fix $T > 0$ and, for
$t\in[0,T]$, let
\begin{equation}\label{eq:z-path}
  z_\ve(t)=(\theta_\ve(t),\zeta_\ve(t))=z_\pint{t\vei}+(t\vei-\pint{t\vei})[z_{\pint{t\vei}+1}-z_\pint{t\vei}].
\end{equation}
Observe that in the above definition we scale $t$ in such a way that the
  slow variable moves of $O(1)$ for times $t$ of order one.  This in turns corresponds to
  study the $F_\ve^n$ for $n\sim t\vei$.  In fact, given $T > 0$, we will
  study the evolution of $F_\ve$ up to iterates of order $T\vei$. Then
$z_\ve\in C^0([0,T],\bR^d)$, and we can consider it as a random element of
$C^0([0,T],\bR^d)$, the randomness being determined by the distribution of the initial
condition.

\begin{thm}[Averaging]\label{t_averaging}\ %
  Let $\theta_0\in \bT^1$ and $x_0$ be distributed according to a smooth
  distribution $\mu$; then for all $T>0$:
  \begin{align*}
    \lim_{\ve\to 0}z_\ve=\bar z(\cdot, \theta_0)
  \end{align*}
  where $\bar z(\cdot,\theta_0) = (\bar\theta(\cdot,\theta_0),\bar\zeta(\cdot, \theta_0))$
  and the limit is in probability with respect to the measure $\mu$ and the uniform
  topology in $C^0([0,T],\bR^d)$.
\end{thm}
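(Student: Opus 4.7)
The plan is to use the standard-pair decomposition to reduce the problem to a blockwise computation in which the slow variables are approximately constant, and then to close a discrete Gronwall argument via decay of correlations of the frozen expanding map.

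First I would decompose the initial distribution $\mu_0$ (supported on the slice $\{\theta=\theta_0,\zeta=0\}$) as a convex combination of standard pairs, i.e.\ smooth densities on short subintervals of $\bT^1$; by linearity of the statement it suffices to prove the theorem for a single standard pair. The class of standard pairs is, up to subdivision, preserved under $F_\ve$, so at any macroscopic time the distribution of $(x_n,\theta_n,\zeta_n)$ remains a convex combination of pairs with regularity constants controlled uniformly in $\ve$. This is precisely the machinery we will have built in the standard-pair section.

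Next I would introduce a blocking scale $N=N(\ve)$ with $1\ll N\ll \vei$; a convenient choice is $N\sim\veih$. Over a single block $\{kN,\dots,(k+1)N-1\}$ the slow variables drift by $O(\ve N)=o(1)$, and freezing them to their block-start values $(\theta_{kN},\zeta_{kN})$ introduces only an $O(\ve^{2}N^{2})$ systematic error in the block sum, by the shadowing estimates of Section~\ref{subsec:shadowing}. With $\theta$ frozen, the $x$-dynamics reduces to iteration of the pure expanding map $f_{\theta_{kN}}$, and I would apply decay of correlations (against the smooth density of the pushed-forward standard pair) to the zero-mean observables $\omega(\cdot,\theta_{kN})-\bar\omega(\theta_{kN})$ and $B(\cdot,\theta_{kN})-\bar B(\theta_{kN})$. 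Exponential decay of correlations, together with a two-point Chebyshev bound, yields a variance of $O(\ve^{2}N)$ per block for the fluctuating part of the block sum, uniformly in the starting pair.

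Summing over the $K=T\vei/N$ blocks, the total systematic error in the slow variables is $O(K\cdot\ve^{2}N^{2})=O(\ve N T)=o(1)$, while the total fluctuation, using decorrelation across blocks at the scale $N$, has variance $O(K\cdot\ve^{2}N)=O(\ve T)=o(1)$. A Chebyshev inequality, combined with the Riemann-sum approximation
\[
\ve N\sum_{k=0}^{n-1}\bar\omega(\theta_{kN})\;\approx\;\int_{0}^{n\ve N}\bar\omega(\bar\theta(s,\theta_0))\,ds
\]
and a discrete Gronwall inequality (valid because $\bar\omega$ and $\bar B$ are Lipschitz, which follows from the smoothness of $\mu_\theta$ in $\theta$), yields convergence in probability of $(\theta_{kN},\zeta_{kN})$ to $\bar z(kN\ve,\theta_0)$ at the block endpoints. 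Since $|z_\ve(t)-z_\ve(kN\ve)|\le\Const\ve N=o(1)$ for $t$ in the $k$-th block, this upgrades to uniform convergence on $[0,T]$, and a union bound over the $O(\vei/N)$ blocks converts the block-wise probabilistic estimates into a bound on the supremum.

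The main obstacle I anticipate is the uniformity required for the argument to close: the regularity constants of the pushed-forward standard pairs must not blow up over the $O(\vei/N)$ iterations of $F_\ve^{N}$, and the decay-of-correlations rate must be independent of the frozen parameter $\theta$. Both facts rest on the uniform expansion hypothesis \eqref{e_strongExpansion} and the $\cC^{4}$ regularity of $f$, $\omega$, $B$, and are precisely the ingredients developed in the standard-pair/transfer-operator sections of the paper.
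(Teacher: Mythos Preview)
Your proposal is correct and follows essentially the same route as the paper: decompose into standard pairs, block the time axis, freeze $\theta$ on each block via the shadowing Lemma~\ref{lem:shadow}, control the centered block sums via the spectral gap of $\tO_\theta$, and finish with a Chebyshev-type second-moment estimate. The only cosmetic differences are that the paper takes the macroscopic block length $h=\ve^{2/3}$ (balancing $\ve h^{-1}$ against $\vei h^{2}$ in~\eqref{eq:correlations1}) rather than your $h=\ve N\sim\ve^{1/2}$, and closes by showing that every subsequential limit of $\theta_\ve$ satisfies the integral form of~\eqref{eq:averageeq} (compactness plus uniqueness of the ODE) instead of your explicit discrete Gronwall plus union bound; both closings are standard and interchangeable here.
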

The proof is more or less standard.  We provide it in Section~\ref{sec:averaging} for
reader's convenience.  Indeed, our proof contains, in an elementary form, some of the
ideas that will be instrumental in the following.  The reader not very familiar with the
transfer operator or standard pairs technology is advised to read
Sections~\ref{s_stdpairs} and~\ref{sec:averaging} first.

\subsection{Large and Moderate Deviations}\label{ss_largeModerateDeviationIntro}
We will consider $d$ and $A\in\cC^3(\bT^2,\bR^d)$, $A_1(x,\theta)=\omega(x,\theta)$, to be
fixed throughout the paper and to be data associated to the dynamical systems; although
many quantities will depend on $A$, we do not add subscripts emphasize this dependence.
In particular constants indicated with $\Const$ or $\const$ may indeed depend on $A$.

We find convenient  to define $\gpoly$ to be the random element of $\gspace$ obtained by subtracting
  to $z_\ve$ its (random) initial condition $z_\ve(0)$, i.e.\ we let
  \begin{align}\label{e_gpolyDef}
    \gpoly(t) = z_\ve(t)-z_{\ve}(0);
  \end{align}
  similarly, we define
  \begin{align}\label{e_gavgDef}
    \gavg(t,\theta) = \bar z(t,\theta)-\bar z(0,\theta).
  \end{align}
The next natural question concerns the behavior of deviations from the average.  To this
end it is more convenient to consider the fundamental probability space to be the
classical Wiener space $\gspace$ endowed with the Borel $\sigma$-algebra and the
probability measure given by $\ppath_\mu = (\gpoly)_*\mu$, where $\mu$ is the distribution
of initial conditions on $\bT^2$; in other words $\ppath_\mu$ is the law of $\gpoly$ under $\mu$.

Note that the paths $\gpoly$ are all Lipschitz with Lipschitz constant bounded by
$\|A\|_{\cC^0}$.
To obtain stronger results we need some extra hypotheses:
\begin{enumerate}[label=\textup{(A\arabic*')},,ref=(A\arabic*')]
  \item\label{eq:coboA} for any $\theta\in\bT$ and $\sigma\in\bR^d$, the function
  $\langle\sigma,A(\cdot,\theta)\rangle$ is not $f_\theta$-cohomologous to a constant (in
  particular, this implies~\ref{eq:cobo}).
\end{enumerate}
Note that such condition is implied by the existence of $d+1$ periodic orbits for which
the differences of the averages of $A$ span $\bR^d$.  In particular,
condition~\ref{eq:coboA} is generic.

Given this assumption, we prove upper and lower bounds for the probability of large and
moderate deviations.  The result we are after is much sharper than the one contained
in~\cite{Kifer09}. It is of a more quantitative nature (in the spirit
of~\cite{DimaAveraging} where the rate function is only estimated near zero and in a much
rougher manner). In particular, we provide bounds on the rate function that
allow to treat both large and moderate deviations for all $\ve$ small enough (not just
asymptotically for $\ve\to 0$).  We refrain from developing a more complete
theory\footnote{ For example, we do not strive for optimal results (such as the equivalence of the lower and upper bounds for all possible events in all the regimes under discussion, or the best possible estimate of the error terms).}  because on the one hand
it would not change substantially the result, on the other hand it would increase the
length of an already long paper and, finally, since the results presented here already
more than suffices for our purposes (i.e.\ both for our later use and to pedagogically
illustrates some ideas used in the following).  In fact, the theorem that we state next
does not contain even the full force of what we prove in Section~\ref{sec:upper},
nevertheless its statement requires already quite a bit of preliminary notations.  We advise
the reader that wants a quick, but sub-optimal, idea of the type of results that can be
obtained to jump directly to the Corollaries~\ref{cor:large-dev2} and~\ref{cor:large-dev3}.

The first objects we need, as in any respectable large deviation theory, are \emph{rate
  functions}.  Their precise properties will be specified in detail in
Section~\ref{subsec:rate}; here we summarize some basic facts.  For any $\theta\in\bT$ we
define the set\footnote{ For any $A\in\cC^0(\bT,\bR^d)$ and measure $\mu$ on $\bT$ we
  define $\mu(A)$ to be the vector $(\mu(A_i))\in \bR^d$, where
  $\mu(A_i)=\int_\bT A_i(x) \mu(d x)$.}
\begin{align*}
  \Averages(\theta) = \{\mu(A(\cdot,\theta))\st\mu \text{ is a } f_\theta-\text{invariant
  probability}\}.
\end{align*}
In other words $\Averages(\theta)$ is the set of all possible averages of $A$ with respect
to $f_\theta$-invariant measures.  Observe that $\Averages(\theta)$ can be determined with
arbitrary precision by studying the periodic orbits of the dynamics (see
Lemma~\ref{lem:unconstrained} for details).
The set $\Averages(\theta)$ is a compact convex subset of $\bR^d$; it is also non-empty,
since for any $\theta\in\bT$, $\bar A(\theta)\in\Averages(\theta)$,
where $\bar A(\theta) = \mu_\theta(A(\cdot,\theta))$
(observe that $\bar A(\theta)$ is deterministic, \ie it is a non-random vector).
Additionally, condition~\ref{eq:coboA} implies (see Lemma~\ref{lem:domainZ} for details) that $\bar A(\theta)\in\intr\Averages(\theta)$ for any
$\theta\in\bT$.  Let us now define
the $d\times d$ matrix
\[
 \begin{split}
      \Sigma^2(\theta) = &\mu_\theta\left(\hat A(\cdot,\theta)\otimes \hat A(\cdot,\theta)\right)
      + \sum_{m=1}^{\infty} \mu_\theta\left( \hat A(f_\theta^m(\cdot),\theta)\otimes \hat
      A(\cdot,\theta)\right)\\
     & +\sum_{m=1}^{\infty}\mu_\theta\left(  \hat
      A(\cdot,\theta)\otimes \hat A(f_\theta^m(\cdot),\theta)\right),
    \end{split}
    \]
    where $\hat A=A-\bar A$.  Then $\Sigma\in\cC^1(\bT,M_d)$,\footnote{ It follows from
      the fact that $\Sigma$ can be seen as the second derivative of the eigenvalue of an
      appropriate transfer operator~\eqref{e_secondDerivativeChi}, which, in turn is
      differentiable by Lemma~\ref{eq:theta-derivative-all}.}  where $M_d$ is the space of
    $d\times d$ symmetric non negative matrices.  If~\ref{eq:coboA} holds, then $\Sigma$
    is invertible (see Lemma~\ref{lem:coboundary}).

    In the following statement (and in the rest of
    the paper) we adopt the convention that $\inf\emptyset = +\infty$ (\resp $\sup\emptyset = -\infty$).
\begin{prop}[\emph{Asymptotic} Large Deviation Principle]\label{p_rateFunctionBasicProperties}\ %
  Let $\theta\in\bT$, $\mu$ be a measure with smooth density on $\bT$ and
  $\ppath_\mu = (\gpoly)_*(\mu\times\delta_\theta)$.  There exists a lower semicontinuous
  function $\ratef_{\theta}: \gspace\to\bRp\cup\{+\infty\}$ (see~\eqref{eq:rate2} for an
  explicit definition) so that $\ppath_\mu$ satisfies the Large Deviation Principle with
  rate function $\mathscr{I}_{\theta}$, that is: given any event $Q\subset\gspace$ we have
  \begin{equation}\label{e_largeDeviationPrincipleKifer}
  \begin{split}
    -\inf_{\gamma\in \intr Q}\ratef_{\theta}(\gamma)&\le%
    \liminf_{\ve\to0}\ve\log \ppath_\mu(Q)\\
    &\le  \limsup_{\ve\to 0}\ve\log \ppath_\mu(Q)\le%
    -\inf_{\gamma\in \overline Q}\ratef_{\theta}(\gamma).
  \end{split}
  \end{equation}
  Note that $\ratef_{\theta}$ is not necessarily convex, yet it satisfies the following properties:
  \begin{enumerate}
  \item the \emph{effective domain}
    $\fkD(\ratef_\theta):=\{\gamma\in \gspace\st\ratef_\theta(\gamma)<\infty\}$ consists of
    Lipschitz paths such that $\gamma(0)=0$ and, for almost all $t\in[0,T]$, the
    vector\footnote{ Recall that by Rademacher's Theorem, any Lipschitz function is a.e.\
      differentiable.} $\gamma'(t)\in\Averages(\theta^\gamma(t))$, where
    $\theta^\gamma(t,\theta) = \theta+(\gamma(t))_1$.\footnote{ Here $(\gamma(s))_1$ is
      the first component of the vector $\gamma(s)$: the one that corresponds to the
      $\theta$ motion. Also remark that, to ease notation, we will often suppress the $\theta$ dependency if no confusion arises.}  In particular, this implies $\|\gamma'\|_{L^\infty}\le\|A\|_{L^\infty}$
    for any $\gamma\in\fkD(\ratef_\theta)$.
   \item for any $\gamma\in\fkD(\ratef_\theta)$, the rate function $\ratef_{\theta}$ satisfies the
    following expansion:
    \begin{equation}\label{eq:mathscrI}
      \begin{split}
       &\left| \mathscr{I}_{\theta}(\gamma)-\frac 12\int_0^T\hskip-6pt\langle \gamma'(s)-\bar A(\theta^\gamma(s)), \left[\Sigma^2(\theta^\gamma(s))\right]^{-1}\left[\gamma'(s)-\bar A(\theta^\gamma(s))\right]\rangle \deh{}s\right|\\
        &\quad\leq \Const \|\gamma'-\bar A\circ\theta^\gamma)\|_{L^3}^3.
      \end{split}
    \end{equation}
  \end{enumerate}
\end{prop}
The above is the usual {\em asymptotic} large deviation principle, similar to what can be found
in~\cite{Kifer09}, although in a different setting.  We are, however, interested in
stating estimates valid for all, sufficiently small, $\ve$ not just in the limit
$\ve\to 0$.

In order to properly state results in the needed generality, we define, for each
$\theta_0\in\bT$, a set $\measures_\ve(\theta_0)$ of \emph{good} probability measures that
are supported in a $\ve$-neighborhood of $\theta_0$.  We refer to
Section~\ref{s_stdpairs}, in particular~\eqref{eq:good-measure}, for the precise
definition, but, as an example, $\mu\times \delta_{\theta_0}\in\measures_\ve(\theta_0)$
where $\mu$ is a measure on $\bT$ with a smooth distribution $\rho$, and the derivative of
$\ln\rho$ is bounded by some fixed constant.  Here is a useful, but minimal, example of
the kind of results we are after.
\begin{prop} \label{p_largeDevzQuadraticBound}\ %
  There exists $\Tmax, \ve_0\in (0,1)$,  $\bar C, c_\star>0$ such that, for all $\ve\leq \ve_0$, $T \in [\ve_0^{-4}\ve, \Tmax]$,
  $R\ge\bar C\sqrt{\ve T}$ and $\theta^*_0\in\bT$, if we set
  \begin{align*}
  Q_R=\{\gamma\in\gspace\st \|\gamma(\cdot)-\gavg(\cdot,\theta^*_0)\|\nc0\ge R\},
  \end{align*}
  then, for any $\mu\in\measures_\ve(\theta^*_0)$ and recalling $\ppath_\mu = ({\gpoly})_{*}\mu$, we have
  \begin{align*}
    \ppath_\mu(Q_R)&\le\expo{-c_\star \vei T^{-1} R^2}.
  \end{align*}
\end{prop}
Proposition~\ref{p_largeDevzQuadraticBound} is similar to~\cite[Theorem
6(b)]{DimaAveraging}, although in a different setting: our goal is to obtain stronger
results encompassing the above ones.  In particular, the previous results will be mere byproducts (see Section~\ref{s_proofCorollaries}).

In order to properly present such result we introduce a slightly modified rate functions and we will state the result by saying that the probability of an event is controlled from above by the inf of
the rate function on a slightly larger set and from below by the inf on a slightly smaller
set. Also, if an event describes a small deviation from the average, then we can obtain effective bounds only if it is not too wild on a small given scale. Unfortunately, it is a bit tricky to make quantitatively precise these notions, so we ask for the
reader patience.

First, for any $\lseparation>0$, we introduce functionals $\mathscr{I}_{\theta_0,\lseparation}^\pm$  so that $\mathscr{I}_{\theta_0,\lseparation}^-\le\mathscr{I}_{\theta_0}\le\mathscr{I}_{\theta_0,\lseparation}^+$ but agree with $\mathscr{I}_{\theta_0}$
outside a $\lseparation$ neighborhood of $\partial\fkD(\mathscr{I}_{\theta_0})$.\footnote{ Essentially $\mathscr{I}_{\theta_0,\lseparation}^+=\infty$ in a
  $\lseparation$-neighborhood of $\partial\fkD(\mathscr{I}_{\theta_0})$ while
  $\mathscr{I}_{\theta_0,\lseparation}^-<\infty$ in the same neighborhood,
  see~\eqref{eq:zeta-reg-def},~\eqref{eq:rate2}, Section~\ref{subsec:rate} and Lemma~\ref{lem:entropy} for precise definitions.}
Remark that we consider $\fkD(\ratef_{\theta_0})$ as a subset of the Lipschitz functions with the associated topology, see Remark~\ref{rem:effective} for more details.   In Lemma~\ref{lem:rate-lower} we prove
\begin{align*}
\lim_{\lseparation\to
  0}\mathscr{I}_{\theta_0,\lseparation}^-=\mathscr{I}_{\theta_0}\le\mathscr{I}_{\theta_0}^+=\lim_{\lseparation\to
  0}\mathscr{I}_{\theta_0,\lseparation}^+,
\end{align*}
where $\mathscr{I}_{\theta_0}^+$ agrees with $\mathscr{I}_{\theta_0}$ everywhere apart
from $\partial\fkD(\mathscr{I}_{\theta_0})$ where it has value $+\infty$.  Second, let
$\theta_0\in\bT$, $\hgamma(t)=\gamma(t)-\gavg(t,\theta_0)$ and define
$\Rad\pm:\gspace\to\bRp$ by
\begin{align}\label{e_defRadPm}
  \Rad-(\gamma)&= \ve^{\frac 1{2}}&
  \Rad+(\gamma)& =C_{\lseparation,T}\left\{\ve\eefrac17\|\hgamma\|\nl\infty\eefrac57+\sqrt\ve \right\},
\end{align}
for some appropriate constant $C_{\lseparation,T}$.
Then, for each $Q\subset \cC^0([0,T],\bR^d)$ let
\begin{equation}\label{eq:Qdef}
  Q^-=\{\gamma\in Q\st \gball{}(\gamma, \Rad-(\gamma))\subset Q\}\;;
  \hskip1cm
  Q^+=\bigcup_{\gamma\in Q}\gball{}(\gamma, \Rad+(\gamma))
\end{equation}
where $\gball{}(\gamma,r)$ is the
standard $C^0$-ball in $\gspace$. Obviously
$Q^-\subset \intr Q \subset\overline Q\subset Q^+$. Finally, we want to make precise what
do we mean by event that are not too wild on a given scale. Let
\begin{equation}\label{eq:last-resort}
\begin{split}
&\varrho(\theta_0,Q)=\inf_{\gamma\in Q}\|\hgamma\|_\infty,\\
&\Clip(\gamma)=T^{-\efrac {11}7}\ve^{-\efrac 27}\|\hgamma\|_{L^\infty}^{\efrac{11}7}\\
&\vestep(\gamma)=\sqrt\ve\left(\frac {T^2\ve}{\|\hgamma\|_{L^\infty}^2}\right)^{\efrac 1{14}}.
\end{split}
\end{equation}
Given a measure $\bP$ on $\cC^0$, we call an event a $Q\subset \cC^0$ {\em $\bP$-regular} if for $\bP$-almost all $\gamma\in Q$ we have

\begin{equation}\label{eq:lip-bound}
|s-s'|\leq \frac{\vestep(\gamma)}{2\Clip(\gamma)}\quad \Longrightarrow\quad \|\gamma(s)-\gamma(s')\|\leq \frac{\vestep(\gamma)}4.
\end{equation}
In other words, for each $\beta\in(0,1]$, points at a distance $\frac{\beta\vestep(\gamma)}{2\Clip(\gamma)}$ yield a Lipschitz constant bounded by $\Clip(\gamma)/(2\beta)$.

We are now ready to state our first main result. Essentially, it is a quantitative version Proposition~\ref{p_rateFunctionBasicProperties} which allows to, rather precisely, estimate the probability of events when $\ve$ is small, but non zero. In particular, it provides bounds for the speed at which the limits in Proposition~\ref{p_rateFunctionBasicProperties} take place.
\begin{thm}[Large and Moderate deviations]\label{thm:large}\ %
Let $T>0$; for all $\lseparation>0$, $\ve$ small
  enough (depending on $T$ and $\lseparation$), $\theta_0\in\bT$, $\mu\in\measures_\ve(\theta_0)$, and for any  $\ppath_\mu$-regular event $Q_\ve$ (possibly depending
  on $\ve$), we have
  \begin{equation}\label{eq:chepalle}
  \begin{split}
    &\ppath_\mu (Q_\ve)\le e^{-\ve^{-1}\left[\left(1-C_{\lseparation,T}\ve\eefrac1{7}\varrho(\theta_0,Q_\ve)\eefrac{-2}{7}\right)\inf_{\gamma\in
          Q_{\ve}^+}\mathscr{I}_{\theta_0,\lseparation}^-(\gamma)\right]}
    \\
    &\ppath_\mu (Q_\ve)\ge e^{-\ve^{-1}\left[(1+C_{\lseparation,T}\ve^{\efrac 12})\inf_{\gamma\in
          Q_{\ve}^-}\mathscr{I}_{\theta_0,\lseparation}^+(\gamma)+C_{\lseparation,T} \ve^{\efrac
          1{8}}\right]}.
  \end{split}
\end{equation}
\end{thm}
The proof can be found in Section~\ref{subsec:large-long-Q}.
\begin{rem}\label{rem:shortT}
Note that $\ppath_\mu$ almost surely the paths have Lipschitz constant bounded by $\|A\|_{L^\infty}$. Hence, if $\varrho(\theta_0,Q)\geq \Const\ve^{\efrac 2{11}}$ (that is, the deviation is large enough)  then $Q_\ve$ is always $\ppath_\mu$ regular.\newline
Also, if $\inf_{\gamma\in Q_{\ve}^+}\mathscr{I}_{\theta_0,\lseparation}^-(\gamma)\leq C_{\lseparation, T}\sqrt\ve$, then it must be $\varrho(\theta_0,Q_\ve)\leq C_{\lseparation, T}\sqrt\ve$ (see Lemma \ref{l_Jgamma-bound}), and the coefficient in front of the rate function in the first of the \eqref{eq:chepalle} becomes positive, therefore making the estimate empty.\newline
Finally, note that, using the results of Section~\ref{sec:upper} (in particular
  Lemmata~\ref{lem:upper} and~\ref{lem:lower}) one could state the theorem in the
  case of a small $T$ depending on $\ve$.
  This is in fact not necessary: indeed any event in $\cC^0([0,\Const \ve^\alpha],\bR^d)$,
  $\alpha\in [0,1)$, can be seen as an event in $\cC^0([0,T],\bR^d)$.  One can then check,
  using~\eqref{eq:mathscrI}, that times larger than $\Const \ve^\alpha$ do no contribute
  to the $\operatorname{inf}$, since any such event contains trajectories
  for which $\gamma'=\bar A$ for all $t\ge \Const \ve^\alpha$.
\end{rem}
The statement of Theorem~\ref{thm:large}, due to its precise quantitative nature, may feel
a bit cumbersome.  To help the reader understand its force we spell out few easy
consequences in a form of corollaries.  Their proof can be found in
Section~\ref{s_proofCorollaries}.

We already mentioned that Theorem~\ref{thm:large} implies
Proposition~\ref{p_rateFunctionBasicProperties}; yet the {\em finite size} version
provided by Theorem~\ref{thm:large} implies much more.  Also note that, although the
statement of Proposition~\ref{p_rateFunctionBasicProperties} looks very clean, it is not
very easy to use since the $\inf$ involved is often very hard to compute, even for a
simple event like
$Q= \{\gamma\in\gspace \st \|\gamma(s)-\gavg(s,\theta_0)\|\ge C s, s\in [0,T]\}$.  For
deviations that are not too large, one can get some more explicit estimates using the
expansion of $\mathscr{I}_{\theta_0}$ stated in~\eqref{eq:mathscrI}.  The following corollary provides precise asymptotic estimates for paths that deviate from the average by at most $\Const\ve^\beta$, where $\beta\in(0,1/2)$.
\begin{cor}[Moderate deviations]\label{cor:large-dev2}
Let $T>0$ and $\ve_0$ small enough. For each $\ve\leq \ve_0$, $\theta_0\in\bT$, $\beta\in (0,\frac 12)$ and Lipschitz bounded set $Q\subset\cC^0([0,T],\bR^d)$, i.e. the Lipschitz constant is uniformly bounded, define\footnote{ Hence there exists $C>0$ such that, if $\gamma\in Q_\ve$, then $\|\gamma-\gavg\|_{L^\infty}\leq C\ve^{\beta}$.}
  \begin{align*}
  Q_\ve=\{\ve^{\beta}\gamma(\cdot)+(1-\ve^{\beta})\gavg(\cdot,\theta_0)\}_{\gamma\in Q}.
  \end{align*}
  Then,  for all $\mu\in\measures_\ve(\theta_0)$, $\ppath_\mu=(\gpoly)_*\mu$, we have
  \begin{align*}
  \limsup_{\ve\to 0}\ve^{1-2\beta}\log \ppath_\mu (Q_\ve)\le -\inf_{\gamma\in\overline
    Q}\ratefLin_{\theta_0}(\gamma),
  \end{align*}
  where
  \begin{align*}
  \ratefLin_{\theta_0}(\gamma)=\frac 12\int_0^T\langle \gamma'(s)-\bar
  A(\bar\theta(s, \theta_0)), \Sigma^2(\bar\theta(s, \theta_0))^{-1}\left[\gamma'(s)-\bar
    A(\bar\theta(s, \theta_0))\right]\rangle \deh{}s.
  \end{align*}
  If, additionally, $\beta<\frac 1{16}$ then\footnote{ Our techniques should allow to establish a similar lower bound also for $\beta\in [1/2, \frac 1{16}]$, but at the price of further work. As is, if $Q$ is open, we have only $\log \ppath_\mu (Q_\ve)\ge- \Const \ve^{-\efrac{7}{8}}$, for $\beta\leq \frac 12$.}
  \begin{align*}
    \liminf_{\ve\to 0}\ve^{1-2\beta}\log \ppath_\mu (Q_\ve)\ge -\inf_{\gamma\in\intr
    Q}\ratefLin_{\theta_0}(\gamma).
  \end{align*}

\end{cor}

In fact, Theorem~\ref{thm:large} allows to estimate the probability of even smaller
deviations, up to the scale of the Central Limit Theorem, whereby providing a strong refinement of Proposition~\ref{p_largeDevzQuadraticBound}.
\begin{cor}[Small deviations]\label{cor:large-dev3}
For each  $T>0$ and $\lprecisione\in(0,1)$  there exists $\ve_0, C_*>0$ such that, for each $\ve\in (0,\ve_0)$, $\mu\in\measures_\ve(\theta_0)$, $\ppath_\mu=(\gpoly)_*\mu$, Lipschitz bounded event
  $Q\subset\gspace$ such that $\varrho(\theta_0,
  Q)\ge C_*$ and setting $Q_\ve=\{\ve^{\frac 12}\gamma(\cdot)+(1-\ve^{\frac
    12})\gavg(\cdot,\theta_0)\}_{\gamma\in Q}$, we have
  \[
 \ppath_\mu(Q_\ve)\le e^{ -\lprecisione\inf_{\gamma\in\hat
      Q^{+}}\mathscr{I}_{\operatorname{Lin},\theta_0}(\gamma) },
  \]
  where $\hat Q^+=\bigcup_{\gamma\in Q}B(\gamma,  \lprecisione\|\gamma-\gavg(\cdot,\theta_0)\|_\infty)$.
\end{cor}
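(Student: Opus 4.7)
The plan is to apply the upper bound of Theorem~\ref{thm:large} to the rescaled events $Q_\ve$ and then extract the linearized rate function by Taylor expanding $\mathscr{I}_{\theta_0}$ via~\eqref{eq:mathscrI}, mirroring the strategy of Corollary~\ref{cor:large-dev2} but at the borderline scale $\beta=\tfrac 12$. At this scale the prefactor $\bigl(1-C_{\lseparation,T}\ve^{1/72}\varrho(\theta_0,Q_\ve)^{-1/36}\bigr)$ of Theorem~\ref{thm:large} no longer tends to $1$, and one can only keep it bounded away from $0$; this explains why the sharp multiplicative constant degenerates into the free parameter $\lprecisione\in(0,1)$ in the statement.

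The first step is a change of variables: for each $\gamma\in Q$ the corresponding element of $Q_\ve$ satisfies $\gamma_\ve-\bar z=\ve^{1/2}(\gamma-\bar z)$. Substituting into~\eqref{eq:mathscrI} and Taylor expanding $\bar A$ and $\Sigma^{2}$ around $\bar\theta(\cdot,\theta_0)$ using their $\cC^1$-smoothness gives
\[
\gamma_\ve'(s)-\bar A(\theta^{\gamma_\ve}(s))=\ve^{1/2}\bigl(\gamma'(s)-\bar A(\bar\theta(s,\theta_0))\bigr)+O(\ve\|\gamma-\bar z\|_\infty),
\]
while the cubic error in~\eqref{eq:mathscrI} contributes $O(\ve^{3/2})$ relative to the quadratic part. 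After dividing by $\ve$ and fixing $\lseparation$ small enough so that $\mathscr{I}_{\theta_0,\lseparation}^-=\mathscr{I}_{\theta_0}$ on all paths at the CLT scale (which remain well inside $\bD_{\theta_0}$), this yields the pointwise relation $\ve^{-1}\mathscr{I}_{\theta_0,\lseparation}^-(\gamma_\ve)=\mathscr{I}_{\operatorname{Lin}}(\gamma)\bigl(1+O(\sqrt\ve)\bigr)$.

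The second step controls the prefactor and the inflated set. The hypothesis translates (via the rescaling) into a bound $\varrho(\theta_0,Q_\ve)\geq C_*\sqrt\ve$ in which $C_*$ is still free; choosing $C_*$ large depending on $\lprecisione$ makes $C_{\lseparation,T}\ve^{1/72}\varrho(\theta_0,Q_\ve)^{-1/36}\leq 1-\sqrt\lprecisione$. For the inflation, a direct computation gives $R_+(\gamma_\ve)=\ve^{1/6}\|\gamma_\ve-\bar z\|_\infty^{2/3}=\ve^{1/2}\|\gamma-\bar z\|_\infty^{2/3}$, and the inequality $R_+(\gamma_\ve)\leq\ve^{1/2}\lprecisione\|\gamma-\bar z\|_\infty$ holds provided $\|\gamma-\bar z\|_\infty\geq\lprecisione^{-3}$. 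Taking $C_*\geq\lprecisione^{-3}$ ensures this, so that the rescaling sends $Q_\ve^+$ into the rescaled image of $\hat Q^+$ and therefore
\[
\inf_{\eta\in Q_\ve^+}\ve^{-1}\mathscr{I}_{\theta_0,\lseparation}^-(\eta)\geq\bigl(1-O(\sqrt\ve)\bigr)\inf_{\gamma\in\hat Q^+}\mathscr{I}_{\operatorname{Lin}}(\gamma).
\]

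Feeding the three estimates into Theorem~\ref{thm:large}, absorbing the $O(\sqrt\ve)$ losses and the additive error $C_T\ve$ into a new constant $C_\lprecisione$, and relabeling $\sqrt\lprecisione$ as $\lprecisione$, yields the claimed bound. The main obstacle is the coexistence of two independent small scales in Theorem~\ref{thm:large}: the linearization error $O(\sqrt\ve)$ from~\eqref{eq:mathscrI}, which requires us to first know we are in a regime where linearization is valid, and the prefactor loss $C_{\lseparation,T}\ve^{1/72}\varrho^{-1/36}$, which at the CLT scale is only bounded and not small; both must be controlled simultaneously by the single parameter $\lprecisione$, and it is precisely the hypothesis on $\varrho(\theta_0,Q)$ with $C_*$ chosen large depending on $\lprecisione$ that makes this balance possible.
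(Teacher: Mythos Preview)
Your approach matches the paper's exactly: apply the upper bound of Theorem~\ref{thm:large} to $Q_\ve$, compare $R_+(\gamma_\ve)$ with the radius $\lprecisione\|\gamma-\bar z\|_\infty$ defining $\hat Q^+$ to obtain $Q_\ve^+\subset(\hat Q^+)_\ve$, and convert $\mathscr{I}_{\theta_0}$ to $\mathscr{I}_{\operatorname{Lin}}$ via the expansion~\eqref{eq:mathscrI}. The paper's proof consists of three sentences and you have essentially supplied the omitted details, including the explicit discussion of the prefactor, which the paper does not spell out.

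There is, however, a rescaling slip in your second step. Since $\gamma_\ve-\bar z=\ve^{1/2}(\gamma-\bar z)$, the stated hypothesis $\varrho(\theta_0,Q)\geq C_*\sqrt\ve$ yields only $\varrho(\theta_0,Q_\ve)\geq C_*\ve$, not $C_*\sqrt\ve$ as you write; with this the prefactor term $\ve^{1/72}\varrho(\theta_0,Q_\ve)^{-1/36}$ is of order $C_*^{-1/36}\ve^{-1/72}$ and diverges, and the condition $\|\gamma-\bar z\|_\infty\geq\lprecisione^{-3}$ needed for the set inclusion is not implied by $\|\gamma-\bar z\|_\infty\geq C_*\sqrt\ve$ for any $\ve$-independent $C_*$. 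Both of your uses of ``take $C_*$ large'' are actually consistent with the stronger hypothesis $\varrho(\theta_0,Q)\geq C_*$ (without the $\sqrt\ve$), under which your argument goes through cleanly; the $\sqrt\ve$ in the hypothesis looks like a slip in the paper itself, and its own terse proof exhibits the same tension.
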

\subsection{Local Central Limit Theorem}
Given that in many cases we have seen that the upper bound in Proposition~\ref{p_largeDevzQuadraticBound} is
sharp, one expects that typical deviations are of order $\sqrt{\ve T}$.  It is then
natural to wonder about their distribution.  It is possible to prove (see
e.g.~\cite[Theorem 5]{DimaAveraging}, where a related class of systems is
investigated, or \cite{DeL3} for a pedagogical exposition of the present case) that the
deviation of $z_\ve$ from the average, when rescaled by $\ve^{-\frac 12}$, converges
towards a diffusion process.  To simplify matters we will discuss only the case $d=1$, but
similar results hold for any $d$.

Let us describe the above statement more precisely. Once again fix $\thetas_0$, let $x$ be
random and define
$\deviatio^{\!\ve}(t) = \ve^{-\efrac12}\left[\theta_\ve(t)-\bar\theta(t,\thetas_0)\right]$.
Then, as $\ve\to 0$, the deviation $\deviatio^{\!\ve}(t)$ converges weakly to
$\deviatio(t)$, the solution of:
\begin{equation}\label{e_deviationDiffusion}
\begin{split}
&   \deh\deviatio(t) = \bar\omega'(\bar\theta(t,\thetas_0))\deviation(t)\deh t + \greenkubo(\bar\theta(t,\thetas_0))\deh B(t)\\
&    \deviatio(0) =0,
\end{split}
\end{equation}
where $B(t)$ is a standard Brownian motion and\footnote{ Observe that this is nothing else
  that the matrix element $\Sigma^2_{1,1}$, which appeared in the moderate deviations.}
\begin{equation}\label{e_definitionBarChi}
 \bVar(\theta) = \mu_\theta\left(\ho^2(\cdot,\theta) + 2 \sum_{m=1}^{\infty}
    \ho(f_\theta^m(\cdot),\theta)\ho(\cdot,\theta)\right).
\end{equation}
Our next result provides a dramatic sharpening of the above statement.
  \begin{thm}\label{thm:lclt}\ %
    For any $T>0$, there exists $\ve_0>0$ so that the following holds.  For any $\beta>0$,
    compact interval $I\subset\bR$, real numbers $\shiftPar>0$, $\ve\in(0,\ve_0)$ and
    $t\in[\ve^{1/2000},T]$, any fixed $\theta^*_0\in\bT^1$ and $\mu\in\measures_\ve(\theta^*_0)$, we have:
  \begin{equation} \label{e_indicatorlclt}
  \begin{split}
\left|  \frac{\ppath_{\mu}(\deviatio^{\!\ve}(t)\in \veh I +
      \shiftPar) }{\sqrt\ve}-     \frac{e^{-\shiftPar^2/2\Var_t^2(\theta^*_0)}}{\Var_t(\theta^*_0)\sqrt{2\pi}}\Leb\, I \right|
      \leq& \,C_{T,\beta} \ve^{\efrac 12-7\beta}\Leb\, I \\
      &+C_{T,\beta} \ve^{\efrac 12-\beta},
\end{split}
  \end{equation}
  where $\ppath_\mu = (\gpoly)_*\mu$ and the variance $\Var_t^2(\theta)$ is given by
  \begin{equation}\label{e_variancelclt} \Var_t^2 (\theta)= \int_0^t
e^{2\int_s^t\bar\omega'(\bar\theta(r,\theta))\deh r}\bVar(\bar\theta(s,\theta))\deh s.
  \end{equation}
\end{thm}
Note that the Gaussian in equation~\eqref{e_indicatorlclt} is indeed the solution
of~\eqref{e_deviationDiffusion}.\footnote{ If in doubt, see \cite{DeL3} for details.} The
proof of Theorem~\ref{thm:lclt} is given in Section~\ref{subsec:prooflclt}.
\begin{rem} If an Edgeworth expansion for~\eqref{e_indicatorlclt} would hold, then one
  would expect the next term to be $\cO(\veh \Leb I)$, see \cite{Feller2}. Thus our error
  term is just slightly bigger than the expected first term in the Edgeworth expansion. In
  fact, with the technology put forward in this paper it should be possible to obtain such
  a correction to the CLT at the price of explicitly computing the main contribution of
  some terms that we have just estimated and considered errors.  Unfortunately this,
  although feasible, is computationally heavy and we decided to avoid it to keep the
  length and readability of the paper (somewhat) under control.
\end{rem}
\section{Standard pairs and families}\label{s_stdpairs}
In this section we introduce standard pairs and families for our system.  As mentioned in
the introductory section, this tool proved quite powerful in obtaining quantitative
statistical results in systems with some degree of hyperbolicity.  The first step is thus
to establish some hyperbolicity result.

\subsection{Dominated splitting}\label{subsec:splitting}\ \newline
Let us start with a preliminary inspection of the geometry of our system: for
$\Kconeu,\Kconec > 0$ to be specified later, let us define the \emph{unstable cone} and
the \emph{center cone} as, respectively:
\begin{align}\label{e_definitionCones}
  \coneu&=\{(\xi,\eta)\in\bR^2\;:\; |\eta|\leq\ve \Kconeu|\xi|\}&
  \conec&=\{(\xi,\eta)\in\bR^2\;:\; |\xi|\leq \Kconec|\eta|\}.
\end{align}
We claim that there exist $\Kconeu,\Kconec$ such that, if $\ve$ is small enough, $\deh
F_{\ve}\coneu\subset\coneu$ and $\deh F_{\ve}\invr\conec\subset\conec$.  In fact, let us
compute the differential of $F_\ve$:
\begin{equation}\label{e_formulaDF}
  \deh F_\ve=\begin{pmatrix} \partial_{x}f &\partial_\theta f\\ \ve\partial_x\omega &1+\ve\partial_\theta\omega\end{pmatrix};
\end{equation}
consequently, if we consider the vector $(1,\ve u)$
\begin{align}
  \deh_pF_\ve(1,\ve u) &= (\partial_{x}f(p) +\ve u\partial_\theta f(p),\ve\partial_x\omega(p) +\ve u+\ve^2 u\partial_\theta\omega(p))\notag\\
  &=\partial_{x}f(p)\left(1 +\ve \frac{\partial_\theta
      f(p)}{\partial_{x}f(p)}u\right)\cdot (1,\ve \Xi_p(u))\label{e_formulaExpansionX}
\end{align}
where
\begin{equation}\label{e_evolutionXi}
  \Xi_p(u)=\frac{\partial_x\omega(p) + (1+\ve\partial_\theta\omega(p))u}{\partial_{x}f(p) +\ve \partial_\theta f(p)u},
\end{equation}
from which we obtain our claim, choosing for instance
\begin{align}\label{e_definitionKconeuc}
  \Kconeu&=2\|\partial_x\omega\|_\infty& \textrm{ and }& &
  \Kconec&=2\|\partial_\theta f\|_\infty.
\end{align}
In fact, for any $\lambda'$ so that
  $\lambda > \lambda' > 3/2$, we can choose $\ve$ so small
  that if $|u| <\Kconeu$:
  \begin{align*}
    |\Xi_p(u)| < \frac{\|\partial_x\omega\|_{\infty} + |u|}{\lambda'} < \Kconeu,
  \end{align*}
  which proves invariance of $\coneu$ under $dF_\ve$.  Invariance of $\conec$ can be
  similarly established.  Hence, for any $p\in\bT^2$ and $n\in\bN$, we can define the
quantities $\enu_n,u_n,\stable_n,\emu_n$ as follows:
\begin{align}\label{e_defineSlopes}
  \deh_p F_{\ve}^n (1,0)&=\enu_n ( 1, \ve u_n )&\deh_p F_{\ve}^n (\stable_n ,1) &= \emu_n (0,1)
\end{align}
with $|u_n|\le c$ and $|\stable_n|\le K^{-1}$.  Notice that $\deh_p F_\ve(\stable_n(p),1) =
\emu_n/\emu_{n-1}(\stable_{n-1}(F_ \ve p),1)$; therefore, there exists a constant $\expb$
such that:
\begin{equation}\label{e_trivialBound}
  \expo{-\expb\ve}\le \frac{\emu_n}{\emu_{n-1}} \le \expo{\expb\ve}.
\end{equation}
Furthermore, define $\Gamma_n=\prod_{k=0}^{n-1}\partial_x f\circ F_\ve^k$, and let
\[
\expa=c\left\|\frac{\partial_\theta f}{\partial_x f}\right\|_\infty.
\]
Clearly
\begin{equation}\label{e_***}
  \Gamma_n\expo{-\expa\ve n} \le\enu_n\le \Gamma_n\expo{\expa\ve n}.
\end{equation}

\subsection{Standard pairs: definition and properties}\ \newline\label{sec:stp}
We now proceed to define standard pairs for our system: we begin by introducing real
standard pairs (which are just special probabilities measures), and then proceed to extend our definitions to complex standard pairs.

\subsubsection{Real standard pairs}
Let us fix a small $\delta>0$, and $\dtt0, \dtt1>0$ large to be specified later; for
any $\spc1>0$ let us define the set of functions
\begin{align*}
  \Sigma_{\spc1}=\{G\in \cC^3([a,b], \bT^1)\st&
  a,b\in\bT^1, b-a\in[\delta/2,\delta],\\&
  \|G'\|\le \ve \spc1,\, \|G''\|\le \ve \dtt0 \spc1,\,\|G'''\|\le \ve \dtt1 \spc1\}.
\end{align*}
Let us associate to each $G\in \Sigma_{\spc1}$ the map $\bG(x)=(x,G(x))$ whose image is a
curve --the graph of $G$-- which will be denoted by $\gamma_{\bG}$; such curves are called
\emph{standard curves}.  For any $\spc2,\spc3>0$ define the set of $(\spc2,\spc3)$-\emph{standard}
probability densities on the standard curve $\gamma_\bG$ as
\begin{align*}
  D^\bR_{\spc2,\spc3}(G)=\left\{\rho\in \cC^2([a,b],\bR_{>0})\st \int_a^b\rho(x)\deh x=1,\
  \left\|\frac{\rho'}{\rho}\right\|\le \spc2,\,\left\|\frac{\rho''}{\rho}\right\|\le
  \spc3\right\}.
\end{align*}
A \emph{real $(\spc1,\spc2,\spc3)$-standard pair} $\ell$ is given by $\ell=(\bG,\rho)$ where
$G\in\Sigma_{\spc1}$ and $\rho\in D^\bR_{\spc2}(G)$.  
A real standard pair
$\ell=(\bG,\rho)$ induces a probability measure $\mu_\ell$ on $\bT^2$ defined as follows:
for any Borel-measurable function $g$ on $\bT^2$ let
\[
\mu_\ell(g):=\int_a^{b} g(x,G(x))\rho(x) \deh x.
\]
We define\footnote{ We remark that this is not the most general definition of standard
  family, yet it suffices for our purposes and it allows to greatly simplify our
  notations.} a \emph{standard family} $\stdf=\SF{\ell_j}{\fm}$ as a (finite or) countable
collection of standard pairs $\{\ell_j\}$ endowed with a finite factor measure $\fm$,
i.e.\ we associate to each standard pair $\ell_j$ a positive weight $\fm_{\ell_j}$ so that
$\sum_{\ell\in\stdf}\fm_\ell<\infty$.  A standard family $\stdf$ naturally induces a
finite measure $\mu_\stdf$ on $\bT^2$ defined as follows: for any Borel-measurable
function $g$ on $\bT^2$ we let
\[
\mu_\stdf(g):=\sum_{\ell\in\stdf}\fm_{\ell}\mu_\ell(g).
\]
A standard family is a standard probability family if the induced measure is a probability
measure (i.e.\ if $\fm$ is itself a probability measure).  Let us denote by $\sim$ the
equivalence relation induced by the above correspondence i.e.\ we let $\stdf\sim\stdf'$ if
and only if $\mu_\stdf=\mu_{\stdf'}$.The key property of the above objects is that the pushforward of a standard family is a standard family \cite[Proposition 5.2]{DeL2}.

Unfortunately, to study large deviations we will need to consider a more general pushforward in which the density is first multiplies by some real positive function (called {\em weight}, which logarithm is called {\em potential}) and then pushforwarded (see equation~\eqref{eq:push-pot}). This is analogous to the use of twisted transfer operators so useful  in the analytic approach to the statistical properties of dynamical systems \cite{Baladibook}. Yet, for the study of the CLT not even this suffices: we need to multiply the density by a complex phase. It is then necessary to generalize the above concepts to the complex setting. As the proofs for complex and real weights are essentially the same, we proceed directly in introducing complex potentials and prove the needed generalization of \cite[Proposition 5.2]{DeL2} : Proposition~\ref{p_invarianceStandardPairs}.

\subsubsection{ Complex standard pairs} We now proceed to  introduce \emph{complex
  standard pairs}. Let us first define the set of complex standard densities:
\begin{align}\label{eq:complex-st}
  D^\bC_{\spc2,\spc3}(G)=\left\{\rho\in \cC^2([a,b],\bC^*)\st \int_a^b\rho(x)\deh x=1,\
  \left\|\frac{\rho'}{\rho}\right\|\le \spc2,\,\left\|\frac{\rho''}{\rho}\right\|\le
  \spc3\right\},
\end{align}
where we denote $\bC^*=\bC\setminus\{0\}$.  Yet, this time, for technical reasons,
we cannot chose the length fixed once an for all. So we will consider standard curves $\Sigma_{\spc1}^\bC$ made of curves of length $b-a\in [\deltacomplex/2, \deltacomplex]$ for some $\deltacomplex\in (0,\delta)$.
We then require $\spc2\deltacomplex\le\pi/10$.  A \emph{complex standard pair} is then
given by $\ell=(\bG,\rho)$ where $G\in\Sigma_{\spc1}^\bC$ and $\rho\in D_{\spc2,\spc3}^\bC(G)$; a
complex standard pair induces a natural complex measure on $\bT^2$.  A complex standard
family $\stdf$ is defined as its real counterpart, but now we allow $\ell_j$'s to be
complex standard pairs and $\fm$ to be a complex measure so that
$\sum_{\ell\in\stdf}|\fm_\ell|<\infty$.  Clearly, a complex standard family naturally
induces a complex measure on $\bT^2$.

We will say that $b-a$ is the {\em length} of the standard pair and we will say that a family $\stdf$ has length $\deltacomplex$ if each $\ell\in \stdf$ has length $b_\ell-a_\ell\in [\deltacomplex/2, \deltacomplex]$.

\begin{lem}[Variation]\label{l_variation}
  Let $G\in\Sigma_{\spc1}^\bC$ be a standard curve and $\rho\in D_{\spc2,\spc3}^\bC(G)$; if $\delta$ is
  sufficiently small we have:
  \begin{align*}
    \textup{Range}\,\rho&\subset %
    \{z=re^{i\thetaArgument}\in\bC\st e^{-2\spc2\deltacomplex}< r (b-a) <
    e^{2\spc2\deltacomplex},\,|\thetaArgument|<\spc2\deltacomplex\}.
  \end{align*}
\end{lem}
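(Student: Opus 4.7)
The plan is to parametrize $\rho$ by its logarithm and then use the normalization $\int_a^b \rho\,\deh x = 1$ to anchor both the argument and the modulus at interior reference points.

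First, since $\rho$ is $\cC^2$ and never vanishes on $[a,b]$, I would write $\rho(x)=\exp(\phi(x))$ for a well-defined continuous branch $\phi=u+iv\in\cC^2([a,b],\bC)$. Then $\phi'=\rho'/\rho$, so the hypothesis gives $|u'|,|v'|\leq\spc2$; in particular the total variation of each of $u$ and $v$ across $[a,b]$ is at most $\spc2\delta$.

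For the argument bound, decompose the normalization into real and imaginary parts to get $\int_a^b e^{u(x)}\sin v(x)\,\deh x = 0$. If $v$ were nowhere zero on $[a,b]$, it would keep a constant sign since it is continuous and its total variation $\spc2\delta\leq\pi/10$ prevents crossing any nonzero multiple of $\pi$; but then $\sin v$ would keep a constant sign, contradicting the vanishing integral. Hence there exists $x_*\in[a,b]$ with $v(x_*)=0$, and so $|v(x)|=|v(x)-v(x_*)|\leq\spc2|x-x_*|\leq\spc2\delta$ for every $x$.

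For the modulus bound, use the real part of the normalization: $\int_a^b e^{u(x)}\cos v(x)\,\deh x=1$. Since $|v|\leq\spc2\delta\leq\pi/10$, the integrand is continuous and positive, so by the intermediate value theorem there is $y_*\in[a,b]$ with $e^{u(y_*)}\cos v(y_*)=\frac{1}{b-a}$, which yields
\[
\frac{1}{b-a}\ \leq\ e^{u(y_*)}\ \leq\ \frac{1}{(b-a)\cos(\spc2\delta)}.
\]
Combining with $|u(x)-u(y_*)|\leq\spc2|x-y_*|\leq\spc2\delta$ gives
\[
e^{-\spc2\delta}\ \leq\ (b-a)\,|\rho(x)|\ \leq\ \frac{e^{\spc2\delta}}{\cos(\spc2\delta)}.
\]
An elementary estimate shows $\cos t\geq e^{-t}$ for $0\leq t\leq 1$ (e.g.\ compare Taylor expansions), which for $\spc2\delta\leq\pi/10$ upgrades the right-hand side to $e^{2\spc2\delta}$, completing the proof.

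The main conceptual point—and the only place the hypothesis $\spc2\delta\leq\pi/10$ is really used—is the winding argument in the second paragraph that promotes the control on the variation of $v$ to a pointwise bound on $|v|$; without the smallness of $\spc2\delta$ relative to $\pi$, the sign-change argument for $\sin v$ would fail and there would be no natural anchor for the argument.
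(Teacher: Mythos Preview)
Your approach mirrors the paper's: write $\rho=e^{\phi}$ with $\phi=u+iv$, use the normalization $\int\rho=1$ to anchor both $u$ and $v$ via the mean value theorem, and read off the bounds. The modulus estimate and the $\cos t\ge e^{-t}$ cleanup at the end are fine.

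There is, however, a genuine gap in your argument-bound step. You assert that if $v$ is nowhere zero then ``its total variation $\spc2\delta\le\pi/10$ prevents crossing any nonzero multiple of $\pi$.'' This is not true as stated: the continuous branch $\phi$ is only determined up to an additive constant in $2\pi i\bZ$, and nothing you have said rules out, e.g., $v([a,b])\subset[\pi-\pi/20,\pi+\pi/20]$, where $v$ has constant sign, small variation, yet crosses $\pi$. What your argument with $\int e^{u}\sin v=0$ actually shows is that $\sin v$ must vanish somewhere, hence $v(x_*)=k\pi$ for some integer $k$; to force $k$ even (so that, after shifting the branch, one may take $v(x_*)=0$) you must also invoke the real part $\int e^{u}\cos v=1>0$, which---since $|v-k\pi|\le\spc2\delta<\pi/2$ everywhere---forces $\cos v$ to have the sign of $(-1)^k$ and hence $k\in2\bZ$. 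The paper does exactly this: it uses both the condition that $\exp S$ meets $\{\Im z=0\}$ and the condition that it meets $\{\Re z=(b-a)^{-1}\}$ together to pin the annular sector near the \emph{positive} real axis. With that one extra sentence your proof is complete.
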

\begin{proof}
  Observe that, by definition of standard density we have $\|(\log \rho)'\|\le \spc2$; since
  we are assuming $\spc2\deltacomplex\le\pi/10$, we can unambiguously define the function
  $\log\rho$, which is contained in a square $S\subset\bC$ of side $\spc2\deltacomplex$.  Thus,
  $\textup{Range}\,\rho\subset\exp S$, which is an annular sector.  The normalization
  condition $\int\rho=1$ and the mean value theorem imply that $\exp S$ must non-trivially
  intersect the sets $\{\Re\, z=(b-a)\invr\}$ and $\{\Im\, z=0\}$; these two conditions
  immediately imply that $\exp S\subset\{re^{i\thetaArgument}\in\bC\st |\thetaArgument|<\spc2\deltacomplex\}$.  It
  is then immediate to show that
  \[
  \exp S\subset\left\{re^{i\thetaArgument}\st e^{-\spc2\deltacomplex}<r
  (b-a)<\frac1{\cos(\spc2\deltacomplex)} e^{\spc2\deltacomplex}\right\},\] which concludes the
  proof.
\end{proof}
\begin{rem}
  The above lemma also implies a uniform $\cC^2$ bound on standard densities given by
  $\|\rho\|\nc2\le e^{2\spc2\deltacomplex}\spc3\deltacomplex\invr$.  Moreover, we have
  \begin{align}\label{e_trivialTotalVariationBound}
    |\mu_\ell|<e^{2\spc2\deltacomplex},
  \end{align}
  where $|\mu_{\ell}|$ is the standard total variation norm.
\end{rem}

The key property of the class of real standard pairs is its invariance under push-forward
by the dynamics; we are now going to prove a more general result.  Let
$\potFamily\subset\cC^2(\bT^2,\bC)$ be a family of smooth functions with uniformly bounded
$\cC^2$-norm; we denote by $\|\potFamily\|\nc{r} =
\sup_{\pot\in\potFamily}\|\pot\|\nc{r}$.  For any $\pot\in\potFamily$ define the
operator $F_{\ve *,\pot}$ acting on a complex measure $\mu$ as follows: for any
measurable function $g$ of $\bT^2$
\begin{equation}\label{eq:push-pot}
  \left[F_{\ve *,\pot}\mu\right](g) := \mu(e^{\pot}\cdot g\circ F_{\ve}).
\end{equation}
We call $F_{\ve *,\pot}$ the \emph{weighted push-forward operator with potential $\pot$};
observe that $F_{\ve *,0} = F_{\ve *}$ is the usual push-forward.
\begin{prop}[Invariance]\label{p_invarianceStandardPairs}\ %
Given a family of complex potentials $\potFamily$, there
  exist $\spc1$, $\spc2$, $\spc3$ and $\delta$ such that the following holds.  For any
  $\pot\in \potFamily$ and complex $(\spc1,\spc2,\spc3)$-standard family $\stdf$ of length $\deltacomplex\leq \min\{\delta,\pi/( 10\, \spc2)\}$, the complex measure $F_{\ve*,\pot}\mu_{\stdf}$ can be decomposed in complex
  $(\spc1,\spc2,\spc3)$-standard pairs, i.e.\ there exists a complex
  $(\spc1,\spc2,\spc3)$-standard family $\stdf'_\pot$, of length $\deltacomplex$, such that
  $F_{\ve*,\pot}\mu_{\stdf}=\mu_{\stdf'_\pot}$.  We say that $\stdf'_\pot$ is a
  \emph{$(\spc1,\spc2,\spc3)$-standard decomposition of} $F_{\ve*,\pot}\mu_{\stdf}$ and we
  write --with a little abuse of notation-- $\stdf'_\pot\sim F_{\ve*,\pot}\stdf$.
  Moreover, the constant $\spc1$ does not depend on $\potFamily$, whereas the constants
  $\spc2$ and $\spc3$ (and consequently $\delta$) can be chosen as follows: 
  \begin{align}\label{e_estimatec2c3}
    \spc2    &\ge \Const(1+\|\potFamily\|\nc1)&
    \spc3    &\ge \Const(1+\|\potFamily\|\nc2+\|\potFamily\|\nc1^2).
  \end{align}
\end{prop}
\begin{proof}
  For simplicity, let us assume that $\stdf$ is given by a single complex standard pair
  $\ell$; the general case does not require any additional ideas and it is left to the
  reader.

  Let then $\ell=(\bG,\rho)$ be a complex $(\spc1,\spc2,\spc3)$-standard pair.  For any
  sufficiently smooth function $A$ on $\bT^2$, by the definition of standard curve, it is
  trivial to check that:
  \begin{subequations}\label{e_AG}
    \begin{align}
      \|(A\circ\bG)'\| &\le \|\deh A\| (1+\ve c_1)\label{e_AG'}\\
      \|(A\circ\bG)''\| &\le \ve\|\deh A\| \dtt0c_1+\|\deh A\|\nc1(1+\ve c_1)^2\label{e_AG''}\\
      \|(A\circ\bG)'''\| &\le \ve\|\deh A\| \dtt1c_1+\|\deh A\|\nc2(1+\ve(1+\dtt0) c_1)^3\label{e_AG'''}.
    \end{align}
  \end{subequations}
  Let us then introduce the maps $f_{\bG}= f\circ\bG$, $\omega_{\bG}= \omega\circ\bG$ and
  $\pot_{\bG}=\pot\circ\bG$.  We will assume $\ve$ to be small enough (depending on our
  choice of $c_1$) so that $f_{\bG}'\ge \lambda-\ve c_1 \|\partial_\theta f\|>3/2$; in
  particular, $f_{\bG}$ is an expanding map.  Provided $\delta$ has been chosen small
  enough, $f_\bG$ is invertible.  Let $\vf(x)=f_{\bG}\invr(x)$.  Differentiating we obtain
  \begin{align}\label{e_estimatesVf}
    \vf'   &= \frac1{f_\bG'}\circ\vf &%
    \vf''  &= -\frac{f_\bG''}{f_\bG'^3}\circ\vf &%
    \vf''' &= \frac{3f_\bG''^2-f_\bG'''f_\bG'}{f_\bG'^5}\circ\vf.
  \end{align}

  Then, by definition, for any measurable function $g$:
  \begin{align*}
    F_{\ve*,\pot}\mu_\ell(g) &= \mu_{\ell}(e^\pot\cdot g\circ F_{\ve})\\&= \int_{a}^{b}%
    g(f_\bG(x),\bar G(x))\cdot %
    e^{\pot_\bG(x)} %
    \rho(x)%
    \deh x,
  \end{align*}
  where $\bar G(x)=G(x)+\ve\omega_{\bG}(x)$.  Then, fix a partition (mod $0$)
  $[f_{\bG}(a),f_{\bG}(b)]=\bigcup_{j\in\cJ}[a_{j},b_{j}]$, with
  $b_{j}-a_{j}\in[\deltacomplex/2,\deltacomplex]$ and $b_{j}=a_{j+1}$.  We can thus write:
  \begin{equation}\label{e_induction0}
    F_{\ve*,\pot}\mu_\ell(g) =\sum_j\int_{a_{j}}^{b_{j}} g(x,G_j(x))\tilde\rho_j(x)\deh x,
  \end{equation}
  provided that $G_j=\bar G\circ\vf_j$ and $\tilde\rho_j(x)= e^{\pot_\bG\circ\vf_j}\cdot
  \rho\circ\vf_j\cdot\vf_j'$, where $\vf_j=\vf|_{[a_j,b_j]}$.

  In order to conclude our proof it suffices to show that
  \begin{enumerate*}[label=(\roman*), itemjoin*={{, and }}]
  \item \label{i_geometryc1} there exists $\spc1$ large enough
    so that if $G\in\Sigma_{\spc1}$, then $G_j\in\Sigma_{\spc1}$
  \item \label{i_potentialc2} there exist $\spc2,c_3$ large
    enough and $\delta$ small enough so that if $\rho\in D_{\spc2,c_3}^\bC(G)$, $\tilde\rho_j$
    can be normalized to a complex standard density belonging to $D_{\spc2,c_3}^\bC(G_j)$.
  \end{enumerate*}

  Item~\ref{i_geometryc1} follows from routine computations: differentiating the above
  definitions and using~\eqref{e_estimatesVf} we obtain
  \begin{subequations}\label{e_C1}
    \begin{align}
      G_j' &=%
      \frac{\bar G'}{f'_\bG}\circ\vf_j\label{e_c1'}\\%
      G_j'' &=%
      \frac{\bar G''}{f'^2_\bG}\circ\vf_j%
      -G_j'\cdot\frac{f_\bG''}{f_\bG'^2}\circ\vf_j\label{e_c1''}\\%
      G_j''' &=%
      \frac{\bar G'''}{f'^3_\bG}\circ\vf_j%
      -3 G''_j\cdot\frac{f_\bG''}{f_\bG'^2}\circ\vf_j-
      G'_j\cdot\frac{f_\bG'''}{f_\bG'^3}\circ\vf_j\label{e_c1'''}%
    \end{align}
  \end{subequations}
  Using~\eqref{e_c1'}, the definition of $\bar G$ and~\eqref{e_AG'} we obtain, for small
  enough $\ve$:
  \begin{align*}
    \|G'_j\| &\le \left\|\frac{ G' + \ve\omega_\bG'}{f_\bG'}\right\|
    \le\frac23(1+\ve\|\deh\omega\|)\ve c_1 + \frac23\ve\|\deh\omega\| \le\frac34\ve c_1+ \ve D_1%
  \end{align*}
  where $D_1=\frac23\|\deh\omega\|$.  We can then fix $c_1$ large enough so that the right
  hand side of the above inequality is less than $c_1$.  Next we will use $C_*$ for a
  generic constant depending on $c_1,D_1,D_1'$ and $\Const$ for a generic constant
  depending only on $F_\ve$.  Then, we find\footnote{ The reader can easily fill in the
    details of the computations.}
  \begin{align*}
    \|G''_j\|&\le \frac34\ve [ c_1 D_1+\Const]+\ve^2 C_*\,;\\%
    \|G'''_j\|&\le \frac34\ve\left[ c_1( D_1'+D_1\Const+\Const)+\Const\right]+\ve^2 C_*.
  \end{align*}
  We can then fix $c_1,D_1'$ sufficiently large and then $\ve$ sufficiently small to
  ensure that the $\bG_{j}$'s are $c_1$-standard pairs.
  We now proceed with item~\ref{i_potentialc2}; by differentiating the definition of
  $\tilde\rho_j$ we obtain
  \begin{subequations}\label{e_C2}
    \begin{align}
      \label{e_c2'} 
      \frac{\tilde\rho_j'}{\tilde\rho_j}&=\frac{\rho'}{\rho\cdot
        f'_\bG}\circ\vf_j-\frac{f_\bG''}{f_\bG'^2}\circ\vf_j + \frac{\pot_\bG'}{f_\bG'}\circ\vf_j \\
      \label{e_c2''} 
      \frac{\tilde\rho_j''}{\tilde\rho_j}&=%
      \frac{\rho''}{\rho\cdot f'^2_\bG}\circ\vf_j
      -3\frac{\tilde\rho'_j}{\tilde\rho_j}\cdot \frac{f_\bG''}{f_\bG'^2}\circ\vf_j
      -\frac{f_\bG'''}{f_\bG'^3}\circ\vf_j+\\&\phantom=+\notag%
      2\frac{\rho'\pot_\bG'}{\rho{f_\bG'{}^2}}\circ\vf_j +
      \frac{\pot_\bG'{}^2}{f_\bG'{}^2}\circ\vf_j
      +\frac{\pot_\bG''{}^2}{f_\bG'{}^2}\circ\vf_j.
    \end{align}
  \end{subequations}
  From the first of the above expressions and~\eqref{e_AG'} we gather:
  \begin{align*}
    \left\|\frac{\tilde\rho_j'}{\tilde\rho_j}\right\|\nc0\le \frac23
    \left\|\frac{\rho'}{\rho}\right\|\nc0 + D + \Const\|\pot\|\nc1,
  \end{align*}
  where $D$ is a uniform constant related to the \emph{distortion} of the maps
  $f(\cdot,\theta)$, which can be obtained using our uniform bounds on $\|G'_j\|$ and $\|G''_j\|$.
  The above expression implies that we can choose $\spc2=\cO(1+\|\pot\|\nc1)$ so that if
  $\|\rho'/\rho\|\nc0\le \spc2$, then $\|\tilde\rho_j'/\tilde\rho_j\|\nc0\le \spc2$.  A
  similar computation, using~\eqref{e_c2''}, yields:
  \[
  \left\|\frac{\tilde\rho_j''}{\tilde\rho_j}\right\|\le   \frac49%
  \left\|\frac{\rho''}{\rho}\right\| + \Const(\|\pot\|\nc2+\|\pot\|\nc1^2+\spc2(\|\pot\|\nc1+D)+D'),
  \]
  where, once again, $D'$ is uniformly bounded thanks to our bounds on $\|G'_j\|$,
  $\|G''_j\|$ and $\|G'''_j\|$.  As
  before, this implies the existence of $c_3=\cO(1+\|\pot\|\nc2+\|\pot\|\nc1^2)$ so
  that if   $\left\|\rho''/\rho\right\|\nc0\le c_3$, then
  $\|\tilde\rho_j''/\tilde\rho_j\|\nc0\le c_3$.

  We are now left to show that, using our requirement on $\deltacomplex$:
  \begin{equation}\label{e_normalizationPossible}
    \fmm_j:=\int_{a_j}^{b_j}\tilde\rho_j\deh x\not =0;
  \end{equation}
  this implies that $\rho_j:=\fmm_j\invr\tilde\rho_j\in D_{\spc2,c_3}^\bC(G_j)$, which
  concludes our proof: in fact, define the standard family $\stdf'_\pot$ given by
  $\SF{\ell_j}{\fm}$, where $\fm_{\ell_j}=\fmm_j$; then we can
  rewrite~\eqref{e_induction0} as follows:
  \[
  F_{\ve *,\pot}\mu_\ell(g)=%
  \sum_{\tell\in\stdf'_\pot}\fm_\tell\mu_{\tell}(g)=%
  \mu_{\stdf'_\pot}(g).
  \]
  The proof of~\eqref{e_normalizationPossible} follows from arguments similar to the ones
  used in the proof of Lemma~\ref{l_variation}: in fact $\deltacomplex$ is sufficiently small so
  that the function $\log\tilde\rho_j$ can be defined and it is contained in a square of
  side $\spc2\deltacomplex$.  Therefore, $\textup{Range}\,\tilde\rho_j$ is contained in an annular
  sector of small aperture, whose convex hull is bounded away from $0$; this implies that
  $\fmm_j\not = 0$.
\end{proof}
\begin{rem}
  Assume $\stdf$ to be a standard probability family and $\pot\in\cC^2(\bT^2,\bR)$: then
  $\stdf'_{\pot}$ is also a real standard family.  Moreover, $\stdf'_0$ is a
  standard probability family.
\end{rem}
\begin{rem}\label{rem:prestandard}
A quick inspection to the proof of Proposition~\ref{p_invarianceStandardPairs}  shows that we can choose the standard family $\stdf'_{\pot}$ to be of length $\frac 32 \deltacomplex$, provided $\frac 32 \deltacomplex\leq\min\{\delta,\pi/( 10\, \spc2)\}$.
\end{rem}
\begin{rem}\label{rem:complextoreal}
Note that if $\ell=(\bG, \rho)$ is  a complex standard pair, then $\ell_\bR=(\bG,|\rho|)$ is also a complex standard pair, and indeed is a regular standard pair if $\deltacomplex=\delta$. Moreover form the arguments in the proof of Proposition~\ref{p_invarianceStandardPairs} it follows that, for $\delta$ small enough, if $\stdf_{\pot}=\{\ell\}$, calling $\stdf'_{\pot}=\{\{\ell'\}_{\ell'\in \stdf'_{\pot}}, \fm_{\ell'}\}$ and  $\stdf'_{\Re(\pot)}=\{\{\ell'_\bR\}_{\ell'\in \stdf'_{\pot}}, \fm_{\bR,\ell'}\}$ the family obtained applying Proposition~\ref{p_invarianceStandardPairs} to  $\{\ell_\bR\}$ we have $|\fm_{\ell'}|\geq \const \fm_{\bR,\ell'}$.
\end{rem}
We say that $\ell$ is a \emph{$\potFamily$-standard pair} if $\spc1, \spc2, \spc3$ and
$\deltacomplex$ are so that Proposition~\ref{p_invarianceStandardPairs} holds with respect to the
family $\potFamily$.  Given a $\potFamily$-standard pair $\ell$ and a sequence of
potentials $(\pot_k)_{k\in\bN} = \potSequence\in\potFamily^\bN$, we denote (again with an
abuse of notation) by $\stdf^{(n)}_{\ell,\potSequence}$ a standard decomposition of
$F^{(n)}_{\ve *,\potSequence}\mu_\ell=F_{\ve *,\pot_{n-1}}\cdots F_{\ve
  *,\pot_0}\mu_\ell$, which we obtain by iterating the above proposition.  By definition,
therefore, we have, for any sufficiently smooth function $g$ of $\bT^2$:
\begin{equation}\label{eq:preparatory0}
  F_{\ve *,\potSequence}^{(n)}\,\mu_\ell(g)=%
  \sum_{\tell\in \stdf^{(n)}_{\ell,\potSequence}}\fm_{\tell}\mu_{\tell}(g)=%
  \mu_\ell\left(e^{S_n\potSequence}g\circ F_\ve^n\right),%
\end{equation}
where we have defined the ``Birkhoff sum'' $S_n\potSequence=\sum_{k=0}^{n-1}\pot_k\circ
F_\ve^k$.  In particular,~\eqref{eq:preparatory0} implies that \(
\mu_\ell\left(e^{S_n\potSequence}\right) = \sum_{\tell\in\stdf^{(n)}_{\ell,\potSequence}}\fm_\tell.  \)
\begin{rem}\label{rem:fm-bound}
  The proof of Proposition~\ref{p_invarianceStandardPairs} allows to define, for any
  $\tell\in\stdf^{(n)}_{\ell,\potSequence}$ the corresponding characteristic function
  $\Id_{\tell}$, that is a random variable on $\ell$ which equals $1$ on points which are
  mapped to $\tell$ by $F_\ve^n$ and $0$ elsewhere.  This allows to write:
  \begin{subequations}\label{eq:def-zero-step}
    \begin{align}
      \fm_{\tell}&=\mu_{\ell}\left( e^{S_n\potSequence}
        \Id_{\tilde \ell}\right)\label{e_def-zero-step-nu}\\
      \mu_{\tell}(g)&=\fm_{\tell}\invr\mu_\ell\left( e^{S_n\potSequence} \Id_{\tell} \cdot
        g\circ F_\ve^n \right)\label{e_def-zero-step-mu},
    \end{align}
  \end{subequations}
\end{rem}
Observe that~\eqref{e_def-zero-step-nu} and~\eqref{e_trivialTotalVariationBound}
immediately implies that, for any $n\in\bN$:
\begin{equation}\label{eq:trivial-nu.basic}
  \sum_{\tell\in\stdf_{\ell,\potSequence}^{(n)}}|\fm_\tell|\le \expo{\sum_{k=0}^{n-1} \max\Re\,\pot_k+2\spc2\deltacomplex}.
\end{equation}
Moreover,
\begin{equation}\label{eq:trivial-nu.iterated}
  \sum_{\ell_1\in\stdf_{\ell,\potSequence}^{(n)}}%
  \sum_{\ell_2\in\stdf_{\ell_1,\potShift^n\potSequence}^{(n)}}%
  \cdots%
  \sum_{\ell_{m}\in\stdf_{\ell_{m-1},\potShift^{n(m-1)}\potSequence}^{(n)}}%
  \prod_{j=1}^{m}
  |\fm_{\ell_j}|\le e^{\sum_{k=0}^{nm-1}\max\Re\,\pot_k}e^{2\spc2\deltacomplex},
\end{equation}
where $\potShift$ is the one-sided shift acting naturally on $\potFamily^\bN$.  In fact,
the above is just a special choice of standard decomposition for
$F_{\ve*,\potSequence}^{(nm)}\,\mu_\ell$, indexed by a $m$-tuple of standard pairs
$(\ell_1,\cdots,\ell_{m-1})$ selected at intermediate steps of length $n$.

\begin{rem}\label{r_randomVariables}
  Given a standard pair $\ell=(\bG,\rho)$, we will interpret $(x_k,\theta_k)$ as random
  variables defined as $(x_k,\theta_k)=F_\ve^k(x, G(x))$, where $x$ is distributed
  according to $\rho$.  We would like do the same for complex standard pairs. Of course,
  in this case $(x_k,\theta_k)$ will be random variables under $\Re(\rho)$ only, so we
  will simply say that they are functions distributed according to $\rho$, or, for
  brevity, functions on $\ell$.
\end{rem}

Finally, let us define the set ``good probability measures" mentioned in
Section~\ref{sec:results}.  Fix $C_\star>0$ large enough; given $\theta^*_0\in\bT$, we
define
\begin{equation}\label{eq:good-measure}
\measures_\ve(\theta^*_0)=\{\mu_\stdf\;:\; \mu_\stdf(G_\ell)=\theta^*_0,\;\sup_{\ell\in\stdf} |G_\ell-\theta^*_0|\leq C_\star\ve,\; \stdf \in \textrm{standard families}\},
\end{equation}
where $\mu_\stdf(G_\ell):=\sum_{\ell\in\stdf}\fm_{\ell}\int_{\bT} G_\ell(x)\rho_\ell(x) dx$.





\section{Averaging}\label{sec:averaging}
This section is devoted to the proof of Theorem~\ref{t_averaging}.  The aim of this
section is mostly notational and didactic; therefore, we keep things as simple as possible
we provide the proof only for the variable $\theta$ since the argument for $z$ is exactly
the same.

In the following, given a standard pair $\ell=(\bG, \rho)$, we will use the notation
\begin{align}\label{eq:thetastartdef}
  \thetasl=\mu_\ell(\theta_0) \quad\textrm{and}\quad \thetaslk{k}=\bar\theta(\ve k,\thetasl)
\end{align}
where, according to Remark~\ref{r_randomVariables}, we consider
  $\theta_0 = G$ to be a random variable on the standard pair $\ell$ and we denote with
$\bar\theta(t,\thetas)$ the unique solution of~\eqref{eq:averageeq} for initial condition
$\bar\theta(0)=\thetas$.
\begin{rem}\label{eq:inital-cond}
We find it convenient to prove the theorem for slightly more general initial condition:  standard pairs $\ell_\ve$ s.t. $\mu_{\ell_\ve}\in \measures_\ve(\theta_0)$. In the following we will drop the subscript $\ve$ in the standard pair since this does not create confusion.
\end{rem}
\subsection{Deterministic approximation}\ \newline
First, we provide a preliminary useful approximation result, which allows to compare the
true dynamics with a fixed one for times of order $\ve^{-1/2}$: for fixed
$\theta^*\in\bT^1$, let us introduce\footnote{ The reader should not confuse the notation
  $F_*$, which is a map of $\bT^2$, with the push-forward $F_{\ve *}$ introduced in the
  previous section.} the map $F_*(x,\theta)=(f_*(x),\theta)$, where
$f_*(x)=f(x,\theta^*)$.
\begin{lem}\label{lem:shadow}
  Consider a standard pair $\ell=(\bG,\rho)$ and fix $\theta^*\in\bT^1$ at a distance at
  most $\epsilon$ from the range of $G$.\footnote{ We will typically
      apply this Lemma to the case $\theta^*=\thetasl$.}  For any $n\in\bN$ so that
  $\epsilon n+n^2\ve\le \Const$, there exists a diffeomorphism $Y_n:[a,b]\to[a^*, b^*]$
  such that
  $(x_n,\theta_n)=F_\ve^n(x, G(x))=(f_*^n(Y_n(x)),\theta_n)=F_*^n(Y_n(x),\theta_n)$.  In
  addition, for all $k\in\{0,\cdots,n\}$ and setting $x_k^*=f_*^k(Y_n(x))$,
  \begin{align*}
    \left\|\theta_k-\theta^*-\ve\sum_{j=0}^{k-1}\omega(x_j^*,\theta^*)\right\|\nc0 &\le \Const [\epsilon+\ve^2k^2],\\
    \left\| x^*_k-x_k+\ve\sum_{j=k}^{n-1}\Lambda^*_{k,j}\partial_\theta f(x^*_j,\theta^*)
    \sum_{l=0}^{j-1}\omega(x^*_l,\theta^*)\right\|\nc0 &\le \Const[\epsilon+\ve^2k^2],\\
    \left\|1-Y_n' \prod_{k=0}^{n-1} \frac{f_*' (x^*_k)}{\partial_x
      f(x_k,\theta_k)}\right\|\nc0 &\le \Const n\ve,
  \end{align*}
  where we defined $\Lambda^*_{k,j}=\prod_{l=k}^jf'_*(x^*_l)\invr
  =(f_*^{j-k+1})'(x^*_k)^{-1}\le \lambda^{k-j-1}$ and $\|\cdot\|\nc0$ denotes the usual
  $\sup$-norm of the random variables seen as functions of $x\in[a,b]$.
\end{lem}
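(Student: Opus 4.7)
The plan is to construct the shadowing orbit by backward iteration, then prove the three estimates by first establishing crude bounds that feed into a bootstrap argument.

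First I would construct $Y_n$ as follows. Set $x_n^\ast=x_n$ and, for $k<n$, define $x_k^\ast$ to be the preimage of $x_{k+1}^\ast$ under $f_\ast$ that lies closest to $x_k$. Uniform expansion of $f_\ast$ makes each local branch of $f_\ast^{-1}$ a $\lambda^{-1}$-contraction on its domain. Using this, together with the trivial bound $|\theta_k-\theta^\ast|\leq \epsilon+\Const\ve n$ and the consequent estimate $|f(x_k,\theta_k)-f(x_k,\theta^\ast)|\leq \Const(\epsilon+\ve n)$, one checks inductively that the selected branches are always well-defined and that $|x_k-x_k^\ast|\leq \Const(\epsilon+\ve n)$. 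Smooth dependence on $x$ is inherited from smooth dependence of the branches of $f_\ast^{-1}$, giving the diffeomorphism $Y_n(x):=x_0^\ast$.

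Second, I would prove the two main estimates essentially in parallel by bootstrapping from the crude bounds
\begin{equation*}
  |x_k-x_k^\ast|\leq \Const(\epsilon+\ve n),\qquad |\theta_k-\theta^\ast|\leq \Const(\epsilon+\ve k),
\end{equation*}
which follow from the previous paragraph and from $\theta_k-\theta_0=\ve\sum_{j=0}^{k-1}\omega(x_j,\theta_j)$. For the $\theta$-estimate I would use the telescoping
\begin{equation*}
  \theta_k-\theta^\ast = (\theta_0-\theta^\ast) + \ve\sum_{j=0}^{k-1}\omega(x_j^\ast,\theta^\ast) + \ve\sum_{j=0}^{k-1}\bigl[\omega(x_j,\theta_j)-\omega(x_j^\ast,\theta^\ast)\bigr]
\end{equation*}
and bound the last sum using the crude estimates and the Lipschitz property of $\omega$, getting $\ve\cdot k\cdot \Const(\epsilon+\ve k)=\Const(\epsilon\ve k+\ve^2 k^2)$, which under $\epsilon n+\ve n^2\leq \Const$ is $\Const(\epsilon+\ve^2 k^2)$.

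Third, for the $x$-estimate I would set $\delta_k=x_k-x_k^\ast$ and expand
\begin{equation*}
  \delta_{k+1}=f(x_k,\theta_k)-f(x_k^\ast,\theta^\ast) = f_\ast'(x_k^\ast)\,\delta_k + \partial_\theta f(x_k^\ast,\theta^\ast)(\theta_k-\theta^\ast) + R_k,
\end{equation*}
where $R_k$ collects the second-order Taylor remainder and satisfies $|R_k|\leq \Const(\delta_k^2+(\theta_k-\theta^\ast)^2)\leq \Const(\epsilon+\ve k)^2$. Solving this linear recursion backward from the terminal condition $\delta_n=0$ yields
\begin{equation*}
  \delta_k = -\sum_{j=k}^{n-1}\Lambda^\ast_{k,j}\,\partial_\theta f(x_j^\ast,\theta^\ast)(\theta_j-\theta^\ast) - \sum_{j=k}^{n-1}\Lambda^\ast_{k,j}\,R_j.
\end{equation*}
Substituting the $\theta$-estimate into the first sum and using the geometric decay $\Lambda^\ast_{k,j}\leq \lambda^{k-j-1}$ to bound $\sum_{j=k}^{n-1}\Lambda^\ast_{k,j}(\epsilon+\ve^2 j^2)\leq \Const(\epsilon+\ve^2 k^2)$ gives the second stated estimate. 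The main obstacle is exactly this coupling between the two estimates and the need to separate the leading-order ballistic correction $\ve\sum_{l<j}\omega(x_l^\ast,\theta^\ast)$ from the quadratic-in-$k$ error; the geometric-series decay of $\Lambda^\ast_{k,j}$ is what makes the final error depend on $k$ rather than $n$.

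Finally, the Jacobian estimate follows by differentiating $f_\ast^n(Y_n(x))=x_n(x)$, so that
\begin{equation*}
  Y_n'(x)\prod_{k=0}^{n-1}f_\ast'(x_k^\ast) = \tfrac{d}{dx}[F_\ve^n(x,G(x))]_1.
\end{equation*}
Since $G'=O(\ve)$, the tangent vector $(1,G'(x))$ lies in the invariant cone $\cone_c$ from Section~\ref{subsec:splitting}, and the per-step factor $1+\ve\frac{\partial_\theta f}{\partial_x f}u$ appearing in the cone computation is $1+O(\ve)$; multiplying these $n$ factors gives growth $\prod_{k=0}^{n-1}\partial_x f(x_k,\theta_k)\cdot(1+O(n\ve))$, which is the claimed bound.
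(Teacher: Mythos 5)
Your proposal follows essentially the same route as the paper: the shadowing orbit is anchored at time $n$ (so $x_n^*=x_n$), one first gets crude bounds, then telescopes for $\theta$, solves the linearized backward recursion with the geometric weights $\Lambda^*_{k,j}$ for $x$, and obtains the Jacobian estimate by differentiating $f_*^n\circ Y_n=\pi_xF_\ve^n\circ\bG$ and invoking the cone computation of Section~\ref{subsec:splitting} (the paper does exactly this via \eqref{e_expressionY'} and \eqref{e_***}); the only structural difference is that you build $Y_n$ by backward selection of inverse branches, while the paper gets it from the Implicit Function Theorem applied to $H_n(x,z;\varrho)$, which is an equivalent and equally valid construction.

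Two bookkeeping points need fixing. First, the crude bound you state, $|x_k-x_k^*|\leq\Const(\epsilon+\ve n)$, is too weak for the arithmetic you then perform: with it, the error in the $\theta$-estimate comes out as $\Const(\epsilon\ve k+\ve^2 kn)$, and $\ve^2kn$ is not $\leq\Const(\epsilon+\ve^2k^2)$ when $k\ll n$ (e.g.\ $\epsilon=0$, $n\sim\ve^{-1/2}$, $k=1$). What you actually use in the display ``$\ve\cdot k\cdot\Const(\epsilon+\ve k)$'' is the $k$-localized bound $|x_k-x_k^*|\leq\Const(\epsilon+\ve k)$, which is what the paper records; it does follow from your own backward recursion, because the weights $\lambda^{k-j-1}$ concentrate the sum $\sum_{j\geq k}\lambda^{k-j-1}(\epsilon+\ve j)$ at $j\approx k$ — so state and prove that version. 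Second, the signs: your expansion correctly gives $x_k-x_k^*=-\sum_{j=k}^{n-1}\Lambda^*_{k,j}\bigl[\partial_\theta f(x_j^*,\theta^*)(\theta_j-\theta^*)+R_j\bigr]$, hence $x_k^*-x_k\approx+\ve\sum_j\Lambda^*_{k,j}\partial_\theta f(x_j^*,\theta^*)\sum_{l<j}\omega(x_l^*,\theta^*)$, which is the stated estimate only after flipping the sign of the correction term; you assert it ``gives the second stated estimate'' without noticing this. In fact your sign convention is the consistent one (it matches the definition of $\fkW_k$ in Lemma~\ref{l_shadowAveraged}, where the minus sign appears for $x_k-\bar x_k$), while the recursion displayed in the paper's proof carries a dropped minus in the Intermediate Value Theorem step; either way, make the orientation of $\xi_k$ versus your $\delta_k$ explicit so the leading term and the statement agree.
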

\begin{proof}
  Let us denote with $\pi_x:\bT^2\to\bT$ the canonical projection on the $x$ coordinate;
  then, for $x,z\in\bT$ and $\convexP\in [0,1]$, define
  \begin{align*}
    \imFun_{n}(x, z; \convexP)&=\pi_x F_{\convexP\ve}^n(x,\theta^*+\convexP(G(x)-\theta^*)) - f_*^n(z).
  \end{align*}
  Note that, $\imFun_{n}(x,x; 0)=0$, in addition, for any $x,\convexP$:
  \begin{align*}
    \partial_{z}\imFun_{n}(x,z;\convexP)=- (f_*^n)'(z) \neq 0.
  \end{align*}
  Accordingly, by the implicit function theorem, for any $n\in\bN$ and
  $\convexP\in [0,1]$, there exists a diffeomorphism $Y_n(\cdot;\convexP)$ such that
  $\imFun_{n}(x,Y_n(x;\convexP);\convexP)=0$; from now on $Y_n(x)$ stands for $Y_n(x;1)$.
  Observe moreover that
  \begin{align}\label{e_expressionY'}
    Y_n' = \frac{(\pi_xF_\ve^n\circ\bG)'}{(f_*^n)'\circ Y_n} =
    \frac{(1-G'\stable_n)\enu_n}{(f_*^n)'\circ Y_n},
  \end{align}
  where we have used the notations introduced in~\eqref{e_defineSlopes}.

  Next, we want to estimate to which degree ${\{(x^*_k, \theta^*)\}}_{k=0}^n$ shadows the true
  trajectory.  Observe that
  \[
  \theta_k=\ve\sum_{j=0}^{k-1}\omega(x_j,\theta_j)+\theta_0
  \]
  thus $|\theta_k-\theta^*|\le \Const \ve k +\epsilon$.  Accordingly, let us set $\xi_k=x_k^*-x_{k}$;
  then by the mean value theorem we obtain, for some $x,\theta\in\bT$:
  \begin{align*}
    |\xi_{k+1}| %
    &= |f_*'(x)\cdot\xi_k + \partial_\theta f(x_k,\theta)\cdot(\theta_k-\theta^*)| \ge
    \lambda|\xi_k| - \Const (\theta_k-\theta^*)
  \end{align*}
  which, by backward induction, using the fact that $\xi_n=0$ and our previous estimates
  on $|\theta_k-\theta^*|$, yields $|\xi_k|\le\Const (\epsilon+\ve k)$.  We thus obtain:
  \begin{align*}
    \theta_k-\theta^* &=
    \theta_0-\theta^*+\ve\sum_{j=0}^{k-1}\omega(x^*_j,\theta^*)+\cO(\ve(\epsilon k + \ve
    k^2))\\
    \xi_k &=-\sum_{j=k}^{n-1}\Lambda^*_{k,j}\partial_\theta f(x^*_j,\theta^*)\left
      (\ve\sum_{l=0}^{j-1}\omega(x^*_l,\theta^*)+\cO(\epsilon+\ve^2j^2)\right).
  \end{align*}
  Finally, recalling~\eqref{e_expressionY'},~\eqref{e_***} and using invariance of the
  center cone, we have
  \begin{align*}
    e^{-\const \ve n}\prod_{k=0}^{n-1} \frac{\partial_x f(x_k,\theta_k)}{f_*'(x^*_k)}\le%
    \left| \frac{(1-G'\stable_n)\enu_n}{(f_*^n)'}\right| \le %
    e^{ \const \ve n}\prod_{k=0}^{n-1} \frac{\partial_x f(x_k,\theta_k)}{f_*'(x^*_k)}.
  \end{align*}
  Accordingly, $Y_n$ is invertible with uniformly bounded derivative, since we assume
  $n\epsilon+n^2\ve\le\Const$.\footnote{ On the contrary, the reader can easily check that
  $\|Y''_n\|_\infty\sim\lambda^n$.\label{f_hugeYDerivative}}
\end{proof}
\subsection{Proof of the Averaging Theorem}\label{eq:subsec-ave}\ \newline
Let us now ready to prove our first result.
\begin{proof}[{\bf Proof of Theorem~\ref{t_averaging}}]
  Let $\ell$ be a standard pair; recall that we defined
  $\hat\omega(x,\theta)=\omega(x,\theta) - \bar\omega(\theta)$; for any $t,h > 0$ define
  $H = H(t,h) = \pint{(t+h)\vei}-\pint{t\vei}$; observe that $|H(t,h)-\pint{h\vei}|\le1$.
  Let us start by computing
  \begin{align}\label{eq:correlations}
    \notag\mu_\ell &\left({\left[%
          \ve\sum_{k=\pint{t\vei }}^{\pint{(t+h) \vei}-1}
          \ho(x_k,\theta_k)\right]}^2\right) = %
                     \sum_{\ell_1\in\stdf_{\ell}^{(\lfloor t\vei\rfloor)}}
                     \sum_{k=0}^{H-1} \ve^2\fm_{\ell_1}\mu_{\ell_1}(\ho^2\circ F^{k}_\ve)+\\
                   &+\sum_{\ell_1\in\stdf_{\ell}^{(\lfloor t\vei\rfloor)}}%
                     2\sum_{j=0}^{H-1} \sum_{k=j+1}^{H-1}\sum_{\ell_2\in\stdf_{\ell_1}^{(j)}}%
                     \ve^2\fm_{\ell_1}\fm_{\ell_2}\mu_{\ell_2}(\ho\circ F_\ve^{k-j}\,\cdot \ho),
  \end{align}
  where we repeatedly used Proposition~\ref{p_invarianceStandardPairs} and the notation
  introduced before~\eqref{eq:preparatory0} without $\potSequence$, since in this case
  $\potSequence=0$.

  Next, using Lemma~\ref{lem:shadow} we introduce, for any standard pair
  $\tilde\ell = (\tilde\bG,\tilde\rho)$, the diffeomorphisms $Y=Y_H$ and let
  $[a^*, b^*]=Y([a,b])$.  Let us call
  $\rho^*=\frac{\tilde\rho\circ Y^{-1}}{Y'\circ Y^{-1}}$ the push-forward of
  $\tilde\rho$ by $Y$, also let $\theta_{\tilde\ell}^*=\mu_{\tilde\ell}(\theta)$.  For any
  functions $\vf,g\in\cC^1(\bT^2)$ and $k\in\bN$, Lemma~\ref{lem:shadow} implies
  \begin{align*}
    \mu_{\tilde\ell}(g\circ F^k_\ve\cdot\vf)&= \int_{a^*}^{b^*}
    \rho^*(x)\vf(Y\invr(x),\theta_{\tilde\ell}^*)\cdot
    g(f_{\theta_{\tilde\ell}^*}^k(x),\theta_{\tilde\ell}^*)\deh x%
    + \cO(k\ve \|g\|\nc1\|\vf\|\nc1)\\
    &=\int_{a}^{b} \tilde\rho(x)\vf(x,\theta_{\tilde\ell}^*)\cdot
    g(f_{\theta_{\tilde\ell}^*}^k(x), \theta_{\tilde\ell}^*)\deh x + %
    \cO(k^2\ve \|g\|\nc1\|\vf\|\nc1).
  \end{align*}
  To continue we introduce one of the main tools in the study of hyperbolic systems: the transfer operator (for now, without potential).
  Let
  \begin{align*}
  \tO_\theta g(x)=\sum_{y\in f_\theta\invr(x)}\frac {g(y)}{f'_\theta(y)}.
  \end{align*}
  The basic properties of these operators are well known (see e.g.~\cite{Baladibook}) but
  in the following we need several quite sophisticated facts that are either not easily
  found or absent altogether in the literature.  To help the reader we have collected all
  the needed properties in Appendix~\ref{subsec:transfer}.\footnote{ For the time being we
    need only that $\int g\tO_\theta \phi=\int g\circ f_\theta \phi$ and that, seen as an
    operator acting on $BV$, $\tO_\theta$ has $1$ as a maximal eigenvalue, a spectral gap,
    and $h_\theta$ (the eigenfunction associated to the eigenvalue $1$) is the $\cC^{r-1}$
    density of the unique absolutely continuous invariant measure of $f_\theta$.  In other
    words $\tO_\theta$ has the spectral decomposition $\tO_\theta g=h_\theta \int g+Rg$,
    where the spectral radius of $R$ is smaller that some $\tau\in (0,1)$. } We can thus
  estimate the quantity in the second line of~\eqref{eq:correlations} as
  \begin{align*}
    \mu_{\ell_2}(\hat \omega\circ F_\ve^l\,\cdot \hat\omega) &=%
    \int [\tO^l_{\theta_{\ell_2}^*}(\Id_{[a,b]}\rho\hat\omega)](x,\theta_{\ell_2}^*)\cdot
    \ho(x,\theta_{\ell_2}^*)\deh x+\cO(l^2\ve) = \\%
    &= \int_{\bT^1} h_{\theta_{\ell_2}^*}(x)\ho(x,\theta_{\ell_2}^*)\deh{}x
    \int_a^b\rho(x)\ho(x,\theta_{\ell_2}^*)\deh x+\cO(l^2\ve+\tau^l) = \\
    &=\cO(l^2\ve+\tau^l),
  \end{align*}
  where we used the fact that $\mu_\theta(\hat\omega(\cdot,\theta))=0$ by construction and
  $\tau\in(0,1)$ where $1-\tau$ is a lower bound on the spectral gap of $\tO_\theta$ for
  any $\theta\in\bT$

  Collecting all the above considerations we obtain
  \begin{align}\label{eq:correlations1}
    \mu_\ell\left(\left[\ve\sum_{k=\pint{t\vei}}^{\pint{(t+h)\vei}-1}\ho(x_k,\theta_k)\right]^2\right)
    &\le \Const \ve^2 \sum_{k=0}^{H-1}\left[1+\sum_{j=1}^{H -k-1}\{\tau^j +j^2\ve\}\right]\\
    &\le \Const[\ve h+\vei h^4]\le\Const \ve\eefrac53,\notag
  \end{align}
  where at the very last step we have chosen $h = \ve\eefrac23$, which optimizes the
  estimate.  Recall now the definition of the random element
  $\theta_\ve\in C^0([0,T],\bR)$, defined in~\eqref{eq:z-path}.  As previously observed,
  the functions $\theta_\ve$ are uniformly Lipschitz of constant $\|A\|_{\cC^0}$.  Using
  the Cauchy--Schwarz inequality and~\eqref{eq:correlations1}:
  \begin{align*}
    \mu_\ell
    &\left(\left|\theta_\ve(t)-
\theta_\ve(0)-\int_0^{t}\bar\omega(\theta_\ve(s))\deh{}s\right|^2\right)\\
    &\le \pint{th^{-1}}\sum_{r=0}^{\pint{th\invr}-1}\mu_\ell\left(\left|\theta_\ve((r+1)h)-\theta_\ve(rh)-\int_{rh}^{(r+1)h}
      \bar\omega(\theta_\ve(s))\deh{}s\right|^2\right)\\
    &\le \pint{th^{-1}}\sum_{r=0}^{\pint{th\invr}-1}\mu_\ell\left(\left|\ve\sum_{k=\pint{rh\vei}}^{\pint{(r+1)h\vei}-1}\hat\omega(x_k,\theta_k)+\cO(\ve h)\right|^2\right)
       \end{align*}
     \begin{align*}
    &\le \Const t^2 [\ve h^{-1}+\vei h^2+\ve]\le \Const t^2 \ve^{\frac 13},
  \end{align*}
  where at the very last step we have chosen $h=\ve^{\efrac 23}$.  Chebyshev inequality
  then implies, for any $t\le T$:
  \begin{equation}\label{eq:Chebyshev}
    \mu_\ell\left(\left\{
      \left|\theta_\ve(t)-\theta_\ve(0)-\int_0^{t}\bar\omega(\theta_\ve(s))\deh s\right|\ge
      \Const \ve^{\efrac 18}\right\}\right) \le \Const T^2 \ve^{\efrac 13}\ve^{-\efrac 14}.
  \end{equation}
  Let us partition the interval $[0,T]$ in $N = \pint{T\ve^{-1/24}}$ intervals of
  endpoints
  \begin{align*}
    0 = t_0 < t_1 < \cdots < t_{N} = T,
  \end{align*}
  where for any $k\in\{0,\cdots,N-1\}$ we have
  $\ve^{1/24}\le t_{k+1}-t_{k} < 2\ve^{1/24}$.  Since $\theta_\ve$ is uniformly Lipschitz
  and using~\eqref{eq:Chebyshev}, we conclude:
  \begin{equation}\label{eq:close-zero}
  \begin{split}
    &\mu_\ell\left(\left\{
      \sup_{t\in[0,T]}\left|\theta_\ve(t)-\theta_\ve(0)-\int_0^{t}\bar\omega(\theta_\ve(s))\deh
        s\right|\ge \Const \ve^{\efrac 1{24}}\right\}\right)\\
    &\le\mu_\ell\left(\bigcup_{k = 0}^{N-1}\left\{
      \left|\theta_\ve(t_k)-\theta_\ve(0)-\int_0^{t_k}\bar\omega(\theta_\ve(s))\deh
        s\right|\ge \Const \ve^{\efrac 1{24}}\right\}\right)\\
        &\le \Const T^3\ve^{\efrac 1{24}}.
  \end{split}
  \end{equation}
  Since $\theta_\ve$ are a uniformly Lipschitz family of paths, they form a compact set by
  Ascoli--Arzel\`a Theorem.  Consider then any converging subsequence $\theta_{\ve_j}$;
  choosing $\ve=\ve_j$ and taking the limit of~\eqref{eq:close-zero} for $j\to \infty$ it
  follows that all accumulation points of $\theta_\ve$ are solutions of the integral
  version of~\eqref{eq:averageeq}.  Since such differential equation admits a unique
  solution, we conclude that the limit exists and it is given by the solution
  of~\eqref{eq:averageeq}.

  If we consider now the initial conditions of the Lemma, which allow to consider all
  random variables on the same probability space, we immediately have the result for
  $\theta$.  The results for $z$ is more of the same.
\end{proof}
\begin{rem}
  Note that it may be possible to obtain this result \emph{almost surely} rather than in
  probability.  We do not push this venue since it is irrelevant for our
  purposes.\footnote{ But see~\cite{Kifer09} for a discussion of possible
    counterexamples.}
\end{rem}
\begin{rem} The bound~\eqref{eq:close-zero} was obtained by estimating the second moment.
  This gave us a simple argument, but not sufficient for our later needs.  To
  get sharper bounds we will need to estimate the exponential moment, which is tantamount
  to studying large deviations.
\end{rem}

\section{Moment generating function}\label{subsec:markov}\
We now begin the study of deviations from the average behavior described in the previous
section.  We start with the problem of investigating large and moderate deviations.  It is
well known that such information can be obtained from precise estimates of the exponential
moment generating function.  Hence our next goal is the study of this object.  In order to
do so it turns out to be helpful to have an approximate description of the dynamics that
is more refined than the one presented in Lemma~\ref{lem:shadow}.  This is achieved in the
next subsection.
\subsection{Random approximation}\label{subsec:randomapp}\ \newline
In order to obtain our main results Theorem~\ref{thm:large} and Theorem~\ref{thm:lclt}, we
will need to control deviations from the average with resolution up to order $\ve$; this
requires very fine bounds which we proceed to obtain in this subsection.

For later reference, we find convenient to state such estimates in a slightly more general
form than needed for our immediate purposes; we introduce two different notions of
\emph{deviation from the average}: let $\ell$ be a standard pair; recall the notation
$\thetaslk{k} = \bar\theta(\ve k,\thetasl)$ introduced in~\eqref{eq:thetastartdef}, where
$\thetasl = \mu_\ell(G_\ell)$.  Let us also define the functions
$\bar\theta_{k}(\theta) = \bar\theta(\ve k,\theta)$ (observe that
$\thetaslk{k} = \bar\theta_{k}(\thetasl)$).  Then we define two corresponding notions of
deviation:%
\begin{subequations}%
  \begin{align}
    \label{e_deviationsDef}\deviationsl_k(x,\theta) &= \theta_k(x,\theta)-\thetaslk{k}\\
    \label{e_deviationeDef}\deviatione_k(x,\theta) &= \theta_k(x,\theta)-\bar\theta_k(\theta).
  \end{align}
\end{subequations}
Since $|\theta_k-\theta_0|\le\Const\ve k$, $|\thetaslk{k}-\thetasl |\le\Const\ve k$,
$|\bar\theta_k-\theta|\le\Const\ve k$ and $|\theta_0 -\thetasl|\le\Const\ve$, we trivially
find
\begin{align}\label{eq_trivialBoundDeviations}
  |\deviationsl_k|&\le \Const \ve (k+1)
& |\deviatione_k|&\le \Const \ve k.
\end{align}
Moreover, observe that
\begin{align*}
  \theta_{k+1} - \theta_k &= \ve \bar\omega(\theta_k) + \ve \hat \omega(x_k,\theta_k)\\
  \thetaslk{k+1} - \thetaslk{k} &= \ve \bar\omega(\thetaslk{k}) +
                                  \frac12\ve^2\bar\omega'(\thetaslk{k})\bar\omega(\thetaslk{k}) + \cO(\ve^3),\\
  \bar\theta_{k+1} - \bar\theta_{k} &= \ve \bar\omega(\bar\theta_{k}) +
                                      \frac12\ve^2\bar\omega'(\bar\theta_{k})\bar\omega(\bar\theta_{k}) + \cO(\ve^3),
\end{align*}
where, recall $\bar\omega(\theta) = \mu_\theta(\omega(\cdot,\theta))$ and
$\ho(\cdot,\theta) = \omega(\cdot,\theta)-\bar\omega(\theta)$.  The above equations yield the difference equations:
\begin{subequations}
  \begin{align}\notag
    \deviationsl_{k+1}-\deviationsl_k
    &= \ve\ho(x_k,\theta_k) + \ve\bar\omega'(\thetaslk{k})\deviationsl_k+\\\label{eq:Delta-recursion}
    &\pheq + \frac{\ve}2\bar\omega''(\thetaslk{k})(\deviationsl_k)^2- \frac{\ve^2}2\bar\omega'(\thetaslk{k})\bar\omega(\thetaslk{k})+ \cO(\ve(\deviationsl_k)^3+\ve^3).\\\notag
    \deviatione_{k+1}-\deviatione_k &= \ve\ho(x_k,\theta_k) + \ve\bar\omega'(\bar\theta_{k})\deviatione_k+\\&\pheq + \frac{\ve}2\bar\omega''(\bar\theta_{k})(\deviatione_k)^2- \frac{\ve^2}2\bar\omega'(\bar\theta_{k})\bar\omega(\bar\theta_k{k})+ \cO(\ve(\deviatione_k)^3+\ve^3).\label{eq:Deltae-recursion}
  \end{align}
\end{subequations}
Define now the auxiliary functions:
\begin{subequations}
  \begin{align}\label{e_defEtaRef}
    \etaRefs_{k} &=\sum_{j=0}^{k-1}\etaExps_{j,k}\left[\ho(x_j,\theta_j)-\frac{\ve}2\bar\omega'(\thetaslk{j})\bar\omega(\thetaslk{j})\right],\\
    \label{e_defEtaRef2}\etaRefb_{k} &=\sum_{j=0}^{k-1}\etaExpb_{j,k}\left[\ho(x_j,\theta_j)-\frac{\ve}2\bar\omega'(\bar\theta_{j})\bar\omega(\bar\theta_{j})\right],
  \end{align}
\end{subequations}
where
\begin{align}\label{e_defEtaExp}
  \etaExps_{j,k} &=\prod_{l=j+1}^{k-1}\left[1+\ve\bar\omega'(\thetaslk{l})\right]
& \etaExpb_{j,k} &=\prod_{l=j+1}^{k-1}\left[1+\ve\bar\omega'(\bar\theta_{l})\right].
\end{align}
We are now finally ready to state and prove the needed approximation.  The following lemma
is a refinement of Lemma~\ref{lem:shadow}.
 \begin{lem}\label{l_boundEtaRef}%
   For any $T > 0$, $0\le k\le T\vei$ and standard pair $\ell$, we have
   \begin{subequations}
     \begin{align}\label{e_boundEtaRefs}
       \deviationsl_k-\ve \etaRefs_{k}
       &= \ve\sum_{j=0}^{k-1}\etaExps_{j,k}\left[\frac{\bar\omega''(\thetaslk{j})}{2}(\deviationsl_j)^2+\cO((\deviationsl_j)^3+\ve^2)\right]\\
       &\phantom{=}\;+\etaExps_{-1,k}\deviationsl_0\notag\\
       \label{e_boundEtaRefb}
       \deviatione_k-\ve \etaRefb_{k}
       &= \ve\sum_{j=0}^{k-1}\etaExpb_{j,k}\left[\frac{\bar\omega''(\bar\theta_{j})}{2}(\deviatione_j)^2+\cO((\deviatione_j)^3+\ve^2)\right].
     \end{align}
   \end{subequations}
\end{lem}
\begin{proof}
  Observe that~\eqref{e_defEtaRef} implies $|\etaRefs_{k}|\le\Const k$.  Also, it is
  immediate to check that, by definition, $\etaRefs_{k}$ satisfies the following recurrence
  equation:
  \begin{align*}
    \etaRefs_{k+1}
    &= \ho(x_k,\theta_k)
      +(1+\ve\bar\omega'(\thetaslk{k})) \etaRefs_{k}-%
      \frac{\ve}2\bar\omega'(\thetaslk{k})\bar\omega(\thetaslk{k}).
  \end{align*}
  Hence, by~\eqref{eq:Delta-recursion},
  \begin{align*}
     \deviationsl_{k+1}- \ve \etaRefs_{k+1}%
      &=(1+ \ve\bar\omega'(\thetaslk{k}))\left[\deviationsl_k-\ve\etaRefs_{k}\right]+
      \frac{\ve}2\bar\omega''(\thetaslk{k})(\deviationsl_k)^2 \\
      &\phantom = +\cO(\ve(\deviationsl_k)^3+\ve^3).
  \end{align*}
  The first statement of the lemma then follows by induction, since $\etaRefs_0 = 0$.  The
  second statement follows by identical computations and the observation that, by
  definition, we have $\deviatione_0 = 0$.
\end{proof}
\subsection{Computation of the exponential moment}\label{ss_exponentialMoment}\ \newline
We can now proceed to the main result of this section, which is the precise computation of
the exponential moment.  The goal is to compute it with an error much smaller than
currently available in the literature.  This will allow to obtain precise information not
only on large, but also on moderate deviations, as will be shown in the next two sections.

In this section, given a $(\spc1,\spc2,c_3)$-standard pair $\ell=(\bG,\rho)$, we will call
it simply a $\spc2$-standard pair, since $\spc1$ will be always fixed (as in the rest of
the paper) and $c_3$ is irrelevant for the estimates in this section.  %
Recall that we fixed $A = (A_1,\cdots,A_d)\in\cC^3(\bT^2,\bR^d)$, with $A_1=\omega$; we also
introduced the notation $\bar A(\theta)=\mu_\theta(A(\cdot,\theta))$ and
$\hat A=A-\bar A$.  Recall that $\thetasl=\mu_\ell(G)=\int_a^b \rho(x) G(x) \deh x$ (hence
it belongs to the range of the standard pair $\ell$).  Moreover, recall that we are under
the standing assumption~\ref{eq:coboA}.
\begin{rem}\label{rem:variationdef}
  In this section we will use the notation $\BV([0,T],\bR^d)$ to denote the space of
  functions in $\bR^d$ whose components are bounded variation functions.  Recall that,
  given a $L^1$ function $\varphi:I\to\bR^d$, its $\BV$-norm is defined as:
  \begin{align*}
    \|\varphi\|_{\BV(I)} = \|\varphi\|_{L^1(I)}+V_I(\varphi),
  \end{align*}
  where $V_I(\varphi)$ is the \emph{total variation} of $\varphi$ on the interval $I$,
  given by:
  \begin{align*}
    V_I(\varphi) =
    \sup_{\substack{\psi\in\cC_\text{c}^1(I,\bR^d)\\ \|\psi\|_\infty = 1}}\int_I\langle\varphi(x),\psi'(x)\rangle dx,
  \end{align*}
  where $\cC_\text{c}^1(I)$ is the space of $\cC^1$ functions that are $0$ in a
  neighborhood of the boundary of $I$.  Moreover in this section, given $I\subset\bR$, we
  will denote $\|f\|\nl\infty = \sup_{x\in I}|f(x)|$.  As usual, if the set $I$ is not
  specified, it is understood to be the domain of the function.
\end{rem}
\newcommand{\blockStep}{h} \newcommand{\lmgf}{\Lambda} 
\newcommand{\ssp}{\mathscr{S}}
\begin{rem}\label{r_lmgf}
  Before giving the main result of this section (an estimate for the \emph{logarithmic
    moment generating functional}), as an attempt to illustrate its statement, let us
  consider the following simple example.  Let us fix $\ve > 0$; consider a
  (non-stationary) Markov chain on the state space $\ssp$ described at time $n$ by the
  transition matrix $P_n$; assume that (in an appropriate sense) $P_{n+1}$ is $\ve$-close
  to $P_n$.  Let us fix an arbitrary observable $A\in\bR^\ssp$ (which we identify with a
  column vector $A(x) = A^x$) and $x\in\ssp$ we can define the \emph{logarithmic moment
    generating functional of $A$ associated to the Markov chain with initial state $x$}
  (denoted by $\lmgf$) as follows: for any function $\sigma\in\BV([0,T],\bR)$
  \begin{align*}
    \lmgf_{x}(\sigma) = \ve\log\bE_x\left[\exp\vei\int_0^T \sigma(s)A(X_{\pint{\vei s}}) ds\right].
  \end{align*}
  where $X_n$ is a realization of the Markov chain with initial state $X_0 = x$ and
  $\bE_x$ denotes the expectation conditioned to having initial state $x$. If $\sigma$
  were a constant and $P_n = P$ for all $n$, then it would be possible to express this
  expectation as follows: let $P_{\sigma A}$ be the transition matrix \emph{twisted with
    potential $\sigma A$}, that is $[P_{\sigma A}]^{xy} = [P]^{xy}\exp(\sigma A^y)$.  Then
  \begin{align*}
    \bE_x\left[\exp\left(\sum_{n = 0}^{\pint {T\vei}-1}\sigma A(X_n)\right)\right]=
    \sum_{y\in\ssp} \left[P_{\sigma A}^{\pint{T\vei}}\right]^{xy}.
  \end{align*}
  The leading contribution to the logarithmic moment generating functional is thus given
  by the spectral radius of the matrix $P_{\sigma A}$, that is, its leading
  eigenvalue $e^{\chi_A(\sigma)}$. We then obtain
  \begin{align*}
    \lmgf_{x}(\sigma) = \pint {T\vei}\chi_A(\sigma)+\cR_x
  \end{align*}
  where $\cR$ is a remainder term that hopefully can be neglected.  If, on the other hand,
  $\sigma$ and $P$ are not constant, then, heuristically, we can choose
  $\ve \ll \blockStep \ll T$ and assume $\sigma$ and $P$ to be constant in each block of
  length $h$ in $[0,T]$.  Arguing in this way we can expect
  \begin{align*}
    \lmgf_{x}(\sigma) = \int_{0}^T \chi_{A}(\sigma(s),\pint{s\vei})ds+\cR_x
  \end{align*}
  where $e^{\chi_{A}(\sigma,n)}$ is the leading eigenvalue of the transition matrix $P_n$
  twisted with the potential $\sigma A$ and $\cR_x$ is a remainder term which remains to
  be estimated.

  The main result of this section is the proof of a formula similar to the above, for our
  deterministic system.  The Markov chain will be replaced by the fast dynamics,
  which changes in time according to the evolution of the slow variable.
\end{rem}
The first object that we need to define is the class of transfer operators associated to the
function $A\in\cC^2(\bT^2,\bR^d)$ and a parameter $\sigma\in\bR^d$. These will play the role of the $P_{\sigma A}$ in the example of Remark~\ref{r_lmgf}. For any $\cC^1$ (or
$\BV$) function $g$, define
\begin{align}\label{eq:Lopzero}
  [\tO_{\theta,\langle\sigma, A\rangle}\,g](x)
  & := \sum_{f_\theta(y)=x}\frac{e^{\langle\sigma, A(y,\theta)\rangle}}{f_\theta'(y)} g(y)
    = e^{\langle\sigma, \bar A(\theta)\rangle}[\tO_{ \theta, \langle\sigma, \hatA\rangle}\,g](x),
\end{align}
where, recall, we have defined $\bar A(\theta)=\mu_\theta(A(\cdot,\theta))$,
$\hatA=A-\bar A$ and $\mu_\theta$ denotes the unique absolutely continuous invariant
probability of $f_\theta$.  The above operators are of Perron--Frobenius type when acting
on $\cC^1$ (see Lemma~\ref{lem:quasicompact}), and the same is true for sufficiently small
$\sigma$ when acting on $\BV$ (see Remark~\ref{rem:bv-contraction}).  In other words, they
have a simple maximal eigenvalue and a spectral gap.  Let $e^{\chi_A(\sigma,\theta)}$ and
$e^{\hat\chi_A(\sigma,\theta)}$ be their maximal eigenvalues, respectively.
By~\eqref{eq:Lopzero} it follows
\begin{align}\label{e_relationChiHatChi}
  \chi_A(\sigma,\theta)
  = \langle\sigma, \bar A(\theta)\rangle+\hat\chi_A(\sigma,\theta).
\end{align}
Moreover, it is well known (see e.g.~\cite[Remark 2.5]{Baladibook}) that
\begin{align*}
  \chi_A(\sigma,\theta) = P_{\text{top}}(f_\theta,\langle \sigma,A\rangle-\log f'_\theta),
\end{align*}
where $P_\text{top}$ denotes the topological pressure.  Also the results of
Appendices~\ref{subsec:pert-paramL} and~\ref{subsec:pert-theta} imply
$\chi_A\in\cC^2(\bR^d\times\bT,\bR)$.

  Given $\sigma\in\BV([0,T],\bR^d)$, and $n\in\bN$, we introduce the notation
\begin{equation}\label{eq:signa-def}
  \sigma_n = \vei\int_{\ve n}^{\ve (n+1)}\sigma(s)ds;
\end{equation}
observe that for any $s\in[\ve n,\ve (n+1)]$ we have
$|\sigma(s)-\sigma_n|\le\|\sigma\|_{\BV([\ve n,\ve(n+1)])}$.

For any standard pair $\ell$, $\ve > 0$, $T > 0$ and $\sigma\in\BV([0,T],\bR^d)$, we now
proceed to obtain some information on the logarithmic moment generating functional
\begin{align}\label{e_logmgfDefinitionSigma}
  \logmgf(\sigma)
  &= \ve\log\mu_{\ell} \left[\exp\left({ \sum_{n=0}^{\pint{T\vei}-1}
    \langle\sigma_n,A\circ F_\ve^n\rangle}\right)\right].
\end{align}
Remark~\ref{r_lmgf} suggests that
$\Lambda_{\ell,\ve}(\sigma)\sim\int_0^T\chi_A(\sigma(s),\bar\theta(s,\thetasl))ds$; it is
therefore natural to define the quantity:
\begin{align}\label{eq:almost-equality}
  \restoPalla(\sigma) = \logmgf(\sigma) -
  \int_0^{T} \chi_A(\sigma(s),\bar\theta(s,\avgtheta{\ell})) \deh s.
\end{align}
The main result of this section is a bound on the remainder term defined above.
\begin{prop}\label{lem:exponential}
  There exists $\ve_0 > 0$, such that, for any $\ve\in (0,\ve_0)$, $L\in\left[\frac{1}{\ve_0}, \frac{\ve_0}{\sqrt\ve}\right]$ and $T\in[\ve L,\Tmax]$,
  \begin{enumerate}
    \item\label{i_nonPerturbativeBoundOnR} for any $\sigma\in\BV([0,T],\bR^d)$ we have
    \begin{align*}
      |\restoPalla(\sigma)|
      &\le \Const\big(\ve L\|\sigma\|\nBV+\ve L T  \\
      &\phantom\le+\left[L\invr+ \min\{T,\|\sigma\|\nl1+\ve L\|\sigma\|\nBV\}\right]\|\sigma\|\nl1\big);
    \end{align*}
    \item\label{i_perturbativeBoundOnR} there exists $\sigma_* = \sigma_*(\Tmax)>0$ so that, if
    $\|\sigma\|\nl\infty< \sigma_*$, then
     \begin{align*}
      |\restoPalla(\sigma)|
      \le \Const\left(\ve\|\sigma\|\nBV+ \ve LT+\ve (L+T\invr)\|\sigma\|\nl1+
      \|\sigma\|\nl1^2+ L\invr\|\sigma\|\nl2^2\right).
    \end{align*}
  \end{enumerate}
\end{prop}
The proof of the Proposition~\ref{lem:exponential} relies on the spectral properties of the transfer
operators~\eqref{eq:Lopzero}. It is then natural that our ability to bound the size of the
remainder term $\cR$ depends on the size of $\sigma$.  Without any assumption on $\sigma$
we cannot use perturbation theory of the associated transfer operators.  This allows only
a rough bound, which is stated in item~\ref{i_nonPerturbativeBoundOnR}.  On the other
hand, if $\|\sigma\|\nl\infty$ is sufficiently small, then the corresponding transfer operators
are guaranteed to be of uniform Perron--Frobenius type and can be treated using
perturbation theory.  This enables us to give the much sharper bounds stated
in~\ref{i_perturbativeBoundOnR}.

As hinted in Remark~\ref{r_lmgf}, the main (quite standard) idea of the proof is to
introduce a block decomposition: consider a partition of the set
$\{0,\cdots,\pint{T\vei}\}$ in $K$ blocks of length $L$, where $K=\pint{T\vei}L\invr$ (in
Remark~\ref{r_lmgf} we have $h \sim L\ve$).\footnote{ For simplicity of notation we ignore
that $K$ may not be an integer, as such a problem can be fixed  trivially.}

By~\eqref{eq:preparatory0}, we have that, for any $g\in L^\infty(\bT^2,\bR)$,\footnote{ In
  this section we will use~\eqref{eq:unfold-mgf} only in the case $g=1$; yet in
  Section~\ref{subsec:large-long} this more general formulation will be needed.}
\begin{align}\label{eq:unfold-mgf}
  \mu_{\ell}\left(e^{\sum_{n=0}^{\pint{T\vei}-1} \langle\sigma _n,A\circ F_\ve^n\rangle}
  g\circ F_\ve^{\pint{T\vei}}\right)%
  &=\sum_{\ell_1\in \stdf_{\ell}^L}\hskip -4pt \cdots\hskip -4pt \sum_{\ell_K\in
    \stdf_{\ell_{K-1}}^L}\prod_{i = 1}^K\fm_{\ell_i}\mu_{\ell_K}(g),
\end{align}
where, to ease the notation, we dropped the subscript potentials
$\langle\sigma_k,\hat A\circ F_\ve^k\rangle$ from the symbols for standard families.  To
further shorten notation, given a standard pair $\ell$, we use $\rho_\ell$, $G_\ell$,
$a_\ell$ and $b_\ell$ to denote the corresponding data.

Recall from Section~\ref{sec:stp} that $\rho_\ell$ is a $\cC^2$ probability density over
$[a_\ell,b_\ell]$; yet for our future purposes it is more convenient to deal with
functions that are defined on the whole $\bT^1$; to this end we introduce the extension
$\rrho_\ell$ of $\rho_\ell$ to $\bT^1$ which we indicate by the (slightly abusing)
notation $\rrho_\ell=\Id_{[a_\ell,b_\ell]}\rho_\ell$.
\begin{rem}\label{r_standardDensityBV}
  Observe that if $\rho_\ell$ is a $c_*$-standard density, then $\rrho_\ell$ is a $\BV$
  function and its $\BV$ norm is bounded by:
  \begin{align*}
    \|\rrho_\ell\|\nBV &\le \|\rrho_\ell\|\nl1+\sup_{\substack{\psi\in\cC^1(\bT,\bR)\\\|\psi\|_\infty = 1}}\left|\int\psi'(x) \rrho_\ell(x)dx\right| \le 1+2\|\rho_\ell\|\nl\infty+c_*\\
    &\leq (1+2|b_\ell-a_\ell|^{-1})e^{c_*}.
  \end{align*}
\end{rem}
The next lemma, whose proof we briefly postpone, is our basic computational
tool:  it contains an estimate of the contribution of each of
the blocks of length $L$ appearing in~\eqref{eq:unfold-mgf}.\footnote{ The $\{\sigma_0,\cdots,\sigma_{L-1}\}$ in Lemma~\ref{lem:one-step-moment}
correspond to an arbitrary block $\{\sigma_{jL},\cdots,\sigma_{(j+1)L-1}\}$ in equation~\eqref{eq:unfold-mgf}. Recall that $\sigma_j$ is defined in~\eqref{eq:signa-def}.}
\newcommand{\indBlock}{j}
\newcommand{\fixBlock}{{j^*}}
\newcommand{\osigma}{\bar\sigma_*}
\begin{lem}\label{lem:one-step-moment}
There exists $\ve_0>0$, such that, for any
  $\ve\in(0,\ve_0)$, any standard pair $\ell$, $L\in[\ve_0^{-1}, \ve_0 \veih]$,
  $\sigma\in\BV([0,\ve L],\bR^d)$ and $\auxPot\in\cC^2(\bT,\bR)$:
  \begin{enumerate}
  \item \label{i_one-step-moment-rough} the following bound holds
    \begin{align*}
      \Leb\left[\sum_{\ell'\in \stdf_{\ell}^L}\fm_{\ell'} \mr\rho_{\ell'}(\cdot)
      e^{\vei\auxPot\circ G_{\ell'}}\right] %
      &=e^{\vei\auxPot(\bar\theta(\ve L,\avgtheta\ell)) + \sum_{\indBlock=0}^{L-1}
        \chi_A(\sigma_\indBlock,\bar\theta(\ve \indBlock,\thetasl))}e^{\vei\cS(\sigma)}
    \end{align*}
    where
    \begin{align*}
      |\cS(\sigma)|
      \le &\Const\Big(
      \ve L\|\sigma\|_{\BV([0,\ve L])} + L\invr\|\sigma\|_{L^1([0,\ve L])}+\ve^2 L^2\\
      & +\ve\|\auxPot\|\nc2 \left[ 1+L\min\{1,\vei L\invr\|\sigma\|_{L^1([0,\ve L])} +
        \|\sigma\|_{\BV([0,\ve L])}+\|\auxPot\|\nc2 \}\right]\Big).
    \end{align*}
    Recall that $e^{\chi_A(\sigma,\theta)}$ denotes the maximal eigenvalue of
    $\tO_{\theta,\langle\sigma,A(\cdot,\theta)\rangle}$.
    \item\label{i_one-step-moment-sharp} There exists $\osigma > 0$ so that if
    $\|\sigma\|\nl\infty+2\|\auxPot\|\nc2\le \osigma$, then
    \begin{align*}
      &\sum_{\ell'\in \stdf_{\ell}^L} \fm_{\ell'}\rrho_{\ell'}(x)
        e^{\vei\auxPot(G_{\ell'}(x))} = \harb(x) \\
      &\phantom{\sum_{\ell'\in \stdf_{\ell}^L} \fm_{\ell'}\rrho_{\ell'}(x)}
        \cdot  \marb\left(\rrho_\ell(\cdot) e^{\ve^{-1}\auxPot(\bar \theta(\ve L,
        G_\ell(\cdot)))+\sum_{\indBlock=0}^{L-1}\chi_A(\sigma_\indBlock,\bar\theta(\ve \indBlock,G_\ell(\cdot)))
        }\right)e^{\vei\cS(\sigma,x)}
    \end{align*}
    where
    \begin{align*}
      \|\cS(\sigma,\cdot)\|\nl\infty
      \le &\Const\Big(\ve\|\sigma\|_{\BV([0,\ve L])}
            +\ve^2 L^2(1+\|\auxPot\|\nc2)+L\ve\|\sigma\|_{L^1([0,\ve L])}\\
          & +L\invr\|\sigma\|^2_{L^2([0,\ve L])}+\|\auxPot\|\nc2
            \left(\ve L\|\auxPot\|\nc2+\|\sigma\|_{L^1([0,\ve L])}\right)\Big),
    \end{align*}
    where $\harb$ is the right eigenvector of the transfer operator
    $\tO_{\thetas_1,\langle\sigmas_1,\hat A(\cdot,\thetas_1)\rangle}$ and $\marb$ is the
    left eigenvector of $\tO_{\thetas_2,\langle\sigmas_2,\hat A(\cdot,\thetas_2)\rangle}$,
    where $\thetas_1,\thetas_2$ can be chosen arbitrarily with
    $|\thetasl-\thetas_i|\le\Const\ve L$ and $\sigmas_1,\sigmas_2$ can be chosen
    arbitrarily in the \emph{essential range}\footnote{ Recall that the essential range of
      a function $\sigma$ is the ``range modulo null sets'', \ie the intersection of the
      closure of the image of all functions which agree a.e. with $\sigma$.} of $\sigma$.
  \end{enumerate}
\end{lem}
\begin{proof}[\bf{Proof of Proposition~\ref{lem:exponential}}]
  For $k\in \{0, \cdots, K\}$ and $\theta\in\bT$, $T=K\ve L$, define
  \begin{align*}
    \auxPot_{k}(\theta)=\int_{\ve k L}^{\ve K L}\chi_A(\sigma(s),\bar\theta(s-\ve kL,\theta))
    \deh{}s.
  \end{align*}
  It follows from equation~\eqref{e_estimatesecondpartchi} that
  $\auxPot_k\in C^2(\bT,\bR)$. Also ~\eqref{e_derivativeChi} implies that, for any $k$, we
  have $\|\auxPot_k\|\nc2\le \Const\|\sigma\|_{L^1([0,T])}$.  First of all notice that, by
  definition
  \begin{align*}
    \logmgf(\sigma) = \auxPot_0(\avgtheta\ell) + \restoPalla(\sigma).
  \end{align*}
  Moreover, let us define $J_k=[\ve k L,\ve(k+1) L]$ and
  recall~\eqref{e_estimatesecondpartchi0},~\eqref{eq:derivative-m} together with
  Remark~\ref{rem:variationdef}; then
  \begin{align*}
    \auxPot_{k}(\theta)-
    &\auxPot_{k+1}(\bar\theta(\ve L,\theta))=\int_{J_k}\chi_A(\sigma(s),\bar\theta(s-\ve kL,\theta))\\
    &=\sum_{j=kL}^{(k+1)L-1}\int_{\ve j}^{\ve(j+1)}\chi_A(\sigma(s),\bar\theta(\ve(j- kL),\theta))+\cO(\ve\|\sigma\|_{L^1(J_k)})\\
    &=\ve\sum_{j=0}^{L-1}\chi_A(\sigma_{j+kL},\bar\theta(\ve j,\theta))+\cO(\ve\|\sigma\|_{\BV(J_k)}),
  \end{align*}
which is the quantity appearing in Lemma~\ref{lem:one-step-moment}.

We first proceed to prove item~\ref{i_nonPerturbativeBoundOnR}: let us fix
  conventionally $\ell_0 = \ell$.  Consider~\eqref{eq:unfold-mgf} with $g = 1$ and
  isolate the last term:
  \begin{align*}
    \mu_{\ell}%
    &\left(e^{\sum_{n=0}^{KL-1}\langle\sigma_n, A\circ F_\ve^n\rangle}\right)= \sum_{\ell_1\in
      \stdf_{\ell_0}^L}\hskip -4pt \cdots\hskip -4pt\sum_{\ell_{K-1}\in
      \stdf_{\ell_{K-2}}^L}\prod_{k=1}^{K-1}\fm_{\ell_k}\Leb\left[\sum_{\ell'\in\stdf^L_{\ell_{K-1}}}\fm_{\ell'}\mathring\rho_{\ell'}\right].
  \end{align*}
  We then apply Lemma~\ref{lem:one-step-moment}-\ref{i_one-step-moment-rough} with
  $\auxPot = \auxPot_{K} = 0$ to the term in brackets and obtain:
  \begin{align*}
    \mu_{\ell}&\left(e^{\sum_{n=0}^{KL-1}\langle\sigma _n, A\circ F_\ve^n\rangle}\right)
    = \sum_{\ell_1\in \stdf_{\ell_0}^L}\hskip -4pt \cdots\hskip -4pt\sum_{\ell_{K-1}\in
      \stdf_{\ell_{K-2}}^L}
      \prod_{k=1}^{K-1}\fm_{\ell_k}\Leb(\rrho_{\ell_{K-1}}) e^{\vei\auxPot_{K-1}(\avgtheta{
      \ell_{K-1}})}e^{\vei\tilde\cS(\sigma)}
    \\&\quad= \sum_{\ell_1\in \stdf_{\ell_0}^L}\hskip -4pt \cdots\hskip -4pt
        \sum_{\ell_{K-2}\in \stdf_{\ell_{K-3}}^L}
        \prod_{k=1}^{K-2}\fm_{\ell_k}%
        \Leb \left[\sum_{\ell'\in \stdf_{\ell_{K-2}}^L}\fm_{\ell'}
        \rrho_{\ell'} e^{\vei\auxPot_{K-1}\circ G_{\ell'}}\right] e^{\vei\tilde\cS(\sigma)}
  \end{align*}
  where $\tilde\cS$ stands for an arbitrary function on $\BV([0,T])$ satisfying the bound
  \begin{align*}
    |\tilde\cS(\sigma)| &\leq \Const\left(\ve L\|\sigma\|_{\BV(J_{K-1})}+ L\invr\|\sigma\|_{L^1(J_{K-1})} +\ve^2 L^2\right).
  \end{align*}
  Also, we used the fact that $\Leb(\rrho_{\ell_{K-1}}) = 1$ and
  Lemma~\ref{lem:non-pert-theta} to change the argument in $\auxPot_{K-1}$.  We now apply
  Lemma~\ref{lem:one-step-moment}-\ref{i_one-step-moment-rough} to the term in brackets
  and iterate.  This proves item~\ref{i_nonPerturbativeBoundOnR} since, recalling that
  $K = T\vei L\invr$, we obtain
   \begin{align*}
    \mu_{\ell_0}\left(e^{\sum_{n=0}^{KL-1}\langle\sigma _n,A\circ F_\ve^n\rangle}\right)%
    &= e^{\vei\auxPot_0(\avgtheta{\ell_0})%
    + \cO\left(L\|\sigma\|\nBV+ \vei L\invr\|\sigma\|\nl1+ LT\right)}\\
    &\phantom\le\cdot e^{\|\sigma\|\nl1\cO\left(T\vei
      L\invr+\min\{T\vei,(1+T)\vei\|\sigma\|\nl1+L\|\sigma\|\nBV\}\right)}.
  \end{align*}

  To prove item~\ref{i_perturbativeBoundOnR} note that, if we assume
  $(1+\Const T)\sigma_*< \osigma$, it possible to obtain a sharper estimate using
  Lemma~\ref{lem:one-step-moment}-\ref{i_one-step-moment-sharp} and carefully keeping track
  of the error terms.  More precisely: for any $k\in\{0,\cdots,K-1\}$ define
  $m_{\ell_k} = m_{\theta^*_{\ell_k},\langle\sigma_{(k+1)L}
    ,\hatA(\cdot,\theta^*_{\ell_k})\rangle}$.  For each standard
  pair $\ell$,~\eqref{e_basicEstimatem-BV-Lebesgue} and Remark~\ref{r_standardDensityBV} yield
  \begin{align}\label{e_sillyClaim}
| m_{\ell_k}(\rrho_\ell) -\Leb(\rrho_\ell)|\leq \Const|\sigma_{(k+1)L}|\le \Const(T\invr\|\sigma\|\nl1+\|\sigma\|\nBV).
  \end{align}
Then we claim that for any $k\in\{1,\cdots,K\}$:
  \begin{align}\label{eq:iterative-step}
    \mu_\ell&\left(e^{\sum_{n=0}^{KL-1}\langle\sigma_n,A\circ
              F_\ve^n\rangle}\right)=\sum_{\ell_1\in
              \stdf_{\ell_0}^L}\!\!\cdots\!\!\sum_{\ell_k\in
              \stdf_{\ell_{k-1}}^L}\prod_{i=1}^k\fm_{\ell_i}
              m_{\ell_{k-1}}\left(\rrho_{\ell_k}
              e^{\vei\auxPot_k(G_{\ell_k}(\cdot))}\right)\cdot e^{\vei\cS_k(\sigma)}
  \end{align}
  where
  \begin{align*}
    |\cS_k(\sigma)| \leq& \Const\big(
    \ve\|\sigma\|_{\BV([\ve kL,\ve KL])}+(K-k)L^2\ve^2(1+\|\sigma\|\nl1)+L\invr\|\sigma\|^2_{L^2([\ve kL,\ve KL])}\\
    &+L\ve\|\sigma\|_{L^1([\ve kL,\ve KL])}+\ve\|\sigma\|\nl1^2L(K-k) +\|\sigma\|\nl1\|\sigma\|_{L^1([\ve kL,\ve KL])}\\
    &+\ve T\invr\|\sigma\|\nl1+\ve\|\sigma\|\nBV\big).
  \end{align*}
Let us give an inductive proof of~\eqref{eq:iterative-step}.  The base case is $k=K$:
  choosing $g = 1$ in~\eqref{eq:unfold-mgf} yields
  \begin{align*}
    \mu_\ell&\left(e^{\sum_{n=0}^{KL-1}\langle\sigma_n,A\circ F_\ve^n\rangle}\right)
              =\sum_{\ell_1\in \stdf_{\ell_0}^L}\!\!\cdots\!\!\sum_{\ell_K\in
              \stdf_{\ell_{K-1}}^L}\prod_{i=1}^K\fm_{\ell_i}
              \Leb\left(\rrho_{\ell_K}\right)
  \end{align*}
  and~\eqref{eq:iterative-step}, for $k = K$, follows by~\eqref{e_sillyClaim} (\ie
  $\|\cS_K\|\nl\infty \leq \Const(\ve T\invr\|\sigma\|\nl1+\ve\|\sigma\|\nBV)$).

  Next, we proceed by backward induction to prove~\eqref{eq:iterative-step} for $k<K$.
  Suppose that the estimate holds for $k+1\le K$, then we need to compute
  \begin{align}\label{eq:ind-step-ld}
    \sum_{\ell_{k+1}\in \stdf_{\ell_{k}}^L}&\fm_{\ell_{k+1}}m_{\ell_{k}}\left(\rrho_{\ell_{k+1}}e^{\ve^{-1}\auxPot_{k+1}(G_{\ell_{k+1}}(\cdot))}\right) = \\
    & =m_{\ell_{k}}\left(\sum_{\ell_{k+1}\in \stdf_{\ell_{k}}^L}\fm_{\ell_{k+1}}\rrho_{\ell_{k+1}}e^{\ve^{-1}\auxPot_{k+1}(G_{\ell_{k+1}}(\cdot))}\right).\notag
  \end{align}
  Apply Lemma~\ref{lem:one-step-moment}-\ref{i_one-step-moment-sharp} with
  $\auxPot = \auxPot_{k+1}$, $\marb = m_{\ell_{k-1}}$ and
  $\harb = h_{\avgtheta{\ell_{k}},\langle\sigma_{(k+1)L},\hat
    A(\cdot,\avgtheta{\ell_{k}})\rangle}$.  Since by design $m_{\ell_k}(\harb) = 1$, we
  obtain~\eqref{eq:iterative-step} at step $k$, which concludes the proof
  of~\eqref{eq:iterative-step} for any $k\in\{1,\cdots,K\}$.

  In particular, choosing $k = 1$ we have:
  \begin{align}\label{eq:last-iterative-step}
    \mu_\ell\left(e^{\sum_{n=0}^{KL-1}\langle\sigma_n,A\circ F_\ve^n\rangle}\right)
    = m_{\ell_{0}}\left(\sum_{\ell_1\in \stdf_{\ell_0}^L}\fm_{\ell_1}
      \rrho_{\ell_1} e^{\vei\auxPot_1(G_{\ell_1}(\cdot))}\right)\cdot e^{\vei\cS_1(\sigma)}.
  \end{align}
  We now apply once again Lemma~\ref{lem:one-step-moment}-\ref{i_one-step-moment-sharp}
  with $\auxPot = \auxPot_1$, $\marb = m_{\ell_0}$ and
  $\harb = h_{\avgtheta{\ellz},\langle\sigma_{L},\hat A(\cdot,\avgtheta{\ell_{0}})\rangle}$.
  We conclude that
  \begin{align*}
    \mu_\ell\left(e^{\sum_{n=0}^{KL-1}\langle\sigma_n,A\circ F_\ve^n\rangle}\right)
    = m_\ellz(\index{$\rrho$}\rrho_\ellz(\cdot)e^{\vei\auxPot_0(G_\ellz(\cdot))})e^{\vei\cS_0(\sigma)}.
  \end{align*}
  Since $\|\auxPot_0\|\nc2 = \Const\|\sigma\|\nl1$ and $\ellz$ is a standard pair, we
  conclude that $\|\auxPot_0(G_\ell(\cdot))-\auxPot_0(\thetasl)\| < \ve\|\sigma\|\nl1$;
  hence, using once again~\eqref{e_sillyClaim} to estimate $m_\ellz(\rrho_\ellz)$ we
  obtain:
  \begin{align*}
    \mu_\ell\left(e^{\sum_{n=0}^{KL-1}\langle\sigma_n,A\circ F_\ve^n\rangle}\right) =
    e^{\vei (\auxPot_0(\thetasl)+\cS_0(\sigma))}
  \end{align*}
  which concludes the proof of item~\ref{i_perturbativeBoundOnR}.
\end{proof}
\begin{proof}[{\bf Proof of Lemma~\ref{lem:one-step-moment}}]
  \newcommand{\lhs}{\sum_{\ell'\in \stdf_{\ell}^L}\fm_{\ell'} \mu_{\ell'}\left(\testfunc\,
      e^{\vei\auxPot\circ G_{\ell'}}\right)} For any standard pair $\ell = (G,\rho)$
  supported on $[a,b]$, recall that we consider $x_\indBlock$ and $\theta_\indBlock$ to be
  random variables on $\ell$.  Let $\testfunc\in \cC^0(\bT^1,\bRp)$ be an arbitrary
  non-negative test function; using~\eqref{eq:preparatory0}, we can write:
  \begin{align}%
    \notag
    \lhs
    &= \mu_\ell\left(\testfunc(x_L) e^{\vei\auxPot(\theta_L)}e^{\sum_{\indBlock=0}^{L-1}\langle\sigma _\indBlock,A(x_\indBlock,\theta_\indBlock)\rangle}\right)%
    \\
    \label{eq:markov-1}
    &=\int_{a}^{b} \testfunc(x_L(x)) \rho(x)  e^{\vei\auxPot(\theta_L(x))
      + \sum_{\indBlock=0}^{L-1}\langle\sigma_\indBlock,A(x_\indBlock(x),\theta_\indBlock(x))\rangle} dx.
  \end{align}
  First of all, observe that, if $\theta_0$ is distributed according to $\ell$, then
  \begin{equation}\label{eq:exchange-t}
    |\auxPot(\theta_0)-\auxPot(\avgtheta{\ell})|\le \|\auxPot\|\nc1 \ve.
  \end{equation}
  Next, let us define the random variable
  $\bar\theta_\indBlock = \bar\theta(\ve\indBlock,\theta_0)$; and recall the notation
  $\bar\theta_{\ell,\indBlock}^* = \bar\theta(\ve\indBlock,\thetasl)$.  Observe that
  \begin{align*}
    \left\|\bar\theta_\indBlock-\bar\theta^*_{\ell,\indBlock}\right\|\nc0\le \Const\ve
  \end{align*}
Also, by Lemma~\ref{l_boundEtaRef} (more precisely~\eqref{e_boundEtaRefb}) we have, for any
  $\indBlock\in\{0,\cdots, L-1\}$,
   \begin{equation}\label{eq:theta-dev}
     \|\theta_\indBlock-\bar\theta_\indBlock-\ve H_{\indBlock}\| \le \Const \indBlock^3\ve^3.
   \end{equation}
   Hence, we conclude that (recall the definition of $\etaExps$ given
   in~\eqref{e_defEtaExp}:
   \begin{align*}
     \vei\auxPot(\theta_L(x))
     &=\vei\auxPot(\bar\theta_L(x)) + \vei\auxPot'(\bar\theta_L(x))\cdot (\theta_L-\bar\theta_L)+\cO(\|\auxPot\|\nc2\ve L^2)\\
     &=\vei\auxPot(\bar\theta_L(x)) + \auxPot'(\bar\theta_{L})\cdot
       \sum_{\indBlock=0}^{L-1}\etaExp_{\indBlock,L}\hat\omega(x_\indBlock,\theta_\indBlock) +\cO(\|\auxPot\|\nc2\ve L^2).\\
     &=\vei\auxPot(\bar\theta_L(x)) + \auxPot'(\bar\theta^*_{\ell,L})\cdot
       \sum_{\indBlock=0}^{L-1}\etaExps_{\indBlock,L}\hat\omega(x_\indBlock,\theta_\indBlock) +\cO(\|\auxPot\|\nc2\ve L^2).
   \end{align*}
   We now proceed to incorporate the first term 
   of the above expression in the density; the 
   second term will be incorporated as a potential and the third term is small enough to
   be considered as an error term.  Let us introduce the notation
  \begin{align*}
    \Gamma_{\auxPot,\indBlock}=\auxPot'(\bar\theta^*_{\ell,L})(\etaExps_{\indBlock,L}, 0)\in\bR^d
  \end{align*}
  and let:
  \begin{align*}
    \rho_\auxPot(x) &= \rho(x) e^{\vei \left[\auxPot(\bar\theta_L(x))
                      -\auxPots\right]}& \text{where }
     \auxPots &=\ve\log\left[ \int_a^b\rho(x) e^{\vei
                \auxPot(\bar\theta_L(x))
                }\right].
  \end{align*}
  Observe that $\rho_\auxPot$ is a $(\spc2+\Const \|\auxPot\|\nc2)$-standard probability
  density and that
  \begin{equation}\label{eq:pot0-change}
  |\auxPots-\auxPot(\bar\theta^*_{\ell,L})| \leq \Const \|\auxPot\|\nc2\ve.
  \end{equation}
   We can then rewrite~\eqref{eq:markov-1} as
  \begin{align}\label{eq:preparatory1.5}
    \notag\lhs =&e^{\vei\auxPots+ \cO(\|\auxPot\|\nc2\ve L^2)}\\
 &\times\int_a^b\rho_\auxPot(x) \testfunc(x_L)
   e^{\sum_{\indBlock=0}^{L-1}\left[\langle\sigma_\indBlock,A(x_\indBlock,\theta_\indBlock)\rangle+\langle\Gamma_{\auxPot,\indBlock},\hat
   A(x_\indBlock,\theta_\indBlock)\rangle\right]}dx.
  \end{align}
  It is then convenient to defined
  \begin{align}\label{e_definitionOmega}
    \Omg_\indBlock(x,\theta) &= \langle\sigma_\indBlock ,A (x,\theta)\rangle+\langle\Gamma_{\auxPot,\indBlock},\hat  A(x,\theta)\rangle\\
                             &= \langle\sigma_\indBlock ,\bar A(\theta)\rangle+\langle\sigma_\indBlock + \Gamma_{\auxPot,\indBlock},\hat  A(x,\theta)\rangle\notag.
  \end{align}
To estimate the integral in~\eqref{eq:preparatory1.5} we use Lemma~\ref{lem:shadow} and write,
  using the notations introduced there,\footnote{ Choosing $n=L$ and setting $Y=Y_L$.} for
  some $\theta^*$, $|\thetas-\thetasl| < \Const \ve L$, to be chosen later:\footnote{ To ease notation, for the duration of the proof $L^1$ will denote
    $L^1([0,\ve L])$ (and similarly for $\BV$, $L^\infty$ and $L^2$) unless a different
    domain is explicitly written. }
  \begin{equation}    \label{eq:preparatory2}
  \begin{split}
&\phantom{=`} \int_a^b\rho_\auxPot(x) \testfunc(x_L)e^{ \sum_{\indBlock=0}^{L-1}\left[\langle\sigma_\indBlock ,
      A(x_\indBlock,\theta_\indBlock)\rangle+\langle\Gamma_{\auxPot,\indBlock},\hat A(x_\indBlock,\theta_\indBlock)\rangle\right]}\deh x\\
    & =\int_a^b\rho_\auxPot(x) \testfunc(x_L)e^{\sum_{\indBlock=0}^{L-1} \Omg_\indBlock(x^*_\indBlock,\thetas)
      + \cO(L\|\auxPot\|\nc2+\vei\|\sigma\|\nl1 )\ve L}\deh x\\
    &=\int_{a^*}^{b^*}\frac{\rho_\auxPot\circ Y^{-1}(x)}{Y'\circ
      Y^{-1}(x)}\testfunc\circ f^L_*(x)e^{\sum_{\indBlock=0}^{L-1}\Omg_\indBlock(f_*^j(x),\thetas)+ \cO(\ve L^2\|\auxPot\|\nc2+
      L\|\sigma\|\nl1)}\deh x.
  \end{split}
  \end{equation}
  Next, we let $\smg_\indBlock = \sigma_\indBlock+\Gamma_{\auxPot,\indBlock}$ and write
  $\Omg_\indBlock(x,\theta) = \bOmega_\indBlock(\theta)+\hOmg_\indBlock(x,\theta)$,
  where
  \begin{align}\label{e_definitionBarOmega}
    \bOmega_\indBlock(\theta) &= \langle\sigma_j,\bar A(\theta)\rangle
    &\hOmg_\indBlock(x,\theta) &=  \langle\smg_j,\hat A(x,\theta)\rangle.
  \end{align}
  It is then natural to introduce the $\BV$-function
  \begin{align*}
    \smg(s) = \sigma(s)+\Gamma_{\auxPot,\pint{s\vei}}
  \end{align*}
  so that $\smg_\indBlock = \vei\int_{\ve\indBlock}^{\ve(\indBlock+1)}\smg(s)ds$.
  Observe that the definition of $\Gamma_{\auxPot,\indBlock}$ and our upper bound on $L$
  imply, if $\ve$ is sufficiently small:
  \begin{subequations}\label{e_boundsFor-smg}
    \begin{align}
      \|\smg-\sigma\|\nl\infty &\le \|\auxPot\|\nc2(1+\ve L)\le 2\|\auxPot\|\nc2 \\
      \|\smg\|_{\BV([\ve \indBlock,\ve\indBlock'])}
                               &\le \|\sigma\|_{\BV([\ve \indBlock,\ve\indBlock'])}+\Const\|\auxPot\|\nc2\ve|j'-j|\\
      \|\smg\|_{L^1([\ve \indBlock,\ve\indBlock'])}
                               &\le \|\sigma\|_{L^1([\ve\indBlock,\ve\indBlock'])}+\Const\|\auxPot\|\nc2\ve|j'-j|.
    \end{align}
  \end{subequations}
  Combining~\eqref{eq:preparatory2} and~\eqref{eq:preparatory1.5} and using the above
  definitions we can thus write:
  \begin{align}\notag
    \notag\lhs &= e^{\vei\auxPots+\sum_{\indBlock = 0}^{L-1}\bOmega_\indBlock(\bar\theta^*_{\ell,j})}\notag\\
    &\phantom{=}\cdot\int_{a^*}^{b^*}\frac{\rho_\auxPot\circ Y^{-1}(x)}{Y'\circ  Y^{-1}(x)} \testfunc(f^L(x,\thetas))
    e^{\sum_{\indBlock=0}^{L-1}\hOmg_\indBlock(f^\indBlock(x,\thetas),\thetas)}\deh x\label{eq:preparatory2.5}\\
               &\phantom = \cdot e^{ \cO\left(\ve L^2\|\auxPot\|\nc2+L\|\sigma\|\nl1\right)},\notag
  \end{align}
  where, in the above estimate, we also used:
  \begin{align*}
    \sum_{\indBlock = 0}^{L-1}\bOmega_\indBlock(\thetas) = \sum_{\indBlock =
    0}^{L-1}\bOmega_\indBlock(\bar\theta^*_{\ell,j}) +\cO(L\|\sigma\|\nl1).
  \end{align*}
  The problem with expression~\eqref{eq:preparatory2.5} is that $Y'$ has a very large derivative (see
  footnote~\ref{f_hugeYDerivative}) and hence it cannot be effectively treated as a $\BV$
  function.  In Section~\ref{sec:one-small} we will deal with this problem in a more
  sophisticated way; here it suffices the following rough estimate based, again, on Lemma~\ref{lem:shadow}:
  \begin{align}\label{eq:ur-E}
    \frac 1{Y'}&= e^{\cO(\ve L)}\prod_{\indBlock=0}^{L-1}
                             \frac{\partial_xf(x^*_\indBlock,\thetas)}{\partial_x f(x_\indBlock,\theta_\indBlock)}\notag\\
                           &= e^{\cO(\ve L)+\sum_{\indBlock=0}^{L-1}\left[ \log \partial_x
                             f(x^*_\indBlock,\thetas)-\log \partial_x f (x_\indBlock,
                             \theta_\indBlock)\right]}=e^{\cO\nl\infty(\ve L^2)}.
  \end{align}
  Also note that, setting
  \begin{align}\label{e_definitionTildeRho}
    \tilde\rho_\auxPot &= \frac{\rho_\auxPot\circ Y^{-1}}{\rho_\auxPot^*}\Id_{[a^*,b^*]} =
                      \frac{\rho_\auxPot\,\Id_{[a,b]}}{\rho_\auxPot^*}\circ Y\invr  &\text{where }\rho_\auxPot^* &=
                                                                                                          \int_{a^*}^{b^*}\rho_\auxPot\circ Y\invr,
  \end{align}
  we have that $\tilde\rho_\auxPot$ is a $\Const(\spc2+\|\auxPot\|\nc2)$-standard probability
  density and $\|\tilde\rho_\auxPot\|\nBV\le \Const\|\rho_\auxPot\|\nBV$.  Observe moreover
  that~\eqref{eq:ur-E} implies that $\rho_\auxPot^* = e^{\cO(\ve L^2)}$.  Collecting the above estimate together with
 ~\eqref{eq:ur-E} and~\eqref{eq:preparatory2.5}  we have
  \begin{align}\label{eq:preparatory3}
    \notag\lhs =& e^{\vei\auxPots+\sum_{\indBlock = 0}^{L-1}\bOmega_\indBlock(\bar\theta^*_{\ell,j})}\\
    &\cdot \int_{\bT}\tilde\rho_\auxPot(x)
                 \testfunc(f^L(x,\thetas))e^{\sum_{\indBlock=0}^{L-1}\hOmg_\indBlock(f^\indBlock(x,\thetas),\thetas)}\deh x\\
               & \cdot e^{ \cO\left(\ve L^2(1+\|\auxPot\|\nc2)+L\|\sigma\|\nl1\right)}.\notag
  \end{align}
Such integrals can be computed by introducing the weighted transfer operators
  \begin{align*}
    [\tO_{\theta,\hOmg_j}g](x)=\sum_{f_\theta(y)=x}\frac{e^{\hOmg_j(y,\theta)}}{f_\theta'(y)}  g(y),
  \end{align*}
which allow to rewrite the integral in~\eqref{eq:preparatory3} as
  \begin{equation}\label{eq:preparatory4}
  \begin{split}
\int_{\bT}\tilde\rho_\auxPot(x) &\testfunc(f^L(x,\thetas))\exp\left[\sum_{\indBlock=0}^{L-1} \hOmg_\indBlock(f^\indBlock(x,\thetas),\thetas)\right]\deh x\\
&= \Leb\left( \testfunc\tO_{\thetas,\hOmg_{L-1}} \cdots \tO_{\thetas,\hOmg_0}[\tilde\rho_\auxPot]\right).
 \end{split}
  \end{equation}
  Such a quantity can be computed in terms of
  $\hat\chi_{\theta,\Omg_j} = \chi_{\theta,\hOmg_j}$, the logarithm of the maximal
  eigenvalue of $\tO_{\theta,\hOmg_j}$ when acting on $\cC^1$.  Observe that by
  definition, remembering~\eqref{e_relationChiHatChi},~\eqref{e_definitionBarOmega}, and
  by~\eqref{eq:pvarrhochi}, Lemma~\ref{lem:quasicompact} we can write\footnote{ Also
    recall the normalization
    $m_{\theta, \langle\sigma_\indBlock+s\Gamma_{\auxPot,\indBlock},\hat A\rangle}(
    h_{\theta, \langle\sigma_\indBlock+s\Gamma_{\auxPot,\indBlock},\hat A\rangle})=1$.}
 \[
 \begin{split}
    \hat\chi_{\theta,\Omg_\indBlock}&= \hat\chi_A(\smg_\indBlock,\theta)
    = \hat\chi_A(\sigma_\indBlock,\theta)
    +\int_0^1m_{\theta, \langle\sigma_\indBlock+s\Gamma_{\auxPot,\indBlock},\hat A\rangle}(\langle\Gamma_{\auxPot,\indBlock},\hat A\rangle h_{\theta, \langle\sigma_\indBlock+s\Gamma_{\auxPot,\indBlock},\hat A\rangle})ds\\
    &=\hat\chi_A(\sigma_\indBlock,\theta)+\cO(\|\auxPot\|\nc2).
\end{split}
\]
Also, by  Lemma~\ref{l_derivativess} and since
  $m_{\theta,0}(\hat A(\cdot,\theta)h_{\theta,0})=0$, we have, for $\|\smg_j\|$ small,
\[
m_{\theta, \langle\sigma_\indBlock+s\Gamma_{\auxPot,\indBlock},\hat A\rangle}(\langle\Gamma_{\auxPot,\indBlock},\hat A\rangle h_{\theta, \langle\sigma_\indBlock+s\Gamma_{\auxPot,\indBlock},\hat A\rangle})=\cO(\|\smg_j\|\|\auxPot\|\nc2).
\]
Collecting the above facts, yields
  \begin{align}\label{eq:chi-g}
    \hat\chi_{\theta,\Omg_\indBlock}%
    &= \hat\chi_A(\sigma_\indBlock,\theta)
    +\cO(\min\{1,\|\smg_j\|\}\|\auxPot\|\nc2).
  \end{align}


  \begin{irem}
  We will now adopt the following strategy: we first obtain a rather crude bound for~\eqref{eq:preparatory4}
  (see~\eqref{e_roughPreliminaryEstimate}): this bound will be valid for arbitrary
  $\sigma$ and $\auxPot$.  We then proceed to obtain a sharper bound, which is however
  valid only for $\sigma$ with a relatively small $L^\infty$ norm; the sharper bound
  will enable us to improve the previously found rough bound to obtain item~\ref{i_one-step-moment-rough}
  and to prove~\ref{i_one-step-moment-sharp}.
\end{irem}
  We obtain the rough bound by replacing the potential $\hOmg_{\indBlock}$ for
  $j\in\{0,\cdots,L-1\}$ with a fixed $\fixhOmega = \hOmg_\fixBlock$ for some
  $\fixBlock\in\{0,\cdots,L-1\}$ chosen arbitrarily.  Notice in fact that, for
  $g\ge 0$ and any $\indBlock\in\{0,\cdots,L-1\}$:
  \begin{align}\label{e_approxBigBlock}
    \tO_{\thetas,\hOmg_{\indBlock}} g %
    &= e^{\cO\nl\infty(\|\smg_\indBlock-\smg_\fixBlock\|)}\tO_{\thetas,\fixhOmega}g
      = e^{\cO\nl\infty(\|\smg\|\nBV)}\tO_{\thetas,\fixhOmega}g,
  \end{align}
  whence:
  \begin{align}\label{e_roughPreparatoryPreliminary}
    \tO_{\thetas,\hOmg_L}\cdots\tO_{\thetas,\hOmg_0}[\tilde\rho_\auxPot]%
    &=e^{\cO\nl\infty(L\|\smg\|_{BV([0,\ve L])})}\tO_{\thetas,\fixhOmega}^L[\tilde\rho_\auxPot].
  \end{align}
  Since $\tilde\rho_\auxPot$ is a $\BV$ function and $\sigma$ can be arbitrarily large, we
  cannot guarantee that $\tO_{\thetas,\fixhOmega}$ is of Perron--Frobenius type (see
  Remark~\ref{rem:bv-contraction}).  We thus proceed as follows: recall that
  $\tilde\rho_\auxPot$ is supported on an interval $[a^*,b^*]$ of size at least
  $\delta/4$; since $f(\cdot,\thetas)$ is uniformly expanding there exists
  $q_0 \sim \log \delta = \cO(1)$ so that $f([a^*,b^*],\thetas)\supset \bT^1$; by
  definition of $\tO_{\thetas,\fixhOmega}$:
  \begin{align*}
    e^{-\Const(1+\|\auxPot\|\nc2+q_0\|\fixhOmega\|\nc0)} \le \tO_{\thetas,\fixhOmega}^{q_0} \tilde\rho_\auxPot\le e^{\Const(1+\|\auxPot\|\nc2+q_0\|\fixhOmega\|\nc0)}.
  \end{align*}
  By positivity of the transfer operator, and since it is of Perron--Frobenius type when
  acting on $\cC^1$ densities (here we want to apply it to the constant functions), we can apply $\tO_{\thetas,\fixhOmega}^{L-q_0}$ to the previous inequalities and, by~\eqref{eq:hilbert-contraction}, obtain
  \begin{align*}
    \tO_{\thetas,\fixhOmega}^L[\tilde\rho_\auxPot] =
    e^{L\,{\hat\chi_{\thetas,\fixOmega}}+\cO(1+\|\auxPot\|\nc2+\|\fixhOmega\|\nc1)} \bar h_{\thetas,\fixhOmega},
  \end{align*}
  where $\bar h_{\thetas,\fixhOmega}$ is the eigenfunction associated to the maximal eigenvalue and normalized so that $\Leb(\bar h_{\thetas,\fixhOmega})=1$.
  Thus, using~\eqref{e_roughPreparatoryPreliminary},
  \begin{align}\label{e_roughPreliminaryEstimate}
    \Leb\tO_{\thetas,\hOmg_L}\cdots\tO_{\thetas,\hOmg_0}[\tilde\rho_\auxPot]
    &= e^{L\,{\hat\chi_{\thetas,\fixOmega}}+\cO(L\|\smg\|\nBV+\|\sgm\|\nl\infty+1+\|\auxPot\|\nc2)},
  \end{align}
  where we used that  $\|\fixhOmega\|\nc1\le\|\sgm\|\nl\infty$.  This is our announced
  preliminary rough bound, which holds for any $\sigma$ and $\auxPot$.

  In order to obtain a sharper bound we need to subdivide $\{0,\cdots,L-1\}$ into
  smaller sub-blocks and replace $\hOmg_{j}$ on each sub-block with a potential that
  is constant on the corresponding sub-block.

  Let us now assume $\|\sigma\|\nl\infty+2\|\auxPot\|\nc2<\osigma$ (hence $\|\smg\|_{L^\infty}<\osigma$) for some fixed
  $\osigma\le\sigma_2$ (from Lemma~\ref{lem:m-estimate-0}) sufficiently small to be
  chosen shortly.

  Observe that, by definition~\eqref{e_definitionBarOmega}, we have that
  $\|\hOmg_\indBlock\| < \Const\osigma$ for any $j\in\{0,\cdots,L-1\}$ and thus each
  $\tO_{\theta,\hOmg_\indBlock}$ is a perturbation of the Perron--Frobenius operator
  $\tO_{\theta,0}$.

  Lemma~\ref{lem:quasicompact} implies that can fix $\sstep \in\bN$ such that $q = \cO(1)$
  and $\tO_{\theta,0}^{\sstep}=\tOpb_{\theta}+\tOqb_{\theta}$ where $\tOpb_\theta$ is a
  projector, $\tOpb_{\theta}\tOqb_{\theta}=\tOqb_{\theta}\tOpb_{\theta}=0$ and
  $\|\tOqb_{\theta}\|\nc1\le \frac 14$.  %

  As announced, we now partition $\{0,\cdots,L-1\}$ in $L' = L\sstep\invr$
  sub-blocks\footnote{ Once again we ignore the issue that $L'$ may not be an integer.}
  of length $\sstep$.  Let us fix arbitrarily $\fixsstep\in\{0,\cdots,\sstep-1\}$; for
  any $l\in\{1,\cdots,L'\}$ define $\sbOm_l = \hOmg_{(l-1)\sstep+\fixsstep}$ and
  let $\tOb_{\theta,l} = \tO_{\theta,\sbOm_l}^\sstep$.  Then in each sub-block, for any
  $g\ge0$, similarly to~\eqref{e_approxBigBlock}:
  \begin{align}\label{e_preparatory8}
    \tO_{\thetas,\hOmg_{l\sstep-1}}\cdots\tO_{\thetas,\hOmg_{(l-1)\sstep}}[g] =
    e^{\cO\nl\infty(\|\sgm\|_{\BV([\ve(l-1)\sstep,\ve l\sstep-1])})}\tOb_{\theta,l}[g]
  \end{align}

  By Lemma~\ref{lem:quasicompact}, each $\tOb_{\theta,l}$ has a simple maximal
  eigenvalue, which we denote $e^{\chib_{\theta,l}}$.  Observe that by definition
  $\chib_{\theta,l} ={\sstep\chi_{\theta,\sbOm_l}}$.  Moreover, we can write
  $\tOb_{\theta,l}=e^{\chib_{\theta,l}} \tOpb{}_{\theta,l}+\tOqb_{\theta,l}$ where
  $\tOpb{}_{\theta,l}^2=\tOpb{}_{\theta,l}$,
  $\tOpb{}_{\theta,l}\tOqb{}_{\theta,l}=\tOqb{}_{\theta,l}\tOpb{}_{\theta,l}=0$ and the
  theory of Section~\ref{subsec:pert-paramL} implies that
  $\|\tOqb{}_{\theta,l}\|\nc1\le \frac 12e^{\chib_{\theta,l}}$, provided that $\osigma$
  is sufficiently small and $Q$ has been chosen large enough.  Let us write
  $\tOpb{}_{\theta,l}g=h_{\theta,l}m_{\theta,l}(g)$, normalized as in
  Lemma~\ref{l_normalization}.

  The main advantage in defining the iterated operators $\tOb_{\theta,l}$ is that the
  bound on the \emph{norm} of $\tOqb$ (as opposed to the bound on the mere spectral
  radius which is available for the operator $\tOq$ acting on a single iterate) makes
  them well behaved under composition.


  \begin{sublem}\label{lem:itera-trivial}
    Using the above notation, if $\osigma>0$ is sufficiently small and $\|\sgm\|\nl\infty < \osigma$:
    \begin{align*}
      \tOb_{\theta,L'} \cdots \tOb_{\theta,1} [\tilde\rho_\auxPot]=
      h_{\theta,L'}(x)m_{\theta,1}(\tilde\rho_\auxPot)e^{\sum_{l=
      1}^{L'}\chib_{\theta,l}} e^{\cO\nl\infty(\|\sgm\|\nBV)}.
    \end{align*}
  \end{sublem}

  The above sub-lemma, whose proof is postponed after the end of the current proof, allows to refine the rough estimate~\eqref{e_roughPreliminaryEstimate}.  Observe that,
  using~\eqref{e_estimatePartialh} and~\eqref{e_basicEstimatem-BV-Lebesgue}:
  \begin{align*}
    \Leb(h_{\theta,L'})
    & = e^{\cO(\|\sgm\|\nl\infty)}\\
    |m_{\theta,1}\tilde\rho_\auxPot|
    & = \Leb\,\tilde\rho_\auxPot+\cO(\|\sgm\|\nl\infty(1+\|\auxPot\|\nc2))
      = e^{\cO(\|\sgm\|\nl\infty(1+\|\auxPot\|\nc2))};
  \end{align*}
  moreover, by~\eqref{e_derivativeChi}:
  \begin{align*}
    \sum_{l = 1}^{L'}\chib_{\theta,l} =
    L\hat\chi_{\theta,\fixOmega}+\cO(L\|\sgm\|\nBV).
  \end{align*}
  We thus conclude that if $\|\sgm\|\nl\infty < \osigma$, using~\eqref{e_preparatory8}:
  \begin{align*}
    \Leb\tO_{\thetas,\hOmg_L} \cdots \tO_{\thetas,\hOmg_0}[\tilde\rho_\auxPot] =
    e^{L\hat\chi_{\thetas,\fixOmega} + \cO(L\|\sgm\|\nBV + \|\sgm\|\nl\infty(1+\|\auxPot\|\nc2))}.
  \end{align*}
  Combining the above equation with~\eqref{e_roughPreliminaryEstimate}, we conclude that
  for arbitrary $\sgm$:
    \begin{align*}
    \Leb\tO_{\thetas,\hOmg_L} \cdots \tO_{\thetas,\hOmg_0}[\tilde\rho_\auxPot]
    &= e^{L\hat\chi_{\thetas,\fixOmega}
      + \cO(L\|\sgm\|\nBV + \|\sgm\|\nl\infty + \min\{1,\|\sgm\|\nl\infty\}\|\auxPot\|\nc2)}.
  \end{align*}
  Applying~\eqref{eq:chi-g} we thus obtain
  \begin{align*}
    \Leb\tO_{\thetas,\hOmg_L} \cdots \tO_{\thetas,\hOmg_0}[\tilde\rho_\auxPot]
    = e^{L\hat\chi_A(\sigma_\fixBlock,\thetas) +
    \cO(L\min\{1,\|\sgm\|\nl\infty\}\|\auxPot\|\nc2+L\|\sgm\|\nBV + \|\sgm\|\nl\infty)}.
  \end{align*}
  At last, setting $\testfunc = 1$ and substituting the latter equation
  in~\eqref{eq:preparatory4} and~\eqref{eq:preparatory3},
  \begin{align*}
    &\left[\sum_{\ell'\in\stdf_\ell^L}\fm_{\ell'}\mu_{\ell'}\left(e^{\vei\auxPot\circ G_{\ell'}}\right)\right] e^{-\vei\auxPots-\sum_{\indBlock = 0}^{L-1}\bOmega_\indBlock(\bar\theta^*_{\ell,j})}
      = e^{L\hat\chi_A(\sigma_\fixBlock,\thetas)}\\
    &\cdot e^{\cO(L\min\{1,\|\sgm\|\nl\infty\}\|\auxPot\|\nc2+L\|\sgm\|\nBV +
      \|\sgm\|\nl\infty +\ve L^2( 1+\|\auxPot\|\nc2)+L\|\sigma\|\nl1)}.
  \end{align*}
  Choosing $\thetas = \bar\theta^*_{\ell,\fixBlock}$ and taking the geometric mean of the above
  expressions for $\fixBlock \in\{0,\cdots,L-1\}$, we conclude
  \begin{align*}
    &\sum_{\ell'\in\stdf_\ell^L}\fm_{\ell'}\mu_{\ell'}\left(e^{\vei\auxPot\circ
      G_{\ell'}}\right)
      =  e^{\vei\auxPots+\sum_{\indBlock = 0}^{L-1}\chi_A(\sigma_j,\bar\theta^*_{\ell,j})}\\
    &\cdot e^{\cO(L\min\{1,\|\sgm\|\nl\infty\}\|\auxPot\|\nc2+L\|\sgm\|\nBV +
      \|\sgm\|\nl\infty + \ve L^2(1+\|\auxPot\|\nc2)+L\|\sigma\|\nl1)}.
  \end{align*}
  Item~\ref{i_one-step-moment-rough} then follows using~\eqref{e_boundsFor-smg},~\eqref{eq:pot0-change} and $\|\sigma\|_{L^\infty}\leq (\ve L)^{-1}\|\sigma\|_{L^1}+\|\sigma\|_{\BV}$.

  We now proceed to the proof of item~\ref{i_one-step-moment-sharp}, which follows from a more
  careful application of Sub-Lemma~\ref{lem:itera-trivial}.

  Recall that, by definition, $h_{\theta,L'} = h_{\theta,\fixhOmega_{L'}}$, where
  $\fixhOmega_{L'} = \langle\sgm_{(L'-1)\sstep+\fixsstep}, \hat A(\cdot,\theta)\rangle$.
  Then, notice that for any $\sigma^*_1$ in the essential range of $\sigma$ we have,
  using bounds~\eqref{e_boundsFor-smg}, that
  $\sigma_1^*-\sgm_{(L'-1)\sstep+\fixsstep}\le\|\sigma\|_{\BV([0,\ve L])}+\|\auxPot\|\nc2
  \ve L^2$.  By~\eqref{e_estimatePartialh} and~\eqref{e_estimatePartialThetah} we thus
  conclude that for any $\thetas_1$ so that $|\thetas_1-\theta| < \Const\ve L:$
  \begin{equation}\label{eq:hbar-chage}
    \harb = h_{\thetas_1,\langle\sigma^*_1,\hat
    A(\cdot,\thetas_1)\rangle} = h_{\theta,L'}e^{\cO\nl\infty(\|\sigma\|_{\BV([0,\ve
    L])}+\|\auxPot\|\nc2\ve L^2 +\ve L)}.
  \end{equation}
  Likewise, for any $\sigma^*_2$ in the essential range of $\sigma$,
  using~\eqref{e_estimatePartialm-BV} we gather,
  \begin{equation}\label{eq:marb-chage}
    \marb(\tilde\rho_\auxPot) = m_{\theta,\langle\sigma^*_2,\hat
    A(\cdot,\theta)\rangle}(\tilde\rho_\auxPot) =
    m_{\theta,1}(\tilde\rho_\auxPot)e^{\cO(\|\sigma\|_{\BV([0,\ve L])}+\|\auxPot\|\ve L^2)}.
  \end{equation}
Also, by~\eqref{e_derivativeChi} and using~\eqref{eq:chi-g} and~\eqref{e_boundsFor-smg} we obtain
  \begin{align*}
    \sum_{l = 1}^{L'}\chib_{\theta,l} %
    &= \sstep\sum_{l = 1}^{L'}\hat\chi_{\theta,\sbOm_l} %
      = \sum_{\indBlock = 0}^{L-1}\hat\chi_{\theta,\Omg_\indBlock}+\cO(\|\sgm\|_{\BV([0,\ve L])})\\
    &= \sum_{\indBlock = 0}^{L-1}\hat\chi_A(\sigma_\indBlock,\theta)+\cO(\|\sigma\|_{\BV([0,\ve
      L])}+L\|\auxPot\|\nc2(\ve +\vei L^{-1}\|\sigma\|_{L^1}+\|\auxPot\|\nc2))
  \end{align*}
and using~\eqref{e_estimatesecondpartchi0}:
  \begin{align*}
    \sum_{\indBlock = 0}^{L-1}\hat\chi_A(\sigma_j,\theta) = \sum_{\indBlock =
    0}^{L-1}\hat\chi_A(\sigma_j, \bar\theta^*_j)+\cO(\|\sigma\|_{L^2}^2 L).
  \end{align*}
  Hence, using Sub-Lemma~\ref{lem:itera-trivial} and
  equations~\eqref{e_preparatory8},~\eqref{eq:hbar-chage},~\eqref{eq:marb-chage}, we
  conclude:
  \begin{align}\label{e_preparatoryk}
    \tO_{\thetas,\hOmg_{L-1}}&\cdots\tO_{\thetas,\hOmg_0}[\tilde\rho_\auxPot]
    = \harb \marb(\tilde\rho_\auxPot)e^{\sum_{\indBlock = 0}^{L-1}\hat\chi_A(\sigma_j,\bar\theta^*_\indBlock)}\\%
    &\phantom = \cdot e^{\cO\nl\infty(\|\sigma\|\nBV+\ve L^2\|\auxPot\|\nc2+\|\auxPot\|\nc2(\|\auxPot\|\nc2 L+\vei\|\sigma\|\nl1)+\ve L+\|\sigma\|_{L^2}^2L )}.\notag
  \end{align}
  In order to proceed we need to compare $\marb(\tilde\rho_\auxPot)$ with
  $\marb(\rrho_\auxPot)$.  Recall that by~\eqref{e_definitionTildeRho},
  $\tilde\rho_\auxPot = (\rrho_\auxPot\circ Y\invr)/\rho^*_\auxPot$, where
  $\mr\rho_{\auxPot} = \rho_\auxPot\,\Id_{[a,b]}$ and $\rho^*_\auxPot = 1+\cO(\ve L^2)$.  We claim
  that
  \begin{align}\label{e_marbClaim}
    \marb(\rrho_\auxPot\circ Y\invr)
    &= \marb(\mr\rho_\auxPot) e^{\cO(\ve L^2+\|\sigma\|_{\BV([0,\ve L])}
      +\vei L\invr\|\sigma\|\nl2^2)}.
  \end{align}
  Observe that substituting~\eqref{e_marbClaim} into~\eqref{e_preparatoryk},
  item~\ref{i_one-step-moment-sharp} follows by~\eqref{eq:preparatory4}
  and~\eqref{eq:preparatory3} since $\testfunc$ is arbitrary.  In order to conclude, we
  therefore only need to prove~\eqref{e_marbClaim}.  First of all, recall that
  $\rho_\auxPot$ is a $(\spc2+\Const\|\auxPot\|\nc2)$-standard probability density and
  that, by hypotheses, $\|\auxPot\|\nc2\le\Const\osigma$; hence
  Remark~\ref{r_standardDensityBV} implies that $\|\rrho_\auxPot\|\nBV\le\Const$.  Hence,
  if $\osigma$ is sufficiently small and since $\Leb\, \rrho_\auxPot = 1$,
  Lemma~\ref{lem:m-estimate} yields:
  \begin{align*}
    \marb(\mr\rho_\auxPot) = e^{\cO(\osigma |\log \osigma\,|)}.
  \end{align*}
  Let us proceed to estimate $\marb(\mr\rho_\auxPot \circ Y\invr - \mr\rho_\auxPot)$: if
  $\osigma$ is sufficiently small, Lemma~\ref{lem:m-estimate} ensures that
  \begin{align*}
    \marb(\mr\rho_\auxPot \circ Y\invr - \mr\rho_\auxPot)
    &=\Leb(\mr\rho_\auxPot \circ Y\invr - \mr\rho_\auxPot)+ \|\mr\rho_\auxPot \circ Y\invr - \mr\rho_\auxPot\|\nl1\cO(\|\sigma^*_2\||\log\|\sigma^*_2\|\,|)\\
    &\phantom = +\cO(\|\sigma^*_2\|^2\|\rrho_\auxPot\|_{\BV}) \\
    &\le \|\mr\rho_\auxPot \circ Y\invr - \mr\rho_\auxPot\|\nl1\left(1+\cO(\|\sigma^*_2\||\log\|\sigma^*_2\|\,|)\right)\\
    & +\cO(\|\sigma^*_2\|^2\|\rho_\auxPot\|_{\BV})
  \end{align*}
  Next, we proceed to estimate the $L^1$ norm; recall that for any bounded $\psi$:
  \begin{align*}
    \|\psi\|\nl1 = \sup_{\substack{\vf\in L^\infty\\\|\vf\|\nl\infty = 1}}\int\psi\vf = \sup_{\substack{\vf\in \cC^0\\\|\vf\|\nc0 = 1}}\int\psi\vf,
  \end{align*}
  where in the last equality we used the fact that continuous functions are dense in
  $L^1$. For any $\vf\in\cC^0$ we have
  \begin{align*}
    \left|\Leb(\vf\left[ \mr\rho_\auxPot\circ Y\invr - \mr\rho_\auxPot\right])\right|
 &=\left|\int_\bT \left[\vf( Y(x))\cdot Y'(x)-\vf(x)\right]\rho_\auxPot(x)dx\right|\\
 &=\left|\int_\bT [\testfuncp\circ Y-\testfuncp]'(x)\rrho_\auxPot(x)dx\right|\\
 &\le\|\rrho_\auxPot\|\nBV\|\testfuncp\circ Y-\testfuncp\|\nc0,
  \end{align*}
  where $\testfuncp'=\vf$ on $[0,1]$.  Since
  $|Y-\text{Id}|_{\infty}\le \Const \ve L^2$ (by~\eqref{eq:ur-E}), and
  $\|\rrho_\auxPot\|\nBV\le\Const$, we conclude that $|\Leb(\vf[ \mr\rho_\auxPot\circ Y\invr - \mr\rho_\auxPot])|
  \le \Const\|\vf\|\nc0\ve L^2$, which implies
  \begin{align*}
    \|\mr\rho_\auxPot\circ Y\invr - \mr\rho_\auxPot\|\nl1
    &\le \Const\ve L^2.
  \end{align*}
  Accordingly, putting together the above estimates:
  \begin{align*}
    \marb(\mr\rho_\auxPot\circ Y\invr)
    &= \marb(\mr\rho_\auxPot) e^{\cO(\ve  L^2+\|\sigma^*_2\|^2)}.
  \end{align*}
  Observe that
  \begin{align*}
    \|\sigma_2^*\|^2
    &= \frac1{\ve L}\int_0^{\ve L} \|\sigma(s)+\sigma_2^*-\sigma(s)\|^2
      \le \frac1{\ve L}\|\sigma\|\nl2^2 + 4\osigma\|\sigma\|_{\BV([0,\ve L])},
  \end{align*}
  and thus we have
  \begin{align*}
    \marb(\mr\rho_\auxPot\circ Y\invr)
    &= \marb(\mr\rho_\auxPot)
      e^{\cO(\ve L^2+\|\sigma\|_{\BV([0,\ve L])} +\vei L\invr\|\sigma\|\nl2^2)}.
  \end{align*}
  which gives~\eqref{e_marbClaim} and concludes the proof of the Lemma.
\end{proof}
\begin{proof}[Proof of Sub-lemma~\ref{lem:itera-trivial}]%
  First of all observe that, using~\eqref{e_estimatePartialh}
  and~\eqref{e_estimatePartialm-BV},
  \begin{subequations}\label{e_blockEstimates}
    \begin{align}
      \label{e_blockEstimate-h}
      \|h_{\theta,l}-h_{\theta,l+1}\|
      &\le \cR_{\theta,l+1}\\\label{e_blockEstimate-m}
      |m_{\theta,l}(g)-m_{\theta,l+1}(g)|
      &\le \cR_{\theta,l+1}\|g\|\nBV
    \end{align}
  \end{subequations}
  where\footnote{ The proposed estimate of $\cR_{\theta,l}$ may seem a bit cumbersome.
    The reason is that the second possibility is good locally to verify the condition
    $\cR_l\le 2\sstep \osigma$ below but is otherwise a
    bad choice since it gives a too large cumulative mistake.}
  \begin{align}\label{e_cRbound}
    \cR_{\theta,l+1}=C_\sstep\min\left\{\|\sgm\|_{\BV([\ve (l-1)\sstep, \ve (l+1)\sstep])},
    \vei\|\sgm\|_{L^1([\ve (l-1) \sstep ,\ve (l+1)\sstep])}\right\}
  \end{align}
  where the first term in the $\min$ comes from comparing the potential in one block to
  the potential in the next one and the second term comes from comparing the potential
  in each block with the zero potential.  We assume conventionally $\cR_{\theta,0}=1$.

  \newcommand{\irho}[1]{\rho_{(#1)}}
  Let $\irho0 = \tilde\rho_\auxPot$ and define for $l\in\{1,\cdots,L'\}$:
  \begin{align*}
    \irho{l}:=\tOb_{\theta,l}\irho{l-1};
  \end{align*}
  observe in particular that $\irho{L'} = \tOb_{\theta,L'}\cdots\tOb_{\theta,1}[\tilde\rho_\auxPot]$.

  Let us now define $\gamma_l = m_l(\irho l)\geq 0$ and $\varphi_l = (1-\tOpb_l)\irho l$ so that
  $\irho{l}=\gamma_{l} h_{l}+\vf_{l}$; in particular
  $\|\irho l\|\nBV\le \Const\gamma_l+\|\varphi_l\|\nBV$ and $m_l(\vf_{l})=0$.  Then
  \begin{subequations}
    \begin{align}\label{eq:gamma-itera}
      \gamma_{l+1}&=m_{l+1}(\rho_{l+1})=e^{\chib_{\theta,l+1}}m_{l+1}(\rho_{l})\\
      &=e^{\chib_{\theta,l+1}}\left(\gamma_{l}-m_{l}(\irho{l})+m_{l+1}(\irho{l})\right)\notag\\
                  &= e^{\chib_{\theta,l+1}}\gamma_{l}+e^{\chib_{\theta,l+1}}[m_{l+1}-m_{l}](\irho{l})\;;\notag\\
      \vf_{l+1}&=\irho{l+1}-\gamma_{l+1}h_{l+1}=\tOb_{\theta,l+1}(\irho{l}-m_{l+1}(\irho {l}) h_{l+1})\\
                  &=\tOqb_{\theta,{l+1}}(\irho{l}-m_{l+1}(\irho{l}) h_{l+1}).\notag
    \end{align}
  \end{subequations}
  By~\eqref{e_blockEstimate-m} we have
  $|[m_{l+1}-m_{l}](\irho{l+1})|\le \Const\cR_{l+1}\|\irho {l+1}\|\nBV$.  Accordingly,
  since $\|\tOqb_{\theta,l}\|\nBV < \frac12e^{\chib_{\theta,l}}$ and setting
  $\alpha=\log 2$:
  \begin{subequations}
    \begin{align}
      \gamma_{l+1}&=e^{\chib_{\theta,l+1}}\gamma_{l}+\cO(e^{\chib_{\theta,l+1}}\cR_{l+1}\|\irho{l}\|\nBV)\label{eq:cone-iterag}\\
                  &= e^{\chib_{\theta,l+1}+\cO(\cR_{l+1})}\gamma_{l}+\cO(e^{\chib_{\theta,l+1}}\cR_{l+1}\|\vf_{l}\|\nBV)\;;\nonumber\\
      \|\vf_{l+1}\|\nBV\label{eq:cone-itera}
                  &\le e^{\chib_{\theta,l+1}-\alpha}\|\irho{l}-m_{l+1}(\irho{l})h_{l+1} \|\nBV\\\notag
                  &\le e^{\chib_{\theta,l+1}-\alpha}\|m_{l}(\irho{l})h_{l}+\varphi_{l}-m_{l+1}(\irho{l})h_{l+1} \|\nBV\\\notag
                  &\le e^{\chib_{\theta,l+1}-\alpha}\left[\|\vf_{l}\|\nBV +C_\#\cR_{l+1} \|\irho{l}\|\nBV\right]\\
                  &\le e^{\chib_{\theta,l+1}-\alpha}\left[e^{\Const\cR_{l+1}}\|\vf_{l}\|\nBV+\Const \cR_{l+1}\gamma_{l}\right].\nonumber
    \end{align}
  \end{subequations}
  Next, we prove, by induction, that there exists $C_*>1$ such that, for any
  $l\in\{1,\cdots, L'\}$
  \begin{equation}\label{phi-induction}
    \|\vf_l\|\nBV\le C_* \gamma_l\sum_{j=0}^le^{-(l-j)\alpha/2}\cR_j.
  \end{equation}
 Since $\irho{0}$ is a standard density we have
  $\gamma_0=m_0(\irho0)\ge C_\# \|\irho0\|\nBV\ge C_\#\|\vf_0\|\nBV$, thus the relation
  is satisfied for $l=0$ provided $C_*$ is chosen large enough.  Next,
  combining~\eqref{eq:cone-iterag} with~\eqref{phi-induction} and observing that for any
  $j$ we have by definition $\cR_j\le 2\sstep \osigma$, we obtain:
  \begin{align*}
    \gamma_{l+1}&\ge e^{\chib_{\theta,l+1}-\Const\cR_{l+1}}\gamma_{l}-\Const e^{\chib_{\theta,l+1}}\cR_{l+1}\left[
                  C_*\gamma_l\sum_{j=0}^le^{-(l-j)\alpha/2}\sstep\osigma\right]\\
                &\ge e^{\chib_{\theta,l+1}-\Const \sstep\osigma-\Const C_*\sstep^2\osigma^2}\gamma_l.
  \end{align*}
  Plugging the above estimate into~\eqref{phi-induction} and combining
  with~\eqref{eq:cone-itera} yields,
  \begin{align*}
    \|\vf_{l+1}\|\nBV&\le e^{-\alpha+\Const \sstep\osigma+\Const\sstep^2
                       C_*\osigma^2} \gamma_{l+1}C_* \sum_{j=0}^l e^{-(l-j)\alpha/2}\cR_j +C_\#\cR_{l+1}\gamma_{l+1}\\
                     &\le C_* \gamma_l\sum_{j=0}^{l+1}e^{-(l+1-j)\alpha/2}\cR_j
  \end{align*}
  provided $C_*$ is chosen large enough and
  $\Const \sstep \osigma+\Const C_*\sstep^2\osigma^2\le \frac{\alpha}{2}$, which can always be
  satisfied by choosing $\osigma$ small enough.  This concludes the proof
  of~\eqref{phi-induction}. Combining this estimate with~\eqref{e_cRbound} we conclude
  that
  \begin{align*}
    \|\varphi_{L'}\|\nBV\le\Const\gamma_{L'}\|\sgm\|\nBV.
  \end{align*}
Finally, substituting again~\eqref{phi-induction} into~\eqref{eq:cone-iterag} implies
  \begin{align*}
    \gamma_{L'}=e^{\chib_{\theta,L'}+\cO(\cR_{L'})}\gamma_l = e^{\sum_{l = 1}^{L'}[\chib_{\theta,l}+\cO(\cR_{l})]}\gamma_0.
  \end{align*}
  since $\gamma_0 = m_0(\irho0) = m_0(\tilde\rho_\auxPot)$, we gather that
  \begin{align*}
    \irho{L'}(x) = e^{\sum_{l = 1}^{L'}[\chib_{\theta,l}+\cO(\cR_{l})]}m_0(\tilde\rho_\auxPot)(h_{L'}(x)+\Const\|\sgm\|\nBV).
  \end{align*}
Hence, recalling, from Lemma~\ref{lem:quasicompact} (or more
  precisely~\eqref{eq:cone-est}) that $h_{L'}\ge e^{\const}$, we can conclude the proof of
  the sub-lemma, since $\sum_{l = 1}^{L'}\cR_{\theta,l}\le \Const\|\sgm\|\nBV$.
\end{proof}

\subsection{Regularizing moment generating functional}\ \\
\label{s_regularizedlogmgf}
The discussion of the previous section tells us that, provided the error term
$\restoPalla(\sigma)$ is somewhat under control, the logarithmic moment generating
function $\logmgf(\sigma)$ is well described by
$\int_0^T\chi_A(\sigma(s),\bar\theta(s,\thetasl))ds$.  Proposition~\ref{lem:exponential},
however, shows that any kind of control on the remainder term $\restoPalla(\sigma)$
might fail if the $\BV$-norm of $\sigma$ is much larger than its $L^1$-norm (\ie for
rapidly oscillating functions).  We will then need to consider regularizations of
$\sigma$ whose $\BV$-norm is controlled by their $L^1$-norm.

Given a step size $h=T/N_h$, for $N_h\in\bN$ suitably large, define the
projector $\regPi{h}$ given by averaging on each interval of size $h$:
\begin{align}\label{e_definitionRegPi}
  [\regPi{h}\sigma](t)=h\invr\int_{h\pint{th\invr}}^{h(\pint{th\invr}+1)}\sigma(s)\deh s.
\end{align}
We collect in the following sub-lemma the basic properties of $\regPi{h}$; their proof is
elementary and it is left to the reader.
\begin{sublem}\label{sl_propertiesPi} The operator $\regPi{h}$ satisfies the following properties
  \begin{enumerate}
  \item $\regPi{h}1 = 1$
  \item $\int f \cdot \regPi{h} \sigma = \int \regPi{h} f\cdot \sigma$;
  \item $\regPi{h}$ is a contraction in the $\BV$ and $L^p$-norms if $p\in[1,\infty]$;
  \item $\|\regPi{h}\sigma\|\nBV\le \Const h\invr\|\sigma\|\nl1$.
  \end{enumerate}
\end{sublem}
We then proceed to define the \emph{regularized moment generating functional} as
\begin{align}\label{e_defReguLogmgf}
  \regu\logmgf{h} =\logmgf\circ\regPi{h}
\end{align}
and as in the previous section we can define
\begin{align}\label{e_defReguPalla}
  \regu\restoPalla{h}(\sigma) = \regu\logmgf{h}(\sigma)-\int_0^T\chi_A(\sigma(s),\bar\theta(s,\thetasl))ds
\end{align}
\begin{lem}\label{lem:exponential-h}
  There exists $\ve_0 > 0$, such that if $\ve\in (0,\ve_0)$,  $L\in\left[\vei_0, \ve_0{\ve^{-1/2}}\right]$ and $T\in[\ve L,\Tmax]$:
  \begin{enumerate}
    \item\label{i_qnonPerturbativeBoundOnR-h} for any $\sigma\in\BV([0,T],\bR^d)$, the
    following upper bound holds:
    \begin{align*}
      \regu\restoPalla{h}(\sigma)
      &\le \Const(\ve LT+\\
      &\phantom\le + \left[\ve Lh\invr + h + L\invr+\min\{T,(1+\ve L h\invr)\|\sigma\|\nl1\}\right]\|\sigma\|\nl1);
    \end{align*}
    \item\label{i_perturbativeBoundOnR-h} there exists $\sigma_* = \sigma_*(\Tmax)>0$ so that if
    $\|\sigma\|\nl\infty< \sigma_*$, the following upper bound holds:
    \begin{align*}
      \regu\restoPalla{h}(\sigma) \le \Const\left(\ve LT+\ve[L+ T\invr + h\invr +
      h\vei]\|\sigma\|\nl1 + \|\sigma\|\nl1^2+ L\invr\|\sigma\|\nl2^2\right).
    \end{align*}
  \end{enumerate}
\end{lem}
\begin{proof}
  Observe that, by definition
  \begin{align*}
    \regu\restoPalla{h}(\sigma) = \restoPalla(\regPi{h}\sigma)+ \int_0^T\chi_A(\regPi{h}\sigma(s),\bar\theta(s,\thetasl))ds
    -\int_0^T\chi_A(\sigma(s),\bar\theta(s,\thetasl))ds.
  \end{align*}
  But since $\chi_A(\cdot,\theta)$ is convex, Jensen inequality yields:
  \begin{equation}\label{eq:regular-Pi}
    \int_0^T\chi_A(\regPi{h}\sigma(s),\bar\theta(s,\thetasl))ds
    \le \int_0^T\!\! ds\int_{h\pint{s\vei}}^{h(\pint{s\vei}+1)}\frac{\hat\chi_A(\sigma(r),\bar\theta(s,\thetasl))}hdr .
 \end{equation}
 Next, by~\eqref{e_partialThetachi},~\eqref{eq:derivativeL} and since, by
 Lemma~\ref{lem:quasicompact}, $m_\theta$ is a measure, holds the normalization
 $m_\theta(h_\theta)=1$ and $|h_\theta'|\leq \Const \|\sigma\|h_\theta$, we have
\[
|\hat\chi_A(\sigma,\bar\theta(s,\theta))-\hat\chi_A(\sigma,\bar\theta(r,\theta))| \leq \Const h\|\sigma\|.
\]
Hence
   \begin{align*}
    \int_0^T\chi_A(\regPi{h}\sigma(s),\bar\theta(s,\thetasl))ds
    &\le \int_0^T[\regPi{h}\chi_A(\sigma(\cdot),\bar\theta(\cdot,\thetasl))](s)ds +\Const h\|\sigma\|\nl1\\
    &\le \int_0^T\chi_A(\sigma(s),\bar\theta(s,\thetasl))ds +\Const h\|\sigma\|\nl1,
  \end{align*}
where we used items (a-b) of Sub-lemma~\ref{sl_propertiesPi} to conclude that
  $\int\regPi{h} f = \int f$.  The lemma readily follows from items (c-d) of
  Sub-lemma~\ref{sl_propertiesPi} and Proposition~\ref{lem:exponential}.
\end{proof}
\section{Deviations from the average: the rate function}\label{sec:upper}
\noindent Here we study the deviations from the average behavior described in
Section~\ref{sec:averaging}.
\begin{rem}
  The results in Sections~\ref{sec:upper} and~\ref{sec:LargeDeviations} are in the spirit
  of~\cite{DimaAveraging} although more precise, insofar in~\cite{DimaAveraging} only a
  rough upper bound on the rate function is provided.  Regarding the classical Large
  Deviations Principle, the exact rate function was derived in~\cite{Kifer09}, but with an
  estimate of the error largely insufficient to handle moderate deviations, as the
  function was computed with a mistake of order $o(1)$.  Here we estimate the error much
  more precisely and we are therefore able to study accurately also deviations of order
  $\ve^\alpha$, with $\alpha< 1/2$.  In addition, contrary to~\cite{DimaAveraging}, we
  derive not only an upper bound but a lower bound as well, at least for deviations larger
  than $\ve^{\efrac 1{96}}$.  We refrain from obtaining completely optimal results (which
  may be obtained using the techniques developed later in this paper) only to keep the
  length of the paper (somewhat) under control.
\end{rem}

Recall that we fixed $d\in\bN$ and $A = (A_1,\cdots,A_d)\in \cC^2(\bT^2,\bR^d)$, with
$A_1(x,\theta)=\omega(x,\theta)$.  Recall moreover that we are always under the standing
assumption~\ref{eq:coboA}.  Finally, note that, for convenience, we will often implicitly
lift $\theta\in\bT$ to its universal cover $\bR$.

The fundamental object in the theory of large deviations is the {\em rate function}.
Because its definition is a bit involved, we start by discussing it in some detail.  The
reader that is not familiar with the meaning and the use of such a function may want to
review the discussion in Sections~\ref{sec:trivial},~\ref{ss_largeModerateDeviationIntro} and have a
preliminary look at Section~\ref{sec:LargeDeviations} where it is made clear the role of
the rate function in the statements of the various large and moderate deviations results.

\subsection{Definition and properties: the preliminary rate function}\label{subsec:rate}\ \newline
We start by discussing a rate function that is expressed in terms of the averaged
trajectory of $\theta$ and therefore turns out to be accurate only for short
times. However its discussion entails all the quantities and ideas needed for the general
case.

Recall that $e^{\chi_A(\sigma,\theta)}$ (\resp $e^{\hat\chi_A(\sigma,\theta)}$) denotes
the maximal eigenvalue of the transfer operator $\tO_{\theta,\langle\sigma,A\rangle}$
(\resp $\tO_{\theta,\langle\sigma, \hatA\rangle}$) which has been introduced in
Section~\ref{ss_exponentialMoment}.  Recall also that
$\chi_A(\sigma,\theta)=\langle \sigma, \bar{A} (\theta)\rangle+\hat\chi_A(\sigma,\theta)$;
finally, observe that~\eqref{e_secondDerivativeChia},~\eqref{e_secondDerivativeChi} and
Lemma~\ref{lem:coboundary}, together with assumption~\ref{eq:coboA} imply that
$\chi_A(\cdot,\theta)$ is a strictly convex function.

For any $\sigma,b\in\bR^d$ and $\theta\in\bT^1$, define
\begin{equation}\label{eq:kappaZ}
  \zet(\sigma,b,\theta)
  = \langle \sigma, b\rangle -\chi_A(\sigma,\theta)
  = \langle \sigma, b-\bar A(\theta)\rangle-\hat\chi_A(\sigma,\theta),
\end{equation}
and define the function $\cZ: \bR^d\times\bT\to\bR\cup\{+\infty\}$ as
\begin{equation}\label{eq:M-0}
  \cZ(b,\theta)=\sup_{\sigma\in\bR^d}\zet(\sigma,b,\theta).
\end{equation}
Observe that $\cZ(\cdot,\theta)$ is the Legendre transform of $\chi_A(\cdot,\theta)$.  We
are now able to give a first preliminary definition of the \emph{rate function}; for any
$\thetas\in\bT$, let
\begin{align}
  \notag
  \Ifirst_{\thetas}&:\cC^0([0,T],\bR^d)\to\bR\cup\{+\infty\}\\\label{eq:rate1}
  \Ifirst_{\thetas}&(\gamma)=\begin{cases}+\infty&\textrm{ if }\gamma \textrm{ is not Lipschitz, or $\gamma(0)\neq0$}\\
    \int_0^T \cZ(\gamma'(s), \bar \theta(s,\thetas))\,\deh s\quad&\textrm{ otherwise.}
  \end{cases}
 \end{align}

Our next task is to investigate the properties of $\Ifirst_{\thetas}$ or, equivalently, of
$\cZ$.  Let $\bD(\theta)=\{b\in\bR^d\;:\:\cZ(b,\theta)<+\infty\}$ be the \emph{effective
  domain} of $\cZ(\cdot,\theta)$,
\begin{lem}\label{lem:domainZ}
  Assume~\ref{eq:coboA} (\ie for all $\sigma\in\bR^d$ and $\theta\in\bT^1$,
  $\langle\sigma,\hatA(\cdot, \theta) \rangle$ is not an $f_\theta$-coboundary).
  Then the following properties hold:
  \begin{enumerate}[label = (\arabic*),start = 0]
  \item \label{i_Zconvex} $\cZ(\cdot,\theta)$ is a convex lower semi-continuous function;
    in particular $\bD(\theta)$ is convex.
  \item \label{i_bDstar} let $\bD_*(\theta)=\partial_\sigma\chi_A(\bR^d, \theta)$; then
    $\bD_*(\theta) = \intr{\bD}(\theta)$; in particular $\bD_*(\theta)$ is convex;
  \item \label{i_analyticity} let
    $U=\{(b,\theta)\;:\; \theta\in\bT,\,b\in\, {\bD_*}(\theta)\}$; $\cZ\in\cC^2(U,\bRp)$
    and it is analytic in $b$;
  \item \label{i_nbhdSRB} $\bD(\theta)$ contains a neighborhood of $\bar A(\theta)$;
  \item \label{i_goodSRB} $\cZ(\bar A(\theta),\theta)=0$, $\partial_b  \cZ(\bar A(\theta),\theta)=0$, $\partial_\theta  \cZ(\bar A(\theta),\theta)=0$, and $\cZ \ge 0$;
  \item \label{i_GreenKubo} $\partial_b^2\cZ(b,\theta)>0$, and setting
    $\left[ \partial_b^2\cZ(\bar A(\theta),\theta)\right]^{-1}=\Sigma^2(\theta)$ we have
    \[
      \Sigma^2(\theta)= \mu_\theta\left(\hat A(\cdot,\theta)\otimes\hat A(\cdot,\theta)\right)+ 2 \sum_{m=1}^{\infty}\mu_\theta\left( \hat A(f_\theta^m(\cdot),\theta)\otimes\hat A(\cdot,\theta)\right).
    \]
  \end{enumerate}
\end{lem}
\begin{proof}
  Item~\ref{i_Zconvex} follows since, for each $\theta$, $\cZ(\cdot,\theta)$ is the
  (convex) conjugate function of a proper function, hence a convex lower semi-continuous
  function (see~\cite[Theorems 10.1, 12.2]{Rockafellar70}).

  Since $\chi_A$ is a strictly convex function, $\partial_\sigma\chi_A$ in an injective
  map and hence, by the theorem of Invariance of Domain, we conclude that $\bD_*(\theta)$
  is open.  The equality $\intr{\bD}(\theta)= \bD_*(\theta)$ follows then
  from~\cite[Theorem 23.4, Corollary 26.4.1]{Rockafellar70}.  We have thus proved
  item~\ref{i_bDstar}.

  Observe now that if $b\in\bD_*(\theta)$, then $\cZ(b,\theta)=\zet(\bar\sigma,b,\theta)$
  where $\bar\sigma=\bar\sigma(b,\theta)$ is the unique solution of
  $b=\partial_\sigma\chi_A(\bar\sigma(b,\theta), \theta)$.  Item~\ref{i_analyticity}
  follows by the implicit function theorem and the perturbation theory results collected
  in Appendix~\ref{subsec:pert-paramL} and~\ref{subsec:pert-theta}.

  By~\eqref{e_derivativeChi}
  $\partial_\sigma \chi_A(\sigma,\theta)=\mnu_{\theta,\langle\sigma,A\rangle}(
  A(\cdot,\theta))$, where $\mnu_{\theta,\langle\sigma, A\rangle}$ is the invariant
  probability measure associated to the operator~\eqref{eq:Lopzero}; in particular
  $\mnu_{0,\theta}=\mu_\theta$.  Then $\partial_\sigma \chi_A(0,\theta)=\bar A(\theta)$,
  which implies that $\bar A(\theta)\in \bD_*(\theta)$, hence proving item~\ref{i_nbhdSRB}.

  Next, let us prove item~\ref{i_goodSRB}. First $\cZ(\bar A(\theta),\theta)=0$ and $\cZ(b,\theta)\ge -\hat\chi_A(0,
  \theta)=0$.  Also, a direct computation shows that
  \begin{equation}\label{eq:zeta-sigma-rel}
    \partial_b \cZ(b,\theta)=\bar\sigma(b,\theta);
  \end{equation}
  in particular $\partial_b \cZ(\bar A(\theta),\theta)=0$.  Next,
  $(\partial_\theta\cZ)(\bar A(\theta),\theta)=-\partial_\theta\hat\chi_A(0,\theta)=0$
  by~\eqref{e_estimatesecondpartchi0}.

  Finally, by~\cite[Theorem 26.5]{Rockafellar70},
  $\partial_b^2\cZ(b,\theta)=[\partial_\sigma^2\chi_A(a(b,\theta),\theta)]^{-1}$.  This
  and ~\eqref{e_secondDerivativeChia} imply item~\ref{i_GreenKubo}.
\end{proof}

\begin{rem}\label{rem:z-concrete}
Arguing as in~\eqref{e_derivativeChi} it follows that
  $\|\partial_\sigma\chi_A\|_{\infty}\le \|A\|_\infty$, thus $\bD_*(\theta)$ is
  uniformly (in $\theta$) bounded.  It follows that if $b\in \bD_*(\theta)$ and $\sigma_b$ is the
  solution of
  \begin{equation}\label{eq:stationary-sigma}
    b = \partial_\sigma\chi_A(\sigma, \theta),
  \end{equation}
  then
  \begin{equation}\label{eq:good-zeta}
    \cZ(b,\theta)=\zet(\sigma_b, b, \theta),
  \end{equation}
  and $\partial_b\cZ(b,\theta)=\sigma_b$.
\end{rem}
\begin{rem} Using the above facts, it would be possible to show that $\Ifirst_{\thetas}$ is lower
  semi-continuous with respect to the uniform topology.  We refrain from proving it here
  because the proof will be given later in Lemma~\ref{lem:lws}.
\end{rem}
We conclude this subsection with a useful estimate.
\begin{lem}\label{l_gamma-bound}
  Fix $\theta\in\bT$ and let $\dualsigma,\sigma\in\bR^d$ so that
  $\dualsigma = \partial_\sigma\hat\chi_A(\sigma,\theta)$; then there exists $\Constgs >
  1$ so that
  \begin{enumerate}
    \item $ \|b\|\le \Constgs \|\sigma\|$
    \item $ \Constgs\invr \min\{\|\sigma\|, \|\sigma\|^2\}%
    \le \langle \sigma,\dualsigma\rangle %
    \le \Constgs \min\{\|\sigma\|, \|\sigma\|^2\}.$
  \end{enumerate}
\end{lem}
\begin{proof}
  The first item follows by the definition, equations~\eqref{eq:derivative-m} and the fact
  that $\partial_{\sigma^2}\hat\chi_A(\cdot,\theta)$ is bounded (as a quadratic form).  We
  proceed to prove the second item.  Let $\hat\sigma=\|\sigma\|\invr\sigma$; then, by
  definition:
\begin{align} \label{eq:gamma-bound1}%
  \langle \sigma,\dualsigma\rangle%
  &=\int_0^{\|\sigma\|}\langle\sigma,\partial_{\sigma}^2\hat\chi_A(\lambda\hat\sigma,\theta)\hat\sigma\rangle\deh\lambda.
\end{align}
Moreover,~\eqref{e_secondDerivativeChi} and~\ref{eq:cobo} imply that
$\partial_\sigma^2\hat\chi_A(\cdot,\theta)\ge \Consti \Id$ (as quadratic forms) for
$\|\sigma\|\le \sigma_\#$ for some $\sigma_\#$ sufficiently small.  Observe that
$\sigma_\#$ depends on $f$ and $A$ only.  Hence by~\eqref{eq:gamma-bound1} we gather
\begin{align*}
  \langle\sigma,\dualsigma\rangle
  &\ge \Consti
    \int_0^{\min\{\|\sigma\|,\sigma_\#\}}\langle\sigma,\hat\sigma\rangle\deh\lambda %
    \ge \Consti \min\{\|\sigma\|,\|\sigma\|^2\},
\end{align*}
which gives the lower bound.

On the other hand, since $\dualsigma = \partial_\sigma\hat\chi_A(\sigma,\theta)$, we have
$\dualsigma\in\bD_*(\theta)$; hence, by Remark~\ref{rem:z-concrete}, $\dualsigma$ is
uniformly bounded and thus we obtain $\langle\sigma,\dualsigma\rangle\le\Const\|\sigma\|$.

Moreover, using once again~\eqref{eq:gamma-bound1} and since $\partial_\sigma^2\hat\chi_A$
is locally bounded from above (see~\eqref{e_secondDerivativeChi0}), $\langle\sigma,\dualsigma\rangle\le\Const\|\sigma\|^2$, for all $\sigma\leq\sigma_\#$, which concludes the proof.
\end{proof}
\subsection{Entropy characterization}\label{subsec:rate-entropy}\ \newline
As already mentioned, $\cZ$ can also be expressed in terms of entropy (see
e.g.~\cite{Kifer09}).
\begin{lem}\label{lem:entropy} For any $\theta\in\bT^1$ and $b\in\bR^d$, let
  $\cM_\theta(b)=\{\mnu\in\cM_\theta\st \mnu(A(\cdot,\theta))=b\}$, where $\cM_\theta$
  denotes the set of $f_\theta$-invariant probability measures.  Then:
  \begin{align}\label{e_entropy}
    \cZ(b,\theta)= -\sup_{\mnu\in\cM_\theta(b)}\{ \kse_\theta(\mnu) - \mnu(\log f_\theta')\},
  \end{align}
  where $\kse_\theta(\mnu)$ is the Kolmogorov-Sinai metric entropy of the measure $\mnu$
  with respect to the map $f_\theta$. In particular\footnote{ Recall that we adopt the
    convention $\sup\,\emptyset = -\infty$. },
  $\bD(\theta)=\{b\in\bR^d\st \cM_\theta(b)\neq \emptyset\}$.
\end{lem}
\begin{proof}
  It is well known (see e.g.~\cite[Remark 2.5]{Baladibook}), that
  \begin{equation}\label{rem:top-ent}
    \begin{split}
      \chi_A(\sigma, \theta) &= \sup_{\mnu\in\cM_\theta}\left\{ \kse_\theta(\mnu) + \mnu(\langle \sigma, A\rangle-\log f_\theta')\right\}\\
      &=\kse_\theta({\mnu_{\theta,\langle \sigma, A\rangle}})+\mnu_{\theta,\langle \sigma,
        A\rangle}(\langle \sigma, A\rangle-\log f_\theta')
    \end{split}
  \end{equation}
  where
  $\mnu_{\theta,\langle \sigma, A\rangle}(g)=m_{\theta,\langle \sigma,
    A\rangle}(g\,h_{\theta,\langle \sigma, A\rangle})$ and
  $m_{\theta,\langle \sigma, A\rangle}$ and $h_{\theta,\langle \sigma, A\rangle}$ are
  respectively the left and right eigenvectors of $\tO_{\theta,\langle\sigma, A\rangle}$
  corresponding to the eigenvalue $e^{\chi_A(\sigma,\theta)}$, normalized so that
  $\mnu_{\theta,\langle \sigma, A\rangle}$ is a probability measure.  We record, for
  future use, some properties of the entropy: since each $f_\theta$ is expanding,
  $\kse_\theta$ is (as a function of $\mnu$) an upper-semicontinuous function with respect
  to the weak topology (see~\cite[Theorem 4.5.6]{Keller98}).  Also, $\kse_\theta$ is a
  convex affine function\footnote{ i.e.\ it is both convex and concave.} by~\cite[Theorem
  3.3.2]{Keller98}. Incidentally, this implies that the $\sup$ in~\eqref{rem:top-ent}
  would be the the same if taken only on ergodic measures, see~\cite[Theorem
  4.3.7]{Keller98}.  Then, using the definition~\eqref{eq:M-0}:
  \begin{align}\notag
    \cZ(b,\theta)
    &= \sup_{\sigma\in\bR^d}\left\{\langle\sigma,b\rangle -\sup_{\mnu\in\cM_\theta}[
      \kse_\theta(\mnu)+\mnu(\langle \sigma, A\rangle-\log f_\theta')]\right\}\\
    &\le \sup_{\sigma\in\bR^d}\left\{\langle\sigma,b\rangle -\sup_{\mnu\in\cM_\theta(b)}[
      \kse_\theta(\mnu)+\mnu(\langle \sigma, A\rangle-\log f_\theta')]\right\}\notag\\
    &\le -\sup_{\mnu\in\cM_\theta(b)}\{ \kse_\theta(\mnu)-\mnu(\log f_\theta')\}.\label{e_rateFunctionLowerBoundEntropy}
  \end{align}
  In particular, the above implies that if $\cZ(b,\theta) = \infty$,
  then~\eqref{e_entropy} holds.  We may thus assume that $\cZ(b,\theta) < \infty$.
  Observe that:
  \begin{align*}
    \cZ(b,\theta)
    &= \sup_{\sigma\in\bR^d} \left\{- \kse_\theta({\mnu_{\theta,\langle \sigma, A\rangle}})+\mnu_{\theta,\langle \sigma, A\rangle}(\langle \sigma, b-A\rangle)+\mnu_{\theta,\langle \sigma, A\rangle}(\log f_\theta')\right\}.
  \end{align*}
  Note that the first term on the right hand side is bounded by the topological entropy
  \cite[Theorem 4.2.3]{Keller98}, while the last term is bounded because $f_\theta'>1$.
  Thus, since Lemma~\ref{lem:domainZ}\ref{i_goodSRB} implies that $\cZ\ge 0$ and we assume
  $\cZ(b,\theta) < \infty$, we conclude that
  \begin{align}\label{e_assumptionLambda}
    \sup_{\sigma\in\bR^d}|\mnu_{\theta,\langle \sigma, A\rangle}(\langle \sigma,
    b-A\rangle)|<\infty.
  \end{align}
  For any $\lambda\in\bR$ consider the function $K_\lambda\in\cC^0(\bR^d,\bR^d)$ defined
  by $K_\lambda(\sigma)=\mnu_{\theta, \lambda\langle \sigma, A\rangle}( b-A)$.  Since
  $\mnu_{\theta, \lambda\langle \sigma, A\rangle}$ is a probability measure, we have
  $K_\lambda(\bR^d)\subset B=\{x\in\bR^d\st\|x\|\le \|b-A\|_\infty\}$.  By Brouwer
  fixed-point theorem it follows that there exists $\sigma_\lambda\in B$ such that
  $K_\lambda(\sigma_\lambda)=\sigma_\lambda$.  Accordingly, for any non-negative sequence
  $(\lambda_j)$ with $\lambda_j\to +\infty$:
  \begin{align*}
    \langle\lambda_j\sigma_{\lambda_j}, \mnu_{\theta, \langle
    \lambda_j\sigma_{\lambda_j}, A\rangle}(b-A)\rangle=\lambda_j\| \mnu_{\theta,
    \langle \lambda_j\sigma_{\lambda_j}, A\rangle}(b-A)\|^2\ge 0.
  \end{align*}
  Since the left hand side is bounded, see~\eqref{e_assumptionLambda},
  $\lim_{j\to\infty}\mnu_{\theta, \langle \lambda_j\sigma_{\lambda_j}, A\rangle}(b-A)=0$.
  By passing to a subsequence $\{j_k\}$ we can assume, setting
  $\bar\sigma_k=\lambda_{j_k}\sigma_{\lambda_{j_k}}$, that
  $\mnu_{\theta, \langle\bar\sigma_k, A\rangle}$ weakly converges to a measure $\mnu_*$.
  Moreover, for any $k\in\bN$
  \begin{align*}
    \cZ(b,\theta)\ge - \kse_\theta({\mnu_{\theta, \langle \bar\sigma_k, A\rangle}})+\mnu_{\theta, \langle \bar\sigma_k, A\rangle}(\log f_\theta').
  \end{align*}
  Since $\kse_\theta$ is upper-semicontinuous, we conclude that
  \begin{align*}
    \cZ(b,\theta)\ge -\limsup_{k\to\infty}\left[ \kse_\theta({\mnu_{\theta, \langle \bar\sigma_k, A\rangle}})-\mnu_{\theta, \langle \bar\sigma_k, A\rangle}(\log f_\theta')\right]\ge -\kse_\theta({\mnu_*})+\mnu_*(\log f_\theta').
  \end{align*}
  Finally, notice that $\mnu_*\in\cM_\theta$ and $\mnu_*(b-A)=0$, hence $\mnu_*\in\cM_\theta(b)$.  Thus we have
  \begin{align*}
    \cZ(b,\theta)\ge -\sup_{\mnu\in\cM_\theta(b)}\{ \kse_\theta({\mnu})-\mnu(\log f_\theta')\},
  \end{align*}
  which together with~\eqref{e_rateFunctionLowerBoundEntropy} concludes the proof of the
  lemma.
\end{proof}
The entropy characterization allows to add two useful properties to those listed in Lemma~\ref{lem:domainZ}.
\begin{lem}\label{lem:hausdorff}
The following properties hold:
  \begin{enumerate}[label = (\arabic*),start = 6]
 \item\label{i_compactD} $\bD(\theta)$ is a compact set for all $\theta\in\bT$;
 \item\label{i_lipschitzD} The map $\theta\mapsto\bD(\theta)$ is Lipschitz in the Hausdorff metric.
 \end{enumerate}
\end{lem}
\begin{proof}
  If $\{b_n\}\subset \bD(\theta)$, then there exists $\{\nu_n\}\subset \cM_\theta$ such
  that $\nu_n(A)=b_n$.  Since $\cM_\theta$ is compact in the weak topology, by extracting
  a convergent subsequence, item~\ref{i_compactD} follows.

  To prove item~\ref{i_lipschitzD}, note that all the maps $f_\theta$ are topologically
  conjugated to $f_0$ by a homeomorphism $\xi(\cdot,\theta)=\xi_\theta(\cdot)$ with the
  property\footnote{ This is folklore, e.g. it can be proven using shadowing and keeping
    track of the constants.}
  \[
    \|\xi_\theta-\xi_{\theta'}\|\nc0+\|\xi_\theta^{-1}-\xi_{\theta'}^{-1}\|\nc0\leq \Const |\theta-\theta'|.
  \]
  Accordingly, using the notation of Lemma~\ref{lem:entropy},
  $\cM(\theta)=(\xi_{\theta})_*\cM(0)$. Hence, for each $\theta, \theta'\in\bT$ and
  $b\in\bD(\theta)$ there exist $\nu\in\cM(0)$ such that
  $b=\nu(\bar A(\xi_{\theta}(\cdot),\theta))$ and
  $b'=\nu(\bar A(\xi_{\theta'}(\cdot),\theta'))\in \bD(\theta')$. Then
  \[
    \|b-b'\|\leq \|\nu(\bar A(\xi_{\theta}(\cdot),\theta))-\nu(\bar A(\xi_{\theta'}(\cdot),\theta'))\|\leq \Const |\theta-\theta'|.
  \]
  Thus $b$ must belong to a $\Const |\theta-\theta'|$ neighborhood of $\bD(\theta')$ and
  exchanging the role of $\theta, \theta'$, the item follows.
\end{proof}
Lemma~\ref{lem:entropy} allows to specify exactly the effective domain of $\Ifirst_{\thetas}$:
\begin{align*}
\fkD(\Ifirst_{\thetas}) = \{\gamma\in C([0,T])\st \Ifirst_{\thetas}(\gamma) < \infty\}.
\end{align*}
In fact $\Ifirst_{\thetas}(\gamma)<\infty$ if and only if $\gamma(0)=0$, $\gamma$ is
Lipschitz and $\cM_{\bar \theta(t,\thetas)}(\gamma'(t))\neq \emptyset$ for almost all
$t\in [0,T]$. Having fixed $\thetas\in\bT$, we will call {\em s-admissible} the paths
such that $\gamma\in \fkD(\Ifirst_{\thetas})$.  To use effectively this definition, it would
be convenient if one could characterize s-admissibility in terms of periodic orbits.  To
this end, given a periodic orbit $p$, let $\mnu_p$ the measure determined by the average
along the orbit of $p$.

\begin{lem}\label{lem:unconstrained} Given $\theta\in\bT$ and $b\in\bR^d$, $b\in\intr\bD(\theta)$ if
  and only if there exist $d+1$ periodic orbits $\{p_i\}$ of $f_\theta$ such that the
  convex hull of $\mnu_{p_i}(b- A(\cdot,\theta))$ contains a neighborhood of zero.  Also
  if there exists $n\in\bN$ such that
  \begin{align*}
    \inf_{x\in\bT}\left|\langle b,b-\frac 1n
    \sum_{k=0}^{n-1}A(f_\theta^k(x),\theta)\rangle\right|>0,
  \end{align*}
  then $b\not\in\bD(\theta)$.
\end{lem}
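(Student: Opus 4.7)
The plan is to use Lemma~\ref{lem:entropy}, which identifies $\bD(\theta)$ with $\{\nu(A(\cdot,\theta))\st\nu\in\cM_\theta\}$, together with compactness and convexity of $\cM_\theta$ and the weak-$*$ density of periodic orbit measures in $\cM_\theta$ (a classical consequence of the specification property for uniformly expanding $\cC^4$ maps). Throughout I will write $\nu_p$ for the periodic orbit measure associated to a periodic orbit $p$, and treat $\cM_\theta(b)$ as the preimage of $b$ under the continuous affine map $\nu\mapsto\nu(A(\cdot,\theta))$ on $\cM_\theta$.

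For the second (simpler) claim I would set $\phi(x)=\langle b,b-\tfrac1n\sum_{k=0}^{n-1}A(f_\theta^k(x),\theta)\rangle$. Since $\phi$ is continuous on the connected torus $\bT^1$ and the hypothesis $\inf_x|\phi|>0$ rules out a zero, $\phi$ has constant sign, with $|\phi|\geq c$ for some $c>0$. Integrating against an arbitrary $\nu\in\cM_\theta$ and using $f_\theta$-invariance, one gets $\nu(\phi)=\langle b,b-\nu(A(\cdot,\theta))\rangle$, which is bounded away from zero; therefore $\nu(A(\cdot,\theta))\neq b$ for every $\nu\in\cM_\theta$, so $\cM_\theta(b)=\emptyset$ and Lemma~\ref{lem:entropy} yields $b\notin\bD(\theta)$.

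For the interior characterization, the easy implication is almost tautological: if periodic orbits $p_0,\dots,p_d$ satisfy $0\in\operatorname{int}\operatorname{conv}\{\nu_{p_i}(b-A(\cdot,\theta))\}$, then equivalently $b\in\operatorname{int}\operatorname{conv}\{\nu_{p_i}(A(\cdot,\theta))\}$, and each convex combination $\nu_\lambda=\sum_i\lambda_i\nu_{p_i}$ (with $\lambda$ in the standard $d$-simplex) lies in $\cM_\theta$, so the continuous map $\lambda\mapsto\nu_\lambda(A(\cdot,\theta))$ sweeps out $\operatorname{conv}\{\nu_{p_i}(A(\cdot,\theta))\}$. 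A neighborhood of $b$ is therefore contained in $\bD(\theta)$, so $b\in\ring{\bD}(\theta)$.

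For the converse, starting from $b\in\ring{\bD}(\theta)$, I would pick $r>0$ with $\overline{B(b,r)}\subset\bD(\theta)$ and choose $d+1$ points $q_0,\dots,q_d\in B(b,r)$ placed near the vertices of a small regular simplex centered at $b$, so that $\operatorname{conv}\{q_i\}\supset B(b,r')$ for some $r'>0$. Lemma~\ref{lem:entropy} provides $\nu_i\in\cM_\theta$ with $\nu_i(A(\cdot,\theta))=q_i$. The key step is then to replace each $\nu_i$ by a periodic orbit measure $\nu_{p_i}$ arbitrarily close in the weak-$*$ topology; since $A(\cdot,\theta)$ is continuous, $\nu_{p_i}(A(\cdot,\theta))\to q_i$, and because the inclusion $B(b,r'/2)\subset\operatorname{conv}\{q_i\}$ is stable under small perturbations of the $q_i$, the convex hull of $\{\nu_{p_i}(A(\cdot,\theta))\}$ still contains a neighborhood of $b$ for sufficiently good approximations; translating by $b$ gives the desired inclusion for $\{\nu_{p_i}(b-A(\cdot,\theta))\}$. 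The only nontrivial ingredient is this density of periodic orbit measures in $\cM_\theta$, which for a $\cC^4$ uniformly expanding $f_\theta$ is classical (Bowen's specification theorem or direct shadowing); so the main obstacle is bibliographical rather than mathematical, and the rest of the argument is soft convex analysis combined with invariance of integrals under $f_\theta$.
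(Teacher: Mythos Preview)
Your proposal is correct and follows essentially the same route as the paper: for the interior characterization both pick $d+1$ points in $\bD(\theta)$ whose convex hull contains a neighborhood of $b$, realize them by invariant measures via Lemma~\ref{lem:entropy}, and approximate by periodic-orbit measures (the paper cites~\cite{Parthasarathy61} for weak-$*$ density, you invoke specification). For the second claim your argument via Lemma~\ref{lem:entropy} is slightly more direct than the paper's, which instead goes through the characterization of $\ring\bD(\theta)$ as the range of $\partial_\sigma\chi_A$ together with a neighborhood-stability argument, but the substance is identical.
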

\begin{proof}
  If the convex hull contains a neighborhood of zero, then there exists $\delta>0$ such that, for all $b'\in\bR^d$, $\|b-b'\|<\delta$,
  there exists $\{\alpha_i\}_{i=1}^{d+1}\subset \bRp$, $\sum_{i=1}^{d+1}\alpha_i=1$ such
  that $\sum_{i=1}^{d+1}\alpha_i\mnu_{p_i}(A(\cdot,\theta))=b'$, hence $b'\in\bD(\theta)$
  and $b\in\bD_*(\theta)$.  On the other hand if $b\in\bD_*(\theta)$ then there are
  $\{b_i\}_{i=1}^{d+1}\subset \bD_*(\theta)$ such that $b$ belongs to the interior of
  their convex hull.  Hence there exists $\mnu_i\in\cM_\theta(b_i)$ such that their convex
  combination gives an element of $\cM_\theta(b)$.  Since the measures supported on
  periodic orbits are weakly dense in $\cM_\theta$ (see\footnote{
    In fact the proof in~\cite{Parthasarathy61} is for the invertible case but it applies
    almost verbatim to the present one.}~\cite{Parthasarathy61}) it is possible to find periodic orbits $\{p_i\}$
  such that the convex hull of $\mnu_{p_i}(A(\cdot,\theta))$ contains a neighborhood of
  $b$, hence the necessity of the condition.

  To prove the other necessary condition, note that, by~\eqref{e_derivativeChi},~\eqref{eq:stationary-sigma} reads
  \begin{equation}\label{eq:unconstraint}
    b=\mnu_{\theta,\langle \sigma, A\rangle}(A(\cdot,\theta)).
  \end{equation}
  Thus $b\in \bD_*(\theta)$ if and only if~\eqref{eq:unconstraint} has a solution.  If the second
  condition in the lemma is satisfied, then for each invariant measure $\mnu$ we have
  $|\langle b,b-\mnu(A(\cdot,\theta))\rangle|>0$, hence equation~\eqref{eq:unconstraint}
  cannot be satisfied.  Moreover, the same conclusion holds for any $b'$ in a small
  neighborhood of $b$, hence the claim.
\end{proof}

\subsection{An equivalent definition}\label{subsec:rate1}\ \newline
Unfortunately, in our subsequent discussion, the rate function will appear first in a much
less transparent form, a priori different from the definition~\eqref{eq:rate1}.  Namely,
recall the notation~\eqref{eq:kappaZ}; then for any $\sigma\in\bR^d$ and any Lipschitz
path $\gamma\in\cC^0([0,T],\bR^d)$, we introduce the shorthand notation
  \begin{align}\label{e_bzetgt_definition}
    \bzetgts(\sigma,s)= \zet(\sigma, \gamma'(s), \bar\theta(s,\thetas)).
  \end{align}
  \begin{rem}\label{rem:further-notice}
    For further use remark that Lemmata~\ref{lem:large-sup},~\ref{lem:lws}
    and~\ref{sublem:rate-f} hold verbatim if in the above definition of
    $ \bzetgts(\sigma,s)$ one substitutes $\bar\theta(s,\thetas)$ with some other
    continuous function of $s$.
  \end{rem}
  Also let us fix $C\ge 2\|A\|_{\cC^0}$ and define:
  \begin{align*}
    \Lip_{C,*}=\{\gamma\in \cC^0([0,T],\bR^d):\; \gamma(0)=0, \|\gamma(t)-\gamma(s)\|\le
    C|t-s|\;\forall\;t,s\in[0,T]\},
  \end{align*}
  Then, the functional $I_{\thetas}:\cC^0([0,T],\bR^d)\to \bR\cup\{+\infty\}$ will appear naturally, where
  \begin{equation}\label{eq:ratefunction}
    I_{\thetas}(\gamma)= \begin{cases}+\infty &\textrm{if } \gamma\not \in \Lip_{C,*}\\
      \sup\limits_{\sigma\in \BV}\int_0^T\bzetgts(\sigma(s),s) \deh{}s&\textrm{otherwise.}
    \end{cases}
  \end{equation}
  It is the task of this subsection to show that the two definitions~\eqref{eq:rate1}
  and~\eqref{eq:ratefunction} coincide.  At a superficial level, it amounts to prove that we
  can bring the $\sup$ inside the integral.  This will be proved essentially via a
  compactness argument.

  First, observe that $I_{\thetas}$ is convex, because it is the conjugate function of a
  proper function.  Moreover, $I_{\thetas}\ge 0$ (just consider $\sigma=0$ in the sup) and
  since $\hat\chi_A\ge 0$ we obtain $I_{\thetas}(\gavg(\cdot,\thetas))=0$, where recall
  that $\gavg(s,\theta)$ (defined in~\eqref{e_gavgDef}) satisfies the equation
  $\gavg'(t,\theta) = \bar A(\bar\theta(t,\theta))$.   Our first task is to show that we
  can replace the $\sup$ on $\sigma\in\BV$ with the $\sup$ on $\sigma\in L^1$.
  \begin{lem}\label{lem:large-sup}
    Let  $\gamma\in \Lip_{C,*}$; then:
    \begin{align*}
      I_{\thetas}(\gamma) &= \sup_{\sigma\in L^1}\int_0^T\bzetgts (\sigma(s),s)\deh{}s.
    \end{align*}
  \end{lem}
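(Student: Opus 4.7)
The inequality $\sup_{\sigma\in\BV}\leq\sup_{\sigma\in L^1}$ is trivial since $\BV\subset L^\infty\subset L^1$ on a bounded interval. The substance is therefore to show the reverse inequality: given any $\sigma\in L^1([0,T],\bR^d)$, I want to produce a sequence $\sigma_n\in\BV([0,T],\bR^d)$ with
\[
\int_0^T \bar\zet_\gamma(\sigma_n(s),s)\,\deh s \longrightarrow \int_0^T \bar\zet_\gamma(\sigma(s),s)\,\deh s.
\]
The decisive structural observation is that, by~\eqref{eq:kappaZ}, $\bar\zet_\gamma(\sigma,s)=\langle\sigma,\gamma'(s)-\bar A(\bar\theta(s,\thetasl))\rangle-\hat\chi_A(\sigma,\bar\theta(s,\thetasl))$ is continuous (in fact concave) in $\sigma$, and the variational formula~\eqref{rem:top-ent} together with $\|A\|_\infty<\infty$ gives a uniform Lipschitz bound $|\hat\chi_A(\sigma,\theta)|\leq \|A\|_\infty|\sigma|$. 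Since $\gamma\in\Lip_{C,*}$, $\gamma'\in L^\infty$, so
\begin{equation*}
|\bar\zet_\gamma(\sigma,s)|\leq K|\sigma|,
\end{equation*}
uniformly in $s\in[0,T]$, for some constant $K$ depending only on $A$ and $C$.

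I would proceed in two steps. First, a truncation step: for $M>0$, let $\sigma_M(s)=\sigma(s)\mathbf{1}_{\{|\sigma(s)|\leq M\}}$. Since $\bar\zet_\gamma(0,s)=0$ (recall $\hat\chi_A(0,\theta)=0$),
\[
\Bigl|\int_0^T\bigl[\bar\zet_\gamma(\sigma,s)-\bar\zet_\gamma(\sigma_M,s)\bigr]\deh s\Bigr|\leq K\int_{\{|\sigma|>M\}}|\sigma(s)|\,\deh s\xrightarrow[M\to\infty]{}0
\]
by absolute continuity of the Lebesgue integral, using $\sigma\in L^1$. So it suffices to approximate bounded measurable $\sigma_M$ in $\BV$.

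Second, a step-function approximation: fix $M$ and take $\sigma_M\in L^\infty([0,T],\bR^d)$ with $\|\sigma_M\|_\infty\leq M$. By standard measure-theoretic density (e.g.\ Lebesgue differentiation applied on a sequence of dyadic partitions, or Lusin plus uniform continuity), there exists a sequence of piecewise-constant functions $\sigma_{M,n}\to\sigma_M$ a.e.\ with $\|\sigma_{M,n}\|_\infty\leq M$; each such $\sigma_{M,n}$ lies in $\BV$. By continuity of $\bar\zet_\gamma$ in its first argument, $\bar\zet_\gamma(\sigma_{M,n}(s),s)\to\bar\zet_\gamma(\sigma_M(s),s)$ a.e., and by the linear bound above, $|\bar\zet_\gamma(\sigma_{M,n}(s),s)|\leq KM$, so dominated convergence yields
\[
\int_0^T\bar\zet_\gamma(\sigma_{M,n}(s),s)\,\deh s\longrightarrow\int_0^T\bar\zet_\gamma(\sigma_M(s),s)\,\deh s.
\]
Combining the two steps, $\int\bar\zet_\gamma(\sigma,s)\deh s\leq\sup_{\BV}\int\bar\zet_\gamma(\cdot,s)\deh s$ for every $\sigma\in L^1$, giving the desired equality.

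The argument is essentially routine approximation; the only point requiring attention is the uniform linear control $|\bar\zet_\gamma(\sigma,s)|\leq K|\sigma|$, which is what simultaneously makes the truncation step work (via absolute continuity of the integral of $|\sigma|$) and provides the dominating function for dominated convergence in the piecewise-constant step. Without this control one could imagine $L^1$ functions whose $\bar\zet_\gamma$-integral is not approachable from $\BV$, but the uniform linear growth of $\hat\chi_A$ in $\sigma$ rules this out.
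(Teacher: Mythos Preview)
Your proof is correct. The paper's argument is shorter because it exploits a bound you did not invoke: beyond the linear growth $|\bar\zet_\gamma(\sigma,s)|\leq K|\sigma|$, the paper observes (via~\eqref{e_derivativeChi}) that $\|\partial_\sigma\bar\zet_\gamma(\sigma,s)\|$ is bounded uniformly in $\sigma$ and $s$. This makes the functional $J_\gamma(\sigma)=\int_0^T\bar\zet_\gamma(\sigma(s),s)\,\deh s$ globally Lipschitz in the $L^1$ topology, so density of $\BV$ in $L^1$ finishes the proof in one line, with no need to split into a truncation step and a dominated-convergence step. Your route, by contrast, uses only linear growth and pointwise continuity in $\sigma$, which is why you need the two-stage approximation; it is slightly more hands-on but would still work in settings where the uniform Lipschitz bound on $\partial_\sigma\bar\zet_\gamma$ might fail.
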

  \begin{proof}
    First, notice that~\eqref{e_derivativeChi} implies that
    $\|\partial_\sigma\bar \zet_{\gamma,\thetas}(\sigma,s)\|\le \Const(C+1)$ (and
    consequently $\|\bzetgts(\sigma,s)\|\le \Const(C+1) \|\sigma\|$) for
    all $s\in[0,T]$.  It follows that, for all $\gamma\in \Lip_{C,*}$ the functional
    $\sigma\mapsto\int_0^T\bzetgts(\sigma(s),s) \deh{}s$
    is continuous in the $L^1$ topology.

    Let $\sigma\in L^1$; since $\BV$ is dense in $L^1$, \cite[Theorem 2.16]{Lieb-Loss}, for any $\epsilon>0$ there exists
    $\sigma_\epsilon\in \BV$ such that $\|\sigma-\sigma_\epsilon\|\nl1 < \epsilon$ and
    thus
    \[
    \begin{split}
      \int_0^T\bzetgts(\sigma(s),s)\deh{s}
      & \le \; \Const \eps+\int_0^T\bzetgts(\sigma_\epsilon(s),s)\deh{s}
       \\
        & \le \Const \eps+\sup_{\bar\sigma\in\BV}\int_0^T\bzetgts(\bar\sigma(s),s)\deh{s}.
    \end{split}
   \]
    Taking the limit $\epsilon\to 0$ first and then sup on $\sigma\in L^1$ we have that the
    sup on $\BV$ equals the sup on $L^1$, proving the lemma.
  \end{proof}
  \begin{lem}\label{lem:lws}
    The functional $I_{\thetas}$ is lower semi-continuous on $\cC^0([0,T],\bR^d)$.
  \end{lem}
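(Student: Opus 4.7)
The plan is to realize $I$ as a pointwise supremum of functionals, each of which I will show is continuous on $\Lip_{C,*}$ in the uniform topology. Specifically, for each $\sigma\in\BV$ set $J_\sigma(\gamma)=\int_0^T\bar\zet_\gamma(\sigma(s),s)\,\deh s$, so that $I(\gamma)=\sup_{\sigma\in\BV}J_\sigma(\gamma)$ for $\gamma\in\Lip_{C,*}$ and $I\equiv+\infty$ outside. Since $\Lip_{C,*}$ is closed in $\cC^0([0,T],\bR^d)$ (a uniform limit of functions with Lipschitz constant $C$ vanishing at $0$ inherits both properties), showing that each $J_\sigma$ is continuous on $\Lip_{C,*}$ will immediately give that the sup is lower semi-continuous there, and hence, combined with $I\equiv+\infty$ off the closed set $\Lip_{C,*}$, will yield lsc of $I$ on all of $\cC^0([0,T],\bR^d)$.

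The main obstacle is that, unpacking the definition
\[
\bar\zet_\gamma(\sigma,s)=\langle\sigma,\gamma'(s)\rangle-\langle\sigma,\bar A(\bar\theta(s,\thetaslz))\rangle-\hat\chi_A(\sigma,\bar\theta(s,\thetaslz)),
\]
one sees that $J_\sigma$ depends on $\gamma$ only through its derivative $\gamma'$, and uniform convergence of paths says nothing about derivatives. The remedy is integration by parts: only the first term involves $\gamma$, and it is linear in $\gamma'$. For $\sigma\in\BV$ and $\gamma$ absolutely continuous with $\gamma(0)=0$, the Lebesgue--Stieltjes identity gives
\[
\int_0^T\langle\sigma(s),\gamma'(s)\rangle\,\deh s=\langle\sigma(T),\gamma(T)\rangle-\int_{[0,T]}\langle\gamma(s),\deh\sigma(s)\rangle,
\]
with any standard convention for $\sigma$ at its countably many jumps. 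After this transformation, $J_\sigma$ depends on $\gamma$ only through point values: the boundary term is continuous in $\gamma(T)$, and the Stieltjes integral is continuous in $\gamma$ because the total variation $|\deh\sigma|([0,T])$ is finite and $\gamma_n\to\gamma$ uniformly.

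With this continuity in hand the conclusion is immediate. Given $\gamma_n\to\gamma$ with $\liminf_n I(\gamma_n)<\infty$, pass to a subsequence along which $I(\gamma_n)\leq M<\infty$; then each $\gamma_n\in\Lip_{C,*}$, the closedness of $\Lip_{C,*}$ gives $\gamma\in\Lip_{C,*}$, and for each $\sigma\in\BV$ we have $J_\sigma(\gamma)=\lim_n J_\sigma(\gamma_n)\leq\liminf_n I(\gamma_n)$. Taking the supremum in $\sigma$ on the left-hand side yields the desired inequality $I(\gamma)\leq\liminf_n I(\gamma_n)$. I do not expect any step to be a genuine obstacle: the real content is the integration-by-parts reformulation of $J_\sigma$, which reduces an apparent derivative dependence to the pairing of a $\BV$ signed measure with a continuous path.
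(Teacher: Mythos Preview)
Your proposal is correct and takes essentially the same approach as the paper: both argue that $\Lip_{C,*}$ is closed, and both reduce the lower semi-continuity of $J_\sigma$ to continuity of $\gamma\mapsto\int_0^T\langle\sigma,\gamma'\rangle$ via integration by parts. The only technical difference is that you integrate by parts directly against the Lebesgue--Stieltjes measure $\deh\sigma$ for $\sigma\in\BV$, whereas the paper approximates $\sigma$ in $L^1$ by a $\cC^1$ function $\sigma_\epsilon$, controls the approximation error using the uniform Lipschitz bound $\|\gamma_{n_j}'\|_\infty\le C$, and then integrates by parts classically against $\sigma_\epsilon'$; your route is slightly more direct.
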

  \begin{proof}
    Consider a sequence $\{\gamma_n\}\subset\cC^0([0,T],\bR^d)$ converging uniformly to
    $\gamma$.  If $\liminf_{n\to\infty}I_{\thetas}(\gamma_n)=+\infty$, then obviously
    $\liminf_{n\to\infty}I_{\thetas}(\gamma_n)\ge I_{\thetas}(\gamma)$.  Otherwise, there
    exists a subsequence $\{\gamma_{n_j}\}$, $M>0$ and $j_0\in\bN$ such that
    \begin{align*}
      \liminf_{n\to\infty}I_{\thetas}(\gamma_n)=\lim_{j\to\infty}I_{\thetas}(\gamma_{n_j}),
    \end{align*}
    and $I_{\thetas}(\gamma_{n_j})\le M$ for all $j\ge j_0$.  This implies that if
    $j \ge j_0$, then $\gamma_{n_j}\in\Lip_{C,*}$; hence, we also conclude that
    $\gamma\in\Lip_{C,*}$.  This implies that, for any $\sigma\in L^1$,
    \begin{align*}
      \lim_{j\to\infty}\int_0^T\langle\sigma,\gamma_{n_j}'\rangle=\int_0^T\langle\sigma,\gamma'\rangle.
    \end{align*}
    In fact, for any $\epsilon>0$ there exists $\sigma_\epsilon\in\cC^1$, such that
    $\|\sigma-\sigma_\epsilon\|_{L^1}\le \epsilon$,  \cite[Theorem 2.16]{Lieb-Loss}. Then
    \begin{align*}
      \left|\int_0^T\langle\sigma,\gamma_{n_j}'\rangle-\int_0^T\langle\sigma,\gamma'\rangle\right|\le
      2C\epsilon+\left|\int_0^T\langle\sigma_\epsilon',\gamma_{n_j}-\gamma\rangle\right|+|\langle\sigma_\epsilon(T),\gamma_{n_j}(T)-\gamma(T)\rangle|.
    \end{align*}
    We conclude that, for any $\sigma\in L^1$,
    \begin{align*}
      \liminf_{n\to\infty}I_{\thetas}(\gamma_n)
      &\ge \lim_{j\to\infty}\int_0^T\left[\langle\sigma(s),\gamma'_{n_j}(s))\rangle-\chi_A(\sigma(s),\bar\theta(s,\thetas))\right] \deh{}s\\
      &=\int_0^T\left[\langle\sigma(s),\gamma'(s)\rangle-\chi_A(\sigma(s),\bar\theta(s,\thetas))\right] \deh{}s.
    \end{align*}
    The proof follows by taking the $\sup$ on $\sigma$.
  \end{proof}
  We can finally show that the definition of $I_{\thetas}$ given in the current section
  coincides with~\eqref{eq:rate1}.
  \begin{lem}\label{sublem:rate-f}%
    For any $\thetas\in\bT$, let $\gamma\in\cC^0([0,T],\bR^d)$, then:
    \begin{align*}
      I_{\thetas}(\gamma) = \Ifirst_{\thetas}(\gamma).
    \end{align*}
  \end{lem}
  \begin{proof}
    If $\gamma(0)\ne0$ or $\gamma$ is not Lipschitz, we have
    $\Ifirst_{\thetas}(\gamma) = \infty = I_{\thetas}(\gamma)$; we can thus assume $\gamma$
    to be a Lipschitz function so that $\gamma(0) = 0$.  Recall that in this case
    \begin{align*}
      \Ifirst_{\thetas}(\gamma) = \int_0^T\cZ(\gamma'(s),\bar\theta(s,\thetas)) \deh{}s.
    \end{align*}
    If $\gamma\not\in\Lip_{C,*}$, then, provided $C$ has been chosen large enough, there
    is a positive measure set in which $\gamma'(t)\not\in \bD( \bar\theta(t,\thetas))$ and hence
    $\int_0^T\cZ(\gamma'(s),\bar\theta(s,\thetas)) \deh{}s=\infty$, which coincides with
    $I_{\thetas}$.  We can then assume $\gamma\in\Lip_{C,*}$.

    Observe that by definition we have $I_{\thetas}(\gamma)\le\Ifirst_{\thetas}(\gamma)$;
    it just suffices to prove the reverse inequality.

    Suppose first that $\Ifirst_{\thetas}(\gamma) = \infty$: we want to show that
    $I_{\thetas}(\gamma) = \infty$. Define $z_*(s)=\cZ(\gamma'(s),\bar\theta(s,\thetas))$;
    by assumption $z_*\not\in L^1[0,T]$.  Let us fix arbitrarily $M>0$; by Lusin Theorem
    and Lebesgue monotone convergence Theorem there exists $\lambda>0$ and a compact set
    $E$ such that $\gamma'$ and $\min\{\lambda,z_*(t)\}$ are continuous on $E$ and
    $\int_E \min\{\lambda,z_*(t)\}\deh{}t\ge M$.  Then for $t\in E$ let
    $\sigma_\lambda(t)$ be such that
    $\bzetgts(\sigma_\lambda(t), t)\ge \frac 12\min\{\lambda, z_*(t)\}$.  Since
    $\bzetgts(\sigma_\lambda(t), s)$ is continuous in $s\in E$, it follows that, for all $t\in E$,
    there exists and open set $U(t)\ni t$ such that
    $\bzetgts(\sigma_\lambda(t), s)\ge \frac 14\min\{\lambda, z_*(s)\}$ for all
    $s\in U(t)\cap E$.  We can then extract a finite sub cover $\{U(t_i)\}$ of $E$ and
    define
    \begin{align*}
      \bar\sigma_\lambda(s) =
      \begin{cases}
        \sigma_\lambda(t_{k(s)})&\textrm{if $s\in E$, where $k(s)=\inf \{i: s\in U(t_i)\}$}\\
        0&\textrm{if $s\not\in E$}.
      \end{cases}
    \end{align*}
    By construction $\bar\sigma_\lambda\in L^\infty$ and
    $\bzetgts(\bar\sigma_\lambda(t), t)\ge \frac 14 \min\{\lambda, z_*(t)\}$ for each
    $t\in E$.  Accordingly, setting $z_\lambda(t)=\Id_E(t)\cdot\min\{\lambda, z_*(t)\}$,
    by Lemma~\ref{lem:large-sup} we have
    \begin{align*}
      I_{\thetas}(\gamma)\ge \int_0^T \bzetgts(\bar\sigma_\lambda(s), s)\deh{}s\ge\frac
      14\int_0^Tz_\lambda(s) \deh{}s\ge \frac M4.
    \end{align*}
    By the arbitrariness of $M$ it follows $I_{\thetas}(\gamma)=+\infty$.

    On the other hand, if $z_*\in L^1$, then by Lemma~\ref{lem:large-sup}
    \begin{equation}\label{eq:upperZ}
      I_{\thetas}(\gamma)=\sup_{\sigma\in L^1}\int_0^T \zet(\sigma(s),
      \gamma'(s), \bar\theta(s,\thetas)) \deh{}s\le \int_0^T z_*(s) \deh{}s<+\infty,
    \end{equation}
and $\gamma'(s)\in \bD(\bar\theta(s,\thetas))$ for almost every $s\in [0,T]$.
    For $\convexP\in(0,1)$ and $s\in[0,T]$ let us define the convex combination
    \begin{align*}
      \gamma_\convexP(s)=(1-\convexP)\gamma(s)+\convexP\gavg(s,\thetas).
    \end{align*}
    Since $\gavg'(s,\thetas) = A(\bar\theta(s,\thetas))\in\intr\bD(\bar\theta(s,\thetas))$ it follows that, for any $\convexP\in(0,1)$ and $s\in [0,T]$, there exists a compact set
    $K(\convexP, s)\subset\bD_*(\bar\theta(s,\theta))$, such that $\gamma'_\convexP(s)\in K(\convexP, s)$ for
    almost all $s\in [0,T]$.  By Lemma~\ref{lem:hausdorff} such compacts depend continuously on $s$. Since the inverse of $\partial_\sigma\chi_A(\cdot,\theta)$
    is a continuous function with depends continuously on $\theta$, it follows that the preimages of $K(\convexP, s)$ are all contained in a fixed compact
    set $K_\convexP$.  Hence, there exists $\sigma_\convexP\in L^\infty$ such that
    $\gamma_\convexP'(s)=\partial_\sigma\chi_A(\sigma_\convexP(s),\bar\theta(s,\thetas))$
    for almost all $s\in[0,T]$.
    \begin{align*}
      I_{\thetas}(\gamma)
      &=\lim_{\convexP\to 0}(1-\convexP) I_{\thetas}(\gamma)\ge \liminf_{\convexP\to 0} I_{\thetas}(\gamma_\convexP)\ge \liminf_{\convexP \to 0} \int_0^T \zet(\sigma_\convexP(s), \gamma_\convexP'(s), \bar\theta(s,\thetas)) \deh{}s\\
      &=\liminf_{\convexP\to 0} \int_0^T \cZ(\gamma_\convexP'(s), \bar\theta(s,\thetas)) \deh{}s\ge \int_0^T \liminf_{\convexP\to 0}\cZ(\gamma_\convexP'(s), \bar\theta(s,\thetas)) \deh{}s\\
      &\ge \int_0^T \cZ(\gamma'(s), \bar\theta(s,\thetas)) \deh{}s = \Ifirst_{\thetas}(\gamma),
    \end{align*}
    where we have used the convexity of $I_{\thetas}$ first, then
    Lemma~\ref{lem:large-sup}, then equation~\eqref{eq:good-zeta}, then Fatou's Lemma and
    finally Lemma~\ref{lem:domainZ}\ref{i_Zconvex}.  The above concludes the proof.
  \end{proof}
\subsection{Definition and properties: the rate function}\label{subsec:ratefinal}\ \newline
Lemma~\ref{lem:unconstrained} tells us that it might be difficult to exactly determine
the boundary of the effective domain of the rate function, i.e.\ to distinguish between
impossible and almost impossible paths. To circumvent this problem it is convenient to
thicken the boundary of the effective domain by slightly modifying the rate function.

For any $\epsilon>0$ let  $\partial_\epsilon\bD(\theta)=\{b\in\bR^d\;:\; \dist(b,\partial\bD(\theta))< \epsilon\}$ and define
  \begin{equation}\label{eq:zeta-reg-def}
    \begin{split}
      &\cZ^+_\epsilon(b,\theta)=\begin{cases} \cZ(b,\theta) \quad&\textrm{ if } b\not\in  \partial_\epsilon\bD(\theta)\\
        +\infty&\textrm{ otherwise,}
      \end{cases}\\
      &\cZ^-_\epsilon(b,\theta)=\begin{cases} \cZ(b,\theta) \quad&\textrm{ if }b\not\in  \overline{\partial_\epsilon\bD(\theta)}\\
        \cZ(\bar A(\theta) + \convexP_{b,\theta}(b-\bar A(\theta)),\theta) & \textrm{ otherwise,}
      \end{cases}
    \end{split}
  \end{equation}
  where
  $\convexP_{b,\theta}=\sup\{\convexP > 0\st \bar A(\theta)+\convexP(b-\bar A(\theta))\in
  \bD(\theta)\setminus \partial_\epsilon\bD(\theta)\}$.  We will conveniently assume that
  $\epsilon$ is so small that for any $\theta\in\bT^1$ and
  $b\in\partial_\epsilon\bD(\theta)$ we have $\convexP_{b,\theta} > 1/2$.  Note that, by
  Lemma~\ref{lem:unconstrained}, the set $\partial_\epsilon\bD(\theta)$ can be explicitly
  determined for arbitrarily small $\epsilon$ by computing longer and longer periodic
  orbits and ergodic averages (see \cite{Collier-Morris} for a discussions on the speed of
  such approximation).  Moreover, for any $\eps' < \eps$ we have:
  \begin{align*}
    \cZ_\eps^- < \cZ_{\eps'}^- < \cZ < \cZ_{\eps'}^+ < \cZ^+_{\eps}.
  \end{align*}
\begin{rem}\label{rem:quadratic} Note that
  Lemmata~\ref{lem:domainZ} and~\ref{lem:entropy} show that $\bD(\theta)$ is a convex
  compact non-empty set (in fact,~\eqref{e_entropy} implies
  $\sup_{\theta\in\bT^1}\sup_{b\in\bD(\theta)}\cZ(b,\theta)<\infty$). Moreover, they
  characterize $\partial\bD(\theta)$ as those values that can be attained as averages of
  $A$ with respect to an invariant measure which is not associated to a transfer operator
  of type~\eqref{eq:Lopzero}.  Hence, again by Lemma~\ref{lem:domainZ}, there exists
  $\Sigma^+> \Sigma^->0$ (as quadratic forms) such that, for any $\epsilon$ small enough,
  \begin{align*}
    \cZ^+_\epsilon(b,\theta)&\ge \langle b-\bar A(\theta), \Sigma^- (b-\bar A(\theta))\rangle\\
    \cZ^-_\epsilon(b,\theta)&\le \langle b-\bar A(\theta), \Sigma^+ (b-\bar A(\theta))\rangle.
  \end{align*}
\end{rem}
The rate function defined in the previous sections would suffice to describe deviations
from the average behavior for relatively short times.  If we want to study longer times,
then we must consider slightly different rate functions.  That is, for any $\thetas\in\bT$
and $\gamma\in\cC^0([0,T];\bR^d)$, let $\theta^\gamma(s,\thetas) = \thetas+(\gamma(s))_1$
(where recall that $(\gamma(s))_1$ denotes the first component of the vector $\gamma(s)$).
Then for any $\epsilon>0$:
\begin{equation}\label{eq:rate2}
\begin{split}
&  \ratef_{\thetas}(\gamma)=\begin{cases} +\infty&\textrm{if }\gamma \not\in\textrm{ Lipschitz, or $\gamma(0)\neq0$}\\
    \int_0^T \cZ(\gamma'(s), \theta^\gamma(s,\thetas))\, \deh s\;&\textrm{otherwise};
  \end{cases}\\
&  \ratef_{\thetas,\epsilon}^\pm(\gamma)=\begin{cases} +\infty&\textrm{if }\gamma \not\in\textrm{ Lipschitz, or $\gamma(0)\neq0$}\\
    \int_0^T \cZ^\pm_\epsilon(\gamma'(s), \theta^\gamma(s,\thetas))\, \deh s\;&\textrm{otherwise}.
  \end{cases}
\end{split}
\end{equation}
Finally, we define
\begin{align}\label{e_defRatefPm}
  \ratef^\pm_{\thetas}(\gamma)=\lim_{\epsilon\to 0}
  \ratef_{{\thetas},\epsilon}^\pm(\gamma).
\end{align}
We stress the important difference with the definition of $I_\theta$ which comes from the
fact that the function $\cZ$ is now calculated along the actual path
$\theta^{\gamma}$ rather than the averaged path $\bar\theta$.

\begin{rem}\label{rem:effective}
  Note that, in general, $\ratef_{\thetas}$ is not convex.  The effective domain
  $\fkD(\ratef_{\thetas})=\{\gamma\in\cC^0\st\ratef_{\thetas}(\gamma)<\infty\}$ is given
  by those paths $\gamma$ that are $C$-Lipschitz, $\gamma(0)=0$ and such that
  $\cM_{\gamma_1(t)}(\gamma'(t))\neq \emptyset$ for almost all $t\in[0,T]$.  By lower
  semicontinuity (which we prove shortly) $\fkD(\ratef_{\thetas})$ is closed, and hence
  compact by Ascoli-Arzel\`a, in $\cC^0$; however it has empty interior. It is therefore
  more convenient to consider $\fkD(\ratef_{\thetas})$ as a subset of the Lipschitz
  functions with the associated topology. Then the interior and the boundary are non
  trivial and this is the topology we will always consider for the effective domains
  otherwise differently stated. The effective domain of $\ratef^+_{\thetas}$ is given by
  $\intr\fkD(\ratef_{\thetas})$.  If $\gamma\in\fkD(\ratef^+_{\thetas})$ we say that
  $\gamma$ is {\em admissible}.  Note that the two functionals only differ on
  $\partial \fkD(\ratef_{\thetas})$.
\end{rem}

\begin{lem}\label{lem:rate-lower}
  For any $\thetas\in\bT$ and $\epsilon$ sufficiently small, the rate functions
  $\ratef_{\thetas}, \ratef^-_{\thetas,\epsilon}$ are lower-semicontinuous and
  $\ratef_{\thetas}=\ratef^-_{\thetas}$. Also, $\partial\fkD(\ratef_{\thetas})$ has empty
  interior and, for each $\gamma \in \partial\fkD(\ratef_{\thetas})$, there exists a
  sequence $\{\gamma_n\}\subset \intr\fkD(\ratef_{\thetas})$ such that
  $\lim_{n\to\infty}\ratef_{\thetas}(\gamma_n)=\ratef_{\thetas}(\gamma)$.
\end{lem}
\begin{proof}
  Let $\{\gamma_n\}\in \cC^0([0,T],\bR^d)$ be a converging sequence and call $\gamma_*$
  its limit. We start proving that:
  \[
    \liminf_{n\to\infty}\ratef_{\theta^*}(\gamma_n)\geq \ratef_{\theta^*}(\gamma_*).
  \]
  If the left hand side equals $+\infty$, then the statement is obviously true; if not,
  then there exists a subsequence such that $\{\gamma_{n_j}\}\subset \Lip_{C,*}$, hence
  $\gamma_*\in \Lip_{C,*}$.  From now on the proof follows very closely the argument in
  Section~\ref{subsec:rate1} (recall Remark~\ref{rem:further-notice}): for
  $\gamma\in \Lip_{C,*}$ define
  \[
    \ratef_{\theta_*,\textrm{pre}}(\gamma)= \sup_{\sigma\in L^1}\int_0^T\langle \sigma(s),\gamma'(s)\rangle-\chi_A(\sigma(s),\theta^\gamma(s))\deh{}s
  \]
  Then arguing as in Lemma~\ref{lem:lws} it follows that $\ratef_{\theta_*,\textrm{pre}}$ is
  lower semicontinuous. The only difference being in the last display of the proof, since
  now the second argument of $\chi_A$ depends on $\gamma$, which can be controlled using
  Lemma~\ref{lem:non-pert-theta}.  Then the equivalent of Lemma~\ref{sublem:rate-f} holds
  verbatim whereby establishing $\ratef_{\theta_*,\textrm{pre}}=\ratef_{\theta_*}$. The
  argument for $\ratef^-_{\thetas,\epsilon}$ (for arbitrary $\epsilon > 0$) is more of the
  same.

  Next, by the convexity of $\cZ(\cdot, \theta)$,
  $\ratef^-_{\thetas,\epsilon}\leq \ratef^-_{\thetas}$ and
  $\ratef^-_{\thetas,\epsilon}(\gamma)=\int_0^T\cZ(\gamma_\epsilon',\theta^{\gamma})$,
  where $\lim_{\epsilon\to 0}\|\gamma'_\epsilon-\gamma'\|\nc0=0$.  Fatou Lemma and the
  lower semicontinuity of $\cZ(\cdot, \theta)$ then imply that
  $\ratef_{\thetas}=\ratef^-_{\thetas}$.

  Further, suppose $\gamma\in \partial\fkD(\ratef_{\thetas})$.  Fix an
  arbitrary $\delta>0$ and, for all $\alpha>1$ we define the path $\gamma_\alpha$ as the
  unique solution of the ODE
  \[
    \begin{split}
      &\gamma_\alpha'(s)=(1-\delta e^{-\alpha(T-s)})\gamma'(s)+ \delta e^{-\alpha(T-s)}\bar A(\theta^{\gamma_\alpha}(s))\\
      &\gamma_\alpha(0)=0.
    \end{split}
  \]
  Then $\|\gamma_\alpha-\gamma\|\nc0\leq 2C\delta\alpha^{-1}e^{-\alpha (T-s)}$ and $\|\gamma'_\alpha-\gamma'\|\nc0\leq 2C\delta e^{-\alpha (T-s)}$. Also, by Lemma ~\ref{lem:hausdorff},
  \[
    d(\gamma_\alpha,\partial\bD(\theta^{\gamma_\alpha}(s)))\geq d(\gamma_\alpha,\partial\bD(\theta^{\gamma}(s))-c_42C\delta\alpha^{-1}e^{-\alpha (T-s)}.
  \]
  On the other hand by Lemma~\ref{lem:domainZ}-\ref{i_nbhdSRB} and Lemma~\ref{lem:hausdorff} the distance between $\bar A(\theta^{\gamma}(s))$ and $\partial\bD(\theta^\gamma(s))$ is continuous and strictly positive, hence it has a minimum $\tau>0$. Accordingly,  provided $\delta$ is small enough,\footnote { Just note that $\bD(\theta^{\gamma}(s))$ must contain a right triangle with vertexes $\gamma'(s)$ and $\bar A(\theta^{\gamma}(s))$ and base of length at least $\tau$.}
  \[
    \begin{split}
      d(\gamma_\alpha,\partial\bD(\theta^{\gamma}(s)))\geq & d((1-\delta e^{-\alpha(T-s)})\gamma'(s)+ \delta e^{-\alpha(T-s)}\bar A(\theta^{\gamma}(s)),\partial\bD(\theta^{\gamma}(s)))\\
      &-\Const \delta^2\alpha^{-1} e^{-\alpha (T-s)}\geq \Const \tau \delta e^{-\alpha(T-s)}.
    \end{split}
  \]
  Thus, by choosing $\alpha$ large enough, we have $d(\gamma_\alpha,\partial\bD(\theta^{\gamma_\alpha}(s)))\geq c_T\delta$.
  Accordingly, $\gamma_\alpha\in  \fkD(\ratef_{\thetas})\setminus\partial \fkD(\ratef_{\thetas})$, thus $\intr\partial \fkD(\ratef_{\thetas})=\emptyset$.

  Finally, by~\eqref{eq:good-zeta},~\eqref{eq:kappaZ}, Lemma~\ref{lem:non-pert-theta} and
  Lemma~\ref{l_gamma-bound}, setting $\lambda(s)=\delta e^{-\alpha(T-s)}$ and choosing
  $\alpha$ large enough, we have
  \[
    \begin{split}
      \ratef_{\thetas}(\gamma_\alpha)&=\int_0^T\langle \sigma_\alpha,\hat\gamma'_\alpha\rangle-\hat\chi_A(\sigma_\alpha,\theta^{\gamma_\alpha})\\
      & \leq\int_0^T (1-\lambda)\langle \sigma_\alpha,\hat\gamma'\rangle-\hat\chi_A(\sigma_\alpha,\theta^{\gamma})+\Const \min\{\|\sigma_\alpha^2\|,\|\sigma_\alpha\|\}\,\|\gamma-\gamma_\alpha\|\\
      &\leq\int_0^T\langle \sigma_\alpha,\hat\gamma'\rangle-\hat\chi_A(\sigma_\alpha,\theta^{\gamma})-\Const\min\{\|\sigma_\alpha^2\|,\|\sigma_\alpha\|\}\,\left[\lambda -\Const\lambda \alpha^{-1}\right]\\
      &\leq \sup_{\sigma\in L^1}\int_0^T\langle \sigma, \gamma'\rangle-\chi_A(\sigma,\theta^{\gamma})=\ratef_{\thetas}(\gamma).
    \end{split}
  \]
  The then lemma follows by the lower semicontinuity of $\ratef_{\thetas}$.
\end{proof}

We conclude this section with a useful estimate:
\begin{lem}\label{l_Jgamma-bound}
  For any $\theta\in\bT$ and $\gamma\in\Lip_{C,*}([0,T],\bR^d)$:
  \begin{align}
    \ratef_{\theta,\epsilon}^\pm(\gamma)%
    &\ge \Const\|\gamma' - \bar A(\theta^\gamma(\cdot,\theta))\|\nl2^2.
  \end{align}
\end{lem}
\begin{proof}
  Let us fix $\gamma$ and introduce the shorthand notation
  $\hat\chi_{A}(\sigma) = \hat\chi_A(\sigma(\cdot),\theta^\gamma(\cdot))$; let $\vgamma$
  be so that $\vgamma' = \gamma'(s) - \bar A(\theta^\gamma(s))$.  Observe that if
  $\ratef^\pm_{\theta,\epsilon}(\gamma) = \infty$ then the statement trivially holds;
  hence let us assume that this is not the case and fix $s\in[0,T]$ so that
  $\cZ_\epsilon^-(\gamma'(s),\theta^\gamma(s)) < \infty$.  Then, for any
  $\convexP\in[0,1/2)$, by the definition~\eqref{eq:zeta-reg-def} of $\cZ_\epsilon^-$ and
  the smallness condition on $\epsilon$, we can define $\bar\sigma_\convexP(s)$ to be the
  solution of $\convexP\vgamma'(s)=\partial_\sigma\hat\chi_A(\bar\sigma_\convexP(s))$.
  Define, moreover
  \begin{align*}
    \vf(s,\convexP) = \langle \bar\sigma_\convexP(s), \convexP \vgamma'(s)
    \rangle - \hat\chi_A(\bar\sigma_\convexP(s)).
  \end{align*}
  Observe that $\bar\sigma_0=0$ and
  $\partial_\convexP\vf(s,\convexP)=\langle
  {\bar\sigma_\convexP(s)},{\vgamma'(s)}\rangle$, finally:
  \begin{align*}
    \partial_\convexP^2\vf(s,\convexP) %
    = \langle {\partial_\convexP\bar\sigma_\convexP(s)},{\vgamma'(s)}\rangle%
    = \langle \partial_\convexP\bar\sigma_\convexP(s)
    ,\partial_\sigma^2\hat\chi_A(\bar\sigma_\convexP)\partial_\convexP\bar\sigma_\convexP(s)\rangle \ge 0.
  \end{align*}
  In particular $\vf(s,\cdot)$ is increasing; hence, using once again the
  definition~\eqref{eq:zeta-reg-def} of $\cZ_\epsilon^-$ and the smallness condition on
  $\epsilon$, we conclude that $\cZ_\epsilon^-(\gamma(s),\theta^\gamma(s))\ge \vf(s,1/2)$.

  Recall moreover there exists $c>0$ such that
  $\inf_{\|\sigma\|\leq 1}\partial_\sigma^2\hat\chi_A(\sigma)\geq c\Id$ (as quadratic
  forms).  Let
  \begin{align*}
    \convexP_0 = \max\{\convexP\in[0,1/2]\st \|\bar\sigma_{\convexP'}(s)\|\le 1 \text{ for all $\convexP'\in[0,\convexP]$}\}.
  \end{align*}
  Since $\gamma\in\Lip_{C,*}([0,T],\bR^d)$ we have
  $ c\|\partial_\convexP\bar\sigma_\convexP(s)\|\le \|\vgamma'(s)\|\le 2C$ for all
  $\convexP\in[0,\convexP_0]$.  Hence, either $\convexP_0 = 1/2$ or, otherwise,
  $1=\|\bar\sigma_{\convexP_0}(s)\|\leq 2Cc^{-1}\convexP_0$.  In any case we have
  $\convexP_0\geq \Const$. We thus conclude:
  \begin{align*}
    \vf(s,1/2)\ge\vf(s,\convexP_0)
    &= \int_0^{\convexP_0}\deh\convexP\int_0^\convexP \deh\mnu \langle \vgamma'(s),\partial^2_\sigma\hat\chi_A(\bar\sigma_\mnu(s))\invr\vgamma'(s)\rangle \ge \Const \|\vgamma'(s)\|^2.
  \end{align*}
  Since $\ratef^-_{\theta,\epsilon}(\gamma) < \infty$ we conclude that $s$ can be chosen
  in a full-measure set in $[0,T]$; hence the above estimate holds a.e., which concludes
  the proof of our lemma since $\ratef^+_{\theta,\epsilon}\ge\ratef^-_{\theta,\epsilon}$.
\end{proof}
Note that a similar, but simpler, argument shows that
\begin{align}\label{eq:Igamma-bound}
  I_\theta(\gamma)\ge\Const\|\gamma'-\bar A(\bar\theta(\cdot,\theta))\|\nl2^2.
\end{align}
\section{Deviations from the average: Large Deviations}\label{sec:LargeDeviations}
We are now at last ready to precisely state and prove our Large Deviations results.

Let us recall the definition~\eqref{e_gpolyDef} of the random element $\gpoly(t)$; observe
that equivalently, we have:
\begin{equation}\label{eq:path-def}
  \gpoly(t)=\ve\sum_{j=0}^{\pint{t\vei}-1} A\circ F_ \ve^j(x,\theta) +
  (t-\ve\pint{t\vei})A\circ F_ \ve^{\pint{t\vei}}(x,\theta).
\end{equation}
Recall that
$\gpoly\in\cC_*^0([0,T],\bR^d):=\{\gamma\in\cC^0([0,T],\bR^d)\st \gamma(0)=0\}$; moreover
$\gpoly(t)=(\theta_\ve(t)-\theta,\zeta_\ve(t))$ and the family $\{\gpoly\}$ is
uniformly Lipschitz of constant $\|A\|\nc0$: in fact it is differentiable at all $t\not\in\ve\bZ$.

Given a standard pair $\ell$ we can consider $\gpoly$ as a random element of
$\cC_*^0([0,T],\bR^d)$ by assuming that $(x,\theta)$ are distributed according to $\ell$.
In fact, as already mentioned in Section~\ref{sec:results}, it is more convenient to work
directly in the probability space $\cC^0([0,T],\bR^d)$ endowed with the probability
measure $\ppath_\ell$ determined by the law of $\gpoly$ under $\ell$, that is
$\ppath_\ell = (\gpoly)_*\mu_\ell$.
  In particular, for any function
$g\in\cC^0(\bR^{d},\bR)$, $k\in\bN$ and standard pair $\ell$:\footnote{ According to the
  usual probabilistic notation $\gamma(t)$ stands both for the numerical value of the path
  $\gamma$ at time $t$ and for the evaluation functional
  $\gamma(t):\cC^0([0,T],\bR^d)\to \bR^d$ defined by
  $\gamma(t)(\tilde \gamma)=\tilde\gamma(t)$, for all
  $\tilde\gamma\in \cC^0([0,T],\bR^d)$.}
\begin{align*}
  \epath_\ell(g\circ \gamma(k\ve)) =
  \mu_\ell(g(\gpoly(k\ve))=\mu_\ell(g\circ \bF_{\ve}^k)
\end{align*}
where $\epath_\ell$ is the expectation associated to the probability $\ppath_\ell$ and
$\bF_\ve$ is defined in~\eqref{eq:map-full}.
\begin{rem}\label{r_lipschitzPath}
  By the above mentioned Lipschitz property of the paths $\gpoly$ and since
  $\gpoly\in\cC_*^0([0,T];\bR^d)$ we conclude that the support of $\ppath_\ell$ is
  contained in a compact set that is independent on $\ve$ and $\ell$; in particular the
  family $\{\ppath_\ell : \ve > 0, \ell \textrm{ standard pair}\}$ is tight.  More
  precisely, for any $C>\|A\|\nc0$ we have that $\ppath_\ell(\Lip_{C,*})=1$ where
  \begin{align*}
    \Lip_{C,*}=\{\gamma\in \cC_*^0([0,T],\bR^d):\; \|\gamma(t)-\gamma(s)\|\le
    C|t-s|\;\forall\;t,s\in[0,T]\} .
  \end{align*}
\end{rem}
 For any standard pair $\ell$ recall (see~\eqref{eq:thetastartdef}) that we defined
$\thetasl=\int_a^b\rho(x)G(x) \deh{}x$, to be the average of the random variable $\theta_0$.
If we let $\ve\to 0$ and consider standard pairs $\ell_\ve$ (each standard with respect to
the corresponding $\ve$) with fixed $\thetasl$, Theorem~\ref{t_averaging} implies that
$\ppath_{\ell_\ve}$ converges, as $\ve\to 0$, to a measure supported on the single path
$\gavg(t)$ defined in~\eqref{e_gavgDef}.  The goal of this section is to establish
estimates for the deviations from this path.

We begin with Sections~\ref{subset:rate-up-short} and~\ref{ss_lowerBound} where
we establish large deviations results that are optimal only for relatively short times.
Then in Section~\ref{subsec:large-long} we use such preliminary results to prove
Theorem~\ref{thm:large}; finally in Section~\ref{s_proofCorollaries} we prove the
remaining propositions stated in Section~\ref{sec:results}.
\subsection{Upper bound for arbitrary sets (short times)}\label{subset:rate-up-short}\ \newline
For any $\gamma\in C^0([0,T];\bR^d)$, let $\gball{}(\gamma,r)$ denote the $\cC^0$-ball of
radius $r$ centered at $\gamma$.\footnote{ We prefer not to write the explicit dependence of $B$ on $T$
  since this can be recovered by the fact that $\gamma\in\cC^0([0,T],\bR^d)$.}  For each measurable set (event) $Q\subset \cC^0([0,T],\bR^d)$ define $\QLipC= Q\cap\Lip_{C,*}$.  By
Remark~\ref{r_lipschitzPath}, we conclude that for any $C$ sufficiently large:
\begin{equation}\label{eq:lip-supported}
  \ppath_\ell(Q\setminus \QLipC)=0.
\end{equation}
\newcommand{\centgamma}{\underline\gamma}
\newcommand{\csigma}{{\underline\sigma}}
\newcommand{\gengamma}{\gamma}
\begin{lem}[Upper bound]\label{lem:upper}
  There exist $C_0 > 0$ and $\ve_0,\Tmax\in(0,1]$ such that, for all $\ve\le\ve_0$,
  $T\in [\ve_0^{-4}\ve,\Tmax]$,
  and $Q\subset\cC^0([0,T],\bR^d)$, for any $\theta\in\bT$ and standard pair $\ell$
  with $\thetasl = \theta$.
  \begin{align*}
    \ve\log\ppath_\ell(Q)
    &\le {-\inf_{\gengamma\in Q_{\ve,+}} I_\theta(\gengamma)},
  \end{align*}
  where $I_\theta$ is defined in~\eqref{eq:ratefunction},
  $Q_{\ve,+}=\bigcup_{\gamma\in \overline Q}\gball{}(\gamma, R_\ve(\gamma))$ with
\begin{align*}
      R_\ve(\gamma) = C_0
      \max\left\{(\ve^{\efrac 14}T^{-\efrac 14}+T)\|\hgamma\|\nl\infty,%
      \min \left\{ \ve\eefrac14 T\eefrac34,
      (\ve T)\eefrac16\|\hgamma\|\nl\infty\eefrac23\right\},
      \sqrt{\ve T}\right\}
    \end{align*}
    and $\hgamma=\gamma-\gavg(\cdot,\theta)$, the latter being defined in~\eqref{e_gavgDef}.
\end{lem}
\begin{proof}
  For any linear functional
  $\gfunctional\in \cM^d([0,T])=\cC^0([0,T],\bR^d)'=\left[\cC^0([0,T],\bR)'\right]^d$,
  recalling~\eqref{eq:lip-supported}, we have
  \begin{align}\label{eq:largedev1}
    \ppath_\ell(Q)
    & \le \epath_\ell\left( \Id_{\QLipC}e^{\gfunctional-\inf_{\gengamma\in \QLipC}\gfunctional(\gengamma)}\right)\\
    &\le \exp\left[{-\inf_{\gengamma\in \overline\QLipC}\gfunctional(\gengamma)}\right] \epath_\ell\left(e^{\gfunctional}\right),\notag
  \end{align}
  where $\epath_\ell(e^\gfunctional)$ denotes the expectation of
  $\gamma\mapsto \exp(\gfunctional(\gamma))$ with respect to the probability
  $\ppath_\ell$.  Let $\logmgf$ be the \emph{logarithmic moment generating function} and
  $\logmgf^*$ be its convex conjugate function, i.e.:\footnote{ We will see shortly,
    in~\eqref{eq:largedev3}, that $\logmgf$ agrees with the previous
    definition~\eqref{e_logmgfDefinitionSigma}, hence justifying the abuse of notations
    (in one case we have a functional on measures, in the other a functional on $\BV$).}
  \begin{align}\label{eq:legendre}
    \logmgf(\gfunctional) &=\ve\log\epath_\ell\left( e^{\gfunctional}\right);
    & \logmgf^*(\gamma)&=\sup_{\gfunctional\in\cM^d([0,T])}(\ve\gfunctional(\gamma) -\Lambda_{\ell,\ve}(\gfunctional)).
  \end{align}
  Note that $|\logmgf(\gfunctional)|\le \ve \Const\|\gfunctional\|<\infty$, hence $\logmgf$ is a
  proper convex function.\footnote{ The first assertion follows by~\eqref{eq:lip-supported} which implies $\|\gamma\|_\infty\leq  CT$, $\ppath_\ell$-a.s.. The second follows from the H\"older inequality since, for all
    $t\in[0,1]$, and $\gfunctional,\gfunctional'\in \cM([0,T])$,
    \begin{align*}
      \logmgf(t\gfunctional+(1-t)\gfunctional')
      &=\ve\log\bE_{\ell,A,\ve}\left( [e^{\gfunctional}]^t[e^{\gfunctional'}]^{1-t}\right)\\
      &\le \ve\log\left[\bE_{\ell,A,\ve}\left(e^{\gfunctional}\right)^t\bE_{\ell,A,\ve}\left(e^{\gfunctional'}\right)^{1-t}\right]=
        t\logmgf(\gfunctional)+(1-t)\logmgf(\gfunctional').
    \end{align*}}
  Since $\logmgf(0)=0$, we have $\logmgf^*\ge 0$.
  Moreover   $\logmgf^*:\cC^0([0,T],\bR^d)\to\bR\cup\{+\infty\}$ is convex as well and lower
  semi-continuous (with respect to the $C^0$ topology), since it is the conjugate
  function of a proper function.  We can then follow the strategy of~\cite[Exercise
  4.5.5]{DemboZeitouni}.  Note that
  $(\gfunctional,\gamma)\mapsto\ve\gfunctional(\gamma)-\logmgf(\gfunctional)$ is a
  function concave in $\gfunctional$, continuous in $\gamma$ with respect to the $\cC^0$
  topology, also it is convex in $\gamma$ for any $\gfunctional\in \cM([0,T])$.  Finally,
  $\overline\QLipC$ is compact in $\cC^0$.  Thus, the Minimax Theorem
  (\cite{Sion58}, but see~\cite{Komiya88} for an elementary proof) guarantees that
  \begin{align*}
    \sup_{\gfunctional\in\cM^d}\inf_{\gamma\in \overline\QLipC}[\ve\gfunctional(\gamma)-\logmgf(\gfunctional)]=\inf_{\gamma\in \overline\QLipC}\sup_{\gfunctional\in\cM^d}[\ve\gfunctional(\gamma)-\logmgf(\gfunctional)].
  \end{align*}
  The above implies, taking the $\inf$ on $\gfunctional$ in~\eqref{eq:largedev1},
  \begin{equation}\label{eq:largedev2}
    \begin{split}
      \ve\log\ppath_\ell(Q)&\le {-\inf_{\gamma\in \overline\QLipC}\sup_{\gfunctional\in\cM^d}\ve\gfunctional(\gamma) -\logmgf(\gfunctional) }\\
      &\le {-\inf_{\gamma\in \overline\QLipC}\logmgf^*(\gamma)}.
    \end{split}
  \end{equation}
  The above estimate looks indeed quite promising, but unfortunately it is completely
  useless without sharp information on $\logmgf^*$.

  We are thus left with the task of computing $\logmgf^*$.  It turns out to be convenient
  to associate to $\gfunctional$ the function $\sigma$ defined as:
  \begin{align}\label{e_cdlg}
    \sigma(s)=\ve\gfunctional((s,T]),
  \end{align}
  where the right hand side is interpreted by applying the Jordan Decomposition to
  $\gfunctional$.  Note that, by definition, $\sigma=(\sigma_1,\cdots,\sigma_d)$ with
  $\sigma_i\in \BV$, thus $\sigma\in \BV^d$, that we will simply call $\BV$ to ease
  notation.  By definition $\|\gfunctional\|=\vei \|\sigma\|\nBV$ and, for any
  $\gamma\in \Lip_{C,*}$,
  \begin{equation}\label{eq:sigma-nu}
    \gfunctional(\gamma)=\vei\int_0^T\langle{\sigma(s)},{\gamma'(s)}\rangle \deh{}s.
  \end{equation}
  On the other hand, for each $\sigma\in \BV$ there exists $\gfunctional\in\cM^d([0,T])$ such that~\eqref{eq:sigma-nu} holds, see \cite[Section 5.1, Theorem 1]{EvansGariepy}.
  Moreover (recall
  definition~\eqref{eq:path-def}):\footnote{ As we often do in this work, we are neglecting the contribution
    of the fact that $T\vei$ may not be an integer.}
  \begin{align*}
    \gfunctional(\gpoly)&=\vei\int_0^T\langle{\sigma(s)},{\gpoly'(s)}\rangle \deh s\\
                &=\sum_{k=0}^{\pint{T\vei}-1}\langle {\vei\int_{k\ve}^{(k+1)\ve}\sigma(s)\deh s},{A\circ F_\ve^{k}}\rangle
                =\sum_{k=0}^{\pint{T\vei}-1}\langle {\sigma_k},{A\circ F_\ve^{k}}\rangle.
  \end{align*}
  where, as in Section~\ref{ss_exponentialMoment}, we introduced the notation
  \begin{align*}
    \sigma_n = \vei\int_{n\ve}^{(n+1)\ve}\sigma(s)ds.
  \end{align*}
Hence, we conclude that for any $\gfunctional\in\cM^d$ and for the corresponding
  $\sigma\in \BV$:
  \begin{align}\notag
    \logmgf(\gfunctional)
    &=\ve\log\mu_\ell\left(e^{\sum_{k=0}^{\pint{T\vei}-1}\langle\sigma_k
      , A\circ F_\ve^k\rangle}\right)= \logmgf(\sigma)\\
    &=  \int_0^{T}\chi_A(\sigma(s),\bar\theta(s,\thetasl))\deh
      s+\restoPalla(\sigma).\label{eq:largedev3}
  \end{align}
  where $\restoPalla(\sigma)$ (which is defined by the equation above,
  see~\eqref{eq:almost-equality}) satisfies the estimates given in
  Proposition~\ref{lem:exponential}.

  The above implies (recall~\eqref{eq:legendre},~\eqref{eq:sigma-nu} and the
  definition~\eqref{e_bzetgt_definition} of $\bzetgt$):
  \begin{equation}\label{eq:newLambda}
  \begin{split}
    \logmgf^*(\gamma) &=\sup_{\sigma\in\BV}\left[\int_0^T\langle\sigma(s),\gamma'(s)\rangle-\logmgf(\sigma)\right]\\
    &= \sup_{\sigma\in\BV}\left[
      \int_0^T\bzetgtsl(\sigma(s),s)\deh s - \restoPalla(\sigma)\right].
  \end{split}
  \end{equation}
  Formula~\eqref{eq:newLambda} closely resembles the definition of rate function given
  in~\eqref{eq:ratefunction}.  Unfortunately there is an obvious obstacle: we need to
  ensure that the first term dominates $\restoPalla(\sigma)$.  As already observed in
  Subsection~\ref{s_regularizedlogmgf}, this can be taken care of by some regularization
  procedure for $\sigma$; we will now describe the dual procedure, \ie a regularization
  procedure for the paths $\gamma$.

  Given $\gamma$ and $h=T/N_h$, for $N_h\in\bN$ suitably large to be chosen later, we
  denote with $\gamma_h = \udPi_{(h)}\gamma$ the polygonalization of $\gamma$ over a mesh of size $h$.  In
  other words, we define $\gamma_h\in \cC_*^0([0,T],\bR^d)$ so that $\gamma_h' = \regPi{h}(\gamma')$ where
  $\regPi{h}$ has been defined in~\eqref{e_definitionRegPi}.  Recall
  (see~\eqref{eq:lip-supported}) that it suffices to consider paths $\gamma\in\Lip_{C}$
  and thus, since $\regPi{h}$ is a contraction in $L^\infty$ (see Sub-lemma~\ref{sl_propertiesPi}), we
  conclude that $\gamma_h\in\Lip_{C,*}$.  Moreover, for $t\in [nh, (n+1)h]$, we have
  \begin{equation}\label{eq:poly-close}
    |\gamma_h(t)-\gamma(t)|\le \int_{nh}^{(n+1)h}|\gamma_h'(s)-\gamma'(s)|\deh{}s\le
    2\int_{nh}^{(n+1)h}|\gamma'(s)|\deh s,
  \end{equation}
  which implies that $\gamma_h\in Q_h=\bigcup_{\gamma\in \overline \QLipC}B(\gamma, 2Ch)$.
  Then, by~\eqref{eq:legendre} and recalling the definition of
  $\regu\logmgf{h}$ given in~\eqref{e_defReguLogmgf}:
  \begin{align*}
    \logmgf^*(\gamma)
    &\ge     \sup_{\sigma\in\BV}\left[\int_0^T\langle\regPi{h}\sigma(s),\gamma'(s)\rangle-\regu\logmgf{h}(\sigma)\right] \\
    &\ge\sup_{\sigma\in\BV}\left[\int_0^T\langle\sigma(s),\gamma_h'(s)\rangle-\regu\logmgf{h}(\sigma)\right] = \regu\logmgf{h}^*(\gamma_h)
  \end{align*}
  where $\regu\logmgf{h}^*$ is the Legendre transform of the regularized moment
  generating functional $\regu\logmgf{h}$. In particular, we have
  $\regu\logmgf{h}^*\ge0$ because $\regu\logmgf{h}(0) = 0$.

  Hence, we conclude that
  \begin{equation}\label{eq:upper-eq-0}
    \inf_{\gamma\in \overline\QLipC}\logmgf^*(\gamma)
    \ge \inf_{\gamma\in Q_h}\regu\logmgf{h}^*(\gamma).
  \end{equation}
Observe that, by~\eqref{e_defReguPalla},
\begin{equation}\label{eq:newLambda-h}
  \regu\logmgf{h}^*(\gamma) = \sup_{\sigma\in\BV}\left[
    \int_0^T\bzetgtsl(\sigma(s),s)\deh s - \regu\restoPalla{h}(\sigma)\right],
\end{equation}
where $\regu\restoPalla{h}$ satisfies the estimates obtained in
Lemma~\ref{lem:exponential-h}.  The aim of the above regularization is to gain control on
$\regu\restoPalla{h}$ even for very rough $\sigma$.  This is the content of the next
sub-lemma.
\begin{sublem} \label{lem:omegah}%
  There exists $\CJ5 > 0$ and $\ve_0,\Tmax\in(0,1]$, such that, for all
  $\ve\in [0,\ve_0]$, $T\in[\ve_0^{-4}\ve, \Tmax]$
  and $\gamma\in\Lip_{C,*}([0,T],\bR^d)$, we have for any $\theta\in\bT$ and standard pair
  $\ell$ with $\thetasl = \theta$, setting $h=\sqrt{\ve T}$:
  \begin{align}\label{e_omegah}
    \regu\logmgf{h}^*(\gamma)
    \ge
    \inf_{\tilde\gamma\in B(\gamma,\tilde R_\ve(\gamma))} I_\theta(\tilde\gamma)
    \end{align}
    where we define
    \begin{align*}
      \tilde R_\ve(\gamma) = \CJ5
      \max\left\{(\ve\eefrac 14T^{-\efrac 14}+T)\|\hgamma\|\nl\infty,%
      \min \left\{ \ve\eefrac14 T\eefrac34,
      (\ve T)\eefrac16\|\hgamma\|\nl\infty\eefrac23\right\},
      \sqrt{\ve T}\right\}
    \end{align*}
and recall, $\hgamma = \gamma-\gavg(\cdot,\theta)$.
\end{sublem}
Observe that, together with~\eqref{eq:largedev2} and~\eqref{eq:upper-eq-0}, the above
sub-lemma immediately allows to conclude the proof of Lemma~\ref{lem:upper} choosing $C_0
= \CJ5+2C$.
\end{proof}
\begin{proof}[{\bf Proof of Sub-Lemma~\ref{lem:omegah}}]
  Let us fix $\CJ5 > 1$ large enough to be specified later; we begin by observing that if
  $\|\hgamma\|\nl\infty < \CJ5\sqrt{\ve T}$, then
  $B(\gamma,\tilde R_\ve(\gamma))\ni\gavg$, which implies that
  $\inf_{\tilde\gamma\in B(\gamma,\tilde R_\ve(\gamma))}I_\theta(\tilde\gamma) = I_\theta(\gavg) = 0$ and
  the sub-lemma holds trivially since $\regu\logmgf{h}^*(\gamma)\ge 0$.  In the rest of
  the proof, we will therefore always assume that
  $\|\hgamma\|\nl\infty \ge \CJ5\sqrt{\ve T}$ provided
  \begin{equation}\label{eq:cond-R-one}
    \tilde R_\ve(\gamma)\geq \CJ5\sqrt{\ve T}.
    \end{equation}
 Recall~\eqref{eq:newLambda-h}:
  \begin{align*}
    \regu\logmgf{h}^*(\gamma) = \sup_{\sigma\in\BV}\left[
    \int_0^T\bzetgtsl(\sigma(s),s)\deh s - \regu\restoPalla{h}(\sigma)\right],
  \end{align*}
  and that, by definition~\eqref{eq:ratefunction} (since $\gamma\in\Lip_{C,*}$):
  \begin{align*}
    I_\theta(\gamma) = \sup_{\sigma\in\BV}\int_0^T\bzetgt(\sigma(s),s)\deh s,
  \end{align*}
  where, by definition~\eqref{e_bzetgt_definition},
  \begin{equation}\label{eq:kappas-def}
    \begin{split}
      \bzetgt(\sigma(s),s)    &= \langle\sigma(s),\gamma'(s)\rangle-\chi_A(\sigma(s),\bar\theta(s,\theta)) \\
      &= \langle\sigma(s),\hgamma'(s)\rangle-\hat\chi_A(\sigma(s),\bar\theta(s,\theta)).
    \end{split}
  \end{equation}
  We will proceed as follows: by convexity of the rate function, in any ball $B(\gamma)$
  around $\gamma$, we can find paths which are more likely than $\gamma$ itself; in
  particular $\inf I_\theta(B(\gamma)) < I_\theta(\gamma)$ (the inequality is strict since
  $\gamma \ne \gavg$).  The idea is then to take the ball $B$ to be so large that the
  decrease in the rate function compensates for the remainder term $\regu\restoPalla{h}$.  Of
  course, by choosing larger balls, we obtain worse bounds for the error in the final
  estimate: the key technical point of the sub-lemma rests exactly in finding a good
  compromise for the size of $B$.

  For any $\convexP\in[0,1]$, let us define the convex interpolation
  $\gamma_\convexP = (1-\convexP)\gamma+\convexP\gavg$; since
  $I_\theta(\gamma_{\convexP = 1}) = I_\theta(\gavg) = 0$ and by convexity of
  $I_\theta$, we conclude that $I_\theta(\gamma_{\convexP})$ is decreasing in $\convexP$.
  Hence we want to find $\convexP$ sufficiently large so that the decrease compensates
  for the remainder term. The choice of $\convexP$ will in fact depend on the distance of
  $\gamma$ from $\gavg$, that is on $\|\hgamma\|\nl\infty$.
  We carry out the estimate using two different strategies as they yield optimal bounds in different regimes.\\

  \noindent\textbf{Case I: non-perturbative estimate}\\
  Note that, since $\hgamma'_\convexP = (1-\convexP)\hgamma'$, we have
  \begin{equation}\label{eq:kappa-r-s}
    \zet(\sigma,\gamma',\theta) =  \zet(\sigma,\gamma'_\convexP,\theta)
    + \convexP \langle\sigma,\hgamma'\rangle.
  \end{equation}
  Collecting~\eqref{eq:newLambda-h},~\eqref{eq:kappas-def} and~\eqref{eq:kappa-r-s} we
  obtain, for any $\sigma\in\BV$ and $\convexP\in[0,1]$,
  \begin{align}\label{e_collection}
    \regu\logmgf{h}^*(\gamma)
    &\ge \int_0^T \zet(\sigma(s),\gamma_\convexP'(s),\bar\theta(s,\theta))ds
      + \convexP\int_0^T\langle\sigma(s),\hgamma'(s)\rangle ds-\regu\restoPalla{h}(\sigma).
  \end{align}
  We then use the estimate for $\regu\restoPalla{h}$ given by
  Lemma~\ref{lem:exponential-h}-\ref{i_qnonPerturbativeBoundOnR-h} with the choice
  \[
  L=T\eefrac 14\ve\eefrac {-1}4\in[\ve_0^{-1}, \min\{\vei T, \ve_0\ve^{-\efrac 12}\}],
  \]
  where the inclusion follows from our conditions on $\ve, T$ and noticing that $[\ve_0^{-4}\ve,\Tmax]$ is empty for $\ve> \Tmax\ve_0^4$.
  Recalling that $h = \sqrt{\ve T}$ we obtain:
  \begin{align}\label{eq:Rh-ok}
    \regu\restoPalla{h}(\sigma)
    &\le\CJdue%
      \left[T\eefrac54\ve\eefrac34+[T\eefrac{-1}4\ve\eefrac14 +
      \min\{T,\|\sigma\|\nl1\}]\|\sigma\|\nl1\right].
  \end{align}
  Let us fix $\CJ0 > 0$ large to be specified later and let
\begin{equation}\label{eq:convexP1}
\convexP_0 = \CJ0 \{\ve^{\efrac 14}T^{-\efrac 14}+2T\}.
\end{equation}
To fix ideas, we may  assume $\ve_0, \Tmax$ to be so small that
  $\convexP_0 \le \frac 13$.
  Let us first examine the possibility $I(\gamma_{\convexP_0})=\infty$; in this case we claim
  that $\regu\logmgf{h}^*(\gamma) = \infty$, which trivially implies the sub-lemma.  In
  fact: let us fix arbitrarily $M > 1$; by~\eqref{eq:ratefunction}, since
  $\gamma\in\Lip_{C,*}$, there exists $\bar\sigma_M\in\BV$ such that
  \begin{equation}\label{eq:large-M}
  \begin{split}
    M&\le \int_0^T\zet(\bar\sigma_M(s),\gamma_{\convexP_0}'(s),\bar\theta(s,\theta))ds\\
    & = \int_0^T\langle\bar\sigma_M(s),\hgamma_{\convexP_0}'(s)\rangle -   \hat\chi_A(\bar\sigma_M(s),\bar\theta(s,\theta))ds.
\end{split}
\end{equation}
  By the properties of $\hat\chi_A$ it follows\footnote{ Equations~\eqref{eq:derivative-m}
    and~\eqref{e_secondDerivativeChi} imply that $\hat\chi_A(\cdot,\theta)$ is convex and
    has its minimum in $\sigma = 0$ where $\hat\chi_A(0,\theta) = 0$} that if
  $\|\sigma\|\ge\sigma_*$, then
\newcommand{\setAbove}{\Sigma_*}
\newcommand{\setAboveM}{\Sigma_M}
\newcommand{\setAbovec}{\setAbove^\text{c}}
  \begin{align*}
    \hat\chi_A(\sigma,\theta) \ge \Const\|\sigma\|.
  \end{align*}
Define the set $\setAboveM = \setAboveM(\convexP_0,M) = \{s\in[0,T]\st \|\bar\sigma_M(s)\|\ge \sigma_*\}$.  Then
  \begin{align*}
    M\le  \int_0^T \langle \bar\sigma_M,\hgamma_{\convexP_0}'\rangle-\Const\int_{\setAboveM} \|\bar\sigma_M\|\le\int_0^T
    \langle\bar \sigma_M,(1-\convexP_0)\hgamma'\rangle-\Const (\|\bar\sigma_M\|\nl1-T\sigma_*).
  \end{align*}
  Assuming $M\geq \Const \sqrt T$ to be sufficiently large, it follows
  \begin{equation}\label{eq:lower-b-braket}
    \int_0^T\langle\bar\sigma_M,\hgamma'\rangle > \frac M2 + \Const\|\bar\sigma_M\|\nl1.
  \end{equation}
Using~\eqref{e_collection},~\eqref{eq:Rh-ok} together with~\eqref{eq:large-M} and~\eqref{eq:lower-b-braket} yields
  \begin{align}\label{eq:Lambdah-0}
    \regu\logmgf{h}^*(\gamma)-M
    &\ge \convexP_0\int_0^T\langle \bar\sigma_M,\hgamma'\rangle -
      \CJdue \left[\ve\eefrac 34T\eefrac 54+\left\{\frac{\ve\eefrac 14}{T\eefrac {1}4}+T\right\}\|\bar\sigma_M\|\nl1\right]\\
    &\ge 0\notag,
  \end{align}
  provided $\ve$ is small enough, $M$ is large enough, and $\CJ0$, thus $ \convexP_0$, is
  large enough.  By the arbitrariness of $M$ it follows
  $ \regu\logmgf{h}^*(\gamma)=\infty$.

  We can therefore assume that $I(\gamma_{\convexP_0})<\infty$: since $I(\gamma_\convexP)$
  is decreasing in $\convexP$, this implies that $I(\gamma_\convexP)<\infty$ for any
  $\convexP\in[\convexP_0,1]$.  In particular, recalling also Lemma~\ref{lem:hausdorff}, for each $\convexP\in(\convexP_0,1]$ and $\theta\in \bT$, $s\in [0,T]$  there exists an open set $s\in U\subset [0,T]$ and a compact set $\cK = \cK(\theta,\convexP,s)\subset \cap_{s'\in U}\bD_*(\bar\theta(s',\theta))$ such that $\gamma'_\convexP(s')\in\cK$ for almost all $s'\in U$ and thus that
  $(1-\convexP)\hgamma'=\hgamma'_\convexP(s)=\partial_\sigma\hat\chi_A(\sigma)$ has a (unique) solution for
  almost all $s\in [0,T]$, which we denote with $\bar\sigma_\convexP$. Since $\partial_\sigma\hat\chi_A$ is a homeomorphism, see the proof of Lemma~\ref{lem:domainZ}, we have $\bar\sigma_\convexP\in L^\infty$.

By construction, $\bar\sigma_\convexP$ realizes the $\sup$
  in~\eqref{eq:ratefunction}; in particular~\eqref{e_collection} reads:
  \begin{align*}
    \regu\logmgf{h}^*(\gamma) - I_\theta(\gamma_\convexP)
    &\ge \convexP\int_0^T\langle\bar\sigma_\convexP(s),\gamma'(s)\rangle ds-\regu\restoPalla{h}(\bar\sigma_\convexP).
  \end{align*}
  Let us assume $\convexP\in (\convexP_0,\frac 12]$. Define the set
  $\setAbove=\setAbove(\convexP) = \{s\in[0,T]\st \|\bar\sigma_\convexP(s)\|\ge
  \sigma_*\}$ and let us denote by $\setAbovec=[0,T]\setminus \setAbove$ its complement.
  Then, plugging~\eqref{eq:Rh-ok} into the above inequality and recalling
  Lemma~\ref{l_gamma-bound}, we find:
  \begin{align}\label{e_collection2}
    \notag\regu\logmgf{h}^*(\gamma) - I_\theta(\gamma_\convexP)
    &\ge \Const\convexP\left[\int_{\setAbove}\|\bar\sigma_\convexP\|+\int_{\setAbovec}\|\bar\sigma_\convexP\|^2\right]\\
    &\phantom = -\CJdue \left[T\eefrac 54\ve\eefrac 34 +\left\{\ve\eefrac 14T\eefrac
      {-1}4+\min\{T,\|\bar\sigma_\convexP\|\nl1\}\right\}\|\bar\sigma_\convexP\|\nl1\right].
  \end{align}
  First, we claim that:
  \begin{equation}\label{eq:min-bound}
    \min\{T,\|\bar\sigma_\convexP\|_{L^1}\}\|\bar\sigma_\convexP\|_{L^1} \le 2T\int_{\setAbove}\|\bar\sigma_\convexP\|+2T\int_{\setAbovec}\|\bar\sigma_\convexP\|^2.
  \end{equation}
  In fact, assume that 
  $\int_{\setAbove}\|\bar\sigma_\convexP\|\geq \int_{\setAbovec}\|\bar\sigma_\convexP\|$;
  then $\|\bar\sigma_\convexP\|\nl1\leq 2\int_{\setAbove}\|\bar\sigma_\convexP\|$ and
  \begin{align*}
    \min\{T,\|\bar\sigma_\convexP\|\nl1\}\|\bar\sigma_\convexP\|\nl1\le 2 T\int_{\setAbove}\|\bar\sigma_\convexP\|.
  \end{align*}
  If, on the other hand,
  $\int_{\setAbove}\|\bar\sigma_\convexP\|\leq \int_{\setAbovec}\|\bar\sigma_\convexP\|$,
  we have
  \begin{align*}
    \min\{T,\|\bar\sigma_\convexP\|\nl1\}\|\bar\sigma_\convexP\|\nl1\le  T\int_{\setAbove}\|\bar\sigma_\convexP\|+2\left[\int_{\setAbovec}\|\bar\sigma_\convexP\|\right]^2
    \le T\int_{\setAbove}\|\bar\sigma_\convexP\|+2T\int_{\setAbovec}\|\bar\sigma_\convexP\|^2.
  \end{align*}
Which proves~\eqref{eq:min-bound}. Plugging it into~\eqref{e_collection2} we
  obtain:
  \begin{align*}
    \regu\logmgf{h}^*(\gamma)& - I(\gamma_\convexP)
    \ge \Const\convexP\left[\int_{\setAbove}\|\bar\sigma_\convexP\|+\int_{\setAbovec}\|\bar\sigma_\convexP\|^2
      \right]-\CJdue%
      \Bigg[\ve\eefrac34T\eefrac54+\\
      &+\{\ve\eefrac 14T\eefrac {-1}4+2T\}\int_{\setAbove}\|\bar\sigma_\convexP\|+ (\ve T)\eefrac
      14 \left[{\int_{\setAbovec}\|\bar\sigma_\convexP\|^2}\right]\eefrac12+2
      T\int_{\setAbovec}\|\bar\sigma_\convexP\|^2 \Bigg]\\
    &\ge \Const\convexP \left[\int_{\setAbove}\|\bar\sigma_\convexP\|+\int_{\setAbovec}\|\bar\sigma_\convexP\|^2
      \right]\\
    &\phantom{\ge} -\CJdue \left[ \ve\eefrac 34 T\eefrac54 +
      (\ve T)\eefrac 14 \left[{\int_{\setAbovec}\|\bar\sigma_\convexP\|^2}\right]\eefrac12\right],
  \end{align*}
  provided $\CJ0$ has been chosen large enough (recall~\eqref{eq:convexP1} and that
  $\convexP \ge \convexP_0$).

  We conclude that,
 \[\begin{split}
 &\textrm{ if }\int_{\setAbovec}\|\bar\sigma_\convexP\|^2\ge \Const\convexP^{-2}\ve\eefrac 12T\eefrac 12\\
 &\textrm{ or } \int_{\setAbove}\|\bar\sigma_\convexP\|\ge \Const \convexP\invr\max\left\{\ve\eefrac
  34T\eefrac54,(\ve T)\eefrac14\left[{\int_{\setAbovec}\|\bar\sigma_\convexP\|^2}\right]\eefrac12\right\},
\end{split}
\]
then $\regu\logmgf{h}^*(\gamma) - I(\gamma_\convexP)\geq 0$.  Otherwise,
  \begin{equation}\label{eq:last-case}
    \begin{split}
      \int_{\setAbovec}\|\bar\sigma_\convexP\|^2
      &\le \Const \convexP^{-2}(\ve T)\eefrac12\\
      \int_{\setAbove}\|\bar\sigma_\convexP\|
      &\le \Const\convexP\invr\max\left\{\ve\eefrac34T\eefrac54,(\ve T)\eefrac14
      \left[{\int_{\setAbovec}\|\bar\sigma_\convexP\|^2}\right]\eefrac12\right\}\\
      &\le \Const \convexP^{-2}(\ve T)\eefrac12,
    \end{split}
  \end{equation}
  provided $\ve$ is small enough.
  Note that Lemma~\ref{l_gamma-bound} implies that
  $\|\hat\gamma_\convexP'(s)\|\le\Constgs\|\bar\sigma_\convexP(s)\|$;
  then estimates~\eqref{eq:last-case} and definition~\eqref{eq:convexP1} imply, since $\convexP> \convexP_0$,
  \begin{align}\label{eq:hgamma-ub}
    \|\hgamma\|\nl\infty
    &\leq 2\|\hgamma_\convexP'\|\nl1\le 2\Constgs\|\bar\sigma_\convexP\|\nl1\\
    &\le 2\Constgs\left[ \int_{\setAbove}\|\bar\sigma_\convexP\| +
      \left[ T{\int_{\setAbovec}\|\bar\sigma_\convexP\|^2}\right]\eefrac12 \right]
      \le \Const \convexP\invr\ve\eefrac14 T\eefrac34.\notag
  \end{align}
Hence, if we choose $\CJ0$ sufficiently large and
  \begin{equation}\label{eq:convexP2}
    \convexP =\max\{2\convexP_0, 2\CJ0\ve\eefrac14 T\eefrac34\|\hgamma\|\nl\infty\invr\},
  \end{equation}
  then~\eqref{eq:hgamma-ub} cannot hold true and necessarily $\regu\logmgf{h}^*(\gamma)\ge I(\gamma_\convexP)$. Thus,  we
  obtain~\eqref{e_omegah}, provided that $\tilde R_\ve(\gamma) > \convexP\|\hgamma\|\nl\infty$.\\

 \noindent\textbf{Case II: perturbative estimate}\ \\
  In this case we plan to apply the more refined estimate given in
  Lemma~\ref{lem:exponential-h}-\ref{i_perturbativeBoundOnR-h}; yet it may not be possible
  to do so with the path $\gamma_\convexP$ since in general $\setAbove \ne \emptyset$.  In
  order to circumvent this problem we define another path that is sufficiently close to
  $\gamma_\convexP$ and to which we can apply the mentioned estimate.  Let
  $\tgamma\in \Lip_{C,*}$ so that $\tgamma(0)=0$ and $\tgamma'(s)=\gavg'(s)$ for
  $s\in \setAbove$ and $\tgamma'(s)=\gamma'(s)$ otherwise.  Define
  $\tgamma_\convexP = (1-\convexP)\tgamma+\convexP\gavg$, hence
   \begin{equation}\label{eq:gammarho}
     \hat\tgamma'_\convexP = (1-\convexP)\hat\tgamma' = \Id_{\setAbovec}\hgamma_\convexP'.
   \end{equation}
   Let us now choose
   \begin{align}\label{e_optimalConvexP}
     \convexP = \CJ0\max\{(\ve T)\eefrac16\|\hgamma\|\nl\infty\eefrac{-1}3, \ve^{\efrac 14}T^{-\efrac 14}+2 T\}.
   \end{align}
   Then, either $\regu\logmgf{h}^*(\gamma)\ge I(\gamma_\convexP)$ (and
   then~\eqref{e_omegah} holds provided
   $\tilde R_\ve(\gamma) \geq \convexP\|\hgamma\|\nl\infty$), or estimates~\eqref{eq:last-case} hold; in
   the latter case Lemma~\ref{l_gamma-bound} yields
  \begin{align}\label{eq:tgamma-bound}
    \|\gamma_\convexP-\tgamma_\convexP\|\nl\infty
    &\le \int_{\setAbove} \|\hgamma_\convexP'(s)\| ds
      \le \Constgs \int_{\setAbove} \|\bar\sigma_\convexP(s)\| ds 
    \le \Const\convexP^{-2}(\ve T)\eefrac12.
  \end{align}
  In particular,
  $\|\gamma-\tgamma_\convexP\|\nl\infty \le\convexP\|\hgamma\|\nl\infty +
  \Const\convexP^{-2}(\ve T)\eefrac12$.  Observe that, assuming $\CJ0$ large
  enough,~\eqref{e_optimalConvexP} implies that (this justifies the
  choice of the first term in~\eqref{e_optimalConvexP}, which optimizes the above inequality)
  \begin{align}\label{e_boundRadius}
    \|\gamma-\tgamma_\convexP\|\nl\infty
    &\le 2\convexP\|\hgamma\|\nl\infty.
  \end{align}
  Let $\underline\sigma_\convexP(s)$ be the unique solution of
  $\hat\tgamma'_\convexP(s)=\partial_\sigma\hat\chi_A(\sigma,\bar\theta(s,\theta))$. By
  definition,
  \begin{equation}\label{eq:sigmarho}
    \underline\sigma_\convexP(s)= \Id_{\setAbovec}(s) \bar \sigma_\convexP(s)\le \sigma_*;
  \end{equation}
  in particular, $\underline\sigma_\convexP(s)$ satisfies the hypotheses of
  Lemma~\ref{lem:exponential-h}-\ref{i_perturbativeBoundOnR-h}. Also it satisfies the first inequality of~\eqref{eq:last-case}. We can now proceed as
  before but using the path $\tgamma_\convexP$: more precisely, equations~\eqref{eq:newLambda-h},~\eqref{eq:kappas-def},~\eqref{eq:gammarho},
 ~\eqref{eq:sigmarho} and Lemma~\ref{l_gamma-bound} imply
  \begin{align*}
    \regu\logmgf{h}(\gamma)
    &\ge \int_0^T\bzetgtsl(\underline\sigma_\convexP(s),s)\deh s - \regu\restoPalla{h}(\underline\sigma_\convexP) \\
    &= \int_0^T\kappa_{\underline\gamma_\convexP,\theta}(\underline\sigma_\convexP(s),s)\deh s+\frac{\convexP}{1-\convexP}\int_0^T \langle\underline\sigma_\convexP(s), \hat\tgamma_\convexP'(s)\rangle - \regu\restoPalla{h}(\underline\sigma_\convexP)\\
    &\ge I(\tgamma_\convexP)+\Const\convexP\|\underline\sigma_\convexP\|\nl2^2-
      \regu\restoPalla{h}(\underline\sigma_\convexP).
  \end{align*}
  Before continuing note that, choosing $\tilde R_\ve(\gamma) \ge 3\convexP\|\hgamma\|\nl\infty$, we can ensure
  that~\eqref{e_omegah} trivially holds if $\convexP > 1/3$. Hence we can assume
  $\convexP \le 1/3$. Next, we claim that
  \begin{equation}\label{eq:sigma-bound}
\frac{\CJ5\Constgs}3\sqrt\ve\leq \|\underline\sigma_\convexP\|\nl2\leq \Const \CJ0\invr\sqrt T.
 \end{equation}
  Indeed, by the first of~\eqref{eq:last-case} and recalling the choice~\eqref{e_optimalConvexP}
  $\|\underline\sigma_\convexP\|\nl2\le \Const \CJ0\invr\sqrt T$.
  Moreover, recall that we are assuming that $\|\hgamma\|\nl\infty\ge\CJ5\sqrt{\ve T}$ and
  $\convexP \le 1/3$, which by~\eqref{e_boundRadius} yields
  $\|\underline\hgamma_\convexP\|\nl\infty\ge(\CJ5/3)\sqrt{\ve T}$; in turn, by Lemma~\ref{l_gamma-bound},  this implies
  that $\|\underline\sigma_\convexP\|\nl2\ge\frac{\CJ5\Constgs}3\sqrt\ve$.

  We now make the choice  $L=\Const {\|\underline\sigma_\convexP\|\nl2}(\ve T)\eefrac{-1}2$. Let us check when it satisfies the hypotheses of Lemma~\ref{lem:exponential-h}-\ref{i_perturbativeBoundOnR-h}. By~\eqref{eq:sigma-bound}, choosing $\CJ5$ and $\CJ0$ sufficiently large,
we have $L\in[\ve_0\invr,\ve_0\ve\eefrac{-1}2]$. On the other hand the condition $\ve L\leq T$ is satisfied only if $\Const \|\underline\sigma_\convexP\|\nl2\leq T^{\efrac 32}\ve^{-\efrac 12}$, which is automatically ensured only if $T\geq \sqrt\ve$, provided  $\CJ0$ is sufficiently large.

Thus, if $T\leq \sqrt\ve$, then our choice it is not good. In such a case we make the choice $L=\ve_0\ve^{-1} T$. Again, we have  $L\in[\ve_0\invr,\ve_0\ve\eefrac{-1}2]$, and $\ve L\leq T$, thus the hypotheses of Lemma~\ref{lem:exponential-h}-\ref{i_perturbativeBoundOnR-h} are satisfied.

We are now ready to estimate $\regu\restoPalla{h}$ using Lemma~\ref{lem:exponential-h}-\ref{i_perturbativeBoundOnR-h}, recall our choices $h =\sqrt{\ve T}$ and $T\in (\ve L, \Tmax)$.  We have to treat the two above regimes separately.

 If $T\geq \sqrt\ve$, then, using Schwarz inequality,
  \begin{equation}\label{eq:resto-palla-h-II}
    \begin{split}
      \regu\restoPalla{h}(\underline\sigma_\convexP) \leq\CJdue\left[5\sqrt{\ve}\,\|\underline\sigma_\convexP\|\nl2+2T\|\underline\sigma_\convexP\|^2\nl2
      \right].
    \end{split}
  \end{equation}
  We thus conclude, assuming $\CJ0$ in~\eqref{e_optimalConvexP} to be sufficiently large:
  \begin{align}\label{e_finalCaseEstimate}
    \regu\logmgf{h}(\gamma)-I(\tgamma_\convexP)
    &\ge \Const\convexP\|\underline\sigma_\convexP\|\nl2^2-
      5\CJdue      \sqrt{\ve}\|\underline\sigma_\convexP\|\nl2.
  \end{align}
  Since Lemma~\ref{l_gamma-bound} implies $\Constgs\sqrt T \|\sigma\|\nl2\geq \|\hgamma\|\nl\infty\geq \CJ5\sqrt{\ve T}$, the first term in the max of~\eqref{e_optimalConvexP} implies that~\eqref{e_omegah} holds by choosing
  $\tilde R_\ve(\gamma) = 3\convexP\|\hgamma\|\nl\infty$.

 Next, we consider the case  $T\leq \sqrt\ve$ and apply again Lemma~\ref{lem:exponential-h}-\ref{i_perturbativeBoundOnR-h}:
    \begin{equation}\label{eq:resto-palla-h-III}
    \begin{split}
      \regu\restoPalla{h}(\underline\sigma_\convexP) &\leq\CJdue\left[T^{2}\CJdue^{-\efrac12}+4\sqrt \ve\|\underline\sigma_\convexP\|\nl2+2\ve T^{-1} \|\underline\sigma_\convexP\|^2\nl2\right]\\
      &\leq\CJdue\left[5\sqrt{\ve}\,\|\underline\sigma_\convexP\|\nl2+2\ve T^{-1}\|\underline\sigma_\convexP\|^2\nl2
      \right],
    \end{split}
  \end{equation}
  where we have chosen $\CJdue$ large enough and, in the last line, we have used~\eqref{eq:sigma-bound}. Then~\eqref{e_omegah} follows again since, on the one hand, $\ve^{\efrac14}T^{-\efrac 14}\geq \ve T^{-1}$, and, on the other hand, $(\ve T)^{\efrac 16}\|\hgamma\|_{L^\infty}^{-\efrac 13}\|\underline\sigma_\convexP\|\nl2\geq \Const \sqrt\ve$ is implied, again, by $\Constgs\sqrt T\|\underline\sigma_\convexP\|\nl2\geq \|\hgamma\|_{L^\infty}$ and $\|\hgamma\|_{L^\infty}\geq \CJ5 \sqrt{\ve T}$.
  The sub-lemma then follows by
  recalling~\eqref{eq:cond-R-one}, that $\convexP$ must be larger than $\convexP_0$, defined in
 ~\eqref{eq:convexP1}, as well as larger than either~\eqref{eq:convexP2} or
 ~\eqref{e_optimalConvexP}, and provided that we choose $\CJ5\ge3\CJ0$.
\end{proof}
  \begin{rem}
    The above Lemma is based on a trade-off: for small deviations (up to the ones
    predicted by the Central Limit Theorem) it gives very rough estimates, but up to times
    of order one; while for larger deviations it provides much sharper results although only
    for short times.  Obtaining sharper results for small deviations would entail more work, in
    particular a sharper version of Lemma~\ref{lem:exponential} (which can be achieved by
    using the techniques that we will employ in the next sections to prove a local CLT).
    On the other hand, in order to extend the above sharp results to longer times one can simply
    divide the time interval in shorter ones and use Lemma~\ref{lem:upper} repeatedly (see
    Theorem~\ref{thm:large} for an implementation of this strategy).
  \end{rem}
  \subsection{Lower bound for balls (short times)}\label{ss_lowerBound}\ \newline %
  In this section we proceed to obtain a lower bound for short times.  It turns out that
  in~\cite{DeL2} we need this type of estimates only in the case of large deviations;
  therefore we do not insist here in obtaining optimal bounds for all moderate deviations,
  since this would require considerable additional work.  On the other hand, we can, and
  will, obtain results for deviations that are not exceedingly small at essentially no
  extra cost.  %
  Also, in an attempt to simplify the exposition, we will consider trajectories that are not
  arbitrarily close to being impossible, \ie so that their derivatives belong to the
  domains $\bD_\eps(\theta)=\bD(\theta)\setminus\partial_\eps\bD(\theta)$ for some
  arbitrarily small, but fixed, $\eps$.
  \newcommand{\centsigma}{\bar\sigma}
  \newcommand{\argamma}{\gamma}
  \begin{lem}[Lower bound]\label{lem:lower}
    For any $\delta\in(0,\frac 14)$ there exists $\ve_\delta, K_\delta,\Tmax>0$ such that, for
    any $\theta\in\bT$, $\ve\le \ve_\delta$, $\centgamma\in\Lip_{C,*}([0,T])$, with
    $\centgamma'(s)\in\bD_\delta(\bar\theta(s,\theta))$ for almost all $s\in[0,T]$, and for
    any standard pair $\ell$ so that $\thetasl = \theta$:
    \begin{align*}
    &\ve\log\ppath_\ell(B(\centgamma, h))\ge - I_\theta(\centgamma)- c_\delta (T\sqrt{\ve/h}+ T^2+T\raggio/h))\\
   & \raggio=  K_\delta [T^{\efrac 32}+\max\{\ve^{\efrac 14}T^{\efrac 34}, K_\delta^{-\efrac 12}T(\ve/h)^{\efrac 14}\} ]
    \end{align*}
   provided $T\in[\ve_\delta^{-1}\ve,\Tmax]$, $h\in[\raggio,T]$.
  \end{lem}
  \begin{proof}
    Before describing the core of the proof we need some preparation: we must define a
    new reference path having several special properties. To this end we first perform the
    same polygonalization done just before~\eqref{eq:poly-close}, with step $h=T/N_h$,
    $N_h\in\bN$ and $h\geq 2\ve$. Let $\centgamma_{h}$ the resulting path. Clearly
    $\|\centgamma-\centgamma_{h}\|\nc0\leq Ch\leq \delta/4$, provided $h$ is small enough,
    but it also has another important property.
    \begin{sublem}\label{sublem:poly}
      For all $\delta>0$ and $\centgamma\in\Lip_{C,*}([0,T])$, with
      $\centgamma'(s)\in\bD_\delta(\bar\theta(s,\theta))$ for almost all $s\in[0,T]$, there
      exists $h_\delta\in (0,T]$ such that, for all $h\leq h_\delta$,
      $\centgamma'_{h}(s)\in \bD_{\delta/4}(\bar\theta(s,\theta))$ for almost all $s\in[0,T]$.
    \end{sublem}
    \begin{proof}
      By Lemma~\ref{lem:hausdorff}, $\bD(\bar\theta(s,\theta))$ varies continuously, in
      the Hausdorff topology, with respect to $s$.  Hence, for each $\delta>0$ there
      exists $h_\delta>0$ such that
      $\bD_{\delta}(\bar\theta(s',\theta))\subset
      \bD_{\delta/2}(\bar\theta(s,\theta))\subset \bD_{\delta/4}(\bar\theta(s',\theta))$
      for all $|s'-s|\leq h_\delta$.  Accordingly, for all $h\leq h_\delta$,
      $k\in\{0,\cdots,N_h\}$ and $s\in (kh,(k+1)h)$ we have
      $\centgamma'(s)\in \bD_{\delta/2}(\bar\theta(kh,\theta))$. By the convexity of the
      set it follows
      $\centgamma'_{h}(s)\in \bD_{\delta/2}(\bar\theta(kh,\theta))\subset
      \bD_{\delta/4}(\bar\theta(s,\theta))$, hence the sub-lemma.
    \end{proof}
    The above sub-lemma implies that the equation
    $\centgamma_{h}'(s)=\partial_\sigma\chi_A(\sigma(s),\bar\theta(s,\theta))$ has a
    unique solution $\csigma_h(s)\in \BV$. Moreover, from
    Lemmata~\ref{lem:hausdorff},~\ref{lem:domainZ} it also follows that there exists
    $\raggiod>0$ such that
    $\partial_\sigma\chi_A(B(0,\raggiod),\bar\theta(s,\theta))\supset\bD_{\delta/4}(\bar\theta(s,\theta))
    $ for all $s\in[0,T]$.  This allows to obtain also some regularity of the function
    $\csigma_h\in\BV$. Indeed, for $k\in\{0,\cdots,N_h\}$ and $s\in (kh,(k+1)h)$ we have
    \[
  0=\partial^2_\sigma\chi_A(\csigma_h(s),\bar\theta(s,\theta))\csigma'_h(s)+\partial_\theta\partial_{\sigma}\chi_A(\csigma_h(s),\bar\theta(s,\theta))\bar
  A(\bar\theta(s,\theta).
\]
Since $\|\csigma_h\|_{L^\infty}\leq \raggiod$ and
$\inf_{\|\sigma\|\leq \raggiod, s}\partial^2_\sigma\chi_A>0$, there exists $C_\delta>0$
such that $\|\csigma'_h(s)\|\leq C_\delta$.

We have thus obtained a path with a controlled regularity. Unfortunately, we have a further
problem: it is not obvious how to compare effectively the rate function computed on the
regularized path and the rate function computed on the original one. To this end it turns
out to be convenient to further modify the path in a special way: consider a path
$\gammah\in\cC^0([0,T],\bR^d)$ such that $\gammah(kh)=\centgamma(k h)=\centgamma_{h}(kh)$
for all $k\in\{0,\cdots, N_h\}$ with the following extra property: a) $\gammah$ is
Lipschitz, b) the equation
$\gammah'(s) = \partial_\sigma\chi_A(\sigma(s),\bar\theta(s,\theta))$, which by (a) is
well defined for almost every $s\in[0,T]$, has a solution $\sigmah$ which is constant in
each interval $[nh, (n+1)h)$.

The reason for the above construction lies in item (c) of the next lemma.
\begin{sublem}\label{sublem:minimizing} There exists $h_\delta>0$ such that, for all $h\leq h_\delta\in (0,T]$, the above defined path $\gammah$ is well defined and unique. In addition,
\begin{enumerate}
\item $\gammah'(s)\in\bD_{\delta/8}(\bar\theta(s,\theta))$ for almost all $s\in[0,T]$;
\item $\|\centgamma-\gammah\|_{\cC^0}\leq 2C h$ ;\label{sublem:minimizing-b}
\item $I_{\theta}(\centgamma)\geq I_\theta(\gammah)$.\label{sublem:minimizing-c}
 \end{enumerate}
\end{sublem}
\begin{proof}
  We can change the path interval by interval. Assume that we have a path $\gamma_{h,k}$
  that has the wanted properties in the interval $[0,kh]$ and that agrees with
  $\centgamma_{h}$ in the interval $(kh,T]$, and let us consider the the interval
  $J_k=(kh, (k+1)h]$. Define the function
  $\Xi_k: L^{\infty}([0,T])\times \bR^{d+1}\to L^{\infty}(J_k)\times \bR$ given by
  \[
    \Xi_k(\eta,\zeta,\beta)=
    \left(\centgamma_{h}'(s)+\eta(s)-\partial_\sigma\chi_A(\zeta+(1-\beta)\csigma_h(s),\bar\theta(s,\theta)),
      \int_{J_k}\eta(s)ds\right).
  \]
  By definition we have $\Xi_k(0,0,0)=0$. We want to apply the implicit function theorem,
  hence we have to study the differential
  \[
    \partial_{\eta,\zeta}\Xi_k=\begin{pmatrix} \Id&-\partial^2_\sigma\chi_A(\zeta+(1-\beta)\csigma_h(s),\bar\theta(s,\theta))\\
      \Leb&0\end{pmatrix}.
  \]
  Using Lemmata \ref{lem:quasicompact}, \ref{lem:coboundary},
  \ref{eq:theta-derivative-all}, a direct computation shows that, provided
  $\|\zeta\|\leq 2\raggiod$, $\|(\partial_{\eta,\zeta}\Xi_k)^{-1}\|\leq C_\delta$. We can
  then apply the Implicit Function Theorem and obtain a solution
  $(\eta(\beta,s),\zeta(\beta))$ of $\Xi_k(\eta,\zeta,\beta)=0$. Such a solution is
  differentiable and satisfies
  \[
    \begin{split}
      &\partial_\beta\eta(s)-\partial^2_\sigma\chi_A(\zeta+(1-\beta)\csigma_h(s),\bar\theta(s,\theta))[\partial_\beta\zeta-\csigma_h(s)]=0\\
      &\int_{J_k}\partial_\beta\eta(s) ds=0.
    \end{split}
  \]
  Integrating the first and using the second equation yields
  \[
    \begin{split}
      &\partial_\beta\eta(s)=B(s,\zeta,\beta)[\partial_\beta\zeta-\csigma_h(s)]\\
      &\partial_\beta\zeta=\left[\int_{J_k}B(s',\zeta,\beta)ds'\right]^{-1}\int_{J_k}B(s',\zeta,\beta)\csigma_h(s')ds'\\
      &B(s,\zeta,\beta)=\partial^2_\sigma\chi_A(\zeta+(1-\beta)\csigma_h(s),\bar\theta(s,\theta)).
    \end{split}
  \]
  Recalling that $\|\csigma'_h(s)\|\leq C_\delta$ (which was the point of introducing the
  regularized path $\centgamma_{h}$ in the first place), we have
  $\|B(s,\zeta,\beta)-B(kh,\zeta,\beta)\|\leq C_\delta h$. Thus,
\[
\begin{split}
&\left\|\partial_\beta\zeta- h^{-1}\int_{J_k}\csigma_h(s)ds\right\|\leq C_\delta h\leq \raggiod\\
&\|\partial_\beta\eta(s)\|\leq C_\delta h\leq \delta/8.
\end{split}
\]
provided $h$ is small enough. Accordingly,
$\centgamma'_{h}(s)+\eta(\beta,s)\in \bD_{\delta/8}(\bar\theta(s,\theta))$ for all
$\beta\leq 1$.  The above shows that $\gamma_{h,k+1}$ and hence, by induction, $\gammah$
is uniquely defined and point (b) of the lemma follows as well for $\delta$ small enough.
To prove (c), we use Lagrange multipliers to find the local minimum among all the paths
$\gamma\in\cC^{0}([0,T],\bR^d)$ such that $\gamma(kh)=\centgamma(kh)$. This amount to
finding the stationary points of the functional
\[
I_L(\gamma, \phi_i)=I(\gamma)+\sum_{k=1}^{h^{-1}T}\langle \phi_k,\gamma(kh)-\centgamma(kh)\rangle.
\]
By~\eqref{eq:rate1} and~\eqref{eq:zeta-sigma-rel} it follows that, for each $\gamma$ such that $I(\gamma)<\infty$, we have, for each $\alpha\in\Lip$, $\alpha(0)=0$,
\[
\partial_\gamma I_L(\gamma)(\alpha)=\int_0^T\langle\sigma (s),\alpha'(s)\rangle ds+\sum_{k=1}^{h^{-1}T}\langle \phi_k,\alpha(kh)\rangle
=\int_0^T\langle(\sigma (s)-\sigma_\phi(s)),\alpha'(s)\rangle ds
\]
where $\gamma'(s) = \partial_\sigma\chi_A(\sigma(s),\bar\theta(s,\theta))$ and
$\sigma_\phi(s)=-\sum_{j=k}^{h^{-1}T}\phi_j$ for $s\in [(k-1)kh,kh)$.  Hence it must be
$\sigma=\sigma_\phi$, that is $\sigma$ is piecewise constant. But only $\gammah$ has such
a property, thus the convex function $t: \bR\to I_\theta(t\gamma_h+(1-t)\centgamma)$ has a
unique stationary point that must be a minimum, hence point (c) of the sub-lemma.
\end{proof}
Note that, by the above sub-lemma, the equation
$\gamma_{h}'(s)=\partial_\sigma\chi_A(\sigma(s),\bar\theta(s,\theta))$ has a unique
solution $\sigma_{h}(s)\in \BV$, $\|\sigma_h\|_{L^\infty}\leq C_\delta$,
$\|\sigma_h\|_{\BV}\leq C_\delta h^{-1} T$.  Also, if $C h>\raggio$, then we have
    \begin{equation}\label{eq:inclusion}
     B(\centgamma, 3C h)\supset B(\gammah,\raggio).
    \end{equation}
    This concludes our preparation; the rest of the proof follows the strategy strategy for
    proving the lower bound.  Consider the linear functional $\gfunctionalh\in\cM^d([0,T])$
    defined by
    \begin{equation}\label{eq:vfh}
      \gfunctionalh(\argamma) = \vei\int_0^T\langle\sigmah(s),\argamma'(s)\rangle ds,
    \end{equation}
    and introduce the measure $\ppath_{\gfunctionalh,\ell}$ on $\cC^0([0,T],\bR^d)$
    defined by
    \begin{align*}
      \epath_{\gfunctionalh,\ell}(\psi) =
      \frac{\epath_{\ell}(e^{\gfunctionalh}\psi)}{\epath_\ell(e^{\gfunctionalh})},
    \end{align*}
    for any continuous functional $\psi\in \cC([0,T],\bR^d)'$.
    \begin{sublem}\label{sub:small-measure}%
      There exists $\ve_\delta, K_\delta, \Tmax>0$ such that for any $0 < \ve\le \ve_\delta$:
      \begin{align*}
        \ppath_{\gfunctionalh,\ell}(B(\gammah,\raggio)) \ge\frac 12,
      \end{align*}
      provided $T\in[\ve_\delta^{-1}\ve,\Tmax]$, $\raggio\geq  K_\delta [T^{\efrac 32}+\max\{\ve^{\efrac 14}T^{\efrac 34}, K_\delta^{-\efrac 12}T(\ve/h)^{\efrac 14}\} ]$ and $h\in[\raggio/(3C),T]$.
    \end{sublem}
    \begin{proof}
      The idea is to cover the complement of $B(\gammah,\raggio)$ in the support of
      $\ppath_{\gfunctionalh,\ell}$ with finitely many sufficiently small balls whose measure
      we can estimate using the upper bound obtained in Lemma~\ref{lem:upper}.  In order to
      do so, let us partition the interval $[0,T]$ in subintervals of length $\const r_\ve$,
      $r_\ve<h$, so that any path in the support of the measure (and hence $C$-Lipschitz) can
      vary in any given subinterval by at most $2 r_\ve$.  This means that there exists a
      finite set $\Gamma = \{\gamma_i\}\subset \cC^0([0,T],\bR^d)$ of cardinality
      $|\Gamma| = \exp({\const r_\ve^{-1} T})$ so that\footnote{ In fact, consider a lattice
        of size $r_\eps \sqrt{4/d}$ in $\bR^d$. If a path $\gamma$ is in the support of the
        measure and $\gamma(s)$ belongs to the lattice, by the Lipschitz property
        $\gamma(s+\const r_\ve)$ belongs to the union of balls of radius $r_\ve$ centered at
        \emph{finitely many} points of the lattice.}
      \begin{align*}
        \supp \ppath_{\gfunctionalh,\ell} \subset \bigcup_{\gamma_i\in\Gamma} B(\gamma_i,r_\ve).
      \end{align*}
      Define $\Gamma_*=\{\gamma_i\in\Gamma\st\|\gammah-\gamma_i\|_\infty\ge \raggio/2\}$: then, by definition,
      \begin{align*}
        \ppath_{\gfunctionalh,\ell}(B(\gammah,\raggio))
        &\ge  1- \sum_{\gamma_i\in \Gamma_*}\ppath_{\gfunctionalh,\ell}(B(\gamma_i,r_\ve)).
      \end{align*}
      Let $r_\ve\in [ K_\delta^2 \raggio^{-2}\ve T^2, \raggio/8]$; observe that {the
        interval is not empty provided
        $\raggio\geq 2\ve^{\efrac 13}T^{\efrac 23} K_\delta^{\efrac 23}$}, which always
      holds if $\ve_\delta$ is    small enough.  We claim that, for all
      $\gamma_i\in \Gamma_*$,
      \begin{align}\label{e_boundComplementaryBall}
        \ppath_{\gfunctionalh,\ell}(B(\gamma_i,r_\ve)) < e^{- C_\delta\vei \raggio^2T^{-1}}.
      \end{align}
      Observe that the above estimate suffices to prove the sub-lemma: in fact, using our
      estimate on the cardinality of $\Gamma$, we have
      \begin{align*}
        \sum_{\gamma_i\in \Gamma_*}\ppath_{\gfunctionalh,\ell}(B(\gamma_i,r_\ve)) \le
        e^{-\Const([C_\delta-\const K_\delta^{-2}] \vei \raggio^2T^{-1}}\le \frac 12,
      \end{align*}
      provided we choose  $K_\delta$ large enough.

      We thus proceed to prove~\eqref{e_boundComplementaryBall}.
      Using~\eqref{eq:legendre} we gather
      \begin{align*}
        \ppath_{\gfunctionalh,\ell}(B(\gamma_i,r_\ve))
        &=e^{-\vei\logmgf(\gfunctionalh)}
          \epath_{\ell}\left(e^{\gfunctionalh}\Id_{B(\gamma_i,r_\ve)}\right)\\
        &\leq e^{-\vei\logmgf(\gfunctionalh)+\gfunctionalh(\gamma_i)+\ve^{-1}r_\ve\|\sigmah\|\nBV}
          \ppath_{\ell}\left(B(\gamma_i,r_\ve)\right),\notag
      \end{align*}
      where in the second line we used the fact that, for any
      $\gamma\in B(\gamma_i,r_\ve)$, we have by definition~\eqref{eq:vfh} (recall
      Remark~\ref{rem:variationdef}),
      \begin{equation}\label{eq:fi-h-est}
        \begin{split}
          |\gfunctionalh(\gamma)-\gfunctionalh(\gamma_i)| &=
          \vei\left|\int\langle\sigmah,\gamma'-\gamma'_i\rangle\right|
          \le \vei \|\sigmah\|\nBV\|\gamma-\gamma_i\|\nl\infty\\
          & \le \vei \|\sigmah\|\nBV\, r_\ve .
        \end{split}
      \end{equation}
      Then, using~\eqref{eq:sigma-nu} and~\eqref{eq:largedev3}:
      \begin{align}
        \ppath_{\gfunctionalh,\ell}(B(\gamma_i,r_\ve))
        &\leq e^{\vei \left[-\restoPalla(\sigmah)+ r_\ve\|\sigmah\|\nBV\right]}\notag\\
        &\phantom = \cdot e^{\vei\left[\int_0^T\langle\sigmah(s),\gamma_i'(s)\rangle
          - \chi_A(\sigmah(s),\bar\theta(s,\theta))ds\right]}
          \ppath_{\ell}\left(B(\gamma_i,r_\ve)\right).\label{eq:exhausted-1}
      \end{align}
      Next, let us set
      $\Xi_i=\inf_{\argamma\in B(\gamma_i, \raggio/4)}I_\theta(\argamma)$; we claim that
      \begin{align}\label{e_ppathEstimate}
        \ppath_\ell\left(B(\gamma_i,r_\ve)\right)\le e^{-\vei \Xi_i}.
      \end{align}
      In fact, if $\|\hgamma_i\|_\infty\ge 2CT$, then $\|\hgamma\|_\infty\geq 3/2 CT$ for all $\gamma\in B(\gamma_i, r_\ve)$, hence their  Lipschitz constant must be larger than $C$ and  hence $\ppath_\ell\left(B(\gamma_i,r_\ve)\right)=0$.
      Otherwise~\eqref{e_ppathEstimate} follows from Lemma~\ref{lem:upper}, provided that $R_\ve(\gamma_i) +r_\ve< \raggio/{4}$. This holds since one can check that $R_\ve(\gamma_i)\leq\Const \{\ve^{\efrac 14}T^{\efrac 34}+T^2\}$, hence $R_\ve(\gamma_i)\leq \raggio/8$ by choosing $K_\delta$ sufficiently large.
      Substituting~\eqref{e_ppathEstimate} in~\eqref{eq:exhausted-1},
      recalling~\eqref{eq:kappaZ},~\eqref{eq:M-0},~\eqref{eq:rate1}, Sub-Lemma~\ref{sublem:rate-f} and Remark~\ref{rem:z-concrete}:
      \begin{align}\notag
        \ppath_{\gfunctionalh,\ell}(
        &B(\gamma_i,r_\ve)) %
          \le e^{\vei\left[-\restoPalla(\sigmah)+ r_\ve\|\sigmah\|_{\BV}\right]}\\
        &\cdot e^{\vei\sup_{\argamma\in B(\gamma_i, \raggio/4)\cap \Lip}\int_0^T\left[\langle\sigmah(s),\gamma_i'(s)\rangle
          - \chi_A(\sigmah(s),\bar\theta(s,\theta)) - \cZ(\argamma'(s),\bar\theta(s,\theta))\right]ds}\notag\\
        &\le e^{\vei\left[-\restoPalla(\sigmah)+ r_\ve\|\sigmah\|_{\BV}\right]}\notag\\
        &\cdot e^{\vei\sup_{\argamma\in B(\gamma_i, \raggio/4)} \int_0^T
          \left[\langle\sigmah(s),\argamma'(s)-\gammah'(s)\rangle+\cZ(\gammah'(s),\bar\theta(s,\theta))-\cZ(\argamma'(s),\bar\theta(s,\theta))\right]ds}.\label{eq:exhausted-2}
      \end{align}
      Next, we proceed to estimate the argument of the $\sup$ appearing on the last line: note that we only
      need to consider paths $\gamma$ so that
      $s\mapsto\cZ(\gamma'(s),\bar\theta(s,\theta))$ is integrable on $[0,T]$, hence $\gamma'(s)\in\bD$ for almost all $s$.  Then, for any
      $\convexP\in[0,1]$, let us define the interpolating function
      \begin{align*}
        H(\convexP,s)
        &:= \langle \sigmah(s),\convexP(\gamma'(s)-\gammah'(s))\rangle\\
        &\phantom{: = }-\cZ(\gammah'(s)+\convexP(\gamma'(s)-\gammah'(s)),\bar
          \theta(s,\theta))+\cZ(\gammah'(s),\bar \theta(s,\theta)).
      \end{align*}
      Let $\sigma_\convexP$ be the solution of $\gammah'+\convexP(\gamma'-\gammah')=\partial_\sigma\hat\chi_A(\sigma_\convexP,\bar\theta(\cdot,\theta))$, which exists, for all $\convexP<1$, by the convexity of $\bD$. In addition, note that, if $\convexP\leq 1/2$, then $\gammah'+\convexP(\gamma'-\gammah')\in\bD_{\delta/16}$.  Note that $H(0,s) = 0$ and
      \[
        \begin{split}
          & \partial_\convexP H(0,\cdot) = \langle
          \sigmah-\centsigma,\gamma'-\gammah'\rangle -
          \langle\centsigma,\partial_b\cZ(\gammah'(s),\bar \theta(s,\theta)),\gamma'-\gammah'\rangle=0,\\
          & \partial_{\convexP\convexP} H(\convexP,\cdot)=-\langle
          [\partial_\sigma^2\hat\chi_A(\sigma_\convexP)]\invr(\gamma'-\gammah'),\gamma'-\gammah'\rangle\leq 0,\\
          & \partial_{\convexP\convexP} H(\convexP,\cdot)\le -C_\delta \|\gamma'-\gammah'\|^2 \quad\textrm{ for all }\convexP\leq 1/2.
        \end{split}
      \]
      Thus
      \begin{align*}
        \langle \sigmah, \gamma'-\gammah'\rangle
        - \cZ(\gamma',\bar \theta(\cdot,\theta))
        + \cZ(\gammah',\bar \theta(\cdot,\theta))
        =H(1)\le -C_\delta\|\gamma'-\gammah'\|^2.
      \end{align*}
      The term containing $\restoPalla$ can be estimated by
      Proposition~\ref{lem:exponential}-\ref{i_nonPerturbativeBoundOnR}; by our bounds on $\sigma_h$ and choosing
      $L=\ve^{-\efrac 12}h^{\efrac 12}$ we obtain, since $\sqrt{\ve h}\leq T$,
      \begin{equation}\label{eq:R-estimate}
        |\restoPalla(\sigmah)|\le  c_*^{-1}C_\delta \ve L h^{-1}T+C_\delta(L^{-1}+T+\ve Lh^{-1}T)T
        \le C_\delta (T\sqrt {\ve/h}+ T^2).
      \end{equation}
      Thus,
      \[
        \ppath_{\gfunctionalh,\ell}(B(\gamma_i,r_\ve))\le e^{-C_\delta\vei\left[ \inf_{\gamma\in B(\gamma_i, \raggio/4)}\|\gamma'-\gammah'\|_{L^2}^2-C_\delta (T\sqrt {\ve/h}+ T^2)-r_\ve\|\sigmah\|_{\BV}\right]}.
      \]
      To conclude, note that, for $\gamma_i\in\Gamma_*$, if $\argamma\in B(\gamma_i,\raggio/4)$ we  have
      \[
        \raggio/4\le \|\gamma-\gammah\|_{L^\infty}\le \|\gamma'-\gammah'\|_{L^1}\le\|\gamma'-\gammah'\|\nl2\sqrt T,
      \]
      which, by choosing $K_\delta$ large and $r_\ve= K_\delta^2 \raggio^{-2}\ve T^2$, proves \eqref{e_boundComplementaryBall}.
    \end{proof}
    To conclude the proof of Lemma~\ref{lem:lower} it suffices to compare the measures
    $\ppath_{\gfunctionalh,\ell}$ and $\ppath_\ell$.  By
    Sub-Lemma~\ref{sub:small-measure}, equations~\eqref{eq:legendre},
   ~\eqref{eq:largedev3}, and using~\eqref{eq:R-estimate}, and arguing like in
   ~\eqref{eq:fi-h-est} we have
    \begin{align*}
      \frac 12
      &\le \ppath_{\gfunctionalh,\ell}(B(\gammah, \raggio)) =
        \frac{\epath_\ell\left(e^{\gfunctionalh}\Id_{B(\gammah,\raggio)}\right)}{\epath_\ell(e^{\gfunctionalh})}\\
      &\le \Const
        e^{\vei\left[\int_0^T\langle\sigmah,\gammah'\rangle-\chi_A(\sigmah,\bar\theta(s,\theta))\deh{}s
        + C_\delta (T\sqrt {\ve/h}+ T^2)\right]}\;\epath_{\ell}\left(e^{\gfunctionalh-\gfunctionalh(\gammah)}\Id_{B(\gammah,\raggio)}\right)\\
      &\le \Const
        e^{\vei\left[I_\theta(\gammah)
        + C_\delta (T\sqrt {\ve/h}+ T^2)+\|\sigma_h\|_\BV \raggio\right]}\;\ppath_\ell(B(\gammah,\raggio)).
    \end{align*}
    By~\eqref{eq:inclusion}, Sub-Lemma \ref{sublem:minimizing}-\ref{sublem:minimizing-c}
    and $h,\raggio$ as in Sub-Lemma \ref{sub:small-measure} we have
  \[
      \ppath_\ell(B(\centgamma, 3Ch))\ge    \ppath_\ell(B(\gamma_h, \raggio))\geq e^{-\vei(I(\gammah)+c_\delta (T\sqrt{\ve/h}+ T^2+T\raggio/h))}.
    \]
 The Lemma follows by choosing $\raggio$ as small as allowed and renaming $3Ch$ as $h$.
 \end{proof}
  \subsection{Large and moderate deviations for balls: long times}\label{subsec:large-long}\ \newline
  Lemmata~\ref{lem:upper} and~\ref{lem:lower} are the basic ingredients to prove
  Theorem~\ref{thm:large}.  Their major drawback is that they are really effective only
  for short times.  In order to proceed and obtain a Large Deviation estimate for times of
  order $1$ with the announced small error, we will subdivide a trajectory in shorter
  subintervals and apply the mentioned lemmata to each subinterval.  To this end, some
  type of Markov-like property is needed.  Before stating it in Lemma~\ref{lem:markov}, we
  need to introduce a bit of notation.

  For any $J\subset J'\subset [0,T]$, $\centgamma\in \cC^0(J',\bR^d)$ and $r > 0$, we introduce the notation
  \begin{equation}\label{eq:balls}
    \gball J(\centgamma, r) = \left\{\gamma\in\gspace\st \|\gamma -
    \centgamma\|_{L^\infty(J)}<r\right\},
  \end{equation}
  where $\|\gamma\|_{L^\infty(J)}=\sup_{s\in J}\|\gamma(s)\|$.  In other words,
  $\gball J(\centgamma,r)$ is a set of paths in $\gspace$ that are $r$-close to
  $\bar\gamma$ on $J$, but are otherwise arbitrary on $[0,T]\setminus J$.  Naturally, if
  $J = [0,T]$, the set $\gball{}(\centgamma,r) = \gball{[0,T]}(\centgamma,r)$ is the
  standard $C^0$-ball of radius $r$ around $\centgamma$.

  Let us fix $\theta\in\bT$; consider a path $\gamma\in \cC^0([0,T],\bR^d)$ and a number
  $r>0$.  For any standard pair $\ell = (\bG,\rho)$ and $t\in [0,T]$ we define
  $\Id^\pm_{\theta,\gamma,r}(\ell,t)$ as follows:
  \begin{subequations}
    \begin{align}
      \Id^-_{\theta,\gamma,r}(\ell,t)
      &=
        \begin{cases}
          1 &\text{ if $\inf_x\{|G_\ell(x)-\theta^{\gamma}(t)|\}< r$}\\
          0 &\text{ otherwise}
        \end{cases}\\
      \Id^+_{\theta, \gamma,r}(\ell,t)
      &=
        \begin{cases}
          1 &\text{ if $\sup_x\{|G_\ell(x)-\theta^{\gamma}(t)|\}< r $}\\
          0 &\text{ otherwise,}
        \end{cases}
    \end{align}
  \end{subequations}
  where we used the previously introduced notation
  $\theta^\gamma(s) = \theta^\gamma(s,\theta) = \theta+\gamma(s)_1$, and $\gamma(s)_1$
  denotes the first component of the vector $\gamma(s)$.  Observe that, by construction,
  $\Id^+_{\theta,\gamma,r}(\ell,t)\le\Id^-_{\theta,\gamma,r}(\ell,t)$.  Finally, for any
  interval $E=[s,t]\subset[0,T]$, let
 \begin{equation}\label{eq:porb-bound}
  \begin{split}
    P_-(E, \theta,\gamma, r)
    &=\sup_{\{\ell\;:\;\Id^-_{\theta,\gamma,r}(\ell,s)=1\}}\ppath_\ell(\gball{[0,t-s]}(\gamma(s+\cdot)-\gamma(s), r)),\\
    P_+(E,\theta, \gamma, r)
    &=\inf_{\{\ell\;:\;\Id^+_{\theta,\gamma, r}(\ell,s)=1\}}\ppath_\ell(\gball{[0,t-s]}(\gamma(s+\cdot)-\gamma(s), r));
  \end{split}
\end{equation}
observe that by construction $P_+(E,\theta, \gamma, r)\le P_-(E, \theta,\gamma, r)$.
  \begin{lem}\label{lem:markov}
    For $T > 0$, $k\in\{0,\cdots, K-1\}$ and $\tau=TK\invr\geq \ve$ let
    $E_k=[k\tau,(k+1)\tau]$.  Then, for any $\thetaref\in\bT$ and standard pair $\ellz$ with
    $\thetaslz = \thetaref$ we have:
        \begin{align*}
      \prod_{k=0}^{K-1}P_+(E_k,\thetaref, \centgamma, r-\Const \ve)
      \le \ppath_{\ellz}(\gball{}(\centgamma, r))%
      \le \prod_{k=0}^{K-1}P_-(E_k,\thetaref,\centgamma, r+\Const\ve),%
    \end{align*}
    where the first inequality holds for $\centgamma\in\cC^0$, while the second for
    $\centgamma\in\supp \ppath_{\ellz}$.\footnote{ In particular, $\centgamma$, is
      $C$-Lipschitz and piecewise linear in each interval $[k\ve, (k+1)\ve]$.}
  \end{lem}
\begin{proof} For each $k\in \bN$ let $\stdf^{(k)}_{\ellz, 0}$ be a standard family
  representing $(F^k_\ve)_*\mu_{\ell_0}$, see Proposition~\ref{p_invarianceStandardPairs}
  and Remark~\ref{rem:fm-bound} for exact definitions.  Let
  $\pi_z:\bT^2\times\bR^{d-1}\to\bT\times \bR^{d-1}$ be the projection on the last $d$
  (\ie slow) coordinates.  Let $\ell'\in \stdf^{(k)}_{\ell, 0}$, by Remark
  \ref{rem:fm-bound}, definition~\eqref{eq:map-full} and the expansivity of the $x$
  dynamics, it follows that, for all $j\leq k$,
\begin{equation}\label{eq:close-by}
\begin{split}
&\sup_{x,x'}\left\|\Id_{\ell'} (x)\Id_{\ell'} (x') [\pi_z \bF^{j}(x,G_{\ell}(x),0)-\pi_z \bF^{j}(x',G_{\ell}(x'),0)]\right\|\\
&=\sup_{x,x'}\left\|\Id_{\ell'}(x)\Id_{\ell'}(x') \ve\sum_{m=0}^{j-1}\left [ A\circ F^{m}(x,G_{\ell}(x))-A\circ F^{m}(x',G_{\ell}(x'))\right]\right\|\\
&\leq\Const\ve \lambda^{-k+j}.
\end{split}
\end{equation}
Let us define, for $j, k\in \bN$, $\ell'\in \stdf^{(k)}_{\ell, 0}$, $E=[j\ve,(j+k)\ve]$,
$\Id^-_{\gball{E}(\centgamma,r)}(\ell,\ell')=1$ if
\[
\sup_{m\leq k}\inf_x\|\Id_{\ell'}(x)[\centgamma(\ve(j+m))-\centgamma(\ve j)+(G_\ell(x),0) -\pi_z \bF^{m}(x,G_\ell(x),0)]\|\leq r,
\]
while $\Id^-_{\gball{E}(\centgamma,r)}(\ell,\ell')=0$ otherwise. Note that,
$\Id^-_{\gball{[0,t]}(\centgamma,r)}(\ell,\ell')\leq \Id^-_{\thetasl,\centgamma,r+\Const\ve}(\ell',t)$.
Also, $\Id^-_{\gball{[0,t]}(\centgamma,r+\Const\ve)}(\ell,\ell')\geq \Id^-_{\thetasl,\centgamma,r}(\ell',t)$
since, if $ \Id^-_{\thetasl,\centgamma,r}(\ell',t)=1$, then, by  \eqref{eq:close-by},
\[
\sup_{m\leq k}\sup_x\|\Id_{\ell'}(x)[\centgamma(\ve(j+m))-\centgamma(\ve j)+(G_\ell(x),0) -\pi_z \bF^{m}(x,G_\ell(x),0)]\|\leq r+\Const\ve.
\]
Setting $\centgamma_{k}(s)=\centgamma(k\tau+s)-\centgamma(k\tau)$, by
Proposition~\ref{p_invarianceStandardPairs} and~\eqref{eq:porb-bound} it follows that
\begin{equation}\label{eq:uffff}
 \begin{split}
 &\ppath_\ellz\left(\gball{}(\centgamma ,r)\right)\leq \mu_{\ellz}\left(\prod_{k=0}^{\vei T}\Id_{\gball{\{k\ve\}}(\centgamma+(\theta_0,0), r)}\circ \bF_\ve^k\right)\\
 &\leq \sum_{  \ell'\in \stdf^{K\tau/\ve}_{\ellz, 0}}\fm_{\ell'} \Id^-_{\gball{[0,K\tau]}(\centgamma,r)}(\ell_0,\ell')\\
 &\le  \sum_{  \ell'\in \stdf^{(K-1)\tau/\ve}_{\ellz, 0}}\fm_{\ell'} \Id^-_{\gball{[0,(K-1)\tau]}(\centgamma,r)}(\ell_0,\ell') \ppath_{\ell'}(\gball{[0,\tau]}(\centgamma_{K-1}, r+\Const \ve))\\
& \le  \sum_{  \ell'\in \stdf^{(K-1)\tau/\ve}_{\ellz, 0}}\fm_{\ell'} \Id^-_{\gball{[0,(K-1)\tau]}(\centgamma,r)}(\ell_0,\ell')
  P_-(E_{K-1}, \thetaref,\centgamma, r+\Const \ve).
\end{split}
\end{equation}
Iterating, and arguing similarly for the lower bound, the lemma follows.\footnote{ Note
  that if $\centgamma\in\supp \ppath_\ellz$, then in the first line of \eqref{eq:uffff}
  holds equality.}
  \end{proof}
We are finally ready to prove our main Large Deviations result; the proof is divided in two
  steps; the first step is to obtain upper and lower bounds for the probability of a small ball
  around a trajectory of length of order $1$ and it is given by Lemma~\ref{l_finalBounds}
  below.  The second will be carried out in the proof of Theorem~\ref{thm:large}.

  \newcommand{\cgamma}{\underline\gamma}
  \newcommand{\hcgamma}{\underline{\hat\gamma}}
  \newcommand{\ctheta}{\underline\theta}
  \begin{lem}\label{l_finalBounds}
    Let $T>0$, $\theta\in\bT$, $\cgamma\in\cC^0([0,T],\bR^d)$ and, recalling the notation in~\eqref{e_defRadPm},
    $\Rad+(\gamma) = C_{\epsilon,T}\left\{\ve\eefrac17\|\hgamma\|\nl\infty\eefrac57+\sqrt\ve \right\}$,  where $\hgamma = \gamma-\bar\gamma(\cdot,\theta)$; let moreover
    $\vestep =(T^{\efrac 17}\|\hcgamma\|\nl\infty^{-\efrac 17}\ve^{\efrac 47})$, $\vestep_-=\ve^{\frac{1}{2}}$.  Then for any $\epsilon > 0$, if $\ve$ is sufficiently
    small, we have, for any $\ellz$ with $\thetaslz = \thetaref$:
    \begin{subequations}\label{e_finalBounds}
      \begin{align}\label{e_finalBoundUpper}
        \ve\log\ppath_\ellz(\gball{}(\cgamma,\vestep/3))
        &\le {-\left[1-C_{\epsilon}\left(\frac{T^2\ve}{\|\hcgamma\|\nl\infty^2}\right)^{\efrac 17}\right]}\inf_{\gamma\in
          B(\cgamma,\Rad+(\cgamma))}
          \ratef^-_{\thetaref,\epsilon}(\gamma).\\
        \ve\log\ppath_\ellz(\gball{}(\cgamma,\vestep_-))
        &\ge -(1+C_\epsilon\ve\eefrac 12) \ratef^+_{\thetaref,\epsilon}(\cgamma)-C_{\epsilon, T} \ve\eefrac{1}{8}.
          \label{e_finalBoundLower}
      \end{align}
    \end{subequations}
  \end{lem}
  \begin{proof}
    In accordance with our conventions we will use $C_\epsilon$ or $C_T$ to designate an
    arbitrary constant depending only on the values of $\ve$ or $T$, respectively.

    We begin by estimating the upper bound~\eqref{e_finalBoundUpper}. First
    of all observe that if $\|\hcgamma\|\nl\infty\le C_{\epsilon,T}^4\sqrt\ve$,
    then $\Rad+(\cgamma) > \|\hcgamma\|\nl\infty$ and
    therefore $\bar\gamma\in B(\cgamma,\Rad+(\cgamma))$, which implies that~\eqref{e_finalBoundUpper}
    holds trivially, since $\ratef_{\thetaref,\epsilon}(\bar\gamma) = 0$.  Hence we can
    assume that $\|\hcgamma\|\nl\infty\ge C_{\epsilon,T}^4\sqrt\ve$ and, provided $C_{\epsilon,T}$ has been chosen large enough, $\vestep\leq \sqrt\ve$.

    As mentioned, we will divide $[0,T]$ in time intervals of length
    $ \tau\leq c_\epsilon$. Although, for convenience, most of the following argument is
    done for arbitrary $\tau$ satisfying said inequalities, we will eventually
    choose\footnote{ We do not claim this choice to be optimal; its motivation is to
      simplify~\eqref{eq:jet-lag-3}.}
  \begin{equation}\label{eq:taudef}
\vestep=  \sqrt{\ve \tau} \quad \textrm{that is }\quad \tau= \left(\frac{T^2 \ve}{\|\hcgamma\|_{L^\infty}^2}\right)^{\efrac 17}.
  \end{equation}

  Note that if $B(\cgamma,\vestep)\cap \Lip_{C,*}([0,T],\bR^d)=\emptyset$, then
  $\ppath_\ellz(\gball{}(\cgamma,\vestep))=0$ and~\eqref{e_finalBoundUpper} is trivially
  true. We can then assume that there exists
  $\tilde\cgamma\in B(\cgamma,\vestep)\cap \Lip_{C,*}([0,T],\bR^d)$. We then consider the
  piecewise linear path $\overline\cgamma$ such that
  $\overline\cgamma(k\ve)=\tilde\cgamma(k\ve)$ for all $k\in\bN$. Since
  $\|\tilde\cgamma-\overline\cgamma\|\nl\infty\leq C\ve$,
  $B(\cgamma,\vestep)\subset B(\overline\cgamma,3\vestep)$. We can thus assume, without
  loss of generality, that $\cgamma=\overline\cgamma$ by substituting $\vestep$ to
  $\vestep/3$.

  Let $E = [0,\tau]$ and recall the notation $E_k = [k\tau,({k+1})\tau)$ introduced in
  Lemma~\ref{lem:markov}.  By Lemma~\ref{lem:markov}, we gather that, provided $\ve$ is
  sufficiently small,
    \begin{align}\label{e_markovBoundOnP}
      \ppath_\ellz\left({B(\cgamma,\vestep)}\right)
      &\le \prod_{k = 0}^{K-1} P_-(E_k,\thetaref,\cgamma,2\vestep)
    \end{align}
    where recall that
    \begin{equation}\label{eq:vista1}
      P_-(E,\thetaref,\cgamma,r )
      = \sup_{\{\ell\st\Id^-_{\thetaref,\cgamma,r}(\ell,k\tau)=1\}}
         \ppath_\ell(\gball{E}(\cgamma_{k},r )),
    \end{equation}
    and $\cgamma_{k}\in\Lip_{C,*}(E,\bR^d)$ is the translation of the path
    $\cgamma$ defined by:
        \begin{align*}
      \cgamma_{k}(s)
      &= \cgamma(k\tau+s)-\cgamma(k\tau).
    \end{align*}
    Let us fix arbitrarily $k\in\{0,\cdots,K-1\}$ and let
    $\ctheta_k=\theta^{\cgamma}(k\tau,\thetaref)$; fix also $\ell$ so that
    $\Id^-_{\thetaref,\cgamma,\vestep}(\ell,k\tau)=1$, \ie we have
    $|\thetasl-\ctheta_k| \le \Const \vestep$.

    Let us define the shorthand notations $\bar \theta_\ell:E\to\bT$ as
    $\bar \theta_\ell(\cdot) = \bar\theta(\cdot,\thetasl)$ and correspondingly
    $\bar z_{\ell}:E\to\bR^d$ as $\bar z_\ell = \bar z(\cdot,\thetasl)$ (\ie $\bar z_\ell$
    satisfies the differential equation $\bar z_{\ell}'(s)=\bar A(\bar \theta_{\ell}(s))$
    with initial condition $\bar z_{\ell}(0)=(\thetasl,0)$).  For convenience, remembering~\eqref{e_gavgDef}, let us also
    introduce, for any $\gamma\in\Lip_{C,*}(E,\bR^d)$,
    \begin{equation}\label{eq:ell-gavgDef}
    \begin{split}
   & \gavg_{\ell}(s)=\bar z_{\ell}(s)-(\avgtheta\ell,0)\\
    &\hgamma_\ell = \gamma-\gavg_\ell.
\end{split}
\end{equation}
    Our first step is to estimate $\ppath_\ell(\gball{E}(\cgamma_k,2\vestep ))$.
    For further use  let us introduce the notations (recall
    definitions~~\eqref{eq:M-0},~\eqref{eq:rate1},~\eqref{eq:zeta-reg-def}
    and~\eqref{eq:rate2}):
    \begin{equation}\label{eq:rate-good}
      \begin{split}
        I_\theta|_E(\gamma)
        &= \begin{cases}
          +\infty &\textrm{ if }\gamma(0)\neq 0 \textrm{ or } \gamma\not\in \Lip\\
          \int_E \cZ(\gamma'(s), \bar\theta(s,\theta))\, \deh s&\textrm{ otherwise.}
        \end{cases}\\
        \ratef_{\theta,\epsilon}^\pm|^{\phantom+}_E(\gamma)
        &= \begin{cases}
          +\infty &\textrm{ if }\gamma(0)\neq 0 \textrm{ or } \gamma\not\in \Lip\\
          \int_E \cZ^\pm_\epsilon(\gamma'(s), \theta^{\gamma}(s,\theta))\, \deh s&\textrm{ otherwise.}
        \end{cases}
      \end{split}
    \end{equation}
    Of course, we have $I_\theta = I_\theta|_{[0,T]}$ and
    $\ratef_{\theta,\epsilon}^\pm =\ratef_{\theta,\epsilon}^\pm|^{\phantom+}_{[0,T]}$.

    Next, we apply Lemma~\ref{lem:upper} in the interval $E$: assume
    $\sqrt{\ve\tau}\geq \vestep$, $\tau\geq \ve_0^{-4}\ve$, let
\begin{equation}\label{eq:Rkell}
        R_{k,\ell} = C_0'
        \max\left\{((\ve/\tau)^{\efrac 14}+\tau)\|\hcgamma_{k,\ell}\|\nl\infty,%
        \min \left\{ \ve\eefrac14 \tau\eefrac34,
        (\ve \tau)\eefrac16\|\hcgamma_{k,\ell}\|\nl\infty\eefrac23\right\},
        \sqrt{\ve \tau}\right\}
      \end{equation}
      where $\hcgamma_{k,\ell} = \cgamma_k-\gavg_\ell$, $C_0' = \CJ1 C_0$, $C_0$ was introduced in Lemma~\ref{lem:upper} and the
      constant $\CJ1$ is chosen so large that
      $C_0'\sqrt{\ve\tau}\ge C_0\sqrt{\ve\tau}+2\vestep$, then
      \begin{align}\label{eq:renzinoooo2}
        \ve\log\ppath_\ell(\gball{E}(\cgamma_k,2\vestep))
        &\le  {-\inf_{\gamma\in \gball{E}(\cgamma_k, R_{k,\ell})}
          I_{\thetasl}|_{E}(\gamma)}.
      \end{align}
We now proceed to relate the rate function $I_{\thetasl}|_{E}$ appearing in~\eqref{eq:renzinoooo2} with the modified rate function $\ratef^-_{\ctheta, \epsilon/2}|_E$.
\begin{sublem}\label{l_firstHorribleClaim}
For each $\epsilon>0$ there exists $\ve_0, c_\epsilon>0$ such that, for all $\ve<\ve_0$, $c_\epsilon \geq \tau>0$, $E=[0,\tau]$, standard pair $\ell$ such that $\Id^-_{\thetaref,\cgamma,\vestep}(\ell,k\tau)=1$,
and $\gamma\in \Lip_{C,*}(E,\bR^d)$,
 \begin{align*}
 \ratef^-_{\ctheta_k,\epsilon/2}|_{E}(\gamma)
        &\le (1+C_\epsilon\tau)I_{\thetasl}|_{E}(\gamma)+
          C_\epsilon \vestep^2.
      \end{align*}
\end{sublem}
    \begin{proof}
      Since $\|\cdot\|_{L^1(E)}\le\sqrt \tau\|\cdot\|_{L^2(E)}$, for any
      $\gamma\in\Lip_{C,*}(E,\bR^d)$ we have
       \begin{align}
        \notag\|\bar\theta_\ell-\theta^\gamma(\cdot,\ctheta_k)\|_{L^{\infty}(E)}
        &\le \int_{E}\|\bar A(\bar\theta_\ell(s))-\gamma'(s)\|\deh{}s+\Const \vestep\\\notag
        &\le \sqrt\tau\| \bar A(\bar\theta_\ell)-\gamma'\|_{L^2(E)}+\Const \vestep
          \intertext{which yields, recalling~\eqref{eq:Igamma-bound}:}
          \label{eq:hoi-wei-wei}
          \|\bar\theta_\ell-\theta^\gamma(\cdot,\ctheta_k)\|_{L^{\infty}(E)}
        &\le \Const \sqrt{\tau I_{\thetasl}|_{E}(\gamma)}+\Const \vestep.
      \end{align}
      Similarly, we have:
      \begin{align}
        \notag\|\bar\theta_\ell-\theta^\gamma(\cdot,\ctheta_k)\|_{L^{\infty}(E)}
        &\le \int_E \|\bar A(\bar\theta_\ell(s)) -\bar A(\theta^\gamma(s,\ctheta_k))\| + \|\bar A(\theta^\gamma(s,\ctheta_k)) -\gamma'(s)\|\deh{}s+\Const \vestep\\\notag
        &\le
          \Const\tau\|\bar\theta_\ell-\theta^\gamma(\cdot,\ctheta_k)\|_{L^{\infty}(E)}+ \|\bar A(\theta^\gamma(s,\ctheta_k)) -\gamma'(s)\|_{L^1(E)}+\Const \vestep .
\end{align}
Then, provided $\Const \tau\leq 1/2$, Lemma \ref{l_Jgamma-bound} implies
\begin{equation}\label{eq:hoi-wej-wej}
\|\bar\theta_\ell-\theta^\gamma(\cdot,\ctheta_k)\|_{L^{\infty}(E)}\le\Const\sqrt{\tau \ratef^-_{\ctheta_k,\epsilon/2}|_E(\gamma)}+\Const\vestep.
\end{equation}
      Notice that if $I_{\thetasl}|_{E}(\gamma)=\infty$, then the sub-lemma holds
      trivially.
      We can then assume that, for a.e.\ $s\in E$ we have
      $\cZ(\gamma'(s),\bar\theta_\ell(s))<\infty$, \ie
      $\gamma'(s)\in \bD(\bar\theta_\ell(s))$.  Hence, if $\tau$ is sufficiently small (with respect to $\epsilon$), since
      $|\bar\theta_\ell(\cdot)-\theta^{\gamma}(\cdot,\ctheta_k)|\leq \Const\sqrt{ \tau}$, by Lemma~\ref{lem:hausdorff} we conclude that
      $\gamma'(s)\in \bD(\theta^{\gamma}(s,\ctheta_k))\cup\partial_{\epsilon/2}\bD(\theta^{\gamma}(s,\ctheta_k))$,
      that is, $\cZ^-_{\epsilon/2}(\gamma'(s),\theta^{\gamma}(s,\ctheta_k)) < \infty$.
      Thus, by definition~\eqref{eq:zeta-reg-def}, we have that, for any $s\in E$,
      \begin{align*}
        \cZ^-_{\epsilon/2}(\gamma'(s),\theta^\gamma(s,\ctheta_k))
        &=\cZ(b_\convexP(s),\theta^\gamma(s,\ctheta_k))
      \end{align*}
      where
      $b_\convexP(s) = \bar A(\theta^\gamma(s,\ctheta_k)) +\convexP(s)({\gamma'(s)}-{\bar
        A(\theta^\gamma(s,\ctheta_k))})$ and $ \convexP(s)\in[0,1]$ is the largest $\convexP$ such that $b_\convexP\not\in \partial_{\epsilon/2}\bD(\theta^\gamma(s,\ctheta_k))$.  Let
      $\bar b_\convexP(s) = \bar A(\bar\theta(s,\thetasl)) +\convexP(s)({\gamma'(s)}-\bar
      A(\bar\theta(s,\thetasl)))$; observe that, by definition,
      $\|\bar b_\convexP- b_\convexP\| <
      \Const|\bar\theta_\ell-\theta^\gamma(\cdot,\ctheta_k)|$.  By
      Lemma~\ref{lem:domainZ}-\ref{i_analyticity} we can expand $\cZ$ to second order
      obtaining:\footnote{ Notice that all second derivatives of $\cZ$ are uniformly
        bounded by some constant that depends on $\epsilon$, which we denote with
        $C_\epsilon$.}
      \begin{align*}
      &\cZ(b_\convexP(s),\theta^\gamma(s,\ctheta_k))-
        \cZ(\bar b_\convexP(s),\bar\theta_\ell(s)) =\cZ(b_\convexP(s),\theta^\gamma(s,\ctheta_k))-
          \cZ(b_\convexP(s),\bar\theta_\ell(s)))\\
&\phantom{\cZ(b_\convexP(s),\theta^\gamma(s,\ctheta_k)) \cZ(\bar b_\convexP(s),\bar\theta_\ell(s))= =,}
         +\cZ(b_\convexP(s),\bar\theta_\ell(s))-
          \cZ(\bar b_\convexP(s),\bar\theta_\ell(s))\\
          &=\partial_\theta\cZ(b_\convexP(s),\bar\theta_{\ell}(s))(\theta^\gamma(s,\ctheta_k)-\bar\theta_{\ell}(s) ) +\partial_b\cZ(\bar b_\convexP(s),\bar\theta_{\ell}(s))(b_\convexP-\bar b_\convexP)\\
        &\phantom = +C_\epsilon\cO(|\theta^\gamma(s,\ctheta_k)-\bar\theta_{\ell}(s)|^2).
      \end{align*}
      Next, we can expand $\partial_\theta\cZ$ and $\partial_b\cZ$ around the point
      $(\bar A(\bar\theta_{\ell}(s)), \bar\theta_{\ell}(s))$.  Recalling
      Lemma~\ref{lem:domainZ}-\ref{i_analyticity},\ref{i_goodSRB} we obtain:
      \begin{align*}
        |\partial_\theta\cZ(b_\convexP(s),\bar\theta_{\ell}(s))|
        &\le C_\epsilon\|\bar A(\bar\theta_{\ell}(s)) - b_\convexP\| \le \Const_\epsilon \|\bar A(\bar\theta_{\ell}(s))-\bar b_\convexP\|+\|\bar b_\convexP-
          b_\convexP \|\\
        &\le C_\epsilon\|\bar A(\bar\theta_{\ell}(s))-\gamma'(s)\|+C_\epsilon|\theta^\gamma(s,\ctheta_k)-\bar\theta_{\ell}(s)|.\\
        |\partial_b\cZ(\bar b_\convexP(s),\bar\theta_{\ell}(s))|
        &\le C_\epsilon\|\bar A(\bar\theta_{\ell}(s))-\bar b_\convexP\|\le C_\epsilon\|\bar A(\bar\theta_{\ell}(s))-\gamma'(s)\|.
      \end{align*}
      Since Lemma~\ref{lem:domainZ}-\ref{i_Zconvex},\ref{i_analyticity},\ref{i_goodSRB}
      imply that
      $\cZ(\bar b_\convexP(s),\bar\theta_\ell(s) )\le \cZ(\gamma'(s),\bar\theta_\ell(s)
      )$, we conclude that
      \begin{align*}
        \cZ^-_{\epsilon/2}(\gamma'(s)&,\theta^\gamma(s,\ctheta_k))
                                   \le \cZ(\gamma'(s),\bar\theta_{\ell}(s))\\
                                 &+ C_\eps\left[
                                   \|\bar A(\bar\theta_\ell(s))-\gamma'(s)\|\|\theta^\gamma(\cdot,\ctheta_k)-\bar\theta_{\ell}\|_{L^\infty(E)}
                                   +\|\theta^\gamma(\cdot,\ctheta_k)-\bar\theta_{\ell}\|^2_{L^\infty(E)}\right].
      \end{align*}
      Integrating the above expression over $E$ yields
      \begin{align*}
        \ratef^-_{\ctheta_k,\epsilon/2}|_{E}(\gamma)\le I_\thetasl|_E(\gamma) + C_\epsilon \Big[
        &\|\bar
          A(\bar\theta_\ell(s))-\gamma'\|_{L^1(E)}\|\theta^\gamma(\cdot,\ctheta_k)-\bar\theta_{\ell}\|_{L^\infty(E)}\\
        &+\tau\|\theta^\gamma(\cdot,\ctheta_k)-\bar\theta_{\ell}\|^2_{L^\infty(E)}\Big].
      \end{align*}
      Finally, using~\eqref{eq:hoi-wei-wei}, the relation
      $\|\cdot\|_{L^1(E)}\le\sqrt\tau\|\cdot\|_{L^2(E)}$ and~\eqref{eq:Igamma-bound} we
      obtain
      \begin{align*}
        \ratef^-_{\ctheta_k,\epsilon/2}|_{E}(\gamma)
        &\le (1+C_\epsilon\tau)I_{\thetasl}|_{E}(\gamma)+
          C_\epsilon\vestep\sqrt{\tau I_{\thetasl}|_{E}(\gamma)}+
          C_\epsilon \vestep^2\tau.
      \end{align*}
Since $2ab\leq a^2+b^2$, this concludes the proof of the sub-lemma.
    \end{proof}



    By  \eqref{eq:renzinoooo2} and Sub-Lemma \ref {l_firstHorribleClaim} it follows that
    for each standard pair $\ell$ such that
    $\Id^-_{\thetaref,\cgamma,\vestep}(\ell,k\tau)=1$, and
    $c_\epsilon \geq \tau\geq\ve^{-1}\vestep^2$,
      \begin{equation}\label{eq:upper-piece}
       \ve\log \ppath_\ell(\gball{E_k}(\cgamma_k,2\vestep ))
        \le -\inf_{\gamma\in \gball{E_k}(\cgamma_k,R_{k,\ell})}(1-C_\epsilon\tau)\ratef^-_{\ctheta_k,\epsilon/2}|_{E_k}(\gamma)+C_\epsilon\vestep^2.
      \end{equation}

    We have now obtained an estimate for each of the terms appearing in the product on the
    right hand side of~\eqref{e_markovBoundOnP} (see also \eqref{eq:vista1}). Next, we must join the above estimates for different time intervals
    $E_k$.  This can be done in various ways: we choose to
    control the trajectories at the endpoint of the intervals so that the paths
    corresponding to different time intervals will join naturally into a continuous path.
    To this end, for each $\tilde \gamma\in\cC^0([0,T],\bR^d)$ and $r>0$, let us define the sets
    \begin{align*}
      B^*|_{E}(\tilde\gamma,r)&=\{\gamma\in \gball{E}(\tilde\gamma,r)\st \gamma(0)=0\;;\gamma(\tau)=\tilde\gamma(\tau)\}.
    \end{align*}
    Then, for any $\gamma\in \gball{E}(\cgamma_k,R_{k,\ell})$ with $\gamma(0) = 0$, we
    define $\gamma^*\in B^*|_E(\cgamma_k,2R_{k,\ell})$ as:
    \begin{align*}
      \gamma^*(s)=\gamma(s)+\frac{\cgamma_k(\tau)-\gamma(\tau)}{\tau}s.
    \end{align*}
   Our next goal is to relate $\ratef^-_{\ctheta_k,\epsilon/2}|_{E}(\gamma)$ with
    $\ratef^-_{\ctheta_k,\epsilon}|_{E}(\gamma^*)$.
\begin{sublem}\label{sublem:divide-et-impera}
   There exist $c_\epsilon>0$ such that, for all  $c_\epsilon>\tau>c_\epsilon^{-1}\ve$,
\[
      \ratef^-_{\ctheta_k,\epsilon/2}|_{E}(\gamma)
      \geq \left(1-C_\epsilon\max\left\{\left(\frac\ve\tau\right)^{\efrac 14}+\tau,\left(\frac{\ve}{\ratef^-_{\ctheta_k,\epsilon}|_{E}(\gamma^*)}\right)^{\efrac 16}\right\}\right)
         \ratef^-_{\ctheta_k,\epsilon}|_{E}(\gamma^*)-C_\epsilon\ve,
\]
provided $\ve$ is small enough.
\end{sublem}
\begin{proof}
        Again, it suffices to consider the case
        $\ratef^-_{\ctheta_k,\epsilon/2}|_{E}(\gamma)<\infty$ (hence $\gamma'(s)\in \bD(\theta^\gamma(s,\ctheta_k))$ for all $s\in E$), the other case being trivial.
        Observe that $\|\hcgamma_{k,\ell}\|\leq 2C\tau$ hence, by  definition, $R_{k,\ell}\leq \epsilon\tau/4$ provided $\Const \epsilon^{-2}\ve\leq\tau\leq \Const \epsilon^2$.
        Accordingly, for a.e.\ $s\in E$.
    \begin{equation}\label{eq:gamma-e}
    \begin{split}
      &\|(\gamma^*)(s)-\gamma(s)\|\le R_{k,\ell}\\
      &\|(\gamma^*)'(s)-\gamma'(s)\|
      = \|{\cgamma_k(\tau)-\gamma(\tau)}\|/{\tau} \le R_{k,\ell}/\tau\leq \frac \epsilon 4.
    \end{split}
    \end{equation}
Since $\bD(\theta)$ varies continuously (see Lemma \ref{lem:hausdorff})  it follows that, for $\ve$ small enough, if,  for all $s$, $\gamma'(s)\not\in\partial_{3\epsilon/4}\bD(\theta^\gamma(s,\ctheta_k))$, then $(\gamma^*)'(s)\not\in\partial_{\epsilon/2}\bD(\theta^{\gamma^*}(s,\ctheta_k))$. We start to discuss such case as we will see that the other possibility can be essentially reduced to the present one.

We expand $\cZ$ to first order at  $({\gamma^*}',\theta^{\gamma^*}(s,\ctheta_k))$ using
    Lemma~\ref{lem:domainZ}-\ref{i_analyticity} and obtain
    \begin{align*}
      |\cZ
      (\gamma'(s),\theta^\gamma(s,\ctheta_k)) &-
        \cZ((\gamma^*)'(s),\theta^{\gamma^*}(s,\ctheta_k)|\le
      \|\partial_b\cZ((\gamma^*)',\theta^{\gamma}(s,\ctheta_k))\| \frac{R_{k,\ell}}\tau\\
       & +|\partial_\theta\cZ((\gamma^*)',\theta^{\gamma^*}(s,\ctheta_k))| R_{k,\ell}+ C_\epsilon(R_{k,\ell}/\tau)^2.
    \end{align*}
    Once again, expanding the derivatives to the first order at
    $(\bar A(\theta^{\gamma^*}(s,\theta)),\theta^{\gamma^*}(s,\theta))$ and using
    Lemma~\ref{lem:domainZ}-\ref{i_goodSRB} we conclude
    \begin{equation}\label{eq:zetachange}
    \begin{split}
      |\cZ(\gamma'(s),\theta^\gamma(s,\ctheta_k)) -
          &  \cZ((\gamma^*)'(s),\theta^{\gamma^*}(s,\ctheta_k)| \\
          &\le C_\epsilon\left(\|(\gamma^*)'-\bar A(\theta^{\gamma^*}(s,\theta))\|R_{k,\ell}/\tau + (R_{k,\ell}/\tau)^2\right).
    \end{split}
    \end{equation}
    Integrating over $E$, it follows that
    \begin{align}
      \notag\ratef^-_{\ctheta_k,\epsilon/2}|_{E}(\gamma)
      &= \ratef^-_{\ctheta_k,\epsilon/2}|_{E}(\gamma^*)+C_\epsilon\cO\left(
        \frac{R_{k,\ell}}{\tau}\int_{E}\|(\gamma^*)'-\bar
        A(\theta^{\gamma^*}(s,\theta))\|+\frac{R_{k,\ell}^2}{\tau}\right)\intertext{and, by Lemma~\ref{l_Jgamma-bound} and Remark~\ref{rem:quadratic},}
      &\geq\ratef^-_{\ctheta_k,\epsilon}|_{E}(\gamma^*)-C_\epsilon\left( \frac{R_{k,\ell}}{\sqrt \tau}\sqrt{\ratef^-_{\ctheta_k,\epsilon}|_{E}(\gamma^*)}+\frac{R_{k,\ell}^2}{\tau}\right).\label{e_secondHorribleBound}
    \end{align}
    Then, recalling that $\hat\cgamma_{k,\ell}=\cgamma_{k}-\gavg_\ell$ and~\eqref{eq:ell-gavgDef},~\eqref{eq:gamma-e},
    \begin{align*}
      \|\hat\cgamma_{k,\ell}\|\nl\infty
      &\le \|\cgamma_{k}-\gamma^*\|\nl\infty +\|\gamma^*-\bar\gamma_\ell\|\nl\infty \le 2
        R_{k,\ell}+\|\gamma^*-\bar\gamma_\ell\|\nl\infty,
\end{align*}
using the same argument as in~\eqref{eq:hoi-wej-wej} and since $R_{k,\ell}\geq \vestep$, we have
\begin{equation}\label{eq:hgamma-b}
        \|\hat\cgamma_{k,\ell}\|\nl\infty\le 3 R_{k,\ell}+ \Const\sqrt{\tau \ratef^-_{\ctheta_k,\epsilon}|_{E}(\gamma^*)}.
    \end{equation}
    Before continuing, note that if  $R_{k,\ell} \ge \|\hcgamma_{k,\ell}\|\nl\infty/4$, then the value of  $R_{k,\ell}$ cannot be given by $(\ve^{\efrac 14}\tau^{-\efrac 14}+\tau) \|\hcgamma_{k,\ell}\|\nl\infty$. If $R_{k,\ell}=\min \left\{ \ve\eefrac14 \tau\eefrac34, (\ve \tau)\eefrac16\|\hcgamma_{k,\ell}\|\nl\infty\eefrac23\right\}$, then
$ (\ve \tau)\eefrac16\|\hcgamma_{k,\ell}\|\nl\infty\eefrac23\geq \|\hcgamma_{k,\ell}\|\nl\infty/4$ implies
$ \|\hcgamma_{k,\ell}\|\nl\infty\leq 4^3\sqrt{\ve \tau}$. In turn, the later inequality implies, provided $\ve$ is small enough,
\[
(\ve \tau)\eefrac16\|\hcgamma_{k,\ell}\|\nl\infty\eefrac23\leq 4^2\sqrt{\ve\tau}\leq \ve^{\efrac 14}\tau^{\efrac 34}.
\]
We have thus seen that $\sqrt{\ve \tau}\leq R_{k,\ell} \le\Const \sqrt{\ve \tau}$ and
substituting it in~\eqref{e_secondHorribleBound} yields
\begin{align}\label{eq:first-up-short}
      \ratef^-_{\ctheta_k,\epsilon/2}|_{E}(\gamma)
      &\geq \left[1-C_\epsilon\left(\left[\frac{\ve}{\ratef^-_{\ctheta_k,\epsilon}|_{E}(\gamma^*)}\right]\eefrac12\right)\right]\ratef^-_{\ctheta_k,\epsilon}|_{E}(\gamma^*)-\Const \ve.
    \end{align}
We are thus left considering the case $R_{k,\ell} < \|\hcgamma_{k,\ell}\|\nl\infty/4$. By~\eqref{eq:hgamma-b}:
    \begin{align*}
      \|\hcgamma_{k,\ell}\|_{L^\infty}\le \Const\sqrt{\tau \ratef^-_{\ctheta_k,\epsilon}|_{E}(\gamma^*)}.
    \end{align*}
    Accordingly, we have:
    \begin{align*}
      R_{k,\ell}
      &\le \Const \max\left\{(\ve^{\efrac 14}\tau^{-\efrac 14}+\tau)\sqrt{\tau\ratef^-_{\ctheta_k,\epsilon}|_E(\gamma^*)}
        ,\ve\eefrac16\tau\eefrac12{\ratef^-_{\ctheta_k,\epsilon}|_E(\gamma^*)}\eefrac13
        ,\sqrt{\ve\tau}\right\}.
    \end{align*}
    If the first term realizes the $\max$, then by~\eqref{e_secondHorribleBound} we conclude
    \begin{align*}
      \ratef^-_{\ctheta_k,\epsilon/2, E}(\gamma)\geq
      (1-C_\epsilon\cO(\ve^{\efrac 14}\tau^{-\efrac 14}+\tau))\ratef^-_{\ctheta_k,\epsilon, E}(\gamma^*).
    \end{align*}
    Otherwise, if the second term realizes the $\max$,~\eqref{e_secondHorribleBound} gives:\footnote{ Just consider the two possibilities $\ratef^-_{\ctheta_k,\epsilon, E}(\gamma^*)>\ve$ and $\ratef^-_{\ctheta_k,\epsilon, E}(\gamma^*)<\ve$.}
\[
      \ratef^-_{\ctheta_k,\epsilon/2}|_{E}(\gamma)
      \geq \left[1-C_\epsilon\left(\left[\frac{\ve}{\ratef^-_{\ctheta_k,\epsilon}|_{E}(\gamma^*)}\right]\eefrac16\right)\right]\ratef^-_{\ctheta_k,\epsilon}|_{E}(\gamma^*)-C_\epsilon \ve.
\]
Finally, if the third term realizes the $\max$, then we have~\eqref{eq:first-up-short}
again.  This proves the sub-lemma in the case under consideration.

We are thus left with the case that, for some $s$, we have $\gamma'(s)\in\partial_{3\epsilon/4}\bD(\theta^\gamma(s,\ctheta_k))$. Let $S_\gamma\neq \emptyset$ be the collection of such $s$. Then we define  $b_\varrho(s)=\varrho(s)\gamma'(s)+(1-\varrho(s))\bar A(\bar\theta(s,\ctheta_k))$ where $\varrho(s)\in [0,1]$ is zero on the complement of $S_\gamma$ and such that $b_\varrho(s)$ belongs to the boundary of $\bD(\bar\theta(s,\ctheta_k))\setminus \partial_{3\epsilon/4}\bD(\bar\theta(s,\ctheta_k))$ otherwise. Also, we define $b_\varrho^*(s)=\varrho(s)(\gamma^*)'(s)+(1-\varrho(s))\bar A(\bar\theta(s,\ctheta_k))$. Note that $b_\varrho(s)\not \in \partial_{\epsilon/2}\bD(\bar\theta^{\gamma_\varrho}(s,\ctheta_k))$ and $b^*_\varrho(s) \not\in \partial_{\epsilon/2}\bD(\bar\theta^{\gamma^*_\varrho}(s,\ctheta_k))$ but, for $s\in S_\gamma$, $b^*_\varrho(s) \in \partial_{\epsilon}\bD(\bar\theta^{\gamma^*_\varrho}(s,\ctheta_k))$. By \eqref{eq:gamma-e} we have $\|b_\varrho-b^*_\varrho\|_{L^\infty}\leq R_{k,\ell}/\tau$ and
\[
\begin{split}
& \ratef^-_{\ctheta_k,\epsilon/2}|_{E}(\gamma)\geq  \int_{E}\cZ(b_\varrho(s),\bar\theta^\gamma(s,\ctheta_k))ds\\
& \int_{E}\cZ(b^*_\varrho(s),\bar\theta^{\gamma^*}(s,\ctheta_k))ds\geq\ratef^-_{\ctheta_k,\epsilon}|_{E}(\gamma^*).
\end{split}
\]
We can then conclude by expanding $\cZ$ as in \eqref{eq:zetachange} and, using the above relations, we obtain again
 \[
      \notag\ratef^-_{\ctheta_k,\epsilon/2}|_{E}(\gamma)
     \geq\ratef^-_{\ctheta_k,\epsilon}|_{E}(\gamma^*)-C_\epsilon\left( \frac{R_{k,\ell}}{\sqrt \tau}\sqrt{\ratef^-_{\ctheta_k,\epsilon}|_{E}(\gamma^*)}+\frac{R_{k,\ell}^2}{\tau}\right).
    \]
The argument is then concluded exactly in the same manner as before.
\end{proof}
 Using equations \eqref{e_markovBoundOnP}, \eqref{eq:vista1}, \eqref{eq:upper-piece} and Sub-Lemma \ref{sublem:divide-et-impera} we obtain:
    \begin{equation}\label{eq:opla'}
    \begin{split}
     & \ve\log\ppath_{\ellz}(\gball{}(\cgamma,\vestep))
      \leq-(1-\bK_{\epsilon,\ve,\tau})\left[\sum_k\inf_{\gamma\in
         B|_{E}(\cgamma_k,R_k)}\ratef^-_{\ctheta_k,\epsilon}|_{E}(\gamma^*)\right] \\
        &\phantom{ \ve\log\ppath_{\ellz}}
        +C_\epsilon\left(\frac{T \ve}\tau\right)^{\efrac16}\left[\sum_k\inf_{\gamma\in
         B|_{E}(\cgamma_k,R_k)}\ratef^-_{\ctheta_k,\epsilon}|_{E}(\gamma^*)\right]^{\efrac56}+C_\epsilon\frac{T\ve}{\tau}\\
         &\bK_{\epsilon,\ve,\tau}=C_\epsilon (\ve^{\efrac 14}\tau^{-\efrac 14}+\tau),
    \end{split}
    \end{equation}
    where, in the second line, we have used H\"older inequality, the assumption $\vestep^2\leq \ve$ and
    \begin{align*}
      R_k &= 2 \sup_{\{\ell\st\Id^-_{\thetaref,\cgamma,2\vestep}(\ell,k\tau)=1\}}\!\! R_{k,\ell}\;;\quad R = \max_k\{R_k\}.
    \end{align*}

Next, we must compute the sum in the square brackets. Let us define the sets   $\underline B^*_k = B^*|_{E}(\cgamma_k,R_k)$.
For each set of paths $\{\tilde \gamma_k\}_{k\in\{0,\cdots,{K-1}\}}$,
    $\tilde\gamma_k\in \underline B^*_k$, we can ``glue them together'' defining
    $\tilde\gamma(s)=\tilde\gamma_k(s-k\tau)+\cgamma(k\tau)$ for $s\in E_k$.  Clearly
    $\tilde \gamma\in \gball{}(\cgamma, R)$.  In addition,
    $\ratef^-_{\thetaref,\epsilon}(\tilde\gamma) =
    \sum_{k}\ratef^-_{\ctheta_k,\epsilon}|_{E}(\tilde\gamma_k)$, which yields
    \[
   \sum_k\inf_{\gamma\in
         B|_{E}(\cgamma_k,R_k/2)}\ratef^-_{\ctheta_k,\epsilon}|_{E}(\gamma^*)
         \geq \sum_k\inf_{\gamma\in\underline B^*_{k}}\ratef^-_{\ctheta_k,\epsilon}|_{E}(\gamma)\geq \inf_{\gamma\in
            B(\cgamma,R)}\ratef^-_{\thetaref,\epsilon}(\gamma)=:J_{*,R}.
   \]
Note that the right hand side of~\eqref{eq:opla'}  is bounded by $\Const C_\epsilon^5\ve\tau^{-1}T$ and the maximum is achieved for $\sum_k\inf_{\gamma\in
        \underline B|_{E}(\cgamma_k,R_k)}\ratef^-_{\ctheta_k,\epsilon}|_{E}(\gamma)$ proportional to $C_\epsilon \ve\tau^{-1}T$. Hence, for $J_{*,R}\geq  C_\epsilon \ve\tau^{-1}T$, the right hand side of~\eqref{eq:opla'} is a decreasing function of the quantity in square brackets. Accordingly, by eventually increasing the value of $C_\epsilon$,
    \begin{equation}\label{eq:sonno}
      \ve\log\ppath_{\ellz}(\gball{}(\cgamma,\vestep))
      \le -(1-\bK_{\epsilon,\ve,\tau})J_{*,\Rad+}+C_\epsilon\left(\frac{T \ve}\tau\right)^{\efrac16}J_{*,\Rad+}^{\efrac56}+C_\epsilon\ve\tau^{-1}T,
    \end{equation}
provided that $\Rad+:=\Rad+(\cgamma)\geq R = \max_k\{R_k\}$.
Further,
\[
\begin{split}
\|\hcgamma_{k,\ell}(s)\|&=\|\cgamma(k\tau+ s)-\cgamma(k\tau) -\bar z(s,\thetasl)+(\thetasl,0)\|\\
&\leq\|\hcgamma(k\tau+s)\|+\|\hcgamma(k\tau)\|+\|\bar z(k\tau+s,\thetaslz)-\bar z(k\tau,\thetaslz)-\bar z(s,\thetasl)+(\thetasl,0)\|\\
&\leq 2\|\hcgamma\|\nl\infty+\left\|\int_{0}^s\left[\bar A(\bar\theta(s',\bar\theta_{\ellz}(k\tau)))-\bar A(\bar\theta(s', \thetasl))\right]ds'\right\|.
\end{split}
\]
By continuity with respect to the initial conditions and recalling the assumption
$\|\hcgamma\|_{L^\infty}\geq C_{\epsilon,T}\sqrt \ve $, hence
$\|\hcgamma\|_{L^\infty}\geq \Const \vestep \tau$, it follows
\begin{equation}\label{eq:short-to-long}
\|\hcgamma_{k,\ell}\|_{L^\infty}\leq \Const \|\hcgamma \|_{L^\infty}.
\end{equation}
Also, by Lemma \ref{l_Jgamma-bound}, for each $\gamma\in B(\cgamma, \Rad+)$ we have
$\|\hcgamma\|_{L^\infty}\le 2 \|\hgamma\|_{L^\infty}\le\Const \sqrt{T
  \ratef^-_{\thetaref,\epsilon}(\gamma)}$. Hence
$\|\hcgamma\|_{L^\infty}^2\leq\Const TJ_{*,\Rad+}$ and, by \eqref{eq:sonno},
\begin{equation}\label{eq:bisonno}
\ve\log\ppath_{\ellz}(\gball{}(\cgamma,\vestep))
\le -\left[1-\bK_{\epsilon,\ve,\tau}-C_\epsilon\left(\frac{T^2 \ve}{\tau\|\hcgamma\|_{L^\infty}^2}\right)^{\efrac16}-C_\epsilon \frac{T^2 \ve}{\tau\|\hcgamma\|_{L^\infty}^2}\right]J_{*,\Rad+}.
\end{equation}

To validate \eqref{eq:bisonno} we still need to verify that $\Rad+\geq R$. To this end,
notice (see the beginning of the proof of Lemma~\ref{sublem:divide-et-impera}) that
$R_{k}\leq \tau$, provided $\tau\leq C_\epsilon$, which is implied by \eqref{eq:taudef}
when $\|\hcgamma\|_{L^\infty}\geq C_{\epsilon, T}\sqrt\ve$.  Thus, using \eqref{eq:Rkell},
\eqref{eq:short-to-long} and the choice \eqref{eq:taudef}, we have
\[
 \begin{split}
 R_{k}&\leq C_T
        \max\left\{\frac{\|\hcgamma\|\nl\infty^{\efrac {15}{14}}\ve^{\efrac 3{14}}}{T^{\efrac 1{14}}}+\|\hcgamma\|\nl\infty^{\efrac 57}T^{\efrac 27}\ve^{\efrac 17},%
        \|\hcgamma\|\nl\infty^{\efrac{13}{21}}T^{\efrac 1{21}}\ve^{\efrac 4{21}},
        \frac{\ve^{\efrac 47}T^{\efrac 17}}{\|\hcgamma\|\nl\infty^{\efrac 17}}\right\}.
\end{split}
\]
One can then check that
\[
R_{k}\leq C_T\left[\|\hcgamma\|\nl\infty^{\efrac 57}\ve^{\efrac 17}+\sqrt\ve\right].
\]
The above implies the claim $\Rad+\geq R $ and $\Rad+\leq\max\{ \|\hcgamma \|\nl\infty, C_{\epsilon,T}\sqrt\ve\}$.
Substituting the choice \eqref{eq:taudef} in equation \eqref{eq:bisonno}, yields
     \begin{equation}\label{eq:jet-lag-3}
      \ve\log\ppath_{\ellz}(\gball{}(\cgamma,\vestep))
      \le -\left[1-C_\epsilon\left(\frac{T^2 \ve}{\|\hcgamma\|_{L^\infty}^2}\right)^{\efrac17}\right]J_{*,\Rad+}.
    \end{equation}

    To obtain the lower bound~\eqref{e_finalBoundLower} we argue along the same lines
    (with a different choice of $\vestep$ and $\tau$), but the argument turns out to be a
    bit simpler.  To further simplify our discussion we are not going to pursue optimal
    results.  We use Lemma~\ref{lem:lower} with $\vestep_-=h=K_\delta^{-2} T$ and
    $T=\tau=\sqrt\ve$, and $\delta=\epsilon$ to write
 \begin{equation}\label{eq:lower1}
  \ve\log\ppath_\ell(\gball{E}(\cgamma_k, \vestep_-))\geq - I_{\thetasl}(\cgamma_k)-c_\delta \ve^{\efrac 58}.
\end{equation}
 Next, we claim that, for all $\ell$ such that $\Id^+_{\thetaref,\cgamma,\vestep_-}(\ell,k\tau)=1$,
 \begin{equation}\label{eq:JvrsusImeno}
\begin{split}
        \ratef^+_{\ctheta_k,\epsilon}|_{E}(\cgamma_k)
        &\ge (1-C_\epsilon \tau)I_{\thetasl}|_{E}(\cgamma_k) - C_\epsilon \vestep_-^2.
\end{split}
\end{equation}
 The above relation is trivial if the left hand side is infinite. Otherwise, recalling~\eqref{eq:rate2} and~\eqref{eq:zeta-reg-def}, it can be proven along the lines of Sub-Lemma~\ref{l_firstHorribleClaim}. Accordingly, Lemma~\ref{lem:markov} implies
\[
\begin{split}
  \ve\log\ppath_\ell(B(\cgamma, \vestep_-))&\geq -(1+C_\epsilon\sqrt\ve)\sum_{k=0}^{K-1}\ratef^+_{\ctheta_k,\epsilon}|_{E}(\cgamma_k)-C_\epsilon T\ve^{\efrac 18}\\
  &=-(1+C_\epsilon\sqrt\ve)\ratef^+_{\thetaref,\epsilon}(\cgamma)-C_\epsilon T\ve^{\efrac 18} .\qedhere
\end{split}
 \]
 \end{proof}

\subsection{Large and moderate deviations for general sets}\label{subsec:large-long-Q}\ \newline
This subsection contains the second step of our argument that leads to the proof of our
main Large Deviations result.  Concretely, we show how Theorem~\ref{thm:large} follows
from Lemma~\ref{l_finalBounds}. For the upper bound, we use a relatively standard
combinatorial argument which allows to obtain an estimate for the probability of an
arbitrary event by covering it with balls; for the lower bound, we simply bound it from
below with the measure of a ball contained in the event.
  \begin{proof}[{\bf Proof of Theorem~\ref{thm:large}}]
    \ %
  \newcommand{\numI}{Z}
  Let $\ellz$ be a standard pair so that $|\thetaslz - \theta_0|\leq \ve$ and let
  $\epsilon = \Delta_*$. Clearly, it suffices to prove the theorem for such standard
  pairs.

Our goal is to estimate, from above and below, the probability of the event $Q_\ve$.

We start with the lower bound,  let $Q_\ve^-=\{\gamma\in Q_\ve\;:\; \gball{[0,T]}(\gamma, \ve^{\efrac 1{2}})\subset Q_\ve\}$.
    Then, for each $\cgamma\in Q_\ve^-$, inequality \eqref{e_finalBoundLower} implies
    \[
     \ppath_{\ellz}\left(Q_\ve\right)\ge \ppath_{\ellz}\left(B(\cgamma, \ve^{\efrac 1{2}})\right)\ge e^{-\vei[(1+C_\epsilon\ve\eefrac 12) \ratef^+_{\theta_0,\epsilon}(\cgamma)+C_{\epsilon,T} \ve\eefrac{1}{8}]}.
    \]
We can then conclude the argument by taking the sup for $\cgamma\in Q_\ve^-$.

To obtain the upper bound, first recall that  $\supp\ppath_\ellz\subset\Lip_{C,*}([0,T],\bR^d)$ hence, setting $Q=Q_\ve\cap \Lip_{2C,*}([0,T],\bR^d)$ holds
\[
\ppath_{\ellz}(Q_\ve)=\ppath_{\ellz}(Q).
\]
We will construct a class of coverings of $Q$ and use Lemma~\ref{l_finalBounds} to
estimate the probability of each elements of these coverings.  Let us first recap some
notations.  For any set $\tilde Q\subset \gspace$ let
$\varrho(\tilde Q)=\varrho(\theta_0,\tilde Q)=\inf_{\gamma\in \tilde Q}\|\gamma-\bar
\gamma(\cdot,\theta_{0})\|_\infty$, and
    \[
    Q^+:=\bigcup_ {\gamma\in  Q}\gball{}(\gamma,\Rad+(\gamma))\supset \overline Q
    \]
    where
    $ \Rad+(\gamma)
    =C_{\epsilon,T}\min\left\{\ve\eefrac17\|\hgamma\|\nl\infty\eefrac57+\ve^{\efrac
      12}\right\}$.  Note that if $\varrho(Q)\leq C_{\epsilon,T}\sqrt\ve$, then
    $ \gavg_{\ellz}(\cdot)=\gavg(\cdot,\theta_0)\in Q^+$, hence the statement of the
    theorem is trivially true. We can thus assume
    $\varrho(Q)\geq C_{\epsilon,T} \sqrt\ve$.

We want to estimate the measure of $Q$ by covering it with balls of the type
$B(\centgamma,\vesteps)$, for some $\vesteps>0$. To this end we must construct a
$(\vesteps/2)$-net. To do so, subdivide the interval $[0,T]$ in sub-intervals of equal
lengths $\timeI_j = [s_j,s_{j+1})$, where
$s_j = j\efrac{\vesteps}{(1+6C)}=:s_{j-1}+\Delta_s$ and recall that $C$ is an upper bound
on the Lipschitz constant of all paths that are in the support of $\ppath_\ellz$.  Denote
with $\numI = T/\Delta_s$, so that\footnote{ Once again we disregard the possibility that
  $\numI$ is not a natural number.} $s_\numI = T$.  Let
$\aseq = \{a_l\}_{l\in\{0,\cdots,\numI-1\}}$ be a (finite) sequence with values in
$\frac{1}{2\sqrt d}\bZ^d$ and let $\gamma_\aseq$ be the unique (Lipschitz) continuous path
in $\gspace$ that, for each $k$, satisfies (for a.e.\ $s\in[0,T]$) the equation
    \begin{align*}
      \gamma_{\aseq}'(s)
      &=\bar A(\theta^{\gamma_{\aseq}}(s, \theta_0))+a_j &\text{for $s\in \timeI_j$}
    \end{align*}
    with initial condition $\gamma_\aseq(0) = 0$.  Let
    $\pathnet = \{\aseq \st \|a_j\| < 2C \text{ for all $j$}\}$; observe that, by
    construction, $\pathnet$ is a finite set (indeed
    $\#\pathnet < e^{\const T\vesteps\invr}$) and since
    $\supp\ppath_\ellz\subset\Lip_{C,*}([0,T],\bR^d)$ we conclude that if $B(\gamma_\aseq,\vesteps)\cap \supp \ppath_\ellz\neq \emptyset$, then $\aseq\in \pathnet$.  We now claim that
    $\bigcup_{\aseq\in \pathnet} \gamma_\aseq$ is a $\vesteps$-net for the support of
    $\ppath_\ellz$, \ie
    $\bigcup_{\aseq\in \pathnet}B(\gamma_\aseq,\vesteps)\supset\supp\ppath_\ellz$.  In fact, for each $\aseq\in \pathnet$ and $k\in\{0,\cdots,Z\}$,
    $\partial_{a_j}\gamma_\aseq(s_{j+1})=\vesteps\Id+\cO(\vesteps^2)$, by the smooth dependence of a solution from the vector field.
Thus for any path $\gamma\in\Lip_{C,*}([0,T],\bR^d)$, provided $\ve$ is small enough,  there exists $\aseq\in \pathnet$ so that\footnote{ Recall that the lattice $\frac{1}{2\sqrt d}\bZ^d$ is a $r$-net for $\bR^d$ for any $r\geq 1/4$.}
    \begin{align*}
      \|\gamma(s_j)-\gamma_\aseq(s_j)\| < \frac38\Delta_s      \text{ for any $j \in\{0,\cdots, \numI\}$}.
    \end{align*}
    By the Lipschitz property, for any $j\in\{0,\cdots,\numI\}$ and $s\in\timeI_j$,
    \begin{align*}
      \|\gamma(s)-\gamma_\aseq(s)\| < \frac38\Delta_s+3C \Delta_s < \vesteps/2.
    \end{align*}
 This proves our claim and concludes the construction of a $\vesteps$-net of paths.

Next, let us define $Q_k=\{\gamma\in Q\;|\; \|\hgamma\|_{L^\infty}\in [2^k\varrho(Q), 2^{k+1}\varrho(Q))$. By our current assumption $\varrho(Q)\geq C_{\epsilon,T} \sqrt\ve$ and the fact that $\|\gamma\|_{L^\infty}\leq C T$, we have
\[
Q\subset \bigcup_{k=0}^{\const\ln\vei}Q_k.
\]
Let us fix some $k$. Then, by hypothesis, for $\ppath_{\ellz}$-almost all $\gamma\in Q_k$
we have (see~\eqref{eq:last-resort})
$\Clip(\gamma)\leq T^{-\efrac {11}7}\ve^{-\efrac 27}\varrho(Q)^{\efrac{11}7}2^{\efrac{11
    k}7}=:\Clip(k)$, $\vestep(\gamma)\in[\vestep_k 2^{-\efrac 17}, \vestep_k ]$,
$\vestep_k= \sqrt\ve\left(\frac {T^2\ve}{\varrho(Q)^2}\right)^{\efrac 1{14}}2^{-\efrac
  k7}$ and, for each $|s-s'|\leq \frac{\vestep_k}{2 \Clip(k)}=:h_\star$,
\[
\|\gamma(s)-\gamma(s')\|\leq \vestep_k/4
\]
Hence, if $\max\left\{\|\gamma_\aseq(jh_\star)-\gamma(jh_\star)\|,\|\gamma_\aseq((j+1)h_\star)-\gamma((j+1) h_\star)\|\right\}\leq \vestep_k/2$ then we have, for each $s\in [0,h_\star]$
\[
\|(1-sh_\star^{-1})\gamma_\aseq(jh_\star)+sh_\star^{-1}\gamma_\aseq((j+1)h_\star)-\gamma(jh_\star+s)\|\leq 3\vestep_k/4.
\]
Accordingly, we need only $\Const$ paths to describe all possible behaviors in an interval
$[h_\star, (j+1)h_\star]$ with a precision $\vestep_k$. This implies that there exists
$\cA_{Q_k}\subset \cA$ such that
$\bigcup_{\aseq\in\cA_{Q_k}} B(\gamma_\aseq,\vestep_k)\supset Q_k$ and
$\#\cA_{Q_k}\leq e^{\const T h_\star^{-1}}=e^{\const T\vestep_k^{-1} \Clip(k)}$.

Accordingly, Lemma~\ref{l_finalBounds} implies
\begin{equation}\label{eq:herewego}
\begin{split}
  \ppath_{\ellz}\left(Q\right)&\le\sum_{k=0}^{\const\ln\vei}  \ppath_{\ellz}\left(Q_k\right)\leq \sum_{k=0}^{\const\ln\vei}  \sum_{\aseq\in\cA_{Q_k}} \ppath_{\ellz}\left(\gball{}(\gamma_\aseq,\vestep_k)\right)\\
  &\leq\sum_{k=0}^{\const\ln\vei} \hskip-.3cm\#\left(\cA_{Q_k}\right)
  \expo{-\ve^{-1}{\left(1-C_{\epsilon}\frac{T\eefrac
          27\ve\eefrac1{7}}{\varrho(Q_k)\eefrac{2}{7}}\right)}\inf_{\gamma\in Q_k^+}
    \ratef^-_{\theta_0,\epsilon}(\gamma)}.
\end{split}
\end{equation}
Note that, since $\rho(Q_k)\geq C_{\epsilon,T} \sqrt \ve$, we have $\rho(Q_k^+)\geq \frac 12\rho(Q_k)$. Then, by Lemma \ref{l_Jgamma-bound},~\eqref{eq:rate2} and Remark~\ref{rem:quadratic}, we have
\begin{equation}\label{eq:varrho-J}
\begin{split}
\rho(Q_k)^2&\leq C_{\epsilon} T \inf_{\gamma\in Q_k^+} \ratef^-_{\theta_0,\epsilon}(\gamma)\leq C_\epsilon T\inf_{\gamma\in Q_k} \int_0^T\|\hgamma'(s)\|^2 ds\leq C_\epsilon T^2 \Clip(k)^2\\
&\leq C_\epsilon T^{-\efrac 87}\ve^{-\efrac 47}\varrho(Q_k)^{\efrac {22}7} .
\end{split}
\end{equation}
Hence, $\ve\Clip (k)T\vestep_k^{-1}\leq C_{\epsilon}\frac{T\eefrac 27\ve\eefrac1{7}}{\varrho(Q_k)\eefrac{2}{7}}\inf_{\gamma\in Q_k^+} \ratef^-_{\theta_0,\epsilon}(\gamma)$. Accordingly,
\begin{equation}\label{eq:andgo}
 \ppath_{\ellz}\left(Q\right)\leq\sum_{k=0}^{\const\ln\vei}  \expo{-\ve^{-1}{\left(1-C_{\epsilon}\frac{T\eefrac 27\ve\eefrac1{7}}{\varrho(Q_k)\eefrac{2}{7}}\right)}\inf_{\gamma\in Q_k^+} \ratef^-_{\theta_0,\epsilon}(\gamma)}.
\end{equation}
Next, let us define the sequence $k_0=0$, $k_{j+1}$ being the smallest integer $k$ such that $2^{k}\geq C_{\ve} (\varrho(Q)^2/(T^2\ve))^{\efrac 27}2^{\frac {11}{7}k_j}$. By \eqref{eq:varrho-J} it follows $\inf_{\gamma\in Q_{k_{j+1}^+}}\ratef^-_{\theta_0,\epsilon}(\gamma)\geq 2\inf_{\gamma\in Q_{k_j}^+} \ratef^-_{\theta_0,\epsilon}(\gamma)$. One can check by induction that $k_j\leq e^{c_\star j}\ln(\varrho(Q)^2/\ve) $ for some constant $c_\star>0$, depending on $T$. Using again \eqref{eq:varrho-J}, we can finally conclude:
\begin{align*}
 \ppath_{\ellz}\left(Q\right)&\leq\sum_{j=0}^{\infty} e^{c_\star j} \ln(\varrho(Q)^2/\ve) \expo{-\ve^{-1}{\left(1-C_{\epsilon}\frac{T\eefrac 27\ve\eefrac1{7}}{\varrho(Q)\eefrac{2}{7}}\right)}2^j\inf_{\gamma\in Q^+} \ratef^-_{\theta_0,\epsilon}(\gamma)}\\
 &\leq\sum_{j=0}^{\infty}  \expo{-\ve^{-1}{\left(1-C_{\epsilon,T}\frac{\ve\eefrac1{7}}{\varrho(Q)\eefrac{2}{7}}\right)}2^j\inf_{\gamma\in Q^+} \ratef^-_{\theta_0,\epsilon}(\gamma)+\const j}\\
  &\leq\sum_{j=1}^{\infty}  \expo{-j\ve^{-1}{\left(1-C_{\epsilon,T}\frac{\ve\eefrac1{7}}{\varrho(Q)\eefrac{2}{7}}\right)}\inf_{\gamma\in Q^+} \ratef^-_{\theta_0,\epsilon}(\gamma)}\\
  &\leq  \expo{-\ve^{-1}{\left(1-C_{\epsilon,T}\frac{\ve\eefrac1{7}}{\varrho(Q)\eefrac{2}{7}}\right)}\inf_{\gamma\in Q^+} \ratef^-_{\theta_0,\epsilon}(\gamma)}.\qedhere
\end{align*}
\end{proof}

  \subsection{Proof of  Propositions~\ref{p_rateFunctionBasicProperties},~\ref{p_largeDevzQuadraticBound} and Corollaries~\ref{cor:large-dev2},~\ref{cor:large-dev3}}\label{s_proofCorollaries}\ \newline
  We conclude this section by proving the propositions and corollaries that were
  stated in Section~\ref{sec:results} without a proof.
  \begin{proof}[Proof of Proposition~\ref{p_rateFunctionBasicProperties}]
    We start by proving~\eqref{e_largeDeviationPrincipleKifer}.  Fix $R>0$ and
    $\lseparation > 0$; by Lemma~\ref{lem:entropy} for any $C>\| A\|_{L^\infty}$ if
    $\gamma$ is not $C$-Lipschitz, then $\ratef_{\theta_0}(\gamma) = \infty$.  Hence we
    can assume that all elements of $Q$ are $C$-Lipschitz paths; this in particular
    implies that $\Rad+(\gamma) < C_{\lseparation ,T}\ve^{1/8}$ (recall that $\Rad+$ was
    defined in~\eqref{e_defRadPm}).  Now let $Q^{+}_R = \bigcup_{\gamma\in Q}B(\gamma,R)$
    and $Q^{-}_{R} = \{\gamma\in Q\st B(\gamma,R)\subset Q\}$.  For $\ve$ small enough,
    $Q^{-}_{R}\subset Q^-$ and, $Q^{+}\subset Q^{+}_{R}$ (see~\eqref{eq:Qdef} for the
    definition of $Q^-$, $Q^+$) and, by Theorem~\ref{thm:large}, taking first $\liminf$
    and $\limsup$ as $\ve\to 0$ and then the liminf for $R\to 0$:
    \begin{align*}
      -\!\!\inf_{\gamma\in \intr Q}\ratef_{\theta_0,\lseparation}^+(\gamma)\le%
      \liminf_{\ve\to0}\ve\log \ppath_\mu (Q)\le%
      \limsup_{\ve\to 0}\ve\log \ppath_\mu (Q)\le%
      -\inf_{\gamma\in \overline Q}\ratef_{\theta_0,\lseparation}^-(\gamma),
    \end{align*}
    the only non-obvious inequality being the last one. To prove it note that if
    $\rho(\theta_0,Q)=0$, then the inequality is trivially true, we can then assume
    $\rho(\theta_0,Q)>0$, hence, for $\ve$ small enough $Q$ is $\ppath_\mu$-regular (see
    Remark~\ref{rem:shortT}).  Next, let us define
    $\beta=\liminf_{R\to 0}\inf_{\gamma\in
      Q^+_R}\ratef_{\theta_0,\lseparation}^-(\gamma)$. Then for each $\delta>0$ there
    exists $R_\delta<\delta$ and $\gamma_\delta\in Q^+_{R_\delta}$ such that
    $\ratef^-_{\theta_0,\lseparation}(\gamma_\delta)\leq \beta+\delta$. Since the
    $C$-Lipschitz function are compact, there exists a subsequence $\delta_j\to 0$ such
    that $\gamma_{\delta_j}\to \gamma_*\in \bar Q$. The claim follows by the lower
    semicontinuity of $\ratef^-_{\theta_0,\lseparation}$ (see Lemma~\ref{lem:rate-lower}).

Next, we want to take the limit  $\lseparation\to 0$ and prove~\eqref{e_largeDeviationPrincipleKifer}, that is
    \begin{equation}\label{eq:chepallegalattiche}
    \begin{split}
      -\inf_{\gamma\in \intr Q}\ratef_{\theta_0}(\gamma)&\le%
      \liminf_{\ve\to0}\ve\log \ppath_\mu (Q)\\%
      &\le\limsup_{\ve\to 0}\ve\log \ppath_\mu (Q)\le%
      -\inf_{\gamma\in \overline Q}\ratef_{\theta_0}(\gamma).
      \end{split}
     \end{equation}
If    $\eta=\inf_{\gamma\in \intr Q}\ratef_{\theta_0}(\gamma)=\infty$, then the first inequality is trivially true. Otherwise, by Lemma~\ref{lem:rate-lower}, for each $\delta>0$ there exists $\gamma_\delta\in\intr Q \cap \intr\fkD(\ratef_\theta)$ such that $\eta+\delta>\ratef_{\theta_0}(\gamma_\delta)$.
 Accordingly, there exists $\lseparation$ such that
\[
\eta+\delta\geq \ratef_{\theta_0}(\gamma_\delta)=  \ratef^+_{\theta_0, \lseparation}(\gamma_\delta)\geq \inf_{\gamma\in \intr Q}\ratef^+_{\theta_0, \lseparation}(\gamma)
\]
by the arbitrariness of $\delta$ the first inequality of~\eqref{eq:chepallegalattiche} follows.

To prove the last inequality of ~\eqref{eq:chepallegalattiche} let
$\eta=\lim_{\lseparation\to 0}\inf_{\gamma\in \overline
  Q}\ratef^-_{\theta_0,\lseparation}(\gamma)$. If $\eta=\infty$, then
$\overline Q\cap \fkD(\ratef^-_{\theta_0})=\emptyset$ hence the inequality
follows. Otherwise, for each $\delta$ there exists $\Delta_\delta>0$ such that, for all
$\lseparation\leq \Delta_\delta$, there exists
$\gamma_{\lseparation} \in \overline Q\cap \fkD(\ratef^-_{\theta_0})$ such that
$\eta+\delta\geq \ratef^-_{\theta_0, \lseparation}(\gamma_\lseparation)\geq
\ratef^-_{\theta_0, \Delta_\delta}(\gamma_\lseparation)$, where the last inequality
follows form the definition of $\ratef^-_{\theta_0, \lseparation}$. By taking a
subsequence we can assume that $\gamma_ \lseparation$ converges to
$\gamma\in \overline Q\cap \fkD(\ratef^-_{\theta_0})$.  We can then
establish~\eqref{e_largeDeviationPrincipleKifer} by taking first the limit
$\lseparation\to 0$ followed by $\delta\to 0$ and applying Lemma~\ref{lem:rate-lower}
twice.

Item (a) follows from Lemma~\ref{lem:entropy} and Remark~\ref{rem:effective} while item
(b) is a direct consequence of the properties of $\cZ$ detailed in
Lemma~\ref{lem:domainZ}.
\end{proof}
  \begin{proof}[Proof of Proposition~\ref{p_largeDevzQuadraticBound}]
By Lemma~\ref{lem:entropy} , for any
    $\theta\in\bT$, $\cZ(\cdot,\theta)$ (defined in~\eqref{eq:M-0}) is finite only in a
    compact set on which it is bounded.  Then Lemma~\ref{lem:domainZ} implies
    that there exists $c>0$ such that $\cZ(b,\theta)\geq c (b-\bar\omega(\theta))^2$ for all  $\theta\in\bT$. Hence,
    \begin{equation}\label{eq:silly-billy}
     I_{\theta^*_0}(\gamma) =   \int_0^T\cZ(\gamma'(s),\bar\theta(s,\thetas_0)) \deh{}s\geq c
      \int_0^T\left\|\gamma'(t)-\bar A(\bar\theta(s,\thetas_0))\right\|^2 \deh t,
    \end{equation}
    where we assumed that $\gamma$ is Lipschitz (otherwise
    $I_{\theta^*_0}(\gamma)=\infty$ by definition).  Hence, for each $\gamma\in Q_\star=\{\|\hgamma\|_\infty\ge \frac 12 R\}$,
    \[
     R\leq \int_0^T\|\gamma'(t)-\bar\gamma'(t,\theta^*_0)\|\deh t\leq \sqrt {c^{-1}T  I_{\theta^*_0}(\gamma)} ,
    \]
We can now apply Lemma~\ref{lem:upper}. Note that, for $T_{\max}$ small enough and $\bar C$ large enough,
$R_\ve(\gamma)\leq  \|\hgamma\|_\infty/2$.
This implies that $Q_{\ve,+}\subset Q_\star$ and the Lemma follows.
 \end{proof}
  \begin{proof}[Proof of Corollary~\ref{cor:large-dev2}]
    Let us start with the upper bound.  For any $\gamma\in Q$, let
    $\gamma_\ve= \ve^{\beta}\gamma+(1-\ve^{\beta})\gavg$.  Since $Q$ is bounded, we have
    $\|\gamma_\ve-\gavg\|\nc0 < C_Q \ve^\beta$ and in particular (recall the definitions of
    $\Rad+$ given in~\eqref{e_defRadPm} and of $\rho,\vestep, \Clip$ in \eqref{eq:last-resort})
    $\Rad+(\gamma_\ve)\le C_T \ve^{\efrac 17+\efrac {5\beta}7}$, $\rho(\theta_0,Q_\ve)=\ve^\beta\rho(\theta_0,Q)$, $\Clip(\gamma_\ve)=\|\hgamma\|_{L^\infty}^{\efrac{11}7}T^{\efrac {11}7}\ve^{-\efrac 27+\efrac{11\beta}7 }$. Thus $\Clip\leq C$ only if $\beta\geq \ve^{\efrac 2{11}}$, in such a case
\[
\|\gamma_\ve(s)-\gamma_\ve(s')\|\leq \ve^\beta\|\gamma(s)-\gamma(s')\|\leq \Const \ve^\beta|s-s'|\leq C_Q\ve^{-\efrac{4\beta}{7}+\efrac 27}\vestep(\gamma_\ve)
\leq \frac {\vestep(\gamma_\ve)}4
\]
since $\beta<\frac 12$, that is the events $Q_\ve$ are always $\ppath_\mu$-regular.  In
addition, since $\frac 17+\frac 57\beta>\beta$, it follows that, for all $R>0$, for all
$\ve$ small enough we have
$Q_\ve^+\subset
\{\ve^{\beta}\gamma(\cdot)+(1-\ve^{\beta})\gavg(\cdot,\theta_0)\}_{\gamma\in
  Q^+_R}=:Q^+_{\ve,R}$ where $Q^+_R=\bigcup_{\gamma\in \overline Q}B(\gamma, R)$. Also,
for $\ve$ small enough, $Q^+_{\ve,R}\subset \intr\fkD(\ratef_{\theta_0})$. In particular,
for any $\eps > 0$ and sufficiently small $\ve$,
$\ratef_{\theta_0,\eps}^\pm(\gamma_\ve) = \ratef_{\theta_0}(\gamma_\ve)$ for any
$\gamma\in Q^+_{\ve,R}$ (recall the definition of $\ratef_{\theta_0,\eps}^\pm$ given
in~\eqref{eq:rate-good}).  Also by~\eqref{eq:mathscrI} and the smoothness of $\Sigma$,
since $Q$ is Lipschitz bounded,
    \begin{align*}
      \ratef_{\theta_0}(\gamma_\ve)=\frac{\ve^{2\beta}}2\int_0^T\hskip-6pt\langle
      \gamma'(s)-\bar A(\bar\theta(s)), \Sigma(\bar\theta(s))^{-1}\left[\gamma'(s)-\bar
      A(\bar\theta(s))\right]\rangle \deh{}s+o(\ve^{2\beta}).
    \end{align*}
    We then apply Theorem~\ref{thm:large} and the above estimate. Taking the $\limsup$ as
    $\ve\to0$ followed by the limits $R\to 0$ yields the wanted result.  The lower bound follows by similar arguments.
  \end{proof}
  \begin{proof}[Proof of Corollary~\ref{cor:large-dev3}]
    Let $C_*$ large enough and set
    $\gamma_\ve=\ve^{\frac 12}\gamma-(1-\ve^{\frac 12})\bar z$.  For each $\gamma\in Q$
    we have (recall the definition of $\Rad+$ given in~\eqref{e_defRadPm}) $\Rad+(\gamma_\ve)\le \lprecisione\|\hgamma\|_\infty\sqrt\ve$ and that $Q_\ve$ is  $\ppath_\mu$-regular.  Thus, in the
    notation of Theorem~\ref{thm:large}, $(\hat Q^+)_\ve\supset Q^+_\ve$.
    Since~\eqref{eq:mathscrI} implies
    \[
      \ratef_{\theta_0}(\gamma_\ve)=\ve\ratef_{\operatorname{Lin}}(\gamma)+\cO(\ve^{\frac 32})
    \]
    the result follows directly by Theorem~\ref{thm:large}.
  \end{proof}
\section{Local Limit Theorem}\label{sub:lclt}
The results of the previous section allow to study deviations
$\deviationsl_n=\theta_n-\bar\theta(\ve n,\thetasl)$ from the average of order larger than
$\sqrt\ve$, but give no information on smaller fluctuations, except for the fact that with
very high probability the fluctuations are of order $\sqrt \ve$ or smaller.  In fact,
in~\cite{DimaAveraging}, it is proven that the fluctuations from the average, once
renormalized by the multiplicative factor $\ve^{-\efrac 12}$, converge in law to a
diffusion process.  Here we go one (long) step forward and we prove Theorem~\ref{thm:lclt}
which is the equivalent of a Local Central Limit Theorem with error terms for the above
convergence.

\begin{rem}
  As already mentioned before the statement of Theorem~\ref{thm:lclt}, although we will
  restrict our discussion to fluctuations of the variable $\theta$, the same type of
  arguments would yield corresponding results for $z$.
\end{rem}

A standard technique to prove local CLT type results for a dynamical systems leads to the
study of the leading eigenvalue of a suitable transfer operator (see, e.g.,
\cite{guivarch-hardy}).  While this idea works quite well for uniformly hyperbolic
systems, it is much harder to implement for partially hyperbolic systems. Here we will use
the standard pair technology to reduce our problem to a slowly varying uniformly
hyperbolic system. This will be achieved in several steps.

The first step consists in expressing the fluctuation in terms of a more explicit random
variable $\bA$: this is done in Section~\ref{sec:birk}.  Then, in
Section~\ref{subsec:prooflclt}, we first show how Theorem~\ref{thm:lclt} follows rather
easily once one has computed the characteristic function of the random variable $\bAs$,
which is a suitable mollification of $\ve\bA$.  We then discuss which technical estimates
are necessary to compute the Fourier transform defining the characteristic function of
$\bAs$ and we use the standard pair formalism to recast them in a form to which, in the
next sections, it will be possible to apply the transfer operator technique, effectively
reducing the problem to one similar to the skew-product case. The difference being that
the fast dynamics is slowly varying rather than a constant.  Hence, instead of having a
power of a single transfer operator we will have to deal with a product of similar, but
different, operators.

Let $T>0$ be the one appearing in the statement of Theorem~\ref{thm:lclt} and consider
$t\in[\ve\eefrac1{2000}, T]$ to be fixed.  In the following we will find convenient to
work with a definition of ``deviation'' that is independent of the standard pair language.
This definition has been already introduced in~\eqref{e_deviationeDef}, but we report it
here for the reader's convenience.  Recall the notation
$\bar\theta_k=\bar\theta(\ve k,\theta)$; then let
\begin{equation}\label{eq:deviatione-D}
\deviatione_k(x,\theta)=\theta_{k}(x,\theta)-\bar\theta_k(\theta)
\end{equation}
where, as usual, $(x_k,\theta_k)=F_\ve^k(x,\theta)$ and $\bar\theta(t,\theta)$ is the
unique solution of $\dot{\bar\theta}=\bar\omega(\bar\theta)$, with initial condition
$\bar\theta(0)=\theta$.  On the other hand, the deviation $\deviatio^{\!\ve}(t)$, which
appears in the statement of Theorem~\ref{thm:lclt} is related to the initial
measure $\mu$;\footnote{ Recall the definition of the random variable
  $\deviatio^{\!\ve}(t )=\ve^{-\efrac12}\left[\theta_\ve(t)-\bar\theta(t,\thetas_0)\right]$
  where $\theta_\ve(t)$ is defined in \eqref{eq:z-path} by
  $\theta_\ve(t)=\theta_\pint{t\vei}+(t\vei-\pint{t\vei})[\theta_{\pint{t\vei}+1}-\theta_\pint{t\vei}]$, $\mu\in \cP_\ve(\thetas_0)$.}
the first goal of this section is to obtain an explicit relation between the two
definitions.

\begin{rem}\label{eq:rem-complex-sp}
  In the following we will need to iterate complex standard pairs. The basic tool to do so
  will be a generalization of Proposition~\ref{p_invarianceStandardPairs} where the
  potentials that appear are proportional to $\sigma$. This means that we will need
  $c_2\geq \Const |\sigma|$ in order for Proposition~\ref{p_invarianceStandardPairs} to
  apply.  Accordingly, by the condition $c_2\deltacomplex\leq \pi/10$, stated just
  after~\eqref{eq:complex-st}, we will need to consider
  $\deltacomplex\leq \Const |\sigma|^{-1}$.  On the other hand we will see shortly that we
  need worry only about $|\sigma|\leq \ve^{-\efrac 12-2\delta_*}$ for some conveniently
  chosen small constant $\delta_* > 0$ .  Due to this, we are going to consider {\em
    complex} standard pairs with
  $\delta\geq \deltacomplex\geq \deltac=c_* \ve^{\efrac 12+2\delta_*}$ for some
  conveniently chosen small constant $c_*$.  We will call {\em short complex standard
    pairs} the ones for which $\deltacomplex= \deltac$ and {\em long complex standard
    pairs} the ones for which $\deltacomplex= \delta$.
\end{rem}
Due to the above remark it is necessary to write a standard pair $\ell_0$ as a family of
short complex standard pairs. Recall that $\ell_0=(\bG_{\ell_0},\rho_{\ell_0})$,
$\bG_{\ell_0}:[a_{\ell_0},b_{\ell_0}]\to\bT^2$, has length
$|b_{\ell_0}-a_{\ell_0}|\in[\delta/2, \delta]$, where, as in the previous sections,
$\delta$ is some fixed number independent on $\ve$. Hence we must cut
$[a_{\ell_0},b_{\ell_0}]$ in $\delta\deltac^{-1}$ pieces $[\alpha_i,\alpha_{i+1}]$ of
length between $\deltac/2$ and $\deltac$. We can then define the complex standard pairs
$\ellc_{i}=(\bG_{\ell_0,i},\rho_{i})$, where
$ \bG_{\ell_0,i}=\bG_{\ell_0}|_{[\alpha_i,\alpha_{i+1}]}$ and
$\rho_i=Z_i^{-1}\rho_{\ell_0}\Id_{[\alpha_,\alpha_{i+1}]}$,
$Z_i=\int_{\alpha_i}^{\alpha_{i+1}}\rho_{\ell_0}$.\footnote{ The reader should not be
  confused by the fact that the $\ellc_i$ are real: the adjective ``complex''  here refers
  to the fact that they satisfy all the conditions for complex standard pairs, in
  particular the one stated in Remark~\ref{eq:rem-complex-sp} concerning their length.}
Remark that $Z_i\sim \ve^{\efrac 12+2\delta_*}$ and $\sum_i Z_i=1$. Clearly, for each
continuous function $B$,
\begin{equation}\label{eq:put-togehter-st}
\mu_{\ell_0}(B)=\sum_iZ_i\mu_{\ellc_i}(B).
\end{equation}
Let $(x,\theta)$ be distributed according to a measure in $\cP_\ve(\avgtheta{0})$, we can
apply to each standard pair in the family the decomposition~\eqref{eq:put-togehter-st}. We
can thus write
\begin{equation}\label{eq:disintegrate}
  \deviatio^{\!\ve}(t)=\ve\eefrac{-1}2\sum_i \Id_{\ellc_i}\left[\theta_\ve(t)-\bar \theta(t,\avgtheta{\ellc_i})\right]- \Id_{\ellc_i}\left[\bar \theta(t,\avgtheta{0})-\bar \theta(t,\avgtheta{\ellc_i})\right].
\end{equation}
In addition, for any $\alpha>0$, except for a set of exponentially small probability, the
relation between the random variable in~\eqref{eq:deviatione-D} and
$\deviatio^{\!\ve}_{\ellc_i}(t )=\ve\eefrac{-1}2[\theta_\ve(t)-\bar \theta(t,\avgtheta{\ellc_i})]$, under
$\ellc_i$, is given by:
\begin{align}\notag
\deviatio^{\!\ve}_{\ellc_i}(t )&=\ve\eefrac{-1}2[\theta_\ve(t)-\bar \theta(t,\theta)]+\cO(\ve^{1-2\delta_*})\\
&\notag=\veih \left\{\deviatione_\pint{t\vei}+(t\vei-\pint{t\vei})[\deviatione_{\pint{t\vei}+1}-\deviatione_\pint{t\vei}]\right\}+\cO(\ve^{1-2\delta_*})\\
  &=\veih \deviatione_\pint{t\vei}+\veh
  (t\vei-\pint{t\vei})\ho(x_{\pint{t\vei}},\theta_{\pint{t\vei}})+\cO(\ve^{1-2\delta_*}),\label{eq:translate}
\end{align}
where we have argued as in~\eqref{eq:Delta-recursion} and used our large deviation
results.\footnote{ See the arguments around equation~\eqref{eq:largevDelta} for more
  details.}
\begin{rem}\label{r_lazyTimes}
  In the following we will consider only values of $t$ such that $\pint{t\vei}=t\vei$, \ie
  we will assume $t\in\ve\bN\cap[0,T]$.  As the formula above shows, the general case can
  be treated by modifying the last term in the sum defining $\etaRefb_{0,k}$
  in~\eqref{eq:hbolddef} below. We refrain from doing so explicitly to alleviate our
  notation.  Note however that if one wanted to compute the first term of the Edgeworth
  expansion, then one would need to treat explicitly all times and even use a formula
  slightly more precise than~\eqref{eq:translate}, which anyhow also follows from the
  arguments used in~\eqref{eq:Delta-recursion}.
\end{rem}
\subsection{Reduction to a Birkhoff sum}\ \label{sec:birk}\newline As it is often done in
the study of sums of weakly dependent random variables (and already several times in this
paper), we need to divide the time interval $[0,t]$ in blocks.  For technical reasons it
turns out to be convenient to allow such blocks to be of variable length.  We thus
consider a number $R$ of blocks of length identified by the sequence $\{L_k\}_{k=0}^{R-1}$
and set
\begin{align*}
  S_k&=\sum_{j=0}^kL_j, &S_{-1}&=0
\end{align*}
so that $S_{R-1}={t \vei}$.  In our situation, it suffices to consider the case in which
all the blocks are equal except the last one.  More precisely: let us fix\footnote{ The
  choices of $1/32$ and $1/99$ are both arbitrary and largely irrelevant; in fact one could
  work with values of $\delta_*$ arbitrarily small (see Footnote~\ref{f_sigma100}) .}
\begin{align}\label{eq:delta-setting}
  \delta_*&\in(\efrac1{99},\efrac 1{32}),
\end{align}
to be specified later, let $L_*= \ve^{-3\delta_*}$ and define the lengths $L_k$ as
follows:
\begin{equation}\label{eq:L-choice}
\begin{split}
  L_k &= L_* \quad\textrm{ for all }k\in\{0,\cdots, R-2\}.\\
  L_*&\le L_{R-1}\leq 2L_*.
\end{split}
\end{equation}
\begin{rem}
  The estimates in this section are sharper than needed for our purposes, given our choice
  of $L_*$.  Yet, they are instructive as they show, at very little extra cost, how to
  proceed if one wants to obtain a full Edgeworth expansion.
\end{rem}
 \begin{lem}\label{lem:break-smart?}
   For any $\ve>0$, let $t\in\ve\bN\cap[0,T]$ and $\{L_k\}_{k=0}^{R-1}\subset\bN$ as
   above:
  \begin{equation}\label{eq:Deltabloks}
    \begin{split}
      \deviatione_{t\vei}&=\sum_{k=0}^{R-1}\left[\devD_{k}+\cO(\deviatione_{L_k}^3)\right]\circ F_\ve^{S_{k-1}}\\
      \devD_{k}&=\etaExph(t - \ve S_k,\bar\theta_{L_k})\left[\deviatione_{L_{k}}+\frac 12P(t - \ve
        S_k,\bar\theta_{L_k})\deviatione_{L_{k}}^2\right]\\
      \etaExph(s,\theta)&=\expo{\int_0^s\bar\omega'(\bar\theta(\tau,\theta))\deh
        \tau};\quad
      P(s,\theta)=\int_0^s\etaExph(\tau,\theta)\bar\omega''(\bar\theta(\tau,\theta) )\deh
      \tau.
    \end{split}
  \end{equation}
\end{lem}
\begin{proof}
  Note that
  \begin{align*}
    \deviatione_{t\vei}
    &=\theta_{t\vei}-\bar \theta_{t\vei}=\theta_{t\vei-L_0}\circ F_\ve^{L_0}-\bar \theta(t-\ve{L_0},\bar\theta_{L_0})\\
    &= \deviatione_{t\vei-{L_0}}\circ F_\ve^{L_0}-\bar \theta(t-\ve{L_0},\bar\theta_{L_0})+\bar\theta(t-\ve{L_0},\theta_{L_0})\\
    &=\deviatione_{t\vei-{L_0}}\circ F_\ve^{L_0}+\partial_\theta\bar\theta(t-\ve{L_0},\bar\theta_{L_0})\deviatione_{L_0}+\frac 12\partial_\theta^2\bar\theta(t-\ve{L_0},\bar\theta_{L_0})\deviatione_{L_0}^2\\
    &+\cO(\deviatione_{L_0}^3).
  \end{align*}
  Next, note that, by the smooth dependence on initial data of the solutions of ordinary
  differential equations, the functions
  $\eta_1=\partial_\theta\bar\theta, \eta_2=\partial_\theta^2\bar\theta$ solve,
  respectively, the differential equations $\dot\eta_1=\bar\omega'(\bar\theta)\eta_1$,
  $\eta_1(0)=1$ and
  $\dot\eta_2=\bar\omega'(\bar\theta)\eta_2+\bar\omega''(\bar\theta)\eta_1^2$,
  $\eta_2(0)=0$. That is, $\partial_\theta\bar\theta(s,\theta)=\etaExph(s,\theta)$ and
  $\partial_\theta^2\bar\theta(s,\theta)=\etaExph(s,\theta) P(s,\theta)$.  Iterating the
  above formulae yields the lemma.
\end{proof}
Next, we want to write the random variables $\devD_k$, associated to the $k$-th block, in
terms of the (more explicit) random variables defined in~\eqref{e_defEtaRef2}: recall that
$\etaRefb_{k}=\etaRefb_{0,k}+\etaRefb_{1,k}$:
\begin{align}\label{eq:hbolddef}
  \etaRefb_{0,k}&=\sum_{j=0}^{k-1}\etaExp_{j,k}\ho(x_j,\theta_j);
  &\etaRefb_{1,k}&=-\frac{\ve}2\sum_{j=0}^{k-1}\etaExp_{j,k}\bar\omega'(\bar\theta_j)\bar\omega(\bar\theta_{j})
  \\\notag\text{where }\etaExp_{j,k} &=\prod_{l=j+1}^{k-1}\left[1+\ve\bar\omega'(\bar\theta_l)\right].
\end{align}
\begin{lem}\label{lem:h-best}
  There exists $\ve_0$ such that, for all $k\in\{0,\cdots,R-1\}$, $\ve\in[0, \ve_0]$,
  $j\in\{0,\cdots, L_k\}$, $\alpha\in (0,\delta_*]$ and standard pair $\ell$ we have
  \begin{align*}
    \mu_\ell\left(\left\{|\deviatione_{j}| \ge \ve L_k^{\efrac12+\alpha}\right\}\right)&\le e^{-\const L_k^{\alpha}}\\
    \mu_\ell\left(\left\{|\ve\etaRefb_{j}-\deviatione_{j}| \ge \ve^{3} L_k^{2+2\alpha}\right\}\right)&\leq e^{-\const L_k^{\alpha}}\\
    \mu_\ell\left(\left\{\left|(\ve \etaRefb_{0,j})^2-\deviatione_{j}^2\right|
    \ge \ve^{3}L_k^{\efrac 32+\alpha}\right\}\right)&\leq e^{-\const L_k^{\alpha}}.
  \end{align*}
\end{lem}
\begin{proof}
  By Lemma~\ref{l_boundEtaRef} (or, more precisely,~\eqref{e_boundEtaRefb})
  \begin{align}\label{eq:aboutthis}
    \deviatione_j - \ve[\etaRefb_{0,j}+\etaRefb_{1,j}]&
    =\ve\sum_{l=0}^{j-1}\etaExp_{l,j}\left[ \frac{\bar\omega''(\bar\theta_l)}2\deviatione_l^2+
    \cO(\deviatione_l^3+\ve^2)\right].
  \end{align}
Next, let us define $\cB_{\alpha,j}=\{(x,\theta)\in\bT^2\;:\; |\deviationsl_j|\geq \ve j^{\frac 12+\alpha}\}$. By Proposition~\ref{p_largeDevzQuadraticBound}
\begin{equation}\label{eq:largevDelta}
\mu_\ell(\cB_{\alpha,j})\leq  e^{-\const j^{2\alpha}}.
\end{equation}
Hence, for all $j\leq \Const \sqrt L_k$, $|\deviatione_j|\leq  \ve L_k^{\efrac 12}$, while for $j\in[\Const\sqrt L_k,L_k]$, since we have $|\deviationsl_j-\deviatione_j|\leq \Const\ve$, \eqref{eq:largevDelta} implies
\begin{align*}
  \mu_\ell\left(\left\{|\deviatione_{j}|\geq \ve L_k^{\efrac12+\alpha}\right\}\right)\le
  e^{-\const j^{2\alpha}}\le e^{-\const L_k^{\alpha}}
\end{align*}
from which the first assertion of the Lemma follows.  Next, we have
\begin{align*}
    \left|\ve\sum_{l=0}^{\min\{j,\const\sqrt L_k\}-1}\etaExp_{l,j}\left[ \frac{\bar\omega''(\bar\theta_l)}2\deviatione_l^2+ \cO(\deviatione_l^3+\ve^2)\right]\right|\leq \ve^3 L_k^{\efrac 32}.
\end{align*}
This proves the second assertion for $j\leq \Const \sqrt L_k$, while, for
$j\in [\Const \sqrt L_k, L_k]$,
  \begin{equation}\label{eq:poormanH}
    \mu_\ell\left(\left\{|\ve [\etaRefb_{0,j}+\etaRefb_{1,j}]-\deviatione_{j}|\geq \ve^{3}j^{2+2\alpha}\right\}\right)\leq e^{-\const L_k^{\alpha}},
  \end{equation}
  which yields the second assertion in the general case, recalling the constraints
 ~\eqref{eq:L-choice} on $L_k$.  The last assertion follows analogously since
  $\deviatione_{j}^2=(\ve \etaRefb_{0,j})^2+2(\deviatione_{j}-\ve
  \etaRefb_{0,j})\deviatione_{j}-(\deviatione_{j}-\ve \etaRefb_{0,j})^2$, and, for
  $j\leq L_k$,
  \begin{align*}
    \mu_\ell\left(\left\{\sup_{0\le j\le \pint{t\vei}}|(\ve
    \etaRefb_{0,j})^2-\deviatione_{j}^2|\geq \ve^{3}j^{\efrac
    32+\alpha}\right\}\right)\leq e^{-\const L_k^{\alpha}},
  \end{align*}
  where we used the fact that $\Delta_j-H_{0,j} = \cO(H_{1,j}) = \cO(\ve^2j) .$
  \end{proof}
\noindent The above Lemma, which is even sharper than necessary, suggests to define
\begin{align}\label{eq:Mvariable}
  \bM_{k}&=\etaExph(t-\ve
  S_k,\bar\theta_{L_k})\left[\etaRefb_{0,L_k}+\etaRefb_{1,L_k}+\frac \ve2P(t-\ve
  S_k,\bar\theta_{L_k})(\etaRefb_{0,L_k})^2\right].
\end{align}
Then, for $\alpha \leq\delta_*<\efrac1 {32}$, Lemmata~\ref{lem:break-smart?},
\ref{lem:h-best} and equations~\eqref{eq:Deltabloks},~\eqref{eq:largevDelta} yield
\begin{equation}\label{eq:firstDeltared}
\begin{split}
  &\mu_\ellz\left(\{|\deviatione_{t\vei}-\ve \bA|\geq \Const\ve^{2}L_*^{1+2\alpha}\}\right)\leq
  e^{-\const\ve^{-3\alpha\delta_*}}\\
  &\bA=\sum_{k=0}^{R-1}\bM_{k}\circ F_\ve^{S_{k-1}}.
\end{split}
\end{equation}
Since $\ve^{2}L_*^{1+2\alpha}\leq \ve^{\efrac 32+\delta_*}$, $\deviatione_{t\vei}-\ve \bA$
is $o(\ve^{\efrac 32})$ with probability almost one.

Thanks to~\eqref{eq:firstDeltared} we have reduced ourselves to computing the distribution
of the random variable $\ve\bA$. The rest of the paper will therefore mostly deal with the
problem of obtaining a local CLT for the variable $\bA$.

\subsection{Proof of the Local CLT}\ \newline\label{subsec:prooflclt}

In this subsection we will obtain a LCLT for the random variable $\ve\bA$, defined in
\eqref{eq:firstDeltared}, assuming the validity of several propositions that will be
proven later on.  Using this result we will be able to prove the LCLT for
$\deviatio^{\!\ve}(t)$.

Our first problem is that the random variable $\bA$ may have a very rough density (if it
has a density at all): it is then convenient to introduce a regularization
procedure.\footnote{ This is not the only way to handle the problem, it is just the one we
  find more convenient, see Remark~\ref{rem:feller-smooth} for a standard alternative.} To
this end let ${\boldsymbol Z}$ be a bounded, independent, zero average random variable so
that $|{\boldsymbol Z}|\le 1$ with smooth density $\psi\in\cC^\infty$.  We can then
consider the random variable $\bAs=\ve \bA+\ve^{\beta_*} {\boldsymbol Z}$, where
$\beta_*=\frac 32+\delta_*$ and recall that $\delta_*\in(\efrac1{99},\efrac1{32})$.  The random
variable $\bAs$ indeed admits a density, which we denote with $N_{\mu,\bAs}$ (where $\mu$
denotes the distribution of initial conditions).  In fact, denote by $\widehat\psi$ the
Fourier transform of $\psi$:
\begin{align}\notag
  N_{\mu,\bAs}(y)&=\frac 1{2\pi}\int_{\bR}e^{-i\xi y}\bE(e^{i\xi \bAs})\deh\xi\\
  &=\frac 1{2\pi\ve}\int_{\bR}e^{-i\sigma\vei
    y}\mu(e^{i\sigma\bA})\widehat\psi(\ve^{\beta_*-1}\sigma)\deh \sigma.\label{e_mainIntegral}
\end{align}
The above discussion motivates us to prove the following result
\begin{prop}\label{p_smoothlclt}
  For any $T>  0$ there exists $\ve_0$ so that the following holds.  For any real numbers
  $\ve\in(0,\ve_0)$, $t\in[\ve^{\efrac1{1000}},T]$ so that $t\vei=\pint{t\vei}$, any
  $\thetas\in\bT$ and any short complex standard pair $\ellc$ so that
  $\thetas=\Re(\mu_{\ellc} (\theta))$,\footnote{ This generalizes~\eqref{eq:thetastartdef}
    to the case of complex standard pairs.} we have:
  \begin{equation}
    \label{e_smoothlclt}
    N_{\ellc,\bAs}(y) = %
    \frac{e^{-y^2/(2\ve \Var_t^2(\thetas))}}{\Var_t(\thetas)\sqrt{2\pi \ve}} +\cO(\ve^{-7\delta_*}),
  \end{equation}
  where $\Var_t(\cdot)$ is given by~\eqref{e_variancelclt}; in particular it is a
  differentiable function so that $|\Var_t'|\leq \Const$.
\end{prop}

Let us postpone the proof of Proposition~\ref{p_smoothlclt} and see immediately how it
implies our main result.
\begin{proof}[{\bf Proof of Theorem~\ref{thm:lclt}}]
  Let us remind once again the reader that we will give the proof only in the case
  $t\vei = \pint{t\vei}$ (see Remark~\ref{r_lazyTimes}).  By
  equations~\eqref{eq:disintegrate} and~\eqref{eq:translate}, given any $I=[a,b]$ and
  $\shiftPar\in\bR$, we have
\begin{align*}
  \ppath_{\mu}(\deviatio^{\!\ve}(t)\in \veh I + \shiftPar)
  &=\sum_{i}Z_i \ppath_{\mu_{\ellc_i}}(\deviatio^{\!\ve}_{\ellc_i}(t)\in \veh I + \shiftPar+\tau_i)\\
  &\le\sum_{i}Z_i \ppath_{\mu_{\ellc_i}}(\veih \deviatione_{t\vei}\in \veh I^+ + \shiftPar+\tau_i)+\Const e^{-\ve^{-\const}}
\end{align*}
where
$\tau_i=\ve\eefrac{-1}2[\bar\theta(t,\avgtheta{0})-\bar\theta(t,\avgtheta{\ellc_i})]$ and
$I^+=[a-\Const \ve^{1-\delta_*}, b+\Const \ve^{1-\delta_*}]$.  By the same token
\begin{align*}
  \ppath_{\mu}(\deviatio^{\!\ve}(t)\in \veh I + \shiftPar)\geq \sum_{i}Z_i
  \ppath_{\mu_{\ellc_i}}(\veih \deviatione_{t\vei}\in \veh I^- + \shiftPar+\tau_i)
  -\Const e^{-\ve^{-\const}}
\end{align*}
where $I^-=[a+\Const \ve^{1-\delta_*}, b-\Const \ve^{1-\delta_*}]$. From now on we follow
only the upper bound, the lower bound being more of the same.

By \eqref{eq:firstDeltared} and the definition of $\bAs$ (see the beginning of this
subsection) we have
\begin{align*}
  \ppath_{\mu}(\deviatio^{\!\ve}(t)\in \veh I + \shiftPar)\leq\sum_{i}Z_i
  \ppath_{\mu_{\ellc_i}}(\veih \bAs\in \veh I^+ + \shiftPar+\tau_i)+\Const e^{-\ve^{-\const}}.
\end{align*}
We can now use Proposition~\ref{p_smoothlclt} to obtain
\begin{equation}\label{eq:final-lclt-form}
\begin{split}
  \ppath_{\mu}&(\deviatio^{\!\ve}(t)\in \veh I + \shiftPar)\le \sum_i Z_i
  \int_{\shiftPar+\tau_i+\veh
    I^+}\left[\frac{e^{-\eta^2/(2\Var_t^2(\theta^*_{\ellc_i}))}}{\Var_t(\theta^*_{\ellc_i})\sqrt{2\pi
      }} +\Const \ve^{1/2-7\delta_*}\right]\deh\eta\\
  &=\int_{\bT}\int_{\shiftPar+\veh I^+}
  \left[\frac{e^{-(\eta-\ve\eefrac{-1}2[\bar\theta(t,\avgtheta{0})-\bar\theta(t,\theta)])^2/(2
        \Var_t^2(\theta)))}}{\Var_t(\theta)\sqrt{2\pi }} \cN_{\mu}(\deh \theta)
    +\Const\ve^{1/2-7\delta_*}\right]\deh\eta,
\end{split}
\end{equation}
where $\cN_{\mu}$ is the law of $\theta$ under $\mu$.  The obvious analog holds for the
lower bound.

The above formula is valid for any standard family, but if $\mu\in\cP_\ve(\theta^*_0)$,
since by definition $|\bar\theta(t,\avgtheta{0})-\bar\theta(t,\theta)|\le\Const\ve$, we
can obtain the simplified expression:
\begin{align*}
  \veih\ppath_\mu(\deviatio^{\!\ve}(t)\in \veh I + \shiftPar)=\Leb\, I\cdot\left[
  \frac{e^{-\shiftPar^2/2\Var_t^2(\theta^*_0)}}{\Var_t(\theta^*_0)\sqrt{2\pi}}
  +\cO(\ve^{\efrac12-7\delta_*})\right]+\cO(\ve^{\efrac 12-\delta_*}).
\end{align*}
This proves the theorem.
\end{proof}
Our task is then reduced to the proof of Proposition~\ref{p_smoothlclt}.
\begin{proof}[{\bf Proof of Proposition~\ref{p_smoothlclt}}]
  It suffices to compute the integral~\eqref{e_mainIntegral} when $\mu = \mu_{\ellc}$ is a
  short complex standard pair.  To do so, we find convenient to split the integral in five
  different regimes: let us fix $\sigma_0>0$ small enough and $\Czero>0$ large enough to %
  be determined later; also let
  $\largeta=\beta_*-1+\delta_*=\frac 12+2\delta_*$.\footnote{ Informally, $\sigma_0$
    specifies the region in which we can use perturbation theory, while $\Czero$ and
    $\ve^{-\largeta}$ specifies the regions that can be bounded trivially, see equations
    \eqref{eq:JJ0}, \eqref{eq:JJ4}.}  Recall moreover that we have chosen
  $L_* = \ve^{-3\delta_*}$; we consider then the partition $\bR=\bigcup_{k = 0}^4\J k$,
  where
  \begin{align*}
    &\J0=\{ |\sigma|\le \Czero\ve^{2}L_*\},\;\;&\J1=\{\Czero\ve^{2}L_* < |\sigma|\le\ve^{\delta_*}\},\\
    &\J2=\{\ve^{\delta_*} < |\sigma|\le\sigma_0\},&\J3=\{\sigma_0< |\sigma| \le \ve^{-\largeta}\},\hskip.6cm\\
    &\J4=\{\ve^{-\largeta} < |\sigma|\}.
  \end{align*}
  Correspondingly, we can rewrite~\eqref{e_mainIntegral} as
  \begin{align*}
    N_{\mu, \bAs} &= \JJ0+\JJ1+\JJ2+\JJ3+\JJ4,
  \end{align*}
  where each $\JJ{j}$ denotes the contribution of $\J{j}$ to the integral on the right
  hand side of~\eqref{e_mainIntegral}.  Recall that we are allowed to neglect
  contributions that are of order $\ve^{-7\delta_*}$; we will now show that the main
  contribution to~\eqref{e_mainIntegral} is given by $\JJ1$, as the contributions of all
  other terms are, in fact, negligible.  First notice that the contribution of $\JJ0$ can
  be neglected; in fact:
  \begin{equation}\label{eq:JJ0}
    \left|\JJ0\right|
    \le \frac{1}{2\pi\ve}\int_{\sigma\le \Czero\ve^{2}L_*}\left|\widehat\psi(\ve^{\beta_*-1}\sigma)\right|\deh\sigma
     \le \Const\ve L_*\|\psi\|\nl1\leq \Const \ve^{1-3\delta_*}
  \end{equation}
  The contribution of $\JJ4$ can also be neglected, since, for each $r\in\bN$, by Cauchy--Schwarz:
  \begin{equation}\label{eq:JJ4}
    \begin{split}
      \left|\JJ4\right|
      &\le \frac{\ve^{-\beta_*}}{2\pi}\int_{\sigma\ge \ve^{\beta_*-1-\largeta}}\left|\widehat\psi(\sigma)\right|\deh\sigma\\
      &\le \Const\ve^{-\beta_*}\|\psi^{(r)}\|\nl2\left[\int_{\sigma\ge \ve^{-\delta_*}}\sigma^{-2r}\deh\sigma\right]^{\efrac12}\\
      &\le \Const\|\psi^{(r)}\|\nl2\ve^{2r\delta_*-\delta_*-\beta_*}.
    \end{split}
  \end{equation}
  If we take $r$ large enough, depending on the choice of $\delta_*$, we can thus conclude
  that $|\JJ4|\le \Const\|\psi^{(r)}\|\nl2\ve^{100}\le\Const\ve^{-7\delta^*}$.  We are
  then left with the estimate of the contributions of $\JJ1$, $\JJ2$ and $\JJ3$. We will
  (impressionistically) call $\J1$ the \emph{small} (frequencies) regime, $\J2$ the
  \emph{intermediate} regime and $\J3$ the \emph{large} regime.

  The basic tool to compute these integrals is described by Lemma~\ref{lem:one-step-M},
  which will be stated below.  Before giving its statement, however, it is convenient to
  introduce a systematic notation for the many correlation terms that will appear in the
  sequel.  It will turn out that, for the level of precision needed for our current
  investigation, the exact form of such terms is irrelevant.  It will thus suffice to
  consider the following, very rough, bookkeeping strategy.
  \begin{notation}\label{not:bookcor}
    Let $C^*>1$ be some fixed constant sufficiently large.  Given a standard pair $\ell$,
    we will use the symbol $\fkC^{k,p}_{\ell,l,\bar \imath}$ to denote a coefficient which
    depends only on the averaged trajectory $\bar\theta(t,\thetasl)$, indexed by
    $\bar\imath =(i_1,\cdots,i_l)\in\{-1,\cdots, L_k-1\}^l$ (or $\bar\imath=\emptyset$
    if $k = 0$)\/\footnote{ We use the convention that, for any set $A$, $A^0=\{\emptyset\}$.}
    and which satisfies the estimates $|\fkC^{k,p}_{\ell,l,\bar\imath}|\leq (C^*)^l$, and
    $\sum_{\bar\imath}|\fkC^{k,p}_{\ell,l,\bar\imath}|\leq (C^*)^l L_k^p$.

    We will use $A_{j,\bar \imath}$, $i_j\geq 0$, as a placeholder for an arbitrary
    $\cC^2(\bT^2,\bC)$ function possibly explicitly depending on $\ell$
    such that $\|A_{j,\bar\imath}\|_{\cC^1(\bT^2,\bC)}\leq C^*$, and we assume
    conventionally that $A_{j,\bar\imath}=1$ if $i_j=-1$. Finally, we will use the
    notation
    \begin{align*}
      \bbK^{k,p}_{\ell,l}=\sum_{\bar\imath}\fkC^{k,p}_{\ell,l,\bar\imath}
      \prod_{j=1}^l A_{j,\bar\imath}\circ F_\ve^{i_j}.
    \end{align*}
    For obvious reasons we will call such expressions {\em correlation terms}. Note that
    $ \bbK^{k,p}_{\ell,l} \bbK^{k,p'}_{\ell,l'} =\bbK^{k,p+p'}_{\ell,l+l'}$.
    Finally, observe that $ \bbK^{k,p}_{\ell,l}$ can also be written as $\bbK^{k,p}_{\ell,m}$ for any
    $m\geq l$ (just set $\fkC^{k,p}_{\ell,m,\bar\imath}=0$ if $i_{j}\neq -1$ for all $j>l$).
  \end{notation}
  Also let us introduce the potentials (recall that the value of $t$ is fixed)
  \begin{align}\label{eq:potdef}
      \hov^k_{\ell,j}(x,\theta)&=\etaExph(t-\ve S_k,\thetaslk{L_k})\etaExps_{j,L_k}\ho(x,\theta)
  \end{align}
  where $\etaExps_{j,k}$ is defined in~\eqref{e_defEtaRef}, $\etaExph$ is defined in
 ~\eqref{eq:Deltabloks} and, generalizing~\eqref{eq:thetastartdef}:
  \begin{equation}\label{eq:ifnotdefinedyet}
    \thetasl=\Re(\mu_\ell(\theta))\;;\quad\thetaslk{k}=\bar\theta(\ve k,\thetasl).
  \end{equation}
  Let us fix $q\in\bN$ sufficiently large to be specified later; associated with the above
  potentials, choosing a standard pair $\ell$, $k\in\{0,\cdots,R-1\}$,
  $\fkC^{k,0}_{\ell,0,\emptyset}$ and $(\bbK^{k,2s}_{\ell,3s})_{s = 1}^{q-1}$, we define
  an operator\footnote{ To be precise $\cT$ should have a lot of indexes
    ($\{\ell, k, q, \fkC^{k,0}_{\ell,0,\emptyset}, \hov^k_{\ell,j},
    (\bbK^{k,2s}_{\ell,3s})_{s = 1}^{q-1}\}$), we drop all of them (except $k$) for
    readability.} $\cT_k$: the operator $\cT_k$ acts on complex measures over $\bT^2$ as a
  ``weighted $L_k$-push-forward with correlation terms up to $q$ points'', according to
  the following formula
  \begin{align}\label{eq:corr-T-op}
      \cT_k\mu(g)
      &= e^{i\sigma\fkC_{\ell}(\ve)}
      \mu\left( e^{i\sigma
      \sum_{j=0}^{L_k-1}\hov^k_{\ell,j}\circ
      F_\ve^j}\left[1+\sum_{s=1}^{q-1}(i\sigma\ve)^s\bbK^{k,2s}_{\ell,3s}\right]g\circ
      F_\ve^{L_k}\right),
   \end{align}
   where $\fkC_{\ell}(\ve)$ is a constant depending only on $\ell$ and $\ve$.  Observe
   that when $q = 1$ and $\fkC_{\ell}(\ve)=0$, we recover the push-forward operator with
   complex potential~\eqref{eq:potdef} defined in~\eqref{e_preparatory8} .  The
   key fact is that the action of such operators on complex standard families can still be
   described in the standard pair language, as the following lemma shows.
  \begin{lem}\label{lem:one-step-M}
    There exists $\ve_0>0$ such that, for each $k\in\{1,\cdots, R\}$, $\sigma\in\bR$,
    short complex standard pair $\ell$ and $g\in L^\infty(\bT^2,\bC)$ there exist a family
    of short standard pairs $\stdf_{\ell}^{k}$ such that, provided
    $|\sigma|\leq \ve^{-\efrac12-2\delta_*}$ and $L_k\leq \ve^{-\efrac 14+\delta_*}$, we
    have
    \begin{align*}
      &  \mu_\ell\left( e^{i\sigma \bM_{k}} g\circ F_\ve^{L_{k}}\right)=\sum_{\ell'\in\stdf_\ell^{k}}
        \fm_{k,\ell,\ell'}\mu_{\ell'}(g)
        +\cO\left(\ve^q\sigma^qL_{k}^{2q}+\sigma \ve^2\deltacomplex L_{k}^2\right)
             \cdot|\mu_\ell|(|g\circ F_\ve^{L_{k}}|),\\
        &\textrm{where }\sum_{\ell'\in\stdf_\ell^{k}}\fm_{k,\ell,\ell'}\mu_{\ell'}(g)=
          \cT_{k}\mu_\ell(g)\text{ and $\cT_k$ is given by~\eqref{eq:corr-T-op} with
    $\fkC_{\ell}(\ve) = \ve\fkC^{k,1}_{\ell,0,\emptyset}$.}
    \end{align*}
    Moreover, if $|\sigma|\le \sigma_0$, we can take
    $\{\ell\}$ and/or $\stdf_{\ell}^{k}$ to consist of long standard pairs.
    In addition, if we define iteratively the standard families
    $\stdf_\ellz^0=\{\ellz\}=\{\ell\}$ and $\stdf_{\ell_{k}}^{k}$ where, for all
    $\ell_k\in \stdf_{\ell_{k-1}}^{k-1}$, $\stdf_{\ell_{k}}^{k}$ is defined as above,
    then, for each $k\in\{0,\cdots, R-1\}$, if $q\geq 4$ and
    $L_*\le C\ve^{-\efrac14+\delta_*+(\efrac34+\delta_*)/(2q-1)}$ for sufficiently small
    $C$, we have
    \begin{equation}\label{eq:trivial-fm-estimate}
      \sum_{\ell_{1}\in \stdf_{\ell_{0}}^{0}}\cdots\sum_{\ell_{k+1}\in \stdf_{\ell_{k}}^{k}}
      \prod_{j=0}^{k}|\fm_{j,\ell_{j},\ell_{j+1}}|\leq \Const.
    \end{equation}
  \end{lem}
  The proof of the above lemma will be given in the next subsection.  We now show how to
  conclude the proof of Proposition~\ref{p_smoothlclt}: Lemma~\ref{lem:one-step-M}
  and~\eqref{eq:firstDeltared} allow to write the expectation
  $\mu(e^{i\sigma\bA}) = \mu_{\ellc}(e^{i\sigma\bA})$ appearing in~\eqref{e_mainIntegral}
  as (recall $R = \cO(\vei L_*\invr)$):
  \begin{equation}\label{eq:stating-pointmu}
    \mu_{\ellc}(e^{i\sigma\bA}) =
    \sum_{\ell_{1}\in \stdf_{\ellc}^{0}}\cdots\sum_{\ell_{R}\in \stdf_{\ell_{R-1}}^{R-1}}
    \prod_{j=0}^{R-1}\fm_{j,\ell_{j},\ell_{j+1}}+\cO(\ve^{q-1}\sigma^qL_*^{2q-1}+\sigma \ve \deltacomplex L_*).
  \end{equation}
  \begin{rem}\label{eq:long-v-short} Note that the above decomposition depends on the
    choice of $\deltacomplex$ which, in turns, depends on $\sigma$. From now on we will
    talk only of ``complex standard pairs" and it will be understood that the families
    $\stdf_{\ell_{k}}^{k}$ are made of short standard pairs for $\sigma\in \J3$ and long
    standard pairs if $\sigma\in \J1\cup \J2$.
  \end{rem}
  Note that the estimate given by~\eqref{eq:trivial-fm-estimate} is very crude as it
  completely ignores possible cancellations among complex phases. Our next step are the
  following --much sharper-- results which take into consideration such cancellations.
  \begin{prop}[Large $\sigma$ regime]\label{prop:not-so-trivial-0}
    For any $\delta_*\in (0, 1/32)$, 
    $\sigma\in\J3$, let $L_*\leq L_{R-1}\leq 2L_*$. Then, for any complex standard pair
    $\ell_{R-1}\in\stdf^{R-2}_{\ell_{R-2}}$:
    \begin{align*}
      \left|  \sum_{\ell_R\in\stdf^{R-1}_{\ell_{R-1}}}\fm_{R-1,\ell_{R-1},\ell_R}\right|= \cO(\ve^{2-9\delta_*}).
    \end{align*}
  \end{prop}
  The proof of the above proposition will be given in Section~\ref{sec:one-large}.
  \begin{prop}[Intermediate $\sigma$ regime]\label{prop:not-so-trivial-1/2}
    For any $\delta_*\in(\efrac1{99}, \efrac1{32})$ and 
    $\sigma\in\J2$, let $L_*\leq L_{R-1}\leq 2L_*$. Then, for any complex standard pair
    $\ell_{R-1}\in\stdf^{R-2}_{\ell_{R-2}}$:
    \begin{align*}
      \left|  \sum_{\ell_R\in\stdf^{R-1}_{\ell_{R-1}}}\fm_{R-1,\ell_{R-1},\ell_{R}}\right|= \cO(\ve^{2-9\delta_*}).
    \end{align*}
  \end{prop}
  The proof of the above proposition can be found in Section~\ref{sec:one-small}.  As
  mentioned previously, the above propositions imply that the main contribution to the
  integral~\eqref{e_mainIntegral} is given by $\JJ1$.  The next proposition estimates
  precisely this contribution

  \begin{prop}[Small $\sigma$ regime]\label{prop:not-so-trivial-1}
    For $\delta_*\in(\efrac1{99}, \efrac 1{32})$, $\sigma\in\J1$
    , $q\geq 5$ and $L_k=L_*$, $0\le k<R-1$:
    \begin{align*}
      \mu_{\ellc}(e^{i\sigma\bA})&= e^{-\frac2{\ve}\sigma^2\Var_t^2(\thetasl)}
                                   +\cE(\sigma,\ve),
    \end{align*}
    where recall (see~~\eqref{e_variancelclt} and~\eqref{e_definitionBarChi}) that
    \begin{align*}
      \Var_t(\theta) =
      \int_0^{t}e^{2\int_s^t\bar\omega'(\bar\theta(s',\theta))\deh{}s'}\greenkubo^2(\bar\theta(s,\theta))\deh{}s
    \end{align*}
    and $\cE$ is a small remainder term in the sense that
    \begin{align*}
      \frac 1{\ve}\int_{\J1}|\cE(\sigma,\ve)|\deh\sigma=\cO(L_*^2\log\vei).
    \end{align*}

  \end{prop}
  The proof of Proposition~\ref{prop:not-so-trivial-1} can be found in Section~\ref{proof-not-so-trivial-1}.

  Let us now fix $q = 7$ and recall that $L_*=\ve^{-3\delta_*}$.  We can now compute the
  integral~\eqref{e_mainIntegral} by using estimates~\eqref{eq:JJ0},~\eqref{eq:JJ4},
  Lemma~\ref{lem:one-step-M} and~\eqref{eq:stating-pointmu} in the first line below, while
  using Propositions~\ref{prop:not-so-trivial-0}--\ref{prop:not-so-trivial-1}
  and~\eqref{eq:trivial-fm-estimate} in the second line:
  \begin{align*}
    N_{\ellc,\bAs}(y)
    &=\hskip-6pt\sum_{\ell_{1}\in \stdf_{\ell_{0}}^{0}} \cdots
      \sum_{\ell_{R}\in \stdf_{\ell_{R-1}}^{R-1}} \int_{|\sigma|\leq\ve^{-\frac 12-2\delta_*}}\frac {e^{-i\sigma\vei y}}{2\pi\ve} \prod_{j=1}^{R-1}\fm_{j-1,\ell_{j-1},\ell_j}\widehat\psi(\ve^{\beta_*-1}\sigma)+\cO(1)\\
    &=\cO(\ve^{-6\delta_*}\log\vei)+\frac {1}{2\pi \sqrt \ve}\int_{\bR}\widehat\psi(\eta\sqrt\ve)e^{-i\veih \eta y-\frac12\eta^2\Var_t^2(\thetaslz))}d\eta\\
    &=\cO(\ve^{-6\delta_*}\log
      \vei)+\frac{e^{-y^2/(2\ve\Var_t^2(\thetasl))}}{\Var_t(\thetasl)\sqrt{2\pi
      \ve}},
  \end{align*}
  where we have used that $|\widehat\psi(s)-1|\leq\Const s^2$, since ${\boldsymbol Z}$ has
  zero average.
\end{proof}

\begin{rem}\label{rem:feller-smooth}
  In alternative to the above strategy we can choose $\cN$ to be the distribution of a
  Gaussian random variable with density
  $\cN'=\frac1{\sqrt{2\pi\ve}\Var_t }e^{-\efrac{x^2}{2\Var_t^2\ve}}$ and apply \cite[Lemma
  2, Chapter XVI.3]{Feller2} with $T=\ve^{-\beta_*}$
\[
\left|N_{\bA}(x)-\cN(x)\right|\le \frac 1\pi\int_{-\ve^{-\beta_*}}^{\ve^{-\beta_*}}\left|\frac{\widehat N_{\bA}(\xi)-\widehat \cN(\xi)}{\xi}\right|d\xi+\cO(\ve^{\beta_*-\frac 12}),
\]
where $N_{\bA}$ is the distribution of the random variable $\bA$.  The above integral can
be computed, and shown to be small, using Propositions~\ref{prop:not-so-trivial-0},
\ref{prop:not-so-trivial-1/2} and~\ref{prop:not-so-trivial-1} as we have done in the proof
of Proposition~\ref{p_smoothlclt}.  Note however that this would yield weaker results, as
far as we are concerned, since the errors in the distribution function translate badly on
errors for probability of small intervals (which represent our current interest).
 \end{rem}

 \subsection{Standard pairs decomposition}\ \newline\label{subsec:standard-dec} To
 complete our argument we need to provide the proofs of the previously stated
 Propositions. Such proofs turn out to be rather laborious and to them is devoted the rest
 of the paper.

 We start first with a generalization of Proposition~\ref{p_invarianceStandardPairs}.
 \begin{lem}\label{sublem:iteracorr}
   There exists a constant $C_*\in(0,1)$ such that, for each short complex standard pair
   $\ell$, $K\in\bN$, $|\sigma|\leq \ve^{-\efrac 12-2\delta_*}$, imaginary potential
   families $\pot=(i\sigma \hov_j)_{j=0}^{K-1}$, $\hov_j$ defined as in
  ~\eqref{eq:potdef}, finite index set $\cA$ and functions
   $\overline B=(B_a)_{a\in\cA}$, $\|B_a\|\nc1\leq C_*$, and times
    $\{k_a\}_{a\in\cA}\subset\{0,\cdots, K-1\}$, $k_a=k_{a'}\Longrightarrow a=a'$, for any
   $\vartheta\in (0,C_* 3^{-\sharp\cA})$ there exists a short complex standard family
   $\stdf_{K,\pot, \overline B}$ such that, for all $A\in L^\infty(\bT^2,\bC)$:
   \begin{align*}
     \nu_{B,\ell}(A):=\mu_\ell\left(A\circ F_\ve^Ke^{i\sigma\sum_{j=0}^{K-1}\hov_j\circ F_\ve^j}\left[ 1+\vartheta\prod_{a\in\cA} B_a\circ F_\ve^{k_a} \right]\right)=\sum_{\tell\in\stdf_{K,\pot, \overline B}}\fm_\tell\mu_{\tell}(A).
   \end{align*}
   In addition, we have
   \begin{align} \label{eq:fm-not-zero}
       |\fm_\tell|\geq \Const\exp(-\const K)
   \end{align}
   for some uniform $\Const,\const$.

   Finally, if $|\sigma|\leq \sigma_0 $ and $K\geq C\log\vei$, for some $C$ large enough,
   then the above holds also requiring that the family $\stdf_{K,\pot, \overline B}$
   or/and $\ell$ consists of long complex standard pairs.
 \end{lem}
\begin{proof}
  We will use a baby cluster expansion like strategy. Note that, provided $C_*$ is small
  enough, $\pi_a=\log (1+ B_a)$ are allowed potentials for both short and long standard
  pairs.  Then, calling $\sharp\cA$ the cardinality of $\cA$, $\cP(\cA)$ the power set of
  $\cA$ and $S^c = \cA\setminus S$,  we have
\begin{align*}
  \prod_{a\in\cA}B_a\circ F_\ve^{k_i}
  &= \prod_{a\in\cA}(B_a\circ F_\ve^{k_a}+1-1)=\sum_{S\in\cP(\cA)}(-1)^{\sharp S^c}\prod_{a\in S}e^{\pi_a\circ F_\ve^{k_a}}.
\end{align*}
Then, if we set $\bar\pi_{S,k}=0$ if $k\not \in \{k_a\}_{a\in S}$ and
$\bar\pi_{S,k_a}=\pi_a$ otherwise, we can write
\begin{align*}
\nu_{B,\ell}(A)&=\mu_\ell\left(A\circ F_\ve^K e^{i\sigma\sum_{j=0}^{K-1}\hov_j\circ F_\ve^j } \right)\\
&\phantom = +\vartheta\sum_{S\in\cP(\cA)} (-1)^{\sharp S^c}\mu_\ell\left(A\circ F_\ve^K e^{\sum_{j=0}^{K-1}[i\sigma\hov_j+\bar\pi_{S,j}]\circ F_\ve^j } \right).
\end{align*}
We can now use Lemma~\ref{p_invarianceStandardPairs} on each term of the above sums. Note
that the decomposition in complex standard curves does not depend on the details of the
potential but only on $|\sigma|$ and the dynamics.  In particular,  we can write
\[
\nu_{B,\ell}(A)=\sum_{\ell'\in \stdf}\fm^0_{\ell'}\mu_{\ell'}\left(A\right)+\vartheta\sum_{S\in\cP(\cA)}(-1)^{\sharp S^c}\sum_{\ell'\in \stdf_{S}}\fm_{S,\ell'}\mu_{\ell'}\left(A\right)
\]
where $\stdf=\{(\bG_j,\rho^0_{j}),\fm^0\}$ and, for each $S\subset\cA$, $\stdf_{S}=\{(\bG_j,\rho_{S,j}),\fm_S\}$. Note that, if $\ell'_j=(\bG_j,\rho_{S,j})$,
\begin{align*}
  |\fm_{S,\ell'_j}|&=\left|\mu_\ell\left(\Id_{\ell'_j}e^{i\sigma\sum_{j=0}^{K-1}\hov_j\circ F_\ve^j}\prod_{a\in S}(1+ B_a)\circ F_\ve^{k_a} \right)\right|\\
                   &\leq (1+C_*)^{\sharp S}|\mu_\ell|\left(\Id_{\ell'_j} \right),
\end{align*}
see Remark~\ref{rem:fm-bound} for an explanation of the notation $\Id_{\ell}$. Next, notice that, by the usual distortion arguments
\[
|\supp\Id_{\ell'_j}|\left|\frac{d}{dx}\sum_{j=0}^{K-1}\hov_j\circ F_\ve^j(x)\right|\leq \Const \deltacomplex\sum_{j=0}^{K-1}\lambda^{-K+j}\leq \Const\deltacomplex.
\]
Thus
\[
|\mu_\ell|\left(\Id_{\ell'_j} \right)\leq \Const \left|\mu_\ell\left(\Id_{\ell'_j}e^{i\sigma\sum_{j=0}^{K-1}\hov_j\circ F_\ve^j} \right)\right|,
\]
hence
\[
|\fm_{S,j}|\leq \Const (1+C_*)^{\sharp S}|\fm^0_j|.
\]
The above implies
\[
\vartheta\sum_{S\in\cP(\cA)}|\fm_{S,j}|\leq \vartheta\Const |\fm^0_j|\sum_{S\in\cP(\cA)}(1+C_*)^{\sharp S}= \vartheta\Const(2+C_*)^{\sharp \cA}|\fm^0_j|\leq \frac {|\fm^0_j|}2,
\]
provided $C_*$ is small enough.

We can then define the standard family
 $\stdf_{K,\Omega, \overline B}=\{(\bG_j,\rho_j),\fm_{j}\}$ where
 \[
 \fm_j=\fm_j^0 +\vartheta\sum_{S\in\cP(\cA)}(-1)^{\sharp S^c}\fm_{S,j}\;;\quad \rho_j =\fm_j^{-1}\left[\fm_j^0\rho^0_j+\vartheta\sum_{S\in\cP(\cA)}(-1)^{\sharp S^c}\fm_{S,j}\rho_{S,j}\right],
 \]
which concludes the first part of Lemma (see also Remark~\ref{rem:complextoreal}).

If $|\sigma|\leq \sigma_0$, then the above argument works verbatim in the case in which
$\ell$ is a long standard pair. If $\ell$ is a short complex standard pair, then, by
Remark~\ref{rem:prestandard} we can, at each step, use complex standard pairs of length
$\frac 32$ longer than the ones at the previous step, provided the length stays smaller
than $\delta$. Thus, at most after $\Const\log\ve^{-1}$ steps we have families that
consist of long complex standard pairs.
\end{proof}
\begin{proof}[{\bf Proof of Lemma~\ref{lem:one-step-M}}]
  Recall that, by~\eqref{eq:Mvariable} and~\eqref{eq:hbolddef}, we have
  \begin{align*}
    \bM_{k}&=\etaExph(t-\ve S_k,\bar\theta_{L_k})\left\{\sum_{j=0}^{L_k-1}\etaExp_{ j,L_k}\left[\ho(x_j,\theta_j)-\frac{\ve}2\bar\omega'(\bar\theta_{j})\bar\omega(\bar\theta_{j})\right] +\ve\,\bC_k\right\}\\
    \bC_k&=\frac 12P(t-\ve
           S_k,\bar\theta_{L_k})\left[\sum_{j=0}^{L_k-1}\etaExp_{j,L_k}\ho(x_j,\theta_j)\right]^2.
  \end{align*}
  We would like to argue by using Proposition~\ref{p_invarianceStandardPairs}.
  Unfortunately, the above random variables are not of a form suitable to play the role of
  a potential since they contain products of functions computed at different times (that
  is \emph{correlation terms}).  We will solve this problem in three steps. First we will
  express the averaged trajectory $\bar\theta_k$ in terms of one starting from an initial
  condition that depends only on the standard pair, so that the averaged trajectory
  becomes deterministic.  Then we will develop in series the exponential and finally we
  will show how to deal, in general, with the type of objects so obtained (using
  Lemma~\ref{sublem:iteracorr}).

  Arguing as at the end of Lemma~\ref{lem:break-smart?} we have, for any function
  $\vf\in\cC^2$,\footnote{ In this section we use the shorthand notation
    $\cO(\cdot)=\cO_{L^\infty}(\cdot)$.}
  \begin{align*}
    \vf(\bar\theta_k)=\vf(\thetaslk{k})+\vf'(\thetaslk{k})\etaExph(\ve k,\thetasl)(\theta_0-\thetasl)+\cO(\ve^2\deltacomplex^2).
  \end{align*}
  Using~\eqref{eq:ifnotdefinedyet} and Notation~\ref{not:bookcor} we can (see
  Appendix~\ref{app:tedious} for a detailed explanation on how to perform these, and similar,
  computations) rewrite~\eqref{eq:Mvariable} as
  \begin{equation}\label{eq:random-var-def}
    \begin{split}
      \bM_{k}&=\bM^*_{\ell,k}+\ve\bbK^{k,2}_{\ell,3}+\cO(\ve^2\deltacomplex L_k^2)\\
      \bM^*_{\ell,k}&=\etaExph(t-\ve S_k,\thetaslk{L_k})\left\{\sum_{j=0}^{L_k-1}\etaExps_{j,L_k}\left[\ho(x_j,\theta_{j})-\frac{\ve}2\bar\omega'(\thetaslk{j})\bar\omega(\thetaslk{j})\right] \right\}.
    \end{split}
  \end{equation}
  By equations~\eqref{eq:random-var-def},~\eqref{eq:potdef} and the Taylor expansion
  \begin{equation}\label{eq:readytocook}
    \begin{split}
      & e^{i\sigma \bM_k} =e^{i\sigma \left[\bM^*_{\ell,k}+\ve\bbKs^{k,2}_{\ell,3}\right]}+\cO(\sigma \ve^2\deltacomplex L_k^2)\\
      &\phantom{e^{i\sigma \bM_k}}=e^{i\sigma\left[\ve\fkC^{k,1}_{\ell,0,\emptyset}+\sum_{j=0}^{L_k-1}\hov^k_{\ell,j}\circ F^j_\ve\right]}\left[1+\bC_{\ell,k,q}^*\right]+\cE_{\ell,k,q} \\
      &\fkC^{k,1}_{\ell,0,\emptyset}=-\frac{1}2\etaExph(t-\ve S_k,\thetaslk{L_k})\sum_{j=0}^{L_k-1}\etaExps_{j,L_k}\bar\omega'(\thetaslk{j})\bar\omega(\thetaslk{j})\\
      &\bC_{\ell,k,q}^*=\sum_{s=1}^{q-1}(i\sigma\ve)^s\bbK^{k,2s}_{\ell,3s}\;;\quad
      \cE_{\ell,k,q}=\cO(\ve^q\sigma^qL_k^{2q}+\sigma \ve^2\deltacomplex L_k^2),
    \end{split}
  \end{equation}
  where we used Notation~\ref{not:bookcor}.  Set
  \begin{align*}
    \vartheta:=\sum_{s=1}^{q-1}\sum_{\bar\imath}\left|(\sigma\ve)^s\fkC^{k,2s}_{\ell,3s, \bar\imath}\right|
  \end{align*}
  and notice that
  \begin{align*}
    \vartheta \leq C_q\sum_{s=1}^{q-1}(|\sigma|\ve L_*^2)^s\leq C_q |\sigma|\ve  L_*^2 \leq \Const \ve^{\delta_*}.
  \end{align*}
  The above shows that, for $\ve$ small enough,~\eqref{eq:readytocook} is a sum of terms to
  which we can apply Lemma~\ref{sublem:iteracorr}, plus a small remainder; in fact\footnote{
    Note that we have absorbed the sign of $\sigma\fkC^{k,s}_{\ell,3s, \bar\imath}$ into
    some $A_{j,\bar\imath}$, which is always possible since the $A_{j,\bar\imath}$ are names
    for arbitrary functions.}
  \begin{align*}
    \left[1+\bC_{\ell,k,q}^*\right]
    &= \vartheta\invr\sum_{s=1}^{q-1}\sum_{\bar\imath}|(\sigma\ve)^s\fkC^{2k,s}_{\ell,3s, \bar\imath}|\left[1+\vartheta\prod_{j=1}^{3s}A_{j,\bar\imath}\circ F_\ve^{i_j}\right].
  \end{align*}
  We have thus written $e^{i\sigma \bM_k}$ as a weighted sum of terms which satisfy the
  hypotheses of Lemma~\ref{sublem:iteracorr}, also the analogous of Remark~\ref{rem:complextoreal} applies.   Note
  that, again, the decomposition in standard curves can be chosen to be exactly the same for
  all terms.  We can then define the standard family $\stdf^k_\ell$ exactly as it was done
  at the end of the proof of Lemma~\ref{sublem:iteracorr}.  By~\eqref{eq:fm-not-zero} we
  conclude that the total weight of each standard pair differs uniformly from zero, which allows to normalize the densities.
  This proves the first part of the lemma.

  To conclude the proof we need to prove~\eqref{eq:trivial-fm-estimate}; notice that, by the
  first part of the lemma and using the same notation as in
  Remark~\ref{rem:fm-bound},\footnote{ Below we consider $\Id_{\ell_j}$ to be a function
    defined on the standard pair $\ell_{j-1}$.  Also notice that $\Id_{\ell_j}$ can be
    written, if needed, as the restriction to $\ell_{j-1}$ of
    $\vf_{\ell_{j-1},\ell_j}\circ F_\ve^{L_{j-1}}$, for some function
    $\vf_{\ell_{j-1},\ell_j}\in L^\infty(\bT^2,\bR)$.} for any $0\leq r\leq s\leq R$, we
  have\footnote{ We use the convention that $\sum_{j=a}^b c_j=0$ and $\prod_{j=a}^b c_j=1$
    if $b<a$.}
  \[
    \begin{split}
      &\mu_{\ell_{r}}\left(e^{i\sigma\sum_{j=r}^{s} \bM_{j}\circ F_\ve^{S_{j-1}-S_{r-1}}}\Id_{\ell_{s+1}}\circ F_\ve^{S_{s-1}-S_{r-1}}\cdots\Id_{\ell_{r+2}}\circ F_\ve^{S_{r}-S_{r-1}}\Id_{\ell_{r+1}}\right)\\
      &=\mu_{\ell_{r}}\left(\left[e^{i\sigma\sum_{j=r+1}^{s} \bM_{j}\circ F_\ve^{S_{j-1}-S_{r}}}\Id_{\ell_{s+1}}\circ F_\ve^{S_{s-1}-S_{r}}\cdots\Id_{\ell_{r+2}}\right]\circ F_\ve^{L_{r}}\cdot e^{i\sigma \bM_{r}}\Id_{\ell_{r+1}}\right)\\
      &=\fm_{r,\ell_{r},\ell_{r+1}}\mu_{\ell_{r+1}}\left(e^{i\sigma\sum_{j=r+1}^{s} \bM_{j}\circ F_\ve^{S_{j-1}-S_{r}}}\Id_{\ell_{s+1}}\circ F_\ve^{S_{s-1}-S_{r}}\cdots\Id_{\ell_{r+2}}\right)\\
      &\phantom{=}+\cO(\ve^q\sigma^qL_{r}^{2q}+\sigma \ve^2\deltacomplex L_{r}^2)|\mu_{\ell_{r}}|(\Id_{\ell_{s+1}}\circ F_\ve^{S_{s-1}-S_{r-1}}\cdots\Id_{\ell_{r+1}}).
    \end{split}
  \]
  Iterating the above equation yields, for all $r\leq s$,
  \begin{equation}\label{eq:itera-fm}
    \begin{split}
      &\mu_{\ell_{r}}\left(e^{i\sigma\sum_{j=r}^{s} \bM_{j}\circ F_\ve^{S_{j-1}-S_{r-1}}}\Id_{\ell_{s+1}}\circ F_\ve^{S_{s-1}-S_{r-1}}\cdots\Id_{\ell_{r+1}}\right)\\
      &= \prod_{l=r}^s\fm_{l,\ell_{l},\ell_{l+1}}+\cO(\ve^q\sigma^qL_*^{2q}+\sigma \ve^2\deltacomplex L_*^2)\\
      &\phantom{=}\times\sum_{j=r}^{s}\prod_{l=r}^{j-1}|\fm_{l,\ell_{l},\ell_{l+1}}|\,|\mu_{\ell_j}|\left(\Id_{\ell_{s+1}}\circ F_\ve^{S_{s-1}-S_{r-1}}\cdots\Id_{\ell_{j+1}}\right).
    \end{split}
  \end{equation}
  In particular, choosing $s = r$ we conclude that there exists $\Cuno>0$ such that:,
  \begin{equation}\label{eq:induction-step-zero}
    \sum_{\ell_{r+1}\in \stdf_{\ell_{r}}^{r}}|\fm_{r,\ell_{r},\ell_{r+1}}|\leq \sum_{\ell_{r+1}\in \stdf_{\ell_{r}}^{r}}\Const\,|\mu_{\ell_{r}}|\left(\Id_{\ell_{r+1}}\right)\leq \Cuno.
  \end{equation}
  To conclude we prove, by induction on $m=s-r$, that
  \begin{equation}\label{eq:induction-step-fm}
    \sum_{\ell_{r+1}\in \stdf_{\ell_{r}}^{r}} \cdots\sum_{\ell_{s+1}\in \stdf_{\ell_{s}}^{s}} \prod^{s}_{j=r}|\fm_{j,\ell_{j},\ell_{j+1}}|\leq 2\Cuno
  \end{equation}
  Equation~\eqref{eq:induction-step-zero} shows that~\eqref{eq:induction-step-fm} holds for each
  $r$ and $m=0$. Let us suppose it holds for each $r$ and $n\leq m$ for some
  $m\in\{0,\cdots R-2\}$. Let $s=r+m+1$. Then,
  recalling~\eqref{e_trivialTotalVariationBound}, the fact that
  $|\sigma|\deltacomplex\leq \Const$ and the condition on $L_*$, we can
  use~\eqref{eq:itera-fm} to write:
  \begin{align*}
    & \sum_{\ell_{r+1}\in \stdf_{\ell_{r}}^{r}}\cdots\sum_{\ell_{s+1}\in \stdf_{\ell_{s}}^{s}}  \prod^{s}_{j=r}|\fm_{j,\ell_{j},\ell_{j+1}}|\leq
      |\mu_{\ell_{r}}|+ \Const\,|\mu_{\ell_r}|\,(\ve^q\sigma^qL_*^{2q}+\sigma \ve^2\deltacomplex L_*^2)\\
    &+ \Const\sum_{j=r+1}^{s}\sum_{\ell_{r+1}\in \stdf_{\ell_{r}}^{r}} \cdots\sum_{\ell_{j}\in \stdf_{\ell_{j-1}}^{j-1}}\prod_{l=r}^{j-1}|\fm_{l,\ell_{l},\ell_{l+1}}|\,|\mu_{\ell_j}|\,(\ve^q\sigma^qL_*^{2q}+\sigma \ve^2\deltacomplex L_*^2)\\
    &\leq \Cuno+ (m+1) \Cuno^2\Const(\ve^q\sigma^qL_*^{2q}+\sigma \ve^2\deltacomplex L_*^2)\leq 2\Cuno,
  \end{align*}
  provided $\ve$ is small enough and $q\geq 4$.
\end{proof}

The remaining sections of the paper are devoted to the proofs of
Propositions~\ref{prop:not-so-trivial-0},~\ref{prop:not-so-trivial-1/2}
and~\ref{prop:not-so-trivial-1} although we first need a preparatory technical section.

\section{One block estimate: technical preliminaries}\label{sec:one-prelim}
Our next step consists in transforming the sums on the standard pairs associated to each
of the $R$ blocks into an expression involving transfer operators related to a cocycle over the
(slowly varying) averaged dynamics.  This will at last allow us to perform the needed
computations by functional analytic means.

Let us start by defining the slowly varying dynamics. Let $\ell=(\bG,\rho)$ be a complex
standard pair; recall that we introduced the notations
$\thetasl=\Re(\mu_\ell(\theta_0))$, $\thetaslk{k}=\bar\theta(\ve k,\thetasl)$
in~\eqref{eq:ifnotdefinedyet}, where $\bar\theta(t,\theta)$ is the unique solution
of~\eqref{eq:averageeq} with initial condition $\bar\theta(0,\theta) = \theta$.  Recall
also that we defined (in~\eqref{e_deviationsDef}) $\deviationsl_k = \theta_k-\thetaslk{k}$
and that, for real standard pairs, we will regard $x_k$ and $\deviationsl_k$ as functions on $\ell$ (see Remark~\ref{r_randomVariables}).

Let us define the shorthand notations $\favg_k=f(\cdot, \thetaslk{k})$,
$\favg^{(n)}=\favg_{n-1}\circ \cdots\circ \favg_0$;
consider the map $\Favg(x,\theta)=(f(x,\theta),\bar\theta(\ve,\theta))$.  Observe that
$(\favg^{(k)}(x),\thetaslk{k})=\Favg^k(x,\thetasl)$, \ie the first component of $\Favg$
yields our wanted slowly varying dynamics.  Finally, let us define the function
\begin{align}\label{e_definitionLambda}
  \BLambda_{j,k} = \prod_{r=j}^k (\partial_x f)\invr \circ \Favg^r(\cdot,\thetasl)
 \end{align}
Notice that, by definition, $\BLambda_{j,k} < \lambda^{ -(k-j)-1}$ and for any $x\in\bT$:
\begin{align}\label{e_sillyBoundOnLambda}
  \left|\frac{d}{dx} \BLambda_{0,j}(x)\right|
  &\le  \Const \BLambda_{0,j}(x)\sum_{l=0}^{j-1}\BLambda_{0,l}(x)^{-1}\leq \Const.
\end{align}
\subsection{Error in the slowly varying dynamics approximation}\label{subsec:cody}\ \newline
Our first task is to obtain sufficiently good estimates on the difference between
$F_\ve^k$ and $\Favg^k$ when $k$ is not too large.
 \begin{lem}\label{l_shadowAveraged}
   Fix a complex standard pair $\ell=(\bG_\ell, \rho_\ell)$ of length $\deltac$ and $L\in\bN$ so that
   $L\le\Const\ve\eefrac{-1}2$.  There exists a diffeomorphism
   $\diffeoAvg_{\ell,L}:[a,b]\to[\bar a,\bar b]$ such that
   $(x_{L},\theta_{L})=F_\ve^{L}\circ
   \bG_\ell(x)=\left(\favg^{({L})}\circ\diffeoAvg_{\ell,L}(x),\theta_{L}\right)$ with
   \begin{equation}\label{eq:upsilonp}
     \de{\diffeoAvg_{\ell,L}}{x} %
     = (1-G'\stable_{L})\enu_{L}\Lambda_{0,L-1}
   \end{equation}
   where $\enu_{L}$ was defined in~\eqref{e_defineSlopes} and
   $\Lambda_{k,j} = \BLambda_{k,j}\circ\diffeoAvg_{\ell,L}$.
   Moreover $\diffeoAvg_{\ell,L}$ satisfies  the following estimates:
   \begin{align} \label{eq:up-c1-bound}
     \left\|\diffeoAvg_{\ell,L}-\Id\right\|\nc0&\le \Const \ve\min\{1,L^2\deltac\}&
     \frac{d\diffeoAvg_{\ell,L}}{dx} &= 1 + \erR{\ell}^{1,2}
   \end{align}
where the notation $\erR{\ell}^{p,q}$ denotes an arbitrary differentiable function of $x$ that satisfies the bounds
  \begin{align}\label{eq:remainder-measure}
    \|\erR{\ell}^{p,q}\|\nc0&\leq \Const \ve^p L^q &
    \left|\frac{d\,\erR{\ell}^{p,q}}{d x}\right|&\le \Const \Lambda_{0,L-1}\invr.
  \end{align}
   Additionally, for any $k\in\{0,\cdots,{L}\}$, let us introduce the functions
  \begin{align*}
    \bar\xi_{\ell,k}&=x_k\circ\diffeoAvg_{\ell,L}\invr-\bar x_k &
    \bdeviationsl_{k}&= \deviationsl_k\circ\diffeoAvg_{\ell,L}\invr
  \end{align*}
  where we introduced the shorthand notation $\bar x_k=\favg^{(k)}(\cdot)$.  Recall the
  definition of the quantities $\etaExps_{j,k}$ given in~\eqref{e_defEtaExp}; then let
  \begin{subequations}\label{se_definitionWW}
    \begin{align}
      \ooW_k  &= \vei\etaExps_{-1,k}\bdeviationsl_0+ \sum_{j=0}^{k-1}  \etaExps_{j,k}\ho\bcoord l\\
      \ofkW_k &= -\sum_{l=k}^{L-1}\Lambda_{0,l-k}\partial_\theta f\bcoord l\ooW_{l}  
    \end{align}
  \end{subequations}
    Moreover define:
\begin{align*}
      \ooW_{k,2}&= \sum_{j=0}^{k-1}
                  \Big[\partial_x\ho\bcoord j\ofkW_j
                  +\partial_\theta\ho\bcoord j\ooW_{j}
                                                                   -\frac12\bar\omega'(\thetaslk{j})\bar\omega(\thetaslk{j})\Big]\\
      \ofkW_{k,2}&=-\sum_{l=k}^{L-1}\Lambda_{0,l-k}\bigg[\partial_\theta
                   f\bcoord l \ooW_{l,2}+\frac
                  {1}2\partial_\theta^2 f\bcoord l\ooW_{l}^2\\
                &\phantom{=-\sum_{l=k}^{L-1}\Lambda_{0,l-k}\bigg[}+\frac {1}
                  2\partial_\theta\partial_x f\bcoord l\ooW_{l}\ofkW_l+\frac  {1}
                  2\partial_x^2 f\bcoord l\ofkW_l^2\bigg].
    \end{align*}
    Then the following bounds hold true
    \begin{align}\label{se_definitionDD}
      \bdeviationsl_{k}
      &= \ve\ooW_{k}+\ve^2\ooW_{k,2}+\oerR{\ell}^{3,3} &
      \bar\xi_{\ell,k}&=\ve\ofkW_k+\ve^2\ofkW_{k,2}+\oerR{\ell}^{3,3},
    \end{align}
  where the notation $\oerR{\ell}^{p,q}$ is analogous to $\erR{\ell}^{p,q}$ but with $\Lambda$
  replaced by $\BLambda$ in~\eqref{eq:remainder-measure}.
 \end{lem}
 \begin{rem}\label{rem:series-expla}
   Note that the above lemma is essentially a series expansion in which we only keep the
   first few terms.  More precise formulae can be obtained, if needed, at the price of more
   work.
 \end{rem}
 \begin{rem}
   The approximation formulae obtained in the previous lemma are close, in spirit, to the
   ones obtained earlier in Lemma~\ref{l_boundEtaRef}, but differ from them because they
   are written in terms of the averaged dynamics $(\bar x_k,\thetaslk{k})$, rather than the
   real dynamics $(x_k,\theta_k)$.
 \end{rem}
 \begin{rem}
   Observe that the random variables $\bar\xi_{\ell,k}$ and $\bdeviationsl_k$ (defined in
   the previous lemma) do depend on $L$ (through $\diffeoAvg_{\ell,L}$).  In order to make
   the notation precise their symbols should thus have indices $L$.  Since it will not create
   any confusion, we omit some of the indices to ease notation.  Similarly, we will
   suppress the indices in $\diffeoAvg$ as well when no confusion arises,
 \end{rem}
 \begin{proof}[{\bf Proof of Lemma~\ref{l_shadowAveraged}}]
   The lemma follows from a variation on the proof given for Lemma~\ref{lem:shadow}.  Let
   us recall that we denote with $\pi_x:\bT^2\to\bT$ the projection on the $x$-coordinate
   and define, for $\varrho\in[0,1]$,
   \begin{align*}
     \bar \imFun_{\ell,L}(x, z; \varrho)=\pi_x F_{\varrho\ve}^{L}(x,\thetasl+\varrho(G(x)-\thetasl))-
     \pi_x\Favgnosub_{\varrho\ve}^{L}(z,\thetasl).
   \end{align*}
   As in the proof of Lemma~\ref{lem:shadow}, observe that
   $\bar \imFun_{\ell,L}(x,x;0)=0$, and moreover
   $\partial_z\bar \imFun_{\ell,L}=
   -\partial_z(\pi_x\Favgnosub_{\varrho\ve}^{L}(\cdot,\thetasl))<-\lambda^{L}$.  Therefore
   the implicit function theorem implies that for any $\varrho\in[0,1]$ there exists a
   diffeomorphism $\diffeoAvg_{\ell,L}(\cdot;\varrho)$ so that
   $\bar \imFun_{\ell,L}(x,\diffeoAvg_{\ell,L}(x;\varrho);\varrho) = 0$.  Define
   $\diffeoAvg_{\ell,L}(x)=\diffeoAvg_{\ell,L}(x,1)$; then
   $\pi_x F_\ve^L\circ\bG_\ell=x_L=\favg^{(L)} \circ \Upsilon_{\ell,L}$.  The
   expression~\eqref{eq:upsilonp} then immediately follows using the notation and
   discussion of Subsection~\ref{subsec:splitting}.  Let us postpone the derivation
   of~\eqref{eq:up-c1-bound} to the end of the proof and first obtain the
   bounds~\eqref{se_definitionDD}. Using~\eqref{e_boundEtaRefs} yields
   \begin{align}\label{eq:aboutthata}
     \bdeviationsl_{k}\notag
     &= \etaExps_{-1,k}\bdeviationsl_0+\ve\sum_{j=0}^{k-1}\etaExp^*_{\ell,j,k}\left[\ho(x_j,\theta_j)-\frac{\ve}
       2\bar\omega'(\thetaslk{j})\bar\omega(\thetaslk{j})\right] +
       \cO\left(\ve\sum_{j=0}^{k-1}(\bdeviationsl_j)^2+\ve^3k\right)\\
     &=\etaExps_{-1,k}\bdeviationsl_0+ \ve\sum_{j=0}^{k-1}\etaExp^*_{\ell,j,k}\Big[\ho(\bar x_j,\thetaslk{j})+\partial_x\ho(\bar x_j,\thetaslk{j})\bar\xi_j+\partial_\theta\ho(\bar x_j,\thetaslk{j})\bdeviationsl_j\notag\\
     &\phantom{=\etaExps_{-1,k}\bdeviationsl_0+\ve\sum_{j=0}^{k-1}\etaExp^*_{\ell,j,k}\Big[}-\frac{\ve}
       2\bar\omega'(\thetaslk{j})\bar\omega(\thetaslk{j})\Big]+\cO(\ve^3
       k^3)+\sum_{j=0}^{k-1}\cO(\ve\bar\xi_j^2)
   \end{align}
   where we have used~\eqref{eq_trivialBoundDeviations}.  In addition, we can consider
   the Taylor expansion
   \begin{equation}\label{eq:aboutthatb}
     \begin{split}
       \bar\xi_{k+1}&=f(x_{k},\theta_{k})-f(\bar x_k, \thetaslk{k})\\
       &=\partial_x f(\bar x_k,\thetaslk{k})\bar\xi_{k}+%
         \partial_\theta f(\bar x_k,\thetaslk{k})\bdeviationsl_{k}+
\frac 12\partial_\theta^2 f(\bar x_k,\thetaslk{k})(\bdeviationsl_{k})^2\\
       &\phantom{=}+\frac 12\partial_\theta\partial_x f(\bar x_k,\thetaslk{k})\bdeviationsl_{k}\bar\xi_k+\frac 12\partial_x^2 f(\bar x_k,\thetaslk{k})\bar\xi_k^2+
       \cO((\bdeviationsl_k)^3+\bar\xi_k^3).
     \end{split}
   \end{equation}
   From the first line of~\eqref{eq:aboutthatb} we have
   $|\bar\xi_{k+1}|\geq \lambda |\bar\xi_k|-\Const |\bdeviationsl_k|$.  Recalling that, by
   definition, $\bar\xi_{L}=0$, we can conclude that
   \begin{equation}\label{eq:roughboundxi}
     |\bar\xi_k|\le \Const
     \sum_{j=k}^{L-1}\lambda^{k-j}|\bdeviationsl_j|\le \Const \ve (k+1).
   \end{equation}
   Moreover, by the above estimates, we have
   \begin{equation}\label{eq:notsoimpressive}
     \begin{split}
       \bdeviationsl_{k}&=
       \ve\ooW_k+\cO(\ve^2k^2).
     \end{split}
   \end{equation}
   A more precise result can now be obtained by (backward) iteration of~ ~\eqref{eq:aboutthatb}:
   \begin{equation}\label{eq:standardxi}
     \begin{split}
       \bar\xi_k&=-\sum_{j=k}^{L-1}\prod_{l=k}^j(\partial_xf\bcoord l)^{-1} \left[\partial_\theta f\bcoord j \bdeviationsl_j+\cO(\bar\xi_j^2+(\bdeviationsl_j)^2)\right]\\
       &=-\sum_{j=k}^{{L}-1}\Lambda_{k,j}\partial_\theta f\bcoord j
       \bdeviationsl_j+\cO(\ve^2 L^2)=\ve\ofkW_k+\cO(\ve^2 L^2).
     \end{split}
   \end{equation}
   Finally, we can get a sharper estimate for $\bdeviationsl_k$ by
   substituting~\eqref{eq:standardxi} and~\eqref{eq:notsoimpressive}
   in~\eqref{eq:aboutthata}:
   \begin{equation}\label{eq:nowwetalkD}
     \begin{split}
       \bdeviationsl_{k}&=\ve\ooW_{k}+\ve^2\sum_{j=0}^{k-1}\Big[\partial_x\ho(\bar x_j,\thetaslk{j})\ofkW_j+\partial_\theta\ho(\bar x_j,\thetaslk{j})\ooW_{j}\\
       &\phantom{=}-\frac{1}2\bar\omega'(\thetaslk{j})\bar\omega(\thetaslk{j})\Big]+\cO(\ve^3L^3) 
       = \ve\ooW_{k}+\ve^2\ooW_{k,2}+\cO(\ve^3L^3);
     \end{split}
   \end{equation}
   and a sharper estimate for $\bar\xi_k$ by writing~\eqref{eq:aboutthatb} as
   \begin{equation}\label{eq:aboutthatc}
     \begin{split}
       \bar\xi_{k+1}&=\partial_x f(\bar x_k,\thetaslk{k})\bar\xi_{k}+\ve\partial_\theta f(\bar x_k,\thetaslk{k})\ooW_{k} +\ve^2\partial_\theta f(\bar x_k,\thetaslk{k})\ooW_{k,2}\\
       &\phantom{=}+\frac {\ve^2}2 \partial_\theta^2 f(\bar x_k,\thetaslk{k})\ooW_{k}^2\\
       &\phantom{=}+\frac {\ve^2} 2\partial_\theta\partial_x f(\bar
       x_k,\thetaslk{k})\ooW_{k}\ofkW_k+\frac {\ve^2} 2\partial_x^2 f(\bar
       x_k,\thetaslk{k})\ofkW_k^2+ \cO(\ve^3L^3),
     \end{split}
   \end{equation}
   which, iterating backward as before, yields the wanted result.  The bound on the
   derivatives of the error terms, that is needed to write $\cO(\ve^3L^3)$ as
   $\erR{\ell}^{3,3}$, follows by definition of $\bdeviationsl_k$ and
   $\bar\xi_k$,~\eqref{e_sillyBoundOnLambda} and the fact that
   $|\diffeoAvg'_{\ell,L}|\le\Const$, which in turn follows from the second bound
   in~\eqref{eq:up-c1-bound}.

   In order to conclude the proof we now proceed to prove the two bounds
   of~\eqref{eq:up-c1-bound}, which will be obtained by a careful analysis
   of ~\eqref{eq:upsilonp}.  Recall the definition~\eqref{e_defineSlopes} of the quantities
   $\enu_k$ and $u_k$; by the discussion of Subsection~\ref{subsec:splitting}
   (see~\eqref{e_formulaExpansionX}) we have
   \begin{equation}\label{eq:recap-u-cone}
     \begin{split}
       \enu_{L} &=\prod_{k=0}^{L-1}\left[ \partial_{x}f(x_k,\theta_k) +\ve \partial_\theta f(x_k,\theta_k)u_k\right]\\
       u_{k+1}
       &=\frac{\partial_x\omega(x_k,\theta_k)+(1+\ve\partial_\theta\omega(x_k,\theta_k))u_k}{\partial_x
         f(x_k,\theta_k)+\ve\partial_\theta f(x_k,\theta_k)u_k},\quad \ve u_0 = G'(x).
     \end{split}
   \end{equation}
   As already noted we have $|u_k|\leq \spc1$ (one can also see this using
   Proposition~\ref{p_invarianceStandardPairs}, since $\ve u_k$ is the slope of a standard
   curve).  The above immediately implies,
   using~\eqref{eq:upsilonp},~\eqref{eq:roughboundxi}
   and~\eqref{eq_trivialBoundDeviations}:
   \begin{align}\label{e_explicitBoundOnUpsilon'}
     \de{\diffeoAvg_{\ell,L}}{x} %
     &= (1+\cO(\ve))\expo{\sum_{j=0}^{L-1}\log  \partial_xf(x_j,\theta_j)-\log\partial_x f(\bar x_j, \thetaslk{j})+\cO(\ve)}\notag\\
     &=e^{\cO(\ve L^2)}.
   \end{align}
   which yields the $C^0$-bound of the right expression in~\eqref{eq:up-c1-bound}.
   Integrating in $dx$ yields the bound on the left, since by~\eqref{eq:roughboundxi}, we
   know a priori that $|\diffeoAvg_{\ell,L}(x_0)- x_0| = |\xi_0|\le\Const \ve$.

   At last we want to estimate the second derivative
   $\diffeoAvg_{\ell,L}''$; differentiating~\eqref{eq:upsilonp} we obtain
   \begin{align*}
     \frac {d^2\diffeoAvg_{\ell,L}}{dx^2} &= \frac
     d{dx}(1-G'\stable_{L})\enu_L\Lambda_{0,L-1}+ (1-G'\stable_L)\left[\frac d{dx}(\enu_L)\Lambda_{0,L-1}+
                                            \enu_L\frac d{dx}(\Lambda_{0,L-1})\right].
   \end{align*}
   The last term on the right hand side is bounded by $\Const\enu_L$
   using~\eqref{e_sillyBoundOnLambda} and~\eqref{e_explicitBoundOnUpsilon'}.  The second
   term can be estimated by differentiating the first of~\eqref{eq:recap-u-cone}, which gives:
   \begin{align*}
     \left|\frac{d\enu_{L}}{dx}\right| \le \Const\enu_L\sum_{k =
     0}^{L-1}\left[\left|\frac{dx_k}{dx}\right|+\left|\frac{d\theta_k}{dx}\right|+\ve\left|\frac{d u_k}{dx}\right|\right]
   \end{align*}
   To continue, notice that~\eqref{e_defineSlopes} implies $\left|\frac {d
       x_k}{dx}\right|\le\Const\enu_{k}$ and
   $\left|\frac{d \theta_k}{dx}\right|\le\Const\ve\enu_{k}$.
   Moreover, by the second one of~\eqref{eq:recap-u-cone}, we gather
   \begin{align*}
     \left|\frac {d u_k}{d x}\right|\leq \Const \enu_k+ \Const\left|\frac {d u_{k-1}}{d x}\right|\leq \Const \enu_k.
   \end{align*}
   Hence, the second term is also bounded by $\Const\enu_L$.
   To conclude, we need an estimate for the first term on the right hand side.
   \begin{sublem}\label{lem:Y}
     We have
     \begin{align*}
       \left|\de{}x (1-G'\stable_L)\right|\le \Const \ve L e^{\const\ve L}
     \end{align*}
   \end{sublem}
   \begin{proof}
     From~\eqref{e_defineSlopes} it follows, for all $n\in\bN$
     \begin{align*}
       \emu_{n+1} (0,1)  
    &=\deh_p F_{\ve}^{n} \begin{pmatrix}\partial_x f &\partial_\theta f\\ \ve\partial_x\omega&1+\ve\partial_\theta\omega\end{pmatrix}(\stable_{n+1} ,1) \\
    &= (1+\ve(\partial_\theta\omega+\partial_x\omega \stable_{n+1}))%
      \deh_p F_{\ve}^{n} \left(\frac{\partial_\theta f+\partial_x f\stable_{n+1}}{1+\ve(\partial_\theta\omega+\partial_x\omega \stable_{n+1})} ,1\right).
     \end{align*}
     Since $\emu_{n} (0,1) =\deh_p F_{\ve}^{n} (\stable_{n} ,1)$, it follows that
     $\emu_{n+1}(1+\ve(\partial_\theta\omega+\partial_x\omega \stable_{n+1}))^{-1}=\emu_{n}$
     and
     \begin{align*}
       \stable_n=\frac{\partial_\theta f+\partial_x f\stable_{n+1}}{1+\ve(\partial_\theta\omega+\partial_x\omega \stable_{n+1})}.
     \end{align*}
     Inverting the above formula yields
     \begin{equation*}
       \stable_{L-k}(x_{k})=\frac{\stable_{L-k-1}(x_{k+1})(1+\ve\partial_\theta\omega(x_k,\theta_k))-
         \partial_\theta f(x_k,\theta_k)}{\partial_x f(x_k,\theta_k)-\ve\partial_x\omega(x_k,\theta_k)
         \stable_{L-k-1}(x_{k+1})}.
     \end{equation*}
     In order to estimate the derivatives of $s_j$ we proceed by induction. Note that
     $\stable_0(x_L)=0$. Next, suppose
     $|\partial_x\stable_{L-k-1}|\le C (L-k-1)e^{C\ve (L-k-1)} \left|\frac{\partial
         x_{k+1}}{\partial x} \right|$; then
     \[
       |\partial_x\stable_{L-k}|\le \Const \left|\frac{\partial x_k}{\partial x}
       \right|+\left|\partial_x\stable_{L-k-1}\right|e^{\Const \ve}\left|\frac{\partial
           x_k}{\partial x_{k+1}}\right|\le \Const (L-k)e^{\const\ve (L-k)}\left|\frac{\partial
           x_{k}}{\partial x} \right|,
     \]
     provided $\Const$ is large enough.  Since $\|G'\|\nc1=\cO(\ve)$, we conclude the proof by
     using the above formula with $k=0$.
   \end{proof}
   We conclude that
   \begin{align*}
     \left|\frac {d^2\diffeoAvg_{\ell,L}}{dx^2}\right| &\le \Const\enu_L\le\Const\Lambda_{0,L-1},
   \end{align*}
   which gives the needed bound on the derivatives of $\erR{\ell}^{1,2}$ in~\eqref{eq:up-c1-bound} and
   concludes the proof of our lemma.
 \end{proof}

\subsection{Transfer operator representation}\label{subsec:transferop}\ \newline
We are now ready to write the contribution of the standard pairs belonging to one block in
terms of a product of transfer operators.  This is made explicit
by~\eqref{eq:standardotop} in the statement of the next proposition.  Unfortunately, in
the following we will need rather detailed information on the error terms present
in~\eqref{eq:standardotop} which therefore must be painstakingly reported in the statement
of the proposition, making it rather unpleasant. Yet, the reader can skip such details and
come back to them later when they are needed, and recalled.
\begin{notation}\label{not:bookcor1}
  In the sequel we will use notation similar to Notation~\ref{not:bookcor} where, in
  addition, we introduce symbols for correlations terms computed along the averaged
  dynamics which will be denoted with
\begin{align*}
      \bbbK^{k,p}_{\ell,l}=\sum_{\bar\imath}\fkC^{k,p}_{\ell,l,\bar\imath}\prod_{j=1}^l
  A_{j,\bar\imath}\circ \bar F_\ve^{i_j}.
    \end{align*}
\end{notation}
Observe that, according to Notation~\ref{not:bookcor1}, we can write:
\begin{align*}
  \ooW_k  &= \bbbK^{k,1}_{\ell,1} & \ofkW_k  &= \bbbK^{k,1}_{\ell,1} &
  \ooW_{k,2}  &= \bbbK^{k,2}_{\ell,3} & \ofkW_k  &= \bbbK^{k,2}_{\ell,5}
\end{align*}
hence we gather, for any $0\le j\le L$:
\begin{align}\label{e_bbbK-taylor-expansion}
  \bdeviationslk{\ell}{j} &= \ve\bbbK^{k,1}_{\ell,1}+\ve^2\bbbK^{k,2}_{\ell,3} +\oerR{{\ell_k}}^{3,3}& 
  \bar\xi_j &= \ve\bbbK^{k,1}_{\ell,2}+\ve^2\bbbK^{k,2}_{\ell,5} +\oerR{{\ell_k}}^{3,3}
\end{align}
\begin{prop}\label{p_transferOperators}
  For any complex standard pair $\ellz$, let $\{\stdf^k_{\ell_k}\}_{i=1}^{R-1}$ be the
  complex standard families obtained in Lemma~\ref{lem:one-step-M}, of length $\deltac$,
  and assume $|\sigma|\leq \ve^{-\efrac 12-2\delta_*}$.  For any $\Phi\in\cC^2(\bT,\bC)$,
  so that\/\footnote{ The reader should think of $\Phi$ as a function whose real part is
    negative and has very large absolute value} $\Re(\Phi)\leq \Const$,
  $\ve\|\Phi'\|\nc0 L_*\leq \Const$, any $k\in\{0,\cdots,R-1\}$ and $\wp>0$, we have
  \begin{align}\label{eq:standardotop}
    \sum_{\ell_{k+1}\in\stdf^{k}_{\ell_{k}}}\fm_{k,\ell_{k},\ell_{k+1}}e^{\Phi\circ G_{\ell_{k+1}}}\rrho_{\ell_{k+1}}
    &=\cE_{\ell_k}^*\\
    &\hskip-2cm+e^{i\ve\sigma\fkC^{k,1}_{\ell_k,0,\emptyset}}
      \tO_{\ell_k,k, L_k-1}\cdots\tO_{\ell_k,k,0}
    \left[ \Psi_{\ell_k,q}e^{\Phi(\bar\theta_{\ell_k,L_k})}\trhove_{\wp}\right],\notag
  \end{align}
 where $\rrho_{\ell}=\Id_{[a_{\ell}, b_{\ell}]}\rho_{\ell}$ (as introduced in
  Section~\ref{ss_exponentialMoment}) $\bar\theta_{\ell, j}(x)=\bar\theta(\ve
  j,G_{\ell}(x))$, and
  \begin{enumerate}
  \item \label{i_transferOperators} $\tO_{\ell, k,j}$ is the weighted transfer operator
    defined by
    \begin{equation}\label{e_defTransferOp}
      \begin{split}
        &[\tO_{\ell,k,j} g](x)=\sum_{y\in \favg_{j}\invr(x)}
        \frac{e^{\Omega^{k,\Phi}_{\ell, j}(\sigma,y,\thetaslk{j})}}{\favgp_{j}(y)} g(y),\\
        &\Omega^{k,\Phi}_{\ell, j}(\sigma,x,\theta) =%
        i\sigma\hov^k_{\ell,j}(x,\theta)+\ve \Phi'(\thetaslkk{k}{L_k})
        \etaExps_{j,L_k}\ho(x,\theta)
      \end{split}
    \end{equation}
    with $\thetaslk{j}=\bar\theta(\ve j,\avgtheta{\ell})$,
    $\favg_j(\cdot)=f(\cdot,\thetaslk{j})$ and $\hov^k_{\ell,j}$ defined
    in~\eqref{eq:potdef};
  \item \label{i_propertyDelta2} $\Psi_{\ell_k,q}$ is defined by
    \begin{align*}
      \Psi_{\ell_k,q}
      &= \left[1+\sum_{s=1}^{q-1}(i\sigma\ve)^s\bbbK^{k,2s}_{\ell_k,3s}+(i\sigma\ve)^s\ve
        \bbbK^{k,2(s+1)}_{\ell_k,3(s+1)} \right]\\
      &\phantom{ = }\times\expo{ i\sigma\ve\bbbK^{k,2}_{\ell_k,3}+i\sigma\ve^2\bbbK^{k,3}_{\ell_k,6}+\ve \cK_0},
    \end{align*}
    where $\cK_0$ is a $\bbbK^{k,2}_{\ell_k,2}$-type term which satisfies the following
    extra bound:
    \begin{align*}
      \Leb\left[\rrho_{\ell_k}-e^{ \ve
      \cK_0}\,\trhove_{\wp}\right]=\cO(\ve^2L_k^3+ \wp);
    \end{align*}
  \item \label{i_propertyDeltaerr} $\cE_{\ell_k}^*$ satisfies the bounds
    \begin{align*} \|\cE_{\ell_k}^*\|\nl1
      &\leq \Const e^{\Phi^+}\left [\|\Phi'\|\nc1 \ve^2 L_k^2+\ve^2L_k^3+\wp\right]\\
      \|\cE_{\ell_k}^*\|\nBV&\leq \Const e^{\Phi^+}[1+|\sigma|]
    \end{align*}
    where $\Phi^+=\max\,\Re(\Phi)$;
  \item \label{i_estimateRhoComparison} finally $\trhove_{\wp}\in\cC^\infty(\bT^1,\bR)$ is
    a positive function that is close to $\rho_{\ell_k}$ in the sense
    \begin{align}\label{e_estimateRhoComparison} \|\trhove_{\wp} -
      \rrho_{\ell_k}\|\nl1&=\cO(\ve\min\{\deltacomplex\invr,L^2\} +\wp),
    \end{align} and such that, for any $r\in\bN$, $\|\trhove_{\wp}\|\nw{r}1 \le
    \Const\wp^{-r+1}\deltacomplex^{-r}$.
  \end{enumerate}
\end{prop}
\begin{proof}
  Recall (see Remark~\ref{r_randomVariables}) that, for a given $\ell = (\bG,\rho)$ and
  for any $n$, we have $\coord n = F_{\ve}^n(\bG(x))$; in other words, we consider $x_n$
  and $\theta_n$ to be random variables on $\ell$. %
  In particular, we have, for any smooth test function $g:\bT\to\bR$:
  \begin{align*}
    \sum_{\ell_{k+1}\in\stdf^{k}_{\ell_{k}}} \Leb\left( g\cdot
        \fm_{k,\ell_k,\ell_{k+1}}e^{\Phi\circ G_{\ell_{k+1}}}\rrho_{\ell_{k+1}} \right)
      = &\sum_{\ell_{k+1}\in\stdf^{k}_{\ell_{k}}}
      \fm_{k,\ell_k,\ell_{k+1}}\mu_{\ell_{k+1}}\left(g(x_0)e^{\Phi(\theta_0)}\right).
  \end{align*}
  Using~\eqref{eq:corr-T-op} and the definition of $\fm_{k,\ell_k,\ell_{k+1}}$
  (see Lemma~\ref{lem:one-step-M}), we gather:
  \begin{equation}\label{eq:step-one-longp}
    \begin{split}
      & \sum_{\ell_{k+1}\in\stdf^{k}_{\ell_{k}}} \Leb\left( g\cdot
        \fm_{k,\ell_k,\ell_{k+1}}e^{\Phi\circ G_{\ell_{k+1}}}\rrho_{\ell_{k+1}} \right)\\
      = & \int_\bT [g e^{\Phi}]\circ
      F_\ve^{L_k}\circ\bG_{\ell_k}\ e^{i\sigma\left[\ve\fkC^{k,1}_{\ell_k,0,\emptyset}+
          \sum_{j=0}^{L_k-1}\hov^{k}_{\ell_k,j}\circ F_\ve^j\right]}\rrho_{\ell_k} \\
      &+\sum_{s=1}^{q-1}(i\sigma\ve)^s\int_{\bT} [g e^{\Phi}]\circ
      F_\ve^{L_k}\circ\bG_{\ell_k}\cdot e^{i\sigma\left[\ve\fkC^{k,1}_{\ell_k,0,\emptyset}+
          \sum_{j=0}^{L_k-1}\hov^{k}_{\ell_k,j}\circ
          F_\ve^j\right]}\bbK^{k,2s}_{\ell_k,3s}\rrho_{\ell_k}.
    \end{split}
  \end{equation}
  In the following we will find convenient to use $\bar x=\diffeoAvg_{\ell_k, L_k}(x)$,
  rather than $x$, as our fundamental random variable.\footnote{ In the rest of the proof
    we will often suppress the subscripts $ \ell_k, L_k$ in $\diffeoAvg_{\ell_k, L_k}$,
    and related quantities, when this does not create any confusions.}  This can be done
  using Lemma~\ref{l_shadowAveraged}: indeed, the change of variable formula yields that
  the pushforward of the density is given by
  \begin{equation}\label{eq:push-f}
    \diffeoAvg_*\rrho_{\ell_k} ={(\diffeoAvg\invr)}'\cdot\rrho_{\ell_k}\circ\diffeoAvg\invr.
  \end{equation}
  For any smooth function $\vf$ of the random variables $\{(x_i,\theta_i)\}_{i = 0}^{L_k-1}$, under
  $\mu_{\ell_k}$, we can write $\tilde\vf(x)=\vf(\{F_\ve^i\circ \bG_{\ell_k}(x)\})$, where
  $x$ is distributed according to $\rrho_{\ell_{k}}$.  Then our change of variable
  corresponds to looking at the random variable
  $\bar\vf(\bar x)=\vf(\{F_\ve^i\circ \bG_{\ell_k}\circ\diffeoAvg^{-1}_{\ell_k, L_k}(\bar
  x)\})$ under $\diffeoAvg_*\rrho_{\ell_k}$.  In particular,
  \begin{align*}
    \|\bar\vf'\|_\infty\leq \Const \|\vf\|\nc1\BLambda_{0,L_k}^{-1}.
  \end{align*}
  The above considerations would suffice to treat the small terms
  in~\eqref{eq:step-one-longp}, but, unfortunately, are not adequate to treat the main
  term since we only have an exponentially large bound on the derivative of
  $\diffeoAvg_*\rho_{\ell_k}$ (see the last of~\eqref{eq:up-c1-bound}) which would create
  serious problems in our subsequent arguments, unless they can be discarded by some a
  priori estimate.  In order to deal with this problem, we first need to introduce some
  notation.  Let
  \begin{equation}\label{eq:trho-def}
    \trho=\frac{\rrho_{\ell_k}}{[1-G'\stable_L]}\circ\diffeoAvg\invr;
  \end{equation}
  observe that Sub-Lemma~\ref{lem:Y} implies that
  $\|\trho\|_{\BV}\le\Const\deltacomplex\invr$.  We can now state a more
  useful bound for~\eqref{eq:push-f} whose proof is, for convenience, postponed to
  the end to this section.
  \begin{lem}\label{eq:Upsilon-appr}
    The following formula holds true
    \begin{align*}
      \diffeoAvg_*\rrho_{\ell_k}=\rho_*\expo{\ve \cK_0}+\oerR{\ell_k}^{2,3}\rrho_{\ell_k}.
    \end{align*}
  \end{lem}
  Next, we proceed to eliminate the explicit dependence on $x_j$ and $\theta_{j}$: first
  observe that, by definition, for any smooth function $A(x,\theta)$.  Observe that we
  have $\bdeviationslk{\ell_k}{j},\bar\xi_j = \oerR{\ell_k}^{1,1}$; hence we can write
    \begin{align*}
      [A (x_j,\theta_j)]\circ \diffeoAvg\invr
      &= A \bcoordk{k} j+\partial_x  A \bcoordk{k}
        j\bar\xi_j+\partial_\theta A\bcoordk{k} j\bdeviationslk{\ell_k}{j} \\
      &\phantom{=}+\frac12\left[\partial_{\theta\theta}A
        (\bdeviationslk{\ell_k}{j})^2+\partial_{\theta x} A\,\bdeviationslk{\ell_k}{j}\bar\xi_j
        +\partial_{xx} A\,\bar\xi_j^2\right]
        +\oerR{\ell_k}^{3,3}.
    \end{align*}
 In particular, using~\eqref{e_bbbK-taylor-expansion} we gather
 \begin{align*}
   \sum_{j=0}^{L_k-1}\hov^k_{\ell_k,j}(x_j, \theta_j)
   &=\sum_{j=0}^{L_k-1}\hov^k_{\ell_k,j}(\bar x_j, \thetaslkk{k}{j})+%
     \ve\bbbK^{k,2}_{\ell_k,3}+\ve^2\bbbK^{k,3}_{\ell_k,6}+\oerR{\ell_k}^{3,4};\\
   (i\sigma\ve)^s\bbK^{k,2s}_{\ell_k,3s} 
   &= (i\sigma\ve)^s\sum_{\bar\imath}\fkC_{\ell_k,3s,\bar\imath}^{k,2s}\prod_{j=1}^{3s}\left[A_{j,\bar\imath}(\bar x_{i_j},\thetaslkk{k}{i_j})+\ve\bbbK^{k,1}_{\ell_k,3}+\oerR{\ell_k}^{2,2}\right]\\
   &=(i\sigma\ve)^s\bbK^{k,2s}_{\ell_i,3s,*}+(i\sigma\ve)^s\ve\bbbK^{k,2s+1}_{\ell_k,3(s+1)}+\sigma^s\oerR{\ell_k}^{s+2,2s+2}.
    \end{align*}
    The above will suffice to estimate the error terms.  However, to deal with
    $\Phi(\theta_{L_k})$ we will need a more explicit formula.  By
    definition~\eqref{e_deviationeDef} we have
    $\theta_{L_k} = \bar\theta_{\ell_k,L_{k}} + \deviatione_{L_k}$;
    by~\eqref{eq:aboutthis}, and using~\eqref{se_definitionDD} we conclude that
    \begin{align}\label{eq:theta-bartheta}
      \theta_{L_k}
      &= \bar\theta_{\ell_k,L_{k}} + \ve \sum_{j=0}^{L_k-1}  \etaExps_{j,k}\ho(\bar
        x_l\circ\diffeoAvg,\thetaslk{l}) + \erR{\ell}^{2,2}
    \end{align}
    We can now collect all the above relations to write~\eqref{eq:step-one-longp} in terms
    of the slowly varying dynamics
    \begin{equation}
       \begin{split}
       & \sum_{\ell_{k+1}\in\stdf^{k}_{\ell_{k}}}  \int_{\bT}  g \cdot
        \fm_{k,\ell_k,\ell_{k+1}}e^{\Phi\circ G_{\ell_{k+1}}}\rrho_{\ell_{k+1}}= \\
        &= \int_{\bT} \hskip-4pt g(x_{L_k})
        e^{i\sigma\left[\ve\fkC^{k,1}_{\ell_k,0,\emptyset}+
            \sum_{j=0}^{L_k-1}\hov^{k}_{\ell_k,j}\circ
            F_\ve^j\right]+\Phi(\bar\theta_{\ell_k,L_{k}})+\ve
          \Phi'(\thetaslkk{k}{L_{k}})\sum_{j = 0}^{L_k-1}
          \etaExps_{j,k}\ho\bcoord l}\rrho_{\ell_k} \hskip-1.5cm\\
        &\phantom{=}+ \|\Phi'\|\nc1e^{\Phi^+}\int_{\bT}  g(x_{L_k})  \erR{\ell_k}^{2,2}\rrho_{\ell_k}\\
        &\phantom{=}+ \sum_{s=1}^{q-1}i^s\sigma^s\ve^s\int_{\bT} [g e^{\Phi}]\circ F_\ve^{L_k}\circ \bG_{\ell_k}\cdot e^{i\sigma\left[\ve\fkC^{k,1}_{\ell_k,0,\emptyset}+ \sum_{j=0}^{L_k-1}\hov^{k}_{\ell_k,j}\circ F_\ve^j\right]}\bbK^{k,2s}_{\ell_k,3s}\rrho_{\ell_k}\\
        &= \int_{\bT} g\circ \Favg^{L_k}\cdot
        e^{i\sigma\ve\fkC^{k,1}_{\ell_k,0,\emptyset}+
          \sum_{j=0}^{L_k-1}\Omega^{k,\Phi}_{\ell_k,j}\circ
          \Favg^{j}}\Psi_{\ell_k,q}\,e^{\Phi(\bar\theta_{\ell_k,L_{k}})}\rho_*(\bar x)
        \deh
        \bar x\\
        &\phantom{=}+e^{\Phi^+}\int_{\bT} g\circ \Favg^{L_k}\cdot e^{
          \sum_{j=0}^{L_k-1}\Omega^{k,\Phi}_{\ell_k,j}\circ
          \Favg^{j}}\left[\|\Phi'\|\nc1\oerR{\ell_k}^{2,2}+ \oerR{\ell_k}^{2,3}\right]
        \diffeoAvg_*\rho_{\ell_k}\deh \bar x,
      \end{split}
    \end{equation}
    where we have used the fact that, by hypothesis,
    $\ve^2L_k^3\leq\min\{ |\sigma|\ve^3L_k^4, \sigma^2\ve^4 L_k^4\}$ and by definition
    $\erR{\ell}^{p,q}\circ\diffeoAvg\invr = \oerR{\ell}^{p,q}$.  Since the above equation
    holds for all $g$, we have
  \begin{align}\label{eq:step-two-longs}
    \sum_{\ell_{k+1}\in\stdf^{k}_{\ell_{k}}}\fm_{k,\ell_k,\ell_{k+1}}&e^{\Phi\circ G_{\ell_{k+1}}}\rrho_{\ell_{k+1}} =
    e^{i\sigma\ve\fkC^{k,1}_{\ell_k,0,\emptyset}}\tO_{\ell_k,k, L-1}\cdots\tO_{\ell_k,k,0}{\Psi}_{\ell_k,q}\,e^{\Phi(\bar\theta_{\ell_k,L_{k}})}\rho_*\notag\\
    &+e^{\Phi^+} \tO_{\ell_k,k, L-1}\cdots\tO_{\ell_k,k,0}\left[\|\Phi'\|\nc1\oerR{\ell_k}^{2,2}+ \oerR{\ell_k}^{2,3}\right]\diffeoAvg_*\rho_{\ell_k},
  \end{align} where we used the fact that, by definition,
  \begin{align*}
    \int_{\bT}g\circ\Favg^{L_k}\cdot  e^{\sum_{j=0}^{L_k-1}\Omega^{k,\Phi}_{\ell_k,j}\circ \Favg^j}\vf(x)\deh x%
=\int_{\bT} g \tO_{\ell_k,k, L-1}\tO_{\ell_k,k,L-2}\cdots\tO_{\ell_k,k,0}\vf.
  \end{align*}
  In the sequel  we will need to deal with smooth density functions. We can obtain this by
  a mollification procedure; (see, e.g.\ \cite[Lemma B.1]{liverani13}): for each $\wp>0$ there exists a $\trhove_{\wp}$
  such that
  \begin{equation}\label{eq:rho-smooth}
    \|\trhove_{\wp} - \trho\|\nl1\le \Const\wp\quad\textrm{ and }\quad
    \|\trhove_{\wp}\|_{W^{r,1}}\le\Const \wp^{-r+1}\deltacomplex^{-r}.
  \end{equation}
  Note that $\|\tO_{\ell,k,j}\|\nl1\leq e^{\const\ve \|\Phi'\|_\infty}$. Moreover, by iterating~\eqref{eq:cone-est}, we have, for each $\vf\in W^{1,1}$,
  \begin{equation}\label{eq:W11product}
    \left\|\frac{d}{dx}\tO_{\ell_k,k, L_k-1}\cdots\tO_{\ell_k,k,0}\vf\right\|_{L^1}\leq\Const\left[\|\BLambda_{0,L_k}\vf'\|_{L^1}+(1+ |\sigma|)\|\vf\|_{L^1}\right].
  \end{equation}
  We also have
  \begin{equation}\label{eq:rho-smooth2}
    \begin{split}
      &\|\tO_{\ell_k,k, L_k-1}\cdots\tO_{\ell_k,k,0}\Psi_{\ell_k,q}e^{\Phi(\bar\theta_{\ell_k,L_{k}})}[\trhove_{\wp} - \trho]\|\nl1\le \Const\wp\\
      & \|\tO_{\ell_k,k,
        L_k-1}\cdots\tO_{\ell_k,k,0}\Psi_{\ell_k,q}e^{\Phi(\bar\theta_{\ell_k,L_{k}})}\trhove_{\wp}\|_{W^{1,1}}\le\Const
      (1+|\sigma|)
    \end{split}
  \end{equation}
  By the lower semicontinuity of the variation \cite[Section 5.2.1, Theorem
  1]{EvansGariepy}, since $\trhove_{\wp}\to\rho_*$ in $L^1$ as $\wp\to0$, we have
  \begin{align*}
    \|\tO_{\ell_k,k, L_k-1}&\cdots\tO_{\ell_k,k,0}\Psi_{\ell_k,q}e^{\Phi(\bar\theta_{\ell_k,L_{k}})}[\trhove_{\wp} - \trho]\|_{\BV}\\&=\lim_{\wp'\to 0}\|\tO_{\ell_k,k, L_k-1}\cdots\tO_{\ell_k,k,0}\Psi_{\ell_k,q}e^{\Phi(\bar\theta_{\ell_k,L_{k}})}[\trhove_{\wp} - \trhove_{\wp'}]\|_{W^{1,1}} \leq \Const (1+|\sigma|)
  \end{align*}
  By a similar argument, estimating the remainder terms of~\eqref{eq:step-two-longs},
  follows part (c) of the proposition.  Finally, to prove (d), recall that
  $|G'\stable_L|\leq \Const\ve$. Then, by~\eqref{eq:trho-def} and~\eqref{eq:rho-smooth}:
  \begin{align*}
    \|\trhove_{\wp}-\rrho_{\ell_k}\|\nl1
    &\le \|\rrho_{\ell_k}-\rrho_{\ell_k}\circ \diffeoAvg\invr\|\nl1 + \Const(\wp+\ve )
    \le \Const (\ve\min\{L^2,\deltacomplex\invr\}+\wp)
  \end{align*}
  where in the last step we used the first bound in~\eqref{eq:up-c1-bound}.
\end{proof}
We conclude with the missing proof.
{\begin{proof}[{\bf Proof of Lemma~\ref{eq:Upsilon-appr}}]
    \renewcommand{\ooW}{\overline W} \renewcommand{\oW}{W}
    \renewcommand{\ofkW}{\overline{\mathfrak{W}}}\renewcommand{\fkW}{\mathfrak{W}}
    \renewcommand{\etaExps}{\Xi^*} By Lemma~\ref{l_shadowAveraged},
    equations~\eqref{eq:push-f},~\eqref{eq:trho-def},~\eqref{eq:upsilonp} we gather
    \begin{equation}\label{eq:y*rho}
      \begin{split}
        \diffeoAvg_*\rho_{\ell_k}&=\trho\expo{\sum_{j=0}^{L-1}\log  \left[\frac{\partial_xf(x_j,\theta_j)}{\partial_x f(\bar x_j, \thetaslkk{k}{j})}+\ve\frac{ \partial_\theta f(x_j,\theta_j)}{ \partial_xf(x_j,\theta_j)}u_j\right]+\cO(\ve^2 L)}\\
        &=\trho\expo{\ve\sum_{j=0}^{L-1} \frac{\partial_{\theta x}f(\bar x_j,\thetaslkk{k}{j})\ooW_j+\partial_{x x}f(\bar x_j,\thetaslkk{k}{j})\ofkW_j+ \partial_\theta f(\bar x_j,\thetaslkk{k}{j}) u_j}{\partial_x f(\bar x_j, \thetaslkk{k}{j})}}\\
        &\phantom{=\ }+\oerR{\ell_k}^{2,3}\mathring\rho_{\ell_k},
      \end{split}
    \end{equation}
    where the second needed property of $\oerR{\ell_k}^{2,3}$ follows immediately
    from~\eqref{eq:up-c1-bound}{}.  Note that the exponent
    in~\eqref{eq:y*rho} is, at most, of size $\Const \ve L^2\leq \Const \ve^{6\delta_*}$
    (recall~\eqref{eq:L-choice}), and it is of correlation type.  It is then natural to
    expand the exponential in Taylor series and to use Notation~\ref{not:bookcor1}.  We can
    then write
    \begin{align*}
      \sum_{j=0}^{L-1} \frac{\partial_{\theta x}f(\bar x_j,\thetaslkk{k}{j})}{\partial_x f(\bar x_j, \thetaslkk{k}{j})}
      \ooW_j=\bbbK^{k,2}_{\ell_k,2}.
    \end{align*}
    In order to show that also the second term in the exponent can be treated as a
    correlation term, let us set for convenience
    $\alpha(\bar x,\bar\theta)=\frac{\partial_{x x}f(\bar x,\bar\theta)} {\partial_x
      f(\bar x,\bar\theta)}$, then
    \begin{align*}
      \sum_{j=0}^{L-1} \frac{\partial_{x x}f}{\partial_x f}\circ \Favg^j\cdot \ofkW_j%
      &= -\sum_{j,s=0}^{L-1} \alpha\circ \Favg^j\cdot \ho\circ \Favg^s%
        \sum_{l=\max\{j,s+1\}}^{L-1}\etaExps_{s,l}\cdot\Lambda_{0,l-j}\partial_\theta f\circ
        \Favg^{l}\\ %
      &=-\sum_{j\geq s+1}^{L-1}A_{1,j,s}\circ \Favg^s\cdot \alpha\circ \Favg^j-\sum_{j< s+1}^{L-1}\ho\circ \Favg^sA_{2,j,s}\circ \Favg^j
    \end{align*}
    where
    \begin{align*}
      &A_{1,j,s}=\etaExps_{0,l-s}\ho \sum_{l=j}^{L-1}\Lambda_{0,l-j}\partial_\theta f\circ \Favg^{l-s}\\
      &A_{2,j,s}=\alpha\sum_{l=s+1}^{L-1}\etaExps_{s-j,l-j}\Lambda_{0,l-j}\partial_\theta
        f\circ \Favg^{l-j}
    \end{align*}
    (recall that we dropped the subscript $\ell$ from $\etaExps$).  A direct computation
    shows that $\sup_{\bar\theta}\|A_{i,j,s}(\cdot,\bar\theta)\|\nc1\leq \Const$.  In
    order to deal with the third term in the exponential, we need to define the auxiliary
    variables
   \begin{align*}
     \tilde u_{k+1}=\frac{\partial_x\omega(x_k,\theta_k)+\tilde u_k}{\partial_x
     f(x_k,\theta_k)},\quad \ve\tilde u_0=G_\ell'(x).
   \end{align*}
   By~\eqref{eq:recap-u-cone} it is immediate to observe that
   \begin{align*}
     u_{k+1}-\tilde u_{k+1}=\frac{u_k-\tilde u_k}{\partial_x f(x_k,\theta_k)+\ve\partial_\theta f(x_k,\theta_k)u_k}+\cO(\ve)
   \end{align*}
   from which it follows $u_k=\tilde u_k+\cO(\ve)$.  We can thus replace $u_k$ with
   $\tilde u_k$ in the third term in the exponential and computations similar to the
   previous ones yield that also the third term in the exponential can be
   interpreted as a correlation term.  Recalling~\eqref{eq:trho-def}, the above discussion
   implies that we can write
    \begin{equation}\label{eq:measure-corr}
      \diffeoAvg_*\rho_{\ell_k}=\rho_*\expo{\ve \bbbK^{k,2}_{\ell_k,2}}+\oerR{\ell_k}^{2,3}\mathring\rho_{\ell_k}.
    \end{equation}
    Finally, we claim that the term $\bbbK^{k,2}_{\ell_k,2}$ can be written as $\cK_0$.  In
    order to see this, it suffices to integrate the above relation to obtain
    \begin{equation}\label{eq:zero-refine}
      1=\Leb(\mathring \rho_{\ell_k})=  \Leb\left[e^{ \ve \bbbK^{k,2}_{\ell_k,2}}\,\rho_*\right]+\cO(\ve^2L_k^3),
    \end{equation}
    which implies our requirement by taking into account~\eqref{eq:rho-smooth}.
  \end{proof}
}
\section{One block estimate: the large \texorpdfstring{$\sigma$}{σ} regime}\label{sec:one-large}

In the large $\sigma$ regime it suffices to estimate the contribution of the last block.
To this end we first need an estimate on the product of the transfer operators defined
in~\eqref{e_defTransferOp}.  To ease notation, in this section we will omit the indices
$\ell_R,\ell_{R-1}$ and $R-1$, referring to the last block, as no confusion can arise: in
particular $\tO_{j}$ will stand for $\tO_{\ell_{R-1},R-1,j}$ and $L$ will stand for
$L_{R-1}$.  Also, the transfer operators are defined with respect to the purely imaginary
potentials $i\sigma\hov^k_{\ell,j}$, where $\hov^k_{\ell,j}$ is defined
in~\eqref{eq:potdef}, i.e. we have $\Phi\equiv0$ in~\eqref{e_defTransferOp}.

\begin{lem}
  There exists $\Ccinque>0$ and $\tau_1\in (0,1)$ such that, for all
  $n\in [\Ccinque\log\vei,L-1]$ and $j\in[0,L-1-n]$, any $g\in\cC^1(\bT)$ we have
  \begin{equation}\label{eq:sharp-dolgo}
    \|\tO_{j+n}\cdots\tO_{j} g\|\nc1 \le \tau_1^n\|g\|\nc1.
  \end{equation}
\end{lem}
\begin{proof}
  We begin with a preliminary estimate on $\|\tO_{j+n}\cdots\tO_{j} g\|\nc1$; as already
  noticed, the potentials are purely imaginary, thus for any $0\le n\leq L-1$ and
  $0\le i\le L-1$ we have\footnote{ This follows since $|\tO_ig|\le|\tO_{i,\sigma = 0}g|$,
    therefore $\|\tO_i^n g\|\nc0\le\|\tO_i^n 1\|\nc0\|g\|\nc0\le\Const \|g\|\nc0$.}
\begin{align}\label{e_boundInC0}
  \|\tO_i^n\|_{\cC^0\to\cC^0} \leq \Const.
\end{align}
Observe that by the same token
\begin{align}\label{e_iteratedBoundInC0}
  \|\tO_{i+n}\cdots\tO_{i}\|_{\cC^0\to\cC^0} \leq \Const.
\end{align}
Using the Lasota-Yorke inequality~\eqref{eq:cone-est} we gather
\begin{align}
  \notag  \|\tO_{i+n}\cdots\tO_i g\|\nc1
  &\le \Const\lambda^{-n}\|g\|\nc1+\Const\sum_{k=0}^{n}\lambda^{-k}(1+|\sigma|)\|\tO_{i+n-k-1}\cdots\tO_ig\|\nc0\\
  &\le \Const(1+|\sigma|)\|g\|\nc1\label{e_iteratedBoundInC1}
\end{align}
We continue with an estimate of $\left\|\tO_{L-1}\cdots\tO_{0}\right\|_{\cC^1\to\cC^0}$.
Since $\ho$ is not a coboundary, $\hov_j$ is not a coboundary, and the potentials
$(\hov_j)$ satisfy UUNI (see Corollary~\ref{cor:uuni}).  We can thus apply
Theorem~\ref{lem:dolgo-BV}, that implies that there exists $\tau\in (0,1)$ such that, for
any $i\in\{0,\cdots, L-1\}$,
\begin{equation}\label{eq:rough-c1}
  \|\tO_i^n\|_{\cC^1\to\cC^1}\le \begin{cases} \Const (1+|\sigma|)&\textrm{ for }  n <  n_\ve=\pint{ \Ccinque\log\vei}\\
    \tau^n&\textrm{ for } n\ge n_\ve.
  \end{cases}
\end{equation}
Note that we can choose $\Ccinque$ as large as needed.  Also we have the following trivial
estimate for the difference of operators with potentials $\Omega_i$:
\begin{align*}
  \|\tO_{\theta,\Omega_1}-\tO_{\theta,\Omega_2}\|\nc0\leq \Const \|\Omega_1-\Omega_2\|\nc0\|\tO_{\theta,\Omega_1}\|\nc0,
\end{align*}
and, by~\eqref{eq:basic-taylor}, we have, for all $g\in\cC^1$,
\begin{align*}
\begin{split}
  \|\tO_{\theta_1,\Omega}g-\tO_{\theta_2,\Omega}g\|
  &\le \Const |\theta_1-\theta_2|\sup_{\theta\in[\theta_1,\theta_2]}
  \left[\|\tO_{\theta,0}|g'|\|\nc0+\|\tO_{\theta,0}(1+\|\Omega\|\nc1)|g|\|\nc0\right]\\
  &\leq \Const |\theta_1-\theta_2|\left[\|g\|\nc1+(1+\|\Omega\|\nc1)\|g\|\nc0\right].
  \end{split}
\end{align*}
Accordingly, using the explicit formula \eqref{e_defTransferOp} we have, for each $0\le i < k <  L$,
\begin{equation}\label{eq:sharp-Q-diff}
  \|(\tO_k-\tO_i)g\|\nc0\le \Const\ve (k-i)\left[\|g\|\nc1+(1+|\sigma|)\|g\|\nc0\right].
\end{equation}
Observe moreover that we can write
\begin{align*}
  \tO_{i+n}\cdots\tO_i
  &= \tO_i^{n+1} + \sum_{k = 1}^n\tO_{i+n}\cdots\tO_{i+k+1}(\tO_{i+k}-\tO_i)\tO_i^{k}.
\end{align*}
Thus, for $n\in[n_\ve, 3n_\ve]$ and $i\in\{0,\cdots, L-n-1\}$, we can
use~\eqref{e_iteratedBoundInC0}, \eqref{eq:sharp-Q-diff} and \eqref{eq:rough-c1} to write
\begin{align*}
    \|\tO_{i+n}\cdots\tO_i g\|\nc0
    &\le \sum_{k=0}^{n-1}\|\tO_{i+n}\cdots\tO_{i+k+2}(\tO_{i+k+1}-\tO_i)\tO_i^{k+1} g\|\nc0+\tau^{n+1}\|g\|\nc1\\
    &\le \left(\Const(1+|\sigma|)\ve n_\ve^2+\tau^{n_\ve}\right)\|g\|\nc1\leq \Const
    |\sigma| \ve n_\ve^2\|g\|\nc1,
\end{align*}
provided $\Ccinque$ in the definition of $n_\ve$ has been chosen large enough and since
$\sigma\ge\sigma_0$.

Note that, for $|\sigma|\ve n_\ve^3<1$, we can bootstrap the above estimate by writing, for  $n\in[3n_\ve, 4n_\ve]$,
\begin{align*}
   & \|\tO_{i+n}\cdots\tO_i g\|\nc0 \le \sum_{k=0}^{n_\ve}\|\tO_{i+n}\cdots\tO_{i+k+2}(\tO_{i+k+1}-\tO_i)\tO_i^{k+1} g\|\nc0+2\tau^{n_\ve}\|g\|\nc1\\
    &\le \sum_{k=0}^{n_\ve}\Const|\sigma| \ve n_\ve^2\|\tO_{i+n-n_\ve-1}\cdots\tO_{i+k+2}(\tO_{i+k+1}-\tO_i)\tO_i^{k+1} g\|\nc1+2\tau^{n_\ve}\|g\|\nc1\\
    &\le \Const\sum_{k=0}^{n_\ve}(\ve|\sigma|n_\ve^2\left[\lambda^{-n_\ve}|\sigma|+\Const \ve|\sigma|n_\ve^2\right]+\tau^{n_\ve})\|g\|\nc1
    \le \Const \ve^2|\sigma|^2n_\ve^5\|g\|\nc1
\end{align*}
 where we have chosen, again, $\Ccinque$ large enough and, in the last line, we have used
the Lasota--Yorke inequality~\eqref{eq:cone-est}.

Finally, note that, by using the Lasota--Yorke inequality again, it follows, for all $n\in [3n_\ve,4n_\ve]$,
\[
\begin{split}
  \|\tO_{i+n}\cdots\tO_i g\|\nc1 &\le \Const\lambda^{-n}\|g\|\nc1+\Const\sum_{k=0}^{n-1}\lambda^{-k}|\sigma|\|\tO_{i+n-k-1}\cdots\tO_ig\|\nc0\\
&\le \Const \left[\lambda^{-n_\ve}|\sigma|+\ve^2|\sigma|^3n_\ve^6\right]\|g\|\nc1\\
&\le \tau_1^n\|g\|\nc1
\end{split}
\]
for some $\tau_1\in(\tau,1)$, provided, again, $\Ccinque$ has been chosen large enough and
since $\ve^2|\sigma|^3n_\ve^6\leq \ve^{\efrac 1{4}}$ for $\ve$ small enough.
\end{proof}

We are now able to provide the proof of the main result of this section.
\begin{proof}[{\bf Proof of Proposition~{\ref{prop:not-so-trivial-0}}}]
  Observe that, by definition and by Proposition~\ref{p_transferOperators}, with
  $\Phi\equiv 0$, we have\footnote{ Recall that we are suppressing the subscripts
    $\ell_R,\ell_{R-1}, R-1, R$, when this does not create confusion.}
  \begin{align*}
    \sum_{\ell_R\in\stdf^{R-1}_{\ell_{R-1}}}\fm_{R-1,\ell_{R-1},\ell_R} 
 &= \Leb
   \sum_{\ell_R\in\stdf^{R-1}_{\ell_{R-1}}}\fm_{R-1,\ell_{R-1},\ell_R}\mathring\rho_{\ell_R}
    \\ &=\Leb\, \tO_{L-1}\cdots\tO_{0}\left[ \Psi_{q}\trhove_{\wp}\right] +\Leb\,\cE^*.
  \end{align*}
  Recall that $|\sigma|\leq \ve^{-\frac 12-2\delta_*}$ and $\delta_*\in(1/99,1/32)$.  By
  Proposition~\ref{p_transferOperators}\ref{i_propertyDeltaerr}, we have
  \begin{align*}
    |\Leb\,\cE^*|\leq\|\cE^*\|_{L^1}\leq \Const\{\ve^{2-9\delta_*}+\wp\}.
  \end{align*}
  Next, note that for each $r<L/(3n_\ve)-2$,
  by~\eqref{eq:sharp-dolgo},~\eqref{e_iteratedBoundInC0} and~\eqref{e_iteratedBoundInC1}
  we obtain
  \begin{align*}
    \left\|    \tO_{L-1}\cdots\tO_{i_r} A_{r,i_r}\cdots\tO_{i_1} A_{1,i_1}\cdots\tO_{0}
    \trhove_{\ell,\wp}\right\|\nc0\leq \ConstPow{r} \tau_1^{\efrac{L}{r+1}}\wp^{-1}\deltacomplex^{-2}
    \ve^{-\efrac r2 -2r\delta_*}
  \end{align*}
  since at least one string of operators must be longer than $L/(r+1)\geq 3n_\ve$.  This
  allows to estimate the contribution of $\Psi_{\ell_{R-1},q}$ by expanding is series the
  exponential. We thus obtain
  \begin{align*}
    \left|\sum_{\tilde\ell\in\stdf^L_{\ell,\potSequence}
    }\fm_{\tilde\ell}\rho_{\tilde\ell} \Id_{[a_{\tilde\ell}, b_{\tilde\ell}]}\right|
    &= \cO(\ve^{2-9\delta_*}+\wp+\vei \tau_1^{L/(\const q)}\wp\invr\deltacomplex^{-2}\ve^{-\efrac q2 -2q\delta_*}).
  \end{align*}
  Thus, the proposition follows by choosing $\wp=\ve^{2-9\delta_*}$.
\end{proof}
\section{One block estimate: the intermediate \texorpdfstring{$\sigma$}{η} regime}\label{sec:one-small}
The following two cases require a much more accurate description of the one-block
contribution, which can only be obtained for small $\sigma$. It will be achieved thanks to
the technical lemmata contained in this section.

The argument is similar to the one of the previous section, only a different idea is
needed to compute the norms of the relevant operators: provided $\sigma_0$ is small
enough, such norms can be computed via perturbation theory.

\subsection{The Transfer operators product formula}\ \newline\label{subsec:prod-tram} Our
task here is to study the transfer operators defined in~\eqref{e_defTransferOp} and then
their products in the perturbative regime.
\begin{lem}\label{lem:Taylor}
  Let $\sigma_0$ be chosen small enough.  For any $\sigma\in\J1\cup\J2$ and
  $k\in\{0,\cdots,R-1\}$, $j\in\{0,\cdots,L_k-1\}$ and $\Phi$ satisfying the hypotheses of
  Proposition~\ref{p_transferOperators} and, additionally, so that
  $\ve\|\Phi'\|\nc1\sup_{j\leq L_k}\|\etaExps_{j}\ho\|\nc1\leq \sigma_0$ we have:
  \begin{enumerate}
  \item \label{i_spectralGap} $\tO_{\ell_k,k,j}$ is of Perron--Frobenius type, i.e.\ we can
    write $\tO_{\ell_k,k,j}=e^{\chi_{\ell_k,k,j}}\tOp_{\ell_k,k,j}+\tOq_{\ell_k,k,j}$,
    where $e^{\chi_{\ell_k,k,j}}$ is the maximal eigenvalue of $\tO_{\ell_k,k,j}$ (as an
    operator acting on $\cC^1, W^{1,1}$ or $\BV$), $\tOp_{\ell_k,k,j}, \tOq_{\ell_k,k,j}$
    are such that $\tOp_{\ell_k,k,j}^2=\tOp_{\ell_k,k,j}$,
    $\tOp_{\ell_k,k,j}\tOq_{\ell_k,k,j}=\tOq_{\ell_k,k,j}\tOp_{\ell_k,k,j}=0$, the
    operators $\tOp_{\ell_k,k,j}$ are rank one, and there exists $\tau\in(0,1)$ so that
    \begin{equation}\label{e_spectralGap}
      \|\tOq_{\ell_k,k,j}^n\|_{\cC^1,W^{1,1}, \BV}\le \Const \tau^n|e^{n\chi_{\ell_k,k,j}}|\;;\quad \|\tOp_{\ell_k,k,j}\|_{\cC^1,W^{1,1}, \BV}\le \Const.
    \end{equation}
  \item
    \label{i_chiEstimate} 
    If, additionally, $\Phi$ satisfies
    $\ve\|\Phi'\|\nc0\leq \Const \ve\sigma^2 (R-1-k)L_*$, then there exists a twice
    differentiable function $\bchi(\sigma,T,s,\vf, \theta)$, smooth in $\sigma$, with
    derivatives with respect to $\vf,\theta, s$ uniformly bounded by $\Const |\sigma|$,
    such that
    \begin{subequations}\label{e_chiEstimates}
      \begin{align}
        &\chi_{\ell_k,k,j}=\bchi(\sigma,t-\ve S_{k-1},\ve
        j,\theta^*_{\ell_k},\thetaslkk{k}{j})
               +\cO\left(\sigma^2\ve^2 L_*+\sigma^3\ve(R-1-k)L_*\right)\\
        &\bchi(\sigma, T,s,\vf,\theta)=- \frac{\sigma^2}2\etaExph(T- s-\ve,\bar\theta(s+\ve,\vf))^2\bVar(\theta)+ \cO(\sigma^3)\notag\\
         &\phantom{\bchi(\sigma, T,s,\vf,\theta)=}
           \le-\frac{\sigma^2}4\etaExph(T- s-\ve,\bar\theta(s+\ve,\vf))^2\bVar(\theta)\label{e_chiEstimates-bound-from-above}
      \end{align}
    \end{subequations}
    where $\theta^*_{\ell_k},\thetaslkk{k}{j}$ are defined in~\eqref{eq:ifnotdefinedyet},
    $\bVar\in\cC^1(\bT,\bRp)$ is given by the Green--Kubo
    formula~\eqref{e_definitionBarChi} and $\etaExph$ is defined in~\eqref{eq:Deltabloks}.
  \end{enumerate}
\end{lem}
\begin{proof}
  We will use indifferently the notation introduced in
  Proposition~\ref{p_transferOperators} and the one used in
  Appendix~\ref{subsec:transfer}. Such notations are connected by the relation
  $\tO_{\ell_k,k,j}=\tO_{\thetaslkk{k}{j},\Omega^{k,\Phi}_{\ell_k, j}}$, where
  $\Omega^{k,\Phi}_{\ell_k, j}$ is defined in~\eqref{e_defTransferOp}.  In order to apply
  the results of Appendix~\ref{subsec:transfer}, let us consider the transfer operator
  given by $\tO_{\thetaslk{j},\paramL \Omega^{k,\Phi}_{\ell, j}}$, for $\paramL\in[0,1]$.
  Since the operator, for $\paramL=0$, has $1$ as a simple maximal eigenvalue and a
  spectral gap (in any of the above mentioned spaces), it follows that we can choose
  $\sigma_0$ such that, for any $\sigma\in [0,\sigma_0]$, the operator for
  $\paramL\in[0,1]$ has still a simple maximal eigenvalue and a spectral gap (assuming
  $\ve$ to be sufficiently small). Observe that, since the resolvent is continuous in
  $\theta$, $\sigma_0$ can be chosen uniformly in $\theta$ and, consequently, since we
  have a uniform control on all terms appearing in $\hov^k_{\ell_k,j}$, $\sigma_0$ can be
  chosen to be uniform in $k, \ell_k,j$ and $\Phi$ as well.  This proves
  item~\ref{i_spectralGap}.

  We now prove item~\ref{i_chiEstimate}; note that the definition of $\hov^k_{\ell_k,j}$
  in~\eqref{eq:potdef} implies
\begin{equation}\label{eq:hov-Xi}
\begin{split}
  &\hov^k_{\ell_k,j}=\etaExph(t-\ve S_k,\thetaslkk{k}{L_k})\etaExpsk_{j,L_k}\ho\\
  &=\expo{\int_0^{t-\ve S_k}\hskip-.6cm\bar\omega'(\bar\theta(s+\ve L_k,\theta^*_{\ell_k}))\deh s+\sum_{l=j+1}^{L_k-1}\int_{l\ve}^{(l+1)\ve}\hskip-.6cm\bar\omega'(\bar\theta(s,\theta^*_{\ell_k}))\deh s+\cO(\ve^2 L_k)}\ho\\
  &=\expo{\int_{(j+1)\ve}^{t-\ve S_{k-1}}\bar\omega'(\bar\theta(s,\theta^*_{\ell_k}))\deh s+\cO(\ve^2 L_k)}\ho\\
  &=\etaExph(t-\ve [S_{k-1}+j+1],\thetaslkk{k}{j+1})\ho+\cO(\ve^2 L_k)\ho.
\end{split}
\end{equation}
It is then natural to introduce the potentials
\begin{equation}\label{eq:Bomega-def}
  \BOmega(T,s,\vf,x,\theta)=\etaExph(T-s-\ve,\bar\theta(s+\ve,\vf))\ho(x,\theta)
\end{equation}
so that
\begin{equation}\label{eq:hov-compute}
  \hov^k_{\ell_k,j}(x,\theta)=\BOmega(t-\ve S_{k-1},\ve j,\theta^*_{\ell_k},x,\theta)+\cO(\ve^2 L_*)\ho(x,\theta);
\end{equation}
in particular, by definition of $\Omega^{k,\Phi}_{\ell_k, j}$ we gather
\begin{align*}
  i\sigma\Omega^{k,\Phi}_{\ell_k, j}
	& = i\sigma\BOmega(t-\ve S_{k-1},\ve j,\theta^*_{\ell_k},x,\theta)+\cO(\ve^2 L_*+\sigma^2\ve(R-1-k)L_*)\ho(x,\theta).
\end{align*}
Let, $e^{\bchi(\sigma,T,s,\vf, \theta)}$ be the maximal eigenvalue of the operator
associated to the potential $i\sigma{\boldsymbol \Omega}(T,s,\vf,\cdot ,\theta)$ and
dynamics $f(\cdot,\theta)$.  Then, by~\eqref{e_derivative-varrho}, we have
\begin{align*}
  \chi_{\ell_k,k,j}=\bchi(\sigma,t-\ve S_{k-1},\ve
  j,\theta^*_{\ell_k},\thetaslkk{k}{j})+\cO(\sigma\ve^2 L_*+\sigma^2\ve(R-1-k)L_*)\int_0^1m_{\varrho}(\ho h_\varrho)\deh\varrho,
\end{align*}
where $m_\varrho=m_{\thetaslkk{k}{j}, \Omega_\varrho}$, and $h_\varrho=h_{\thetaslkk{k}{j}, \Omega_\varrho}$ with
\begin{align*}
\Omega_\varrho=i\sigma[(1-\varrho)\BOmega(t-\ve S_{k-1},\ve
  j,\theta^*_{\ell_k},x,\theta)+\varrho \Omega^{k,\Phi}_{\ell_k, j} .]
\end{align*}
Then Lemma~\ref{l_derivativess} implies
\begin{align*}
  \chi_{\ell_k,k,j}%
  &=\bchi(\sigma,t-\ve S_{k-1},\ve j,\theta^*_{\ell_k},\thetaslkk{k}{j})\\
  &\phantom = +\cO\left((\sigma\ve^2 L_*+\sigma^2\ve(R-1-k)L_*)
    m_{\thetaslkk{k}{j},0}(\ho h_{\thetaslkk{k}{j},0})\right)\\
  &\phantom = +\cO\left(\sigma^2\ve^2 L_*+\sigma^3\ve(R-1-k)L_*\right).
\end{align*}
The above implies the first equation of~\eqref{e_chiEstimates} since $\ho$ has zero
average by construction.  Next, we use~\eqref{e_derivativeChi},
~\eqref{e_secondDerivativeChia} with $\paramL=0$ and~\eqref{e_thirdDerivativeChi} to
obtain:
  \begin{equation}\label{e_chiSecondDerivative}
  \begin{split}
    e^{\bchi(\sigma,T,s,\vf,\theta)}
    & =1- \frac{\sigma^2}2\mu_{\theta}\left({\BOmega(T,s,\vf,\cdot,\theta)}^2\right)                                          \\
                                     & \phantom{=}-\sigma^2\sum_{m=1}^\infty
    \mu_{\theta}\left(\BOmega(T,s,\vf,\favg_\theta^m(\cdot),\theta)\BOmega(T,s,\vf,\cdot,\theta)\right) + \cO(\sigma^3) \\
                                     & =e^{-\frac{\sigma^2}2\left[\mu_{\theta}\left(\BOmega^2\right)+2\sum_{m=1}^\infty
        \mu_{\theta}\left(\BOmega\circ \favg_\theta^m\cdot \BOmega \right)\right] +
      \cO(\sigma^3)}
  \end{split}
  \end{equation}
  where we have used the the decay of correlations implied by
  item~\ref{i_spectralGap}. The second equation of~\eqref{e_chiEstimates} follows
  immediately, provided $\sigma_0$ has been chosen small enough.
\end{proof}

\begin{rem}\label{rem:motation-lkk}
  As we will use the results below for all blocks, not just the last one, we are
  interested in all the operators $\tO_{\ell_k,k,j}$. Yet, since all our computations are
  uniform in $k$ and $\ell_k$, there is no harm in dropping, again, the subscripts
  $k,\ell_k$ when this does not create confusion. Thus from now to the end of the section,
  to ease notation, $k,\ell_k$ are fixed and implicit. For the same reason we will write
  $L$ rather than $L_k$.  Moreover, to further ease our notation let us set
  $\Omega_j=\Omega^{k,\Phi}_{\ell_k, j}$ and $\bavgtheta{j}=\thetaslkk{k}{j}$. Also,
  $\chi_j=\chi_{\bavgtheta{j},\Omega_j}$, $m_j=m_{\bavgtheta{j},\Omega_j}$ and
  $h_j=h_{\bavgtheta{j},\Omega_j}$.
\end{rem}

Remark that, since $\tOp_j$ is a one dimensional projector, it can be written
as $\tOp_j=h_j\otimes m_j$, where we choose to normalize $h_j$ and $m_j$ according to
Lemma~\ref{l_normalization}.
Also, for future reference, we define
\begin{equation}\label{eq:fkEi-def}
  \overline\fkE_i:=m_{i+1}(h_{i}-h_{i+1}).
\end{equation}
We are now ready to derive a formula for the products of transfer operators in the
perturbative regime.
\begin{lem}\label{lem:product-bound-rough}
  There exists $\ve_0, \Czero>0$ such that, for any $i, n\in \{0,\cdots, L\}$,
  $\ve\le \ve_0$, $|\sigma|\in[ \Czero \ve^2 L_*,|\sigma_0|\,]$ and
  $\ve\|\Phi'\|\nc0\leq \Const \sigma^2$ we have, for any $g\in\BV$,
  \begin{align*}
    \left\|\tOh_{i+n}\cdots\tOh_{i}g-
    \expo{\sum_{j=0}^{n-1}\overline\fkE_{i+j}}
    \overline h_{i,n}\cdot\overline m_{i,n}g\right\|\nBV\le
    &\;\Const \|g\|_{L^1}\ve^2n^2\\
    &\hskip-3cm+\Const\left[e^{-\const n}+n^2\ve^2+\sillyconst\right]\|g\|_{BV},
  \end{align*}
  where
  \begin{align*}
    \overline h_{i,n}
    &=\sum_{k=0}^{n}\tOqh_{i+n}\cdots\tOqh_{i+k+1}h_{i+k}
    & \overline m_{i,n}
    &=\sum_{k=0}^n m_{i+k}\widehat \tOq_{i+k-1}\cdots \widehat\tOq_{i},
  \end{align*}
  with $\widehat\tO_{j}=e^{-\chi_j}\tO_{j}$, $\widehat\tOq_{j}=e^{-\chi_j}\tOq_{j}$.
\end{lem}
\begin{proof}
  Let us define
  \begin{align}\label{eq:Eindef}
    \bX_{i,n} &= \expo{\sum_{j=0}^{n}\chi_{i+j}},
  \end{align}
  and introduce the auxiliary operators\footnote{ In this section we use the standard
    conventions that, given any sequence of operators $\{A_i\}$,
    $A_j A_{j-1}\cdots A_{i+1}A_i=\Id$ if $j<i$.}
  \begin{align}\label{eq:tO-auxiliary}
    \overleftarrow\tO_j
    &=e^{\chi_j}h_{j+1}\otimes m_j+\tOq_j
    & \overleftarrow\tO_{i,n}
    &=\overleftarrow\tO_{i+n}\overleftarrow\tO_{i+n-1}\cdots\overleftarrow\tO_i.
  \end{align}
  Observe that, by construction:
  \begin{align}\label{e_basicChainBound}
    \tO_{i+n}\cdots\tO_{i}-\overleftarrow\tO_{i,n}
    &=\sum_{k=0}^{n}e^{\chi_{i+k}}\tO_{i+n}\cdots\tO_{i+k+1}(h_{i+k}-h_{i+k+1})\otimes
      m_{i+k}\overleftarrow\tO_{i,k-1},
  \end{align}
  and one can check, by induction, that
  \begin{align}
    \overleftarrow\tO_{i,n}
    & =h_{i+n+1}\otimes\left[\sum_{k=0}^n e^{\sum_{j=k}^{n}\chi_{i+j}}m_{i+k}\tOq_{i+k-1}\cdots \tOq_{i}\right]
      +\tOq_{i+n}\cdots \tOq_{i}\nonumber\\
    \label{eq:tO-auxiliary-chain}
    &=\bX_{i,n}\left\{h_{i+n+1}\otimes \overline m_{i,n}+\tOqh_{i+n}\cdots \tOqh_{i}\right\}.
  \end{align}
  In order to continue we need to compare adjacent operators; this can be done using
  perturbation theory.
  \begin{sublem}\label{sublem:perturbation-fancy}
    For any $i\in\{0,\cdots,L\}$ we have:
    \begin{align*}
      |\chi_{i+1}-\chi_i|   & \le \Const \ve |\sigma|.\\
      \|h_{i+1}-h_{i}\|\nc1 & \le \Const \ve.
    \end{align*}
    The same bounds hold for $m_{i+1}-m_i$ as a functional on $W^{2,1}$. Yet, we also have the
    bound, for any $g\in \BV$:
    \begin{align*}
      |m_{i+1}(g)-m_i(g)|\le \Const \ve|\sigma|(\log|\sigma|)^2\|g\|\nBV.
    \end{align*}
    Finally, we have
    \begin{equation}\label{eq:fkE-est-u}
      |\overline\fkE_i|\leq \Const \ve|\sigma|.
    \end{equation}
  \end{sublem}
  \begin{irem}
    The estimate~\eqref{eq:fkE-est-u} reported above suffices for the present level of precision. Yet, if one wanted
    to compute the first term of the Edgeworth expansion, then it would be necessary to
    introduce the function
    \begin{align*}
      \fkE_k(\sigma, s,\vf)=m_{\bar\theta(s+\ve,\vf),\widetilde\Omega_k(\sigma,s+\ve,\vf,\cdot)}\left(h_{\bar\theta(s,\vf),\widetilde\Omega_k(\sigma,s,\vf,\cdot)}-h_{\bar\theta(s+\ve,\vf),\widetilde\Omega_k(\sigma,s+\ve,\vf,\cdot)}\right),
    \end{align*}
    where
    \begin{align*}
      \widetilde\Omega_k(\sigma, s,\vf, x,\theta)
      &= i\sigma\BOmega(t-\ve S_{k-1},s,\vf,x,\theta)\\
      &\phantom = +\ve \Phi'(\bar \theta(L_k\ve,\vf)) \etaExph(L_k\ve-s,\bar\theta(s+\ve,\vf))\ho(x,\theta).
    \end{align*}
    One could then use Appendix~\ref{subsec:pert-theta} to show that
    $\overline\fkE_i=\fkE_k(\sigma, \ve i,\avgtheta{\ell_k})+\cO(\ve^2L_*)$ and
    $\|\fkE_k(\sigma,\cdot,\cdot)\|_{\cC^1}\leq \Const \ve|\sigma|$. So one can keep
    $\fkE_k$ in the definition of $\Phi_\rage$ in Section~\ref{proof-not-so-trivial-1}.
  \end{irem}
  \begin{proof}[{\bf Proof of Sub-lemma~\ref{sublem:perturbation-fancy}}]
    We will have to  vary both the dynamics and the potentials. This makes convenient to
    use, at times, the heavier, but more precise, notation introduced in
    Appendix~\ref{subsec:transfer}.  In this notation
    $\tO_j=\tO_{\ell_k,k,j}=\tO_{\thetaslkk{k}{j},\Omega^{k,\Phi}_{\ell_k, j}}$.  Note
    that $\|\Omega_j\|\nc2\leq \Const(|\sigma|+\ve\|\Phi'\|\nc0)$.
    Also, recall that $m_{\bavgtheta{j},0}=\Leb$ and hence $\Leb(h_{\bavgtheta{j},0})=1$.
    Next, observe that, although $m_i$ is a distribution, it is almost a measure: indeed
    using Lemma~\ref{lem:m-estimate-0} with $n = \Const|\sigma|\invr$ implies
    \begin{equation}\label{eq:m-almost-m1}
      |m_i(g)|\leq \Const \|g\|\nl1+\Const\expo{-\const|\sigma|\invr}\|g\|_{\BV}.
    \end{equation}
    In turn, this implies that $\tOqh_i$ satisfies a Lasota--Yorke inequality as well.
    In order to see this, recall equations~\eqref{eq:cone-est}
    and~\eqref{eq:lasota-yorke-bv} and note that, if $\sigma_0$ is small enough, then
    there exists $\lambda_1\in (1,\lambda)$ such that
    \begin{align}\label{eq:la-yo-Q}
      \|\tOqh_ig\|\nBV&\le \| h_i m_i( g)\|\nBV+\|\tOh_ig\|\nBV\le \lambda_1\invr\|g\|\nBV+\Const\|g\|\nl1.
    \end{align}
    By Lemma~\ref{eq:theta-derivative-all} we have
    \begin{equation}\label{eq:hoihoiwei}
      \begin{split}
        &|\chi_{\bavgtheta{i-1},\Omega_{i}}-\chi_{\bavgtheta{i},\Omega_{i}}|\le \Const |\sigma|\ve\\
        &\|h_{\bavgtheta{i-1},\Omega_{i}}-h_{\bavgtheta{i},\Omega_{i}}\|\nc2\le\Const \ve\\
        &|m_{\bavgtheta{i-1},\Omega_i}(g)-m_{\bavgtheta{i},\Omega_i}(g)|\le \Const  \ve|\sigma|\|g\|\nw21.
      \end{split}
    \end{equation}
    It turns out that the third of the above estimates is not very convenient owing to the
    higher derivative in the right hand side. However, Lemma~\ref{lem:m-theta-BV} implies
    \begin{equation}\label{eq:m-diff-sharp2}
      |m_{\bavgtheta{i-1},\Omega_i}(g)-m_{\bavgtheta{i},\Omega_i}(g)|\leq \Const(\log|\sigma|)^2|\sigma|\ve\|g\|_{\BV}.
    \end{equation}
    Next, equations~\eqref{e_defTransferOp} and~\eqref{eq:potdef} imply
    \begin{equation}\label{eq:pot-est}
      \begin{split}
        &\|\Omega_{i+1}-\Omega_{i}\|\nc1\leq \Const \ve|\sigma| .
      \end{split}
    \end{equation}

    We can then use~\eqref{e_derivative-varrho} and argue as
    in~\eqref{e_estimatesPartialRho} to obtain,\footnote{\label{fot:lower} The
      formula~\eqref{e_estimatesPartialRho-m} holds also with the $\BV$ norm on the left
      hand side due to the lower semicontinuity of the variation \cite[Section 5.2.1,
      Theorem 1]{EvansGariepy}.} for $i\le L$,
    \begin{equation}\label{eq:der-omega-rough}
      \begin{split}
        &|\chi_{\bavgtheta{i-1},\Omega_{i-1}}-\chi_{\bavgtheta{i-1},\Omega_{i}}|\le\Const \ve|\sigma|\\
        &\|h_{\bavgtheta{i-1},\Omega_{i-1}}-h_{\bavgtheta{i-1},\Omega_{i}}\|\nc1\le\Const |\sigma|\ve\\
        &|m_{\bavgtheta{i-1},\Omega_{i-1}}(g)-m_{\bavgtheta{i-1},\Omega_i}(g)|\le \Const|\sigma|  \ve \|g\|_{\BV},
      \end{split}
    \end{equation}
    Collecting the above facts, yields the first three inequalities of the Lemma.

    Next, by the third of equations~\eqref{eq:hoihoiwei}, using that
    $h_{\bavgtheta{i-1},\Omega_{i-1}}\in\cC^2$, see Remark~\ref{rem:high-der},
    and~\eqref{eq:der-omega-rough}, we can write
    \begin{align*}
      m_{i}(h_{i-1}-h_{i})
      &= m_i(h_{i-1})-1 \\
      &= \left[m_{\bavgtheta{i},\Omega_i}-m_{\bavgtheta{i-1},\Omega_{i-1}}\right](h_{\bavgtheta{i-1},\Omega_{i-1}})\\
      &= \left[m_{\bavgtheta{i-1},\Omega_i}-m_{\bavgtheta{i-1},\Omega_{i-1}}\right](h_{\bavgtheta{i-1},\Omega_{i-1}})+\cO(\ve|\sigma|)
        =\cO(\ve |\sigma|),
    \end{align*}
    which concludes the proof of the Sub-lemma.
  \end{proof}
  We also need a bound on products of $\tOqh_i$'s which is rather obvious but a bit lengthy to
  prove.
  \begin{sublem} \label{sublem:Q-product}%
    There exists $n_*\in\bN$ such that, for all $k\in\{0,\cdots,L-n_*\}$, we have
    \begin{align*}
      \|\tOqh_{k+n_*}\cdots \tOqh_k\|_\BV\le e^{-1}.
    \end{align*}
  \end{sublem}
  \begin{proof}
    Note that, by Lemma~\ref{lem:Taylor}, there exist $\Csette>0$
    such that $\sup_{k,j}\|\tOqh_{k}^j\|_\BV\le \Csette$. We are now going to prove, by
    induction, that there exists $C_Q\geq \Csette$ such that for any $N_Q>0$, there
    exists $\ve_0,\sigma_0$ such that, for all $\ve\le \ve_0$ and
    $|\sigma|\le \sigma_0$, we have
    \begin{align}\label{e_claim-tada}
      \sup_{k}\sup_{j\le N_Q}\|\tOqh_{k+j}\cdots \tOqh_k\|_\BV\le C_Q.
    \end{align}
    The claim is trivially true for $N_Q=0$. Suppose it is true for all $j\leq N_Q-1$
    for some $\sigma_0,\ve_0$. Possibly by decreasing $\sigma_0$ assume that
    $\sigma_0^2N_Q\leq 1$ and note that, since we assume
    $\ve\|\Phi'\|\nc0\leq \Const \sigma^2$:
    \begin{align*}
      |\Leb(\tOh_i g)|&\le \|\tOh_i g\|\nl1\le e^{-\chi_i}\Leb(e^{\Re(\Omega^{k,\Phi}_{\ell_k, j})}|g|)\leq e^{\Const \sigma^2} \|g\|\nl1.
    \end{align*}
    Together with Sub-Lemma~\ref{sublem:perturbation-fancy}, the
    above inequality implies
    \begin{align*}
      \|\tOqh_{k+j}
      &\cdots\tOqh_{k}g\|\nl1\\
      &\le \|\tOh_{k+j}\tOqh_{k+j-1}\cdots\tOqh_{k}g\|\nl1 +\Const|(m_{k+j}-m_{k+j-1})(\tOqh_{k+j-1}\cdots\tOqh_{k}g)|\\
      &\le e^{\const \sigma^2}\|\tOqh_{k+j-1}\cdots\tOqh_{k}g\|\nl1+\Const\sillyconst\|\tOqh_{k+j-1}\cdots\tOqh_{k}g\|\nBV.
    \end{align*}
    Iterating the above argument, since $\sigma N_q\le 1$, and by the inductive
    hypothesis:
    \begin{align}\label{eq:zero-to-one-Q}
      \|\tOqh_{k+j}\cdots\tOqh_{k}g\|\nl1&\le\Const \|g\|\nl1+\Const\sillyconst\sum_{l=0}^{j-1}\|\tOqh_{k+l}\cdots\tOqh_{k}g\|_\BV\notag\\
                                         &\le \Const \|g\|\nl1+\Const\sillyconst jC_Q\|g\|_\BV.
    \end{align}
    We can now use~\eqref{eq:la-yo-Q} to write
    \begin{align*}
      \|\tOqh_{k+j}\cdots\tOqh_{k}g\|_\BV
      &\le \lambda_1^{-1}\|\tOqh_{k+j-1}\cdots\tOqh_{k}g\|_\BV+\Const \|\tOqh_{k+j-1}\cdots\tOqh_{k}g\|\nl1\\
      &\le[(\lambda_1^{-1}+\Const \sillyconst j)C_Q+\Const]\|g\|_\BV
    \end{align*}
    from which the claim follows.

    Next, by Sub-Lemma~\ref{sublem:perturbation-fancy},
    \begin{equation}\label{eq:Q-BV-to-L1}
      \begin{split}
        \|(\tOqh_{k+j}-\tOqh_{k})g\|\nl1&\leq\|(\tOh_{k+j}-\tOh_{k})g-(h_{k+j}\otimes m_{k+j}- h_j\otimes m_j)g\|\nl1\\
        &\leq \Const\sillyconst j \|g\|_\BV.
      \end{split}
    \end{equation}

    Using Lemma~\ref{lem:Taylor} once again, there exists $m_*\in\bN$, independent of
    $\ve$ and $\sigma$, such that, for all $k\in\{0,\cdots,L\}$,
    $\|\tOqh_k^{m_*}\|_\BV\le (2 eC_Q)^{-1}$ and $\lambda_1^{m_*}\geq 8 eC_Q^2$. We will
    use the above claim with $N_Q=m_*$. Note that, in particular, this implies that
    $\sigma_0^2m_*\leq 1$.

    Then, using
    equations~\eqref{eq:la-yo-Q},~\eqref{e_claim-tada},~\eqref{eq:zero-to-one-Q}
    and~\eqref{eq:Q-BV-to-L1}:
    \begin{align*}
      &\|(\tOqh_{k+2m_*}\cdots \tOqh_k-\tOqh_{k+2m_*}\cdots \tOqh_{k+m_*+1}\tOqh_{k+m_*}^{m_*})g\|_\BV\\
      &\le (\lambda_1^{-m_*}+\Const \sillyconst m_*C_Q)\|(\tOqh_{k+m_*}\cdots \tOqh_k-\tOqh_{k+m_*}^{m_*})g\|_\BV\\
      &\phantom{\leq}+\Const \|(\tOqh_{k+m_*}\cdots
        \tOqh_k-\tOqh_{k+m_*}^{m_*})g\|\nl1\\
      &\le  2C_Q (\lambda_1^{-m_*}+\Const \sillyconst m_*C_Q)\|g\|_\BV\\
      &\phantom{\leq}+\Const\sum_{j=1}^{m_*}\|\tOqh_{k+m_*}^j(\tOqh_{k+m_*-j}-\tOqh_{k+m_*})\tOqh_{k+m_*-j-1}\cdots\tOqh_{k}g\|\nl1\\
      &\le \left[2\lambda_1^{-m_*}+\Const\sillyconst m_*^2 \right]C_Q^2\|g\|_\BV\leq \frac 1{2 e}\|g\|_\BV
    \end{align*}
    provided $\ve_0, \sigma_0$ have been chosen small enough. Hence
    \begin{align*}
      \|\tOqh_{k+2m_*}\cdots \tOqh_k\|_\BV
      &\leq\|\tOqh_{k+2m_*}\cdots \tOqh_k-\tOqh_{k+2m_*}\cdots
        \tOqh_{k+m_*+1}\tOqh_{k+m_*}^{m_*})\|\nBV\\
      &\phantom{\leq}+\|\tOqh_{k+2m_*}\cdots \tOqh_{k+m_*+1}\tOqh_{k+m_*}^{m_*}\|\nBV \leq
        \frac 1{2 e}+C_Q(2eC_Q)^{-1}\leq e^{-1}.
    \end{align*}
    We have thus proved our claim, with $n_*=2m_*$.
  \end{proof}
  We can now use Sub-Lemmata~\ref{sublem:perturbation-fancy} and~\ref{sublem:Q-product} to
  continue the argument that we left at~\eqref{eq:tO-auxiliary-chain}: we immediately
  obtain
  \begin{equation}\label{eq:op-arrow-est}
    \begin{split}
      &\left\|
      \bX_{i,n}\invr \overleftarrow\tO_{i,n}-h_{i+n+1}\otimes \overline m_{i,n}\right\|_\BV\le \Const e^{-\const n}\\
      &|\overline m_{i,n}(g)-m_i(g)|\le\sum_{k=1}^{i+n}|(m_{i+k}-m_{i+k-1})(\tOqh_{i+k-1}\cdots\tOqh_ig)|\\
      &\phantom{|\overline m_{i,n}(g)-m_i(g)|}\le \Const\sillyconst\|g\|_\BV.
    \end{split}
  \end{equation}
  At this point we can write, using repeatedly~\eqref{e_basicChainBound}:
  \begin{align}\label{eq:new-chain-bound}
    \tO_{i+n}\cdots\tO_{i}-\overleftarrow\tO_{i,n}
    &=\sum_{k=0}^{n}e^{\chi_{i+k}}\tO_{i+n}\cdots\tO_{i+k+1}(h_{i+k}-h_{i+k+1})\otimes
      m_{i+k}\overleftarrow\tO_{i,k-1},\notag\\
    &=\sum_{k=0}^{n}e^{\chi_{i+k}}\overleftarrow\tO_{i+k+1, n-k-1}(h_{i+k}-h_{i+k+1})
      \otimes m_{i+k}\overleftarrow\tO_{i,k-1}\\\notag
    &\quad+\sum_{k=0}^{n}\sum_{j=0}^{n-k-1}e^{\chi_{i+k}+\chi_{i+k+j+1}}\tO_{i+n}
      \cdots\tO_{i+k+j+2}(h_{i+k+j+1}-h_{i+k+j+2})\\\notag
    &\quad\quad\otimes m_{i+k+j+1}\overleftarrow\tO_{i+k+1,j-1}(h_{i+k}-h_{i+k+1})
      \otimes m_{i+k}\overleftarrow\tO_{i,k-1}.
  \end{align}
  Note that, by~\eqref{eq:op-arrow-est} and~\eqref{eq:m-almost-m1} 
  we have
  \begin{align*}
    \|\bX_{i,j}^{-1}\overleftarrow\tO_{i, j}\|\nBV\le\Const.
  \end{align*}
  Then, the first line of~\eqref{eq:new-chain-bound}, together with
  Sub-Lemma~\ref{sublem:perturbation-fancy}, suffices to write
  \begin{align*}
    \left\|\tO_{i+n}\cdots\tO_{i}-\overleftarrow\tO_{i,n}\right\|_\BV\le \Const \ve\sum_{k=0}^{n-1} \|\tO_{i+n}\cdots\tO_{i+k+1}\|_\BV \left|\bX_{i,k-1}\right|.
  \end{align*}

  From the above it follows by induction:
  \begin{equation}\label{eq:rough-Lchin-bound}
    \left\|\tO_{i+n}\cdots\tO_{i}\right\|_\BV\le \Const |\bX_{i,n}|.
  \end{equation}
  Recall that Lemma~\ref{lem:m-estimate}, implies that, for any $j\in \{0,\cdots, n\}$,
  \begin{align*}
    |\overline m_{i,j}(g)|+|m_{i+j}(g)|\leq \Const \|g\|_{L^1}+\Const |\sigma|^{100}\|g\|_{BV}=\Const\|g\|_{BV_\sigma},
  \end{align*}
  where we have introduced the shorthand notation
  $\|g\|_{\BV_\sigma}=\|g\|\nl1+|\sigma|^{100}\|g\|\nBV$.
  Note that~\eqref{eq:tO-auxiliary-chain} and the definition of $\overline m_{i,k}$
  in the statement of Lemma~\ref{lem:product-bound-rough} imply
  \begin{align*}
    m_{i+k}\overleftarrow\tO_{i,k-1}&=\bX_{i,k-1}\left\{ \overline m_{i,k-1}+m_{i+k}\tOqh_{i+k-1}\cdots \tOqh_{i}\right\}\\
    &=\bX_{i,k-1}\overline m_{i,k}.
  \end{align*}
  Given the above, we can now use the full force of~\eqref{eq:new-chain-bound} and
  Sub-Lemma~\ref{sublem:perturbation-fancy}, using~\eqref{eq:tO-auxiliary-chain}:
  \begin{align*}
    \tO_{i+n}\cdots\tO_{i}g
    &= \overleftarrow\tO_{i,n}g+\sum_{k=0}^{n}\bX_{i,k}\overleftarrow\tO_{i+k+1, n-k-1}(h_{i+k}-h_{i+k+1})\cdot \overline m_{i,k}(g)\\
    &\phantom = +\bX_{i,n}\|g\|_{\BV_\sigma}\cO\nc1(\ve^2n^2)\\
    &=\overleftarrow\tO_{i,n}g+\bX_{i,n}(h_{i+n}-h_{i+n+1})\cdot \overline m_{i,n}(g)\\
    &\phantom =+\bX_{i,n}\sum_{k=0}^{n-1}h_{i+n+1}\cdot\overline m_{i+k+1, n-k-1}(h_{i+k}-h_{i+k+1})
      \cdot \overline m_{i,k}(g)\\
    &\phantom =+\bX_{i,n}\sum_{k=0}^{n-1}\tOqh_{i+n}\cdots\tOqh_{i+k+1}h_{i+k}\cdot \overline
      m_{i,k}(g)\\
    &\phantom = +\bX_{i,n}\|g\|_{\BV_\sigma}\cO\nc1(\ve^2n^2).
  \end{align*}
  Finally, by definition of
  $\overline h_{i,n}, \overline m_{i,n}$,~\eqref{eq:op-arrow-est},
  Sub-Lemmata~\ref{sublem:perturbation-fancy},~\ref{sublem:Q-product}, since $L^2\ve<1$
  and recalling \eqref{eq:fkEi-def} we have\footnote{ Here we use repeatedly that
    $\tOqh_{j}h_{j-1}=\tOqh_{j}(h_{j-1}-h_j)$ and the similar relation for $m_j$.}
  \begin{align*}
    \tOh_{i+n}\cdots\tOh_{i}g
    &= h_{i+n+1} \overline m_{i, n}(g)+(h_{i+n}-h_{i+n+1})\cdot \overline m_{i, n}(g)\\
    &\phantom = +\sum_{k=0}^{n-1}\tOqh_{i+n}\cdots\tOqh_{i+k+1}h_{i+k}\cdot  \overline
      m_{i, n }(g)\\
    &\phantom = +\sum_{k=0}^{n-1}m_{i+k+1}(h_{i+k}-h_{i+k+1})\overline h_{i,n} \cdot  \overline m_{i,n}(g)\\
    &\phantom = +\|g\|_{\BV_\sigma}\cO\nc1(\ve^2n^2)+\|g\|\nBV\cO\nc1(e^{-\const n}+\sillyconst)\\
    &=\expo{\sum_{k=0}^{n-1}\overline\fkE_{i+k}}\overline h_{i,n}\cdot \overline m_{i, n}(g)+\|g\|\nl1\cO\nc1(\ve^2n^2)\\
    &\phantom = +\|g\|\nBV\cO\nc1(e^{-\const n}+e^{\const n|\sigma|\ve}n^2\ve^2|\sigma|^2+\sillyconst).
  \end{align*}
\end{proof}
\subsection{ Main result for the intermediate regime}\
\newline\label{subsec:main-intermediate}%
Lemma~\ref{lem:product-bound-rough} is the basic tool to conclude the proof of the Local
Central Limit Theorem.  In this subsection we see how to use the lemma to prove the
results we are interested in for the (easier) intermediate regime.  The case of the small
regime will be dealt with in the next section.
\begin{proof}[{\bf Proof of Proposition~{\ref{prop:not-so-trivial-1/2}}}]
  First of all recall (see Remark~\ref{eq:long-v-short} that in this regime we are
  considering only families of long standard pairs.  Let us apply
  Proposition~\ref{p_transferOperators} with $\Phi\equiv 0$: we have, choosing
  $\wp=\ve^2$ and recalling that $L_* = \cO(\ve^{-3\delta_*})$:
  \begin{align*}
    \sum_{\ell_R\in\stdf^{R-1}_{\ell_{R-1}} }\fm_{R-1,\ell_{R-1},\ell_R}
    &=\Leb \sum_{\ell_R\in\stdf^{R-1}_{\ell_{R-1}} }\fm_{R-1,\ell_{R-1},\ell_R}\mathring\rho_{\ell_R}\\
    &=e^{i\sigma\ve\fkC^{k,1}_{\ell_{R-1},0,\emptyset}} \Leb\,\tO_{\ell_{R-1},R-1,L_{R-1}-1}\cdots\tO_{\ell_{R-1},R-1,0}[\Psi_q \trhove_{\ell_{R-1},\wp}]\\
    &\phantom{=}+\cO(\ve^{2-9\delta_*}).
  \end{align*}
  Next, we analyze each of the terms separately.  Lemma~\ref{lem:product-bound-rough} and
  Sub-Lemma~\ref{sublem:perturbation-fancy} imply
  \begin{align*}
    &\left\|\left[ \tO_{\ell_{R-1},R-1,L_{R-1}-1}\cdots\tO_{\ell_{R-1},R-1,0}-\bX_{\ell_{R-1}}^*\overline h_{0,L_{R-1}-1}\otimes\overline m_{0,L_{R-1}-1}\right]\trhove_{\ell_{R-1},\wp}\right\|_{\BV}\leq \\
    &\phantom{[\tO_{L-1}\tO_{L-2}\cdots\tO_{0}-}\leq\Const \left|\bX_{0,L_{R-1}-1}\right|\ve,
  \end{align*}
  where
  \begin{align*}
    &\bXs_{\ell_{R-1}}=\expo{\sum_{j=0}^{L_{R-1}}\chi_j+\overline\fkE_j}.
  \end{align*}
  Notice that Lemma~\ref{lem:Taylor} and~\eqref{eq:fkE-est-u} allow to write:
  \begin{align}\label{eq:Esize0}
    \notag \log  \bXs_{\ell_{R-1}}
    &= - \frac{\sigma^2}2\sum_{j=0}^{L_{R-1}}\etaExph(t-\ve[S_{R-2}+j+1],\thetaslkk{R-1}{j})^2
      \bVar(\thetaslkk{R-1}{j})\\
    &\phantom = + \cO(\sigma^3+\sigma^2\ve^2 L_{R-1}+\sigma\ve L_{R-1}),
  \end{align}
  and the same estimate holds for $\bX_{0,L_{R-1}}$.  The above implies that for
  $|\sigma|\in[\ve^{\delta_*}, \sigma_0]$, given the choice $L=\ve^{-3\delta_*}$, we have
  $\bXs_{\ell_{R-1}}=\cO(e^{-\const\ve^{-\delta_*}})$, and the same for
  $\bX_{0,L_{R-1}}$. Also, by similar arguments, the correlation terms will give a smaller
  contribution since $9\delta_*<1$.  It follows that
  \begin{align*}
    \left|\sum_{\ell_R\in\stdf^{R-1}_{\ell_{R-1}} }\fm_{R-1,\ell_{R-1},\ell_R}\right|\leq \Const \ve^{2-9\delta_*}.&\qedhere
  \end{align*}
\end{proof}
\section{One block estimate: the small \texorpdfstring{$\sigma$}{σ} regime}
\label{sec:one-smaller}
As already mentioned, in the small $\sigma$ regime the contraction of a single block is
not sufficient for our needs; we thus need to combine together several blocks.  To this
end, in this section, we provide a suitable description of the one block contribution.
Given a complex standard pair $\ell=(\bG_\ell,\rho_\ell)$, recall the notation
$\mathring{\rho}_{\ell}=\rho_\ell\Id_{[a_\ell,b_\ell]}$.

\begin{prop}\label{prop:not-so-trivial-oneBlock}
  Let $\sigma\in\J1$, $\wp=\ve^2$, $k\in\{0,\cdots, R-1\}$ and $\Phi\in\cC^2(\bT,\bC)$ such
  that $\Phi^+=\max\Re(\Phi)\leq\Const$,
  $\|\Phi'\|\nc0\leq \Const\min\{\sigma^2(R-1-k)L_*, \vei L_*^{-1}\}$ and
  $\ve\|\Phi'\|\nc1\sup_{0\le j\le L_k}\|\etaExps_{j}\ho\|\nc1\leq \sigma_0$.  Then\footnote{
    Recall (see Remark~\ref{eq:long-v-short}) that in this regime we are dealing with
    families of \emph{long} standard pairs.} we have the estimates:
  \begin{align}\label{e_crucialOneBlock}
    \sum_{\ell_{k+1}\in\stdf^k_{\ell_k}}e^{\Phi\circ
    G_{\ell_{k+1}}}\fm_{k,\ell_k,\ell_{k+1}}\mathring\rho_{\ell_{k+1}}
    &= 
      \bXss_{\ell_k}\overline h_{0,L_k-1} \overline
      m_{0,L_k-1}\left[e^{\Phi(\bar\theta_{\ell_k,L_k})} \mathring{\rho}_{\ell_k}\right]\\
    &\phantom= +\cE^{**}_{\ell_k,1}+\cE^{**}_{\ell_k,2}\notag
  \end{align}
  where $\overline h_{i,n}$ and $\overline m_{i,n}$ are defined in
  Lemma~\ref{lem:product-bound-rough},
  \begin{equation}\label{eq:chi-est-u}
    \begin{split}
      &\bXss_{\ell_k}= \expo{\vei\int_0^{\ve L_k}\cG_k(\theta^*_{\ell_k},s,\sigma)\deh s} \\
      &\cG_k(\theta,s,\sigma)= \bchi(\sigma,t-\ve S_{k-1},s,\theta,\bar\theta(s,\theta)),
    \end{split}
  \end{equation}
  $\bchi$ being defined just below~\eqref{eq:hov-compute}.  Finally
  \begin{align*}
    &\|\cE^{**}_{\ell_k,1}\|_{L^1}\leq \Const e^{\Phi^+}(\ve^2L_k^3+|\sigma|\ve
      L_k^2+(R-k)|\sigma|^3\ve L_k^2)\;;\\
    &\|\cE^{**}_{\ell_k,2}\|_{L^1}\leq\Const e^{\Phi^+}  \ve L_k^2\;;\quad\quad |\Leb(\cE^{**}_{\ell_k,2})|\leq  e^{\Phi^+}\Const (|\sigma|\ve L_k^3+\ve^2L_k^3)\\
    &\|\cE^{**}_{\ell_k,1}\|_\BV\leq \Const\;;\quad\quad \|\cE^{**}_{\ell_k,2}\|_\BV\leq\Const,
  \end{align*}

\end{prop}
{}\begin{proof}
  First let us apply Proposition~\ref{p_transferOperators} to the left hand side
  of~\eqref{e_crucialOneBlock}, obtaining:
  \begin{align}
    \sum_{\ell_{k+1}\in\stdf_{\ell_k}^k }e^{\Phi\circ G_{\ell_{k+1}}}\fm_{k,\ell_{k},\ell_{k+1}} \rrho_{\ell_{k+1}}
    = &e^{i\sigma\ve\fkC^{k,1}_{\ell_k,0,\emptyset}}
      \tO_{\ell_k,k,L_k-1}\cdots\tO_{\ell_k,k,0}[\Psi_{\ell_k,q} e^{\Phi(\bar\theta_{\ell_k,L_k})}\trhove_{\ell_k,\wp}] \notag\\
      &+\cE^*_{\ell_k}.\label{e_rewriteOverAgain}
  \end{align}
  Then observe that, by definition
  \begin{align*}
    |e^{i\sigma\ve\fkC^{k,1}_{\ell_k,0,\emptyset}} - 1| \le\Const \ve|\sigma| L_k.
  \end{align*}
  Next, we rewrite the first term on the right hand side of~\eqref{e_rewriteOverAgain}
  \begin{equation}\label{eq:step-2-small}
    \begin{split}
      & \tO_{\ell_k,k,L_k-1}\cdots\tO_{\ell_k,k,0}[\Psi_{\ell_k,q}e^{\Phi(\bar\theta_{\ell_k,L_k})} \trhove_{\ell_k,\wp}]\\
      &=\tO_{\ell_k,k,L_k-1}\cdots\tO_{\ell_k,k,0}\bigg\{[\Psi_{\ell_k,q} -e^{\ve\cK_0}]\trhove_{\ell_k,\wp}+[e^{\ve\cK_0}\trhove_{\ell_k,\wp}-\mathring\rho_{\ell_k}]\bigg\}e^{\Phi(\bar\theta_{\ell_k,L_k})}\\
      &\phantom{=}
      +\tO_{\ell_k,k,L_k-1}\cdots\tO_{\ell_k,k,0}\mathring\rho_{\ell_k}e^{\Phi(\bar\theta_{\ell_k,L_k})}.
    \end{split}
  \end{equation}
  We can now apply Lemma~\ref{lem:product-bound-rough} to each term
  separately:
  \begin{equation}\label{eq:term-zero}
    \begin{split}
      &\left\|\left[\tO_{\ell_k,k,L_k-1}\cdots\tO_{\ell_k,k,0}-\bX_{\ell_k}^*e^{-\overline\fkE_{L_k-1}}\overline h_{0,L_k-1}\otimes\overline m_{0,L_k-1}\right]e^{\Phi}\mathring\rho_{\ell_k}\right\|_\BV \\
      &\phantom{\tO_{\ell_k,k,L_k-1}\cdots\tO_{\ell_k,k,0}-\bX_{\ell_k}^*\overline
        h_{0,L_k-1}\otimes\overline m_{0,L_k-1}]}
      \leq \Const e^{\Phi^+}|\bX_{0,L_k-1}| L_k^2\ve^2\\
      & \bXs_{\ell_{k}}=\expo{\sum_{j=0}^{L_{k}}\chi_{\ell_k,k,j}+\overline\fkE_j}\\
      &\phantom{\bXs_{\ell_{k}}} =\expo{\vei\int_0^{\ve
          L_k}\cG_k(\theta^*_{\ell_k},s,\sigma)\deh s}+\cO(|\sigma|\ve L_k+|\sigma|^3\ve(R-k)
      L_k^2),
    \end{split}
  \end{equation}
  where we used the fact that
  $\|e^{\Phi(\bar\theta_{\ell_k,L_k})}\|_\BV\leq \Const\ve \|\Phi'\|\nc0$; in the second
  line, we have used the definition~\eqref{eq:Eindef} and, in the last line, we have used
  Lemma~\ref{lem:Taylor}-\ref{i_chiEstimate} and~\eqref{eq:fkE-est-u}.

  Next, we want to compute the correlation terms. They are sum of terms of the following
  type (possibly expanding in series the exponential), with $s\leq q$ which, recall, has
  been fixed $q = 7$:
  \[
    \begin{split}
      &\|\tO_{\ell_k,k,L_k-1}\cdots\tO_{\ell_k,k,0}\prod_{i=1}^sA\circ \Favg^{i_s}e^{\Phi(\bar\theta_{\ell_k,L_k})}\trhove_{\ell,\wp}\|_\BV=\\
      &=\|\tO_{\ell_k,k,L_k-1}\cdots\tO_{\ell_k,k,i_s+1}A_{s,\bar\imath}\tO_{\ell_k,k,i_s}\cdots A_{1,\bar\imath}\tO_{\ell_k,k,i_1}\cdots\tO_{\ell_k,k,0}e^{\Phi}\trhove_{\ell,\wp}\|_\BV\\
      &\leq (\Const)^{q},
    \end{split}
  \]
  where we have used equations~\eqref{eq:rough-Lchin-bound} and~\eqref{e_chiEstimates}
  (which implies $|\bX_{i,n}|\leq e^{-\const \sigma^2 n}$).  %
  Thus, by Proposition~\ref{p_transferOperators}-\ref{i_propertyDelta2},
  Notation~\ref{not:bookcor} and since, by hypothesis,
  $\|\tO_{\ell_k,k,j}\|_{L^1}\leq e^{\const L_*^{-1}}$,
  \begin{equation}\label{eq:term-uno}
    \begin{split}
      &\|\tO_{\ell_k,k,L_k-1}\cdots\tO_{\ell_k,k,0}[\Psi_{\ell_k,q} -e^{\ve\cK_0}]e^{\Phi}\trhove_{\ell_k,\wp}\|_{L^1}\leq \Const e^{\Phi^+} \ve|\sigma|L_k^2\\
      &\|\tO_{\ell_k,k,L_k-1}\cdots\tO_{\ell_k,k,0}[\Psi_{\ell_k,q} -e^{\ve\cK_0}]e^{\Phi}\trhove_{\ell_k,\wp}\|_\BV\leq \Const.
    \end{split}
  \end{equation}
  Finally, we compute the remaining term on the left hand side of~\eqref{eq:step-2-small}.
  Since Proposition~\ref{p_transferOperators}-\ref{i_estimateRhoComparison} implies
  \begin{equation}\label{eq:K0-rough0}
    \|e^{\ve\cK_0}\trhove_{\ell_k,\wp}-\mathring\rho_{\ell_k}\|_{L^1}\leq \Const \ve L_k^2,
  \end{equation}
  a brute force estimate, as the one above, yields
  \begin{equation}\label{eq:K0-rough}
    \begin{split}
      &\|\tO_{\ell_k,k,L_k-1}\cdots\tO_{\ell_k,k,0}\,e^{\Phi}[e^{\ve\cK_0}\trhove_{\ell_k,\wp}-\mathring\rho_{\ell_k}]\|_{L^1}\leq \Const e^{\Phi^+}\ve L_k^2\\
      &\|\tO_{\ell_k,k,L_k-1}\cdots\tO_{\ell_k,k,0}\,e^{\Phi}[e^{\ve\cK_0}\trhove_{\ell_k,\wp}-\mathring\rho_{\ell_k}]\|_\BV\leq \Const .
    \end{split}
  \end{equation}
  Unfortunately, inequalities~\eqref{eq:K0-rough} yields a mistake is too large for our needs.  We must be a bit more careful and compute the term in more detail.\footnote{ Note that this could be done also for other terms, hence allowing for smaller errors.} It happens that a more precise estimate of the average with respect to Lebesgue will suffice.
  Let us us call $\tO_{\ell,k,j,0}$ the transfer operator $\tO_{\ell,k,j}$ computed for $\sigma=0$. Then, by standard perturbation theory,
  \begin{equation}\label{eq:hate-proofreading}
    \|\tO_{\ell_k,k,L_k-1}\cdots\tO_{\ell_k,k,0}-\tO_{\ell_k,k,L_k-1,0}\cdots\tO_{\ell_k,k,0,0}\|_{L^1}\leq\Const L_k|\sigma|.
  \end{equation}
  Thus, since $\Leb\, \tO_{\ell_k,k,j,0}=\Leb$, by equations~\eqref{eq:K0-rough0},~\eqref{eq:hate-proofreading} and Proposition~\ref{p_transferOperators}-\ref{i_propertyDelta2} we have
  \begin{equation}\label{eq:K0+Lebesgue}
    \begin{split}
      &\Leb\left\{\tO_{\ell_k,k,L_k-1}\cdots\tO_{\ell_k,k,0}\, e^{\Phi(\bar\theta_{\ell_k,L_k})}[e^{\ve\cK_0}\trhove_{\ell_k,\wp}-\mathring\rho_{\ell_k}]\right\}\\
      &=\Leb\,e^{\Phi(\bar\theta_{\ell_k,L_k})}\left\{e^{\ve\cK_0}\trhove_{\ell_k,\wp}-\mathring\rho_{\ell_k}\right\}
      +e^{\Phi^+}\cO(\sigma L_k^3\ve)\\
      &=e^{\Phi(\bar\theta^*_{\ell_k,L_k})}\Leb\,\left\{e^{\ve\cK_0}\trhove_{\ell_k,\wp}-\mathring\rho_{\ell_k}\right\}+e^{\Phi^+}\cO(\sigma L_k^3\ve+\ve^2L_k^2)\\
      &=e^{\Phi^+}\cO(\sigma L_k^3\ve+\ve^2L_k^3).
    \end{split}
  \end{equation}
  The proposition then follows by collecting the previous inequalities, setting
  $\cE^{**}_{\ell_k,2}=\tO_{\ell_k,k,L_k-1}\cdots\tO_{\ell_k,k,0}\,
  e^{\Phi(\bar\theta_{\ell_k,L_k})}[e^{\ve\cK_0}\trhove_{\ell_k,\wp}-\mathring\rho_{\ell_k}]$
  and putting all the other error terms in $\cE^{**}_{\ell_k,1}$.
\end{proof}

\section{Combining many blocks: main result for the small regime}\label{proof-not-so-trivial-1}
This section contains the proof of Proposition~\ref{prop:not-so-trivial-1}, which follows
by iterating the one block estimates obtained in the previous section (i.e.\
Proposition~\ref{prop:not-so-trivial-oneBlock}); we also rely on the results detailed in
Appendix~\ref{subsec:transfer}.  The proof essentially follows from the next technical
lemma. Before stating it let us fix and recall some notation.

Let $t\in\bR_+$ be fixed.  Given $(x,\theta)\in\bT^2$, recall the definitions
$(x_j,\theta_j)=F^j_\ve(x,\theta)$, $\thetaslk{k}=\bar\theta(\ve k,\thetasl)$,
$\thetasl=\Re(\mu_\ell(\theta))$, while
$\bar\theta_{\ell,\ve k}(x)=\bar\theta(\ve k,G_\ell(x))$; finally recall that, as defined
in Section~\ref{sec:birk}, we defined $S_k=\sum_{j=0}^k L_j$ with $S_{-1} = 0$ and
$\ve S_{R-1}=t$.  Moreover, for convenience, let us define, for $0\le \rage\le R-1$:
\begin{equation}\label{eq:hate}
  \cS_{\rage,\ell_\rage}=\sum_{\ell_{\rage+1}\in
    \stdf_{\ell_{\rage}}^{\rage}}\cdots \sum_{\ell_{R}\in \stdf_{\ell_{R-1}}^{R-1}}
  \prod_{j=\rage+1}^{R}\fm_{j-1,\ell_{j-1},\ell_j}.
\end{equation}
Note that, by~\eqref{eq:stating-pointmu} and since, by hypotheses $q\geq 5$:
\begin{equation}\label{eq:hereweare-almost}
\mu_{\ellz}(e^{i\sigma\bA})=\cS_{0,\ell_0}+\cO(\sigma \ve L_*).
\end{equation}
Also we define, for $1\le \rage \le R$,
\begin{equation}\label{eq:Phiragedef}
  \Phi_{\rage}(\theta)=\vei\int_0^{\ve S_{R-1}-\ve S_{\rage-1}}\cG_{\rage}(\theta,s,\sigma) ds
\end{equation}
where $\cG_\rage$ is defined in~\eqref{eq:chi-est-u}.  We will also use the operators
defined in~\eqref{e_defTransferOp}, with the potentials
$\Omega^{\rage,\Phi_{\rage+1}}_{\ell_\rage,j}=i\sigma\varpi^\rage_{\ell_\rage,j}+\ve
\Phi'_{\rage+1}(\thetaslkk{\rage}{L_\rage}) \etaExpsrage_{j,L_\rage}\ho$, where
$\varpi^\rage_{\ell_\rage,j}$ is defined in~\eqref{eq:potdef} (but
see~\eqref{eq:hov-compute} for a more convenient expression).  As in the previous section
we will use indifferently the notations
$\tO_{\ell_\rage,\rage,j}=\tO_{\ell_\rage,j}=\tO_{\thetaslkk{\rage}j,\Omega^{\rage,\Phi_{\rage+1}}_{\ell_\rage,j}}$
for such operators and similarly for all the corresponding related quantities.  To
simplify notations, let\footnote{ This is just a more convenient notation, limited to the
  present context, for the objects $\overline h_{0,L_k-1}$ and $\overline m_{0,L_k-1}$
  defined in Lemma~\ref{lem:product-bound-rough}.}
\begin{equation}\label{eq:obarh-obarm}
\begin{split}
  \overline h_{\ell_\rage}
  &=\sum_{k=0}^{L_\rage-1}\tOqh_{\ell_\rage,L_\rage-1}\cdots\tOqh_{\ell_\rage,k+1}h_{\thetaslkk{\rage}{k},
    \Omega^{\rage,\Phi_{\rage+1}}_{\ell_\rage,k}}\\
  \overline m_{\ell_\rage}%
  &=\sum_{k=0}^{L_\rage-1} m_{\thetaslkk{\rage}{k},\Omega^{\rage,\Phi_{\rage+1}}_{\ell_\rage,k}}
  \widehat \tOq_{\ell_\rage,k-1}\cdots \widehat\tOq_{\ell_\rage,0},\\
  \Gamma(\theta,\sigma)
  &=\Leb \,h_{\theta,i\sigma\ho},
\end{split}
\end{equation}
where $\widehat\tOq_{\ell_\rage,k}$ are the operators introduced in Lemma~\ref{lem:Taylor}
with the normalization  specified in Lemma~\ref{lem:product-bound-rough}.
Remark that, by~\eqref{e_partialhTheta},~\eqref{eq:D(theta,1)-est} and
Lemma~\ref{l_normalization}, $\Gamma\in\cC^2$ and
\begin{equation}\label{eq:partialGamma}
|\partial_\theta \Gamma|+|\partial^2_\theta \Gamma|\leq \Const |\sigma|.
\end{equation}
\begin{lem}\label{sl_inductive}
  There exists $\ve_0>0$ such that, for all $\ve\in(0,\ve_0)$,
  $L\leq \Const \ve^{-3\delta_*}$, $\sigma\in \J1$, $q\geq 3$,
  $\rage\in\{0,\cdots, R-1\}$, $R\leq \Const\vei L_*\invr$, and
  $\ve\sigma^2L(S_{R-1}-S_{\rage-1})\leq \Const$
  \begin{equation}\label{eq:final-palla-hope}
    \cS_{\rage,\ell_\rage}=%
    \overline m_{\ell_\rage}\left(e^{\Phi_\rage\circ G_{\ell_\rage}}%
      \rrho_{\ell_\rage}\right)\Gamma(\thetaslkk{\rage}{S_{R-1}-S_{\rage-1}},\sigma) +\overline\cE_{\rage,\ell_\rage},
  \end{equation}
  where $\overline\cE_{\rage,\ell_\rage}$ is a remainder term satisfying the following
  bound:\footnote{ We use the convention that the inner sums equal $1$ when $k=\rage$.}
  \begin{align*}
    |\overline \cE_{\rage,\ell_\rage}|
    &\leq\Const\sum_{k=\rage}^{R- 1}e^{- (S_{R-1}-S_{k-1}) \sigma^2\bchimin}\sum_{\ell_{\rage+1}\in \stdf_{\ell_{\rage}}^{\rage}} \cdots\sum_{\ell_{k}\in \stdf_{\ell_{k-1}}^{k-1}} \prod_{j=\rage+1}^{k}|\fm_{j,\ell_{j},\ell_{j+1}}|\\
    &\phantom\le\times\left(|\sigma|\ve L_*^3+\ve^2L_*^3+\ve L_*^2|\sigma|^3 (R-\rage)\right),
  \end{align*}
  with $\bchimin=\min_\theta\bVar(\theta)$.
\end{lem}
\begin{proof}
  Let $\Phi^+_{\rage}(\theta) = \max_{\theta}\Re\,\Phi_\rage(\theta)$;
  then~\eqref{e_chiEstimates-bound-from-above} implies that
  \begin{align}\label{eq:phirageboud}
    \Phi^+_{\rage} &\leq -\const\sigma^2\bchimin( S_{R-1}-S_{\rage-1}).
  \end{align}
  Next, we proceed to prove~\eqref{eq:final-palla-hope} by backward induction.  The base
  step $\rage=R-1$ follows from Proposition~\ref{prop:not-so-trivial-oneBlock}, with
  $\Phi=\Phi_{R} = 0$. Indeed,
    \begin{align}\label{eq:vienna-sigh1}
      \cS_{R-1, \ell_{R-1}}%
      &=\Leb\,\sum_{\ell_{R}\in \stdf_{\ell_{R-1}}^{R-1}} \fm_{R-1,\ell_{R-1},\ell_R}\mathring\rho_{\ell_R}\notag\\
      &= \bXss_{\ell_{R-1}}
        \Leb\,\overline h_{\ell_{R-1}} \overline m_{\ell_{R-1}}\,
        \rrho_{\ell_{R-1}}+\Leb[\cE^{**}_{\ell_{R-1},1}+\cE^{**}_{\ell_{R-1},2}]\\
      &= \Leb\,\overline h_{\ell_{R-1}} \overline m_{\ell_{R-1}}
        \left(\bXss_{\ell_{R-1}}
        \rrho_{\ell_{R-1}}\right)+\cO(\ve^2L_{R-1}^3+|\sigma|\ve L_{R-1}^3).\notag
    \end{align}
    Next, using the orthogonality relations between eigenvector and the
    operators $\tOqh$:
  \begin{equation}\label{eq:m-mandh-h}
    \begin{split}
      \Leb\,\overline h_{\ell_{R-1}}&=\Leb\, h_{\thetaslkk{R-1}{L_{R-1}-1},\Omega^{{R-1},0}_{\ell_{R-1},L_{R-1}-1}}\\
      &\phantom{=}+ \sum_{k=0}^{L_{R-1}-2}\left(\Leb\,-m_{\thetaslkk{R-1}{L_{R-1}-1},\Omega^{{R-1},0}_{\ell_{R-1},L_{R-1}-1}}\right)\tOqh_{\ell_{R-1},L_{R-1}-1}\\
      &\phantom{=}\times \cdots\tOqh_{\ell_{R-1},k+1}\left(h_{\thetaslkk{R-1}{k},\Omega^{{R-1},0}_{\ell_{R-1},k}}-h_{\thetaslkk{R-1}{k+1},\Omega^{{R-1},0}_{\ell_{R-1},k+1}}\right),
    \end{split}
  \end{equation}
  Recalling definitions~\eqref{e_defTransferOp} and~\eqref{eq:potdef} we see that
  $\Omega^{{R-1},0}_{\ell_{R-1},L_{R-1}-1}=i\sigma\ho$.  Then, by
  equations~\eqref{eq:obarh-obarm},\eqref{eq:m-mandh-h},
  Sub-Lemmata~\ref{sublem:perturbation-fancy},~\ref{sublem:Q-product}
  and~\eqref{e_estimatePartialm}, we have
  \begin{equation}\label{eq:gamma-leb-h}
    \Leb\,\overline h_{\ell_{R-1}}=\Gamma(\thetaslkk{R-1}{L_{R-1}-1},\sigma)+\cO(|\sigma| \ve).
  \end{equation}
  In addition, by~\eqref{eq:chi-est-u} we have
  \begin{align*}
    \bXss_{\ell_{R-1}}= \expo{\Phi_{R-1}(\theta^*_{\ell_{R-1}})}+\cO(|\sigma|\ve L_{R-1}).
  \end{align*}
  On the other hand, since $\Phi_{R-1}$ is the integral of
  $\bchi(t-S_{R-2},s,\theta^*_{\ell_{R-1}},\bar\theta(s,\theta^*_{\ell_{R-1}}))$,\footnote{
    Which is the logarithm of the maximal eigenvalue associated to the potential
    $i\sigma\BOmega(t-S_{R-2},s,\theta^*_{\ell_{R-1}},\cdot,\bar\theta(s,\theta^*_{\ell_{R-1}}))$
    with respect to the dynamics $f_{\bar\theta(s,\theta^*_{\ell_{R-1}})}(\cdot)$,
    see~\eqref{eq:Bomega-def} and related comments.}  and since $\BOmega$ is zero-average
  with respect to the SRB measure (see~\eqref{eq:Bomega-def}), we can
  use~\eqref{e_estimatesecondpartchi0} to obtain:
  \begin{equation}\label{eq:chider-phi}
    \|\partial_\theta\bchi(\sigma,T,s,\vf,\theta)\|\leq \Const\sigma^2.
  \end{equation}
  On the other hand,~\eqref{eq:pvarrhochi} and Lemma~\ref{l_derivativess} similarly imply
  \begin{equation}\label{eq:chider-theta}
    \|\partial_\vf\bchi(\sigma,T,s,\vf,\theta)\|\leq \Const\sigma^2.
  \end{equation}
  The above equations yield, for any
  $x\in[a_{\ell_{R-1}},b_{\ell_{R-1}}]$,
  \begin{align*}
    |\Phi_{R-1}(\theta^*_{\ell_{R-1}})-\Phi_{R-1}\circ G_{\ell_{R-1}}(x)|\leq\Const L_{R-1}\sigma^2\ve,
  \end{align*}
  hence
  \begin{equation}\label{eq:vienna-sigh2}
    \bXss_{\ell_{R-1}}= \expo{\Phi_{R-1}\circ G_{\ell_{R-1}}}+\cO(\sigma\ve L_{R-1}).
  \end{equation}
  Collecting equations~\eqref{eq:vienna-sigh1},~\eqref{eq:gamma-leb-h}
  and~\eqref{eq:vienna-sigh2} proves the case $\rage=R-1$.

  Next, let us assume that~\eqref{eq:final-palla-hope} holds for
  $\rage+1\leq R-1$, then
  \begin{align*}
      \cS_{\rage,\ell_\rage}
    &=\sum_{\ell_{\rage+1}\in \stdf_{\ell_{\rage}}^{\rage}} \fm_{\rage,\ell_{\rage},\ell_{\rage+1}}\cS_{\rage+1,\ell_{\rage+1}}
      =\sum_{\ell_{\rage+1}\in \stdf_{\ell_{\rage}}^{\rage}} \fm_{\rage,\ell_{\rage},\ell_{\rage+1}}\\
      &\phantom = \times \overline m_{\ell_{\rage+1}}\left(\expo{\Phi_{\rage+1}\circ G_{\ell_{\rage+1}}}\mathring \rho_{\ell_{\rage+1}}\right)\Gamma(\thetaslkk{\rage+1}{S_{R-1}-S_{\rage}},\sigma) \\
      &\phantom = +\sum_{\ell_{\rage+1}\in \stdf_{\ell_{\rage}}^{\rage}} \fm_{\rage,\ell_{\rage},\ell_{\rage+1}}\overline\cE_{\rage+1,\ell_{\rage+1}}.
  \end{align*}
  To continue, it is necessary to remove the dependence of $\overline m_{\ell_{\rage+1}}$
  and $\Gamma$ on $\ell_{\rage+1}$ in such a way that we can apply
  Proposition~\ref{prop:not-so-trivial-oneBlock}. This will be done in two steps: first
  notice that for $(x,\theta)$ in the support of
  $\mu_{\ell_\rage}$,\footnote{ In fact, using large deviations, it is possible to have a
    better estimate with large probability. We will not push this possibility as it is not
    needed for the level of precision we are currently after.}
  \begin{equation}\label{eq:theta-bartheta-baby}
    |\theta_{L_\rage}-\bar\theta(\ve L_\rage,\avgtheta{\ell_{\rage}}) |\leq \Const \ve L_\rage.
  \end{equation}
  Accordingly, using~\eqref{eq:partialGamma},
  \begin{align*}
    |\Gamma(\thetaslkk{\rage+1}{S_{R-1}-S_{\rage}},\sigma)-\Gamma(\thetaslkk{\rage}{S_{R-1}-S_{\rage-1}},\sigma)|\leq
    \Const \ve|\sigma| L_\rage,
  \end{align*}
  thus
  \begin{equation}\label{eq:S-step-one}
    \begin{split}
      &\cS_{\rage,\ell_\rage}=e^{\Phi^+_{\rage+1}}\cO(\ve|\sigma| L_\rage)+\sum_{\ell_{\rage+1}\in \stdf_{\ell_{\rage}}^{\rage}} \fm_{\rage,\ell_{\rage},\ell_{\rage+1}}\overline\cE_{\rage+1,\ell_{\rage+1}}\\
      & +\sum_{\ell_{\rage+1}\in \stdf_{\ell_{\rage}}^{\rage}} \hskip-.2cm\overline
      m_{\ell_{\rage+1}}\left(\expo{\Phi_{\rage+1}\circ
          G_{\ell_{\rage+1}}}\fm_{\rage,\ell_{\rage},\ell_{\rage+1}}\mathring
        \rho_{\ell_{\rage+1}}\right)\Gamma(\thetaslkk{\rage}{S_{R-1}-S_{\rage-1}},\sigma).
    \end{split}
  \end{equation}
  Before continuing we need a bound on $\Phi_\rage$.
  \begin{sublem}\label{sublem:Phi-rage-bound} For any $j\in\{0,\cdots, R-1\}$ we have
    \begin{align*}
      \|\Phi'_j\|\nc0&\le \Const \sigma^2 (S_{R-1}-S_{j-1})\\
      \|\Phi'_j\|\nc1&\le \Const |\sigma| (S_{R-1}-S_{j-1}) .
    \end{align*}
  \end{sublem}
  \begin{proof}
    By equations~\eqref{eq:chider-theta} and~\eqref{eq:chider-phi} it follows
    \begin{align*}
      |\Phi'_j(\theta)|
      &\le \Const \vei\int_0^{\ve S_{R-1}-\ve S_{j-1}}|\partial_\theta\bchi(\sigma, t-s-\ve S_{j-1},\bar\theta(s,\theta),\bar\theta(s,\theta))| ds\\
      &\phantom\le+ \Const \vei\int_0^{\ve S_{R-1}-\ve S_{j-1}}|\partial_\vf\bchi(\sigma, t-s-\ve S_{j-1},\bar\theta(s,\theta),\bar\theta(s,\theta))| ds\\
      &\phantom\le \Const \sigma^2 (S_{R-1}-S_{j-1}).
    \end{align*}
    This proves the first inequality, the second is obtained similarly by using the above
    formulae,~\eqref{e_derivative-varrho} and Lemma~\ref{eq:theta-derivative-all} (in
    particular~\eqref{e_estimatesecondpartchi}).
  \end{proof}
  Note that Lemma~\ref{lem:Taylor}\ref{i_chiEstimate} implies that
  $\Phi_\rage^+\le-\Const\sigma^2(R-\rage) L_*$.  Moreover,
  Sub-Lemma~\ref{sublem:Phi-rage-bound}, together with our hypotheses on $\rage$, implies
  that the hypotheses of Proposition~\ref{p_transferOperators},
  Lemma~\ref{lem:product-bound-rough} and Lemma~\ref{prop:not-so-trivial-oneBlock}, are
  all satisfied for $\Phi=\Phi_\rage$. We can therefore apply all such results to the
  present situation.

  Observe, moreover, that Sub-Lemma~\ref{sublem:Phi-rage-bound} and the definition of
  $\Omega^{\rage+1,\Phi_{\rage+2}}_{\ell_{\rage+1},k}$ imply:
  \begin{align*}
    \|\Omega^{\rage+1,\Phi_{\rage+2}}_{\ell_{\rage+1},k}-\Omega^{\rage+1,\Phi_{\rage+2}}_{\ell_{\rage+1},k-1}\|\nc1
    &\le \Const (\ve|\sigma|+\ve^2\sigma^2L_*(R-\rage))\\
    &\le \Const \ve|\sigma|,
  \end{align*}
  where we used the fact that by definition $(R-\rage) < R = \vei L_*\invr$ and that since
  $\sigma\in\J1$ we have $\sigma^2 < |\sigma|$.  Similarly, using~\eqref{eq:hov-Xi} we  have
  \begin{align*}
    \|\Omega^{\rage+1,\Phi_{\rage+2}}_{\ell_{\rage+1},0}-\Omega^{\rage,\Phi_{\rage+1}}_{\ell_{\rage},L_\rage-1}\|\nc1
    &\leq|\sigma| \left| \etaExph(t-\ve S_\rage,\avgtheta{\ell_{\rage+1}})-\etaExph(t-\ve S_\rage,\thetaslkk{\rage}{L_\rage})\right|\|\ho\|\nc1\\
    &\phantom = +\Const (\ve^2|\sigma|L_*+\ve^2 |\sigma| L_*^2(R-\rage))\\
    &\leq\Const (\ve|\sigma| L_*+\ve^2 |\sigma| L_*^2(R-\rage))\\
    &\leq\Const \ve|\sigma| L_*
  \end{align*}
  We can now take care of $\overline m_{\ell_{\rage+1}}$: observe that, by
  applying~\eqref{e_estimatesPartialRho-m} and recalling footnote~\ref{fot:lower} and Lemma~\ref{lem:m-theta-BV}
  \begin{align}
    \notag
    \overline m_{\ell_{\rage+1}}(\vf)
    &=m_{\thetaslkk{\rage+1}{0},\Omega^{\rage+1,\Phi_{\rage+2}}_{\ell_{\rage+1},0}}(\vf)\\\notag
    &\phantom{=}+\sum_{k=1}^{L_{\rage+1}-1}
      \left(m_{\thetaslkk{\rage+1}{k},\Omega^{\rage+1,\Phi_{\rage+2}}_{\ell_{\rage+1},k}}
            -m_{\thetaslkk{\rage+1}{k-1},\Omega^{\rage+1,\Phi_{\rage+2}}_{\ell_{\rage+1},k-1}}\right)
      \left(\widehat \tOq_{\ell_{\rage+1},k-1}\cdots \widehat\tOq_{\ell_{\rage+1},0}\vf\right)\\\notag
    &= m_{\thetaslkk{\rage+1}{0},\Omega^{\rage+1,\Phi_{\rage+2}}_{\ell_{\rage+1},0}}(\vf)+\cO(\ve|\sigma|(\log\sigma)^2)\|\vf\|_\BV;
  \end{align}
and applying once again~\eqref{e_estimatesPartialRho-m} and Lemma~\ref{lem:m-theta-BV} together
      with~\eqref{eq:theta-bartheta-baby}
\begin{equation}
\begin{split}
       \overline m_{\ell_{\rage+1}}(\vf)=&\; m_{\thetaslkk{\rage}{L_\rage-1},\Omega^{\rage,\Phi_{\rage+1}}_{\ell_\rage,L_\rage-1} }(\vf)
       +\cO\big(\ve|\sigma|(\log\sigma)^2L_* \big)\|\vf\|\nBV\label{eq:change-m}.
\end{split}
\end{equation}
  We can now continue with the estimate we left at~\eqref{eq:S-step-one} and
  obtain
  \[
    \begin{split}
      \cS_{\rage,\ell_\rage} &=e^{\Phi^+_{\rage+1}}\cO(\ve|\sigma|(\log\sigma)^2
      L_\rage) +\sum_{\ell_{\rage+1}\in \stdf_{\ell_{\rage}}^{\rage}}
      \fm_{\rage,\ell_{\rage},\ell_{\rage+1}}\overline\cE_{\rage+1,\ell_{\rage+1}}\\ %
      &\phantom =  + m_{\thetaslkk{\rage}{L_\rage-1},\Omega^{\rage,\Phi_{\rage+1}}_{\ell_\rage,L_\rage-1} }\left(\sum_{\ell_{\rage+1}\in \stdf_{\ell_{\rage}}^{\rage}}e^{\Phi_{\rage+1}\circ G_{\ell_{\rage+1}}}\fm_{\rage,\ell_{\rage},\ell_{\rage+1}}\mathring \rho_{\ell_{\rage+1}}\right)\\
      &\phantom = \times\Gamma(\thetaslkk{\rage}{S_{R-1}-S_{\rage-1}},\sigma).
    \end{split}
  \]
  Finally we can apply Proposition~\ref{prop:not-so-trivial-oneBlock} with $\bar s=S_{R-1}-S_{\rage-1}$:
  \[
    \begin{split}
      \cS_{\rage,\ell_\rage}= &e^{\Phi^+_{\rage+1}}\cO\left(|\sigma| (\log\sigma)^2\ve L_\rage \right)
      +\sum_{\ell_{\rage+1}\in \stdf_{\ell_{\rage}}^{\rage}}
      \fm_{\rage,\ell_{\rage},\ell_{\rage+1}}\overline\cE_{\rage+1,\ell_{\rage+1}}
      \\
      &+e^{\vei\int_0^{\ve L_\rage}\cG_\rage(\avgtheta{\ell_\rage},s,\sigma)\deh s}\Gamma(\thetaslkk{\rage}{S_{R-1}-S_{\rage-1}},\sigma) \\
      &\times m_{\thetaslkk{\rage}{L_\rage-1},\Omega^{\rage,\Phi_{\rage+1}}_{\ell_\rage,L_\rage-1} }(\overline h_{\ell_\rage})\overline m_{\ell_\rage}\left(e^{\Phi_{\rage+1}(\bar\theta_{\ell_\rage,L_\rage})}\mathring \rho_{\ell_\rage}\right)\\
      &+\cO\left(m_{\avgtheta{\ell_\rage},\Omega^{\rage,\Phi_{\rage+1}}_{\ell_\rage,L_\rage-1}
        }(\cE^{**}_{\ell_\rage,1})\right)
      +\cO\left(m_{\avgtheta{\ell_\rage},\Omega^{\rage,\Phi_{\rage+1}}_{\ell_\rage,L_\rage-1}
        }(\cE^{**}_{\ell_\rage,2})\right),
    \end{split}
  \]
  where $\cG_\rage$ is defined in~\eqref{eq:chi-est-u}.
  Observe that, by definition:
  \begin{align*}
    \Phi_{\rage+1}(\bar\theta_{\ell_\rage,L_\rage})
    &= \vei\int_{0}^{\ve(S_{R-1}-S_{\rage})}
      \bchi(\sigma,t- s-\ve
      S_{\rage}),0,\bar\theta(s,\bar\theta_{\ell_\rage,L_\rage}),\bar\theta(s,\bar\theta_{\ell_\rage,L_\rage})ds\\
    &= \vei\int_{\ve L_\rage}^{\ve(S_{R-1}-S_{\rage-1})}
      \bchi(\sigma,t- s'-\ve S_{\rage-1},0,\bar\theta(s',G_{\ell_\rage}),\bar\theta(s',G_{\ell_\rage}))ds'
  \end{align*}
  Hence, using~\ref{lem:Taylor}\ref{i_chiEstimate}, we can write
  \[
    \begin{split}
      &\Phi_{\rage+1}(\bar\theta_{\ell_\rage,L_\rage})+\vei\int_0^{\ve L_\rage}\cG_\rage(\avgtheta{\ell_\rage},s,\sigma)\deh s\\
      &=\vei\int_{\ve L_\rage}^{\ve(S_{R-1}-S_{\rage-1})}\bchi(\sigma, t-s-\ve S_{\rage-1},0,\bar\theta(s,G_{\ell_\rage}),\bar\theta(s,G_{\ell_\rage})) \deh s\\
      &\phantom{=}
      +\vei\int_0^{\ve L_\rage}\bchi(\sigma, t-s-\ve S_{\rage-1},0,\bar\theta(s,G_{\ell_\rage}),\bar\theta(s,G_{\ell_\rage}))\deh s +\cO(\ve |\sigma|L_\rage)\\
      &=\Phi_\rage\circ G_{\ell_{\rage}}+\cO(\ve|\sigma| L_*).
    \end{split}
  \]
  Finally, recalling~\eqref{eq:obarh-obarm},
  \[
    m_{\thetaslkk{\rage}{L_\rage-1},\Omega^{\rage,\Phi_{\rage+1}}_{\ell_\rage,L_\rage-1} }(\overline h_{\ell_\rage})=1
  \]
  from which the lemma readily follows by using Lemma~\ref{lem:m-estimate}
  (observe\footnote{\label{f_sigma100} Since the choice of the power $100$ in
    Lemma~\ref{lem:m-estimate} is arbitrary (see Footnote~\ref{f_sigma100-appendix}), one
    could in principle work with values of $\delta_*$ smaller than $\efrac1{99}$, if
    needed.} that $\sigma^{100} < \sigma\ve$ since $\delta_* > \efrac1{99}$ and
  $|\sigma| < \ve^{\delta_*}$) and the bounds on $\cE^{**}_{\ell_\rage,i}$ provided in
  Proposition~\ref{prop:not-so-trivial-oneBlock}.
\end{proof}
We are now, finally, ready to prove the very last missing piece in our argument.
\begin{proof}[{\bf Proof of Proposition~\ref{prop:not-so-trivial-1}}]
  The basic idea is to apply Lemma~\ref{sl_inductive}.  Unfortunately,
  Lemma~\ref{sl_inductive} holds only under the additional hypothesis
  $\ve\sigma^2L_* (S_{R-1}-S_{\rage-1})\leq \Const$. Note that if
  $|\sigma|\leq \ve^{2\delta_*}$, then
  \[
    \ve\sigma^2L_* (S_{R-1}-S_{\rage-1})\leq\Const \ve^{4\delta_*-3\delta_*}\leq \Const \ve^{\delta_*}.
  \]
  Yet, if $|\sigma|\in [\ve^{2\delta_*},\ve^{\delta_*}]$, we can apply Lemma~\ref{sl_inductive} only for
  \begin{equation}\label{eq:S-bound-fuck}
    (S_{R-1}-S_{\rage-1})\leq\Const \ve^{-1+\delta_*}.
  \end{equation}
  So, choose $\rage_\ve$ such that $S_{R-1}-S_{\rage_\ve-1}=\Const \ve^{-1+\delta_*}$.
  Then, for $|\sigma|\in [\ve^{2\delta_*},\ve^{\delta_*}]$, we can
  rewrite~\eqref{eq:stating-pointmu}, with $q\geq 5$, and~\eqref{eq:hate} as follows
    \begin{align*}
    \mu_\ellz(e^{i\sigma\bA})=\sum_{\ell_{1}\in
       \stdf_{\ell_{0}}^{0}}\cdots\sum_{\ell_{\rage_\ve}\in
      \stdf_{\ell_{\rage_\ve-1}}^{\rage_\ve-1}}
    \prod_{j=1}^{\rage_\ve}\fm_{j-1,\ell_{j-1},\ell_j}\cS_{\rage_\ve,\ell_{\rage_\ve}}+\cO(\ve\sigma
    L_*).
    \end{align*}
  Hence, by~\eqref{eq:trivial-fm-estimate}, we can bound
  \begin{align*}
    \left|\mu_\ellz(e^{i\sigma\bA})\right|\leq
    \sup_{\ell_{\rage_\ve}}\left|\cS_{\rage_\ve,\ell_{\rage_\ve}}\right|+\Const |\sigma|\ve
    L_*.
    \end{align*}
    We can now apply Lemma~\ref{sl_inductive} and~\eqref{e_chiEstimates} to write
    \begin{align*}
      \left|\cS_{\rage_\ve,\ell_{\rage_\ve}}\right|\leq \Const e^{-\const(S_{R-1}-S_{\rage_\ve-1})\sigma^2}+|\overline\cE_{\rage_\ve,\ell_{\rage_\ve}}|
      \leq \Const e^{-\const \ve^{-1+3\delta_*}}+|\overline\cE_{0,\ell_{0}}|.
    \end{align*}
  Collecting the above facts yields
  \begin{align*}
    |\mu_{\ellz}(e^{i\sigma\bA})|&\le
    \overline\cE_{0,\ellz}+\cO\left(\ve \sigma L_*+e^{-\const \ve^{-1+3\delta_*}}\right)\\
    &\le\overline\cE_{0,\ellz}+\cO\left(\ve \sigma L_*\right).
  \end{align*}
  In particular, since for $|\sigma|\in[\ve^{2\delta_*},\ve^{\delta_*}]$:
  \begin{align*}
    \expo{- \frac{\sigma^2}{2\ve}\Var_t^2(\theta_0)} \le \Const e^{-\const
    \ve^{-1+4\delta_*}}\le\Const\ve|\sigma|L_*,
  \end{align*}
  we have:
  \begin{align}\label{eq:not-so-small-sigma}
    \mu_{\ellz}(e^{i\sigma\bA})
&=\expo{- \frac{\sigma^2}{2\ve}\Var_t^2(\theta_0)} +
    \overline\cE_{0,\ellz}+\cO\left(\ve \sigma L_*\right).
  \end{align}
  Next, we consider the case $|\sigma|\leq \ve^{2\delta_*}$, hence Lemma~\ref{sl_inductive}
  holds with $\rage=0$.  Accordingly, since $q\ge5$, we can
  apply~\eqref{eq:hereweare-almost} that implies:
  \begin{align*}
    \mu_{\ellz}(e^{i\sigma\bA})
    &=\overline m_{\ellz}(e^{\Phi_0\circ G_{\ell_0}} \rrho_{\ellz})\Gamma(\thetaslkk{0}{t\vei},\sigma) +\overline\cE_{0,\ellz}+\cO(\sigma \ve L_*).
  \end{align*}
  Note that definition~\eqref{eq:Phiragedef} and
  equations~\eqref{eq:chider-phi},~\eqref{eq:chider-theta}, using~\ref{sublem:Phi-rage-bound}, give
    \begin{align*}
    \|\Phi_0\circ G_{\ell_0}-\Phi_0(\avgtheta{\ellz})\|\nc{1}\leq \Const \sigma^2\leq \Const\ve^{2\delta_*}
  \end{align*}
  and, by~\eqref{e_chiEstimates} and recalling definitions~\eqref{eq:Deltabloks} and~\eqref{e_variancelclt},
  \[
    \begin{split}
      \ve\Phi_0(\avgtheta{\ellz})
      =&- \frac{\sigma^2}{2}\int_0^t \etaExph(t- \ve-s,\bar\theta(s+\ve,\avgtheta{\ellz}))^2\bVar(\bar\theta(s,\avgtheta{\ellz}))ds+ \cO(\sigma^3)\\
      =&- \frac{\sigma^2}{2}\Var_t^2(\theta_0)+ \cO(\sigma^3+\sigma^2\ve).
    \end{split}
  \]
  In addition, by computations similar to~\eqref{eq:change-m}
  and using Lemma~\ref{lem:m-estimate}, we have
  \begin{align*}
    \overline m_{\ellz}\,\mathring\rho_{\ellz}%
    =m_{\thetaslz,\Omega^{0,\Phi_{0}}_{\ell_0,L_0-1}
    }(\mathring\rho_{\ellz})+\cO(\ve\sigma(\log\sigma)^{2}L_*)=\Leb(\mathring\rho_{\ellz})+\cO(\sigma\log|\sigma|^{-1
   }).
  \end{align*}
  Also, recalling the definition~\eqref{eq:obarh-obarm} and using Lemma~\ref{l_normalization} we have
  \[
    \Gamma(\thetaslkk{0}{t\vei},\sigma)=1+\cO(\sigma).
  \]
  Accordingly, recalling~\eqref{eq:not-so-small-sigma}, for all $\sigma\in \J1$ we have
  \begin{align*}
      \mu_{\ellz}(e^{i\sigma\bA})&=\expo{-
        \frac{\sigma^2}{2\ve}\Var_t^2(\theta_0)}\left(1+\cO(\sigma^3\vei+\sigma^2+\sigma\log|\sigma|^{-1})\right)
      +\overline\cE_{0,\ellz}+\cO(\sigma \ve L_*).
  \end{align*}
  Note that
  \begin{align*}
    &\frac 1{2\pi\ve} \int_{\J1}\expo{- \frac{\sigma^2}{2\ve}\Var_t^2(\theta_0)}
    (\sigma^3\vei+\sigma^2+\sigma\log|\sigma|^{-1})\deh \sigma \\
    &=\frac 1{2\pi} \int_{\bR}\expo{- \frac{\eta^2}{2}\Var_t^2(\theta_0)}
    (\eta^3+\ve\eefrac12\eta^2+\eta\log|\eta\veh|^{-1})\deh \eta = \cO( \log\vei).
  \end{align*}
  To conclude the proof of the proposition it then suffices to estimate the integral of
  $\overline\cE_{0,\ellz}$. This is easily done by noting that, for all $p,q\in\bN$,
  \begin{align*}
    \frac 1{2\pi\ve} \int_{\J1}\sum_{k=0}^{R-1}e^{-\const\sigma^2
    (R-k) L}(R-k)^qL^q |\sigma|^p\deh \sigma
    &\le \frac{\Const}{\ve}\sum_{k=1}^{R}\int_{\bR} e^{-\const \eta^2}\frac{|\eta|^p}{(k L)^{\efrac{(p+1)}2-q}} \deh \eta\\
    & \leq \frac{\Const}{ \ve L^{\efrac{(p+1)}2-q}}\sum_{k=1}^{R} k^{-(p+1)/2+q}.
  \end{align*}
  Thus
  \[
    \int_{\J1}\sum_{k=0}^{R-1} \frac {|\sigma|^p(R-k)^qL^q}{2\pi\ve \expo{\const\sigma^2
        (R-k) L}}\deh \sigma\leq \begin{cases}\Const \ve^{-2-q+\efrac {(p+1)}2}L^{-1}&\textrm{ for }p < 1+2q\\
      \Const\vei L^{-1}\log(\vei)&\textrm{ for }p=1+2q\\
      \Const\vei L^{q-\efrac{(p+1)}2}&\textrm{ for }p>1+2q.
    \end{cases}
  \]
  We can now apply the above estimates to compute the integrals of the various
  contributions to $\overline\cE_{0,\ellz}$ obtaining

  \begin{align*}
    &\frac 1{2\pi\ve} \int_{\J1}\sum_{k=0}^{R-1}e^{-\const\sigma^2
      (R-k) L}[\sigma \ve L_*^3] \deh\sigma=\cO(L_*^2\log\ve^{-1})&&&&\text{\tiny $(p,q)=(1,0)$}\\
    &\frac 1{2\pi\ve} \int_{\J1}\sum_{k=0}^{R-1}e^{-\const\sigma^2
      (R-k) L}[\ve^2 L_*^3] \deh\sigma=\cO(\ve\eefrac12L_*^2)&&&&\text{\tiny $(p,q)=(0,0)$}\\
    &\frac 1{2\pi\ve} \int_{\J1}\sum_{k=0}^{R-1}e^{-\const\sigma^2
      (R-k) L}[\sigma^3 \ve L_*^2(R-\rage)] \deh\sigma=\cO(\log\vei)&&&&\text{\tiny $(p,q)=(3,1)$}\\
  \end{align*}
  which prove the proposition.
\end{proof}
\appendix
\section{Spectral theory for transfer operators: a toolbox}\label{subsec:transfer}
In this appendix we collect some known and less known (or possibly unknown) results on
transfer operators that are used in the main part of this paper.  Let us fix $r\ge 3$ and
let $f(\cdot,\theta) = f_\theta\in\cC^r(\bT,\bT)$, with $\theta\in\bT$, be a one parameter
family of orientation preserving expanding maps (i.e., there exists $\lambda>1$ such that
$\inf_{x,\theta}f'_\theta(x)\ge\lambda$).  Let
$\Omega(\cdot,\theta) = \Omega_\theta\in\cC^{r}(\bT,\bC)$ be a family of
\emph{potentials}.  We further assume some regularity\footnote{ The requirements on
  regularity are not optimal, but rather reflect our case of interest.} in $\theta$; more
precisely we require that $f\in\cC^r(\bT^2,\bT)$ and that
$\partial_x\Omega\in\cC^{r-1}(\bT^2,\bC)$.  For any $\paramL\in\bR$ we can then consider
the family of operators $\tO_{\theta,\paramL\Omega}$ defined as:
\begin{equation}\label{eq:defLOp}
  \left[\tO_{\theta,\paramL\Omega}\, g\right](x)=\sum_{y\in f_\theta\invr(x)}
    \frac{e^{\paramL\Omega_\theta(y)}}{f_\theta'(y)}g(y).
\end{equation}
It is well know that the spectrum of such operators depends drastically on the space on
which they act.  We will be interested in $\BV$, $W^{s,1}$ and $\cC^{s}$, for $s\le r-1$.
\begin{irem}
  We will use $\cC^s$ (and similarly for the other spaces) as a shorthand notation for
  $\cC^s(\bT)$ (which in turn is a shorthand notation for $\cC^s(\bT,\bR)$).  When we need
  to consider functions defined on $\bT^2$ we will write explicitly $\cC^s(\bT^2)$.
\end{irem}

\subsection{General facts}\ \newline
Let us start with a useful result for the case of real potentials.
\begin{lem}\label{lem:quasicompact}
  If $\Omega$ is real, then for any $\paramL\in\bR, \theta\in \bT$, the operator
  $ \tO_{\theta,\paramL\Omega}:\cC^1\to\cC^1$ is of Perron--Frobenius type.  That is, it
  has a simple maximal eigenvalue $e^{\chi\thvs}$ with left and right eigenvectors that we
  denote with $m\thvs$ and $h\thvs$ (respectively), normalized so that
  $m\thvs( h\thvs)=1$.

  In addition, $m\thvs$ is a positive measure; $ h\thvs>0$ and
  \begin{align*}
    \|(\log h\thvs)'\|_\infty\leq \Const (|\paramL|\|\Omega_\theta'\|_\infty+1).
  \end{align*}
  Also, the spectral gap is continuous in $\paramL,\theta$ and the leading eigenvalue and
  eigenprojector are analytic in $
  \paramL$ and differentiable in $\theta$.
\end{lem}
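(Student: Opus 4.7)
The plan is to follow the standard three-step route for transfer operators of expanding maps: a Lasota-Yorke inequality, quasi-compactness via Hennion's theorem, and Perron-Frobenius structure coming from positivity; perturbation theory is then layered on top for the smoothness claims.

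First I would establish a Lasota-Yorke inequality for $\tO\thvs$ on $\cC^1$. Differentiating the defining sum~\eqref{eq:defLOp} by the chain rule, each inverse branch contributes a factor $f_\theta'(y)^{-1}\leq\lambda^{-1}$ when the derivative falls on $g$, plus lower-order contributions coming from $\paramL\Omega_\theta'$ and $f_\theta''$. Iterating produces
\[
\|\tO\thvs^{\,n} g\|_{\cC^1}\leq C\,e^{n\chi_n}\bigl(\lambda^{-n}\|g\|_{\cC^1}+C_n\|g\|_{\cC^0}\bigr),
\]
with $e^{n\chi_n}=\|\tO\thvs^{\,n}1\|_{\cC^0}$ controlled by the Birkhoff sum of $\paramL\Omega_\theta$. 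Combined with the compact embedding $\cC^1\hookrightarrow\cC^0$, Hennion's theorem yields that the essential spectral radius of $\tO\thvs$ on $\cC^1$ is at most $\lambda^{-1}$ times its spectral radius, so $\tO\thvs$ is quasi-compact with only finitely many peripheral eigenvalues.

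Next, since $\Omega_\theta$ is real, $\tO\thvs$ preserves the positive cone of $\cC^1$, and because $f_\theta$ is a topologically mixing expanding map of $\bT$ there exists $N$ such that, for every nonzero $g\geq 0$, the iterate $\tO\thvs^{\,N}g$ is pointwise strictly positive. Standard Perron-Frobenius / Birkhoff-cone arguments (\`a la Ruelle) then give that the spectral radius $e^{\chi\thvs}$ is a simple isolated eigenvalue of $\tO\thvs$, with a strictly positive $\cC^1$-eigenfunction $h\thvs$, and that the rest of the spectrum lies in a strictly smaller disk, producing the spectral gap. The left eigenvector $m\thvs$ is then obtained as a weak-$*$ limit of $e^{-n\chi\thvs}\tO\thvs^{*\,n}\Leb$, hence automatically a positive Borel measure. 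For the smoothness in $\paramL$, note that $\paramL\mapsto\tO\thvs$ is entire in operator norm on $\cC^1$ (the weight $e^{\paramL\Omega_\theta}$ is entire in $\paramL$ by uniform convergence of its Taylor series), so Kato's analytic perturbation theorem applies to the isolated simple eigenvalue and gives analyticity of $\chi\thvs$ and of the rank-one spectral projector $h\thvs\otimes m\thvs$. For the $\theta$-dependence, continuity of $\theta\mapsto\tO\thvs$ in operator norm on $\cC^1$ is immediate from~\eqref{eq:defLOp} since $f_\theta,\Omega_\theta$ are $\cC^1$ in $\theta$, which yields continuity of the spectral gap; differentiability is then obtained by differentiating the Dunford contour integral $\Pi\thvs=\frac{1}{2\pi i}\oint(z-\tO\thvs)^{-1}\,\deh{}z$ around a small circle enclosing only $e^{\chi\thvs}$.

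The main subtlety is the loss-of-derivative issue in the $\theta$-direction: $\partial_\theta\tO\thvs$ is naturally bounded only from $\cC^2$ to $\cC^1$, because the inverse branches of $f_\theta$ move with $\theta$. This is handled by working in the scale of spaces $\cC^2\subset\cC^1\subset\cC^0$ and exploiting the fact that $\tO\thvs$ has a one-step regularizing effect, so that $\theta\mapsto(z-\tO\thvs)^{-1}$ can be shown norm-differentiable as a map $\cC^1\to\cC^1$ once one or two iterations have been absorbed into the resolvent identity; equivalently, a Keller-Liverani-type perturbation statement applies directly. Since the spectral projector is finite-rank, this is enough to deduce its $\theta$-differentiability in $\cC^1$-norm, and the differentiability of $\chi\thvs$ follows from that of $\Pi\thvs$.
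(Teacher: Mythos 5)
Your proposal is correct in substance but reaches the core spectral statement by a genuinely different route than the paper. You go through a Lasota--Yorke inequality on $\cC^1$, Hennion's theorem for quasi-compactness, and then positivity-improving/Perron--Frobenius arguments to isolate a simple maximal eigenvalue; the paper instead works directly with the Birkhoff--Hilbert cone $\cK_a=\{g\in\cC^1:\ |g'|\leq a g\}$, shows $\tO\thvs\cK_a\subset\cK_{\nu a}$ with image of finite projective diameter, and gets simplicity of the leading eigenvalue, the spectral gap, and explicit contraction rates in one stroke from Birkhoff's theorem; the measure property of $m\thvs$ then drops out of the iterated cone estimate $|m\thvs(g)|\leq\Const\lambda^{-n}\|g'\|_\infty+C_\theta(1+|\paramL|)\|g\|_\infty$, whereas you obtain it as a weak-$*$ limit of $e^{-n\chi\thvs}\tO\thvs^{*\,n}\Leb$ (also fine). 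The cone route buys uniform, quantitative control of the gap in $(\theta,\paramL)$, which the paper exploits elsewhere; your route is more modular and generalizes more readily to weights that are not positivity-preserving. For the parameter dependence the two arguments essentially coincide: analyticity in $\paramL$ is Kato perturbation of an isolated simple eigenvalue in both, and for $\theta$ both rely on a Gou\"ezel--Liverani/Keller--Liverani type statement in the scale $\cC^2\subset\cC^1\subset\cC^0$ (the paper verifies the hypotheses of \cite[Theorem 8.1]{GL06} via the Taylor expansion~\eqref{eq:basic-taylor} and the bounds~\eqref{eq:der-t-bound}).

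One point in your write-up is imprecise and would not survive as stated: the claim that $\theta\mapsto\tO\thvs$ is continuous \emph{in operator norm on} $\cC^1$, and likewise that the resolvent becomes norm-differentiable $\cC^1\to\cC^1$ after absorbing a few iterates. Because the inverse branches move with $\theta$, the difference $\tO_{\theta+s,\paramL\Omega}-\tO_{\theta,\paramL\Omega}$ is only small as an operator $\cC^{n+1}\to\cC^n$ (this is exactly~\eqref{eq:der-t-bound}); it is \emph{not} small in $L(\cC^1,\cC^1)$, and no finite number of iterations repairs this. The correct conclusion, and the one the paper proves, is differentiability of the resolvent and of the rank-one eigenprojector only in the weaker topology $L(\cC^2,\cC^0)$, which, combined with the uniform Lasota--Yorke/cone bounds, is enough for continuity of the gap and differentiability of $\chi\thvs$ and $h\thvs\otimes m\thvs$ as asserted in the lemma. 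Since you invoke the Keller--Liverani mechanism in your final paragraph anyway, the fix is only to drop the norm-continuity/norm-differentiability claims on $\cC^1$ and state the conclusions in the mixed-norm sense.
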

\begin{proof}
  The statement could be proven by reducing the system to symbolic dynamics and then using
  results on the induced transfer operator.  Yet, a much more efficient and direct proof
  can be obtained by the Hilbert metric technique used, e.g., in
  \cite[Section~2]{Liverani95a} or \cite{liverani95b}.  Namely, consider the cone
  $\cK_a=\{g\in\cC^1\;:\; |g'(x)|\le a g(x), \forall x\in\bT\}$.  Since
  \begin{equation}\label{eq:cone-est}
    \frac{\deh}{\deh x}\tO\thvs g =%
    \tO\thvs\left(\frac{g'}{f_\theta'}+
      \paramL\frac{g\Omega_\theta'}{f_\theta'}-\frac{gf_\theta''}{{(f_\theta')}^2}\right),
  \end{equation}
  it follows that
  \begin{align*}
    \left|\frac{\deh}{\deh x}\tO\thvs g\right|\le
    \lambda^{-1}\{a+|\paramL|\|\Omega_\theta'\|_\infty+\lambda\Const\}\tO\thvs g.
  \end{align*}
  Hence, for any $\varrho\in(\lambda\invr,1)$, $\tO\thvs\cK_a\subset \cK_{\varrho a}$
  provided
  \begin{equation}\label{eq:a-estimate}
    a\ge {(\varrho\lambda-1)}\invr(|\paramL|\|\Omega_\theta'\|_\infty+\lambda\Const).
  \end{equation}
  A simple computation shows that the diameter $\Delta$, computed in the Hilbert metric
  $\Theta$ determined by $\cK_a$, of the image is bounded by
  $2\frac{1+\varrho}{1-\varrho}e^a$.\footnote{ It suffices to compute the distance of a
    function from the constant function and recall that, for $f,g\in\cK_a$, $\Theta(f,g)$
    is defined as $\log\frac\mu\lambda$ where $\mu$ is the $\inf$ of the $\alpha$ such
    that $\alpha f-g\in\cK_a$ and $\lambda$ is the $\sup$ of the $\beta$ such that
    $f-\beta g\in \cK_a$.}  From this fact and Birkhoff Theorem~\cite[Theorem
  1.1]{Liverani95a} it follows that $\tO\thvs$ contracts the Hilbert metric by a factor
  $\tanh \frac{\Delta}4$.  Also, notice that if $f-g, f+g\in\cK_a$, then
  $\|f\|_{L^\infty}\ge \|g\|_{L^\infty}$.  Accordingly,~\cite[Lemma 1.3]{Liverani95a}
  implies that, if $\|f\|_{L^\infty}=\|g\|_{L^\infty}$, then
  \[
    \|f-g\|_{L^\infty}\le \left(e^{\Theta(f,g)}-1\right)\|f\|_{L^\infty}.
  \]
  Next, let $f,g\in \cK_a$ with $\Leb(f)=\Leb(g)=1$.  Then $e^{-a}\le f, g\le e^a$ hence
  $e^{-a} \tO\thvs^n1\le \tO\thvs^nf, \tO\thvs^n g\le e^a\tO\thvs^n1$.  This means that,
  for any $n\in\bN$, there exists $\alpha_n\in [e^{-a},e^a]$ such that
  \begin{align}\label{eq:hilbert-contraction}
    \|\tO\thvs^n f-\alpha_n\tO\thvs^ng\|_{L^\infty}%
    &\le \Const \Theta(\tO\thvs^nf,\tO\thvs^ng)\|\tO\thvs^n1\|_{L^\infty}\\
    \notag &\le \Const\Delta \left[\tanh \frac{\Delta}4\right]^n \|\tO\thvs^n1\|_{L^\infty}.
  \end{align}
  Let $e^{\chi_{\theta,\paramL\Omega}}$,  be the maximal eigenvalue of $\tO_{ \theta,\paramL
    \Omega}$ when acting on $\cC^1$.
  The above displayed equations, together with~\eqref{eq:cone-est}, imply that $\tO_{\theta,
    \paramL\Omega}$, when acting on $\cC^1$, has a simple maximal eigenvalue and a spectral
  gap of size at least $e^{\chi_{\theta,\paramL\Omega}}\left(1-\tanh \frac{\Delta}4\right)$.

  Accordingly, there exists $h_{\theta,\paramL\Omega}\in \cC^1$ and a distribution $m_{\theta,
    \paramL\Omega}\in (\cC^1)'$ such that
  \[
    \tO_{ \theta,\paramL\Omega}(g)=e^{\chi_{ \theta,\paramL\Omega}}
    h_{\theta,\paramL\Omega}\,m_{\theta,\paramL\Omega} (g)+\tOq_{\theta,\paramL\Omega}(g)
    =: e^{\chi_{ \theta,\paramL\Omega}}
    \cP_{\theta,\paramL\Omega}(g)+\tOq_{\theta,\paramL\Omega}(g),
  \]
  where, for any $n\in\bN$,
  $\|\tOq_{\theta,\paramL\Omega}^n\|_{\cC^1\to \cC^1} \le C_{\theta,\paramL\Omega}
  e^{\chi_{ \theta,\paramL\Omega}n}\tau_{\theta,\paramL\Omega}^n$, with
  $\tau_{\theta,\paramL\Omega}\in(0,1-\tanh\frac\Delta 4]$,
  $\tOq_{\theta,\paramL\Omega}\cP_{\theta,\paramL\Omega}=\cP_{\theta,\paramL\Omega}\tOq_{\theta,\paramL\Omega}=0$
  and $m_{\theta,\paramL\Omega}(h_{\theta,\paramL\Omega})=1$.  Moreover, by standard
  perturbation theory all the above quantities are analytic in $\paramL$.

  We now show that $m_{\theta,\paramL\Omega}$ is not just an element of $(\cC^1)'$, as
  follows automatically from the general theory, but indeed a measure (\ie an element of
  $(\cC^0)'$).  We have seen that
  \[
    e^a\Leb( h_{ \theta,\paramL \Omega} )\geq h_{ \theta,\paramL \Omega} =e^{-n\chi_{
        \theta,\paramL\Omega}}\tO_{ \theta,\paramL\Omega}^nh_{ \theta,\paramL \Omega}\geq
    e^{-a}e^{-n\chi_{ \theta,\paramL\Omega}}\tO_{ \theta,\paramL\Omega}^n1.
  \]
  Thus, for any $n\in\bN$ and $g\in\cC^1$, $g\geq 0$,
  \[
    0\leq e^{-n\chi_{ \theta,\paramL\Omega}}\tO_{ \theta,\paramL\Omega}^ng=h_{
      \theta,\paramL \Omega} m_{ \theta,\paramL \Omega} (g) +C_{\theta,\paramL\Omega}
    \tau_{\theta,\paramL\Omega}^n\|g\|\nc1
  \]
  which shows that $m_{ \theta,\paramL  \Omega}$ is a positive functional and hence a measure.

  Finally, the perturbation theory in \cite[Section 8]{GL06} implies that
  $\chi_{ \theta,\paramL\Omega}$ and
  $\tOp_{\theta,\paramL\Omega} = h_{\theta,\paramL\Omega}\otimes m_{\theta,\paramL\Omega}
  $ are differentiable in $\theta$ (the latter with respect to the $L(\cC^2,\cC^0)$
  topology) and that $C_{\theta,\paramL\Omega}, \tau_{\theta,\paramL\Omega}$ can be chosen
  to be continuous in $\theta$.\footnote{ The Banach spaces $\cB^i$ in \cite[Section
    8]{GL06} here are taken to be $\cC^i$.}  Indeed, a direct computation shows that,
  setting
  \begin{align}\label{eq:derivativeL}
    \derivL\thvs (g)
    & = -\left[\frac{\partial_\theta f_\theta}{f_\theta'}g\right]'+\paramL\left[\partial_\theta \Omega_\theta-\frac{\partial_\theta f_\theta }{f_\theta'}\Omega_\theta'\right]g,
      \intertext{we have}
      \tO_{\theta+s,\paramL\Omega}
    &=\tO_{\theta,\paramL\Omega}+\int_\theta^{\theta+s}
      \tO_{ \varphi,\paramL\Omega}\derivL_{ \varphi,\paramL\Omega}d\varphi=\tO_{\theta,\paramL
      \Omega}(\Id+s\derivL\thvs)+\frac {s^2}2R_{\theta,s,\paramL\Omega,}
      \label{eq:basic-taylor}
  \end{align}
  where
  $R_{\theta,s,\paramL\Omega}=\frac
  2{s^2}\left[\tO_{\theta+s,\paramL\Omega}-\tO_{\theta,\paramL\Omega}(\Id-s\derivL\thvs)\right]$.
  Moreover, for any $0 < k \le r-2$ we have:\footnote{ To get the first inequality, use the
    spectral decomposition together with~\eqref{eq:cone-est} and its obvious analog for
    higher derivatives.}
  \begin{equation}\label{eq:der-t-bound}
    \begin{split}
      &\|\tO\thvs^n\|_{\cC^k}\le C\thvs e^{n\chi\thvs}\\
     & \|\tO_{\theta+s,\paramL\Omega}-\tO_{\theta,\paramL\Omega}\|_{\cC^{k+1}\to\cC^k}\le
      |s|\sup_{\theta'\in[\theta,\theta+s]}\|\tO_{\theta',\paramL\Omega}\|_{\cC^{k+1}}\|\derivL_{\theta',\paramL\Omega}\|_{\cC^{k+1}\to\cC^k}\\
     & \phantom{\|\tO_{\theta+s,\paramL\Omega}-\tO_{\theta,\paramL\Omega}\|_{\cC^{k+1}\to\cC^k}}
      \le C_{\theta,\paramL\Omega}
      |s|(1+\|\Omega_\theta\|\nc{k+1}+\|\partial_\theta\Omega_
      \theta\|\nc{k})\\
      &\|R_{\theta,s,\paramL\Omega}\|_{\cC^{k+2}\to\cC^k}\le C_{\theta,\paramL\Omega} (1+
      \|\Omega_\theta\|\nc{k+2}+\|\partial_\theta\Omega_\theta\|\nc{k+1}+\|\partial_\theta^2\Omega_\theta\|\nc{k}).
    \end{split}
  \end{equation}
  Hence the hypotheses of~\cite[Theorem 8.1]{GL06} are satisfied and the resolvent
  $\Id z-\tO_{\theta,\paramL\Omega}$, viewed as an operator from $\cC^2$ to $\cC^0$, is
  differentiable in $\theta$.  This implies the same for all spectral data, since they can
  be recovered by integrating the resolvent over the complex plane.
\end{proof}

In the case of arbitrary complex potentials it is also possible to obtain information on
the spectrum, as described in the following result.
\begin{lem}\label{lem:c0-c1}%
  For $\paramL\in\bR, \theta\in\bT$, let $e^{\tau_{\theta,\paramL\Omega}}$ be the
  spectral radius of $\tO_{\theta,\paramL\Omega}$ as an element of $L(\cC^0,\cC^0)$.  Then
  the spectral radius $e^{\chi_{\theta,\paramL\Omega}}$ of $\tO_{\theta,\paramL\Omega}$ as
  an element of $L(\cC^1,\cC^1)$ is bounded by $e^{\tau_{\theta,\paramL\Omega}}$.  In
  addition, the essential spectral radius is bounded by
  $\lambda^{-1}e^{\tau_{\theta,\paramL\Omega}}$.  Finally, the spectrum outside the disk
  of radius $\lambda^{-1}e^{\tau_{\theta,\paramL\Omega}}$ is the same when
  $\tO_{\theta,\paramL\Omega}$ acts on all $\cC^k$, $k\in \{1,\cdots, r-1\}$.
\end{lem}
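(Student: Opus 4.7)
The plan is to derive a Lasota--Yorke inequality on $\cC^1$ from \eqref{eq:cone-est}, and to exploit it for each of the three claims.

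I would first iterate \eqref{eq:cone-est}: writing $\Phi = 1/f_\theta'$ and $\Psi = \paramL\Omega_\theta'/f_\theta' - f_\theta''/(f_\theta')^2$, one has $(\tO g)' = \tO(\Phi g') + \tO(\Psi g)$, and repeatedly applying this identity gives
\[
(\tO_{\theta,\paramL\Omega}^n g)' = \tO_{\theta,\paramL\Omega}^n\!\left(\frac{g'}{(f_\theta^n)'}\right) + S_n g,
\]
where $S_n : \cC^0 \to \cC^0$ is bounded by $\Const\, n\, \|\tO_{\theta,\paramL\Omega}^n\|_{\cC^0\to\cC^0}$ times smooth quantities built from $\Phi$ and $\Psi$. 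Since $(f_\theta^n)' \geq \lambda^n$ uniformly, this yields the Lasota--Yorke inequality
\[
\|\tO_{\theta,\paramL\Omega}^n g\|_{\cC^1} \leq \Const\, \lambda^{-n} \|\tO_{\theta,\paramL\Omega}^n\|_{\cC^0\to\cC^0}\,\|g\|_{\cC^1} + C_n \|g\|_{\cC^0},
\]
with $C_n$ controlled, up to sub-exponential factors, by $\|\tO_{\theta,\paramL\Omega}^n\|_{\cC^0\to\cC^0}$. Gelfand's formula gives $\limsup n^{-1}\log \|\tO_{\theta,\paramL\Omega}^n\|_{\cC^0\to\cC^0} = \tau_{\theta,\paramL\Omega}$, so for every $\varepsilon>0$ the inequality above reads
\[
\|\tO_{\theta,\paramL\Omega}^n g\|_{\cC^1} \leq K_\varepsilon\,(\lambda^{-1} e^{\tau+\varepsilon})^n \|g\|_{\cC^1} + K'_{\varepsilon,n}\|g\|_{\cC^0}.
\]

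The first claim then follows from a Neumann series argument: for $|\mu|>e^{\tau_{\theta,\paramL\Omega}}$, both terms on the right of the last inequality, multiplied by $|\mu|^{-(n+1)}$, are summable in $n$, so $\sum_n \mu^{-(n+1)} \tO_{\theta,\paramL\Omega}^n$ converges in the $L(\cC^1,\cC^1)$ operator norm and provides a resolvent. Hence $\sigma(\tO_{\theta,\paramL\Omega}|_{\cC^1}) \subset \overline{B(0,e^{\tau_{\theta,\paramL\Omega}})}$, giving $e^{\chi_{\theta,\paramL\Omega}} \leq e^{\tau_{\theta,\paramL\Omega}}$. For the second claim I would apply Hennion's theorem (or Nussbaum's formula for the essential spectral radius): the inclusion $\cC^1 \hookrightarrow \cC^0$ is compact by Arzelà--Ascoli and $\|\cdot\|_{\cC^0} \leq \|\cdot\|_{\cC^1}$, so the same Lasota--Yorke inequality (with leading coefficient $\lambda^{-1}e^{\tau+\varepsilon}$) gives $\rho_{\mathrm{ess}}(\tO_{\theta,\paramL\Omega}|_{\cC^1}) \leq \lambda^{-1}e^{\tau_{\theta,\paramL\Omega}}$.

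For the third claim the same iteration applied to higher derivatives of $g$ yields an analogous Lasota--Yorke inequality on $\cC^s$, with leading term $(\lambda^{-s} e^{\tau+\varepsilon})^n$; hence the essential spectrum on $\cC^s$ also lies inside $\overline{B(0,\lambda^{-1}e^{\tau_{\theta,\paramL\Omega}})}$. The trivial inclusion $\cC^s \subset \cC^1$ gives one inclusion of the discrete spectra outside this disk. For the converse I would bootstrap the regularity of (generalised) eigenvectors. If $\tO_{\theta,\paramL\Omega} h = \mu h$ with $h \in \cC^1$ and $|\mu|>\lambda^{-1}e^\tau$, differentiating gives $(\mu - \tilde\tO) h' = \tO_{\theta,\paramL\Omega}(\Psi h)$, where $\tilde\tO g := \tO_{\theta,\paramL\Omega}(\Phi g)$. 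Since $\tilde\tO^n g = \tO_{\theta,\paramL\Omega}^n(g/(f_\theta^n)')$, an analogous Lasota--Yorke analysis for $\tilde\tO$ bounds its spectral radius on $\cC^s$ (for any $s \leq r-1$) by $\lambda^{-1}e^{\tau_{\theta,\paramL\Omega}}$, so $(\mu - \tilde\tO)^{-1}$ exists on $\cC^s$. Inductively, if $h \in \cC^s$ with $s < r-1$, since $\Psi \in \cC^{r-2}$ and both $\tO_{\theta,\paramL\Omega}$ and $(\mu-\tilde\tO)^{-1}$ preserve $\cC^{\min(s,r-2)}$, one obtains $h' = (\mu-\tilde\tO)^{-1} \tO_{\theta,\paramL\Omega}(\Psi h) \in \cC^{\min(s,r-2)}$, hence $h \in \cC^{s+1}$. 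Iterating yields $h \in \cC^{r-1}$, and the same argument run step-by-step along a Jordan chain shows that every generalised eigenvector in $\cC^1$ lies in $\cC^{r-1}$. Consequently the generalised eigenspaces coincide across $\cC^s$ for $s\in\{1,\ldots,r-1\}$, so the spectra agree outside $\overline{B(0,\lambda^{-1}e^{\tau_{\theta,\paramL\Omega}})}$.

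The main obstacle will be the bootstrap step: justifying rigorously the identity $h' = (\mu-\tilde\tO)^{-1}\tO_{\theta,\paramL\Omega}(\Psi h)$ starting from $h \in \cC^1$ (so $h'$ is a priori only in $\cC^0$), and carefully deriving the Lasota--Yorke inequality for $\tilde\tO$ in all the $\cC^s$ spaces needed to guarantee that $(\mu-\tilde\tO)^{-1}$ preserves $\cC^{r-2}$.
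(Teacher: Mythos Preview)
Your treatment of the first two claims is essentially the paper's: a Lasota--Yorke inequality on $\cC^1$ followed by Hennion's argument. The only cosmetic difference is that the paper applies \eqref{eq:cone-est} directly to $f_\theta^n$ (see \eqref{eq:more-der-L}) and so gets the weak-norm constant independent of $n$, rather than iterating the one-step identity; your sub-exponential bound is of course enough.

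For the third claim your route genuinely differs. You bootstrap regularity of (generalised) eigenvectors by writing $(\mu-\tilde\tO)h'=\tO(\Psi h)$ and inverting $\mu-\tilde\tO$ on successive $\cC^s$. This works, and your identification of the delicate point (consistency of $(\mu-\tilde\tO)^{-1}$ across the nested spaces, and the induction along Jordan chains) is accurate. The paper instead mollifies: it sets $\tO_\epsilon=Q_\epsilon\tO_{\theta,\paramL\Omega}$ with $Q_\epsilon$ a convolution smoothing, observes that $\tO_\epsilon$ is compact with $\cC^\infty$ eigenvectors (hence identical spectrum on every $\cC^s$), and appeals to the Keller--Liverani perturbation theory of \cite{liverani99} to conclude that the discrete spectra of $\tO_\epsilon$ and $\tO_{\theta,\paramL\Omega}$ are close on each $\cC^s$. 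The paper's argument is shorter but imports an external stability result; yours is self-contained but requires the careful bookkeeping you flag.
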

\begin{proof}
  Note that the computation yielding~\eqref{eq:cone-est} also holds for any power
  $f_\theta^n$; this gives:
  \begin{equation}\label{eq:more-der-L}
    \frac{\deh}{\deh x}\tO\thvs^n g =%
    \tO\thvs^n\left(\frac{g'}{(f_\theta^n)'}+
      \paramL\frac{g\Omega_{\theta,n}'}{(f_\theta^n)'}-\frac{g(f_\theta^n)''}{{[(f_\theta^n)']}^2}\right)
  \end{equation}
  where $\Omega_{\theta,n}=\sum_{k=0}^{n-1}\Omega_{\theta}\circ f_\theta^k$.  Then a direct computation yields
  \begin{align}\label{e_iteratedLasotaYorke}
  \|\tO\thvs^n g\|_{\cC^1}\le
  \|\tO\thvs^n\|_{\cC^0\to\cC^0}\left[\lambda^{-n}\|g\|_{\cC^1}+\Const(|\paramL|\|\Omega_{\theta}'\|\nc0+1)\|g\|\nc0\right].
  \end{align}
  We conclude that the spectral radius of $\tO\thvs$ as an element of $L(\cC^1,\cC^1)$ is
  bounded by $e^{\tau_{\theta,\paramL\Omega}}$.  In addition, it follows from the usual
  Hennion's argument~\cite{Hennion93} that the essential spectral radius is bounded by
  $\lambda^{-1}e^{\tau_{\theta,\paramL\Omega}}$.  To conclude note that, by
  differentiating~\eqref{eq:more-der-L} one see that the essential spectral radius on
  $\cC^k$ is bounded by $\lambda^{-k}e^{\tau_{\theta,\paramL\Omega}}$.  On the other hand,
  an eigenvalue in $\cC^k$ is also an eigenvalue in $\cC^1$.  To prove the contrary,
  define the smoothing operator
  $Q_\epsilon g(x)=\int \epsilon^{-1}q(\epsilon^{-1}(x-y))g(y) d y$, where $q$ is a bump
  function: $q\in \cC^\infty_0(\bR,\bRp)$ with $\int q=1$.  Define
  $\tO_\epsilon=Q_\epsilon\tO\thvs$.  By the perturbation theory in~\cite{liverani99} the
  spectrum of $\tO_\epsilon$ and $\tO\thvs$ are close on each $\cC^k$.  On the other hand
  $\tO_\epsilon$ is a compact operator and its spectrum is the same on each $\cC^k$ since
  each eigenvalue belongs to $\cC^\infty$.
\end{proof}
Note that, in general, it could happen that the spectral radius of $\tO\thvs$ on $\cC^1$
is smaller than $\lambda^{-1}e^{\tau\thvs}$.  In this case, the second part of the above
lemma is of limited interest.
\begin{rem}\label{rem:quasicompact-c}
  If $\Omega$ is real, then the spectral radii of $\tO\thvs$ on $\cC^1$ and $\cC^0$ coincide;
  in fact, for each $\tau< \tau\thvs$ and $\chi>\chi\thvs$, there exists $\bar n\in\bN$
  and $g\in \cC^0$ such that, for all $n\ge \bar n,$
  \begin{align*}
    e^{\tau n}\|g\|_{\cC^0}\le \|\tO\thvs^{n} g\|_{\cC^0}\le \|\tO\thvs^{n}
    1\|_{\cC^1}\|g\|\nc0\le e^{\chi n}\|g\|\nc0.
  \end{align*}
  The claim then follows by  Lemma~\ref{lem:c0-c1}.
\end{rem}
It is worth stressing the fact that the functional $m_{\theta,\paramL\Omega}$ is
guaranteed to be a measure only provided that the potential $\Omega_\theta$ is real: this
is essentially due to the fact that, because of cancellations of complex phases, the
spectral radius on $\cC^1$ might be smaller than the spectral radius on $\cC^0$ if the
potential has a non-zero imaginary part.

\begin{rem}\label{rem:high-der}
  Note that, by arguments similar to the one described in this subsection, $\tO\thvs$ is a
  well defined operator also on $\cC^{r-1}$ or $W^{r-1,1}$ and, on such spaces, it has
  essential spectrum bounded by $\lambda^{-r+1}$. In particular $h\thvs\in\cC^{r-1}$ and
  for any $1\le s < r$, we have
  \begin{align}\label{e_bound-h-Cs}
    \|h\thvs\|\nc{s}\le \Const(1+|\paramL|\|\Omega_\theta\|\nc{s})^{s+1}.
  \end{align}
\end{rem}

\subsection{Perturbation Theory with respect to $\paramL$}\label{subsec:pert-paramL}\ \newline
In this and the following subsections we will consider only the case in which there is a
unique maximal eigenvalue $e^{\chi\thvs}$ which is simple.  Hence $m\thvs$ and $h\thvs$
are well defined, except for their normalization, which is not determined by the spectral
projector $\tOp\thvs=h\thvs\otimes m\thvs$ associated to $e^{\chi\thvs}$.  Note that $\chi_{\theta,0}=0$;
moreover, for $\paramL=0$ there exists a natural normalization for $m_{\theta,0}$ and
$h_{\theta,0}$ so that $m_{\theta,0}$ is the Lebesgue measure and $h_{\theta,0}$ is the
density of the invariant SRB probability measure $\mu_\theta$.  There is, however, no
natural normalization for $\paramL\not=0$; we thus proceed to define one that is
particularly suitable to our purposes.
\begin{rem}\label{eq:neighborhood-paramL}%
  Recall that the spectral data is analytic-- in $\paramL$ in a neighborhood of zero.
  Standard perturbation theory implies that such neighborhood contains the $\paramL$ such
  that, for all $\theta\in\bT$ we have $\|\Omega_\theta\|\nc1|\paramL|\le \sigma_1$ for
  some fixed $\sigma_1\in (0,1)$ small enough.  From now on we will assume $\paramL$ in
  this set unless otherwise specified; in this regime we are guaranteed that
  $\tO_{\theta,\paramL \Omega}$ is a Perron--Frobenius operator.
\end{rem}
\begin{subequations}\label{eq:derivative-m}
  Let us differentiate the relation $m\thvs\tO\thvs h\thvs=e^{\chi\thvs}$ with respect to
  $\paramL$ and obtain
  \begin{equation}
    \partial_\paramL
    \chi_{\theta,\paramL\Omega}=\mnu_{\theta,\paramL\Omega}(\Omega_\theta)
    \label{e_derivativeChi}
  \end{equation}
  where $\mnu\thvs(g)=m\thvs(gh\thvs)$; observe that $\mnu\thvs(1)=1$.
  Let us introduce the renormalized operators
  $\tOh\thvs=e^{-\chi\thvs}\tO\thvs$.  Notice that
  $\tOh\thvs=\tOp\thvs+\tOqh\thvs$, where $\tOqh\thvs=e^{-\chi\thvs}\tOq\thvs$.  Then
  by~\eqref{e_derivativeChi} and the definition of $\tO\thvs$ we obtain
  \[
    \partial_\paramL\tOh\thvs(g)=\tOh\thvs(\Omega\thvs g)
  \]
  with $\Omega\thvs=\Omega_\theta-\mnu\thvs(\Omega_{\theta})$.  Thus, differentiating the
  relations $\tOh_{ \theta,\paramL\Omega}h_{\theta,\paramL\Omega}=
  h_{\theta,\paramL\Omega}$ and $m_{\theta,\paramL\Omega}(\tOh_{
    \theta,\paramL\Omega}g)=m_{\theta,\paramL\Omega}\,g$ yields
  \begin{align}
    \partial_\paramL h\thvs&={[\Id -\tOqh\thvs]}\invr \tOh\thvs(\Omega\thvs
                             h\thvs) - C(\theta,\paramL)h\thvs \label{e_derivativeDensityNN}\\
    \partial_\paramL m\thvs(g)&=m\thvs(\Omega\thvs{[\Id - \tOqh\thvs]}\invr \tilde{g}) +
                                C(\theta,\paramL)m\thvs(g)\label{e_derivativeMeasureNN}
  \end{align}
  where $\tilde g=(\Id-\tOp\thvs)g= g-h\thvs m\thvs(g)$ and $C(\theta,\paramL)$ depends
  on the normalization of $h\thvs$ and $m\thvs$.  Using the above expressions, and
  differentiating~\eqref{e_derivativeChi}, it is immediate to obtain
  \begin{equation}\label{e_secondDerivativeChi.prelim}
    \partial^2_\paramL\chi\thvs=m\thvs(\Omega\thvs{[\Id - \tOqh\thvs]}\invr(\Id + \tOh\thvs)
    \Omega\thvs h\thvs),
  \end{equation}
\end{subequations}
which yields
\begin{subequations}\label{e_secondDerivativeChi0}
  \begin{align}
    \partial^2_\paramL
    \chi\thvs&=\mnu\thvs(\Omega\thvs^2)+2\sum_{k=1}^\infty\mnu\thvs(\Omega\thvs\circ
    f_\theta^k \Omega\thvs) =\label{e_secondDerivativeChia}\\
    &=\lim_{n\to\infty}\frac 1n\mnu\thvs\left({\left[\sum_{k=0}^{n-1}\Omega\thvs\circ
          f_\theta^k\right]}^2\right)\label{e_secondDerivativeChi}
  \end{align}
\end{subequations}
where we used the identity $m\thvs(g_1\tOh\thvs^kg_2)=m\thvs(g_1\circ f^kg_2)$, which is
obtained directly by definition of the Transfer operator $\tO\thvs$.  Observe
that~\eqref{e_secondDerivativeChi} shows that $\chi_{\theta,\paramL\Omega}$ is (for real
potentials) a convex function of $\paramL$.

By further differentiation of~\eqref{e_secondDerivativeChi.prelim} it is simple to show
that
\begin{align*}
  \partial^3_\paramL\chi\thvs &= %
  \mnu\thvs(\Omega\thvs^3) +%
  3\sum_{k=1}^{\infty}\mnu\thvs(\Omega\thvs\circ f_\theta^k\Omega^2\thvs+\Omega^2\thvs\circ
  f_\theta^k\Omega\thvs) +\\
  &\pheq+ 6\sum_{k=1}^{\infty}\sum_{j=k+1}^\infty\mnu\thvs(\Omega\thvs\circ
  f_\theta^j\Omega\thvs\circ f_\theta^k\Omega\thvs),
\end{align*}
which implies the useful estimate
\begin{equation} \label{e_thirdDerivativeChi}
  |\partial^3_\paramL\chi\thvs|\le\Const\|\Omega_\theta\|^3\nc1.
\end{equation}
Next, for all $n\in\bN$,
  \begin{equation}\label{eq:inv-measure-gen}
    \mnu\thvs(\phi g\circ f_\theta^n)=m\thvs(\tOh\thvs^n(\phi g\circ f_\theta^n h\thvs))=m\thvs(g \tOh\thvs^n(\phi h\thvs)).
  \end{equation}
  The above and the iteration of~\eqref{eq:cone-est} imply, setting $g=1$ and taking the
  limit for $n\to\infty$, that $\mnu\thvs$ is a measure provided that $\tOh\thvs$ is power
  bounded as an operator on $\cC^0$.  In addition, taking $\phi=1$ we see that, in
  general, it is an invariant distribution for $f_\theta$.
\begin{lem}\label{l_normalization}
  There exists a normalization for $h\thvs$ and $m\thvs$ so that $m_{\theta,0}=\Leb$ and the corresponding
  $C(\theta,\paramL)$ is identically $0$, that is:
  \begin{subequations}\label{e_derivativess}
    \begin{align}
      \partial_\paramL h\thvs&={[\Id -\tOqh\thvs]}\invr \tOh\thvs(\Omega\thvs
      h\thvs) \label{e_derivativeDensity}\\
      \partial_\paramL m\thvs\,g&=m\thvs(\Omega\thvs{[\Id - \tOqh\thvs]}\invr
      [g-h\thvs m\thvs\,g])\label{e_derivativeMeasure},
    \end{align}
  \end{subequations}
  provided $\Omega$ is real or, for arbitrary potentials, if
  $\|\Omega_\theta\|\nc1|\paramL|\le \sigma_1$ (see Remark~\ref{eq:neighborhood-paramL}).
\end{lem}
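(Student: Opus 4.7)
The plan is to exploit the one-parameter rescaling symmetry $(h\thvs,m\thvs)\mapsto(e^{\phi}h\thvs,e^{-\phi}m\thvs)$ which preserves the canonical spectral projector $\cP\thvs=h\thvs\otimes m\thvs$ and the pairing $m\thvs(h\thvs)=1$. First I will pin down the normalization at $\paramL=0$, and then use the residual scalar freedom in the $\paramL$-direction to kill the constant $C(\theta,\paramL)$.

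Step one (base point). At $\paramL=0$, $\tO_{\theta,0}$ is the usual transfer operator of $f_\theta$, and a change-of-variable argument gives $\Leb\,\tO_{\theta,0}g=\Leb\,g$ for every $g\in\cC^1$. Thus Lebesgue is a left eigenvector for the simple maximal eigenvalue $1$, and by Lemma~\ref{lem:quasicompact} the spectral decomposition is one-dimensional, so I set $m_{\theta,0}:=\Leb$; the right eigenfunction is then fixed by $\Leb(h_{\theta,0})=1$, coinciding with the SRB density $h_\theta$.

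Step two (killing $C$). Starting from any analytic choice $(\bar h\thvs,\bar m\thvs)$ that agrees with the base point at $\paramL=0$ and satisfies~\eqref{e_derivativeDensityNN}-\eqref{e_derivativeMeasureNN} with some constant $\bar C(\theta,\paramL)$, a direct differentiation of $m\thvs=e^{-\phi}\bar m\thvs$ shows that the rescaled pair $(e^{\phi}\bar h\thvs,e^{-\phi}\bar m\thvs)$ still satisfies~\eqref{e_derivativeMeasureNN}, but with $\bar C$ replaced by
\[
C(\theta,\paramL)=\bar C(\theta,\paramL)-\partial_\paramL\phi(\theta,\paramL).
\]
Hence I take
\[
\phi(\theta,\paramL):=\int_0^{\paramL}\bar C(\theta,\paramL')\,\deh\paramL',
\]
which satisfies $\phi(\theta,0)=0$ (preserving the base-point normalization) and kills $C$. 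An identical calculation for $h\thvs$ recovers~\eqref{e_derivativeDensity} from~\eqref{e_derivativeDensityNN}, since the added $h\thvs$-direction term there and the one in~\eqref{e_derivativeMeasureNN} are tied together by the constraint $\partial_\paramL[m\thvs(h\thvs)]=0$.

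Step three (domain of validity). For real potentials the Perron–Frobenius structure of Lemma~\ref{lem:quasicompact} is global in $\paramL$, so the spectral data, and hence $\bar C(\theta,\paramL)$, is analytic in $\paramL\in\bR$ and the integral defining $\phi$ converges for every $\paramL$. For complex potentials the same analyticity holds, by perturbation theory, only on the neighborhood of $0$ described in Remark~\ref{eq:neighborhood-paramL}, i.e.\ when $|\paramL|\|\Omega_\theta\|\nc1\le\sigma_1$, which is precisely the range stated in the lemma.

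The only real obstacle is verifying that $\bar C(\theta,\paramL)$ is continuous (in fact analytic) in $\paramL$ throughout the claimed range, so that the antiderivative defining $\phi$ makes sense; this is however immediate from the analytic dependence of the spectral projector (and the continuity of the normalization constant extracted from~\eqref{e_derivativeMeasureNN}) on $\paramL$, provided one stays inside the region of spectral gap given by Lemma~\ref{lem:quasicompact} (real case) or Remark~\ref{eq:neighborhood-paramL} (complex case). Everything else is bookkeeping.
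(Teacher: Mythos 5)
Your argument is correct and is essentially the paper's own proof: fix a preliminary analytic normalization (the paper uses $\Leb(\bar h\thvs)=1$, giving the explicit formula~\eqref{e_Cbar} for $\bar C$), set $\alpha(\theta,\paramL)=\int_0^\paramL\bar C(\theta,\paramL')\,\deh\paramL'$, and rescale $h\thvs\mapsto e^{\alpha}\bar h\thvs$, $m\thvs\mapsto e^{-\alpha}\bar m\thvs$ to kill the constant, with the stated range of $\paramL$ guaranteeing the spectral gap and analyticity needed for this to make sense. Your observation that the single gauge function works for both equations because $\partial_\paramL[m\thvs(h\thvs)]=0$ matches the paper's ``immediate computation.''
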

\begin{proof}
  Let us temporarily fix a normalization which defines $\bar h\thvs$ and $\bar m\thvs$ so
  that $\Leb(\bar h\thvs)=1$ for any $\paramL$.  Note that for real potentials this can
  always be done since $h\thvs>0$ due to Lemma~\ref{lem:quasicompact}.  For arbitrary
  potentials it is possible only if $\Leb (\cP\thvs(\phi))\neq 0$ for some $\phi\in\cC^0$.
  This is the case for small $\paramL$ due to $\Leb(\cP_{\theta,0}(\phi))=\Leb(\phi)$ and
  the continuity of $\cP\thvs$.

  Using~\eqref{e_derivativeDensityNN} and differentiating this normalization condition
  with respect to $\paramL$ we obtain
  \begin{equation}\label{e_Cbar}
    \bar C(\theta,\paramL) = \Leb({[\Id -\tOqh\thvs]}\invr \tOh\thvs(\Omega\thvs
    \bar h\thvs)).
  \end{equation}
  Define $\alpha(\theta,\paramL)=\int_0^\paramL\bar C(\theta,{\paramL_1})\deh{\paramL_1}$ and
  choose a new normalization so that $h\thvs=e^{\alpha(\theta,\paramL)}\bar h\thvs$ (and
  consequently $m\thvs=e^{-\alpha(\theta,\paramL)}\bar m\thvs$).  Then, an immediate
  computation shows that $h\thvs$ and $m\thvs$ satisfy equations~\eqref{e_derivativess}.
\end{proof}
We now fix once and for all the normalization of $h\thvs$ and $m\thvs$ to be the one constructed
in Lemma~\ref{l_normalization} and refer to it as the \emph{standard normalization}.
\begin{lem}\label{l_derivativess}
  For any $g\in W^{1,1}$ and under the assumptions described
  in Remark~\ref{eq:neighborhood-paramL}, we have
  \begin{subequations}\label{e_estimatesPartialhm}
    \begin{align}
      \|\partial_\paramL h_{\theta,\paramL\Omega}\|\nc1&%
      \le\Const\|\Omega_\theta\|\nc1\label{e_estimatePartialh}\\%
      |\partial_\paramL{}m_{\theta,\paramL\Omega}\,g|&%
      \le\Const\|\Omega_\theta\|\nc1\|g\|\nw11\label{e_estimatePartialm}
    \end{align}
  \end{subequations}
  and, moreover,
  \begin{align}\label{e_quadraticEstimates}
    |m_{\theta,0}\,h\thvs-1|&\le \Const\paramL^2\|\Omega_\theta\|^2\nc1&
    |m\thvs(h_{\theta,0})-1|&\le \Const\paramL^2\|\Omega_\theta\|^2\nc1.
  \end{align}
\end{lem}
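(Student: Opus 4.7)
The plan is to read off the $\cC^1$ bounds \eqref{e_estimatesPartialhm} directly from the explicit formulas \eqref{e_derivativeDensity}--\eqref{e_derivativeMeasure} supplied by Lemma~\ref{l_normalization}, and then obtain the quadratic estimates \eqref{e_quadraticEstimates} by integrating the derivatives from $\paramL=0$ and exploiting a cancellation that forces the integrand to vanish at the basepoint.

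\textbf{Step 1 ($\cC^1$ bounds).} By Lemma~\ref{lem:quasicompact} and the uniform spectral gap on $\cC^1$ (valid in the $\paramL$-range of Remark~\ref{eq:neighborhood-paramL}), the operator $[\Id-\tOqh\thvs]^{-1}=\sum_{n\ge0}\tOqh\thvs^n$ is bounded on $\cC^1$ uniformly in $(\theta,\paramL)$, and so is $\tOh\thvs$. Since $|\nu\thvs(\Omega_\theta)|\le\Const\|\Omega_\theta\|\nc1$ we have $\|\Omega\thvs\|\nc1\le\Const\|\Omega_\theta\|\nc1$, and multiplication by $\Omega\thvs$ is bounded on $\cC^1$. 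Plugging into \eqref{e_derivativeDensity} and using $\|h\thvs\|\nc1\le\Const$ (uniform in the relevant parameter range) gives \eqref{e_estimatePartialh}. For \eqref{e_estimatePartialm}, apply the same reasoning to \eqref{e_derivativeMeasure}, noting that $\|\tilde g\|\nc1\le\Const\|g\|\nc1$ because $\tOp\thvs=h\thvs\otimes m\thvs$ is a rank-one bounded projector on $\cC^1$ and $|m\thvs\,g|\le\Const\|g\|\nc1$ (the latter follows from the analogue of \eqref{eq:itsameasure}).

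\textbf{Step 2 (quadratic estimates).} For the second inequality in \eqref{e_quadraticEstimates}, write
\[
m\thvs(h_{\theta,0})-1=m\thvs(h_{\theta,0})-m_{\theta,0}(h_{\theta,0})=\int_0^\paramL\partial_{\paramL'}m_{\theta,\paramL'\Omega}(h_{\theta,0})\,\deh\paramL'.
\]
The integrand at $\paramL'=0$ vanishes: by \eqref{e_derivativeMeasure} it equals $m_{\theta,0}(\Omega_{\theta,0}\,[\Id-\tOqh_{\theta,0}]^{-1}\widetilde{h_{\theta,0}})$, and $\widetilde{h_{\theta,0}}=h_{\theta,0}-h_{\theta,0}\,m_{\theta,0}(h_{\theta,0})=0$ by the normalization $m_{\theta,0}=\Leb$ and $\Leb(h_{\theta,0})=1$. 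Differentiability of $m\thvs(h_{\theta,0})$ in $\paramL'$ (guaranteed by the analyticity in Remark~\ref{eq:neighborhood-paramL}, or equivalently by differentiating the explicit formula once more and bounding each factor as in Step 1) then yields $|\partial_{\paramL'}m_{\theta,\paramL'\Omega}(h_{\theta,0})|\le\Const|\paramL'|\,\|\Omega_\theta\|\nc1^2$, and integrating from $0$ to $\paramL$ gives the claim.

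\textbf{Step 3 (symmetric argument).} For $|m_{\theta,0}\,h\thvs-1|$, write
\[
m_{\theta,0}(h\thvs)-1=\int_0^\paramL m_{\theta,0}(\partial_{\paramL'}h_{\theta,\paramL'\Omega})\,\deh\paramL'.
\]
At $\paramL'=0$ the integrand is $\Leb([\Id-\tOq_{\theta,0}]^{-1}\tO_{\theta,0}(\hat\Omega_\theta h_{\theta,0}))$, since $\chi_{\theta,0}=0$ and $\Omega_{\theta,0}=\hat\Omega_\theta:=\Omega_\theta-\mu_\theta(\Omega_\theta)$. Now $\Leb(\hat\Omega_\theta h_{\theta,0})=\mu_\theta(\hat\Omega_\theta)=0$, so $\hat\Omega_\theta h_{\theta,0}$ lies in the kernel of $\tOp_{\theta,0}=h_{\theta,0}\otimes\Leb$; hence $\tO_{\theta,0}$ restricted to it equals $\tOq_{\theta,0}$, and each iterate satisfies $\Leb(\tOq_{\theta,0}^n(\hat\Omega_\theta h_{\theta,0}))=\Leb(\tO_{\theta,0}^n(\hat\Omega_\theta h_{\theta,0}))-\Leb(\tOp_{\theta,0}^n(\hat\Omega_\theta h_{\theta,0}))=0$ by duality $\Leb\circ\tO_{\theta,0}=\Leb$. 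Summing the Neumann series gives that the derivative vanishes at $\paramL'=0$, and the same integration argument as in Step 2 delivers the bound $\Const\paramL^2\|\Omega_\theta\|\nc1^2$.

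The main (minor) obstacle is simply organizing the uniform $\cC^1$ bounds on $\partial_{\paramL'}^2$-type quantities needed to justify that the integrand is $\cO(|\paramL'|\,\|\Omega_\theta\|\nc1^2)$; this amounts to a second application of the spectral-gap machinery already used in Step 1, so no genuinely new ingredient is required.
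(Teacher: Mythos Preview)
Your proposal is correct and follows essentially the same route as the paper: the linear bounds \eqref{e_estimatesPartialhm} are read off from the explicit formulas \eqref{e_derivativess} using the uniform spectral gap, and the quadratic estimates \eqref{e_quadraticEstimates} follow by integrating in $\paramL$, observing that the integrand vanishes at $\paramL'=0$, and then bounding the second derivative. The paper simply asserts $m_{\theta,0}\,\partial_\paramL h_{\theta,0}=0$ (you supply the detail via $\Leb\circ\tOq_{\theta,0}=0$) and writes the conclusion as a double integral $\int_0^\paramL\int_0^{\paramL'}\|\partial_\paramL(\cdots)\|\nc0$, but the argument is the same.
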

\begin{proof}
  Since all the quantities are analytic in $\paramL$ and $\paramL$ belongs to a fixed
  compact set, we have uniform bounds on $\|h\thvs\|\nc1$ and $\|m\thvs\|_{(W^{1,1})'}$.
  Thus, by~\eqref{e_derivativeDensity}, taking the $\cC^1$-norm, we obtain:
  \[
  \|\partial_\paramL h\thvs\|\nc1 \le \|{[\Id
    -\tOqh\thvs]}\invr \tOh\thvs(\Omega\thvs h\thvs)\|\nc1\le  \Const  \|\Omega_\theta\|_{\cC^1}.
  \]
  which implies $\|h\thvs\|_{\cC^1}\le\Const(1+|\paramL|\|\Omega_\theta\|_{\cC^1})$.
  Similar computations yield the corresponding result for $m\thvs$.

  Finally, in order to obtain equations~\eqref{e_quadraticEstimates}, observe that
  $m_{\theta,0}\,h\thvs-1=m_{\theta,0}(\int_0^\paramL\deh{\paramL_1}\partial_\paramL
  h_{\theta,\paramL_1\Omega})$; then, since
  $m_{\theta,0}\,\partial_\paramL h_{\theta,0}=0$ we can write
  \begin{align*}
    |m_{\theta,0}\,h\thvs-1|\le\int_0^\paramL\deh{\paramL_1}\int_0^{{\paramL_1}}\deh{\paramL_2}\left\|
    \partial_\paramL\left({[\Id-\tOqh\thvss]}^{-1}
      \tOh\thvss(\Omega\thvss h\thvss) \right)\right\|\nc0
  \end{align*}
  from which follows the first of~\eqref{e_quadraticEstimates}; a similar computation
  yields the second estimate, which concludes the proof.
\end{proof}
We now deal with transfer operators weighted with two different families of potentials,
which we denote by $\Omegaa$ and $\Omegab$.  If $\|\Omegaa-\Omegab\|\nc1$ is small enough,
we can once again use perturbation theory to compare spectral data.
 Until the end of this subsection we assume $\theta$ to be fixed
  and we will drop it from our notation since it will not cause any confusion.  Also, we
  assume that either both $\Omegaa$ and $\Omegab$ are real, or $\|\Omegaa\|\nc1$ and
  $\|\Omegab\|\nc1$ to be sufficiently small (\ie smaller than $\sigma_1$) so that we can
  assume $\paramL = 1$ and still be in the perturbative regime (see
  Remark~\ref{eq:neighborhood-paramL}).  For $\varrho\in[0,1]$, let us define the convex
interpolation $\Omegap\varrho=\Omegaa+\varrho(\Omegab-\Omegaa)$, and let
$\delta\Omega = \partial_\varrho\Omegap\varrho=\Omegab-\Omegaa$; consider the transfer
operators $ \tO_\varrho = \tO_{\theta,\Omegap\varrho}$; similarly let
$h_\varrho=h_{\theta, \Omegap\varrho}$ and $m_\varrho=m_{\theta, \Omegap\varrho}$.  Then,
by arguments analogous to the ones leading to equations~\eqref{eq:derivative-m}, we obtain
\begin{subequations}\label{e_derivative-varrho}
  \begin{align}
    \partial_\varrho \chi_\varrho &=m_\varrho(\delta\Omega h_\varrho)\label{eq:pvarrhochi}\\
    \partial_\varrho h_\varrho &= {[\Id-\tOqh_\varrho]}\invr\tOh_\varrho(\delta\widehat \Omega_\varrho h_\varrho)- C(\varrho)h_\varrho\label{eq:pvarrhoh-nn}\\
    \partial_\varrho m_\varrho g &= m_\varrho(\delta\widehat \Omega_\varrho{[\Id-\tOqh_\varrho]}\invr(g-h_\varrho m_\varrho g)) + C(\varrho)m_\varrho g.
  \end{align}
\end{subequations}
where we defined
$\delta\widehat\Omega_\varrho=\delta\Omega - m_\varrho(\delta\Omega h_\varrho)$ and the
function $C(\varrho)$ depends on the normalization for $h_\varrho$ and $m_\varrho$.
\begin{lem}
  For any $g\in W^{1,1}$ and under the assumptions described in
  Remark~\ref{eq:neighborhood-paramL}, and choosing the standard normalization we have
\begin{subequations}\label{e_estimatesPartialRho}
  \begin{align}\label{e_estimatesPartialRho-h}
    \|\partial_\varrho h_\varrho\|\nc k&\le\Const\|\delta\Omega\|\nc k\\
    \label{e_estimatesPartialRho-m}
   \hskip-.8cm |\partial_\varrho m_\varrho g- m_\varrho(\delta\widehat
      \Omega_\varrho{[\Id-\tOqh_\varrho]}\invr(g-h_\varrho m_\varrho g))|&\le
      \Const\|\Omegaa\|\nc1\|\delta\Omega\|\nc1\|g\|\nw11.\hskip-1cm
  \end{align}
\end{subequations}
\end{lem}
\begin{proof}
  As in the proof of Lemma~\ref{l_normalization}, let us denote by $\bar h_\varrho$ the
  eigenvector normalized so that $\Leb(\bar h_\varrho)=1$, let $\bar{\bar C}(\varrho)$ be
  the corresponding normalization in~\eqref{e_derivative-varrho}.  Then a direct
  computation (differentiating the normalization condition and
  using~\eqref{eq:pvarrhoh-nn}) shows that
  \[
    \bar{\bar
      C}(\varrho)=\Leb\left({[\Id-\tOqh_\varrho]}\invr\tOh_\varrho\delta\widehat\Omega_\varrho
      \bar h_\varrho\right).
  \]
  Moreover, let $\bar C_\varrho(\paramL)$ be defined as in~\eqref{e_Cbar} with the choice
  $\Omega=\Omegap{\varrho}$; then
  \[
    C(\varrho)=\bar{\bar C}(\varrho)-\int_0^1\partial_\varrho\bar
    C_\varrho(\paramL)\deh\paramL.
  \]
  Note that, setting
  $g_{\varrho,\paramL}={[\Id -\tOqh_{\theta,\paramL\Omegap{\varrho}}]}\invr
  \tOh_{\theta,\paramL\Omegap{\varrho}}(\widehat\Omega_{\varrho,\paramL} \bar
  h_{\theta,\paramL\Omegap{\varrho}})$, where   $\widehat\Omega_{\varrho,\paramL}=\Omegap{\varrho}-m_{\theta,\paramL\Omegap{\varrho}}(\Omegap{\varrho}
  h_{\theta,\paramL\Omegap{\varrho}})$,
  \begin{align*}
    \bar C_\varrho(\paramL)&= \Leb(g_\varrho) =m_{\theta,\paramL\Omegap{\varrho}}(g_{\varrho,\paramL})-\int_0^\paramL \partial_{\paramL_1} m_{\theta,\paramL_1\Omegap{\varrho}}(g_{\varrho,\paramL})\deh\paramL_1.
  \end{align*}
  Note that the first term of the rightmost hand side of the equation above is identically
  zero, hence, by~\eqref{e_derivativeMeasure}, we conclude that
  \begin{align*}
    &\bar C_\varrho(\paramL)=-\int_0^\paramL m_{\theta,\paramL_1\Omegap{\varrho}}(\widehat\Omega_{\varrho,\paramL_1}{[\Id - \tOqh_{\theta,\paramL_1\Omegap{\varrho}}]}\invr \widehat g_{\varrho,\paramL})\deh\paramL_1
  \end{align*}
  where
  $\widehat
  g_{\varrho,\paramL}=g_{\varrho,\paramL}-m_{\theta,\paramL_1\Omegap\varrho}(g_{\varrho,\paramL})h_{\theta,\paramL_1\Omegap{\varrho}}
  $; this implies $|\bar C_\varrho(\paramL)|\leq \Const \|\Omegaa\|\nc1^2$ and,
  since~\eqref{e_derivative-varrho} implies that each derivative with respect to $\varrho$
  of the eigenvectors or operators yields an extra factor $\|\delta\Omega\|\nc1$,
  $|\partial_\varrho\bar C_\varrho(\paramL)|\leq \Const
  \|\Omegaa\|\nc1^2\|\delta\Omega\|\nc1$.  By similar arguments we obtain
  $|\bar{\bar C}_\varrho(\paramL)|\leq \Const \|\Omegaa\|\nc1\|\delta\Omega\|\nc1$, which
  then implies equations~\eqref{e_estimatesPartialRho}.
\end{proof}

\subsection{Perturbation Theory with respect to
  \texorpdfstring{$\theta$}{θ}}\label{subsec:pert-theta}\ \newline

Recalling the notation and computations at the end of the proof of
Lemma~\ref{lem:quasicompact} and by argument analogous to the ones leading to
equations~\eqref{eq:derivative-m}, but differentiating with respect to $\theta$, we gather:
\begin{subequations}\label{eq:derivative-theta}
  \begin{align}
    \partial_\theta \chi\thvs&=m\thvs\left(\derivL\thvs h\thvs\right)\label{e_partialThetachi}\\
    \partial_\theta h\thvs&=[\Id -\tOqh\thvs]^{-1} \tOh\thvs \widetilde\derivL\thvs h\thvs - D(\theta,
    \paramL)h_{\theta,\paramL\Omega}\label{e_partialhTheta}\\
    \partial_\theta m_{\theta,\paramL\Omega}\,g&=m_{\theta,\paramL\Omega}( \derivL\thvs[\Id
    -\tOqh_{ \theta,\paramL\Omega}]^{-1} \tilde g ) +
    D(\theta,\paramL)m_{\theta,\paramL\Omega}\,g \label{e_partialhTheta-m}
  \end{align}
\end{subequations}
where $\widetilde\derivL\thvs g=\derivL\thvs g - h\thvs m\thvs(\derivL\thvs g)$, and
recall that $\tilde g= g-h\thvs m\thvs(g)$ and $\derivL\thvs $ is defined
in~\eqref{eq:derivativeL}.  Once again $D(\theta,\paramL)$ is a function which depends on
the normalization for $h_{\theta,\paramL\Omega}$ and $m_{\theta,\paramL\Omega}$.  Note
that we cannot, in general, assume that $D=0$; since $m_{\theta,0}=\Leb$, it is however
true that $D(\theta,0)=0$ for any $\theta$.  Similarly, since $\chi_{\theta,0}=0$, we have
$\partial_\theta\chi_{\theta,0}=0$.
\begin{lem}\label{eq:theta-derivative-all}%
  There exists $\sigma_0\in (0,\sigma_1)$ such that, if
  \begin{align*}
    \|\partial^2_\theta\Omega\|\ncd1{\bT^2}+\|\partial_x\Omega\|\ncd2{\bT^2}+\|\Omega\|\ncd2{\bT^2}
     &\leq \sigma_0
  \end{align*}
  we have, for any $1\le k < r-1$ and using the standard normalization:
  \begin{subequations}\label{e_estimatePartialTheta}
    \begin{align}
      &\left|\partial_\theta\chi_{\theta,\Omega}- \partial_\theta
        \Leb(\Omega_\theta h_{\theta,0})\right|\leq\Const \|\Omega\|^2_{\cC^1(\bT^2)} .\label{e_estimatesecondpartchi0}\\
      &  \|\partial_\theta h_{\theta,\Omega}\|\nc k\le
        \Const(1+\|\Omega_\theta\|\nc{k+1}+\|\partial_\theta\Omega_\theta\|\nc{k})\label{e_estimatePartialThetah}\\
      &  |\partial_\theta m_{\theta,\Omega}\,g| \le \Const\|\Omega\|_{\cC^2(\bT^2)}\|g\|\nw21\label{e_estimatePartialThetam}\\
      & \left|\partial_\theta^2\chi_{\theta,\Omega}\right|\le \Const (\|\partial^2_\theta\Omega\|\ncd1{\bT^2}+\|\partial_x\Omega\|\ncd2{\bT^2}+\|\Omega\|\ncd2{\bT^2}).\label{e_estimatesecondpartchi}
    \end{align}
  \end{subequations}
  Additionally,  $\partial_\paramL h_{\theta,\paramL\Omega}$, $\partial_\paramL m_{\theta,\paramL\Omega}
  $ and $\partial^2_\paramL{}\chi_{\theta,\paramL\Omega}$ are differentiable in $\theta$.
\end{lem}
\begin{proof}
  Plugging~\eqref{e_derivativeMeasure},~\eqref{e_derivativeDensity}
  and~\eqref{eq:derivativeL} in~\eqref{e_partialThetachi}, we have:
  \begin{align}\label{e_partialThetaChi-prelim}
    \partial_\theta\chi_{\theta,\Omega}
    &=\Leb(\derivL\thvz h\thvz)
      +\int_0^1m\thvs\left(\Omega\thvs{[\Id - \tOqh\thvs]}\invr \widetilde\derivL\thvz h\thvz\right)\deh \paramL\notag\\
    & =\Leb\left(\partial_\theta\Omega_\theta\cdot h_{\theta,0}-\frac{\partial_\theta f_\theta}{f'_\theta}\Omega_\theta' h_{\theta,0}\right)-\Leb\left(\Omega_{\theta,0}{[\Id - \tOqh_{\theta,0}]}\invr\left[\frac{\partial_\theta f_\theta}{f'_\theta} h_{\theta,0}\right]'\right)\\
    &\phantom{=}+\cO(\|\Omega\|_{\cC^1(\bT^2)}^2),\notag
  \end{align}
  where the term having a derivative in~\eqref{eq:derivativeL} disappears by integration
  by parts against Lebesgue.  Next, note that ~\eqref{e_partialhTheta} implies
  \[
    \begin{split}
      \partial_\theta \Leb(\Omega_\theta h_{\theta,0})&=\Leb(\partial_\theta\Omega_\theta h_{\theta,0})+\Leb(\Omega_\theta{[\Id - \tOqh_{\theta,0}]}\invr
      \tOqh_{\theta,0} \widetilde\derivL_{\theta,0} h_{\theta,0})\\
     &=\Leb(\partial_\theta\Omega_\theta h_{\theta,0})-\Leb\left(\Omega_\theta{[\Id - \tOqh_{\theta,0}]}\invr \left[\frac{\partial_\theta f_\theta}{f'_\theta} h_{\theta,0}\right]'\right)\\
      &\phantom = +\Leb\left(\Omega_\theta \left[\frac{\partial_\theta f_\theta}{f'_\theta} h_{\theta,0}\right]'\right)+\cO(\|\Omega\|_{\cC^1(\bT^2)}^2).
    \end{split}
  \]
  In addition,
  \[
    \begin{split}
      \Leb\left([\Omega_{\theta,0}-\Omega_\theta]{[\Id - \tOqh_{\theta,0}]}\invr \left[\frac{\partial_\theta f_\theta}{f'_\theta} h_{\theta,0}\right]'\right)
      &=\Leb(\Omega_\theta h_{\theta,0})\\
      &\phantom{=}\times \Leb\left([\Id - \tOqh_{\theta,0}]\invr \left[\frac{\partial_\theta f_\theta}{f'_\theta} h_{\theta,0}\right]'\right)\\
      &=\Leb(\Omega_\theta h_{\theta,0}) \Leb\left( \left[\frac{\partial_\theta f_\theta}{f'_\theta} h_{\theta,0}\right]'\right)=0.
    \end{split}
  \]
  where the last term disappears again by integration by part against Lebesgue.  Combining
  the above expressions with~\eqref{e_partialThetaChi-prelim}
  yields~\eqref{e_estimatesecondpartchi0}.  Next, recall that, by the construction of the
  standard normalization given in Lemma~\ref{l_normalization} we have set
  $h\thvs=e^{\alpha(\theta,\paramL)}\bar h\thvs$, where $\bar h\thvs$ is normalized so
  that $\Leb(\bar h\thvs)=1$ and
  $\alpha(\theta,\paramL)=\int_0^\paramL\bar C(\theta,\paramL_1)\deh{\paramL_1}$, $\bar C$
  being given by~\eqref{e_Cbar}.  Observe that, differentiating the normalization
  condition for $\bar h\thvs$ with respect to $\theta$, we obtain, using
  equations~\eqref{eq:derivative-theta}:
  \begin{align*}
    \bar D(\theta,\paramL) = \Leb ([\Id -\tOqh\thvs]^{-1} \tOh\thvs \widetilde\derivL\thvs \bar h\thvs).
  \end{align*}
  Then, by definition of $h\thvs$ we get:
  \begin{align*}
    D(\theta,\paramL) = \bar D(\theta,\paramL) - \partial_\theta\alpha(\theta,\paramL).
  \end{align*}
  Thus, by the definition of $\alpha(\theta,\paramL)$ and using the hypothesis on
  $\|\Omega\|\nc0$, a direct computation, which is left to the reader, yields
  \begin{equation}\label{eq:alpha-der-theta}
    |\partial_\theta\alpha(\theta,\paramL)|\leq \Const \|\Omega\|_{\cC^{1}(\bT^2)}.
  \end{equation}
  The proof of~\eqref{e_estimatePartialThetah} immediately follows from \eqref{e_partialhTheta} using the
  definition~\eqref{eq:derivativeL}.  %
  In order to prove~\eqref{e_estimatePartialThetam}, one has to examine $\bar D$ in more
  detail.  By~\eqref{eq:derivativeL} we have
  \begin{align*}
    \bar D(\theta,1)
    &= - \Leb \left([\Id-\tOqh\thvz]\invr
      \tOh\thvz \left\{\left[\frac{\partial_\theta f_\theta}{f_\theta'}\bar h\thvz\right]'- h\thvz m\thvz\left( \left[\frac{\partial_\theta f_\theta}{f_\theta'}\bar h\thvz\right]'\right)\right\}\right) \\
    &\phantom =
      +\cO(\|\Omega\|_{\cC^1(\bT^2)}).
  \end{align*}
  Let
  $\widehat g=[\Id-\tOqh\thvz]\invr\tOh\thvz\left\{\left[\frac{\partial_\theta
      f_\theta}{f_\theta'}\bar h\thvz\right]'- h\thvz m\thvz\left(
    \left[\frac{\partial_\theta f_\theta}{f_\theta'}\bar h\thvz\right]'\right)\right\}$ and
  observe that $m\thvz(\widehat g) = 0$ and $\|\widehat g\|\nBV\le\Const$;
  then,~\eqref{e_estimatePartialm} implies
  \begin{align*}
    |\Leb(\widehat g)| \le |m\thvz(\widehat g)| + \left|\int_1^0\partial_\paramL
    m\thvs(\widehat g)\deh \paramL\right|\le \Const\|\Omega_\theta\|\nc1\|\widehat g\|\nw11.
  \end{align*}
  It follows that
  \begin{equation}\label{eq:D(theta,1)-est}
    |D(\theta, 1)|\leq \Const\|\Omega\|_{\cC^1(\bT^2)}.
  \end{equation}
  We thus obtain~\eqref{e_estimatePartialThetam} by~\eqref{eq:D(theta,1)-est} and applying
  to~\eqref{e_partialhTheta-m} similar arguments.  %
 In order to prove~\eqref{e_estimatesecondpartchi}, observe that
  differentiating~\eqref{e_partialThetachi} yields
  \begin{align*}
    \partial_\theta^2\chi\thvs=(\partial_\theta m\thvs)\left(\derivL\thvs
    h\thvs\right)+ m\thvs\left((\partial_\theta\derivL\thvs) h\thvs\right)
    +m\thvs\left(\derivL\thvs(\partial_\theta h\thvs)\right).
  \end{align*}
  Substituting~\eqref{e_partialhTheta-m},~\eqref{eq:derivativeL}
  and~\eqref{e_partialhTheta} in the above expression we get
  \begin{align*}
    \partial_\theta^2\chi\thvz= m\thvz\left(A'+B\right)
  \end{align*}
  where $A,B$ are two functions that (using~\eqref{e_bound-h-Cs} and our assumptions on
  $\Omega$) satisfy
    \begin{align*}
      \|A'\|\nw11 &\le \|A\|\nc2 \le\Const \\
      \|B\|\nc1  & \le\Const (\|\partial^2_\theta\Omega\|\ncd1{\bT^2}+\|\partial_x\Omega\|\ncd2{\bT^2}+\|\Omega\|\ncd2{\bT^2})
    \end{align*}
 To conclude the proof, we use~\eqref{e_derivativeMeasure} as in the proof
  of~\eqref{e_estimatesecondpartchi0} which yields the result since $\Leb(A')=0$ by
  integration by parts.  Finally, the last statement follows from the above considerations
  and the formulae~\eqref{e_derivativess} and~\eqref{e_secondDerivativeChi.prelim}.
\end{proof}
We conclude the subsection with a non-perturbative result
\begin{lem}\label{lem:non-pert-theta}
  Assume that $\Omega$ is real and $\Leb(\Omega h_{0,\theta})=0$, then
  \[
    \| \partial_\theta\chi\thvz \|\leq\Const \min\{\|\Omega\|\nc1+1, \|\Omega\|\nc1^2\}.
  \]
\end{lem}
\begin{proof}
  If $\|\Omega\|\nc0\leq \sigma_0$, the estimate follows
  from~\eqref{e_estimatesecondpartchi0}.  In the non-perturbative regime, \ie for
  potentials of larger norm, recall that $m_{\theta,\Omega}$ is a positive measure (see
  Lemma~\ref{lem:quasicompact}), and therefore~\eqref{e_partialThetachi}
  with~\eqref{eq:derivativeL} imply
  \begin{align*}
    |\partial_\theta \chi_{\theta,\Omega}|\leq \Const m_{\theta,\Omega}(\|\Omega\|\nc1
  h_{\theta,\Omega}+|h'_{\theta,\Omega}|).
  \end{align*}
  The claim then follows by recalling the normalization
  $1=m_{\theta,\Omega}(h_{\theta,\Omega})$ and that, using once again
  Lemma~\ref{lem:quasicompact}:
  \[
    |h'_{\theta,\Omega}(x)|\leq \Const (\|\Omega\|\nc1 +1) h_{\theta,\Omega}(x)\qedhere
  \]
\end{proof}
\subsection{Results for functions of bounded variation}\label{subsec:bv-ly}\ \newline
In certain parts of the paper it is convenient to consider transfer operators acting on
the Sobolev Space $W^{1,1}$ or on the space of function of bounded variations $\BV$.
Since $W^{1,1}\subset \BV$, with the same norm, we will limit our discussion to the
second, more general, case.

For functions of bounded variation, the Lasota--Yorke inequality reads as follows: for any
$\psi\in\cC^1$ and any $n\in\bN$, setting
$\Omega_{\theta,n}=\sum_{k=0}^{n-1}\Omega_\theta\circ f_\theta^k$,
\begin{equation}\label{eq:lasota-yorke-bv0}
  \begin{split}
    &\left|\int \psi'\tO\thvs^n g\right|=\left|\int g e^{\paramL\Omega_{\theta,n}}(\psi')\circ f_\theta^n\right|\\
    &\le \left|\int g\left[\frac{e^{\paramL\Omega_{\theta,n}}\psi\circ
          f_\theta^n}{(f_\theta^n)'}\right]'\right|+\Const\int |\psi|\tO_{\theta,0}^n\left[ e^{\paramL
        \Re(\Omega_{\theta,n})}(1+|\paramL|\|\Omega'_\theta\|\nc0) |g|\right].
  \end{split}
\end{equation}
Thus, setting
\begin{align*}
\tilde \chi_n
  & = \log\|\tO_{\theta,\paramL \Re (\Omega_{\theta,n})}^n\|\nl1
    = \log\|e^{\paramL\Re(\Omega_{\theta,n})}\|\nl1\\
\tau_n
  &=\log\left\|\frac{e^{\paramL\Re(\Omega_{\theta,n})}}{(f_\theta^n)'}\right\|\nl\infty
\end{align*}
we have
\begin{equation}\label{eq:lasota-yorke-bv}
  \|\tO\thvs^n g\|\nBV\le e^{\tau_n}\|g\|\nBV+\Const e^{\tilde\chi_n}(1+|\paramL|\|\Omega_\theta\|\nc1)\|g\|\nl1.
\end{equation}
By the usual Hennion argument \cite{Hennion93}, the spectral radius of $\tO\thvs$ is
bounded by $e^{\tilde\chi_n/n}$ and the essential spectral radius by $e^{\tau_n/n}$.
  Note
that~\eqref{eq:lasota-yorke-bv0} also implies\footnote{ Recall that, in one dimension,
  $\|g\|_{L^\infty}\le \|g\|\nBV$.}
\begin{equation}\label{eq:power-l1-a}
  \|\tO\thvs^n g\|\nBV\le \left[e^{\tau_n}+\Const(1+|\paramL|\|\Omega_\theta\|\nc1)\int
    e^{\paramL\Re(\Omega_{\theta,n})}\right]\|g\|\nBV.
\end{equation}
In addition, calling $\cH_{n}$ the set of inverse branches of $f_\theta^n$, we have, by
standard distortion estimates,
\begin{equation}\label{eq:power-l1-b}
\begin{split}
\int_{\bT^1} e^{\paramL\Re(\Omega_{\theta,n})}&=\int_{\bT^1}
\tO_{\theta,0}^ne^{\paramL\Re(\Omega_{\theta,n})}\\
&=\sum_{h\in\cH_n}\int_{\bT^1}e^{\paramL
  \Re(\Omega_{\theta,n})\circ h(x)}h'(x)\deh{}x\ge\Const e^{\tau_n}.
\end{split}
\end{equation}
Thus our bound on the spectral radius is larger or equal than our bound on the essential
spectral radius.  Nevertheless, these are just estimates: the real values could be much
smaller.
\begin{rem}\label{rem:bv-contraction}%
  For real potentials and $\paramL\ge 0$, more can be said.  If $\chi$ denotes the
  spectral radius of $\tO_{\theta,\paramL\Omega_{\theta}}$ as an operator on $\cC^1$ and
  $e^{\chi}$ its maximal eigenvalue, then\footnote{ Since
    $m\thvs\tO\thvs^n h\thvs=e^{n\chi\thvs}$ and $m\thvs$ is a measure (see
    Lemma~\ref{lem:quasicompact}), then there exists $x_*\in\bT$ such that
    $\tO\thvs^n h\thvs(x_*)=e^{n\chi\thvs}$, then the claim follows
    recalling~\eqref{eq:a-estimate}.}
  \begin{align*}
    e^{-\const|\paramL|} e^{n\chi}\le  \int_{\bT^1} \tO_{\theta,\paramL\Omega_{\theta}}^n1
    =\int_{\bT^1} e^{\paramL\Omega_{\theta,n}}
    =\int_{\bT^1} \tO_{\theta,\paramL\Omega_{\theta}}^n1\le e^{\const|\paramL|}  e^{n\chi}.
  \end{align*}
  This, together with~\eqref{eq:power-l1-a} and ~\eqref{eq:power-l1-b}, implies that the
  spectral radius of $\tO\thvs$ on $\BV$ coincides with the spectral radius on $\cC^1$
  and, moreover, $e^{-\chi} \tO_{\theta,\paramL\Omega_{\theta}}$ is power bounded on
  $\BV$.  This does not, however, imply that $\tO\thvs$ is a Perron--Frobenius operator
  also when acting on $\BV$: in fact, for large $\paramL$, the essential spectral radius
  could a priori coincide with the spectral radius.  Nevertheless, we can find a simple
  condition that prevents this pathological behavior.  Let
  $\Osc\Omega_\theta:=\sup \Omega_\theta-\inf \Omega_{\theta}$; then
  \begin{align*}
    e^{\tau_n}\le \log\left[\lambda^{-n} e^{\paramL\sup \Omega_{\theta,n}}\right]
    & \le \log\left[\lambda^{-n}e^{n \paramL \Osc\Omega_\theta}\left\|
      e^{\paramL \Omega_{\theta,n}}\right\|\nl1\right].
  \end{align*}
  The above implies that if $|\paramL|\Osc\Omega_\theta<\log\lambda$, then the essential spectral radius
  is strictly smaller than the spectral radius.
\end{rem}
\begin{rem}\label{rem:bv-contraction-conp}
  No such general bounds are available for arbitrary complex potentials and we must then
  rely on perturbation theory.  If the potential is purely imaginary,
  then~\eqref{eq:lasota-yorke-bv} implies that the essential spectral radius is smaller
  than $\lambda^{-1}$.  Since the point spectrum is independent on the space on which the
  operators act,\footnote{ This can be proven as in Lemma~\ref{lem:c0-c1}.} it follows
  that the spectrum outside the disk $\{|z|\le \lambda^{-1}\}$ on $\BV$ coincides with the
  spectrum on $\cC^1$.
\end{rem}
We conclude this brief discussion with a number of estimates on the left eigenvector;
observe first that by definition and by the analytic dependence of all objects on
$\paramL$, we have, for $\|\Omega_\theta\|\nc1 < \sigma_1$ that
$|m\thvz g|\le\Const\|g\|\nBV$, and using~\eqref{e_derivativeMeasure} we thus conclude
that, similarly to~\eqref{e_estimatePartialm}:
\begin{align}\label{e_estimatePartialm-BV}
  |\partial_\paramL m\thvs (g)|\le\Const\|\Omega_\theta\|\nc1\|g\|\nBV,
\end{align}
in particular:
\begin{align}\label{e_basicEstimatem-BV-Lebesgue}
  |m\thvs(g) -\Leb(g)| < \Const\|\Omega_\theta\|\nc1\|g\|\nBV.
\end{align}
We now proceed to obtain a refinement of the above estimates.
\begin{lem}\label{lem:m-estimate-0}
  There exists $\sigma_2, \Cquattro>0$ such that, provided $\|\Omega_\theta\|\nc1 < \sigma_2$
  and $\Omega$ is either real or it satisfies the estimate
  $\|\Omega_\theta\|\nc1 \expo{-\Cquattro\|\Omega_\theta\|\nc1\invr}\le
  \|\Omega_\theta\|\nc0$, for any $g\in\BV$ and $n\in\bN$, we have
  \begin{align}\label{e_m-estimate-0}
    |m\thvz\,g|\leq \Const e^{\const n\|\Omega_\theta\|\nc0}\|g\|\nl1+\Const e^{-\const n} \|g\|\nBV.
  \end{align}
\end{lem}
\begin{proof}
  First of all we choose $\sigma_2\le \sigma_1$ so that $\tO\thvz$ is of Perron--Frobenius
  type and let $h\thvz\otimes m\thvz$ be the eigenprojector associated to its maximal
  eigenvalue $e^{\chi\thvz}$.  In particular we have, for any $n > 0$
  \begin{align*}
    \tOh\thvz^n g = h\thvz m\thvz(g)+\tOqh\thvz^n(g)
  \end{align*}
  and therefore there exists $\tau\in (0,1)$ such that:
  \begin{align}\label{eq:m-bound-horror}
    |m\thvz\,g|\notag
    &\leq \left|\frac{\Leb\,\tOh\thvz^n g}{\Leb\, h\thvz}\right|+\Const\tau^n \|g\|_\BV \\
    &\leq \Const \Leb \left|e^{\Omega_{\theta,n}-n\chi\thvz} g\right|+\Const\tau^n \|g\|\nBV.
  \end{align}
  Observe that if $\Omega$ is real, then Lemma~\ref{lem:quasicompact} states that $m\thvz$ is
  a measure.  Therefore~\eqref{e_derivativeChi} implies that
  \begin{align}\label{e_claimChi}
    |\chi\thvz| < \Const\|\Omega_\theta\|\nc0.
  \end{align}
  We claim that the same estimate holds also for complex potentials satisfying the
  assumption given in the statement.  In fact, a priori
  $|\chi\thvz| < \Const\|\Omega_\theta\|\nc1$, thus~\eqref{eq:m-bound-horror} implies
  \begin{align*}
    |m\thvz\,g|\leq \Const e^{n\|\Omega_\theta\|\nc1} \|g\|\nl1 + \Const\tau^n \|g\|\nBV.
  \end{align*}
  We can then choose $n=\|\Omega_\theta\|\nc1^{-1}$ and apply the resulting bound
  to~\eqref{e_derivativeChi}, obtaining the better estimate:
  \begin{equation}\label{eq:chi-c0-bound}
    |\chi\thvz|\leq \Const \|\Omega_\theta\|\nc0 +
    \Const e^{-\const \|\Omega_\theta\|\nc1^{-1}}\|\Omega_\theta\|\nc1\leq \Const \|\Omega_\theta\|\nc0,
  \end{equation}
  where we have used the hypotheses of the lemma choosing $\Cquattro = |\log\tau|$.  We
  can thus use again~\eqref{eq:m-bound-horror} and obtain~\eqref{e_m-estimate-0},
  concluding the proof of the lemma.
\end{proof}
\begin{lem}\label{lem:m-estimate}
  Under the assumptions of Lemma~\ref{lem:m-estimate-0}, for any $g\in\BV$:
  \begin{align}\label{e_estimateMIterate2BV-derivative}
    |\partial_\paramL m\thvs\,g|
    &\le \Const \|\Omega\thvs\|\nc0 \log\|\Omega_\theta\|\nc1\invr\| g\|\nl1+\Const \|\Omega_\theta\|\nc1^{100} \|g\|_\BV
  \end{align}
  in particular:\footnote{\label{f_sigma100-appendix} Our choice of the power $100$ is
    clearly arbitrary: one could substitute it with any sufficiently large number at the
    expense of increasing the constants.}
  \begin{equation} \label{e_estimateMIterate2BV}
    |m\thvz\,g-\Leb\, g|\le \Const
    \|\Omega\thvz\|\nc0\log\|\Omega_\theta\|\nc1\invr \|g\|\nl1
    +\Const \|\Omega_\theta\|\nc1^{100} \|g\|_\BV.
  \end{equation}
\end{lem}
\begin{proof}
  Again we choose $\sigma_2\le \sigma_1$ so that $\tO\thvz$ is of Perron--Frobenius type
  and let $h\thvz\otimes m\thvz$ denote the eigenprojector associated to its maximal
  eigenvalue $e^{\chi\thvz}$.  Combining~\eqref{e_derivativeMeasure}
  and~\eqref{e_m-estimate-0} with the choice $n = \|\Omega_\theta\|\nc0\invr$ we obtain:
  \begin{equation}\label{e_c0-bv-derivative}
  \begin{split}
    |\partial_\paramL m\thvs|&\le\Const \|(\Omega\thvs[\Id-\tOqh\thvs]\invr\tilde{g})\|\nl1 \\
    &\phantom\le+    \Const \|\Omega_\theta\|\nc1\expo{-\const\|\Omega_\theta\|\nc0\invr}\|g\|_\BV
  \end{split}
  \end{equation}
  where recall that $\tilde g= g-h\thvs m\thvs\,g$.  On the other hand, for any $K\in \bN$
  \begin{equation}\label{eq:hoi-hoi-wei}
    \begin{split}
      &\|\Omega\thvs [\Id-\tOqh\thvs]\invr\tilde{g}\|\nl1%
      \le \sum_{k=0}^\infty \|\Omega\thvs\tOh\thvs^k\tilde{g})
      \|\nl1\\
      & \le \Const
      \|\Omega\thvs\|\nc0\left[K\|g\|\nl1  + K |m\thvs\,g| + \tau^{K}\|\tilde g\|\nBV \right]
    \end{split}
  \end{equation}
  since $\tOh\thvs$ is power bounded in $L^1$ and where $\tau\in (0,1)$ is
  determined by the spectral gap in $\BV$.  Note that in the considered range of $\paramL$
  we can assume $\tau$ to be independent on $\paramL$.  We can now choose $K = C\log\|\Omega_\theta\|\nc1\invr$.
  By Lemma~\ref{lem:m-estimate-0}, with the choice
  $n = \|\Omega_\theta\|\nc0\invr$, we can substitute~\eqref{eq:hoi-hoi-wei}
  in~\eqref{e_c0-bv-derivative} to obtain~\eqref{e_estimateMIterate2BV-derivative},
  provided that $C$ has been chosen large enough.
  Integrating~\eqref{e_estimateMIterate2BV-derivative} with respect to $\paramL$ from $0$
  to $1$ yields~\eqref{e_estimateMIterate2BV}.
\end{proof}
\begin{lem}\label{lem:m-theta-BV}
  Under the hypotheses of Lemma~\ref{lem:m-estimate-0}, for any $\theta,\theta'\in\bT$
  sufficiently close, we have
\[
  |m_{\theta,\Omega}(g)-m_{\theta',\Omega}(g)|\leq
  \Const[\log\|\Omega\|\nc0]^2|\theta-\theta'|\|\Omega\|\nc1\|g\|\nBV.
\]
\end{lem}
\begin{proof}
  We choose $\sigma_2$ so that the operators $\tO_{\theta,\Omega}$ are of
  Perron--Frobenius type for all $\theta$ (see Remark~\ref{rem:bv-contraction-conp}).
  Accordingly, there exists $\tau\in (0,1)$ such that\footnote{ By perturbation theory the
    spectral gap varies continuously in $\theta$, hence by compactness there exist an
    uniform spectral gap.} for any $N\in\bN$
  \begin{align*}
    |m_{\theta,\Omega}(g)-m_{\theta',\Omega}(g)|\leq
    \left|\frac{\Leb(\tOh_{\theta,\Omega}^{
          N}g)}{\Leb(h_{\theta,\Omega})}-\frac{\Leb(\tOh_{\theta',\Omega}^{
          N}g)}{\Leb(h_{\theta',\Omega})}\right|+\Const\tau^{N}\|g\|\nBV.
  \end{align*}
  In addition, by~\eqref{e_partialhTheta},~\eqref{eq:D(theta,1)-est}
  and~\eqref{e_estimatePartialm} we have
  \[
    \begin{split}
      |\Leb(h_{\theta,\Omega})-\Leb(h_{\theta',\Omega})|\leq& \int_{\theta}^{\theta'}\deh \vf\left|(\Leb-m_{\vf,\Omega})([\Id-\tOqh_{\vf,\Omega}]^{-1}\tOh_{\vf,\Omega}\widetilde\derivL_{\vf,\Omega}h_{\vf,\Omega})\right|\\
      &+\cO(|\theta-\theta'|\|\Omega\|\nc1)=\cO(|\theta-\theta'|\|\Omega\|\nc1),
    \end{split}
  \]
  where recall
  $\widetilde\derivL_{\theta,\Omega}g =
  \widetilde\derivL_{\theta,\Omega}g-h_{\theta,\Omega}m_{\theta,\Omega}(\derivL_{\theta,\Omega}g)$
  and $\derivL_{\theta,\Omega}$ is defined in~\eqref{eq:derivativeL}.
Also note that
  \[
    \begin{split}
      \frac{\Leb(\tOh_{\theta',\Omega}^{N}g)}{\Leb(h_{\theta,\Omega})}&-\frac{\Leb(\tOh_{\theta',\Omega}^{N}g)}{\Leb(h_{\theta',\Omega})}
=
\frac{\Leb(\tOh_{\theta',\Omega}^{N}g)}{\Leb(h_{\theta',\Omega})}\left[\frac{\Leb(h_{\theta',\Omega})}{\Leb(h_{\theta,\Omega})}-1\right]\\
      =&m_{\theta',\Omega}(g)\cO(|\theta-\theta'|\|\Omega\|\nc1)+\cO(\tau^N\|g\|_\BV)\\
      =&\cO(|\theta-\theta'|\|\Omega\|\nc1)\left(\|g\|_{L^1}+\Const\expo{-\const\|\Omega_\theta\|\nc0^{-1}}\|g\|_\BV\right)+\cO(\tau^N\|g\|_\BV)
    \end{split}
  \]
  where we have used Lemma~\ref{lem:m-estimate-0} with the choice
  $n = \|\Omega_\theta\|\nc0\invr$.  We can then choose $N=C\log\|\Omega\|\nc0^{-1}$ for
  $C$ large enough and continue our estimate to write\footnote{ Remember
    \eqref{eq:basic-taylor}, from which
    $\partial_\theta\tOh_{\theta,\Omega}=\tOh_{\theta,\Omega}\derivL_{
      \theta,\Omega}-\tOh_{\theta,\Omega} \partial_\theta\ln \chi\thvz$, and
    \eqref{e_estimatesecondpartchi0}. Also, in the second line, we use
    \eqref{e_derivativeChi} to exchange $\chi\thvz$ with $1$.}
  \[
    \begin{split}
      |m_{\theta,\Omega}(g)-m_{\theta',\Omega}(g)|&\leq \Const\sum_{k=1}^{N}
      \int_{\theta}^{\theta'}\left|\Leb(\tOh_{\theta,\Omega}^{N-k}
        \tOh_{ \varphi,\Omega}\derivL_{ \varphi,\Omega}\tOh_{\theta,\Omega}^{k-1}g)d\varphi\right|\\
      &\phantom{\leq}+\Const [\log\|\Omega\|\nc0^{-1}] |\theta-\theta'|\|\Omega\|\nc1\|g\|\nl1+\Const\|\Omega\|\nc0^{100}\|g\|_\BV\\
      &\hskip-1.3cm\leq \sum_{k=1}^{N} \int_{\theta}^{\theta'}\left|\Leb\left(\left[e^{\sum_{j=0}^{N-k}\Omega_\theta\circ \favg_{\theta}^j\circ\favg_\vf+\Omega_\vf}-1\right]\derivL_{ \varphi,\Omega}\tOh_{\theta,\Omega}^{k-1}g\right)d\varphi\right|\\
      &\quad+\Const
      [\log\|\Omega\|\nc0^{-1}]|\theta-\theta'|\|\Omega\|\nc1\|g\|\nl1+\|\Omega\|\nc0^{100}\|g\|_\BV,
    \end{split}
  \]
  which, integrating by parts, yields the lemma.k
\end{proof}

\subsection{Generic conditions}\label{subsec:generic}\ \newline
Here we discuss some conditions that prevent non generic behavior of the transfer
operator.  They are arranged by (apparent) increasing strength.  Yet, we will see at the
end of the section that, although in general they are all different, in the particularly
simple case we are considering, they are in fact all equivalent to the weaker condition:
the potential should not be cohomologous to a constant.  As the latter condition holds
generically, all the conditions stated below also hold generically.
\begin{lem}\label{lem:coboundary}
  Let $\theta,\paramL$ be values for which $\tOh_{\theta,\paramL\Omega}$ has a spectral
  gap, $m_{\theta,\paramL\Omega}$ is a measure and $|h_{\theta,\paramL\Omega}|>0$.  If
  $\partial^2_\paramL\chi_{\theta,\paramL\Omega}$ is zero, then $\Omega_{\theta}$ is
  \emph{cohomologous to a constant}, \ie there exists $\beta\in\bR$ and $\phi\in \cC^1$
  such that
  \[
  \Omega_{\theta}=\beta+\phi-\phi\circ f_\theta.
  \]
\end{lem}
\begin{proof}
  Note that if the second derivative is zero for some $\theta$ and $\paramL$ then, by the
  computation implicit in~\eqref{e_secondDerivativeChi}, it follows that the sequence
  $\sum_{k=0}^{n-1}\Omega_{\theta, \paramL\Omega}\circ f_\theta^k$ is uniformly bounded in
  $L^2(\bT,m_{\theta,\paramL\Omega})$ and hence weakly compact.\footnote{ Indeed,
    recalling~\eqref{e_secondDerivativeChia},
    \[
    \begin{split}
      \mnu_{\theta,\paramL\Omega}\left(\sum_{k=0}^{n-1}\Omega_{\theta, \paramL\Omega}\circ
        f_\theta^k\right)^2&=
      n\left\{\mnu_{\theta,\paramL\Omega}(\Omega_{\theta, \paramL\Omega}^2)+2\sum_{j=1}^{n-1}\mnu_{\theta,\paramL\Omega}\left(\Omega_{\theta, \paramL\Omega}\circ f_\theta^j\Omega_{\theta, \paramL\Omega}\right)\right\}\\
      &\quad-2\sum_{j=1}^{n-1}j\mnu_{\theta,\paramL\Omega}\left(\Omega_{\theta, \paramL\Omega}\circ f_\theta^j\Omega_{\theta, \paramL\Omega}\right)\\
      &=-2\sum_{j=n}^\infty\mnu_{\theta,\paramL\Omega}\left(\Omega_{\theta, \paramL\Omega}\circ
        f_\theta^j\Omega_{\theta, \paramL\Omega}\right)-2\sum_{j=1}^{n-1}j\mnu_{\theta,\paramL\Omega}\left(\Omega_{\theta, \paramL\Omega}\circ
        f_\theta^j\Omega_{\theta, \paramL\Omega}\right)
    \end{split}
    \]
    which is bounded by~\eqref{eq:inv-measure-gen} and the spectral gap of
    $\tOh_{\theta,\paramL\Omega}$.}  Let
  $\sum_{k=0}^{n_j-1} \Omega_{\theta,\paramL\Omega}\circ f_\theta^k$ be a weakly
  convergent subsequence and let $\phi\in L^2$ be its limit.  Hence, for any $\vf\in L^2$
  holds
  \[
  \lim_{j\to\infty}\mnu_{\theta,\paramL\Omega}\left( \vf \sum_{k=0}^{n_j-1}\Omega_{\theta,
      \paramL\Omega}\circ f_\theta^k\right)=\mnu_{\theta,\paramL\Omega}(\vf\phi).
  \]
  It follows that, for any $\vf\in \cC^1$,
  \begin{align*}
    \mnu_{\theta,\paramL\Omega}( &\vf[\Omega_{\theta,\paramL\Omega}-\phi+\phi\circ
    f_\theta])= \\
    &=\mnu_{\theta,\paramL\Omega}(\vf \Omega_{\theta,\paramL\Omega}) +
    \lim_{j\to\infty}\sum_{k=0}^{n_j-1}\mnu_{\theta,\paramL\Omega}(\vf [
    \Omega_{\theta,\paramL\Omega}\circ f_\theta - \Omega_{\theta,\paramL\Omega}]\circ f_
    \theta^k)\\
    &=\lim_{j\to\infty}\mnu_{\theta,\paramL\Omega}( \vf \Omega_{\theta,\paramL\Omega}\circ f_
    \theta^{n_j}) =\lim_{j\to\infty} m_{\theta,\paramL\Omega} (
    \Omega_{\theta,\paramL\Omega}\tOh_{\theta,\paramL\Omega}^{n_j}(\vf h_{\theta,\paramL
      \Omega}))\\
    &=\mnu_{\theta,\paramL\Omega}(\vf)\mnu_{\theta,\paramL\Omega}(\Omega_{\theta,\paramL
      \Omega})=0.
  \end{align*}
  Since $\cC^1$ is dense in $L^2$, it follows that
  \begin{equation}
    \label{eq:cob}
    \Omega_{\theta,\paramL\Omega}=\phi-\phi\circ f_\theta\,,\qquad m_{\theta,\paramL\Omega}-
    \textrm{a.s.}
  \end{equation}
  A function with the above property is called a \emph{coboundary}, in
  this case an $L^2$ coboundary.  In fact, more is true: $\phi\in\cC^1$.
  Indeed, recalling that $\mnu_{\theta,\paramL\Omega}( \phi)=0$,
  \[
  \tOh_{\theta,\paramL\Omega}(\Omega_{\theta,\paramL\Omega}\, h_{\theta,\paramL
    \Omega})=\widehat \tO_{\theta,\paramL\Omega}(\phi\, h_{\theta,\paramL\Omega})-\phi\,
  h_{\theta,\paramL\Omega}=-(\Id-\widehat \tO_{\theta,\paramL\Omega})(\phi\, h_{\theta,
    \paramL\Omega}).
  \]
  Note that the above equation has a unique $L^2(\bT,m_{\theta,\paramL\Omega})$ solution.%
  \footnote{ Assume otherwise that the equation
    $\widehat \tO_{\theta,\paramL\Omega}\psi=\psi$ has more than one solution in
    $L^2$.  But for any such solution let $\{\psi_\ve\}\subset \cC^1$ be a sequence that
    converges to $\psi$ in $L^2$, then it converges in $L^1$, moreover
    $m_{\theta,\paramL\Omega}(|\widehat \tO_{\theta,\paramL\Omega}^n(\psi-\psi_\ve)|)\le
    m_{\theta,\paramL\Omega}\,|\psi-\psi_\ve|$.  Thus,
    $\psi=\widehat \tO_{\theta,\paramL
      \Omega}^n\psi_\ve+o(1)=h_{\theta,\paramL\Omega}m_{\theta,\paramL\Omega}\,\psi+o(1)$.
    Hence $\psi=h_{\theta,\paramL\Omega}m_{\theta,\paramL\Omega}\,\psi$.}  Hence
  \[
  \phi=-h_{\theta,\paramL\Omega}^{-1}(\Id-\tOh_{\theta,\paramL\Omega})^{-1}
  \tOh_{\theta,\paramL\Omega}(\Omega_{\theta,\paramL\Omega}\, h_{\theta,\paramL\Omega}) \in
  \cC^1.
  \]
  The proof then follows recalling that
  $\Omega_{\theta,\paramL\Omega} =
  \Omega_\theta-\mnu_{\theta,\paramL\Omega}(\Omega_\theta)$ and setting
  $\beta = \mnu_{\theta,\paramL\Omega}(\Omega_\theta)$.
\end{proof}
\begin{rem} Note that the above lemma applies in particular to the case of real potentials
  (since $m_{\theta,\paramL\Omega}$ is a measure and $h_{\theta,\paramL\Omega}>0$ by
  Lemma~\ref{lem:quasicompact}) and for $\paramL=0$.
\end{rem}
Following~\cite{Guivarch} we introduce
\begin{defin}\label{def:giuvarch}
  A real function $A\in\cC^1$ is called {\em aperiodic}, with respect to the dynamics $f$,
  if there is no $\BV$ function $\beta$ and $\nu_0,\nu_1\in\bR$ such that $A+\beta\circ
  f-\beta$ is constant on each domain of invertibility of $f$, and has range in
  $2\pi\nu_1\bZ+\nu_0$.
\end{defin}
Also in the following we will need the, seemingly stronger, condition.

\begin{defin}\label{def:noi}
  A real function $A\in\cC^1$ is called {\em c-constant}, with respect to the dynamics $f$,
  if there is a $\BV$ function $\beta$ such that $A+\beta\circ
  f-\beta$ is constant on each domain of invertibility of $f$.
\end{defin}

We conclude with the announced proof that all the above properties are equivalent in our
case of interest.
\begin{lem}\label{lem:allthesame}
  If $f\in\cC^2(\bT,\bT)$ and expanding, then any c-constant zero average function
  $A\in \cC^1(\bT,\bR)$ is necessarily a coboundary.
\end{lem}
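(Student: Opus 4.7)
My plan is to use the c-constant decomposition to obtain a bounded-variation coboundary equation, to then leverage the smoothness of $f$ and $A$ to show the piecewise-constant remainder must vanish, and finally to apply the transfer-operator bootstrap at the end of the proof of Lemma~\ref{lem:coboundary} to upgrade the resulting $\beta\in\BV$ to a $\cC^1$ coboundary representative.

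First, by c-constancy I would fix $\beta\in\BV$ such that $\psi:=A+\beta\circ f-\beta$ equals $\sum_{i=1}^{d}c_i\mathbf{1}_{U_i}$ on the branches $U_1,\ldots,U_d$ of $f$. By $f$-invariance of the SRB measure, the zero-average hypothesis yields $\sum_i c_i\mu(U_i)=0$.

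Second, I would analyse the singular structure of $\beta$. Comparing singular parts of distributional derivatives in the identity $A=\beta-\beta\circ f+\psi$ --- the left-hand side being $\cC^1$ --- forces the pointwise jump identity
\[
\Delta\beta(f(y))=\Delta\beta(y)+\Delta\psi(y)\qquad\forall\,y\in\bT,
\]
with $\Delta\psi$ supported on the branch endpoints $\{x_1,\ldots,x_d\}$ and equal to $c_i-c_{i-1}$ at $x_i$. Outside these endpoints the jump function $\Delta\beta$ is thus $f$-invariant. Expanding degree $d\ge 2$ now enters decisively: if $\Delta\beta(y)\ne 0$ at some $y$ whose backward orbit avoids the $x_i$, the same nonzero jump is inherited by each of the $d^n$ preimages at depth $n$, contradicting $\sum|\Delta\beta|<\infty$. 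Hence, up to the forward orbit of $y_0:=f(x_i)$ (which is handled either by choosing a partition base-point avoiding the countable set of periodic points or by passing to a suitable iterate $f^n$), one concludes $J_\beta\subset\{x_1,\ldots,x_d\}$ and $\Delta\beta(y_0)=0$.

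Third, substituting back gives $\Delta\beta(x_i)=c_{i-1}-c_i$, and imposing the $\cC^1$ (not only $\cC^0$) matching of $A$ at each branch endpoint forces all $c_i$ to coincide; together with $\sum_i c_i\mu(U_i)=0$ this yields $c_i\equiv 0$. Therefore $\psi\equiv 0$, all jumps of $\beta$ vanish, and one is reduced to $A=\beta-\beta\circ f$ with $\beta\in\cC^0$. The transfer-operator bootstrap of Lemma~\ref{lem:coboundary} now applies verbatim: iterating $\tOh A=\tOh\beta-\beta$ gives
\[
\beta=\mathrm{const}-\sum_{k=1}^{\infty}\tOh^k A,
\]
a series converging exponentially in $\cC^1$ by the spectral gap of $\tOh$ on the zero-mean subspace. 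Consequently $\beta\in\cC^1$, and $A$ is a genuine $\cC^1$ coboundary.

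The main obstacle is the middle step: showing that $J_\beta$ is actually contained in the branch endpoints and that all $c_i$ must coincide. The first half is controlled by the exponential growth of backward orbits under the expanding map, but the possibly periodic forward orbit of the partition base-point $y_0$ is genuinely delicate, since bounded variation alone does not forbid a constant nonzero jump along a finite orbit; the cleanest fix is to exploit the freedom in the choice of partition (or the invariance of c-constancy under iteration $f\mapsto f^n$) so that $y_0$ avoids every periodic orbit. The second half exploits the higher-order matching imposed by the $\cC^1$ regularity of $A$ at the branch endpoints, which goes beyond the mere continuity constraints.
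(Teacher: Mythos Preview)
Your third step is the gap. You assert that ``imposing the $\cC^1$ (not only $\cC^0$) matching of $A$ at each branch endpoint forces all $c_i$ to coincide,'' but you never explain what this matching is or why differentiability of $A$ at $x_i$ yields any constraint beyond continuity; since $\beta\in\BV$ need not have one-sided derivatives, there is nothing to match. In fact the step is unnecessary: your jump analysis, done correctly, already gives the conclusion. With $y_0$ chosen non-periodic, each $x_i$ lies \emph{off} the forward orbit of $y_0$ (otherwise $y_0$ would be periodic), so your backward-orbit argument yields $\Delta\beta(x_i)=0$ directly. The jump identity at $x_i$ then reads $\Delta\beta(y_0)=\Delta\psi(x_i)$ for every $i$, forcing all $c_i-c_{i-1}$ to coincide; telescoping on the circle makes them zero. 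Thus the conclusion of your second step should be $J_\beta\subset\{y_0\}$, not $\{x_1,\dots,x_d\}$, and the third step collapses without any appeal to $\cC^1$ regularity.

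The paper's proof makes the \emph{opposite} choice: it takes $y_0$ to be a fixed point. Then the transfer-operator series $\beta=(\Id-\tOh)^{-1}\tOh(\alpha-A)$ shows $\beta$ is $\cC^1$ away from the single point $0$, and the identity $0=\int_0^1 A'=(1-d)(\beta(1^-)-\beta(0^+))$, obtained by integrating $A'=\beta'-(\beta\circ f)'$ over each branch, kills the lone jump in one line since $d\ge2$. This replaces your entire jump propagation analysis by a single integration by parts, and it also bypasses any worry about the singular-continuous part of $\beta'$, which your jump argument does not address (though once all $c_i=0$ your final bootstrap would handle it anyway).
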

\begin{proof}
  By definition there exists $\beta\in \BV$ such that $\alpha=A +\beta\circ f-\beta$ where
  $\alpha$ is constant on the invertibility domains of $f$.  If we apply the normalized
  transfer operator $\tOh$ to the previous relation we have
  $\beta=(\Id-\tOh)^{-1}\tOh(\alpha- A)$.  Note that, by hypothesis, $\tOh^k(\alpha- A)$
  is $\cC^1$ except for at most one point (the common image of the boundary points of
  invertibility domains), for each $k>0$.  Thus $\beta$ has at most one (jump)
  discontinuity, which we assume without loss of generality to be at $x=0$.  Since $A$ is
  smooth on $\bT$, calling $\cP$ the partition of invertibility domains, we have:
  \[
    0=\int_0^1A'\deh x= \sum_{p\in\cP}\int_p(\beta'-(\beta\circ f)')\deh
    x=(1-d)(\beta(1^-)-\beta(0^+))
  \]
  where $d$ is the number if invertibility domains of $f$, i.e.\ its topological degree.
  We thus conclude that $\beta$ is in fact continuous on $\bT$ and therefore $\alpha$ has
  to be constant on $\bT$ (hence identically zero).  Consequently, $\beta$ must be smooth
  and correspondingly $A$ is then a $\cC^1$-coboundary.
\end{proof}
\subsection{Non perturbative results}\label{subsec:non-pert}\ \newline
In this section we collect some results that hold when $\paramL$ is large, \ie well
outside the perturbative regime.  Such results hold under the generic conditions discussed
in the previous section.  Even though we have proven that all the conditions are
equivalent, we will state the next lemmata under the conditions that are most natural in
the proof (and for which the lemma might naturally hold in greater generality).
\begin{lem}\label{lem:spectrum-c} If $i\Omega_\theta$ is real, of zero average with
  respect to $\mnu_{\theta,0}$ and is aperiodic, then, for all $\paramL\neq 0$, the
  spectral radius of $\tO_{\theta,\paramL \Omega}$ when acting on both $\cC^1$ and $\BV$
  is strictly less than $1$ and varies continuously with $\paramL$ unless it is smaller
  than $\lambda^{-1}$.
\end{lem}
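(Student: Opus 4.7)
The plan is to separate the lemma into three statements and prove them in order: (i) the spectrum on $\BV$ and on $\cC^1$ agree outside the disk of radius $\lambda^{-1}$; (ii) no eigenvalue of modulus $1$ exists for $\paramL \neq 0$; (iii) continuity of the spectral radius when it exceeds $\lambda^{-1}$. Item (i) is essentially a restatement of Lemma \ref{lem:c0-c1} combined with the Lasota--Yorke inequality \eqref{eq:lasota-yorke-bv}: since $\Re(\paramL\Omega)=0$ one has $e^{\tau_n}\leq\lambda^{-n}$ and $e^{\tilde\chi_n}\leq 1$, so the essential spectral radius on $\BV$ is bounded by $\lambda^{-1}$ while the spectral radius is bounded by $1$, and the point spectrum outside $\{|z|\leq\lambda^{-1}\}$ is independent of the space (see also Remark \ref{rem:bv-contraction-conp}).

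The heart of the argument is (ii). I will argue by contradiction: assume $\tO_{\theta,\paramL\Omega} h=e^{i\phi}h$ for some non-zero $h\in\cC^1$ with $|e^{i\phi}|=1$. Writing $\paramL\Omega_\theta=-i\paramL\Psi$ with $\Psi$ real, the triangle inequality applied to the eigenvalue equation gives $|h|\leq \tO_{\theta,0}|h|$ pointwise. Integrating against Lebesgue and noting that $\int\tO_{\theta,0}|h|=\int|h|$ forces equality everywhere, so $|h|$ is a non-negative $\cC^0$ fixed point of the unweighted transfer operator, hence proportional to $h_{\theta,0}$ by Lemma \ref{lem:quasicompact}. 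In particular $h$ has no zeros and can be written $h=e^{i\psi}h_{\theta,0}$ with $\psi$ smooth modulo $2\pi$. Re-injecting this ansatz in the eigenvalue equation and using that equality in the triangle inequality forces every summand to have the same argument, I obtain, on each invertibility domain of $f_\theta$,
\begin{equation*}
  \paramL\Psi+\psi\circ f_\theta-\psi \equiv -\phi \pmod{2\pi}.
\end{equation*}
This is exactly the assertion that $\paramL\Psi$ is c-constant with constant values in $2\pi\bZ-\phi$, contradicting aperiodicity (Definition \ref{def:giuvarch}) since $\paramL\neq 0$; by Lemma \ref{lem:allthesame} aperiodicity of $i\Omega$ is equivalent to no such cohomological identity being possible for any $\paramL\neq 0$.

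Statement (iii) will be handled by standard perturbation theory: whenever the spectral radius exceeds $\lambda^{-1}$ it is attained at an isolated eigenvalue of finite multiplicity, and both $\theta\mapsto\tO_{\theta,\paramL\Omega}$ and $\paramL\mapsto\tO_{\theta,\paramL\Omega}$ are continuous in the $L(\cC^2,\cC^0)$-topology (see \eqref{eq:der-t-bound}), which by the abstract perturbation result of \cite{liverani99} implies that the peripheral spectrum varies continuously. The main obstacle is the argument for (ii): one must carefully exploit the equality case in the triangle inequality to pass from an $L^\infty$ identity to a pointwise cohomological equation, and then verify that this equation has precisely the structure forbidden by aperiodicity. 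The $\cC^1$ regularity of $\psi$ (and hence the control on $\beta=\psi$ needed by Definition \ref{def:giuvarch}) follows \emph{a posteriori} from $h\in\cC^1$ and the absence of zeros, so no regularity bootstrap is required.
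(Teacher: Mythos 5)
Your proof is correct and follows essentially the same route as the paper's: the key step---excluding a modulus-one eigenvalue by saturating the triangle inequality to force $|h|=h_{\theta,0}$ (up to normalization) and then reading off the mod-$2\pi$ cohomological identity that contradicts aperiodicity---is exactly the paper's argument, and both the identification of the spectra on $\cC^1$ and $\BV$ outside the disk of radius $\lambda^{-1}$ and the continuity claim are handled as in the paper by the Lasota--Yorke bound and standard perturbation theory. The only (harmless) differences are that you omit the paper's small-$\paramL$ expansion $\chi_{\theta,\paramL\Omega}=-\tfrac{\Sigma(\theta)}{2}\paramL^2+\cO(\paramL^3)$, which the statement does not actually require, and you gloss the choice of a branch of $\psi$ with its jump of $2\pi n$ placed at a fixed point of $f_\theta$, a detail the paper makes explicit so that the resulting lattice-valued function is constant on each invertibility domain.
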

\begin{proof}
  We start by noticing that, for $\paramL=0$, the maximal eigenvalue is $1$ and all other
  eigenvalues have modulus strictly smaller than $1$.  As pointed out in
  Remark~\ref{rem:bv-contraction-conp} the relevant spectrum on $\cC^1$ and $\BV$ is the
  same.  Hence, for small $\paramL$ we can apply perturbation theory and the first and
  last of~\eqref{eq:derivative-m} imply that
  $\chi_{\theta,\paramL\Omega}=-\frac{\Sigma(\theta)}2\paramL^2+\cO(\paramL^3)$ for some
  $\Sigma(\theta)>0$.  Note that $\Sigma(\theta)$ is continuous in $\theta$,\footnote{
    This follows from~\eqref{e_secondDerivativeChi} and the perturbation theory
    in~\cite{liverani99}.} hence, by Lemma~\ref{lem:coboundary} and
  Lemma~\ref{lem:allthesame}, $\inf_\theta \Sigma(\theta)>0$.  This yields the results for
  small $\paramL$.
  On the other hand, suppose by contradiction that for $\paramL\in\bR\setminus\{0\}$,
  $\tO_{\theta,\paramL\Omega} h_*=e^{i\vartheta}h_*$ for some $h_*\in\cC^1(\bT,\bC)$ and
  $\vartheta\in[0,2\pi)$.  Then $|h_*|\le \tO_{\theta,0}|h_*|$, but
  $\int [\tO_{\theta,0}|h_*|-|h_*|]=0$ implies $|h_*|= \tO_{\theta,0}|h_*|$, so
  $|h_*|=h_0$, the maximal eigenvector of $\tO_{\theta,0}$.  Accordingly,
  $h_*=e^{i\beta}h_0$ where $\beta$ is some real-valued function.  Note that we can choose
  $\beta$ so that it is smooth a part, at most, a jump of $2\pi n$, for some $n\in\bN$, at
  a fixed point of $f_\theta$.  Next, notice that
  \[
  h_0=e^{-i\beta-i\vartheta}\tO_{\theta,\paramL\Omega}h_*=\tO_{\theta,0}\left(e^{i(-i\paramL
      \Omega_{\theta}+\beta-\beta\circ f_\theta-\vartheta)}h_0\right).
  \]
  If we set $\alpha=-i\paramL\Omega_{\theta}+\beta-\beta\circ f_\theta-\vartheta$ and we
  take the real part of the above we get
  \[
  0=\tO_{\theta,0}\left([1-\cos \alpha]h_0\right).
  \]
  Since the function to which the operator is applied is non negative the range of $\alpha$
  must be a subset of $2\pi \bZ$ and can have discontinuities only at the preimages of the
  discontinuity of $\beta$.

  Finally, the continuity of the maximal eigenvalue follows from standard perturbation
  theory \cite{kato} unless the essential spectral radius coincides with the spectral
  radius.
\end{proof}
The above theorem implies that the spectral radius is smaller than $1$ but does not
provide any uniform bound.  Since we will need a uniform bound, more information is
necessary.  This, as already noticed in~\cite{BV05a, Gou09}, can be gained by using
Dolgopyat's technique~\cite{Dima98}.
\begin{lem}\label{lem:spectrum-d} If $i\Omega_\theta$ is real, of zero average with
  respect to $\mnu_{\theta,0}$ and is a non c-constant function with respect to
  $f_\theta$, then for each $\paramL_0>0$ there exists $\tau\in [0,1)$, such that, for any
  $\paramL\not\in [-\paramL_0,\paramL_0]$ and $\theta\in\bT^1$, the spectral radius of
  $\tO\thvs$, when acting on $\cC^1$, is less than $\tau$.
\end{lem}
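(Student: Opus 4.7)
The plan is to split the range $|\paramL|\geq\paramL_0$ into a moderate regime $|\paramL|\in[\paramL_0,\paramL_1]$ and a large regime $|\paramL|>\paramL_1$, where $\paramL_1$ is a threshold chosen so that a Dolgopyat-type oscillatory cancellation argument can be run. The two regimes will be handled by completely different arguments and then combined by taking the maximum of the two contraction rates.

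For the moderate regime I would invoke Lemma \ref{lem:allthesame}, which identifies the hypothesis "not c-constant" with "not a coboundary" (and hence, together with Definition \ref{def:giuvarch}, with aperiodicity of $\Omega_\theta$). Lemma \ref{lem:spectrum-c} then applies: for every fixed $(\theta,\paramL)$ with $\paramL\neq 0$ the spectral radius of $\tO_{\theta,i\paramL\Omega}$ on $\cC^1$ is strictly less than one, and the same lemma guarantees that the spectral radius depends continuously on $(\theta,\paramL)$ as long as it stays above $\lambda^{-1}$. A standard compactness argument on $\bT^1\times\{\paramL_0\leq|\paramL|\leq\paramL_1\}$ (combined with the bound $\lambda^{-1}$ on the essential spectral radius provided by Lemma \ref{lem:c0-c1} for purely imaginary potentials) then yields a uniform bound $\tau_1<1$ on this regime.

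For the large regime I would implement Dolgopyat's oscillatory cancellation method in the form used in \cite{BV05a,Gou09}. By iterating \eqref{eq:more-der-L}, the operator $\tO_{\theta,i\paramL\Omega}^n$ is a sum over inverse branches $h\in\cH_n$ of $f_\theta^n$ weighted by $e^{i\paramL\Omega_{\theta,n}\circ h}/(f_\theta^n)'\circ h$. The non c-constantness of $\Omega_\theta$, reinterpreted via Lemma \ref{lem:allthesame} as the absence of a coboundary equation, translates into a uniform non-integrability (UNI) condition: there exist $n_0\in\bN$, $\delta_0>0$ and two inverse branches $h_1,h_2\in\cH_{n_0}$ such that $|(\Omega_{\theta,n_0}\circ h_1-\Omega_{\theta,n_0}\circ h_2)'|\geq\delta_0$ on a set of definite size. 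From this UNI estimate, Dolgopyat's invariant cone construction gives, for $|\paramL|>\paramL_1$ large enough (so that the phase $\paramL\Omega_{\theta,n_0}$ actually oscillates on the relevant scale), an $L^2$-contraction of the form $\|\tO_{\theta,i\paramL\Omega}^{Nn_0}g\|_{L^2}\leq \tilde\tau^{N}\|g\|_{\cC^1}$ with $\tilde\tau<1$ and $N$ independent of $\paramL$. Combining this $L^2$ bound with the Lasota--Yorke inequality \eqref{eq:lasota-yorke-bv} and the $\cC^1$-$\cC^0$ spectral identification of Lemma \ref{lem:c0-c1} upgrades it to a $\cC^1$-contraction rate $\tau_2<1$.

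The main obstacle is ensuring that every ingredient in the Dolgopyat scheme ($n_0$, $\delta_0$, the cone parameters, the adapted partition of $\bT^1$ into pieces where the phase derivative has controlled sign, and the size of the $L^2$-gain per block) can be chosen uniformly in $\theta\in\bT^1$ and in $|\paramL|>\paramL_1$. The uniformity in $\theta$ should follow from the $\cC^2$-dependence of $(f_\theta,\Omega_\theta)$ on $\theta$ together with the compactness of $\bT^1$ and the openness of the non-coboundary condition in the $\cC^1$-topology: a limiting argument (any failure along a sequence $\theta_n\to\theta_*$ would produce, via compactness of the relevant cone of $\BV$ solutions, a coboundary for $\Omega_{\theta_*}$, contradicting the hypothesis at $\theta_*$). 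The uniformity in $|\paramL|$ is intrinsic to Dolgopyat's method once UNI holds: the gain per block is essentially independent of $|\paramL|$, so that pushing $|\paramL|$ further only improves the estimate. Setting $\tau=\max(\tau_1,\tau_2)<1$ then concludes the proof.
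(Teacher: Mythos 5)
Your proposal is correct and follows essentially the same route as the paper: the moderate range $|\paramL|\in[\paramL_0,\paramL_1]$ is handled via Lemma~\ref{lem:spectrum-c} (aperiodicity being implied by non-c-constancy) together with uniformity in $\theta$, and the large range via a Dolgopyat estimate based on a $\theta$-uniform UNI condition, which the paper has already packaged as Lemma~\ref{lem:uuni} and Theorem~\ref{lem:dolgo-BV} in Appendix~\ref{sec:dolgo} (whose proof indeed proceeds through an $L^2_\rho$ contraction upgraded by Lasota--Yorke, just as you sketch). The only cosmetic difference is that the paper concludes with a Neumann-series resolvent bound in the $\|\cdot\|_{1,\paramL}$ norm rather than the spectral radius formula, which is equivalent for the purposes of this lemma.
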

\begin{proof}
  By Lemma~\ref{lem:uuni} and Theorem~\ref{lem:dolgo-BV} of Appendix~\ref{sec:dolgo},
  there exists $\paramL_1, A>0$ and $\tau\in (0,1)$ such that for all
  $\paramL\not\in[-\paramL_1,\paramL_1]$, $n\ge A\log|\paramL| $ and $\theta\in\bT^1$,
  \begin{equation}\label{eq:dolgo}
    \|\tO\thvs^n\|_{1,\paramL}\le \tau^n
  \end{equation}
  where $\|f\|_{1,\paramL}=|f|_\infty+|\paramL|^{-1}|f'|_\infty$.  Next, by Lemma
 ~\ref{lem:spectrum-c}, the spectral radius of $\tO\thvs$, for
  $|\paramL|\in [\paramL_0,\paramL_1]$ is uniformly smaller than one, hence~\eqref{eq:dolgo}
  is valid also in such a range perhaps modifying $A$ and $\tau$ accordingly.
  Also note that~\eqref{eq:cone-est} implies, for all $n\in\bN$,
  $\|\tO\thvs^n\|_{1,\paramL}\le \Const \|\Omega\|_{\cC^1}$.  In
  particular, by expanding via the Newman series, for any $z\in\bC$, $\tau<|z|\le 1$:
  \begin{equation}\label{eq:dolgo-res}
    \|(\Id z-\tO\thvs)^{-1}\|_{1,\paramL}
    \le \Const\left\{\frac{\|\Omega\|_{\cC^1}A\log|\paramL|}{|z|^{ A\log|\paramL|}}+\frac{(\tau|z|
      ^{-1})^{A\log|\paramL|}}{1-\tau|z|^{-1}}\right\}.
  \end{equation}
  Hence the spectral radius is bounded by $\tau$ while~\eqref{eq:cone-est} implies that the
  essential spectral radius is bounded by $\lambda^{-1}$.
\end{proof}

\section{Dolgopyat's theory}\label{sec:dolgo}
In this appendix we prove a bound for the transfer operator for large $\paramL$.  The
proof is after the work of Dolgopyat on the decay of correlation in Anosov
flows~\cite{Dima98}.  Unfortunately, we need uniform results in $\theta$, so we cannot use
directly the results in~\cite{pollicott99, BV05, AGY06}.  Although the results below can
be obtained by carefully tracing the dependence on the parameters in published proofs,
e.g., in~\cite{AGY06, BV05}, this is a non trivial endeavor.  Therefore we believe the
reader will appreciate the following presentation that collects a variety of results and
benefits from several simplifications allowed by the fact that we treat smooth maps (even
though the arguments can be easily upgraded to cover all the results in the above
mentioned papers).

\subsection{Setting}\label{sec:dolgoSetting}\ \newline%
Let $f\in\cC^r(\bT^2,\bT^1)$ and $\omega\in\cC^{r-1}(\bT^2,\bR)$, $r\ge 2$.  We will
consider the one parameter family of \emph{dynamics} $f_\theta(x) = f(x,\theta)$, of
\emph{potentials} $\Omega_\theta(x)=\omega(x,\theta)$ and the associated \emph{Transfer
  Operators}
\[
\tO_{\theta, i \paramL\Omega_\theta}g(x)=\sum_{y\in f_\theta^{-1}(x)}\frac {e^{i\paramL
    \Omega_\theta(y)}g(y)}{f'_\theta(y)}.
\]
Also, we assume that there exists $\lambda>1$ such that $\inf_{x,\theta}f_\theta'(x)\ge
\lambda$ (uniform \emph{expansivity}).  It is convenient to fix a partition
$\cP_\theta=\{I_i\}$ of $\bT^1$, such that each $I_i$ is a maximal invertibility domain
for $f_\theta$.  We adopt the convention that the leftmost point of the interval $I_1$ is
always zero, which we assume to be a fixed point for every $f_\theta$.\footnote{ Note that this latter assumption does not imply a loss of generality only if the lines of fixed points of $f_\theta$ are homotopic to $\{(0,\theta)\}_{\theta\in\bT}$. If not, one can simply consider a finite open cover of the torus (in the $\theta$ variable), and make the following argument for each element of the covering.}
Note however that all the following is independent of such a choice of the partition.
\begin{rem}%
  Since the maps $f_\theta$ are all topologically conjugate (by structural stability of
  smooth expanding maps), there is a natural isomorphism between $\cP_0$ and $\cP_\theta$,
  $\theta\in\bT^1$.  From now on we will implicitly identify elements of the partitions
  (and their corresponding inverse branches) for different $\theta$ via this isomorphism
  and will therefore drop the subscript $\theta$ when this does not any create confusion.
\end{rem}
At last we require that the $\Omega_\theta$ satisfies a condition (in general, although
not in the present context, see Appendix~\ref{subsec:generic}) stronger than aperiodicity;
namely we assume it is not \emph{c-constant} (see Definition~\ref{def:noi}).

Let $\cH_{n}$ be the collection of the inverse branches of $f_\theta^n$ as defined by the
partition $\cP$.  Note that an element of $\cH_{n}$ can be written as $h_1\circ\cdots\circ
h_n$ where $h_i\in\cH_{1}$, thus $\cH_{n}$ is isomorphic to $\cH_{1}^n$.  It is then
natural to define $\cH_{\infty}=\cH_{1}^\bN$.

\subsection{Uniform uniform non integrability (UUNI)}\label{sec:uuni}\ \newline
The first goal of this section is to prove the following fact.
\begin{lem}\label{lem:uuni}
  In the hypotheses specified in Subsection~\ref{sec:dolgoSetting} there exist $\Ctre>0$
  and $n_0\in\bN$ such that, for each $n\ge n_0$ and $\theta\in\bT^1$,
  \begin{equation}\label{eq:uni}
    \sup_{h_\theta, \kappa_\theta\in\cH_{ n}}\inf_{x\in\bT^1}\left|\de{}{x} (\Omega_{n,\theta}
      \circ h_\theta)(x)-\de{}{x} (\Omega_{n,\theta}\circ \kappa_\theta)(x)\right|\ge \Ctre,
  \end{equation}
  where,
  $\Omega_{n,\theta}:=\sum_{k=0}^{n-1}\Omega_{\theta}\circ f_{\theta}^k$.
\end{lem}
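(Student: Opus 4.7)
The plan is to argue by contradiction combined with a compactness argument in $\theta$, reducing the statement to a rigidity problem at a single parameter value and then invoking the non-c-constant hypothesis via Lemma~\ref{lem:allthesame}. Suppose UUNI fails; then there exist sequences $n_j\to\infty$, $C_j\searrow 0$, and $\theta_j\in\bT^1$ such that for every distinct pair $h,\kappa\in\cH_{n_j}$ of inverse branches of $f_{\theta_j}^{n_j}$,
\[
\inf_{x\in\bT^1}\left|\frac{d}{dx}\bigl(\Omega_{n_j,\theta_j}\!\circ h\bigr)(x)-\frac{d}{dx}\bigl(\Omega_{n_j,\theta_j}\!\circ\kappa\bigr)(x)\right|<C_j.
\]
By compactness of $\bT^1$ extract a subsequence with $\theta_j\to\theta_*\in\bT^1$. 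By the structural stability of $\cC^r$ expanding maps of the circle, for each fixed $n$ there is a canonical identification between $\cH_n(f_{\theta_j})$ and $\cH_n(f_{\theta_*})$ under which inverse branches converge in $\cC^{r-1}$; the potentials $\Omega_{\theta_j}$ also converge in $\cC^{r-1}$. A diagonal extraction then yields: for every $n\in\bN$ and every distinct pair $h,\kappa\in\cH_n(f_{\theta_*})$, the function $x\mapsto\bigl(\Omega_{n,\theta_*}\!\circ h-\Omega_{n,\theta_*}\!\circ\kappa\bigr)'(x)$ vanishes at some point of $\bT^1$.

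The core step is to deduce from this universal ``critical point'' condition that $\Omega_{\theta_*}$ must be c-constant in the sense of Definition~\ref{def:noi}. The strategy is to use iterated inverse branches accumulating on periodic orbits. For a fixed point $p$ of $f_{\theta_*}^N$, the inverse branch $\pi_p\in\cH_N$ fixing $p$ has $\pi_p'(p)=1/(f_{\theta_*}^N)'(p)<1$, and the iterates $\pi_p^m\in\cH_{mN}$ converge exponentially fast to the constant map $\equiv p$. A direct computation shows that $\bigl(\Omega_{mN,\theta_*}\!\circ\pi_p^m\bigr)'$ converges uniformly as $m\to\infty$ to an explicit limit function built out of $\Omega_{\theta_*}'$ evaluated along the orbit of $p$ and the expansion rates of $f_{\theta_*}$ on that orbit. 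Applying the critical point condition to pairs $(\pi_p^m,\pi_q^m)$ coming from two distinct periodic points $p,q$ and passing to the limit $m\to\infty$ yields a relation between the Birkhoff sums $\sum_{k=0}^{N-1}\Omega_{\theta_*}(f_{\theta_*}^k p)$ and $\sum_{k=0}^{N-1}\Omega_{\theta_*}(f_{\theta_*}^k q)$. Running this argument over a dense family of periodic orbits (they are dense by topological mixing of $f_{\theta_*}$) and using the Liv\v sic-type rigidity for smooth expanding maps produces a $\cC^1$ function $\beta$ whose coboundary matches $\Omega_{\theta_*}$ on every periodic orbit; density then gives $\Omega_{\theta_*}=\beta\circ f_{\theta_*}-\beta+c$ for some constant $c$, hence $\Omega_{\theta_*}$ is c-constant. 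By Lemma~\ref{lem:allthesame} this is equivalent to being a coboundary. This contradicts the standing hypothesis, proving the Lemma.

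The principal obstacle is the rigidity step: converting ``for every pair of branches at every $n$, the difference of derivatives has a zero'' into a genuine cohomological identity. The delicate points are (i) the exponential convergence of iterated inverse branches toward periodic orbits must be controlled with enough uniformity in both the orbit and $\theta$ to make the limit computation rigorous, and (ii) the argument must be carried through simultaneously for all periodic orbits and then combined using Liv\v sic-type rigidity in the smooth category. Compared to the classical pointwise UNI proof at a single parameter value, the novelty here is Step~1: the smoothness of $f,\omega$ in $\theta$, established in Section~\ref{subsec:transfer}, is precisely what allows the structural stability identifications and the passage of the negated conclusion to the limit parameter $\theta_*$.
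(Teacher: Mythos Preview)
Your overall architecture---argue by contradiction, extract a convergent subsequence $\theta_j\to\theta_*$, then show $\Omega_{\theta_*}$ is c-constant---matches the paper. But two steps have genuine gaps.

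\textbf{The diagonal extraction is not valid.} The negation of the statement gives you sequences $n_j\to\infty$, $C_j\to 0$, $\theta_j$ such that for every pair $h,\kappa\in\cH_{n_j}$ (at level $n_j$) the derivative difference is $<C_j$ \emph{at some point}. It does \emph{not} give you anything at a fixed level $n$. If you try to descend from level $n_j$ to level $n$ by writing $h\circ h'$, $\kappa\circ h'$ with $h,\kappa\in\cH_n$ and a common tail $h'\in\cH_{n_j-n}$, the chain rule produces a factor $(h')'(x)$ and dividing by it costs $\Lambda^{n_j-n}$, which destroys the smallness. The paper handles this with a key trick you are missing: it composes with a \emph{short} branch $\ell\in\cH_{p_j}$, $p_j\sim\log j$, on the \emph{outside}, and uses that the range of $\ell$ has diameter $\le\lambda^{-p_j}$ together with the uniform Lipschitz bound on $(\Omega_{m,\theta}\circ h)'$ (standard distortion) to upgrade ``small at one point'' to ``small in $L^\infty$''. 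The price is replacing $n_j$ by $\bar n_j=n_j-p_j$, still tending to infinity. Only after this upgrade can one pass to the limit in $\theta$.

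\textbf{The Liv\v sic step does not connect.} Even granting your reduced claim, the periodic-orbit route is not well posed: the hypothesis concerns zeros of the \emph{derivative} of $\Omega_{mN,\theta_*}\circ\pi_p^m-\Omega_{mN,\theta_*}\circ\pi_q^m$, and it is unclear how a single zero of this derivative yields a relation between the \emph{values} of the Birkhoff sums over the periodic orbits, which is what Liv\v sic rigidity needs. You flag this as the ``principal obstacle'' but do not resolve it. The paper avoids Liv\v sic entirely: once the $L^\infty$ smallness of the derivative differences is in hand at $\bar\theta$, it defines $\Xi_{\bar\theta,\bar n_{j_k},h}(x)=\Omega_{\bar n_{j_k},\bar\theta}\circ h(x)-\Omega_{\bar n_{j_k},\bar\theta}\circ h(x_0)$, shows these are uniformly bounded in $\cC^1$ and converge in $\cC^0$ to a limit $\Phi$ independent of $h\in\cH_\infty$ (precisely because the derivative differences go to zero uniformly), and then reads off the relation $\Phi\circ f_{\bar\theta}-\Phi=\Omega_{\bar\theta}+\Psi$ with $\Psi$ piecewise constant by specializing to a constant infinite branch $\bar h=h\circ h\circ\cdots$. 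This is the direct construction of the transfer function, no periodic-orbit closing needed.
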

\begin{rem}
  Condition~\eqref{eq:uni}, when referred to a single map, is commonly called
  \emph{uniform non integrability} (UNI for short) and has been originally introduced by
  Chernov in~\cite{Che98}, a remarkable paper which constituted the first breakthrough in
  the quantitative study of decay of correlations for flows.  The difference here is due
  to the fact that we have a family of dynamics, rather that only one, and we require a
  further level of uniformity.  The relation between UNI and not being cohomologous to a
  piecewise constant function was first showed in~\cite[Proposition 7.4]{AGY06}.  The
  above Lemma constitutes a not very surprising extension of the aforementioned
  proposition.
\end{rem}
\begin{proof}[{\bf Proof of Lemma~\ref{lem:uuni}}]
  Suppose the lemma to be false, then given $a\in\bN$ large enough to be chosen later,
  there exist sequences $\{n_j,\theta_j\}$, $\lambda^{-n_j}<\frac {1-\lambda\invr}{2j^a}$,
  such that for each $h,\kappa\in\cH_{\theta_j,n_j}$ there exists $x_{j,h,\kappa}\in\bT^1$
  such that
  \[
  \left|\de{}{x} (\Omega_{n_j,\theta_j}\circ h)(x_{j,h,\kappa})-\de{}{x}
    (\Omega_{n_j,\theta_j}\circ \kappa)(x_{j,h,\kappa})\right|\le \frac 1{j^a}.
  \]
  Start by noting that if $h\in\cH_{n}$, then $h=h_1\circ \cdots\circ h_n$ with $h_i\in\cH_{
    1}$, and
  \[
  \Omega_{n,\theta}\circ h=\sum_{k=0}^{n} \Omega_{\theta}\circ h_{k+1}\circ\cdots\circ h_{n}.
  \]
  Note that all the $\Omega_{n,\theta}\circ h$ belong to
  $\cC^{r-1}(\bT^1\setminus\{0\},\bR)$.  For further use, given $h\in \cH_{\infty}$, let us
  define, for each $n\in\bN$,
  \begin{equation}\label{eq:como}
    \Xi_{\theta,n, h}(x)=\sum_{k=0}^{n} \Omega_{\theta}\circ h_{k}\circ\cdots\circ h_{n}(x)-
    \sum_{k=0}^{n} \Omega_{\theta}\circ h_{k}\circ\cdots\circ h_{n}(x_0),
  \end{equation}
  for some fixed $x_0\in \bT^1$.  We remark that, by usual distortion arguments, for each
  $h\in\cH_{\infty}$, we have $\| \Xi_{\theta,n, h}\|_{\cC^1}\le \Const$.  For each
  $h\in\cH_{n}$ and $k\le n$ let $\bar h_k:=f_\theta^k\circ h$.  Next, for each $j\in \bN$
  and $p_j\le n_j$, let $\ell\in\cH_{\theta_j, p_j}$.  Then, for each $h, \kappa\in
  \cH_{\theta_j,n_j-p_j}$, letting $x_{j,h\circ \ell,\kappa\circ \ell}=z$,
  \[
  \begin{split}
    &\frac 1{j^a}\ge \left|\de{}{x} (\Omega_{n_j,\theta_j}\circ h\circ
      \ell)(z)-\de{}{x}
      (\Omega_{n_j,\theta_j}\circ \kappa\circ\ell)(z)\right|\\
    &=\left|\sum_{k=0}^{n_j-p_j} \Omega_{\theta_j}'\circ \bar h_k\circ\ell(z) \cdot ( \bar
      h_k\circ
      \ell)'(z)-\Omega_{\theta_j}'\circ  \bar \kappa_k\circ\ell(z) \cdot (\bar \kappa_k\circ \ell)'(z)\right|\\
    &\ge\left|\de{}{x} (\Omega_{n_j-p_j,\theta_j}\circ h)(\ell(z))-\de{}{x}
      (\Omega_{n_j-p_j, \theta_j}\circ \kappa)(\ell(z))\right| \Lambda^{-p_j},
  \end{split}
  \]
  where $\Lambda=\sup_{\theta,x}|f'_{\theta}(x)|$.  Accordingly, setting
  $\cP_{\theta,n}=\{h(\bT^1)\}_{h\in\cH_{ n}}$, for each $I\in\cP_{\theta_j,p_j}$ we have
  \[
  \sup_{x\in I}\left|\de{}{x} (\Omega_{n_j-p_j,\theta_j}\circ h)(x)-\de{}{x} (\Omega_{n_j-
      p_j,\theta_j}\circ \kappa)(x)\right|\le \frac {\Lambda^{p_j}}{j^a}+ \Const\lambda^{-p_j}.
  \]
  Thus, setting $\bar n_j=n_j-p_j$, choosing $p_j=\Const \log j$ and provided that $a$ has
  been chosen large enough, we have that for each $h,\kappa\in\cH_{\theta_j,\bar n_j}$,
  \[
  \left\|\de{}{x} (\Omega_{\bar n_j,\theta_j}\circ h)-\de{}{x} (\Omega_{\bar n_j,\theta_j}
  \circ \kappa)\right\|_\infty\le \frac {\Const}j.
  \]
  Next, for $h_{\theta,1},\cdots, h_{\theta,m}\in\cH_{1}$, let
  $C_m=\sup_{\theta}\sup_{\{h_{\theta,i}\}}\|\partial_\theta [h_{\theta,1}\circ\cdots\circ
  h_{\theta,m}]\|_\infty$.  Then
  \[
  \partial_\theta [h_{\theta,1}\circ\cdots\circ h_{\theta,m}]=[\partial_\theta
  h_{\theta,1}]\circ h_{\theta, 2}\cdots\circ h_{\theta,m}+h_{\theta,1}'\circ h_{\theta,
    2}\cdots\circ h_{\theta,m}\cdot \{\partial_\theta [h_{\theta, 2}\cdots\circ
  h_{\theta,m}]\}
  \]
  implies $C_m\le \Const+\lambda^{-1} C_{m-1}$, that is $C_m\le \Const$.  This implies that
  \[
  \|\partial_\theta\partial_x\left[ \Omega_{n,\theta}\circ h_\theta\right]\|_\infty\le \Const.
  \]
  We can then consider a subsequence $\{j_k\}$ such that $\{\theta_{j_k}\}$ converges, let
  $\bar \theta$ be its limit.  Also, without loss of generality, we can assume that $j_k\ge
  {2\Const} k$ and$|\theta_{j_k}-\bar \theta|\le k^{-1}$.  Thus, for $k$ large enough and
  for each $h,\kappa \in\cH_{\bar n_{j_k}\bar \theta}$, we have
  \begin{equation}\label{eq:indep-hk}
    \left\|\de{}{x} (\Omega_{\bar n_{j_k},\bar\theta}\circ h)(x)-\de{}{x} (\Omega_{\bar
      n_{j_k},\bar\theta}\circ \kappa)\right\|_\infty\le \frac 1k.
  \end{equation}
  We are now done with the preliminary considerations and we can conclude the argument.  Let
  $\overline{\Xi}_{k,h}=\Xi_{\bar\theta,n_{j_k},h}$.  Since
  \[
  \begin{split}
    |\Omega_{\theta}\circ h_{k+1}\circ\cdots\circ h_{n}(x)- \Omega_{\theta}\circ h_{k+1}\circ
    \cdots\circ h_{n}(x_0)|&\le \|\de{}{x}\Omega_{\theta}\circ h_{k+1}\circ\cdots\circ h_{n}\|_\infty\\
    &\le \Const\lambda^{-n+k},
  \end{split}
  \]
  it follows that the limit $\overline{\Xi}_h=\lim_{k\to\infty}\overline{\Xi}_{k,h}$
  exists in the uniform topology.  Note that, since the derivative of
  $\overline{\Xi}_{k,h}$ are uniformly bounded, $\overline{\Xi}$ is Lipschitz in
  $\bT^1\setminus\{0\}$.  In addition, since $\overline{\Xi}_h(x_0)=0$,
  equation\eqref{eq:indep-hk} implies, for each $h,\kappa\in\cH_{\bar\theta,\infty}$,
  \begin{align*}
    \left\|\de{}{x}\left[
      \overline{\Xi}_{k,h}-\overline{\Xi}_{k,\kappa}\right]\right\|_\infty\le\frac 2k.
  \end{align*}
  It follows that $\overline{\Xi}_h=\Phi$ is independent of $h$.  Finally, choose
  $h\in\cH_{1}$ and $\bar h\in\cH_{\infty}$ such that $\bar h=h\circ h\circ \cdots$, then,
  if $x\in h^{-1}(\bT^1)$,
  \[
  \Xi_{\theta,n,\bar h}\circ f_{\theta}(x)-\Xi_{\theta,n, \bar h}(x)=\Omega_\theta
  -\Omega_\theta(h^n(x)).
  \]
  Since $f_\theta$ has exactly one fixed point $x_I$ in each $I\in\cP_\theta$,
  $\lim\limits_{n\to\infty}h^n(x)=x_I$, where $I=h(\bT^1)$.  From the above considerations
  it follows
  \[
  \Phi\circ f_{\bar \theta}-\Phi=\Omega_{\bar \theta}+\Psi
  \]
  where $\Psi$ is constant on the elements of $\cP_{\bar\theta}$ and $\Phi\in\BV$.  That is, $\Omega_{\bar \theta}$ is c-constant, contrary to the hypothesis.
\end{proof}
It is now easy to obtain the result we are really interested in.
\begin{cor}\label{cor:uuni}
  In the hypotheses specified in Subsection~\ref{sec:dolgoSetting}, there exists
  $n_1\in\bN$ and $h,\kappa\in\cH_{n_1}$ such that, for each $n\ge n_1$, $\theta\in\bT^1$
  and $\ell\in\cH_{n-n_1}$
  \begin{equation}\label{eq:uuni}
    \inf_{x\in\bT^1}\left|\de{}{x} (\Omega_{n,\theta}\circ\ell \circ h)(x)-\de{}{x} (\Omega_{n,
        \theta}\circ\ell \circ \kappa)(x)\right|\ge \frac{\Ctre}2,
  \end{equation}
\end{cor}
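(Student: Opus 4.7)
The plan is to reduce the corollary to a uniform-in-$\theta$ version of Lemma~\ref{lem:uuni} via an algebraic identity that separates the outer inverse branch $\ell$ from the inner pair $(h,\kappa)$, and then to exhibit a single symbolic pair via a compactness-and-contradiction argument modeled on the proof of Lemma~\ref{lem:uuni} itself. For the identity, I would write
\[
\Omega_{n,\theta}\circ\ell\circ h \;=\; \Omega_{n_1,\theta}\circ h \;+\; \sum_{k=0}^{n-n_1-1}\Omega_\theta\circ f_\theta^k\circ\ell\circ h,
\]
using that $f_\theta^{n-n_1}\circ\ell = \mathrm{id}$, so that the terms with $k\geq n-n_1$ collapse into $\Omega_{n_1,\theta}\circ h$. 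Each residual term satisfies $f_\theta^k\circ\ell\circ h\in\cH_{n-k}$, hence its derivative has modulus at most $\|\Omega\|\nc1\lambda^{-(n-k)}$; summing the geometric series yields
\[
\left|\de{}{x}(\Omega_{n,\theta}\circ\ell\circ h)(x) - \de{}{x}(\Omega_{n_1,\theta}\circ h)(x)\right| \;\leq\; \frac{\|\Omega\|\nc1\lambda^{-n_1}}{\lambda-1}
\]
uniformly in $n\geq n_1$, $\theta\in\bT^1$, $\ell\in\cH_{n-n_1}$, $x\in\bT^1$, and $h\in\cH_{n_1}$. Subtracting the analogous identity for $\kappa$ then reduces the corollary, once $n_1$ is chosen so that the above error is below $C_0/4$, to exhibiting a single pair $h,\kappa\in\cH_{n_1}$ with
\[
\inf_{\theta\in\bT^1}\inf_{x\in\bT^1}\left|\de{}{x}(\Omega_{n_1,\theta}\circ h)(x) - \de{}{x}(\Omega_{n_1,\theta}\circ\kappa)(x)\right| \;\geq\; \tfrac{3C_0}{4}.
\]

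To produce such a uniform pair I would first apply Lemma~\ref{lem:uuni} at scale $n_0$ to obtain, for every $\theta\in\bT^1$, a symbolic pair $P_\theta\in\cH_{n_0}^2$ realizing the $C_0$-bound. Joint $\cC^0$-continuity of $\de{}{x}(\Omega_{n_0,\theta}\circ h_\theta)(x)$ in $(\theta,x)$, a consequence of the $\cC^2$ joint smoothness of $\Omega$ and the $\cC^1$-stability of the inverse branches $h_\theta$ (via the natural identification of $\cP_\theta$ with $\cP_0$), ensures that $P_\theta$ continues to achieve a bound $\geq 7C_0/8$ on an open neighborhood $U_\theta$ of $\theta$; compactness of $\bT^1$ then yields a finite subcover $\{U_{\theta_i}\}_{i=1}^N$ with pairs $\{P_i\}_{i=1}^N\subset\cH_{n_0}^2$. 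Unlike the previous attempt, I would not stop at this finite family: the next step is to merge the $\{P_i\}$ into one symbolic pair of sufficiently large depth, by adapting the contradiction strategy of Lemma~\ref{lem:uuni}'s proof.

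Concretely, assume no single pair in $\bigcup_{n_1\geq n_0}\cH_{n_1}^2$ achieves the uniform $3C_0/4$-bound. Then for every $n_1$ and every $(h,\kappa)\in\cH_{n_1}^2$ there exist $\theta_{n_1,h,\kappa}\in\bT^1$ and $x_{n_1,h,\kappa}\in\bT^1$ at which the inner derivative difference falls strictly below $3C_0/4$. I would select a sequence of pairs $(h^{(j)},\kappa^{(j)})\in\cH_{n_0+j}^2$ extending the covering pairs $\{P_i\}$ by appending outer-level symbols in a fashion that diagonalizes against the bad-$\theta$ map, extract $\theta^{(j)}\to\bar\theta$ by compactness, and pass to the limit using the uniform $\cC^1$-boundedness of the auxiliary functions $\Xi_{\theta,n,h}$ from~\eqref{eq:como} to produce a limit $\Phi$ witnessing c-constancy of $\Omega_{\bar\theta}$, contradicting the standing hypothesis via Lemma~\ref{lem:allthesame}. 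The main obstacle is precisely this merging step: Lemma~\ref{lem:uuni} delivers a $\theta$-varying pair, and there is no immediate way to extract a single symbolic pair uniform in $\theta$ from its statement; carefully synchronizing the subsequence extractions of $(h^{(j)},\kappa^{(j)})$, which should converge cylinder-by-cylinder to an infinite symbolic sequence in $\cH_\infty^2$, with the bad-$\theta$'s $\theta^{(j)}\to\bar\theta$ so that the limiting symbolic pair evaluated at $\bar\theta$ genuinely forces the c-constant relation is where I expect the bulk of the technical work to lie.
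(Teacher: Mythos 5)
Your first paragraph is precisely the paper's proof: the paper uses the same splitting $\Omega_{n,\theta}\circ\ell\circ h=\Omega_{n_1,\theta}\circ h+\Omega_{n-n_1,\theta}\circ\ell\circ h$, bounds the derivative of the second summand by $\Const(|h'|_\infty+|\kappa'|_\infty)\leq \Const\lambda^{-n_1}$ (your geometric-series computation is the same estimate), invokes Lemma~\ref{lem:uuni} at a scale $n_1\geq n_0$ to select the pair, and concludes by taking $n_1$ large. Up to that point your argument is complete and correct.

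Where you part ways with the paper is in insisting that a single pair $(h,\kappa)$ serve every $\theta\in\bT^1$. The paper does not prove this: its proof simply takes the pair furnished by Lemma~\ref{lem:uuni} for the given $\theta$, so the pair is a priori $\theta$-dependent and only $n_1$ and $C_0$ are uniform; nothing downstream needs more, since in the proof of Lemma~\ref{L1-lemma} (hence of Theorem~\ref{lem:dolgo-BV}) the parameter $\theta$ is fixed once and for all and only the uniformity of the constants enters. So the ``main obstacle'' you identify is not an obstacle to the argument the paper runs; it is an attempt at a strictly stronger statement suggested by a literal reading of the quantifiers. Moreover, your sketch of that stronger statement is, as you yourself concede, not carried out, and I do not believe it can be repaired along the lines you indicate: by the very splitting you use, the derivative difference at scale $n_1$ is determined, up to an error $\cO(\lambda^{-n_0})$, by the innermost $n_0$-block of the pair, so appending outer-level symbols cannot make a pair work at values of $\theta$ where its innermost block already fails; and the negation of the uniform-pair claim only supplies a bad $\theta$ \emph{depending on the pair}, which breaks the mechanism of the proof of Lemma~\ref{lem:uuni}, where a single $\theta_j$ had to serve all pairs at level $n_j$ in order to make the limit function $\overline{\Xi}_h$ independent of $h$. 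The clean resolution is to allow $h,\kappa$ to depend on $\theta$ (as the paper's proof does and as its applications permit), after which your first paragraph already constitutes the whole proof.
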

\begin{proof}
  Let $n_1\ge n_0$.  Then, for $h\in\cH_{n_1}$ and $\ell\in\cH_{n-n_1}$
  \[
  \Omega_{n,\theta}\circ\ell \circ h=\Omega_{n_1,\theta}\circ h+\Omega_{n-n_1,\theta}\circ\ell
  \circ h.
  \]
  Thus, by Lemma~\ref{lem:uuni}, we can choose $h,\kappa$ so that
  \begin{align*}
    \inf_{x\in\bT^1}\left|\de{}{x} \left(\Omega_{n,\theta}\circ\ell \circ
        h-\Omega_{n,\theta}\circ\ell \circ \kappa \right)(x)\right|&\ge
    \frac34 \Ctre- \Const (|h'|_\infty+|\kappa'|_\infty) \\
    &\ge\frac34\Ctre-C_\#\lambda^{-n_1}.
  \end{align*}
  The result follows by choosing $n_1$ large enough.
\end{proof}
\subsection{Dolgopyat inequality}\label{subsec:dolgo}\ \newline
In order to investigate the operator $\tO_{\theta, i \paramL\Omega_\theta}$ for large
$\paramL$ it is convenient to use slightly different norms and operators.  The reason is
that on the one hand, the main estimate is better done in a $\paramL$ dependent norm and,
on the other hand, it is convenient to have operators that are contractions.  Let
$\rho:=h_{\theta,0}[\int h_{\theta,0}]^{-1}$ be the invariant density of the operator
$\tO_{\theta, 0}$ and define
\begin{align*}
  \tOn_{\theta, i \paramL\Omega_\theta}(g) &=%
  \rho\invr \tO_{\theta, i\paramL\Omega_\theta}(\rho g) \;,&%
  \|g\|_{1,\paramL}&=%
  \|g\|\nc0+\frac{\|g'\|\nc0}{|\paramL|},
\end{align*}
Then we have\footnote{ The first follows
  trivially from $|\tOn_{\theta, i \paramL\Omega_\theta}g|\le \|g\|_{L^\infty} \tOn_{\theta, 0}1$ and
  $\tO_{\theta, 0}\rho=\rho$.  The second from the standard $\|\tO_{\theta,
    i \paramL\Omega_\theta}(g)\|_{L^1}\le\|g\|_{L^1}$.}
\begin{align}\label{eq:L2-cont}
  \|\tOn_{\theta, i \paramL\Omega_\theta}(g)\|\nc0&%
  \le\|g\|\nc0, &%
  \|\tOn_{\theta, i \paramL\Omega_\theta}(g)\|_{L^1_\rho}&%
  \le\|g\|_{L^1_\rho},
\end{align}
as announced.  Moreover, by~\eqref{eq:cone-est}, it follows that, for
$\paramL\ge\paramL_0$, with $\paramL_0>0$ large enough, and $n\in\bN$,
\begin{equation}\label{eq:L2-cont-1}
  \|\tOn_{\theta, i \paramL\Omega_\theta}^n(g)\|_{1,\paramL}\le \Const\lambda^{-n}\|g\|
  _{1,\paramL}+B_0\|g\|\nc0,
\end{equation}
for a fixed constant $B_0$.  Fix $\bar\lambda\in (\lambda,1)$ and choose $n_2\in\bN$ such
that $\Const \lambda^{-n_2}\le\bar \lambda^{-n_2}$.  Also, for future use, we chose $n_2$
so that $\bar\lambda^{-n_2}\le \frac 12$.  Iterating the above inequalities by steps of
length $n_3\in \bN$, $n_3\ge n_2$, we have
\begin{equation}\label{eq:phase-zero}
  \|\tOn_{\theta, i \paramL\Omega_\theta}^{kn_3}(g)\|_{1,\paramL}\le \bar\lambda^{-k n_3}\|g\|
  _{1,\paramL}+B_0\sum_{j=0}^{k-1}\bar\lambda^{-(k-j-1)n_3}\|\tOn_{\theta, i \paramL
    \Omega_\theta}^{jn_3}(g)\|\nc0.
\end{equation}
\begin{thm}\label{lem:dolgo-BV}
  If condition~\eqref{eq:uuni} is satisfied, then there exists $A, B>0$, and $\gamma<1$
  such that for all $|\paramL|\ge B$ and $n\ge A\log|\paramL|$, we have
  \[
  \sup_{\theta\in\bT^1}\|\tOn_{\theta, i \paramL\Omega_\theta}^n\|_{1,\paramL}\le \gamma^n.
  \]
\end{thm}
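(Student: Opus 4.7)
The plan is to prove a Dolgopyat $L^2$-contraction for a single power $\tOn_{\theta,i\paramL\Omega_\theta}^{n_3}$ with $n_3\sim A\ln|\paramL|$, and then transfer the $L^2$ decay to the $\|\cdot\|_{1,\paramL}$ norm through \eqref{eq:phase-zero} together with the Lasota--Yorke estimate \eqref{eq:L2-cont-1}. The initial ingredient is Corollary~\ref{cor:uuni}: fix once and for all $n_1$ and a pair $h,\kappa\in\cH_{n_1}$ realising \eqref{eq:uuni}. For any $n_3\geq 2n_1$, $\theta\in\bT^1$ and $\ell\in\cH_{n_3-n_1}$, the differential of $\paramL[\Omega_{n_3,\theta}\circ\ell\circ h-\Omega_{n_3,\theta}\circ\ell\circ\kappa]$ is of modulus $\geq |\paramL|C_0/2$, so on each interval of length $\delta|\paramL|^{-1}$ its range covers a full arc.

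Using this I would construct a Dolgopyat mollifier. Partition $\bT^1$ into intervals $J_\alpha$ of length $\delta|\paramL|^{-1}$ with $\delta>0$ small. For each $\alpha$ and $\ell\in\cH_{n_3-n_1}$ select, via UUNI, a branch $\cH_{\alpha,\ell}^\star\in\{\ell\circ h,\ell\circ\kappa\}$ and a subinterval $E_{\alpha,\ell}\subset J_\alpha$ of length $\geq\delta_1|\paramL|^{-1}$ (with $\delta_1>0$ independent of $\paramL,\theta,\alpha,\ell$) on which the phase of the weight attached to $\cH_{\alpha,\ell}^\star$ is definitely farther from $\arg\tOn^{n_3}u(x)$ than the phase attached to the other member of the pair. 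Choose a $\cC^1$ bump $\chi_{\alpha,\ell}$ equal to $1-\eta$ on $E_{\alpha,\ell}$, to $1$ outside $J_\alpha$, with $\|\chi'_{\alpha,\ell}\|_{\cC^0}\leq\Const|\paramL|$, and set
\[
\beta_\cH(x)=\begin{cases}\chi_{\alpha,\ell}(x)&\text{if } \cH=\cH^\star_{\alpha,\ell}\text{ and }x\in J_\alpha,\\ 1 &\text{otherwise,}\end{cases}\qquad
M_\beta u(x)=\rho(x)^{-1}\sum_{\cH\in\cH_{n_3}}\beta_\cH(x)\frac{\rho(\cH x)\,u(\cH x)}{(f_\theta^{n_3})'(\cH x)}.
\]
The pointwise majoration $|\tOn_{\theta,i\paramL\Omega_\theta}^{n_3}u(x)|\leq M_\beta|u|(x)$ is then the classical Dolgopyat lemma (two complex numbers whose phases differ by more than a fixed angle have sum strictly smaller than the sum of their moduli), applied to the pair of branches above.

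The central step is the uniform $L^2_\rho$ bound $\|M_\beta v\|_{L^2_\rho}^2\leq \theta_0\|v\|_{L^2_\rho}^2$ for some $\theta_0\in(0,1)$. Expanding the square, using $(\tO_{\theta,0})^*\rho=\rho$, and accounting for the deficit produced by the factors $\chi_{\alpha,\ell}^2\leq(1-\eta)^2+\cO(\delta)$ on $\bigcup_{\alpha,\ell} E_{\alpha,\ell}$, reduces the estimate to the lower bound $\sum_{\alpha,\ell}\rho(\cH^\star_{\alpha,\ell}(x))|E_{\alpha,\ell}|/(f_\theta^{n_3})'(\cH^\star_{\alpha,\ell}(x))\geq c_0>0$; this follows from $\sum_{\cH\in\cH_{n_3}}\rho\circ\cH/(f_\theta^{n_3})'\circ\cH=\rho$ and the fact that the images of the pairs $\{\ell\circ h,\ell\circ\kappa\}_{\ell}$ cover a definite fraction of $\bT^1$, by Cauchy--Schwarz and a pigeonhole argument. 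Iterating gives $\|\tOn^{kn_3}u\|_{L^2_\rho}\leq\theta_0^{k/2}\|u\|_{L^2_\rho}$. To convert back to $\cC^0$, I would use \eqref{eq:L2-cont-1} in the Sobolev-type form $\|\tOn^m v\|_{\cC^0}\leq\Const|\paramL|^{1/2}\|v\|_{L^2_\rho}+\Const\bar\lambda^{-m}\|v\|_{1,\paramL}$; plugging this, with $m=n_2$, into the iterate \eqref{eq:phase-zero} with $n=kn_3+n_2$ and choosing $A$ large enough that $|\paramL|^{1/2}\theta_0^{k/2}\leq\gamma^n$ delivers the stated inequality.

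The main obstacle will be the uniformity in $\theta$ of the constants $\delta,\delta_1,\eta,c_0$ entering the $L^2_\rho$ contraction. This requires the uniform UUNI provided by Corollary~\ref{cor:uuni} (and the openness noted in Remark~\ref{rem:open}), uniform distortion control of $f_\theta^{n_3}$, and uniform bounds on $\rho_\theta$ and $\rho_\theta^{-1}$ in $\cC^1$, all of which follow from Subsection~\ref{sec:dolgoSetting}. A more delicate technical point is to verify that the smooth flag $\chi_{\alpha,\ell}$, with $\|\chi'_{\alpha,\ell}\|_{\cC^0}=\cO(|\paramL|)$, is regular enough for the passage from $L^2$ decay to $\cC^0$ decay to absorb the factor $|\paramL|^{1/2}$ into $\gamma^n$; this is precisely why the target norm $\|\cdot\|_{1,\paramL}$ rescales the derivative by $|\paramL|^{-1}$.
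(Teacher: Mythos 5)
Your overall strategy is the same as the paper's: Corollary~\ref{cor:uuni} for uniform UNI, a damped positive operator built from flag functions on intervals of scale $|\paramL|^{-1}$ that dominates $|\tOn_{\theta,i\paramL\Omega_\theta}^{n_3}g|$ pointwise, an $L^2_\rho$ contraction for the damped operator (your $M_\beta$ is exactly the paper's $u_{k+1}=\tOn_{\theta,0}^{n_3}(\Gamma_{g,k}u_k)$ in Lemma~\ref{L1-lemma}, and your $L^2$ step is the paper's use of~\eqref{eq:another-ly-like} plus Cauchy--Schwarz), followed by interpolation back to $\|\cdot\|_{1,\paramL}$ via~\eqref{eq:L2-cont-1}/\eqref{eq:phase-zero} and the choice $n\geq A\ln|\paramL|$ to absorb the $|\paramL|^{1/2}$ loss. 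So this is not a different route, but there is a genuine gap at the one step where Dolgopyat arguments live or die: the selection of the damped branch and subinterval $E_{\alpha,\ell}$.

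Concretely, your criterion ``the phase of the weight attached to $\cH^\star_{\alpha,\ell}$ is farther from $\arg\tOn^{n_3}u(x)$'' cannot be implemented as stated. First, the ``classical Dolgopyat lemma'' you invoke ($|a+b|\leq |a|+(1-\eta)|b|$ under a phase gap) is only valid when the damped term is not much larger than the other one, so comparability of the two distinguished branch contributions must be established before any cancellation can be cashed in. Second, and more seriously, to put a \emph{fixed} $\cC^1$ flag equal to $1-\eta$ on a subinterval of length $\geq\delta_1|\paramL|^{-1}$ you need the misalignment to hold throughout that subinterval, hence you need control of the variation in $x$ of $\arg\bigl(u\circ\cH^\star_{\alpha,\ell}\bigr)$ and of $\arg\tOn^{n_3}u$; both are uncontrolled near zeros of $u\circ\cH$ or of the sum (the derivative of an argument involves division by the modulus), so UUNI alone does not produce $E_{\alpha,\ell}$. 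The missing ingredient is precisely the dichotomy of the paper's Lemma~\ref{L1-lemma}: on each interval $I_m$ either one of the two distinguished preimages already satisfies $|g_k(\bar h_m(x_m))|\leq\frac12 u_k(\bar h_m(x_m))$, in which case that branch is damped with no cancellation argument at all, or both branch values are comparable to the dominating function, in which case the cone condition $|g_k'|\leq\paramb|\paramL|u_k$ gives $|(\arg g_k\circ h)'|\leq\Const\paramb|\paramL|\lambda^{-n_3}$, UNI then forces the relative phase $\paramL\Theta$ to complete a full oscillation on $I_m$, and the damping is placed on the resulting subintervals where $\cos\paramL\Theta\geq0$. Relatedly, your sketch never verifies that the cone conditions $|g_k'|\leq\paramb|\paramL|u_k$ and $|u_k'|\leq\paramb|\paramL|u_k$ propagate under $\tOn^{n_3}$ and $M_\beta$ (this is what fixes $n_3$ in terms of $\paramb$ and makes the flag bound $\|\beta'\|\leq\Const|\paramL|$ harmless), yet both the re-selection of flags at every step and your final $L^2\to\cC^0$ interpolation depend on it. With the case split and the cone-propagation added you recover the paper's proof; without them the construction of $M_\beta$ and the pointwise majoration are not justified.
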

\begin{rem}
  In fact, Theorem~\ref{lem:dolgo-BV}, for fixed $\theta$, is a special case
  of~\cite[Theorem 1.1]{BV05}.  To be precise,~\cite[Theorem 1.1]{BV05} is stated for a
  single map and with strictly positive roof functions (a role here played by
  $\Omega_\theta$).  The latter can easily be arranged by multiplying the transfer operator
  by $e^{i 2\|\Omega_\theta\|}$, which does not change the norm.  In addition, a careful
  look at the proof should show that $A,B, \gamma$ depend on the map and potential only
  via $n_1, \Ctre$ of~\eqref{eq:uuni} and $\|f'_\theta\|_\infty, \|
  (f'_\theta)\invr\|_\infty, \|f''_\theta (f'_\theta)\invr\|_\infty$,
  $\|\Omega_\theta\|_{\cC^2}$ which, in the present case, are all uniformly bounded.
  Nevertheless, we think the reader may appreciate the following simpler, self-contained,
  proof rather than being referred to the guts of~\cite{BV05}.
\end{rem}
\begin{proof}[{\bf Proof of Theorem~\ref{lem:dolgo-BV}}]
  For each $g\in\cC^1$ set $g_k= \tOn_{\theta, i \paramL\Omega_\theta}^{kn_3} g$, with
  $n_3\ge n_1$ from Corollary~\ref{cor:uuni} and $n_3\ge n_2$ as in equation
  ~\eqref{eq:phase-zero}.  The basic idea, going back to Dolgopyat~\cite{Dima00}, is to
  construct iteratively functions $u_k\in\cC^1(\bT^1,\bRp)$ such that $|g_k(x)|\le
  u_k(x)$ for all $k\in\bN$ and $x\in\bT^1$ and on which one has good bounds.  More
  precisely:
  \begin{lem}\label{L1-lemma}%
    There exists constants $K, \paramb,B_1, \paramL_0>1$, $\tau>0$, $n_3\ge\max\{ n_1,n_2\}$ and, for all $g\in\cC^1$ with $\|g'\|_{\cC^0}\le \paramb |\paramL|^{-1}\|g\|_{\cC^0}$, functions $\{\Gamma_{g,k}\}_{k\in\bN}\in \cC^1(\bT^1,[4/5,1])$ such that, for all
    $|\paramL|\ge \paramL_0$ and $k\in\bN$,
    \begin{equation}\label{eq:Gamma-cond1}
      \|\Gamma_{g,k}'\|_{L^\infty}\le B_1|\paramL|,
    \end{equation}
    and, setting
    $u_0=\|g\|_\infty+\paramb^{-1}|\paramL|^{-1}\|g'\|_\infty$ and $u_{k+1}=\tOn_{\theta,
      0}^{n_3}(\Gamma_{g,k}u_k)$, we have, for any $x\in\bT^1$,
    \begin{align*}
      \max\{|u'_k(x)|, |g_k'(x)|\}&\le \paramb|\paramL| u_k(x)\;,&%
      |g_k(x)|&\le u_k(x)
    \end{align*}
    and, for any $I=[a_1,a_2]\subset \bT^1$ so that $|a_2-a_1|\ge 4K|\paramL|\invr$:
    \begin{align*}
      \int_I \tOn_{\theta, 0}^{n_3}\Gamma_{g,k}^2&\le e^{-3\tau}|I|.
    \end{align*}
  \end{lem}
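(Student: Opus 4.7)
The plan is to follow the classical Dolgopyat construction of ``dip'' functions, adapted uniformly in $\theta$ via the UUNI condition from Corollary~\ref{cor:uuni}. I would proceed by induction on $k$, with the base case $k=0$ being essentially tautological: set $u_0=\|g\|_\infty+\paramb^{-1}|\paramL|^{-1}\|g'\|_\infty$, so $|g_0|\leq u_0$ and $|g_0'|\leq \paramb|\paramL| u_0$ are immediate from the hypothesis on $g$. The induction hypothesis will be that $|g_k|\leq u_k$ and $|g_k'|\leq \paramb|\paramL| u_k$ pointwise, together with the analogous bound $|u_k'|\leq \paramb|\paramL| u_k$.

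For the inductive step, fix the pair of branches $h_*,\kappa_*\in \cH_{n_1}$ from Corollary~\ref{cor:uuni} and choose $n_3\geq n_1$ (to be enlarged later). Cover $\bT^1$ by essentially disjoint intervals $\{J_\alpha\}$ of length $K|\paramL|^{-1}$ with $K$ a large constant to be fixed; by~\eqref{eq:uuni} the phase $\paramL(\Omega_{n_3,\theta}\circ\ell\circ h_* - \Omega_{n_3,\theta}\circ\ell\circ\kappa_*)$ has derivative $\geq C_0|\paramL|/2$ for every prefix $\ell\in\cH_{n_3-n_1}$, so on each $J_\alpha$ this phase sweeps an arc of length at least $C_0K/2$, which will be $\geq 10\pi$ once $K$ is large. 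Consequently, on each $\ell\circ h_*(J_\alpha)$ I can locate a subinterval $I_{\alpha,\ell}\subset \ell\circ h_*(J_\alpha)$ of length $\sim |\paramL|^{-1}$ on which $e^{i\paramL\Omega_{n_3,\theta}\circ\ell\circ h_*}$ and $e^{i\paramL\Omega_{n_3,\theta}\circ\ell\circ\kappa_*}$ point in (nearly) the same direction for any prescribed sign choice of the phase shift; on this subinterval partial cancellation in $\tOn^{n_3}g_k$ is unavoidable. Then define $\Gamma_{g,k}$ to equal $4/5$ at the center of each chosen subinterval (choosing, in each $J_\alpha$, the subinterval where the contribution of $g_k$ through the $h_*$-branch is not dominated by the $\kappa_*$-branch), and to interpolate smoothly up to $1$ outside via a standard bump of width $\sim |\paramL|^{-1}$. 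This automatically gives $\Gamma_{g,k}\in\cC^1(\bT^1,[4/5,1])$ with $\|\Gamma_{g,k}'\|_\infty\leq B_1|\paramL|$.

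The three verifications proceed as follows. The integral bound is the cleanest: since $\tOn_{\theta,0}^{n_3}$ is dual to composition with $f_\theta^{n_3}$ against Lebesgue (equivalently, $\int \phi\,\tOn_{\theta,0}^{n_3}\psi\,\rho = \int (\phi\circ f_\theta^{n_3})\psi\rho$), one has $\int_I \tOn_{\theta,0}^{n_3}\Gamma_{g,k}^2 \leq \int_{f_\theta^{-n_3}(I)} \Gamma_{g,k}^2\,\rho\,J$ up to bounded distortion, and the union of dip subintervals covers a definite fraction $\delta>0$ of each $J_\alpha$; since $\Gamma_{g,k}^2\leq 16/25$ there and $\Gamma_{g,k}^2\leq 1$ elsewhere, the ratio is bounded by $1-\delta(1-16/25)$, giving the decay factor $e^{-3\tau}$ provided $|I|\geq 4K|\paramL|^{-1}$ (so that $I$ contains enough $J_\alpha$'s for the averaging argument, and boundary effects are absorbed). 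The $|u_k'|\leq \paramb|\paramL|u_k$ bound follows by differentiating $u_{k+1}=\tOn_{\theta,0}^{n_3}(\Gamma_{g,k}u_k)$, applying~\eqref{eq:phase-zero}-style Lasota--Yorke together with $\|\Gamma_{g,k}'\|_\infty\leq B_1|\paramL|$, and using that $\Gamma_{g,k}\geq 4/5$ implies $\Gamma_{g,k} u_k$ is a bounded multiple of $u_k$. Similarly, $|g_k'|\leq \paramb|\paramL|u_k$ is preserved because differentiating $g_{k+1}=\tOn_{\theta,i\paramL\Omega_\theta}^{n_3}g_k$ produces at most one factor of $\paramL$ (from $e^{i\paramL\Omega_{n_3,\theta}}$) times $|g_k|$, plus a $\lambda^{-n_3}|g_k'|$ term, and $n_3$ is chosen large enough that $\lambda^{-n_3}\paramb|\paramL|\leq \tfrac12\paramb|\paramL|$.

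The genuine obstacle is verifying $|g_{k+1}|\leq u_{k+1}$ pointwise. Outside the dip regions $\Gamma_{g,k}=1$ and one only needs the trivial triangle inequality $|\tOn_{\theta,i\paramL\Omega}^{n_3} g_k|\leq \tOn_{\theta,0}^{n_3}|g_k|\leq \tOn_{\theta,0}^{n_3} u_k = u_{k+1}$. At a dip point $x$ inside some $I_{\alpha,\ell}$, the inequality reads $|g_{k+1}(x)|\leq (4/5)\tOn_{\theta,0}^{n_3}(u_k)(x) + \tfrac15\cdot(\text{contribution of non-dipped branches})$. Using the induction hypothesis $|g_k'|\leq \paramb|\paramL|u_k$ to transfer information about $|g_k|$ across the $\paramL^{-1}$-scale neighborhood where the UUNI phase separation was used, a direct estimate shows that the two paired branch contributions $\frac{\rho(h_*\ell(x))}{\rho(x)}\frac{g_k(h_*\ell(x))}{(f_\theta^{n_3})'(h_*\ell(x))}e^{i\paramL\Omega_{n_3,\theta}(h_*\ell(x))}$ and the corresponding $\kappa_*\ell$ term, after one chooses the $I_{\alpha,\ell}$ optimally (exploiting the $\geq 10\pi$ phase sweep), produce a cancellation of magnitude at least $\tfrac15$ of one of them — which is exactly the slack we gave up in setting $\Gamma_{g,k}=4/5$. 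Making this geometric cancellation quantitative and uniform in $\theta$, $\paramL$, $k$, and $\ell$ is the heart of the argument; it is where the parameters $K$, $n_3$, $B_1$, and $\paramb$ must be fine-tuned consistently (in that order, $K$ first to force a full phase revolution, then $n_3$ to kill the $\lambda^{-n_3}$ Lasota--Yorke term, then $B_1$ to accommodate the dip slope, and finally $\paramb$ large enough to absorb $B_1$ in the derivative bounds and $\paramL_0$ large enough for all error terms to remain small relative to $|\paramL|$).
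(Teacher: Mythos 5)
Your proposal follows the same Dolgopyat strategy as the paper (inductive dominating functions $u_k$, dips of depth $4/5$ on scale $|\paramL|^{-1}$ located via the UUNI phase sweep, Lasota--Yorke for the derivative bounds, and the dips covering a definite fraction of every interval of length $\ge 4K|\paramL|^{-1}$ for the $L^2$-type contraction). However, there is a genuine gap exactly at the step you yourself flag as the heart of the matter: the pointwise bound $|g_{k+1}|\le u_{k+1}$ at the dip points. You locate the dip subintervals using only the UNI phase $\paramL(\Omega_{n_3,\theta}\circ\ell\circ h_*-\Omega_{n_3,\theta}\circ\ell\circ\kappa_*)$ and assert that cancellation there is ``unavoidable''. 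It is not: the direction of each branch contribution is that of $e^{i\paramL\Omega_{n_3,\theta}\circ h}\,(g_k\circ h)$, so its total phase contains $\arg(g_k\circ h)$, and the inductive bound $|g_k'|\le\paramb|\paramL|u_k$ gives no control on this argument wherever $|g_k|\ll u_k$: there the phase of $g_k$ can rotate at rate of order $\paramb|\paramL|u_k/|g_k|$, which may dominate the UNI rate $C_0|\paramL|/2$, so the subinterval you pick from the phase sweep can be precisely where the two contributions align. The paper removes this obstruction by a dichotomy at the centre $x_m$ of each interval $I_m$: either one of the two special branches has $|g_k(\bar h_m(x_m))|\le\tfrac12 u_k(\bar h_m(x_m))$, in which case (propagating with the derivative bound, using $K\lambda^{-n_3}\paramb$ small) $|g_k\circ\bar h_m|\le\tfrac45 u_k\circ\bar h_m$ on all of $I_m$ and that branch can be dipped with no oscillation argument at all; or both branches satisfy $|g_k|\ge\tfrac12 u_k$ there, whence $u_k\le 2|g_k|$ along them, so the branch contributions $A_h=e^{i\theta_h}B_h$ obey $|A_h'|\le\Const(h'\paramb|\paramL|+1)|A_h|$ and hence $|\theta_h'|\le C_0|\paramL|/(32\pi)$ once $n_3$ is chosen large \emph{after} $\paramb$; only then does UNI pin down subintervals where $\cos(\paramL\Theta)\ge 0$. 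Moreover the $4/5$ saving is not a ``cancellation of magnitude $1/5$ of one of them'': it is the inequality $\sqrt{B_{h_*}^2+B_{\kappa_*}^2}\le B_{h_*}+\tfrac45 B_{\kappa_*}$, valid only under the amplitude comparability $4B_{h_*}\ge B_{\kappa_*}$, which the paper enforces by always dipping the branch with the smaller amplitude. Without the dichotomy and this comparability step, $|g_{k+1}|\le u_{k+1}$ at dip points is unproven.

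A secondary, fixable issue is your order of parameter choices: you fix $n_3$ before $\paramb$, but the argument needs $\lambda^{-n_3}\paramb$ small (e.g.\ $K\lambda^{-n_3}\paramb\le\tfrac1{90}$ and $\lambda^{-n_3}\paramb$ small compared with $C_0$ for the phase control above), while $\paramb$ must already be large compared with the Lasota--Yorke constant; the consistent order is $\paramb$ first, then $n_3$ (and $\paramL_0$) large depending on $\paramb$ and $K$.
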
Let us postpone the proof of Lemma~\ref{L1-lemma} and see how it implies the wanted
  result.  First of all, note that if $\|g_k'\|_{\cC^0}\ge \paramb |\paramL|^{-1}\|g_k\|_{\cC^0}$, then equation~\eqref{eq:phase-zero} implies $\|g_{k+1}\|_{1,\paramL}\le \gamma\|g_k\|_{1,\paramL}$, provided $\paramb$ has been chosen large enough.  We can thus assume $\|g'\|_{\cC^0}\le \paramb |\paramL|^{-1}\|g\|_{\cC^0}$ without loss of generality.

  Next, note that, for any $j_0\in\bN$, by equation~\eqref{eq:cone-est} and choosing $\bar\lambda$ as in equation
 ~\eqref{eq:phase-zero}, we can write
  \[
  \left|\de{}{x}\tOn_{\theta, 0}^{j_0 n_3}(u_k)\right|\le
  \bar\lambda^{-j_0n_3}\tOn_{\theta, 0} ^{j_0 n_3}(|u_k'|)+B\tOn_{\theta, 0}^{j_0 n_3}(u_k)
  \le (\bar\lambda^{-j_0n_3}\paramb|\paramL|+B)\tOn_{\theta, 0}^{j_0 n_3}(u_k).
  \]
  By eventually increasing $\paramL_0$, we can choose $j_0$ so that, for all
  $\paramL\ge \paramL_0$,
  \begin{equation}\label{eq:another-ly-like}
    \left|\de{}{x}\rho\left[\tOn_{\theta, 0}^{j_0 n_3}(u_k)\right]^2\right|\le\frac{\tau |\paramL|}{4K}\rho\left[\tOn_{\theta, 0}
      ^{j_0 n_3}(u_k)\right]^2.
  \end{equation}
  Thus, given any partition $\{p_m\}$ of $\bT^1$ in intervals of size between
  $3K|\paramL|^{-1}$ and $4K|\paramL|^{-1}$ we have
  \begin{align*}
    \int_{\bT^1} u_{k+j_0}^2\rho&\le\int_{\bT^1} \left\{\tOn_{\theta,
      0}^{n_3}[\Gamma_{k+j_0-1} (\tOn_{\theta, 0}^{(j_0-1)
      n_3}u_{k})]\right\}^2\rho\le\int_{\bT^1}\rho\tOn_{\theta,
      0}^{n_3}\Gamma_{k+j_0-1}^2\cdot \tOn_{\theta, 0}^{j_0 n_3}
    u_{k}^2\\
    &\le \sum_m\int_{p_m}\tOn_{\theta, 0}^{n_3}\Gamma_{k+j_0-1}^2\frac
    {e^\tau}{|p_m|}\int_{p_m}\rho\tOn_{\theta, 0}^{j_0 n_3}
    u_{k}^2 \\
    &\le e^{-2\tau}\int_{\bT^1}\rho\tOn_{\theta, 0}^{j_0
      n_3}u_{k}^2=e^{-2\tau}\int_{\bT^1}u_{k}^2 \,\rho,
  \end{align*}
  where in the second inequality of the first line we have used Schwarz inequality with respect to the
  sum implicit in $\tOn_{\theta, 0}^{n_3}$ and $\tOn_{\theta, 0}^{n_3(j_0-1)}$; the second line follows from
 ~\eqref{eq:another-ly-like}; the first inequality of the third line follows from the last
  assertion of Lemma~\ref{L1-lemma}, while the last inequality follows from the well known
  contraction of $\tOn_{\theta, 0}$ in $L^1_\rho$.

  Finally, iterating the above equation, we obtain
  \[
  \|u_{kj_0}\|_{L^2_\rho}\le e^{-k\tau}\|u_0\|_{L_\rho^2}.
  \]
  Accordingly, there exists $A>0$ such that, for all $n\ge \frac A 2\log|\paramL|$ we have $\bar\lambda\le |\paramL|^{-2}$ and
  \[
  \|g_n\|_{L^2_\rho}\le \|u_n\|_{L^2_\rho}\le|\paramL|^{-4}\|u_0\|_{L_\rho^2}.
  \]
  The above equation together with~\eqref{eq:phase-zero} and the fact that, for all $\tilde g\in
  \cC^1$,\footnote{ Indeed, $|\tilde g(x)|^2\le \|\tilde g\|_{L^2_\rho}^2+2\int_{\bT^1}|\tilde g|\,|\tilde g'|$.}
  \[
  \|\tilde g\|_\infty\le
  \|\tilde g\|_{L^2_\rho}^{\frac 12}\left[\|\tilde g\|_{L^2_\rho}+2\|\tilde g'\rho^{-1} \|_\infty\right]^{\frac 12}
  \]
  yields $\|\tOn_{\theta, i \paramL\Omega_\theta}^{n} g\|_{1,\paramL}\le
  |\paramL|^{-1}\|g\|_{1,\paramL}$ for all $n\in[ A\log|\paramL|,
  2A\log |\paramL|]\cap\bN$.  The latter readily implies Theorem~\ref{lem:dolgo-BV}.
\end{proof}
\begin{proof}[{\bf Proof\ of Lemma~\ref{L1-lemma}}]
  Since $u_0=\|g\|_\infty + \paramb\invr|\paramL|\invr\|g'\|_\infty$,
  trivially, $\max\{|u_0'|,|g_0'|\}\le \paramb|\paramL| u_0$ and $|g_0(x)|\le u_0(x)$ for all
  $x\in\bT^1$.  Suppose, by induction, that $\max\{|u_k'|,|g_k'|\}\le \paramb|\paramL| u_k$,
  and $|g_k(x)|\le u_k(x)$ for all $x\in\bT^1$, then~\eqref{eq:cone-est} implies
  \begin{equation}\label{eq:phase-one}
    \begin{split}
      |g_{k+1}'(x)|&\le \bar\lambda^{-n_3} (\tOn_{\theta, 0}^{n_3}|g'_k|)(x)+B|\paramL| (\tOn_{\theta, 0}
      ^{n_3}|g_k|)(x)\\
      &\le \paramb|\paramL|\left[\bar \lambda^{-n_3}+B\paramb^{-1}\right](\tOn_{\theta, 0}^{n_3}u_k)(x)
      \le \paramb|\paramL|\frac 54 \left[\bar \lambda^{-n_3}+B\paramb^{-1}\right]u_{k+1}
    \end{split}
  \end{equation}
  where we have assumed the existence of the wanted $\Gamma_{g,k}$ that remains to be
  constructed.  By choosing $\paramb$ large enough it follows
  \[
  |g_{k+1}'(x)|\le \paramb|\paramL|u_{k+1}.
  \]
  The proof of the analogous inequality for $u_k$ being similar, but it uses~\ref{eq:Gamma-cond1}.

  Next, let $h_*,\kappa_*\in\cH_{n_3}$ be two branches satisfying~\eqref{eq:uuni}, whose existence follows by
  Corollary~\ref{cor:uuni}, and let us define the set $\widehat \cH=\cH_{n_3}\setminus
  \{h_*,\kappa_*\}$.  Then,
  \begin{equation}\label{eq:phase-two}
    \left|g_{k+1}(x)\right|\le \sum_{h\in\widehat\cH}\frac{(u_k \rho)\circ  h(x)}{\rho(x) (f_
      \theta^{n_3})'\circ h(x)}+\left|\sum_{h\in\{h_*,\kappa_*\}} \frac{e^{i\paramL\Omega_{n_3,\theta}
          \circ h(x)}(\rho g_k)\circ h(x)}{\rho(x)\,(f_\theta^{n_3})'\circ h(x)}\right|.
  \end{equation}
  To conclude we need a sharp estimate for the second term in~\eqref{eq:phase-two}, where a
  cancellation may take place.  To this end it is helpful to introduce a partition of unity.
  This can be obtained by a function $\phi\in\cC^2(\bRp,[0,1])$ such that $\phi(x)=1$ for
  $|x|\le \frac 12$ and $\phi(x)=0$ for $|x|\ge 1$ and $1=\sum_{n\in\bN}\phi(x-n)$, for
  all $x\in\bR$.  We then define $L_\paramL= \lfloor K^{-1}|\paramL|\rfloor$,
  $\psi_m(x)=\phi(L_\paramL x-m)$.  Note that, by construction,
  $\sum_{m=0}^{L_\paramL-1}\psi_m=1$ (here we are interpreting the $\psi_m$ as functions on
  $\bT^1$).  Let $I_m=\supp\psi_m$ and let $x_m$ be its middle point.  Note that $\frac
  K{|\paramL|}\le |I_m|\le \frac {2K}{|\paramL|}$.

  To continue, for each $m\in\{1,\cdots,L_\paramL-1\}$, we must consider two different cases.
  First suppose that there exists $\bar h_m\in\{h_*,\kappa_*\}$ such that $g_k(\bar
  h_m(x_m))\le \frac 12u_k(\bar h_m(x_m))$.  Note that, for $z\in h(I_m)$,
  \[
  e^{-2K\lambda^{-n_3}\paramb}u_k(z)\le u_k(h(x_m))\le e^{2K\lambda^{-n_3}\paramb}u_k(z)
  \]
  hence
  \[
  |g_k'(z)|\le \paramb|\paramL| u_k(z)\le \frac 32\paramb|\paramL| u_k(\bar h_m(x_m))
  \]
  provided $K\lambda^{-n_3}\paramb\le \frac 1{8}$.  Which implies, for all $x\in I_m$,
  \begin{equation}\label{eq:phase-three0}
    |g_k\circ \bar h_m(x)|\le \frac 45|u_k\circ \bar h_m(x)|,
  \end{equation}
  provided $K\lambda^{-n_3}\paramb\le \frac 1{90}$.

  Second, suppose that, for each $h\in\{h_*,\kappa_*\}$, $|g_k(h(x_m))|\ge\frac 12
  u_k(h(x_m))$.  Then
  \[
  |g_k'(z)|\le \paramb|\paramL| u_k(z)\le 2\paramb|\paramL| u_k(h(x_m))\le 4\paramb|\paramL|\, |g_k(h(x_m))|.
  \]
  The above implies $\frac 12|g_k(h(x_m))|\le |g_k(z)|\le 2|g_k(h(x_m))|$ provided
  $K\lambda^{-n_3}\paramb\le \frac 18$.

  Thus, setting $A_{h}=\frac{(\rho g_k)\circ h(x)}{\rho(x)\,(f_\theta^{n_3})'\circ h(x)}$, we have
  \begin{align*}
    |A_{h}'(x)|&\le C_\# (h'(x)^2\paramb|\paramL|+h'(x))|u_k(h(x))|\le C_\# (h'(x)^2\paramb|\paramL|+h'(x))|
    g_k(h(x))| \\
    &\le C_\# (h'(x)\paramb|\paramL|+1) |A_{h}(x)|.
  \end{align*}
  Defining $A_{h}=e^{i\theta_{h}}B_{h}$, with $\theta_{h}, B_{h}$ real and $B_{h}\ge
  0$, we have
  \[
  |A_{h}'|\ge \frac 1{\sqrt 2}\left(|\theta_{h}' B_{h}|+|B_{h}'|\right).
  \]
  The above implies that, given $\paramb$, we can chose $n_3$ and $\paramL_0$ large enough so that
  \[
  \begin{split}
    &|\theta_{h}'(x)|\le C_\# (h'(x)\paramb|\paramL|+1)\le \frac {\Ctre|\paramL|}{32\pi},\\
    &|B'_{h}(x)|\le \frac {\Ctre|\paramL|}{32\pi} B_{h}(x).
  \end{split}
  \]
  Hence, setting $\Theta:=\Omega_{n_3,\theta}\circ h_*-\Omega_{n_3,\theta}\circ \kappa_*+|
  \paramL|^{-1}(\theta_{h_*}-\theta_{\kappa_*})$,
  \[
  C_\#\ge \left|\de{}{x}\Theta\right|\ge\frac{\Ctre}4.
  \]
  In turns, this implies that the phase $\Theta$ has at least one full oscillation in $I_m$ provided
  $K\ge\frac {8\pi}{\Ctre}$.  Also, $\inf_{I_m} B_{h}\ge\frac 12
  \sup_{I_m}|B_{h}|$, provided $K\le\frac{10 \pi}{\Ctre}$.  Next, suppose that $\|B_{h_*}\|_\infty\ge
  \|B_{\kappa_*}\|_\infty$, (hence $4|B_{h_*}(x)|\ge |B_{\kappa_*}(x)|$), and set $\bar
  h_m=\kappa_*$, the other case being treated exactly in the same way (interchanging the role of
  $h_*$ and $\kappa_*$, hence setting $\bar h_m=\kappa_*$).  Given the above notation, the last term of
 ~\eqref{eq:phase-two} reads
  \[
  |B_{h_*}-e^{i\paramL\Theta}B_{\kappa_*}|= \left[B_{h_*}^2+B_{\kappa_*}^2-2 B_{h_*}
    B_{\kappa_*}\cos\paramL\Theta\right]^{\frac 12}.
  \]
  It follows that there exists a constant $\Csei>0$ and intervals $J\subset \stackrel {\circ}{I}_m$,
  $\frac{4\pi}{\Ctre|\paramL|}\ge |J|\ge \frac{\Csei}{|\paramL|}$ on which
  $\cos\paramL\Theta\ge 0$.  Then, on each such interval $J$,
  \[
  |B_{h_*}-e^{i\paramL\Theta}B_{\kappa_*}|\le \left[B_{h_*}^2+B_{\kappa_*}^2\right]^{\frac
    12}\le B_{h_*}+\frac 45 B_{\kappa_*}.
  \]
  We can then define $\Xi_m\in\cC^\infty(I_m,[\frac 45,1])$
  such that $\Xi_m(x)=1$ outside the intervals $J$, $\Xi_m(x)=\frac 45$ on the mid third of
  each
  $J$ and $\|\Xi_m\|_{\cC^1}\le C_\#|\paramL|$.  It follows
  \begin{equation}\label{eq:phase-three}
    \begin{split}
      |B_{h_*}-e^{i\paramL\Theta}B_{\kappa_*}|&\le \left|\frac{(g_k\rho)\circ h_*}{(\rho f^{n_3}_
          \theta)'\circ h_*}\right|+\Xi_m \left|\frac{(g_k\rho)\circ {\kappa_*}}{(\rho f^{n_3}_\theta)'\circ
          \kappa_*}\right|\\
      &\le \frac{(u_k\rho)\circ h_*}{(\rho f^{n_3}_
        \theta)'\circ h_*}+\Xi_m \frac{(u_k\rho)\circ {\bar h_m}}{(\rho f^{n_3}_\theta)'\circ
        \bar h_m}.
    \end{split}
  \end{equation}
  We can finally define the function $\Gamma_{g,k}\in\cC^1(\bT^1,[0, 1])$ as
  \[
  \Gamma_{g,k}(x)=\sum\limits_{m=0}^{L_\paramL-1}\psi_m\circ f^{n_3}(x)\Gamma_{k,m}(x).
  \]
  where
  \[
  \Gamma_{k,m}(x)=\begin{cases}
    1&\textrm{ if }x\in h(I_m), h\neq\bar h_m,\\
    \Xi_m\circ f^{n_3}(x) &\textrm{ if } x\in \bar h_m(I_m).
  \end{cases}
  \]
  Note that with the above definition, condition~\eqref{eq:Gamma-cond1} is satisfied.
  Also, by equations~\eqref{eq:phase-three0} and~\eqref{eq:phase-three}, it follows
  $|g_{k+1}|\le u_{k+1}$.

  Finally, we must check the last claim of the Lemma.  Note that it suffices to consider intervals $I$ of size between
  $4K|\paramL|^{-1}$ and $8K|\paramL|^{-1}$.
  \[
  \begin{split}
    \int_I\tOn_{\theta, 0}^{n_3}\Gamma_{g,k}^2&\le
    \sum_m\int_{I}\psi_m \cdot\tOn_{\theta, 0}^{n_3}(\Gamma_{k,m}^2)\\
    &\le \sum_m\left[\sum_{h\in\cH_{n_3}/\{\bar h_m\}}\int_{I}\psi_m\frac{\rho\circ h\cdot
        h'}{\rho}+\int_{I} \psi_m\Xi_m^2 \frac{\rho\circ \bar h_m\cdot \bar
        h_m'}{\rho}\right].
  \end{split}
  \]
  Note that there exists at least one $m_*$ such that $I_{m_*}\subset I$.  Moreover, at least
  $\frac {\Csei}{3K}$ of $I_{m_*}$ (hence at least $\frac {\Csei}{24 K}$ of $I$) is covered by intervals
  $J$ on which $\Xi_{m_*}=4/5$ and $\psi_{m_*}=1$.  Let $J_*$ be the union of such
  intervals.  Since $\frac{\rho(x)}{\rho(y)}\le e^{\Const |x-y|}$, for each $\eta\in (0,1)$,
  \[
  \begin{split}
    \int_{I} \psi_{m_*}\Xi_{m_*} \frac{\rho\circ \bar h_{m_*}\cdot \bar h_{m_*}'}{\rho}\le\int_{I\setminus J_*}\psi_{m_*}\frac{\rho\circ \bar h_{m_*}\cdot \bar h_{m_*}'}{\rho}+\frac {16}{25}\int_{J_*} \frac{\rho\circ \bar h_{m_*}\cdot \bar h_{m_*}'}{\rho}\\
    \le (1-\eta)\int_{I\setminus J_*}\psi_{m_*}\frac{\rho\circ \bar h_{m_*}\cdot \bar
      h_{m_*}'}{\rho}+\left(\eta \frac{|I|}{|J_*|}+\frac {16}{25}\right)\int_{J_*}
    \psi_{m_*}\frac{\rho\circ \bar h_{m_*}\cdot \bar h_{m_*}'}{\rho}.
  \end{split}
  \]
  Thus, choosing $\eta=\frac {9\Csei}{25(\Csei+24K)}$ we have
  \[
  \int_{I} \psi_{m_*}\Xi_{m_*} \frac{\rho\circ \bar h_{m_*}\cdot \bar h_{m_*}'}{\rho}\le
  (1-\eta)\int_{I} \psi_{m_*}\frac{\rho\circ \bar h_{m_*}\cdot \bar h_{m_*}'}{\rho}.
  \]
  Also note that there exists $M>0$ such that, for all $h\in\cH_{n_3}$ and $m\in\{1,\cdots, L_\paramL-1\}$,
  \[
  \int_{I}\psi_m\frac{\rho\circ h h'}{\rho}\le M\int_{I} \psi_{m_*}\frac{\rho\circ \bar h_{m_*}\bar h_{m_*}'}{\rho}.
  \]
  Moreover, note that $I$ can intersect at most $9$ intervals $I_m$.  By an argument similar
  to the above it then follows that there exists $\tau>0$ such that
  \begin{align*}
    \int_I\tOn_{\theta, 0}^{n_3}\Gamma_{g,k}^2&\le e^{-3\tau}\sum_m\sum_{h\in\cH_{n_3}}\int_{I}\psi_m\frac{\rho\circ h\cdot h'}{\rho}\\
    &=e^{-3\tau}\int_{I}\tOn_{\theta, 0}^{n_3}1=e^{-3\tau}|I|.\qedhere
  \end{align*}
\end{proof}

\section{A tedious computation}\label{app:tedious}
Here we perform explicitly the computations that lead to~\eqref{eq:random-var-def}. These
are simple but tedious computations that, in subsequent occasions, will be left to the
reader.  We provide this appendix so that the reader can see precisely how such
computations are done and be able to reproduce them when equally detailed proofs are not
provided.

Let us recall the starting point (see~\eqref{eq:Mvariable}):
\begin{align*}
  \bM_{k} &= \etaExph(t-\ve S_k,\bar\theta_{L_k})\left\{\sum_{j=0}^{L_k-1}\etaExp_{ j,L_k}\left[\ho(x_j,\theta_j)-\frac{\ve}2\bar\omega'(\bar\theta_{j})\bar\omega(\bar\theta_{j})\right]\right\} +\ve\,\bC_k\\
  \bC_k&=\frac  12\etaExph(t-\ve S_k,\bar\theta_{L_k})P(t-\ve S_k,\bar\theta_{L_k})\left[\sum_{j=0}^{L_k-1}\etaExp_{j,L_k}\ho(x_j,\theta_j)\right]^2.
\end{align*}
Recall, as already observed in Section~\ref{sec:results}, that
$\bar\omega\in\cC^{3-\alpha}$ for any $\alpha > 0$.  Let us compute term by term.
\begin{align*}
  &\etaExph(t-\ve S_k,\bar\theta_{L_k}) \sum_{j=0}^{L_k-1}\etaExp_{j,L_k}\ho(x_j,\theta_j) =
    \etaExph(t-\ve S_k,\thetaslk{L_k})  \sum_{j=0}^{L_k-1}\etaExps_{j,L_k}\ho(x_j,\theta_j)\\
  &
    +\partial_\theta\etaExph(t-\ve S_k,\thetaslk{L_k}) \partial_\theta \bar\theta(\ve L_k,\thetasl)\sum_{j=0}^{L_k-1}\etaExps_{j,L_k}\ho(x_j,\theta_j)(\theta_0-\thetasl)\\
  &+\ve\etaExph(t-\ve S_k,\thetaslk{L_k})\sum_{j=0}^{L_k-1}\sum_{l=0}^{L_k-1}\frac{\etaExps_{j,L_k}}{\left(1+\ve\bar\omega'(\thetaslk{l})\right)}\bar\omega''(\thetaslk{l})\partial_\theta \bar\theta(\ve l,\thetasl)\ho(x_j,\theta_j)(\theta_0-\thetasl)\\
  &+\cO(\ve^2\deltacomplex^2 L_k+\ve^{3-\delta_*}\deltacomplex^{2-\delta_*} L_k^2),
\end{align*}
where we have used that $|\theta_0-\thetasl|\leq \Const \ve\deltacomplex$,
by $\partial_\theta\etaExph$ we mean the derivative with respect to the second variable
and we have used the definition~\eqref{eq:hbolddef} of $\etaExp_{i,j}$. Now note that the
term on the second line of the previous equation is of type (recall Notation~\ref{not:bookcor})
\begin{align*}
  \ve\bbK^{k,1}_{\ell,2}=\ve\sum_{i_1,i_2}\fkC^{k,1}_{\ell, 2,( i_1, i_2)} A_{1,i_1}A_{2,i_2}
\end{align*}
where we have $\fkC^{k,1}_{\ell, 2,(i_1, i_2)}=0$, if $i_1\neq 0$, and
$A_{1,0}(x,\theta)=\ve^{-1}\cdot(\theta-\thetasl)$ while $A_{2,i_2}=\ho(x,\theta)$.  Note
that, provided $C^*$ has been chosen large enough, $\|A_{j,i_j}\|\nc1\leq C^*$, as
required. In fact, the terms has the extra property
$\|\bbK^{k,1}_{\ell,2}\|\nc0\leq C^*\deltacomplex L_k$, but we will not use this in the
following.  Similar arguments show that the term on the third line is of type
$\ve^2\bbK^{k,2}_{\ell,3}$: we can thus subsume both terms as a $\ve\bbK^{k,1}_{\ell,3}$
term.

Next,
\begin{align*}
  &-\frac{\ve}2\etaExph(t-\ve S_k,\bar\theta_{L_k})\etaExp_{
    j,L_k}\sum_{j=0}^{L_k-1}\bar\omega'(\bar\theta_{j})\bar\omega(\bar\theta_{j}) \\
    =&   -\frac{\ve}2\etaExph(t-\ve S_k,\thetaslk{L_k})\etaExps_{ j,L_k}\sum_{j=0}^{L_k-1}\bar\omega'(\thetaslk{j})\bar\omega(\thetaslk{j})\\
  &-\frac{\ve}2\partial_\theta\etaExph(t-\ve S_k,\thetaslk{L_k})\partial_\theta\bar\theta(\ve L_k,\thetasl)\etaExps_{ j,L_k}\sum_{j=0}^{L_k-1}\bar\omega'(\thetaslk{j})\bar\omega(\thetaslk{j}) (\theta_0-\thetasl)\\
  &-\frac{\ve}2\etaExph(t-\ve S_k,\thetaslk{L_k})\etaExps_{ j,L_k}\sum_{j=0}^{L_k-1}\left[\bar\omega''(\thetaslk{j})\bar\omega(\thetaslk{j})+\bar\omega'(\thetaslk{j})^2\right]\partial_\theta\bar\theta(\ve j,\thetasl)(\theta_0-\thetasl)\\
  &+\cO(L_k^2\ve^3\deltacomplex+\ve^{3-\delta_*}\deltacomplex^{2-\delta_*}L_k).
\end{align*}
The terms in the second and third line are of type $\ve^2\bbK^{k,1}_{\ell,1}$, which is a bound smaller that the one for the correlation terms already obtained.
Finally, for the last term we have
\[
\begin{split}
&\bC_k=\frac  12\etaExph(t-\ve S_k,\thetaslk{L_k})P(t-\ve S_k,\thetaslk{L_k})\left[\sum_{j=0}^{L_k-1}\etaExps_{j,L_k}\ho(x_j,\theta_j)\right]^2\\
&+\frac  12\partial_\theta\etaExph(t-\ve S_k,\thetaslk{L_k})\partial_\theta\bar\theta(\ve L_k,\thetasl)P(t-\ve S_k,\thetaslk{L_k})\left[\sum_{j=0}^{L_k-1}\etaExps_{j,L_k}\ho(x_j,\theta_j)\right]^2(\theta_0-\thetasl)\\
&+\frac  12\partial_\theta \etaExph(t-\ve S_k,\thetaslk{L_k})P(t-\ve S_k,\thetaslk{L_k})\partial_\theta\bar\theta(\ve L_k,\thetasl)\left[\sum_{j=0}^{L_k-1}\etaExps_{j,L_k}\ho(x_j,\theta_j)\right]^2(\theta_0-\thetasl)\\
&+\cO(L_k^3\ve^2\deltacomplex+\ve^2L_k^2\deltacomplex^2).
\end{split}
\]
Note that the first two lines can be interpreted as a $\bbK^{k,2}_{\ell,3}$ term; also,
any previous correlation term can be interpreted as a term of this type.  Collecting the
above facts, and recalling the constraints on $L_*$ and $\deltacomplex$, we
obtain~\eqref{eq:random-var-def}.

\bibliographystyle{plain}
\bibliography{rw}
\end{document}